\documentclass[leqno, 10pt, a4paper]{amsart}
\usepackage{amsmath}
\usepackage{amssymb, latexsym, mathrsfs, mathabx, dsfont, a4wide, bbm, color, import, xifthen, pdfpages, transparent}
\usepackage{graphicx}
\usepackage{caption}
\usepackage{color}
\usepackage{a4wide}

\usepackage[utf8]{inputenc}

\theoremstyle{definition}
\newtheorem{definition}{Definition}[section]

\theoremstyle{theorem}
 \newtheorem{theorem}[definition]{Theorem}
 \newtheorem{lemma}[definition]{Lemma}
 \newtheorem{proposition}[definition]{Proposition}
 \newtheorem{corollary}[definition]{Corollary}

\newtheorem*{theorem*}{Theorem}
\newtheorem*{proposition*}{Proposition}
\newtheorem*{lemma*}{Lemma}

 \theoremstyle{remark}
 \newtheorem{example}[definition]{Example}
 \newtheorem{remark}[definition]{Remark}

   \newtheorem*{claim*}{Claim}

\theoremstyle{theorem}

\newtheorem{theoremalph}{Theorem}

\newtheorem{propositionalph}[theoremalph]{Proposition}

\newcommand{\op}[1]{\operatorname{#1}}

\newcommand{\norm}[1]{\ensuremath{\|{#1}\|}}

\newcommand{\acou}[2]{\ensuremath{\left\langle #1 , #2 \right\rangle}} 
\newcommand{\brak}[1]{\ensuremath{\langle #1\rangle}}
\newcommand{\acoup}[2]{\ensuremath{\left(#1|#2\right)}}

\newcommand{\Tr}{\ensuremath{\op{Tr}}}

\newcommand{\Hol}{\op{Hol}}

\newcommand{\B}{\ensuremath{\mathbb{B}}} 
\newcommand{\C}{\ensuremath{\mathbb{C}}} 
\newcommand{\bH}{\ensuremath{\mathbf{H}}} 
\newcommand{\N}{\ensuremath{\mathbb{N}}} 
\newcommand{\R}{\ensuremath{\mathbb{R}}} 
\newcommand{\bS}{\ensuremath{\mathbb{S}}} 
\newcommand{\T}{\ensuremath{\mathbb{T}}} 
\newcommand{\U}{\ensuremath{\mathbb{U}}} 
\newcommand{\Z}{\ensuremath{\mathbb{Z}}} 

\newcommand{\Rn}{\ensuremath{\R^{n}}}
\newcommand{\URn}{U\times\R^{n}}

\newcommand{\fp}{\ensuremath{\mathfrak{p}}}

\newcommand{\cK}{\ensuremath{\mathscr{K}}}
\newcommand{\cL}{\ensuremath{\mathscr{L}}}
\newcommand{\cS}{\ensuremath{\mathscr{S}}}

\newcommand{\scD}{\mathscr{D}}
\newcommand{\sE}{\mathscr{E}}
\newcommand{\sF}{\mathscr{F}}
\newcommand{\sH}{\mathscr{H}}
\newcommand{\sJ}{\mathscr{J}}
\newcommand{\sK}{\mathscr{K}}
\newcommand{\sL}{\mathscr{L}}
\newcommand{\sR}{\mathscr{R}}

\newcommand{\bQ}{\mathbf{Q}}

\newcommand{\psido}{$\Psi$DO} 
\newcommand{\psidos}{$\Psi$DOs} 

\newcommand{\stS}{\mathbb{S}}

\newcommand{\supp}{\op{supp}}

\newcommand{\dist}{\op{dist}}

\newcommand{\dom}{\op{dom}}

\newcommand{\rk}{\op{rk}}

\newcommand{\bt}{\ensuremath{\bullet}}

\newcommand{\ran}{\op{ran}}
   
\newcommand{\Sp}{\op{Sp}}

\newcommand{\subsubset}{\subseteq\!\subseteq}

\newcommand{\loc}{\textup{loc}}

\newcommand{\dl}{\partial} 
\newcommand{\OP}{\op{Op}} 
\newcommand{\car}{\mathbbm{1}}

\numberwithin{equation}{section}

\begin{document}

\title{Functional Calculus on Noncommutative Tori, II.\\
Complex Powers, Logarithms, and Sectorial Projections}

\author{Gihyun Lee}
 \address{Institute for Mathematics, University of Potsdam, Germany}
 \email{gihyun.lee@uni-potsdam.de}

\author{Rapha\"el Ponge}
 \address{Department of Mathematics and Statistics, University of Ottawa, Canada}
 \email{ponge.math@icloud.com}

\begin{abstract}
This paper develops a systematic theory of complex powers, logarithms, and sectorial projections of elliptic pseudodifferential operators on noncommutative tori, extending to this setting the classical constructions of Seeley and others. Building on the parametric pseudodifferential calculus of the prequel~\cite{LP:Part1}, we construct the complex powers associated with a given ray, show that they form a holomorphic family of pseudodifferential operators with the semigroup property, and compute their symbols. We further establish exponential growth bounds on vertical strips in the operator, Schatten, and trace-class topologies by means of a new holomorphic calculus for pseudodifferential families. The logarithm is identified as a pseudodifferential operator whose symbol is determined by the resolvent symbol, and the associated trace formula is derived. Sectorial projections are constructed as contour integrals and shown to be of order zero. This yields analogues for noncommutative tori of results of Wodzicki, Okikiolu, and Gaarde--Grubb, and provides the analytic foundations for the spectral-geometric applications developed in subsequent papers.
\end{abstract}

\maketitle 

%\tableofcontents

\section{Introduction}
Noncommutative tori are arguably the most ubiquitous examples of noncommutative spaces. Given a real anti-symmetric $n\times n$-matrix $\theta=(\theta_{jk})$, the $C^*$-algebra $C(\T^n_\theta)$ is generated by unitaries $U_1,\ldots,U_n$ subject to the relations,
\begin{equation*}
 U_lU_j = e^{2i\pi \theta_{jl}} U_jU_l, \qquad j,l=1,\ldots, n.
\end{equation*}
For $\theta=0$ we recover the algebra of continuous functions on the ordinary torus $\T^n=(\R/2\pi \Z)^n$. The pseudodifferential calculus on NC tori was introduced by Connes~\cite{Co:CRAS80} and Baaj~\cite{Ba:CRAS88} and was developed in detail in~\cite{HLP:Part1,HLP:Part2}. This calculus played a central role in recent work on the differential geometry of NC tori (see, e.g.,~\cite{CF:MJM19, CM:JAMS14, CT:Baltimore11, DS:SIGMA15, FK:JNCG12, FK:JNCG15, FW:JPDOA11, FGK:JNCG19, LL:JNCG25, LM:GAFA16, LNP:TAMS16, Liu:arXiv18a, Liu:arXiv18b, MP:AIM23, SXZ:JFA23, vNSZ:JFA25} and the references therein).  

This paper is a continuation of~\cite{LP:Part1}, where we built a pseudodifferential calculus with parameter on NC tori and used it to study resolvents of elliptic operators. Building on those results, we present in this paper a systematic construction of complex powers, logarithms, and sectorial projections of elliptic \psidos\ on NC tori. The extension of these constructions to smooth parametric families of elliptic operators will be taken up in a subsequent paper~\cite{LP:SG3}. Together, these results pave the way toward various spectral-geometric applications, including heat-trace asymptotics, zeta-function analysis, and Weyl laws for NC tori (see~\cite{LP:SG1, LP:SG2}). 

\subsection*{Novelty of the paper}
The approach of the paper is novel in various aspects. First, the approach to complex powers of elliptic operators applies to \psidos, not just differential operators (compare with Shubin~\cite{Sh:Springer01}). Moreover, it relies on the parametric \psido-calculus of~\cite{LP:Part1}, which is a low-tech alternative to the technically far more involved weakly parametric calculus of Grubb--Seeley~\cite{GS:IM95} (see~\cite{LL:JNCG25} for an extension of that calculus to noncommutative tori). 

Second, the approach of this paper provides exponential decay along vertical lines for families $\Gamma(z)P^{-z}$, $z\in \C$, if the spectrum of the principal symbol of $P$ is contained in the half-plane $\{\Re \lambda>0\}$. These results are instrumental in establishing short-time heat kernel asymptotics for such operators in the forthcoming articles~\cite{LP:SG1, LP:SG2}. In particular, we may allow the principal symbol of $P$ to be non-scalar (compare Duistermaat--Guillemin~\cite{DG:IM75}) and this bypasses the use of the weakly parametric calculus of~\cite{GS:IM95} to get such results. 

Third, it has been pointed out in~\cite{BCLZ:JPDOA12} that the approach of Seeley~\cite{Se:PSPM67} to complex powers does not extend to showing that sectorial projections are \psidos. In contrast, the approach of this paper extends \emph{verbatim} to showing that sectorial projections are \psidos\ (compare with Grubb~\cite{Gr:MS12}). Moreover, it makes it simple to define logarithms of elliptic \psidos\ and show they are \psidos\ (compare with Okikiolu~\cite{Ok:Duke95}).  

\subsection*{Pseudodifferential operators on NC tori}
We refer to Section~\ref{sec:NCtori}, and the references therein, for the main background on NC tori, including the definitions of the $L_2$-space $L_2(\T^n_\theta)$ and the Sobolev spaces $W^s_2(\T^n_\theta)$, $s\in \R$. The \emph{smooth noncommutative torus} is 
\begin{equation*}
 C^\infty(\T^n_\theta):=\bigg\{ \sum_{k\in \Z^n}  u_kU^k;\ (u_k)_{k\in \Z^n}\in \cS(\Z^n)\bigg\}, \qquad U^k:=U_1^{k_1}\cdots U_n^{k_n},
\end{equation*}
where $\cS(\Z^n)$ is the space of rapid-decay sequences labelled by $\Z^n$; this is a (nuclear) Fréchet $*$-algebra closed under holomorphic functional calculus, and for $\theta=0$ it recovers the algebra of smooth functions on the ordinary torus $\T^n$. Equivalently, $C^\infty(\T^n_\theta)$ is the space of smooth vectors for the action $\alpha_s(U^k)=e^{is\cdot k}U^k$, $s\in\R^n$, whose infinitesimal generators are the canonical derivations $\delta_1,\ldots,\delta_n$ of $\T^n_\theta$. 

The main background on pseudodifferential operators on NC tori is presented in Section~\ref{sec:PsiDOs}. A standard symbol of order $m\in \R$ is a map $\rho(\xi)\in C^\infty(\R^n, C^\infty(\T^n_\theta))$ such that $\|\partial_\xi^\beta \rho(\xi)\|=O(|\xi|^{m-|\beta|})$; the space of these symbols is denoted $\stS^m(\URn)$, and quantization associates to every $\rho(\xi)\in\stS^m(\URn)$ the operator 
\begin{equation}
 P_\rho u = (2\pi)^{-n} \iint e^{is\cdot\xi} \rho(\xi) \alpha_{-s}(u)\, ds\, d\xi, \qquad u \in C^\infty(\T^n_\theta),
 \label{eq:Intro.quantization}
\end{equation}
defined as an oscillating integral. Such an operator extends continuously to $P_\rho: W_2^{s+m}(\T^n_\theta)\rightarrow W_2^s(\T^n_\theta)$ for every $s\in\R$; if $m<0$, it lies in the weak Schatten class $\sL_{n|m|^{-1},\infty}$, and it is trace-class if $m<-n$.

A \emph{classical symbol} of order $q\in\C$ admits an asymptotic expansion $\rho(\xi)\sim\sum_{j\geq 0}\rho_{q-j}(\xi)$ into terms homogeneous of degree $q-j$; the homogeneous components depend only on the operator $P=P_\rho$, and $\rho_q(\xi)$ is called the \emph{principal symbol}. The class $\Psi^q(\T^n_\theta)$ consists of the operators $P_\rho$ for such symbols $\rho$. Such an operator is \emph{elliptic} if $\rho_q(\xi)$ is invertible for all $\xi\neq 0$, equivalently if $P$ admits a parametrix in $\Psi^{-q}(\T^n_\theta)$; in this case $P$ satisfies elliptic regularity ($Pu\in W_2^s\Rightarrow u\in W_2^{s+m}$, $m=\Re q$), and if $m>0$ it is a closed operator on $L_2(\T^n_\theta)$ with domain $W_2^m(\T^n_\theta)$ and purely discrete spectrum. 

\subsection*{Pseudodifferential operators with parameter}
The parametric \psido\ calculus of~\cite{LP:Part1} is reviewed in Section~\ref{sec:Parametric-PsiDOs}. Here the parameter sets are \emph{pseudo-cones}, i.e., connected sets $\Lambda\subseteq\C$ that agree with a cone $\Theta\subseteq\C^*$ outside a disk about the origin, and an $\Hol^d(\Lambda)$-family in a locally convex space $\sE$ is a holomorphic map $\Lambda\to\sE$ satisfying $O((1+|\lambda|)^d)$ bounds uniformly on closed sub-pseudo-cones. \emph{Standard parametric symbols} are $\stS^{m,d}(\T^n_\theta\times\R^n\times\Lambda):=\Hol^d(\Lambda;\stS^m(\T^n_\theta\times\R^n))$, and \emph{classical parametric symbols} additionally require a bi-homogeneous asymptotic expansion; the associated \emph{parametric \psidos} $\Psi^{m,d}(\T^n_\theta;\Lambda)$ are the families $P_\rho(\lambda)$ obtained by quantizing such symbols as in~(\ref{eq:Intro.quantization}). 

Given an elliptic \psido\ $P$ of order $w>0$, its \emph{elliptic parameter set} is
\begin{equation*}
 \Theta(P) = \C^*\setminus\Bigl[\bigcup_{\xi\neq 0}\Sp(\rho_w(\xi))\Bigr]. 
\end{equation*}
This is an open cone in $\C^*$. If $\Theta(P)\neq\emptyset$, then $P-\lambda$ admits a parametrix in $\Psi^{-w,-1}(\T^n_\theta;\Lambda)$ for any pseudo-cone $\Lambda$ obtained by adjoining a disk to $\Theta(P)$, the spectrum of $P$ has at most finitely many eigenvalues in any cone $\Theta'\subsubset\Theta(P)$, and $\|(P-\lambda)^{-1}\|=O(|\lambda|^{-1})$ as $\lambda\to\infty$ in $\Theta'$. Removing from $\Theta(P)$ every ray through an eigenvalue yields the open cone $\check\Theta(P)$, every ray of which is a ray of minimal growth. One of the main theorems of~\cite{LP:Part1} states that the resolvent $(P-\lambda)^{-1}$ belongs to $\Psi^{-w,-1}(\T^n_\theta;\Lambda(P))$, where $\Lambda(P)$ is a specific pseudo-cone built from $\check\Theta(P)$ (see  Theorem~\ref{thm:Resolvent.resolvent-is-psido-with-parameter} for the precise statement). 

\subsection*{Spectral theory of non-selfadjoint elliptic \psidos}
In Section~\ref{sec:Nonselfadjoint}, we look in more detail at the spectral theory for non-selfadjoint elliptic operators. This is needed for the 
construction of complex powers and sectorial projections. Let $P\in\Psi^q(\T^n_\theta)$ be elliptic with $m=\Re q>0$. For each $\lambda\in\Sp(P)$, the generalized eigenspace $E_\lambda(P)=\bigcup_{\ell\geq 1}\ker(P-\lambda)^\ell$ is a finite-dimensional subspace of $C^\infty(\T^n_\theta)$ (see Proposition~\ref{prop:Spectral.projection-lambda}). Moreover, the \emph{Riesz projection},
\begin{equation*}
 \Pi_\lambda(P)= \frac{1}{2\pi i} \int_{|\zeta-\lambda|=r} (\zeta-P)^{-1}\,d\zeta,
\end{equation*}
projects onto $E_\lambda(P)$ and has $E_{\bar\lambda}(P^*)^\perp$ as nullspace (\emph{loc.\ cit.}). In addition, this is a smoothing operator (Corollary~\ref{cor:Spectral.Riesz-projection-smoothing}). 

The \emph{partial inverse} $P^{-1}$  is defined as the operator inverting $P$ on $E_0(P^*)^\perp$ and annihilating $E_0(P)$. Equivalently, 
\begin{equation*}
 PP^{-1}=1-\Pi_0(P) \qquad \text{and} \qquad P^{-1}P=1-\Pi_0(P). 
\end{equation*}
It is shown that $P^{-1}$ is a classical \psido\ of order $-q$ (Proposition~\ref{prop:Spectral.partial-inverse-basic-properties}). Moreover, the partial inverse of $P^*$ is the adjoint $(P^{-1})^*$, and, for every $j\geq 2$, the partial inverse of $P^j$ is equal to $P^{-j}$ (see Proposition~\ref{prop:Spectral.partial-inverse-adjoint-exponentiation}). 

In addition, if $0\in \Sp(P)$ we give a precise description of the singularity near $\lambda=0$ of the resolvent $(P-\lambda)^{-1}$ (see Proposition~\ref{prop:Resolvent.Meromorphic}).

\subsection*{Holomorphic families of \psidos}
In Section~\ref{sec:hol-PsiDOs}, we give a systematic account of holomorphic families of \psidos\ on NC tori, also considered in~\cite{LNP:TAMS16, Po:SIGMA20}. Given an open set $\Omega\subseteq\C$, a family of (classical) symbols $\rho(z)_{z\in \Omega}$ is \emph{holomorphic} if its order $w(z)$ is holomorphic, $\rho(z)$ is a holomorphic family in $C^\infty(\T^n_\theta\times\R^n)$, and the bounds in the asymptotic expansion $\rho(z)\sim \sum \rho_{w(z)-j}(z)$ are uniform on compact subsets of $\Omega$; a holomorphic family of \psidos\ is one arising by quantization from such a family of symbols. Their composition is again a holomorphic family of \psidos\ (Corollary~\ref{cor:Hol.composition-of-psidos}), and every $P\in \Psi^m(\T^n_\theta)$ embeds into a holomorphic family $P(z)\in \Psi^\bt(\T^n_\theta)$ of order $m+z$ with $P(0)=P$ (Proposition~\ref{prop:Hol.gauging-PsiDos}), called a \emph{holomorphic gauging} of $P$ in the terminology of Guillemin~\cite{Gu:AIM93, Gu:JFA93}.  

We also show that various embeddings of \psidos\ into locally convex TVS or quasi-Banach spaces of operators give rise to holomorphic families\footnote{We refer to Appendix~\ref{sec:quasi+Schatten} for background on quasi-Banach spaces and on the Schatten classes $\sL_p$ and their weak versions $\sL_{p,\infty}$.} In particular, we get the following result. 

\begin{propositionalph}\label{prop:Intro.hol-fam-properties}
 Let $P(z)$, $z\in \Omega$, be a holomorphic family of \psidos\ of order $w(z)$. 
 \begin{enumerate}
 \item $P(z)$, $z\in \Omega$, is a holomorphic family in  $\sL(C^\infty(\T^n_\theta))$. 
 
 \item If $\Re w(z)\leq m$ on $\Omega$, then  $P(z)$, $z\in \Omega$, is a holomorphic family in $\sL(W_2^{s+m}(\T^n_\theta),W_2^s(\T^n_\theta))$ for all $s\in\R$. 
 
 \item If $\Re w(z)<m$ (resp., $\Re w(z)\leq m$) on $\Omega$ with $m<0$, then $P(z)$, ${z\in \Omega}$,  is a holomorphic family in $\sL_{p}$ (resp., $\sL_{p,\infty}$) with 
$p=n|m|^{-1}$.
\end{enumerate}
\end{propositionalph}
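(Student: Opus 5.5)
The plan is to reduce all three statements to a single mechanism: holomorphy of the symbol family $\rho(z)$ in $\stS^m(\URn)$, combined with continuity of the quantization map $\rho\mapsto P_\rho$. For a Banach or quasi-Banach target, holomorphy is equivalent to local boundedness plus weak holomorphy. For a Fréchet target, it suffices to check holomorphy after composing with continuous linear maps, since continuous linear maps preserve holomorphy.

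\emph{Step 1 (symbol level).} Fix a compact $K\subseteq\Omega$ and $m'\geq \sup_K\Re w(z)$. I would first show that $z\mapsto\rho(z)$ is a holomorphic map $\Omega^\circ_K\to\stS^{m'}(\URn)$, where $\Omega^\circ_K$ is a neighborhood of $K$ on which $\Re w(z)\leq m'$; in case (3) the inequality is strict. By the definition of a holomorphic family, $\rho(z)$ is holomorphic in $C^\infty(\R^n,C^\infty(\T^n_\theta))$. The expansion bounds, uniform on compacts, give uniform bounds on $\|\delta^\alpha\partial_\xi^\beta\rho(z)(\xi)\|\,\brak{\xi}^{-m'+|\beta|}$. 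A Cauchy-integral argument on a small circle around each point then upgrades pointwise holomorphy to holomorphy in the Fréchet topology of $\stS^{m''}$ for any $m''>m'$. Here one uses that boundedness in $\stS^{m'}$ together with convergence in $C^\infty$ gives convergence in $\stS^{m''}$. In case (2), where only $\Re w\leq m$ is assumed, I would instead pass to $\stS^{m}$ directly. This works because the classical expansion provides the exact order, and the difference quotients of the homogeneous parts $\rho_{w(z)-j}(z)$ are controlled by $\log|\xi|$ factors, which can be absorbed. This is the main obstacle: when $\Re w(z)=m$, the derivative in $z$ of $|\xi|^{w(z)}$ produces a $\log|\xi|$ loss. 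I would handle it by splitting $\rho(z)=\chi(\xi)\sum_{j<N}\rho_{w(z)-j}(z)+r_N(z)$. The remainder $r_N(z)$ is holomorphic in $\stS^{m-N+1}$ with room to spare, and the homogeneous terms are treated by hand: their $z$-derivatives lie in every $\stS^{m+\epsilon}$. This still yields holomorphy into $\sL(W_2^{s+m},W_2^{s})$, by applying local boundedness in $\sL(W_2^{s+m},W_2^s)$ together with weak holomorphy tested against $\sL(W_2^{s+m+\epsilon},W_2^s)$ and dense vectors.

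\emph{Step 2 (operator level).} Given Step 1, (1) follows because quantization $\stS^{m''}\to\sL(C^\infty(\T^n_\theta))$ is continuous and linear. For (2), I would use the continuity of $\stS^{m}\ni\rho\mapsto P_\rho\in\sL(W_2^{s+m},W_2^s)$ established in Section~\ref{sec:PsiDOs}, together with the local-boundedness/weak-holomorphy criterion from Step 1. For (3) with $\Re w<m<0$, compactness gives $\Re w\leq m-\epsilon$ near $K$. Quantization is continuous from $\stS^{m-\epsilon}$ into $\sL_{n|m-\epsilon|^{-1},\infty}$, and this space embeds continuously into $\sL_{p}$ for $p=n|m|^{-1}>n|m-\epsilon|^{-1}$, which gives holomorphy in $\sL_p$.

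\emph{Step 3 (quasi-Banach case).} For the weak Schatten statement $\sL_{p,\infty}$ under $\Re w\leq m$, I would factor $P(z)=P(z)\Delta_m^{-1}\cdot\Delta_m$, where $\Delta_m=(1+\Delta)^{-m/2}$ has order $m$ and lies in $\sL_{p,\infty}$. By (2), $P(z)(1+\Delta)^{m/2}$ is holomorphic in $\sL(L_2)$. The map $B\mapsto B\Delta_m$ from $\sL(L_2)$ to $\sL_{p,\infty}$ is bounded and linear, so it preserves holomorphy. This avoids needing a Cauchy formula in the quasi-Banach setting. The same factorization also gives an alternative route to (3) for $\sL_p$.
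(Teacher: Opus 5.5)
Your conclusions are right, but you reach (2) by a detour forced by an obstacle that does not exist.

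The paper shows (Lemma~\ref{lem:Hol.loc-bdd-hol}) that a family which is locally bounded in $\stS^m(\T^n_\theta\times\R^n)$ and holomorphic in $C^\infty(\T^n_\theta\times\R^n)$ is holomorphic in $\stS^m$ itself. The proof applies Cauchy's formula pointwise in $\xi$ on a circle $\partial D$ and uses only the $\stS^m$ bounds on $\partial D$. This gives $\|\delta^\alpha\partial_\xi^\beta[\rho(z_0+h)-\rho(z_0)](\xi)\|\leq C|h|(1+|\xi|)^{m-|\beta|}$, with no loss of order.

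Since the $N=0$ expansion bound gives local boundedness in $\stS^m$ as soon as $\Re w\leq m$, the paper gets holomorphy of $\rho(z)$ in $\stS^m$. It then obtains all three parts at once by composing with the continuous quantization maps into $\sL(C^\infty(\T^n_\theta))$, $\sL(W_2^{s+m},W_2^s)$ and $\sL_{p,\infty}$. For the strict case it localizes to $\Re w\leq m'<m$ and uses $\sL_{n|m'|^{-1},\infty}\subseteq\sL_p$, exactly as you do.

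The $\log|\xi|$ loss you fear cannot occur. $\Re w$ is harmonic, so if $\Re w\leq m$ on the open set $\Omega$ and $\Re w(z_0)=m$, then $w$ is constant near $z_0$ and $\partial_z|\xi|^{w(z)}=0$ there. At every other point, $\Re w\leq m'<m$ nearby and the logarithm is absorbed.

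Your workaround for (2) is valid: holomorphy in $\stS^{m+\epsilon}$, local boundedness in $\sL(W_2^{s+m},W_2^s)$, then ``locally bounded and holomorphic on a dense subspace implies norm holomorphic''. However, that criterion is a Kato-type theorem. It is not Corollary~\ref{cor:Hol.hol-separating}, which presupposes norm continuity. Its proof is precisely the loss-free Cauchy estimate above, so you may as well run that estimate on symbols.

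Your Step~3 factorization is a genuinely self-contained alternative to citing continuity of $\stS^m\to\sL_{p,\infty}$. It rightly avoids any weak-holomorphy argument in quasi-Banach spaces, so you should drop your opening claim that holomorphy there is equivalent to local boundedness plus weak holomorphy.

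The signs in Step~3 are reversed, though. For $m<0$ the operator $(1+\Delta)^{-m/2}$ has order $|m|>0$ and is unbounded. The correct factorization is $P(z)=[P(z)\Lambda^{-m}]\,\Lambda^{m}$, with $\Lambda^{m}=(1+\Delta)^{m/2}\in\sL_{p,\infty}$. Here $P(z)\Lambda^{-m}$ is holomorphic in $\sL(L_2(\T^n_\theta))$ by (2) with $s=0$, because $\Lambda^{-m}:L_2(\T^n_\theta)\to W_2^{m}(\T^n_\theta)$ is bounded. Also, the cut-off in your splitting of $\rho(z)$ should be $1-\chi(\xi)$, not $\chi(\xi)$.
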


In~\cite{LP:SG2} we will establish full heat trace asymptotics for elliptic \psidos\ (not just differential operators) on NC tori, building on a precise analysis, carried out in~\cite{LP:SG1}, of the singularities of their zeta functions. This requires rapid decay of these zeta functions along vertical lines (see, e.g., \cite{GS:JGA96}), for which the key technical tool is a new notion of $\Hol_\infty(\Omega)$-families of \psidos\ introduced in this article: a holomorphic family, of symbols or of \psidos, with values in a locally convex TVS or quasi-Banach space $\sE$, that is uniformly bounded in $\sE$ on every closed vertical half-strip $\Sigma\subseteq\Omega$ (assuming every point of $\Omega$ lies in the interior of some such half-strip contained in $\Omega$). 

All the properties of holomorphic families of \psidos\ above extend to $\Hol_\infty$-families. In particular,  their composition is again a $\Hol_\infty$-family of \psidos\ (Proposition~\ref{prop:unif.composition}), and this yields $\Hol_\infty$ analogues of Proposition~\ref{prop:Intro.hol-fam-properties} (see Propositions~\ref{prop:unif.propertiesP(z)-W2s} and~\ref{prop:unif.propertiesP(z)-sLp} for the precise statements). 

\subsection*{Complex powers of elliptic \psidos}
Section~\ref{sec:powers} is devoted to the construction of complex powers of elliptic \psidos\ and the study of their properties. The construction works for elliptic \psidos, not just differential operators (compare~\cite{Sh:Springer01}), and it enables us to get exponential decay along vertical strips for the associated zeta functions. 

Let $P\in\Psi^w(\T^n_\theta)$, $w>0$, be an elliptic \psido\ with $\Theta(P)\neq\emptyset$ and $L_\phi=\{\arg\lambda=\phi\}$ a ray in $\check\Theta(P)$. For 
$\lambda\in \C^*\setminus L_\phi$ and $z\in \C$ we define $\lambda_\phi^z$ to be $|\lambda|^ze^{iz\arg_\phi\lambda}$, where 
$\arg_\phi:\C^*\setminus L_\phi\to(\phi-2\pi,\phi)$ is the continuous determination of the argument associated with $\phi$. As in~\cite{Se:PSPM67}, for $\Re z<0$ we define 
\begin{equation*}
 P_\phi^z:= \frac{i}{2\pi}\int_{\Gamma} \lambda^z_\phi (P-\lambda)^{-1}d\lambda, \qquad \Re z<0,
\end{equation*}
where $\Gamma$ runs from $\infty$ along $L_\phi$ inward, turns clockwise around a small circle about $0$, and then goes back to $\infty$ along $L_{\phi}$ (see Fig.~\ref{Fig:contour2}).  The integral converges in $\sL(L_2(\T^n_\theta))$, as well as in $\sL(C^\infty(\T^n_\theta))$ thanks to the resolvent estimates of~\cite{LP:Part1}, and $P_\phi^z$, $\Re z<0$, is a holomorphic family of \psidos\ of order $wz$ (Proposition~\ref{prop:powers.param-psido-hol-psidos}) satisfying the semigroup properties,
\begin{equation*}
 P_\phi^{z_1+z_2}=P_\phi^{z_1}P_\phi^{z_2}, \quad \Re z_i<0, \qquad
 P_\phi^{-k}=P^{-k}, \quad k=1,2,\ldots. 
\end{equation*}
The definition of $P_\phi^z$ is extended to any $z\in \C$ by letting $P_\phi^z=P^k P_\phi^{z-k}$, for any non-negative integer $k>\Re z$; this does not depend on $k$, and it defines $P_\phi^z$ directly as a \psido. 

\begin{theoremalph}\label{thm:Intro.complex-powers}
The following hold. 
\begin{enumerate}
 \item $P_\phi^z$, $z\in \C$, is a holomorphic family of \psidos\ of order $wz$. 
 
 \item The principal symbol of $P_\phi^z$ is $\rho_w(\xi)_\phi^z$. 
 
 \item We have the group properties, 
 \begin{gather*}
 P_\phi^0=1-\Pi_0(P), \qquad P_\phi^{z_1+z_2}=P_\phi^{z_1}P_\phi^{z_2}, \quad z_i\in \C,\\ 
 P_\phi^k=\left(1-\Pi_0(P)\right)P^k, \qquad  P_\phi^{-k}=P^{-k}, \qquad k=1,2,\ldots.  
\end{gather*}
\end{enumerate}
\end{theoremalph}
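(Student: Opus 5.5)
The proof will take the results already stated for $\Re z<0$ as its starting point: Proposition~\ref{prop:powers.param-psido-hol-psidos} says that $P_\phi^z$, $\Re z<0$, is a holomorphic family of \psidos\ of order $wz$, and the semigroup relations $P_\phi^{z_1+z_2}=P_\phi^{z_1}P_\phi^{z_2}$ and $P_\phi^{-k}=P^{-k}$ hold there. For general $z$ we use the definition $P_\phi^z=P^kP_\phi^{z-k}$.

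\textbf{Part (1).} Fix $k$ and work on the half-plane $\{\Re z<k\}$. Multiplying a holomorphic family by the constant family $P^k$ gives a holomorphic family, by Corollary~\ref{cor:Hol.composition-of-psidos}. The order is $wk+w(z-k)=wz$. Since the definition does not depend on $k$, these families agree on overlaps, so $P_\phi^z$ is holomorphic on all of $\C$.

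\textbf{Part (2).} For $\Re z<0$ we read off the principal symbol from the symbol of the resolvent parametrix. The principal part of $(P-\lambda)^{-1}$ is $(\rho_w(\xi)-\lambda)^{-1}$, and the contour integral $\frac{i}{2\pi}\int_\Gamma\lambda_\phi^z(\rho_w(\xi)-\lambda)^{-1}d\lambda$ is the holomorphic functional calculus for the element $\rho_w(\xi)\in C^\infty(\T^n_\theta)$. Its spectrum avoids $L_\phi$ and $0$ for $\xi\neq0$, so this integral equals $\rho_w(\xi)_\phi^z$. The contour near $0$ needs care: one deforms it so that the small circle lies inside the resolvent set of $\rho_w(\xi)$, using homogeneity to reduce to $|\xi|=1$.

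For general $z$, the composition formula gives principal symbol $\rho_w(\xi)^k\rho_w(\xi)_\phi^{z-k}$. This equals $\rho_w(\xi)_\phi^z$ by the multiplicativity of the holomorphic functional calculus, since $\lambda^k\lambda_\phi^{z-k}=\lambda_\phi^z$.

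\textbf{Part (3).} The group law follows from the $\Re z<0$ case. Note first that $P$ commutes with $P_\phi^{w}$ for $\Re w<0$, because it commutes with the resolvent. Then
\[
P_\phi^{z_1}P_\phi^{z_2}=P^{k_1}P_\phi^{z_1-k_1}P^{k_2}P_\phi^{z_2-k_2}=P^{k_1+k_2}P_\phi^{z_1+z_2-k_1-k_2}=P_\phi^{z_1+z_2}.
\]

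Next, $P_\phi^k=P^{k+1}P_\phi^{-1}=P^kPP^{-1}=P^k(1-\Pi_0(P))$. Also $\Pi_0(P)$ commutes with $P$, giving the stated form $P_\phi^k=(1-\Pi_0(P))P^k$. The case $k=0$ is $P_\phi^0=PP^{-1}=1-\Pi_0(P)$.

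For $-k$ with $k\ge1$ we already have $P_\phi^{-k}=P^{-k}$. One should also check consistency with the definition: $P^jP^{-j-k}=P^{-k}$, which follows from the partial-inverse identities of Proposition~\ref{prop:Spectral.partial-inverse-adjoint-exponentiation}, namely that $P^{-j-k}$ is the partial inverse of $P^{j+k}$, together with $P^{-1}$ annihilating $E_0(P)$.

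\textbf{Main obstacle.} The hard step is the principal-symbol identification in Part (2). It needs the symbol expansion of the contour integral, uniformly near $\lambda=0$. There the parametric symbol is only valid for large $|\lambda|$, so the lower-order pieces coming from the region near $0$ must be shown to be smoothing or lower order. I would first try splitting $\Gamma$ into a compact piece, whose contribution is given by resolvent symbols of order $-w$ integrated over a compact set, and an unbounded piece handled by the bi-homogeneous expansion.
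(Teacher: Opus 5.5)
Your proof is correct and follows the paper's outline. You start from Proposition~\ref{prop:powers.param-psido-hol-psidos} and Lemmas~\ref{lem:powers.powers-half-plane}--\ref{lem:powers.properties-half-plane} on $\{\Re z<0\}$, extend via $P_\phi^z=P^kP_\phi^{z-k}$, and finish with Corollary~\ref{cor:Hol.composition-of-psidos} and the partial-inverse identities. Two sub-steps are done differently.

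\emph{Principal symbol at arbitrary $z$.} You use the composition formula and multiplicativity of the holomorphic functional calculus ($\lambda^k\lambda_\phi^{z-k}=\lambda_\phi^z$). The paper instead fixes $\xi\neq 0$ and compares each homogeneous component $\rho_j(z)(\xi)$ of the global family with the integral over the compact contour $\gamma_\xi$ in~(\ref{eq:Powers.homogeneous-symbols}). Both are entire in $z$ and coincide for $\Re z<0$, hence everywhere. Your route is more elementary but only gives $j=0$. The paper's gives every homogeneous component for every $z$, and that is what gets differentiated at $z=0$ in Theorem~\ref{thm:Log.psido-ness}.

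\emph{Group law.} You commute $P^{k_2}$ past $P_\phi^{z_1-k_1}$ and apply the semigroup law directly. The paper instead argues by analytic continuation in $\sL(C^\infty(\T^n_\theta))$ (Proposition~\ref{prop:Hol.unique-analytic-continuation}). Both work.

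Two remarks.

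In Part (1), saying the families ``agree on overlaps'' is not enough on its own. Being a holomorphic family of \psidos\ requires a single global holomorphic symbol family, and that is exactly what Lemma~\ref{lem:hol.local-global} supplies through the symbol map $P\mapsto\rho_P$.

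Your ``main obstacle'' is not actually an obstacle. Its premise, that the parametric expansion holds only for large $|\lambda|$, is false in this calculus. The homogeneous parametric symbols live on $\Omega_c(\Theta)$, which contains $\{|\lambda|<c|\xi|^w\}$. So the expansion is uniform for $\lambda$ near $0$ once $|\xi|$ is large. Take the small circle of radius $r<\min(r_0,c)$. The remainder in $\stS^{-N,-1}$ then integrates to $\stS^{-N}$ by Lemma~\ref{lem:Powers.st-symbols}, and each cut-off homogeneous term is computed by moving $\Gamma$ to $|\xi|^w\Gamma$. This is precisely Proposition~\ref{prop:powers.param-psido-hol-psidos}(2), which you already cite, so no compact/unbounded splitting of $\Gamma$ is needed.
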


We have explicit formulas for all the homogeneous components of the symbol of $P_\phi^z$ (Eq.~(\ref{eq:Powers.homogeneous-symbols})), and $P_\phi^z$ inherits the Sobolev and Schatten-class mapping properties of Proposition~\ref{prop:Intro.hol-fam-properties} with $w(z)=wz$ (Corollaries~\ref{cor:powers.powers-W2s} and~\ref{cor:powers.powers-sLp}). These results hold for the Laplace--Beltrami operators of~\cite{HP:JGP20}  (see Proposition~\ref{prop:Powers.Laplace-Beltrami1}, Proposition~\ref{prop:Powers.Laplace-Beltrami2}). They also hold for the absolute values $|P|:=\sqrt{P^*P}$ of elliptic \psidos\ (see Proposition~\ref{prop:powers.absolute-value}). 
%  which is a positive elliptic \psido\ with $\Theta(P)=\C\setminus [0,\infty)$ whenever $P$ is elliptic (Propositions~\ref{prop:Powers.Laplace-Beltrami1}, \ref{prop:Powers.Laplace-Beltrami2}, and). 

Suppose now that $\check\Theta(P)\supseteq\{\alpha\leq\arg\lambda\leq 2\pi-\alpha\}$ for some $\alpha\in(0,\pi)$, and denote by $\alpha_0\in[0,\pi)$ the infimum of all such $\alpha$. In this case we may take $\phi=\pi$, and to ease notation we set $P^z:=P_\pi^z$. We then establish uniform exponential bounds along vertical strips for the complex powers $P^z$ (this applies in particular to the Laplace--Beltrami operators and to the absolute values above, for which $\alpha_0=0$). We also denote by $\bH_\pm$ the half-plane $\pm \Im z>0$. 

\begin{theoremalph}
 Given any $\alpha \in (\alpha_0,\pi)$, the following hold. 
 \begin{enumerate}
 \item $e^{\pm i \alpha z}P^z$, $z\in \bH_\pm$, is a $\Hol_\infty$-family of \psidos.
 
 \item Let $\Sigma$ be any closed vertical strip in $\C$, and set $\sigma=\max\{\Re z; z\in \Sigma\}$. 
\begin{enumerate}
 \item 
$P^z\in e^{\alpha |\Im z|}L^\infty\left(\Sigma; \sL\left(W_2^{s+w\sigma}(\T^n_\theta), W_2^{s}(\T^n_\theta)\right)\right)$ for all $s\in \R$.
% In particular, for $\Sigma =i\R$ we get that $P^{it}\in e^{\alpha |t|}   L^\infty\left(\R; \sL\left(W_2^{s}(\T^n_\theta)\right)\right)$ for all $s\in \R$. 

\item If $\sigma<0$ and $p:=n(w|\sigma|)^{-1}$, then $P^z\in e^{\alpha |\Im z|}L^\infty\left(\Sigma; \sL_{p,\infty}\right)$.
\end{enumerate}
\end{enumerate}
\end{theoremalph}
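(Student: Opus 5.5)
The plan is to prove (1) first; part (2) then follows from the $\Hol_\infty$ analogues of Proposition~\ref{prop:Intro.hol-fam-properties} (Propositions~\ref{prop:unif.propertiesP(z)-W2s} and~\ref{prop:unif.propertiesP(z)-sLp}). Fix $\alpha\in(\alpha_0,\pi)$ and $\alpha'\in(\alpha_0,\alpha)$. By the definition of $\alpha_0$, the closed sector $\{\alpha'\le\arg\lambda\le 2\pi-\alpha'\}$ lies in $\check\Theta(P)$, and so does a small disk about $0$ once the finitely many small eigenvalues are removed.

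\textbf{Step 1: deform the contour.} For $\Re z<0$ I will rotate the two rays of $\Gamma$ from $L_\pi$ to $L_{\alpha'}$ and $L_{2\pi-\alpha'}$. This is allowed because the resolvent bound $O(|\lambda|^{-1})$ holds uniformly on closed subcones of $\check\Theta(P)$. On these rays $\arg_\pi\lambda=\pm\alpha'$ (with $\arg_\pi$ valued in $(-\pi,\pi)$), so
\[
|\lambda^z_\pi|=|\lambda|^{\Re z}e^{\mp\alpha'\Im z}.
\]
Hence on $\bH_+$ the worst factor is $e^{\alpha'\Im z}$, and multiplying by $e^{i\alpha z}$ gives $|e^{i\alpha z}\lambda^z_\pi|\le |\lambda|^{\Re z}e^{-(\alpha-\alpha')\Im z}$ on both rays. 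The same holds on $\bH_-$ with $e^{-i\alpha z}$.

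\textbf{Step 2: symbol-level bounds.} I will work with the parametric symbol of $(P-\lambda)^{-1}$, which lies in $\Psi^{-w,-1}(\T^n_\theta;\Lambda(P))$ by Theorem~\ref{thm:Resolvent.resolvent-is-psido-with-parameter}, and write $P^z$ as a contour integral of that symbol plus a smoothing remainder, using the symbol formulas~(\ref{eq:Powers.homogeneous-symbols}). Each homogeneous component $\rho_{wz-j}(z;\xi)$ is a contour integral of $\lambda^z_\pi$ against bihomogeneous resolvent terms. With the deformed contour, the exponential factor $e^{-(\alpha-\alpha')|\Im z|}$ absorbs the polynomial growth in $|z|$ that comes from differentiating and from the integral over the small circle. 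This gives bounds uniform on closed vertical half-strips in $\bH_\pm$.

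For $\Re z<0$ I treat the remainder of the asymptotic expansion the same way. I then extend to all $z$ via $P^z=P^kP^{z-k}$, using that composition preserves $\Hol_\infty$-families (Proposition~\ref{prop:unif.composition}). Since the factor $e^{\pm i\alpha z}$ sits only on $P^{z-k}$ and $e^{\pm i\alpha(z-k)}$ differs from $e^{\pm i\alpha z}$ by a constant, the extension is harmless.

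\textbf{Main obstacle.} The hardest part will be the uniform control of the remainder $\rho(z)-\sum_{j<N}\rho_{wz-j}(z)$ in the $\Hol_\infty$ sense. It needs parametric symbol estimates with the $\lambda$-decay made explicit, so that $\int_\Gamma|\lambda|^{\Re z}(1+|\lambda|)^{-1}\cdots\,|d\lambda|$ converges uniformly on the strip. It also needs the $z$-derivatives of $\lambda^z$ (the powers of $\log\lambda$) to be dominated by the spare exponential decay $e^{-(\alpha-\alpha')|\Im z|}$. I would first try to establish this for $\Re z<0$ directly from the seminorm bounds in $\Hol^{-1}(\Lambda)$.

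\textbf{Step 3: part (2).} For a closed vertical strip $\Sigma$, I split it into $\Sigma\cap\{\Im z\ge 1\}$, $\Sigma\cap\{\Im z\le -1\}$, and a compact middle piece.
\begin{itemize}
\item On the two half-strips, (1) together with Proposition~\ref{prop:unif.propertiesP(z)-W2s} (order $\Re(wz)\le w\sigma$) gives that $e^{\pm i\alpha z}P^z$ is bounded in $\sL(W_2^{s+w\sigma},W_2^s)$. Since $|e^{\pm i\alpha z}|=e^{\mp\alpha\Im z}=e^{-\alpha|\Im z|}$ there, this yields (a).
\item For (b), when $\sigma<0$ I use Proposition~\ref{prop:unif.propertiesP(z)-sLp} in the $\sL_{p,\infty}$ case with $p=n(w|\sigma|)^{-1}$.
\item On the compact middle piece, holomorphy alone gives boundedness.
\end{itemize}
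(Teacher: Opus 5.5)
Your proposal is correct and follows essentially the paper's route. You deform the contour into the sector $\{|\arg_\pi\lambda|\le\alpha\}$ to get $|\lambda^z|\le C|\lambda|^{\Re z}e^{\alpha|\Im z|}$, and feed this through the $\Hol^{-1}(\Lambda)$ bounds on the resolvent symbol (standard symbol, homogeneous components, remainder) to obtain a $\Hol_\infty(\bQ_\pm)$-family. You then extend via $P^z=P^kP^{z-k}$ and Proposition~\ref{prop:unif.composition}, and get part (2) from Propositions~\ref{prop:unif.propertiesP(z)-W2s}--\ref{prop:unif.propertiesP(z)-sLp} by splitting the strip.

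The auxiliary angle $\alpha'<\alpha$ and the spare decay are unnecessary. The $\Hol_\infty$ condition involves no $z$-derivatives, and $\xi$-derivatives are controlled directly by the seminorms of the symbol spaces. For instance, Lemma~\ref{lem:Powers.LCS-boundedness} with $\sE=\stS^m(\T^n_\theta\times\R^n)$ disposes of the remainder immediately. So no polynomial-in-$z$ factors ever arise, and your ``main obstacle'' does not occur.
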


For applications to the heat equation we are actually interested in the family $\Gamma(z)P^{-z}$, $\Re z>0$ (see~\cite{LP:SG2}).  We have the following exponential decay results for this family. 

\begin{theoremalph}\label{thm:Intro.rapid-decay-zeta}
 Assume that $\alpha_0<\pi/2$, i.e., $\check{\Theta}(P)$ contains the half-plane $\Re \lambda\leq 0$. Let $\Sigma \subseteq \C\setminus \Z_{-}$ be a closed vertical half-strip or strip, and set  $\sigma=\min\{\Re z;\ z\in \Sigma\}$. For all $\epsilon\in (0,\pi/2-\alpha_0)$, the following hold. 
 \begin{enumerate}
 \item $\Gamma(z)P^{-z}\in e^{-\epsilon |\Im z|}   L^\infty\left(\Sigma; \sL\left(W_2^{s}(\T^n_\theta), W_2^{s+w\sigma}(\T^n_\theta)\right)\right)$ for all $s\in \R$. 

\item If $\sigma>0$ and $p:=n(w\sigma)^{-1}$, then $\Gamma(z)P^{-z}$ is in $e^{-\epsilon |\Im z|} L^\infty\left(\Sigma; \sL_{p,\infty}\right)$.
% and hence is in $e^{-\epsilon |\Im z|}    L^\infty\left(\Sigma; \sL_{r}\right)$ for all $r>p$. 

\item If $\sigma>w^{-1}n$, then $\Gamma(z)P^{-z} \in e^{-\epsilon |\Im z|}   L^\infty(\Sigma; \sL_1)$, and so there is $C_{\Sigma\epsilon}>0$ such that 
\begin{equation*}
   \big|\Gamma(z)\Tr\left[P^{-z}\right]\big| \leq C_{\Sigma \epsilon} e^{-\epsilon |\Im z|}\qquad \forall z\in \Sigma.
\end{equation*}
 \end{enumerate}
\end{theoremalph}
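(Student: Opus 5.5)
Theorem~\ref{thm:Intro.rapid-decay-zeta} will follow from the preceding theorem on exponential bounds for $P^z$, combined with Stirling's formula for $\Gamma(z)$. Everything else is bookkeeping with $\Hol_\infty$-families.

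\textbf{Choice of parameters.} Given $\epsilon\in(0,\pi/2-\alpha_0)$, set $\alpha:=\pi/2-\epsilon$. Then $\alpha\in(\alpha_0,\pi)$.

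\textbf{Bound on $P^{-z}$.} Apply the previous theorem to the strip $-\Sigma$, where $\max\{\Re(-z)\}=-\sigma$. This gives
\begin{equation*}
P^{-z}\in e^{\alpha|\Im z|}L^\infty\bigl(\Sigma;\sL(W_2^{s}(\T^n_\theta),W_2^{s+w\sigma}(\T^n_\theta))\bigr),
\end{equation*}
after reindexing $s$. When $\sigma>0$, the same theorem also gives the analogous $\sL_{p,\infty}$ bound with $p=n(w\sigma)^{-1}$.

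\textbf{Bound on $\Gamma(z)$.} On a closed vertical strip or half-strip $\Sigma\subseteq\C\setminus\Z_-$, Stirling's formula gives
\begin{equation*}
|\Gamma(z)|\leq C_\Sigma(1+|\Im z|)^{N}e^{-\pi|\Im z|/2}.
\end{equation*}
Near the real axis, a compact part of $\Sigma$ avoids the poles, so $\Gamma$ is bounded there. If $\Sigma$ is a half-strip unbounded in the real direction, this step needs care. A vertical half-strip is bounded in $\Re z$, so I will treat only the case where $\Re z$ is bounded. If the paper's "half-strip" means $\{a\leq\Re z\leq b,\ \Im z\geq c\}$, this is automatic.

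\textbf{Combining the bounds.} Multiplying the two estimates gives the factor
\begin{equation*}
(1+|\Im z|)^N e^{-(\pi/2-\alpha)|\Im z|}=(1+|\Im z|)^N e^{-\epsilon|\Im z|}.
\end{equation*}
The polynomial factor must be absorbed, and this is the main obstacle. To handle it, I will first pick $\epsilon'\in(\epsilon,\pi/2-\alpha_0)$ and run the argument above with $\alpha=\pi/2-\epsilon'$. The resulting bound $(1+|\Im z|)^Ne^{-\epsilon'|\Im z|}$ is $O(e^{-\epsilon|\Im z|})$. This proves parts (1) and (2); for part (2), the quasi-norm of $\sL_{p,\infty}$ is homogeneous, so multiplying by the scalar $\Gamma(z)$ causes no problem.

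\textbf{Part (3): trace-class bound.} For $\sigma>n/w$, choose $\sigma'$ with $n/w<\sigma'<\sigma$. Then $p'=n(w\sigma')^{-1}<1$ and $\sL_{p',\infty}\subseteq\sL_1$ continuously. The order of the family is $-wz$ with $\Re(-wz)\leq -w\sigma<-n$, so part (2), applied with the weaker exponent or using $\sL_{p,\infty}\hookrightarrow\sL_1$ for $p<1$, gives the $\sL_1$ bound. Alternatively, one can invoke the $\Hol_\infty$ Schatten property (Proposition~\ref{prop:unif.propertiesP(z)-sLp}) directly. The scalar estimate then follows from $|\Tr T|\leq\|T\|_{\sL_1}$.
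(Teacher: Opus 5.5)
Your proposal is correct and follows the paper's proof: combine the $e^{\alpha|\Im z|}$ bounds for $P^{-z}$ (from Corollary~\ref{cor:unif.powers-Ws}, i.e.\ Proposition~\ref{prop:unif.exp-control}) with an exponential decay bound for $\Gamma(z)$ on $\Sigma$, and then deduce part (3) from part (2) via $\sL_{p,\infty}\subseteq\sL_1$ for $p<1$. The only difference is in how $\Gamma$ is bounded. The paper rotates the contour in Euler's integral (Example~\ref{ex:Uniform.Gamma-function}) to get $|\Gamma(z)|\leq C_\Sigma e^{-\alpha'|\Im z|}$ for any $\alpha'<\pi/2$, so no polynomial factor appears. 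Your Stirling bound does produce a polynomial factor, and it is absorbed by the extra slack $\epsilon'>\epsilon$ that you correctly build in.
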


More generally, the above results hold \emph{mutatis mutandis} for families $AP^{-z}$, $z\in \C$, with $A\in\Psi^q(\T^n_\theta)$, $q\in \R$ (see Section~\ref{sec:powers}).

\subsection*{Logarithms of elliptic \psidos}
As above, let $L_\phi=\{\arg\lambda=\phi\}$ be a ray in $\check\Theta(P)$. By Theorem~\ref{thm:Intro.complex-powers} and Proposition~\ref{prop:Intro.hol-fam-properties}, the family $P_\phi^z$, $z\in \C$, is a holomorphic 1-parameter group in $\sL(C^\infty(\T^n_\theta))$. We define the logarithm $\log_\phi(P)$ as the infinitesimal generator of this 1-parameter group, i.e., 
\begin{equation*}
 \log_\phi(P):= \left.\frac{d}{dz}\right|_{z=0} P_\phi^z \qquad \text{in}\ \sL(C^\infty(\T^n_\theta)),
\end{equation*}
equivalently
\begin{equation*}
 \log_\phi(P) = \frac{i}{2\pi}\int_{\Gamma} \log_\phi(\lambda)\,\lambda^{-1}P(P-\lambda)^{-1}\,d\lambda,
\end{equation*}
where $\log_\phi(\lambda)$ is the branch of the logarithm associated with $L_\phi$; this integral converges in $\sL(C^\infty(\T^n_\theta))$, and in fact in $\sL(W^s_2(\T^n_\theta), W^t_2(\T^n_\theta))$ for all $s>t$ (Proposition~\ref{prop:Log.properties} and Remark~\ref{rmk:Log.convergenceWs}). 

If $P$ is the flat Laplacian $\Delta$ and $\phi\in (0,2\pi)$, then $\log_\phi(P)$ is just the logarithm $\log(\Delta)$ obtained by functional calculus for $\car_{(0,\infty)}(t)\log(t)$, i.e., the \psido\ with symbol $(1-\chi(\xi))\log(|\xi|^2)\sim 2\log|\xi|$, where $\chi(\xi)$ is a cut-off function around $\xi=0$. In general, we have the following result. 

\begin{theoremalph}
 We may write
 \begin{equation}\label{eq:Intro.logP-PsiDO}
 \log_\phi (P)= \frac{1}{2}w \log(\Delta) + Q_\phi, 
\end{equation}
where $Q_\phi$ is a \psido\ of order $0$ whose principal symbol is $\omega_0(\xi)=\log_\phi[\rho_w(\xi|\xi|^{-1})]$.
\end{theoremalph}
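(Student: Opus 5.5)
The plan is to get the symbol of $\log_\phi(P)$ by differentiating the symbol of the complex powers at $z=0$. By Theorem~\ref{thm:Intro.complex-powers}, $P_\phi^z$ is a holomorphic family of \psidos\ of order $wz$. Its symbol admits an asymptotic expansion $\sigma(z)\sim\sum_j \sigma_{wz-j}(z)$ with components explicitly given by~(\ref{eq:Powers.homogeneous-symbols}), and the bounds are uniform on compact sets. The principal part is $\sigma_{wz}(z)(\xi)=\rho_w(\xi)^z_\phi$ for $\xi\neq0$.

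The obstacle is that $d/dz$ of a holomorphic family whose order varies is not a classical symbol. Differentiating $|\xi|^{wz}$ produces $w\log|\xi|\,|\xi|^{wz}$. To handle this, I factor out the gauge. Let $\Delta$ be the flat Laplacian and set $Q(z):=P_\phi^z(1+\Delta)^{-wz/2}$; alternatively one can use $\Delta^{-wz/2}$ together with the projection onto $\ker\Delta$. By Corollary~\ref{cor:Hol.composition-of-psidos}, $Q(z)$ is a holomorphic family of \psidos\ of constant order $0$. Its symbol $\tau(z)\sim\sum_j\tau_{-j}(z)$ has all components homogeneous of fixed degree $-j$, with bounds that are locally uniform in $z$.

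Differentiation in $z$ then preserves classicality. By Cauchy's formula on a small circle, $\tau'(0)$ is again a classical symbol of order $0$, with components $\partial_z\tau_{-j}(z)|_{z=0}$. This gives that $Q'(0)$ is a \psido\ of order $0$.

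Next I differentiate the product $P_\phi^z=Q(z)(1+\Delta)^{wz/2}$ at $z=0$ in $\sL(C^\infty(\T^n_\theta))$, using Proposition~\ref{prop:Intro.hol-fam-properties}. This gives
\[
\log_\phi(P)=Q'(0)+Q(0)\,\tfrac{w}{2}\log(1+\Delta).
\]
Here $Q(0)=P^0_\phi=1-\Pi_0(P)$, and $\Pi_0(P)$ is smoothing. Moreover $\log(1+\Delta)-\log\Delta$ is a \psido\ of order $-2$ up to a smoothing term. Hence
\[
\log_\phi(P)=\tfrac{w}{2}\log\Delta+Q_\phi
\]
with $Q_\phi\in\Psi^0$, provided $\Pi_0(P)\log(1+\Delta)$ is smoothing. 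It is, because $\Pi_0(P)$ maps into and out of finite-dimensional spaces of smooth elements.

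It remains to identify the principal symbol of $Q_\phi$. Up to lower order terms, the principal symbol of $Q(z)$ is $\rho_w(\xi)^z_\phi|\xi|^{-wz}$. For $\xi\neq0$, homogeneity of $\rho_w$ gives $\rho_w(\xi)=|\xi|^w\rho_w(\xi/|\xi|)$. The scalar factor $|\xi|^w$ commutes, so the contour definition gives
\[
\rho_w(\xi)^z_\phi=|\xi|^{wz}\,\rho_w(\xi/|\xi|)^z_\phi,
\]
since $\arg_\phi$ is unaffected by positive dilations. Therefore the principal symbol of $Q(z)$ is $\rho_w(\xi/|\xi|)^z_\phi$. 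Its $z$-derivative at $0$ is
\[
\frac{i}{2\pi}\int_\Gamma\log_\phi(\lambda)\,\lambda^{-1}\rho_w(\xi/|\xi|)\bigl(\rho_w(\xi/|\xi|)-\lambda\bigr)^{-1}d\lambda=\log_\phi[\rho_w(\xi|\xi|^{-1})].
\]
This is justified because $\rho_w(\xi/|\xi|)$ is invertible with spectrum off $L_\phi$. The principal symbol of $Q(0)\frac{w}{2}\log(1+\Delta)-\frac{w}{2}\log\Delta$ contributes nothing at order $0$. This yields $\omega_0(\xi)$.

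The main technical point to check carefully is the claim that $z$-differentiation of a holomorphic family of fixed order yields a classical symbol with the differentiated components. I would prove it via Cauchy estimates applied to the remainders $\tau(z)-\sum_{j<N}\tau_{-j}(z)$.
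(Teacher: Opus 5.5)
Your argument is correct, but it is organized differently from the paper's (Theorem~\ref{thm:Log.psido-ness} followed by Corollary~\ref{cor:Log.logD-PsiDO}).

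The paper does not remove the varying order. It differentiates the symbol $\rho(z)$ of $P_\phi^z$ itself at $z=0$, using that for $\Re z<a$ the remainders $\rho(z)-\sum_{j<N}(1-\chi)\rho_j(z)$ are holomorphic in $\stS^{wa-N}(\T^n_\theta\times\R^n)$. The logarithmic term then sits entirely in the top component, $\partial_z\rho_0(0)(\xi)=\log_\phi[\rho_w(\xi)]=w\log|\xi|+\omega_0(\xi)$. Homogeneity of $\partial_z\rho_j(0)$ for $j\geq 1$ needs an extra observation: $\rho_j(0)=0$ for $j\geq 1$, since $P_\phi^0=1-\Pi_0(P)$ has symbol $\sim 1$, and this kills the $t^{-j}\log t$ terms. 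One then subtracts $\frac{w}{2}\log\Delta=P_{w(1-\chi)\log|\xi|}$.

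Your gauge $Q(z)=P_\phi^z(1+\Delta)^{-wz/2}$ has constant order $0$. Classicality of $Q'(0)$ is therefore automatic, and no log-homogeneity bookkeeping is needed. The price is two correction terms: $\frac{w}{2}(\log(1+\Delta)-\log\Delta)\in\Psi^{-2}(\T^n_\theta)$, and the smoothing term $-\frac{w}{2}\Pi_0(P)\log(1+\Delta)$. You use the same input $P_\phi^0=1-\Pi_0(P)$, only through $Q(0)$ rather than through the vanishing of the $\rho_j(0)$.

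The paper's route additionally yields the explicit contour formulas~(\ref{eq:Log.symbols2}) for all $\omega_{-j}$ in terms of the resolvent symbols. Yours gives the lower-order components only implicitly, as $z$-derivatives of the components of the composed symbol of $P_\phi^z(1+\Delta)^{-wz/2}$, corrected by the expansion of $\frac{w}{2}\log(1+|\xi|^{-2})$. That is nonetheless all the statement requires.

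Two small points deserve a line each.
\begin{enumerate}
\item You should justify that $(1+\Delta)^{s}$ is a holomorphic family in the sense of Definition~\ref{def:Hol.hol-family-classical-symbols}: its symbol $\brak{\xi}^{2s}$ has an expansion with bounds uniform on compacts. Alternatively, use $\Delta^{-wz/2}$ from Example~\ref{ex:Hol.Delta-powers}, with the rank-one correction by the projection onto constants.
\item You should justify the product rule for $\frac{d}{dz}\big[Q(z)(1+\Delta)^{wz/2}\big]$. This is cleanest at the symbol level, via continuity of $\sharp$ on standard symbol classes.
\end{enumerate}
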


We have explicit formulas for the other homogeneous components $\omega_{-j}(\xi)$, $j\geq 1$, of the symbol of $Q_\phi$ (Eq.~(\ref{eq:Log.symbols2})), so that $\log_\phi(P)$ is a \psido\ with symbol $\tilde{\omega}(\xi)\sim w\log|\xi| + \omega_0(\xi)+\omega_{-1}(\xi)+\cdots$; it follows that $\log_\phi(P)$ is a closed operator on $L_2(\T^n_\theta)$ with the same domain as $\log\Delta$ (Remark~\ref{rmk:Log.domain}). 

Finally, since $P_\phi^z$, $\Re z<-w^{-1}n$, is a holomorphic family of trace-class operators by Theorem~\ref{thm:Intro.complex-powers} and Proposition~\ref{prop:Intro.hol-fam-properties}, and the operators $\log_\phi(P)P_\phi^z$ are trace-class as well, we obtain the differentiation formula,
\begin{equation}\label{eq:Intro.trace-formula-log}
 \frac{d}{dz}\Tr\left[P_\phi^z\right]=\Tr\left[\log_\phi(P)P_\phi^z\right], \qquad \Re z<-w^{-1}n.
\end{equation}

\subsection*{Sectorial projections}
As in previous approaches (see, e.g.,~\cite{GG:MS08,Ok:Duke95,Wo:IM82, Wo:PhD, Wo:Weyl}) to study the dependence on the spectral cut $L_\phi$ of the powers $P_\phi^z$ and the logarithm $\log_\phi(P)$, we introduce sectorial projections, first considered by Burak~\cite{Bu:Pisa70}.

Let  $L_{\phi'}=\{\arg \lambda=\phi'\}$ be another ray in $\check\Theta(P)$ with $\phi<\phi'\leq\phi+2\pi$. Define
\begin{equation}\label{eq:Intro.sectorial-projection}
 \Pi_{\phi,\phi'}(P):= \frac{i}{2\pi}\int_{\Gamma'}\lambda^{-1}P(P-\lambda)^{-1}d\lambda, 
\end{equation}
where $\Gamma'$ runs from $\infty$ along $L_{\phi'}$ inward, turns clockwise around a small circle about $0$, and goes back to $\infty$ along $L_{\phi}$ (see Fig.~\ref{fig:sectorial-contour}); this is a suitable contour. Here $P(P-\lambda)^{-1}$ is a family in $\Psi^{0,-1}(\T^n_\theta; \Lambda(P))$, and hence in $\Hol^{-1}(\Lambda(P);\sL(L_2(\T^n_\theta)))$, so the integral converges to a bounded operator on $L_2(\T^n_\theta)$ (and, using the same fact in $\sL(C^\infty(\T^n_\theta))$, in fact to a \psido).

% Given angles $\phi_1<\phi_2\leq \phi_1+2\pi$, write $E_{\phi_1,\phi_2}(P)$ for the (algebraic) direct sum of the generalized eigenspaces $E_\lambda(P)$ with $\lambda$ in the angular sector $S_{\phi_1,\phi_2}:=\{\phi_1<\arg \lambda <\phi_2\}$. 

The operator $\Pi_{\phi,\phi'}(P)$ is called the \emph{sectorial projection} of $P$ associated with the sector $S_{\phi,\phi'}$. It is a bounded projection whose range is a closed subspace containing all the generalized eigenspaces $E_\lambda(P)$ with $\lambda \in S_{\phi,\phi'}$, and whose nullspace contains $E_0(P)$ together with all the generalized eigenspaces $E_\lambda(P)$ with $\lambda \in S_{\phi',\phi+2\pi}$. 

%\begin{propositionalph}
% The following hold. 
% \begin{enumerate}
% \item $\Pi_{\phi,\phi'}(P)$ is a projection such that 
%\begin{equation*}
% \ran \Pi_{\phi,\phi'}(P) \supset \overline{ E_{\phi,\phi'}(P)}  \quad \text{and} \quad \ker \Pi_{\phi,\phi'}(P) \supset E_0(P) \dotplus \overline{ E_{\phi',\phi+2\pi}(P)}.   
% \end{equation*}
%
%\item The above inclusions are equalities if and only if $L_2(\T^n_\theta)$ admits a complete set of generalized eigenvectors. 
%\end{enumerate}
%\end{propositionalph}
%
%Completeness holds automatically when $P$ is normal, and more generally whenever $\Theta(P)$ can be divided by finitely many rays into angular sectors of aperture $<n^{-1}\pi w$, by a criterion of Dunford--Schwartz (Proposition~\ref{prop:powers.range-sect-proj} and Corollary~\ref{cor:sectorial.plane-division-eigenvectors-completeness}). 

\begin{theoremalph}
 The following hold. 
 \begin{enumerate}
 \item $\Pi_{\phi,\phi'}(P)$ is a \psido\ of order $0$ whose principal symbol is $\Pi_{\phi,\phi'}(P)(\rho_w(\xi))$\footnote{For every $\xi\neq 0$ this is the sectorial projection of $\rho_w(\xi)$ onto the sector $S_{\phi,\phi'}$; see Remark~\ref{rmk:sectorial.principal-symbol}.}.

 \item $\Pi_{\phi,\phi'}(P)$ is a smoothing operator if and only if the whole sector $\{\phi\leq \arg \lambda \leq \phi'\}$ is contained in $\Theta(P)$. 
\end{enumerate}
\end{theoremalph}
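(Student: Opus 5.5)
The plan is to work directly with the contour integral~(\ref{eq:Intro.sectorial-projection}) and the parametric calculus of~\cite{LP:Part1}. By Theorem~\ref{thm:Resolvent.resolvent-is-psido-with-parameter}, $(P-\lambda)^{-1}\in\Psi^{-w,-1}(\T^n_\theta;\Lambda(P))$. Composing with $P$ gives $P(P-\lambda)^{-1}=1+\lambda(P-\lambda)^{-1}$, which lies in $\Psi^{0,0}$. Better, $\lambda^{-1}P(P-\lambda)^{-1}=\lambda^{-1}+(P-\lambda)^{-1}$. Along $\Gamma'$, the closed term $\int_{\Gamma'}\lambda^{-1}d\lambda$ over the two rays does not converge separately, so I will not split the integral. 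Instead I will pass to symbols. Write the symbol of $(P-\lambda)^{-1}$ as $\sim\sum_j r_{-w-j}(\xi,\lambda)$, where the $r_{-w-j}$ are bi-homogeneous and $r_{-w}=(\rho_w(\xi)-\lambda)^{-1}$. The symbol of $\lambda^{-1}P(P-\lambda)^{-1}$ is then $\lambda^{-1}(\rho\sharp r)(\xi,\lambda)$. Its homogeneous components $\lambda^{-1}\sum \rho_{w-k}\sharp r_{-w-j}$ have joint degree $-j-1$ in $(\xi,\lambda^{1/w})$ scaling, i.e.\ homogeneity $-j$ in $\xi$ after integrating over $\lambda$.

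For part (1), I will define $\pi_{-j}(\xi):=\frac{i}{2\pi}\int_{\Gamma'_\xi}\lambda^{-1}\sigma_{-j}(\xi,\lambda)\,d\lambda$ for $\xi\ne0$. Here $\sigma_{-j}$ is the $j$-th component above, and $\Gamma'_\xi$ avoids $\bigcup\Sp\rho_w$ (possible since the rays lie in $\Theta(P)$ and the spectrum is compact for $|\xi|=1$). Homogeneity of degree $-j$ in $\xi$ follows by the substitution $\lambda\mapsto t^w\lambda$. For $j=0$ we have $\lambda^{-1}\rho_w(\rho_w-\lambda)^{-1}$. The small circle contributes nothing because $\rho_w(\xi)$ is invertible, so $\lambda^{-1}\rho_w(\rho_w-\lambda)^{-1}$ is regular at $0$. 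The integral then equals the holomorphic functional calculus of $\rho_w(\xi)$ for the indicator of the sector $S_{\phi,\phi'}$, i.e.\ the sectorial projection of $\rho_w(\xi)$. Next I will show that $\Pi_{\phi,\phi'}(P)$ minus the quantization of $(1-\chi)\sum_{j<N}\pi_{-j}$ has order $-N$. This is done by integrating the remainder estimates of the parametric calculus, of order $(-w-N,-1)$ type, against $\lambda^{-1}d\lambda$ along $\Gamma'$, exactly as in the proof of Proposition~\ref{prop:powers.param-psido-hol-psidos} for $P^z_\phi$ with $z=0$ replaced by the kernel $\lambda^{-1}$. This is the step the introduction says goes through verbatim.

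The main obstacle is the $j=0$ term. The parametric remainder estimates give only $O(|\lambda|^{-1})$ decay times $|\lambda|^{-1}$, which is integrable, but the uniformity in $\xi$ near the small circle must be handled. I would split $|\lambda|\le c|\xi|^w$ and $|\lambda|\ge c|\xi|^w$ as in Seeley's argument, using that bi-homogeneous parametric symbols of order $(m,d)$ satisfy $O((|\xi|+|\lambda|^{1/w})^{m})$ bounds.

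For part (2), the sufficiency direction is as follows. If the closed sector lies in $\Theta(P)$, then for each $\xi\ne0$ the contour $\Gamma'_\xi$ can be closed through the sector at infinity with no spectrum of $\rho_w(\xi)$ inside. Each integrand $\lambda^{-1}\sigma_{-j}$ is holomorphic there and $O(|\lambda|^{-2})$, so all $\pi_{-j}=0$ and the operator is smoothing. For necessity, if $\Pi_{\phi,\phi'}(P)$ is smoothing then $\pi_0(\xi)=0$ for all $\xi$. Hence $\rho_w(\xi)$ has no spectrum in $S_{\phi,\phi'}$, since its sectorial projection is nonzero whenever the spectrum meets the open sector. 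The boundary rays already lie in $\Theta(P)$, so the closed sector is contained in $\Theta(P)$.
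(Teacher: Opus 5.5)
Your overall route is the paper's. By Theorem~\ref{thm:Resolvent.resolvent-is-psido-with-parameter} and Proposition~\ref{prop:Parameter.composition-PsiDOs}, $Q(\lambda)=P(P-\lambda)^{-1}$ lies in $\Psi^{0,-1}(\T^n_\theta;\Lambda(P))$. It is $d=-1$, not merely $\Psi^{0,0}$, that makes $\lambda^{-1}Q(\lambda)$ absolutely integrable on $\Gamma'$. Since $z=-1$ lies in $\{\Re z<0\}$, Proposition~\ref{prop:powers.param-psido-hol-psidos} applied to $Q(\lambda)$ at $z=-1$ directly gives a classical \psido\ of order $0$ with components $\frac{i}{2\pi}\int_{|\xi|^w\Gamma'}\lambda^{-1}q_{-j}(\xi;\lambda)\,d\lambda$. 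Your part~(2) is also the paper's argument.

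Your justification of the principal symbol, however, is wrong. The function $\lambda^{-1}\rho_w(\xi)(\rho_w(\xi)-\lambda)^{-1}=\lambda^{-1}+(\rho_w(\xi)-\lambda)^{-1}$ is \emph{not} regular at $0$. Precisely because $\rho_w(\xi)$ is invertible, $\rho_w(\xi)(\rho_w(\xi)-\lambda)^{-1}$ equals $1$ at $\lambda=0$, so there is a simple pole with residue $1$. The small arc therefore does contribute: as its radius shrinks, its contribution tends to $\frac{\phi'-\phi}{2\pi}$, not to $0$, and dropping it gives the wrong answer.

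The correct reason is topological. The integrand is $O(|\lambda|^{-2})$ at infinity, so you can close $|\xi|^w\Gamma'$ by a large arc $|\lambda|=R$ through the sector and let $R\to\infty$. This writes the integral over the positively oriented boundary of $\{r|\xi|^w<|\lambda|<R\}\cap S_{\phi,\phi'}$, a cycle that does not wind around $0$. Hence $\lambda^{-1}$ integrates to zero, and $(\rho_w(\xi)-\lambda)^{-1}$ yields the Riesz projection onto $\Sp(\rho_w(\xi))\cap S_{\phi,\phi'}$. Your own sufficiency argument in part~(2) already uses that $0$ lies outside this cycle.

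The same remark, combined with $q_0=1+\lambda\sigma_{-w}$ and $q_{-j}=\lambda\sigma_{-w-j}$ for $j\geq 1$ (from $P(P-\lambda)^{-1}=1+\lambda(P-\lambda)^{-1}$), turns your formula into Eq.~(\ref{eq:Powers.homogeneous-symbols-sectorial}). That is a shorter route than the paper's detour through $P_{\phi,\phi'}(z)$ and analytic continuation.

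Your ``main obstacle'' is not an obstacle, and the tool you propose for it is not available here. For \psidos\ (unlike differential operators), the components $\sigma_{-w-j}$, $j\geq 1$, contain factors such as $\rho_{w-k}(\xi)$ and $\partial_\xi^\alpha\rho_w(\xi)$. These are homogeneous in $\xi$ alone and can blow up as $\xi\to0$ with $|\lambda|=1$, so the strong bound $O((|\xi|+|\lambda|^{1/w})^{m})$ fails in general. This is exactly why the classes $S^d_m$ only require estimates for $\xi$ in compact sets.

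No Seeley-type splitting is needed. Uniformity in $\xi$ comes from three ingredients already in the proof of Proposition~\ref{prop:powers.param-psido-hol-psidos}: the cut-off $\chi$ equal to $1$ on $|\xi|\leq (c^{-1}r_0)^{1/w}$, Lemma~\ref{lem:Parameter.homogeneous-symbol-estimate}, and the contour rescaling of Remark~\ref{lem:Powers.contour-LCS}.
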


We refer to~Eq.~(\ref{eq:Powers.homogeneous-symbols-sectorial}) for explicit formulas for the negative-order homogeneous components of the symbol of  $\Pi_{\phi,\phi'}(P)$. 

In addition, we have the following asymmetry formulas, which show how the complex powers $P_\phi^z$, $z\in \C$, and the logarithm $\log_\phi(P)$ depend on the spectral cut $L_\phi$; they extend to NC tori the asymmetry formulas of Wodzicki~\cite{Wo:IM82, Wo:PhD, Wo:Weyl} for complex powers (see also~\cite{Po:IJM06}), and the asymmetry formulas for logarithms of Okikiolu~\cite{Ok:Duke95} and Gaarde--Grubb~\cite{GG:MS08}. 

\begin{theoremalph}
 We have 
\begin{gather} \label{eq:Intro.powers.asymmetry} 
 P_\phi^z-P_{\phi'}^z = \left(1-e^{2i\pi z}\right) \Pi_{\phi,\phi'}(P)P_\phi^z,  \quad z\in \C,\\
 \log_{\phi'}(P) -  \log_{\phi}(P)= (2i\pi) \Pi_{\phi,\phi'}(P). 
 \label{eq:Intro.Log.difference}  
\end{gather}
\end{theoremalph}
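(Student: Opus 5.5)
Both identities will be obtained by reducing them to contour integrals of the resolvent and comparing the two cuts $L_\phi$ and $L_{\phi'}$. The comparison rests on one scalar identity. For $\lambda$ in the open sector $S_{\phi,\phi'}$ the two branches of the argument satisfy $\arg_{\phi'}\lambda=\arg_\phi\lambda+2\pi$. For $\lambda$ in $S_{\phi',\phi+2\pi}$ they coincide. Consequently, on $S_{\phi,\phi'}$ we have $\lambda^z_{\phi'}=e^{2i\pi z}\lambda^z_\phi$ and $\log_{\phi'}\lambda=\log_\phi\lambda+2i\pi$, while off $\overline{S_{\phi,\phi'}}$ the two determinations agree.

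\textbf{Step 1: the powers for $\Re z<0$.} I would start from the contour definition of $P^z_\phi$ and $P^z_{\phi'}$ given above. Deforming $\Gamma_{\phi'}$ through the region of analyticity of the resolvent is legitimate because the rays lie in $\check\Theta(P)$ and the resolvent estimates of~\cite{LP:Part1} hold there. This deformation splits $P^z_\phi-P^z_{\phi'}$ into a single integral over the contour $\Gamma'$ of Fig.~\ref{fig:sectorial-contour}, with integrand $(\lambda^z_\phi-\lambda^z_{\phi'})(P-\lambda)^{-1}$. That integrand equals $(1-e^{2i\pi z})\lambda^z_\phi(P-\lambda)^{-1}$ with $\lambda_\phi^z$ continued analytically across $S_{\phi,\phi'}$. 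So we get
\[
P^z_\phi-P^z_{\phi'}=(1-e^{2i\pi z})\,\frac{i}{2\pi}\int_{\Gamma'}\lambda^z_\phi (P-\lambda)^{-1}\,d\lambda ,
\]
the integral converging since $\Re z<0$.

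\textbf{Step 2: identify the integral and extend to all $z$.} It remains to show that $\frac{i}{2\pi}\int_{\Gamma'}\lambda^z_\phi (P-\lambda)^{-1}d\lambda=\Pi_{\phi,\phi'}(P)P^z_\phi$. Writing $\Pi_{\phi,\phi'}(P)$ as the $\Gamma'$-integral~(\ref{eq:Intro.sectorial-projection}) and $P^z_\phi$ as a $\Gamma$-integral, I would apply the resolvent identity
\[
(P-\lambda)^{-1}(P-\mu)^{-1}=(\lambda-\mu)^{-1}\bigl[(P-\lambda)^{-1}-(P-\mu)^{-1}\bigr],
\]
after shifting one contour slightly so that $\Gamma$ and $\Gamma'$ are disjoint. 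Fubini then reduces the product to scalar Cauchy integrals, in the same way the semigroup property $P^{z_1+z_2}_\phi=P^{z_1}_\phi P^{z_2}_\phi$ is proved. The small circle around $0$ and the factor $\lambda^{-1}P$ must be handled using $P^z_\phi\Pi_0(P)=0$.

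A cleaner alternative is to first prove the formula for $z=-k$ by a residue computation, but I would instead argue holomorphically. For $\Re z<0$ both sides of~(\ref{eq:Intro.powers.asymmetry}) are holomorphic families. For arbitrary $z$ I would use $P^z_\phi=P^kP^{z-k}_\phi$ and the same relation for $\phi'$, together with the fact that $\Pi_{\phi,\phi'}(P)$ commutes with $P$. The factor $e^{2i\pi z}$ is unchanged under $z\mapsto z-k$, so the identity propagates from $\Re z<0$ to all $z\in\C$.

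\textbf{Step 3: the logarithms.} Here I would differentiate~(\ref{eq:Intro.powers.asymmetry}) at $z=0$ in $\sL(C^\infty(\T^n_\theta))$, using the definition $\log_\phi(P)=\frac{d}{dz}P^z_\phi|_{z=0}$ and the holomorphy in Theorem~\ref{thm:Intro.complex-powers} together with Proposition~\ref{prop:Intro.hol-fam-properties}. Since $1-e^{2i\pi z}$ vanishes at $z=0$ with derivative $-2i\pi$, we get
\[
\log_\phi(P)-\log_{\phi'}(P)=-2i\pi\,\Pi_{\phi,\phi'}(P)P^0_\phi=-2i\pi\,\Pi_{\phi,\phi'}(P)\bigl(1-\Pi_0(P)\bigr).
\]
The right-hand side equals $-2i\pi\,\Pi_{\phi,\phi'}(P)$, because $E_0(P)$, which is the range of $\Pi_0(P)$, lies in the kernel of $\Pi_{\phi,\phi'}(P)$.

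\textbf{Main obstacle.} I expect the delicate point to be Step 2. One must check carefully that the composition of the two contour integrals reproduces exactly the $\Gamma'$-integral, with the correct orientation and the correct contribution from the circle near $0$. The factor $\lambda^{-1}P$ in the definition of $\Pi_{\phi,\phi'}(P)$ is designed precisely to kill the $\Pi_0(P)$ term, and that bookkeeping is where an error would most likely occur.
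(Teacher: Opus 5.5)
Your proposal is correct and follows essentially the paper's route. For $\Re z<0$, both $P_\phi^z-P_{\phi'}^z$ and $\Pi_{\phi,\phi'}(P)P_\phi^z$ are identified with the same $\Gamma'$-integral of $\lambda_\phi^z(P-\lambda)^{-1}$; the paper simply defers this step to \cite[Proposition~4.1]{Po:IJM06}. Your intermediate $\Gamma'$-integral carries the opposite sign to the paper's in both displayed formulas, which is immaterial since the sign cancels in the product. The identity is then extended to all $z\in\C$, and the logarithm formula follows by differentiating at $z=0$ and using $\Pi_{\phi,\phi'}(P)\big(1-\Pi_0(P)\big)=\Pi_{\phi,\phi'}(P)$.

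The only differences are minor. First, the paper extends to all $z$ by uniqueness of analytic continuation in $\sL(C^\infty(\T^n_\theta))$, whereas you use $P_\phi^z=P^kP_\phi^{z-k}$ together with the fact that $\Pi_{\phi,\phi'}(P)$ commutes with $P$. Second, the paper derives $\Pi_{\phi,\phi'}(P)\big(1-\Pi_0(P)\big)=\Pi_{\phi,\phi'}(P)$ from the additivity $\Pi_{\phi,\phi'}(P)+\Pi_{\phi',\phi+2\pi}(P)=1-\Pi_0(P)$, whereas you derive it from $E_0(P)\subseteq\ker\Pi_{\phi,\phi'}(P)$.
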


\subsection*{Prospective Applications}
The results of this paper provide the analytic foundation for several forthcoming developments, to be taken up in~\cite{LP:SG1,LP:SG2,LP:SG3} and elsewhere: a comprehensive account of zeta functions and $\zeta$-regularized determinants of elliptic \psidos\ on NC tori, together with local and microlocal Weyl laws (\cite{LP:SG1}); short-time asymptotics for heat semigroup traces $\Tr[Ae^{-tP}]$, with $A\in\Psi^q(\T^n_\theta)$, $q\in\R$, and $P$ elliptic of order $w>0$ with $\Theta(P)\subseteq\{\Re\lambda>0\}$, together with variational formulas for zeta functions, determinants, and heat coefficients of families of elliptic \psidos\ (\cite{LP:SG2,LP:SG3}); an extension of the Gauss--Bonnet theorem of Connes--Tretkoff~\cite{CT:Baltimore11} to NC tori of arbitrary dimension and Riemannian metric (\cite{LP:SG2}); and Weyl laws for negative-order \psidos\ on NC tori extending those of Birman--Solomyak~\cite{BS:VLU77,BS:SMJ79} and conjectured in~\cite{MP:AIM23}, together with the semiclassical Weyl laws for curved NC tori also conjectured there, and a reinterpretation, in these terms, of the spectral asymptotics for quantized derivatives on NC tori of Sukochev--Xiong--Zanin~\cite{SXZ:JFA23}. 

\subsection*{Organization of the paper}
Section~\ref{sec:NCtori} reviews noncommutative tori, and Section~\ref{sec:PsiDOs} the pseudodifferential calculus on them~\cite{Ba:CRAS88,Co:CRAS80,HLP:Part1,HLP:Part2}. Section~\ref{sec:Parametric-PsiDOs} recalls the parametric \psido\ calculus of~\cite{LP:Part1}, including the resolvent of an elliptic \psido\ as an element of $\Psi^{-w,-1}(\T^n_\theta;\Lambda(P))$. Section~\ref{sec:Nonselfadjoint} develops the spectral theory of non-selfadjoint elliptic \psidos\ needed throughout the paper: generalized eigenspaces, Riesz projections, partial inverses, and the singularity structure of the resolvent at $\lambda=0$. Section~\ref{sec:hol-PsiDOs} introduces holomorphic and $\Hol_\infty$-families of symbols and \psidos\ on NC tori, the technical backbone for the constructions that follow. Sections~\ref{sec:powers}--\ref{sec:sectorial} then carry out the paper's main constructions, in the order described above: complex powers, logarithms, and sectorial projections of elliptic \psidos\ on NC tori, culminating in the trace formula~(\ref{eq:Intro.trace-formula-log}) and the asymmetry formulas~(\ref{eq:Intro.powers.asymmetry})--(\ref{eq:Intro.Log.difference}). 

\subsection*{Acknowledgements} The research of the second-named author was partially supported by NSFC grant No.~11971328 (China).

\section{Noncommutative Tori} \label{sec:NCtori}
We review the main definitions and properties of noncommutative tori; see~\cite{Co:NCG, HLP:Part1, Ri:PJM81, Ri:CM90} for fuller accounts.

Throughout this paper, $\theta =(\theta_{jk})$ denotes a real anti-symmetric $n\times n$-matrix ($n\geq 2$), 
with column vectors $\theta_1, \ldots, \theta_n$. The \emph{noncommutative torus} $C(\T^n_\theta)$ is the $C^*$-algebra generated by the unitary operators $U_1, \ldots, U_n$ subject to the relations, 
\begin{equation*}
 U_kU_j = e^{2i\pi \theta_{jk}} U_jU_k, \qquad j,k=1, \ldots, n. 
\end{equation*}
For $\theta=0$ we obtain the $C^*$-algebra $C(\T^n)$ of continuous functions on the ordinary torus $\T^n=\R^n\slash 2\pi \Z^n$; throughout, setting $\theta=0$ likewise recovers the corresponding classical objects on $\T^n$ (the $L_2$-space and its Fourier decomposition, the smooth subalgebra, and the derivations $\delta_j$, which reduce to $D_{x_j}=\frac1i\frac{\partial}{\partial x_j}$). The unitary operators below span a dense subalgebra, 

 \begin{equation*}
 U^k:=U_1^{k_1} \cdots U_n^{k_n}, \qquad k=(k_1,\ldots, k_n)\in \Z^n. 
\end{equation*}

Let $\tau:C(\T^n_\theta)\rightarrow \C$ be the standard trace defined by 
 \begin{equation*} 
 \tau\left( U^k\right) = \left\{
\begin{array}{ll}
 1 &  \text{if $k=0$},  \\
 0 &  \text{otherwise.}
 \end{array}\right. 
\end{equation*}
This is a faithful positive linear trace on $C(\T^n_\theta)$. We thus define an inner product on $C(\T^n_\theta)$ by
\begin{equation}
 \acoup{u}{v} = \tau\left( uv^* \right), \qquad u,v\in C(\T_\theta^n). 
 \label{eq:NCtori.cAtheta-innerproduct}
\end{equation}
By definition $L_2(\T^n_\theta)$ is the Hilbert space obtained as the completion of $C^\infty(\T^n_\theta)$ with respect to the inner product~(\ref{eq:NCtori.cAtheta-innerproduct}). 

The family $\{ U^k; k \in \Z^n\}$  is an orthonormal basis of $L_2(\T^n_\theta)$. Thus, every $u\in L_2(\T^n_\theta)$ can be uniquely written as 
\begin{equation} \label{eq:NCtori.Fourier-series-u}
 u =\sum_{k \in \Z^n} u_k U^k, \qquad u_k=\acoup{u}{U^k}, 
\end{equation}
where the series converges in $L_2(\T^n_\theta)$; by analogy with the case $\theta=0$ we shall call the series $\sum_{k \in \Z^n} u_k U^k$ in~(\ref{eq:NCtori.Fourier-series-u}) the Fourier series of $u\in L_2(\T^n_\theta)$. In what follows we shall denote by $\|\cdot\|_0$ the norm of $L_2(\T^n_\theta)$. This notation allows us to distinguish it from the norm of $C(\T^n_\theta)$, which we denote by $\|\cdot\|$.

The multiplication of $C(\T^n_\theta)$ uniquely extends to a continuous bilinear map $C(\T_\theta^n)\times L_2(\T^n_\theta) \rightarrow L_2(\T^n_\theta)$. This provides us with a unital $*$-representation of $C(\T^n_\theta)$ (this is actually the GNS representation defined by $\tau$). In particular, we have
          \begin{equation*} 
                      \left\| u \right\| = \sup_{\|v\|_0=1} \|uv\|_0 \qquad \forall u \in C(\T^n_\theta). 
           \end{equation*}
This allows us to identify any $u \in C(\T^n_\theta)$ with the sum of its Fourier series in $L_2(\T^n_\theta)$. In general this Fourier series need not converge in $C(\T^n_\theta)$.

There is a natural strongly continuous action $(s,u)\rightarrow \alpha_s(u)$ of $\R^n$ on $C(\T^n_\theta)$ by $*$-automorphisms such that
\begin{equation*}
\alpha_s(U^k)= e^{is\cdot k} U^k \qquad  \text{for all $k\in \Z^n$ and $s\in \R^n$}. 
\end{equation*}
The \emph{smooth noncommutative torus} is the algebra of smooth elements of this action, 
\begin{equation*}
 C^\infty(\T^n_\theta):=\left\{ u \in C(\T^n_\theta); \ \alpha_s(u) \in C^\infty\left(\R^n; C(\T^n_\theta)\right)\right\}. 
\end{equation*}
Let $\cS(\Z^n)$ be the space of rapid-decay sequences with complex entries. In terms of the Fourier series decomposition~(\ref{eq:NCtori.Fourier-series-u}) we have
\begin{equation*}
 C^\infty(\T^n_\theta)=\bigg\{ u=\sum_{k\in \Z^n} u_k U^k; (u_k)_{k\in \Z^n}\in  \cS(\Z^n)\bigg\}. 
\end{equation*}

For $j=1,\ldots, n$, let $\delta_j:C^\infty(\T^n_\theta) \rightarrow C^\infty(\T^n_\theta) $ be the  derivation defined by 
\begin{equation*}
 \delta_j(u) = D_{s_j} \alpha_s(u)|_{s=0}, \qquad u\in C^\infty(\T^n_\theta) , 
\end{equation*}
where we have set $D_{s_j}=\frac{1}{i}\partial_{s_j}$. In general, for $j,l=1,\ldots, n$, we have
\begin{equation*}
 \delta_j(U_l) = \left\{ 
 \begin{array}{ll}
 U_j & \text{if $l=j$},\\
 0 & \text{if $l\neq j$}. 
\end{array}\right.
\end{equation*}
More generally, for $\beta \in \N_0^n$ we set
\begin{equation*}
 \delta^\beta(u) = D_s^\beta \alpha_s(u)|_{s=0} = \delta_1^{\beta_1} \cdots \delta_n^{\beta_n}(u). 
\end{equation*}
We endow $C^\infty(\T^n_\theta)$ with the locally convex topology defined by the semi-norms
\begin{equation*}
 C^\infty(\T^n_\theta) \ni u \longrightarrow \left\|\delta^\beta (u)\right\| ,  \qquad \beta\in \N_0^n. 
\end{equation*}
With the involution inherited from $C(\T^n_\theta)$ this turns $C^\infty(\T^n_\theta)$ into a unital Fréchet $*$-algebra; it is in fact a nuclear Fréchet-Montel space~\cite{HLP:Part1}. Moreover, it is closed under holomorphic functional calculus (see, e.g., \cite{Co:AdvM81, HLP:Part1}). This implies that an element $u\in C^\infty(\T^n_\theta)$ is invertible in $C^\infty(\T^n_\theta)$ if and only if it is invertible in $C(\T^n_\theta)$. 

Let $\scD'(\T^n_\theta)$ be the topological dual of $C^\infty(\T^n_\theta)$, equipped with its strong topology. We regard its elements as analogues of distributions on $\T^n$.
 
Any $u \in C^\infty(\T^n_\theta) $ defines a continuous linear form on $C^\infty(\T^n_\theta)$ by $\acou{u}{v} =\tau(uv)$. Note that for all $u,v \in C^\infty(\T^n_\theta) $,
 \begin{equation} \label{eq:NCtori.distrb-innerproduct-eq}
  \acou{u}{v} =\acoup{v}{u^*}=\acoup{u}{v^*}.
\end{equation}
 In particular, $u \rightarrow \acou{u}{\cdot}$ embeds $C^\infty(\T^n_\theta)$ continuously into $\scD'(\T^n_\theta)$, and by~(\ref{eq:NCtori.distrb-innerproduct-eq}) this extends to a continuous embedding of $L_2(\T^n_\theta)$ into $\scD'(\T^n_\theta)$. Given any $u\in \scD'(\T^n_\theta) $, its Fourier series is
\begin{equation*}
 \sum_{k\in \Z^n} u_k U^k, \qquad \text{where}\ u_k:= \acou{u}{(U^k)^*}. 
\end{equation*}
Here the unitaries $U^k$, $k\in \Z^n$, are regarded as elements of $\scD'(\T^n_\theta)$. It can be shown that every $u\in \scD'(\T^n_\theta) $ is the sum of its Fourier series in $\scD'(\T^n_\theta)$ (see~\cite{HLP:Part1}). 

\section{Pseudodifferential Operators on Noncommutative Tori} \label{sec:PsiDOs}
We recall the main definitions and properties of the pseudodifferential calculus on noncommutative tori~\cite{Ba:CRAS88, Co:CRAS80, HLP:Part1, HLP:Part2}, following~\cite{HLP:Part1, HLP:Part2}.

\subsection{Symbols}
For an open set $U\subseteq \R^{n'}$ and a locally convex space $\sE$, we define $C^\infty(U;\sE)$ as in~\cite{HLP:Part1}; this is a Fréchet space whenever $\sE$ is Fréchet. For $\sE=C^\infty(\T^n_\theta)$ we write $C^\infty(\T^n_\theta\times U)$, which is a Fréchet algebra.

\begin{definition}[Standard Symbols; see~\cite{Ba:CRAS88, Co:CRAS80}]
$\stS^m (\T^n_\theta\times\Rn)$, $m\in\R$, consists of maps $\rho(\xi)\in C^\infty (\T^n_\theta\times\Rn)$ such that, for all multi-orders $\alpha$ and $\beta$, there exists $C_{\alpha \beta} > 0$ such that
\begin{equation*} 
\norm{\delta^\alpha \partial_\xi^\beta \rho(\xi)} \leq C_{\alpha \beta} \left( 1 + | \xi | \right)^{m - | \beta |} \qquad \forall \xi \in \R^n .
\end{equation*}
Equipped with the semi-norms, 
\begin{equation*}
p_N^{(m)}(\rho):=\sup_{|\alpha|+|\beta|\leq N} \sup_{\xi\in\Rn}(1+|\xi|)^{-m+|\beta|}\norm{\delta^\alpha\partial_\xi^\beta\rho(\xi)}, \qquad N\in\N_0 ,
\end{equation*}
$\stS^m (\T^n_\theta\times\Rn)$ is a Fr\'echet space~\cite{HLP:Part1}. 
\end{definition}

\begin{remark}
 Let $(m_j)_{j\geq 0}\subseteq \R$ be a decreasing sequence such that $m_j\rightarrow -\infty$. Given $\rho(\xi)$ in $C^\infty(\T^n_\theta\times\R^n)$ and $\rho_j(\xi)\in \stS^{m_j}(\T^n_\theta\times\R^n)$, $j\geq 0$, we shall write $\rho(\xi) \sim \sum_{j \geq 0} \rho_j(\xi)$ when 
\begin{equation}
 \rho(\xi)-\sum_{j<N} \rho_j(\xi)\in \stS^{m_N}(\T^n_\theta\times\R^n) \qquad \forall N\geq 0.
 \label{eq:PsiDOs.asymptotic-expansion-standard} 
\end{equation}
Note that this implies that $\rho(\xi)\in \stS^{m_0}(\T^n_\theta\times\R^n)$. 
\end{remark}
\begin{definition} 
  $\stS^{-\infty}(\T^n_\theta\times\R^n)$ consists of maps $\rho(\xi)\in C^\infty (\T^n_\theta\times\Rn)$ such that, for all  $N\geq 0$ and multi-orders $\alpha$, $\beta$, there exists $C_{N\alpha \beta} > 0$ such that
\begin{equation*} 
\norm{\delta^\alpha \partial_\xi^\beta \rho(\xi)} \leq C_{N\alpha \beta} \left( 1 + | \xi | \right)^{-N} \qquad \forall \xi \in \R^n .
\end{equation*}
Equivalently, $\stS^{-\infty}(\T^n_\theta\times\R^n)=\bigcap_{m\in\R}\stS^m(\T^n_\theta\times\R^n)$.
\end{definition}

\begin{definition}[Homogeneous Symbols] 
$S_q (\T^n_\theta\times\R^n)$, $q \in \C$, consists of  $\rho(\xi) \in 
C^\infty(\T_\theta^n\times(\R^n\backslash 0))$ that are homogeneous of degree $q$, i.e., 
$\rho( t \xi ) =t^q \rho(\xi)$ for all $\xi \in \R^n \backslash 0$ and $t > 0$. If $\chi(\xi)\in C^\infty_c(\R^n)$ is such that $\chi(\xi)=1$ near $\xi=0$, then $(1-\chi(\xi))\rho(\xi)\in \stS^{\Re q}(\T^n_\theta\times\R^n)$. 
\end{definition}

\begin{definition}[Classical Symbols; see \cite{Ba:CRAS88}]\label{def:Symbols.classicalsymbols}
$S^q (\T^n_\theta\times\R^n)$, $q \in \C$, consists of  $\rho(\xi)\in C^\infty(\T^n_\theta\times\R^n)$ that admit an asymptotic expansion,
\begin{equation*}
\rho(\xi) \sim \sum_{j \geq 0} \rho_{q-j} (\xi),  \qquad \rho_{q-j} \in S_{q-j} (\T^n_\theta\times\R^n), 
\end{equation*}
where $\sim$ means that, for all $N\geq 0$ and multi-orders $\alpha$, $\beta$, there exists $C_{N\alpha\beta} >0$ such that, for all $\xi \in \R^n$ with $| \xi | \geq 1$, we have
\begin{equation} \label{eq:Symbols.classical-estimates}
\Big\| \delta^\alpha \partial_\xi^\beta \big( \rho - \sum_{j<N} \rho_{q-j} \big)(\xi)\Big\| \leq C_{N\alpha\beta} | \xi |^{\Re{q}-N-| \beta |} .
\end{equation}
\end{definition}

\begin{remark} \label{rmk:Symbols.classical-inclusion}
The conditions~(\ref{eq:Symbols.classical-estimates}) are equivalent to requiring that, for all $N$, $\alpha$, $\beta$, as soon as $J$ is large enough there is $C_{NJ\alpha\beta}>0$ such that, for all $\xi\in \R^n$, $|\xi|\geq 1$, we have
\begin{equation} \label{eq:Symbols.classical-estimates-qualitative}
\Big\| \delta^\alpha \partial_\xi^\beta \big( \rho - \sum_{j<J} \rho_{q-j} \big)(\xi)\Big\| \leq C_{NJ\alpha\beta} | \xi |^{-N}.
\end{equation}
Moreover, if $\chi(\xi)\in C^\infty_c(\R^n)$ is such that $\chi=1$ near $\xi=0$, then $\rho(\xi)\sim \sum_{j\geq 0} \rho_{q-j}(\xi)$ in the sense of~(\ref{eq:Symbols.classical-estimates}) if and only if $\rho(\xi)\sim \sum_{j\geq 0} (1-\chi(\xi))\rho_{q-j}(\xi)$ in the sense of~(\ref{eq:PsiDOs.asymptotic-expansion-standard}); in particular $S^q(\T^n_\theta\times\R^n)\subseteq \stS^{\Re{q}}(\T^n_\theta\times\R^n)$. 
\end{remark}

\begin{example}
 Any polynomial map $\rho(\xi)=\sum_{|\alpha|\leq m} a_\alpha \xi^\alpha$, $a_\alpha\in C^\infty(\T^n_\theta) $, $m\in \N_0$, is in $S^m(\T^n_\theta\times\R^n)$. More generally, setting $\brak{\xi}=(1+|\xi|^2)^{\frac12}$, $\xi\in \R^n$, we have $\brak{\xi}^q \in S^q(\T^n_\theta\times\R^n)$ for all $q\in \C$ (see, e.g., \cite{HLP:Part1}). 
\end{example}

\subsection{Pseudodifferential operators}
Let $\rho(\xi)\in \stS^m(\T^n_\theta\times\R^n)$, $m\in \R$. For any $u\in C^\infty(\T^n_\theta) $, the map $\rho(\xi)\alpha_{-s}(u)$ is an amplitude in $A^{m_+}(\T^n_\theta\times\R^{n}\times\R^{n})$, $m_+=\op{max}(m,0)$~\cite{HLP:Part1}, so we may define
\begin{equation*}
P_\rho u = (2\pi)^{-n}\iint e^{is\cdot\xi}\rho(\xi)\alpha_{-s}(u)ds d\xi,
\end{equation*}
where the above integral is meant as an oscillating integral (see~\cite{HLP:Part1}). This defines a continuous linear operator $P_\rho:C^\infty(\T^n_\theta)  \rightarrow C^\infty(\T^n_\theta) $~\cite{HLP:Part1}. Equivalently~\cite{CT:Baltimore11, HLP:Part1}, in terms of the Fourier decomposition~(\ref{eq:NCtori.Fourier-series-u}),
\begin{equation}
 P_\rho u = \sum_{k\in \Z^n} u_k\rho(k)U^k \qquad \text{for all $u=\sum_{k\in \Z^n}u_kU^k\in C^\infty(\T^n_\theta) $}. 
 \label{eq:PsiDOs.toroidal-def} 
\end{equation}
We call $\rho(\xi)$ the \emph{symbol} of $P_\rho$. 

\begin{remark}\label{rmk:PsiDOs.uniqueness-symbol}
 The symbol is not unique. However, if $P=P_\rho$ with $\rho(\xi)\in \stS^m(\T^n_\theta\times \R^n)$, then~(\ref{eq:PsiDOs.toroidal-def}) gives $\rho(k)=P\left(U^k\right) \left(U^k\right)^{-1}$ for all $k\in \Z^n$, so the restriction of $\rho$ to $\Z^n$ is uniquely determined by $P$. Moreover, any two symbols of $P$ in $\stS^m(\T^n_\theta\times \R^n)$ differ by an element of $\stS^{-\infty}(\T^n_\theta\times \R^n)$~\cite{HLP:Part1}.
\end{remark}

\begin{definition}
$\Psi^q(\T^n_\theta)$,  $q\in \C$, consists of all linear operators $P_\rho:C^\infty(\T^n_\theta) \rightarrow C^\infty(\T^n_\theta) $ with $\rho(\xi)$ in $S^q(\T^n_\theta\times\R^n)$.
\end{definition}

\begin{remark} \label{rem:PsiDOs.symbol-uniqueness}
If $P=P_\rho\in\Psi^q(\T^n_\theta)$, $\rho(\xi)\sim \sum \rho_{q-j}(\xi)$, then by Remark~\ref{rmk:PsiDOs.uniqueness-symbol} the homogeneous components $\rho_{q-j}(\xi)$ are uniquely determined by $P$. The leading term $\rho_q(\xi)$ is the \emph{principal symbol} of $P$. 
\end{remark}

\begin{example}
 A differential operator on $C^\infty(\T^n_\theta)$ is of the form $P=\sum_{|\alpha|\leq m}a_\alpha\delta^\alpha$, $a_\alpha\in C^\infty(\T^n_\theta)$ (see~\cite{Co:CRAS80, Co:NCG}). This is a \psido\ of order $m$ with symbol $\rho(\xi)= \sum a_\alpha \xi^\alpha$ (see~\cite{HLP:Part1}).
\end{example}

\begin{example} \label{eq:PsiDOs.Lambda-to-the-s-PsiDO}
 The  Laplacian $\Delta:= \delta_1^2 + \cdots + \delta_n^2$ is a selfadjoint unbounded operator on $L_2(\T^n_\theta)$ with domain 
 $\op{Dom}(\Delta):=\{ u=\sum_{k\in \Z^n} u_kU^k\in L_2(\T^n_\theta); \ \sum_{k\in \Z^n} |k|^2 |u_k|^2<\infty\}$. It is isospectral to the Laplacian on $\T^n$. Set $\Lambda^s=(1+\Delta)^{\frac{s}2}$, $s\in \C$. Then $\Lambda^s$ is the \psido\ with symbol $\brak{\xi}^s$, and so $\Lambda^s\in \Psi^s(\T^n_\theta)$ (see~\cite{HLP:Part1}). In fact, the family $\Lambda^s$, $s\in \C$, forms a 1-parameter group of \psidos. 
\end{example}

We refer to~\cite{HLP:Part1, LNP:TAMS16} for an equivalent description of \psidos\ on $\T^n_\theta$ in terms of discrete (a.k.a.\ toroidal) symbols, defined solely on $\Z^n$. 

\begin{definition}
 $\Psi^{-\infty}(\T^n_\theta)$ consists of all linear operators $P_\rho:C^\infty(\T^n_\theta) \rightarrow C^\infty(\T^n_\theta) $ with $\rho(\xi)$ in $\bS^{-\infty}(\T^n_\theta\times\R^n)$; equivalently, $\Psi^{-\infty}(\T^n_\theta)  = \bigcap_{q\in \C} \Psi^q(\T^n_\theta)$. 
\end{definition}

\subsection{Composition of \psidos}
Suppose we are given symbols $\rho_1(\xi)\in\stS^{m_1}(\T^n_\theta\times\Rn)$, $m_1\in\R$, and $\rho_2(\xi)\in\stS^{m_2}(\T^n_\theta\times\Rn)$, $m_2\in\R$. As $P_{\rho_1}$ and $P_{\rho_2}$ are linear operators on $C^\infty(\T^n_\theta)$, the composition $P_{\rho_1}P_{\rho_2}$ makes sense as such an operator.  
In addition, we define the map $\rho_1\sharp\rho_2:\Rn \rightarrow C^\infty(\T^n_\theta) $ by
\begin{equation} \label{eq:Composition.symbol-sharp}
\rho_1\sharp\rho_2(\xi) = (2\pi)^{-n}\iint e^{it\cdot\eta}\rho_1(\xi+\eta)\alpha_{-t}[\rho_2(\xi)]dt d\eta , \qquad \xi\in\Rn ,
\end{equation}
where the integral is meant as an oscillating integral (see~\cite{Ba:CRAS88, HLP:Part2}). 

\begin{lemma}[see \cite{Ba:CRAS88, Co:CRAS80, HLP:Part2}] \label{prop:Composition.sharp-continuity-standard-symbol}
Let $\rho_1(\xi)\in \stS^{m_1}(\T^n_\theta\times\R^n)$ and  $\rho_2(\xi)\in \stS^{m_2}(\T^n_\theta\times\R^n)$, $m_j\in \R$. 
\begin{enumerate}
 \item $\rho_1\sharp\rho_2(\xi)\in \stS^{m_1+m_2}(\T^n_\theta\times\Rn)$, and we have $ \rho_1\sharp\rho_2(\xi) \sim \sum\frac{1}{\alpha !}\partial_\xi^\alpha\rho_1(\xi)\delta^\alpha\rho_2(\xi)$. 

 \item We have $P_{\rho_1}P_{\rho_2}=P_{\rho_1\sharp \rho_2}$.  
 \end{enumerate}
\end{lemma}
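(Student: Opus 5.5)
The plan is to treat the oscillatory integral defining $\rho_1\sharp\rho_2$ by the standard Taylor-expansion argument, adapted to the $C^\infty(\T^n_\theta)$-valued setting. Part (2) comes first, because it is essentially algebraic. Both sides are continuous on $C^\infty(\T^n_\theta)$, so it suffices to compare them on the basis elements $U^k$. By~(\ref{eq:PsiDOs.toroidal-def}), $P_{\rho_2}U^k=\rho_2(k)U^k$. Expanding $\rho_2(k)=\sum_l c_l U^l$ in Fourier series gives
\begin{equation*}
P_{\rho_1}(\rho_2(k)U^k)=\sum_l c_l\,\rho_1(l+k)\,U^lU^k,
\end{equation*}
up to the phase conventions of $U^lU^k$; the point is that $U^lU^k$ is a scalar multiple of $U^{l+k}$. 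On the other hand, for $\rho_2(\xi)$ a single mode $U^l$ we have $\alpha_{-t}(U^l)=e^{-it\cdot l}U^l$. The oscillatory integral $(2\pi)^{-n}\iint e^{it\cdot(\eta-l)}\rho_1(\xi+\eta)\,dt\,d\eta$ then equals $\rho_1(\xi+l)$ by Fourier inversion, understood in the oscillatory sense. Hence $\rho_1\sharp\rho_2(k)U^k=\sum_l c_l\rho_1(k+l)U^lU^k$, which matches. To make this rigorous for general $\rho_2$, I would use the rapid decay of the Fourier coefficients of $\rho_2(\xi)$ together with the continuity of the oscillatory integral in the amplitude, as in~\cite{HLP:Part1}.

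For part (1), I would Taylor-expand $\rho_1(\xi+\eta)$ in $\eta$ to order $N$:
\begin{equation*}
\rho_1(\xi+\eta)=\sum_{|\alpha|<N}\frac{\eta^\alpha}{\alpha!}\partial_\xi^\alpha\rho_1(\xi)+R_N(\xi,\eta),
\end{equation*}
where $R_N$ is the integral remainder. For each polynomial term, $\eta^\alpha e^{it\cdot\eta}=D_t^\alpha e^{it\cdot\eta}$. Integrating by parts in $t$ and using $D_t^\alpha\alpha_{-t}(\rho_2)=(-1)^{|\alpha|}\alpha_{-t}(\delta^\alpha\rho_2)$ produces exactly $\frac1{\alpha!}\partial^\alpha_\xi\rho_1(\xi)\,\delta^\alpha\rho_2(\xi)$; this sign bookkeeping has to be checked carefully. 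The same formula, specialised to the symbol-class estimates, shows that each term lies in $\stS^{m_1+m_2-|\alpha|}$. The remainder becomes
\begin{equation*}
\iint e^{it\cdot\eta}\,\partial^\alpha_\xi\rho_1(\xi+s\eta)\,\alpha_{-t}(\delta^\alpha\rho_2(\xi))\,dt\,d\eta,\qquad |\alpha|=N,\ s\in[0,1],
\end{equation*}
integrated in $s$ against a polynomial weight.

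The main obstacle is showing that this remainder lies in $\stS^{m_1+m_2-N}$, uniformly in its $\delta^\gamma\partial_\xi^\beta$ derivatives. I would split the $\eta$-integration into $|\eta|\le\frac12|\xi|$ and its complement with a cutoff. On the near region, $(1+|\xi+s\eta|)\asymp(1+|\xi|)$, and integrating by parts with $(1+|t|^2)^{-M}(1-\Delta_\eta)^M$ and $(1+|\eta|^2)^{-M}(1-\Delta_t)^M$ yields the bound $(1+|\xi|)^{m_1-N}\|\cdot\|_{\stS^{m_2}}$-type estimates; here I use that $\alpha_{-t}$ is isometric and that $\delta$ commutes with $\alpha_{-t}$. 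On the far region, repeated $t$-integration by parts produces arbitrary negative powers of $|\eta|\gtrsim|\xi|$, which absorb the polynomial growth $(1+|\xi+s\eta|)^{|m_1|}\le C(1+|\eta|)^{|m_1|}$ via Peetre's inequality. Derivatives in $\xi$ and applications of $\delta^\gamma$ pass under the integral by the Leibniz rule, since $\delta^\gamma$ is a derivation commuting with $\alpha$. Finally, taking $N=0$ gives $\rho_1\sharp\rho_2\in\stS^{m_1+m_2}$.
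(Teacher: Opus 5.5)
Your argument is correct and is the standard one. The paper does not prove this lemma itself; it quotes it from Baaj, Connes and Ha--Lee--Ponge. Each of your three steps goes through: the Taylor expansion with integral remainder, the integration by parts in $t$ using $D_t^\alpha\alpha_{-t}(u)=(-1)^{|\alpha|}\alpha_{-t}(\delta^\alpha u)$ (your signs are right), and the check of part~(2) on the $U^k$ via $\rho_1\sharp\rho_2(\xi)=\sum_l c_l(\xi)\rho_1(\xi+l)U^l$.

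Two minor points. First, the near/far splitting is unnecessary here. The $t$-derivatives of $\alpha_{-t}[\delta^\alpha\rho_2(\xi)]$ cost nothing in $\xi$, so integration by parts in $t$ gives decay $(1+|\eta|)^{-2M}$ on all of $\R^n$. Peetre's inequality $(1+|\xi+s\eta|)^{\mu}\leq(1+|\xi|)^{\mu}(1+|\eta|)^{|\mu|}$, $0\leq s\leq 1$, then yields the bound $(1+|\xi|)^{m_1+m_2-N}$ directly. Second, reducing part~(2) to the basis elements $U^k$ uses that $P_{\rho_1\sharp\rho_2}$ is a continuous operator on $C^\infty(\T^n_\theta)$, which requires part~(1), so part~(1) must be proved first. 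Also, the Fourier coefficients $c_l=c_l(k)$ of $\rho_2(k)$ depend on $k$.
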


For classical symbols this refines as follows. 

\begin{proposition}[see \cite{Ba:CRAS88, Co:CRAS80, HLP:Part2}] \label{prop:Composition.composition-PsiDOs}
Let  $P_1\in \Psi^{q_1}(\T^n_\theta)$, $q_1\in \C$, have symbol $\rho_1(\xi)\sim\sum_{j\geq 0}\rho_{1,q_1-j}(\xi)$, and let  $P_2\in \Psi^{q_2}(\T^n_\theta)$, $q_2\in \C$, have symbol $\rho_2(\xi)\sim\sum_{j\geq 0}\rho_{2,q_2-j}(\xi)$.
\begin{enumerate}
 \item $\rho_1 \sharp \rho_2(\xi)\in S^{q_1+q_2}(\T^n_\theta\times\R^n)$ with $\rho_1\sharp\rho_2(\xi) \sim\sum (\rho_1\sharp\rho_2)_{q_1+q_2-j}(\xi)$, where 
 \begin{equation*}
(\rho_1\sharp\rho_2)_{q_1+q_2-j}(\xi)=\sum_{k+l+|\alpha|=j}\frac{1}{\alpha !}\partial_\xi^\alpha \rho_{1,q_1-k}(\xi)\delta^\alpha \rho_{2,q_2-l}(\xi), \qquad j\geq 0.
\end{equation*}

\item The composition $P_1P_2=P_{\rho_1\sharp \rho_2}$ is contained in $\Psi^{q_1+q_2}(\T^n_\theta)$.  
\end{enumerate}
\end{proposition}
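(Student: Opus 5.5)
This result is cited from the literature (Baaj, Connes, HLP:Part2); I will prove it by reduction to Lemma~\ref{prop:Composition.sharp-continuity-standard-symbol}. Part (2) is then immediate: once part (1) shows $\rho_1\sharp\rho_2\in S^{q_1+q_2}(\T^n_\theta\times\R^n)$, the identity $P_1P_2=P_{\rho_1\sharp\rho_2}$ from that lemma places $P_1P_2$ in $\Psi^{q_1+q_2}(\T^n_\theta)$. So everything comes down to part (1), which says the standard asymptotic expansion of $\rho_1\sharp\rho_2$ can be regrouped into homogeneous components.

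\textbf{Step 1: the candidate components.} Fix a cut-off $\chi\in C^\infty_c(\R^n)$ with $\chi=1$ near $0$. By Remark~\ref{rmk:Symbols.classical-inclusion}, $\rho_i\sim\sum_k(1-\chi)\rho_{i,q_i-k}$ in the standard sense, and $\rho_i\in\stS^{\Re q_i}$. The functions $(\rho_1\sharp\rho_2)_{q_1+q_2-j}$ given by the formula in the statement are homogeneous of degree $q_1+q_2-j$. Indeed, $\partial_\xi^\alpha\rho_{1,q_1-k}$ is homogeneous of degree $q_1-k-|\alpha|$, and $\delta^\alpha$ commutes with dilations in $\xi$, so $\delta^\alpha\rho_{2,q_2-l}$ stays homogeneous of degree $q_2-l$. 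These functions are also smooth on $\R^n\setminus0$ with values in $C^\infty(\T^n_\theta)$, since $C^\infty(\T^n_\theta)$ is a Fréchet algebra. Hence each one lies in $S_{q_1+q_2-j}$.

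\textbf{Step 2: truncation estimates.} I must show that for each $N$,
\[
\rho_1\sharp\rho_2-\sum_{j<N}(1-\chi)(\rho_1\sharp\rho_2)_{q_1+q_2-j}\in\stS^{\Re(q_1+q_2)-N}.
\]
First, by Lemma~\ref{prop:Composition.sharp-continuity-standard-symbol}(1),
\[
\rho_1\sharp\rho_2-\sum_{|\alpha|<N}\frac{1}{\alpha!}\partial_\xi^\alpha\rho_1\,\delta^\alpha\rho_2\in\stS^{\Re(q_1+q_2)-N}.
\]
Next, in each remaining term I write $\rho_1=\sum_{k<N-|\alpha|}(1-\chi)\rho_{1,q_1-k}+r_1$ and $\rho_2=\sum_{l<N-|\alpha|}(1-\chi)\rho_{2,q_2-l}+r_2$, where $r_1\in\stS^{\Re q_1-N+|\alpha|}$ and $r_2\in\stS^{\Re q_2-N+|\alpha|}$. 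Expanding the products and using two facts:
\begin{itemize}
\item[(a)] $\partial_\xi^\alpha$ lowers the standard order by $|\alpha|$, and $\delta^\alpha$ preserves it;
\item[(b)] products of standard symbols have additive orders;
\end{itemize}
every cross term involving some $r_i$ lies in $\stS^{\Re(q_1+q_2)-N}$. The same holds for every term with $k+l+|\alpha|\ge N$.

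What remains are the terms with $k+l+|\alpha|<N$. Derivatives falling on $\chi$ are compactly supported, hence lie in $\stS^{-\infty}$. Also $(1-\chi)^2-(1-\chi)$ is compactly supported. So the remaining terms equal $\sum_{j<N}(1-\chi)(\rho_1\sharp\rho_2)_{q_1+q_2-j}$ modulo $\stS^{-\infty}$.

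\textbf{Step 3: conclusion.} Step 2 gives the standard asymptotic expansion with cut-off homogeneous terms. By Remark~\ref{rmk:Symbols.classical-inclusion}, this is equivalent to the classical estimates~(\ref{eq:Symbols.classical-estimates}), which proves part (1).

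The only delicate point is the bookkeeping in Step 2. It needs the standard asymptotic expansion from Lemma~\ref{prop:Composition.sharp-continuity-standard-symbol}, which is the genuinely analytic input (an oscillatory-integral Taylor expansion) and which I take as given. Once that is granted, I expect no real obstacle.
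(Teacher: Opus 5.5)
Your proof is correct. The paper states this proposition without proof and cites the literature; your reduction is the standard one. You start from the standard expansion in Lemma~\ref{prop:Composition.sharp-continuity-standard-symbol}, insert the cut-off homogeneous expansions of $\rho_1$ and $\rho_2$, and regroup the terms by $k+l+|\alpha|$. This has the same structure as the paper's proof of its holomorphic analogue, Proposition~\ref{prop:Hol.composition-of-symbols}, which relies on Lemmas~\ref{lem:Hol.holfamily-classical-product} and~\ref{lem:Hol.asymptotic-classical-family}. Your Step~2, including the observation that $(1-\chi)^2-(1-\chi)$ is compactly supported, simply does that bookkeeping by hand.
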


\subsection{Action on $\scD'(\T^n_\theta)$}
Given a linear operator $P: C^\infty(\T^n_\theta)  \rightarrow C^\infty(\T^n_\theta) $, a formal adjoint is any linear operator $P^*: C^\infty(\T^n_\theta)  \rightarrow C^\infty(\T^n_\theta) $ such that
\begin{equation*} 
\acoup{Pu}{v} = \acoup{u}{P^* v} \qquad \forall u,v \in C^\infty(\T^n_\theta) ,
\end{equation*}
where $\acoup{\cdot}{\cdot}$ is the inner product~(\ref{eq:NCtori.cAtheta-innerproduct}). A formal adjoint, when it exists, is unique.

It can be shown (see~\cite{Ba:CRAS88, HLP:Part2}) that if $\rho\in \stS^m(\T^n_\theta\times\Rn)$, $m\in\R$, then $P$ has a formal adjoint and there is an explicit symbol $\rho^\star(\xi)\in  \stS^m(\T^n_\theta\times\Rn)$ such that $P^*=P_{\rho^\star}$, and so $P^*$ is a \psido. In particular, if $P\in \Psi^q(\T^n_\theta)$, $q\in \C$, has principal symbol $\rho_q(\xi)$, then $P^*$ is in $\Psi^{\bar{q}}(\T^n_\theta)$ and its principal symbol is $\rho_q(\xi)^*$. 

As an application of this we get  the following extension result. 

\begin{proposition}[see \cite{HLP:Part2}] \label{Adjoints.PsiDOs-extension}
 Let $\rho(\xi)\in\stS^m(\T^n_\theta\times\Rn)$, $m\in\R$. Then $P_\rho$ uniquely extends to a continuous linear operator $P_\rho:\scD'(\T^n_\theta) \rightarrow \scD'(\T^n_\theta) $.
\end{proposition}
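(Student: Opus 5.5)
The plan is to define the extension by duality, using the formal adjoint. Since $\rho(\xi)\in\stS^m(\T^n_\theta\times\R^n)$, the preceding discussion provides a symbol $\rho^\star(\xi)\in\stS^m(\T^n_\theta\times\R^n)$ with $P_\rho^*=P_{\rho^\star}$. In particular $P_\rho^*$ is a continuous operator on $C^\infty(\T^n_\theta)$. We must work with the bilinear pairing $\acou{u}{v}=\tau(uv)$ rather than the inner product. By~(\ref{eq:NCtori.distrb-innerproduct-eq}) the transpose of $P_\rho$ is
\[
P_\rho^t v := \bigl(P_\rho^*(v^*)\bigr)^* .
\]
Indeed, $\acou{P_\rho u}{v}=\acoup{P_\rho u}{v^*}=\acoup{u}{P_\rho^*v^*}=\acou{u}{(P_\rho^*v^*)^*}$. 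Since the involution is continuous on $C^\infty(\T^n_\theta)$, $P_\rho^t$ is a continuous linear map $C^\infty(\T^n_\theta)\to C^\infty(\T^n_\theta)$.

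For $u\in\scD'(\T^n_\theta)$ we then set $\acou{P_\rho u}{v}:=\acou{u}{P_\rho^t v}$ for $v\in C^\infty(\T^n_\theta)$. This is the Banach-space transpose of the continuous map $P_\rho^t$. The transpose of a continuous linear map between locally convex spaces is continuous for the strong dual topologies, because $P_\rho^t$ maps bounded sets to bounded sets. Hence $P_\rho:\scD'(\T^n_\theta)\to\scD'(\T^n_\theta)$ is continuous. The computation above shows that this map agrees with the original $P_\rho$ on $C^\infty(\T^n_\theta)\subseteq\scD'(\T^n_\theta)$.

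Uniqueness follows from density of $C^\infty(\T^n_\theta)$ in $\scD'(\T^n_\theta)$. Every $u\in\scD'(\T^n_\theta)$ is the sum of its Fourier series in $\scD'(\T^n_\theta)$, so finite linear combinations of the $U^k$ are dense. Two continuous extensions agree on this dense set and therefore everywhere, since $\scD'$ is Hausdorff.

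The only point needing care is this density statement together with continuity in the strong topology. I would quote the Fourier series convergence result from~\cite{HLP:Part1} as the key input, rather than reprove it.
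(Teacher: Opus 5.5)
Your proposal is correct and is the argument the paper intends: the paper presents this extension as a consequence of the formal adjoint $P_\rho^*=P_{\rho^\star}$ (following \cite{HLP:Part2}), and you build the transpose $v\mapsto (P_\rho^*v^*)^*$ for the bilinear pairing $\tau(uv)$, extend by duality, and get uniqueness from the density of $C^\infty(\T^n_\theta)$ in $\scD'(\T^n_\theta)$. One terminological point: this is the transpose of a continuous map on the Fr\'echet space $C^\infty(\T^n_\theta)$, not a ``Banach-space transpose''; the bounded-sets argument you give for strong continuity is the right one in this setting.
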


\subsection{Sobolev spaces}
 Let $\Delta=\delta_1^2+\cdots+\delta_n^2$ be the Laplacian on $C^\infty(\T^n_\theta)$. Given any $s\in \R$, the operator $\Lambda^s=(1+\Delta)^{\frac{s}2}$ is a (classical) \psido\ of order $s$ (\emph{cf}.\ Example~\ref{eq:PsiDOs.Lambda-to-the-s-PsiDO}). Therefore, by Proposition~\ref{Adjoints.PsiDOs-extension} it uniquely extends to a continuous linear operator $\Lambda^s:\scD'(\T^n_\theta) \rightarrow \scD'(\T^n_\theta) $.
 
\begin{definition}[see \cite{HLP:Part2, Sp:Padova92, XXY:MAMS18}] 
The Sobolev space $W_2^s(\T^n_\theta)$, $s\in \R$, is the Hilbert space consisting of all $u\in \scD'(\T^n_\theta) $ such that $\Lambda^su \in L_2(\T^n_\theta)$. It is equipped with the Hilbert norm, 
\begin{equation*}
 \| u\|_s= \|\Lambda^s u\|_0, \qquad u \in W^{s}_2(\T^n_{\theta}).
\end{equation*}
\end{definition}

In terms of the Fourier decomposition $u = \sum u_k U^k$ in $\scD'(\T^n_\theta)$ we have 
 \begin{gather*}
u\in W^{s}_2(\T^n_{\theta}) \Longleftrightarrow \sum_{k\in \Z^n}(1+|k|^2)^{s} |u_k|^2<\infty,\\
\| u\|_s^2= \sum_{k\in \Z^n} (1+|k|^2)^{s}|u_k|^2, \qquad u\in W_2^s(\T^n_\theta).
\end{gather*}

If  $t>s$, then the inclusion of $W^{t}_2(\T^n_{\theta})$ into $W^{s}_2(\T^n_{\theta})$ is compact (see \cite{HLP:Part2, XXY:MAMS18}). Moreover, 
the Sobolev spaces $W^s_2(\T^n_\theta)$, $s\in \R$,  provide us with a natural scale of Hilbert spaces interpolating between $C^\infty(\T^n_\theta)$ and $\scD'(\T^n_\theta)$. In 
particular, we have
           \begin{equation*}
                   C^\infty(\T^n_\theta)= \bigcap_{s\in\R} W^{s}_2(\T^n_{\theta}) \qquad \text{and} \qquad \bigcup_{s\in\R} W^{s}_2(\T^n_{\theta})=  \scD'(\T^n_\theta) . 
           \end{equation*}

We have the following Sobolev mapping properties of \psidos\ on noncommutative tori.

\begin{proposition}[see \cite{HLP:Part2}] \label{prop:Sob-Mapping.rho-on-Hs}
Let $\rho(\xi)\in \stS^m(\T^n_\theta\times\R^n)$, $m\in\R$. 
\begin{enumerate}
 \item $P_\rho$ uniquely extends to a continuous linear operator
$P_\rho:W^{s+m}_2(\T^n_{\theta})\rightarrow W^{s}_2(\T^n_{\theta})$ for every $s\in \R$. 

\item If $m\leq 0$, then $P_\rho$ uniquely extends to a bounded operator $P_\rho: L_2(\T^n_\theta) \rightarrow L_2(\T^n_\theta)$. 
\end{enumerate}
\end{proposition}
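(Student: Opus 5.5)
Both parts follow from the toroidal formula~(\ref{eq:PsiDOs.toroidal-def}). We reduce to $L_2$-boundedness of order-zero operators by conjugating with the Bessel potentials $\Lambda^s$, and get uniqueness of the extension from density of $C^\infty(\T^n_\theta)$ in every $W^s_2(\T^n_\theta)$. Truncated Fourier series converge in $\|\cdot\|_s$, which gives the density.

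\emph{Step 1: case $m\leq 0$ and $s=0$ (part (2)).} We bound $\|P_\rho u\|_0\leq C\|u\|_0$ for $u\in C^\infty(\T^n_\theta)$, with $C$ controlled by a seminorm $p^{(0)}_N(\rho)$ for some $N$ depending only on $n$. Since $(1+|\xi|)^{m}\leq 1$, we have $\rho\in\stS^0$, so we may assume $m=0$. From~(\ref{eq:PsiDOs.toroidal-def}), $P_\rho u=\sum_k u_k\rho(k)U^k$, and $\rho(k)=\sum_l \hat\rho_l(k)U^l$ with $\hat\rho_l(k)=\acoup{\rho(k)}{U^l}$. Integrating by parts via $\delta^\alpha$ (as $\delta^\alpha U^l=l^\alpha U^l$) gives $|\hat\rho_l(k)|\leq C_N(1+|l|)^{-N}\sup_{|\alpha|\le N}\|\delta^\alpha\rho(k)\|$, uniformly in $k$. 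Hence $P_\rho u=\sum_l\big(\sum_k \hat\rho_l(k)u_kU^lU^k\big)$. The product $U^lU^k$ equals a unimodular scalar times $U^{l+k}$, so for fixed $l$ the inner sum has $L_2$-norm $\big(\sum_k|\hat\rho_l(k)u_k|^2\big)^{1/2}\leq C_N(1+|l|)^{-N}\|u\|_0$. Summing over $l$ with $N>n$ gives the bound. Alternatively, we can write the inner sum as $U^l$ times a Fourier multiplier; either way the norm estimate is immediate.

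\emph{Step 2: general $s,m$ (part (1)).} Set $Q:=\Lambda^{s}P_\rho\Lambda^{-s-m}$. By Lemma~\ref{prop:Composition.sharp-continuity-standard-symbol}, $Q=P_{\sigma}$ with $\sigma=\brak{\xi}^s\sharp\rho\sharp\brak{\xi}^{-s-m}\in\stS^0$. In fact this is simpler here, because $\Lambda^t$ is diagonal on the basis $U^k$. So $\sigma$ can be taken explicitly as $\brak{\xi}^{s}\rho(\xi)\brak{\xi}^{-s-m}$, up to the scalar factor: indeed $Q(U^k)=\brak{k}^{-s-m}\Lambda^s(\rho(k)U^k)$, and $\Lambda^s$ does not simply commute past $\rho(k)$, so the composition lemma is used rather than a bare multiplier computation. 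Step 1 then gives $\|P_\rho u\|_s=\|Q\Lambda^{s+m}u\|_0\leq C\|u\|_{s+m}$ for smooth $u$. Density extends this to a continuous map $W^{s+m}_2\to W^s_2$, and the extension is unique. It is compatible with the $\scD'$-extension of Proposition~\ref{Adjoints.PsiDOs-extension}, since both are continuous and agree on $C^\infty$. For $m\leq0$, part (2) is also the case $s=0$ combined with the continuous inclusion $W^m_2\supseteq L_2$.

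\emph{Main obstacle.} The delicate point is the uniform (in $k$) decay of the Fourier coefficients $\hat\rho_l(k)$, which is needed to sum over $l$. It comes from the $\delta^\alpha$ bounds in the symbol seminorms, which are uniform in $\xi$; the rest is bookkeeping.
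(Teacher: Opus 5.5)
The paper does not prove this statement; it is quoted from \cite{HLP:Part2}, so there is no in-paper argument to compare with. Your argument is a correct, self-contained proof.

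In Step~1, each ingredient checks out:
\begin{itemize}
\item The decay $|\hat\rho_l(k)|\le C_N(1+|l|)^{-N}\sup_{|\alpha|\le N}\|\delta^\alpha\rho(k)\|$ follows from $\tau\circ\delta_j=0$.
\item For fixed $l$, the vectors $U^lU^k$, $k\in\Z^n$, are orthonormal.
\item Left multiplication by $U^l$ is isometric on $L_2(\T^n_\theta)$ because $\tau$ is a trace.
\item Interchanging the $k$- and $l$-sums is harmless for $u\in C^\infty(\T^n_\theta)$ by absolute convergence.
\end{itemize}
You therefore get $\|P_\rho u\|_0\le C\|u\|_0$ with $C$ a multiple of a seminorm of $\rho$ in $\stS^0$. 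In Step~2, invoking Lemma~\ref{prop:Composition.sharp-continuity-standard-symbol} is legitimate and not circular, since the composition formula is proved by oscillating integrals, independently of any Sobolev bounds.

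One sentence in Step~2 is wrong and should be deleted: $\sigma$ cannot be taken to be $\brak{\xi}^{s}\rho(\xi)\brak{\xi}^{-s-m}$, as your next clause already concedes. Only the right-hand factor is a bare multiplier: $P_\rho\Lambda^{-s-m}U^k=\brak{k}^{-s-m}\rho(k)U^k$, so $P_\rho\Lambda^{-s-m}=P_{\rho\brak{\xi}^{-s-m}}$ exactly. The left factor $\Lambda^s$ does not commute with left multiplication by $\rho(k)$, and $\brak{\xi}^s\sharp\rho$ has genuine lower-order terms $\frac{1}{\alpha!}\partial_\xi^\alpha\brak{\xi}^s\,\delta^\alpha\rho(\xi)$.

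If you want to avoid the composition lemma altogether, your Step~1 decomposition already handles all $s$ and $m$. Write $P_\rho u=\sum_l U^lM_lu$ with $M_lu=\sum_k\hat\rho_l(k)u_kU^k$. Since $U^lU^k$ is a unimodular multiple of $U^{l+k}$,
\begin{equation*}
\big\|U^lM_lu\big\|_s^2=\sum_{k\in\Z^n}\brak{k+l}^{2s}\,|\hat\rho_l(k)|^2\,|u_k|^2 .
\end{equation*}
For $\rho\in\stS^m$ you have $|\hat\rho_l(k)|\leq C_N\brak{l}^{-N}\brak{k}^{m}$. Peetre's inequality gives $\brak{k+l}^{s}\leq 2^{|s|/2}\brak{l}^{|s|}\brak{k}^{s}$. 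Hence $\|U^lM_lu\|_s\leq C\brak{l}^{|s|-N}\|u\|_{s+m}$, and summing over $l$ with $N>n+|s|$ gives $\|P_\rho u\|_s\leq C\|u\|_{s+m}$.

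This variant also shows directly that $\rho\mapsto P_\rho$ is continuous from $\stS^m(\T^n_\theta\times\R^n)$ to $\sL(W_2^{s+m}(\T^n_\theta),W_2^s(\T^n_\theta))$. That continuity is the form in which the paper later uses the result, for example in the proof of Proposition~\ref{prop:Hol.propertiesP(z)-W2s}. Via your Step~2 you would additionally need the continuity of $\sharp$.
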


We refer to~\cite{GK:AMS69, Si:AMS05} for the background on Schatten and weak Schatten classes; in this setting, we have the following result. 

\begin{proposition}[see~\cite{HLP:Part2}] \label{prop:PsiDOs.Schatten} 
 Let $\rho(\xi)\in \bS^m(\T^n_\theta\times\R^n)$, $m< 0$, and set $p=n|m|^{-1}$. Then the operator $P_\rho$ is in the weak Schatten class $\sL_{p,\infty}$. In particular, it is trace-class if $m<-n$. 
\end{proposition}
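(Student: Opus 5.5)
The plan is to reduce to the case $\theta=0$-style toroidal computation: use the Fourier description~(\ref{eq:PsiDOs.toroidal-def}) and the unitary $\Lambda^{s}$ to reduce everything to a statement about an operator of nonpositive order. Write $P_\rho=\Lambda^{m}\,\Lambda^{-m}P_\rho$. Here $\Lambda^{-m}=P_{\brak{\xi}^{-m}}$, and by Lemma~\ref{prop:Composition.sharp-continuity-standard-symbol} the composition $\Lambda^{-m}P_\rho=P_{\brak{\xi}^{-m}\sharp\rho}$ has a symbol in $\stS^0(\T^n_\theta\times\R^n)$. By Proposition~\ref{prop:Sob-Mapping.rho-on-Hs} it is bounded on $L_2(\T^n_\theta)$. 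So it suffices to show that $\Lambda^{m}$, a \psido\ of order $m<0$ diagonal in the orthonormal basis $\{U^k\}$ with eigenvalues $(1+|k|^2)^{m/2}$, lies in $\sL_{p,\infty}$. Then the ideal property of $\sL_{p,\infty}$ under multiplication by bounded operators gives $P_\rho=\Lambda^{m}B\in\sL_{p,\infty}$, with $B=\Lambda^{-m}P_\rho$ bounded. Note that $\Lambda^m\Lambda^{-m}=1$ by the group property in Example~\ref{eq:PsiDOs.Lambda-to-the-s-PsiDO}.

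For $\Lambda^m$ I will compute the singular values directly. Because $\Lambda^m$ is positive and diagonal, its singular values are the numbers $(1+|k|^2)^{m/2}$, $k\in\Z^n$, arranged in decreasing order. The counting function $\#\{k\in\Z^n;\ (1+|k|^2)^{m/2}>t\}=\#\{k;\ |k|^2<t^{2/m}-1\}$ is $O(t^{n/m})=O(t^{-p})$ as $t\to0^+$, by the standard lattice-point bound in a ball of radius $R$, which is $O(R^n)$. Equivalently, the $j$-th singular value is $O(j^{-1/p})$ with $1/p=|m|/n$. This is exactly the condition for membership in $\sL_{p,\infty}$.

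For the trace-class assertion, I will use the fact that when $m<-n$ we have $p<1$, and $\sL_{p,\infty}\subseteq\sL_1$ because $\sum_j j^{-1/p}<\infty$. Alternatively, $\sum_k(1+|k|^2)^{m/2}<\infty$ directly.

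The main point needing care is justifying that $\Lambda^{-m}P_\rho$ really is bounded on $L_2$ and equals an honest operator factorization on $L_2$, rather than only on $C^\infty(\T^n_\theta)$. I will handle this by density of $C^\infty(\T^n_\theta)$ and the continuity statements of Proposition~\ref{prop:Sob-Mapping.rho-on-Hs}: both sides extend continuously from $W_2^{m}\supseteq L_2$, so the identity extends. Everything else is elementary counting.
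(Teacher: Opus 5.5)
Your factorization $P_\rho=\Lambda^{m}(\Lambda^{-m}P_\rho)$ is correct and complete: the second factor has a symbol in $\stS^0$ and so is bounded on $L_2$, the singular values of $\Lambda^m$ are $O(j^{-1/p})$ by the lattice-point count, and you finish with the ideal property of $\sL_{p,\infty}$. This paper gives no proof of its own and only cites~\cite{HLP:Part2}, where the same standard argument is used; the only slip is calling $\Lambda^s$ ``unitary'' on $L_2$ (it is an isometric isomorphism $W_2^{t+s}\to W_2^{t}$), which your argument never actually uses.
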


\subsection{Smoothing operators}
A linear operator $R:C^\infty(\T^n_\theta) \rightarrow \scD'(\T^n_\theta) $ is \emph{smoothing} when it extends to a continuous linear operator $R:\scD'(\T^n_\theta)  \rightarrow C^\infty(\T^n_\theta) $ (the extension is unique since $C^\infty(\T^n_\theta)$ is dense in $\scD'(\T^n_\theta)$). It can be shown that smoothing operators are exactly the pseudodifferential operators with symbol in  $\stS^{-\infty}(\T^n_\theta\times\R^n)$ (see~\cite{HLP:Part1, HLP:Part2}).

\subsection{Symbol map} 
For $m\in \R$, set $\OP^m(\T_\theta^n)=\{P_\rho;\ \rho(\xi) \in \stS^m(\T^n_\theta\times \R^n)\}\subseteq \sL(C^\infty(\T^n_\theta))$, and put $\OP^\bt(\T_\theta^n)=\bigcup_{m\in \R}\OP^m(\T_\theta^n)$. Although the symbol of an operator $P\in \OP^\bt(\T_\theta^n)$ is not unique, we still can construct a symbol map which is a left-inverse of the quantization map $\rho \rightarrow P_\rho$. Namely, we have the following result. 
\begin{proposition}[\cite{HLP:Part1, LP:Part1}] \label{prop:PsiDOs.rho-to-rhotilde-map-continuity}
There exists a map $\OP^\bt(\T_\theta^n)\ni P \rightarrow \rho_P\in \stS^\bt(\T^n_\theta\times \R^n)$ with the following properties: 
\begin{enumerate}
\item[(i)] $P=P_{\rho_{\!{}_{P}}}$ for all $P\in  \OP^\bt(\T_\theta^n)$. 

\item[(ii)] For every $m\in \R$, the map $\rho \rightarrow \rho - \rho_{\!{}_{P_\rho}}$ is continuous from $\stS^m(\T^n_\theta\times \R^n)$ to $\stS^{-\infty}(\T^n_\theta
\times \R^n)$. 

\item[(iii)] If $P\in \Psi^q(\T^n_\theta)$, $\Re q=m$, then $\rho_{\!{}_{P}}(\xi)\in S^q(\T^n_\theta\times \R^n)$. 
\end{enumerate}
\end{proposition}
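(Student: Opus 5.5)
The approach is to build $\rho_P$ out of the discrete data $\rho(k)=P(U^k)(U^k)^{-1}$, $k\in\Z^n$. By Remark~\ref{rmk:PsiDOs.uniqueness-symbol} these depend only on $P$, so the map $P\mapsto\rho_P$ is automatically well defined. I will fix once and for all an interpolating function $\varphi\in\cS(\R^n)$ and set
\begin{equation*}
 \rho_P(\xi):=\sum_{k\in\Z^n}\varphi(\xi-k)\,P(U^k)(U^k)^{-1}, \qquad \xi\in\R^n .
\end{equation*}
The function $\varphi$ is chosen through its Fourier transform. Take $g\in C^\infty_c(\R)$ with $g=1$ on $[-\pi+\epsilon,\pi-\epsilon]$, $\supp g\subseteq[-\pi-\epsilon,\pi+\epsilon]$, and $\sum_{\ell\in\Z}g(x+2\pi\ell)=1$. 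Then let $\hat\varphi(x)=\prod_j g(x_j)$. This choice gives two properties:
\begin{enumerate}
 \item By Poisson summation, $\varphi(k)=\delta_{k0}$ for $k\in\Z^n$.
 \item Since $\hat\varphi\equiv 1$ near $0$ and $\hat\varphi$ vanishes to infinite order near $2\pi\Z^n\setminus 0$, we have $\sum_k\varphi(\xi-k)p(k)=p(\xi)$ for every polynomial $p$. Hence all the moments $\sum_k\varphi(\xi-k)(k-\xi)^\alpha$ with $|\alpha|\geq1$ vanish identically, and $\sum_k\varphi(\xi-k)=1$.
\end{enumerate}

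For (i), property (1) gives $\rho_P(k)=P(U^k)(U^k)^{-1}$. Then~(\ref{eq:PsiDOs.toroidal-def}) gives $P_{\rho_P}U^k=PU^k$ for all $k$, provided $\rho_P$ is a standard symbol, which follows from step (ii). Both $P$ and $P_{\rho_P}$ are continuous on $C^\infty(\T^n_\theta)$ and act diagonally on Fourier series, so they agree.

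For (ii), write $P=P_\rho$ with $\rho\in\stS^m$, so that $\rho-\rho_{P_\rho}=\rho(\xi)-\sum_k\varphi(\xi-k)\rho(k)$. Taylor expand $\rho(k)$ at $\xi$ to order $N$. By property (2), the polynomial part reproduces $\rho(\xi)$ exactly, so the difference equals
\begin{equation*}
 -\sum_k\varphi(\xi-k)\,R_N(\xi,k),
\end{equation*}
where $R_N$ is the integral remainder. The remainder satisfies
\begin{equation*}
 \|\delta^\alpha R_N(\xi,k)\|\lesssim |k-\xi|^N\sup_{t\in[0,1]}(1+|\xi+t(k-\xi)|)^{m-N}\,p^{(m)}_{N+|\alpha|}(\rho).
\end{equation*}
Peetre's inequality $(1+|\xi+\eta|)^{s}\leq 2^{|s|}(1+|\xi|)^{s}(1+|\eta|)^{|s|}$ together with the Schwartz decay of $\varphi$ yields a bound $C_N(1+|\xi|)^{m-N}p^{(m)}_{N'}(\rho)$. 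The $\xi$-derivatives are handled in the same way: differentiating the identity in $\xi$, either the derivative falls on $\rho(\xi)$, or it falls on $\varphi(\xi-k)$, whose derivatives $\partial^\beta\varphi$ have vanishing moments of all orders (including order $0$ when $\beta\neq0$). Since $N$ is arbitrary, this gives continuity of $\rho\mapsto\rho-\rho_{P_\rho}$ from $\stS^m$ into every $\stS^{m-N}$, hence into $\stS^{-\infty}$ with its projective-limit topology. The same estimate shows that $\rho_P\in\stS^m$, which completes (i).

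For (iii), if $P\in\Psi^q(\T^n_\theta)$, choose a classical symbol $\rho\in S^q$ with $P=P_\rho$. By (ii), $\rho_P=\rho-(\rho-\rho_{P_\rho})$ differs from $\rho$ by an element of $\stS^{-\infty}$. Such a perturbation does not affect the estimates~(\ref{eq:Symbols.classical-estimates}), so $\rho_P\in S^q$.

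The main obstacle is the uniform remainder estimate in (ii), specifically controlling the sum over $k$ uniformly in $\xi$ with seminorm bounds linear in $p^{(m)}_{N'}(\rho)$; Peetre's inequality is the key device there. The secondary point needing care is constructing $\varphi$ so that both the interpolation property $\varphi(k)=\delta_{k0}$ and the vanishing of all moments hold simultaneously, which the Fourier-side construction above is designed to deliver.
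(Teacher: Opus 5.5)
Your construction is the one the paper relies on. The paper gives no proof of its own: it cites the earlier references and notes that $\rho_P$ may be taken of the form $\sum_k\phi(\xi-k)P(U^k)(U^k)^{-1}$, with $\phi\in\cS(\R^n)$ as in Ruzhansky--Turunen. Your route through polynomial reproduction of $\varphi$, Taylor expansion with integral remainder, and Peetre's inequality is a sound, self-contained way to obtain (i)--(iii). There is, however, one false step in your treatment of the $\xi$-derivatives in (ii).

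It is not true that $\partial^\beta\varphi$ has vanishing moments of all orders. Differentiate the reproduction identity $\sum_k\varphi(\xi-k)(k-\eta)^\gamma=(\xi-\eta)^\gamma$ in $\xi$, then set $\eta=\xi$. This gives
\begin{equation*}
 \sum_{k\in\Z^n}(\partial^\beta\varphi)(\xi-k)\,(k-\xi)^\gamma=\beta!\,\delta_{\gamma\beta},
\end{equation*}
so every moment vanishes except the one of order $\gamma=\beta$. That nonzero moment is essential. If all moments vanished, Taylor expansion would make $\sum_k(\partial^\beta\varphi)(\xi-k)\rho(k)=O((1+|\xi|)^{m-N})$. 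The term $\partial^\beta\rho(\xi)$, produced when the derivative falls on $\rho(\xi)$, would then be left uncancelled, and $\partial_\xi^\beta(\rho-\rho_{P_\rho})$ would not decay.

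With the correct moments you get, for $N>|\beta|$,
\begin{equation*}
 \partial_\xi^\beta(\rho-\rho_{P_\rho})(\xi)=-\sum_k(\partial^\beta\varphi)(\xi-k)R_N(\xi,k).
\end{equation*}
Your remainder estimate then applies verbatim, since $\partial^\beta\varphi$ is still Schwartz.

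Alternatively, apply Leibniz's rule to $-\sum_k\varphi(\xi-k)R_N(\xi,k)$ and use
\begin{equation*}
 \partial_{\xi_j}R_N(\xi,k)=-\sum_{|\gamma|=N-1}\frac{1}{\gamma!}(k-\xi)^\gamma\,\partial_\xi^{\gamma+e_j}\rho(\xi),
\end{equation*}
where $e_j$ is the $j$-th unit multi-index. Every resulting term is bounded by a polynomial in $|k-\xi|$ times $(1+|\xi|)^{m-N}$. No moment identities are needed here, because membership in $\stS^{-\infty}$ only requires decay $(1+|\xi|)^{m-N}$ with $N$ arbitrary, not the extra $-|\beta|$. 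With this correction the proof goes through.
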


\begin{remark}
 Part~(i) and Remark~\ref{rmk:PsiDOs.uniqueness-symbol} imply that $\rho(k)=\rho_{\!{}_{P_\rho}}(k)$ for all $k\in \Z^n$, and Part~(ii) implies that, for every $m\in \R$, the map $\rho\rightarrow \rho_{\!{}_{P_\rho}}$ is continuous from $\stS^m(\T^n_\theta\times \R^n)$ to itself. We may take the symbol map $P\rightarrow \rho_P$ to be of the form 
 \begin{equation} \label{eq:PsiDOs.symbol-map-definition}
\rho_{\!{}_{P}}(\xi) = \sum_{k\in\Z^n} \phi(\xi-k)P(U^k)(U^k)^{-1} , \qquad \xi\in\Rn ,
\end{equation}
 where $\phi$ is a suitable function in $\cS(\R^n)$ (see~\cite[Lemma~4.5.1]{RT:Birkhauser10}).  
\end{remark}

\subsection{Ellipticity and parametrices}

\begin{definition}
An operator $P\in\Psi^q(\T^n_\theta)$, $q\in \C$, is \emph{elliptic} when its principal symbol $\rho_{q}(\xi)$ is invertible for all $\xi \in \R^n\setminus 0$; in this case $\rho_{q}(\xi)^{-1} \in S_{-q}(\T^n_\theta\times\R^n)$~\cite{Ba:CRAS88, HLP:Part2}. 
\end{definition} 

\begin{example}
 Suppose that the principal symbol $\rho_{q}(\xi)$ of $P$ is such that there is $c>0$ such that
 \begin{equation} \label{eq:Elliptic.positivity-criterion}
 \acoup{\rho_q(\xi)\eta}{\eta}\geq c|\xi|^q \|\eta\|_0^2 \qquad \text{for all $\eta \in L_2(\T^n_\theta)$ and $\xi \in \R^n\setminus \{0\}$}.
\end{equation}
 Then $P$ is elliptic (see~\cite{HLP:Part2}). This condition is satisfied by the (flat) Laplacian $\Delta=\delta_1^2+\cdots+\delta_n^2$; by the conformal deformations $k\Delta k$, $k\in C^\infty(\T^n_\theta) $, $k>0$, introduced by Connes--Tretkoff~\cite{CT:Baltimore11} and considered by various other authors since; and by the Laplace--Beltrami operators of~\cite{HP:JGP20}. 
\end{example}

\begin{proposition}[see \cite{Ba:CRAS88, Co:CRAS80, HLP:Part2}] \label{prop:Elliptic.existence-parametrix} \label{prop:Elliptic.regularity}
Let $P\in\Psi^q(\T^n_\theta)$, $q\in\C$, be elliptic with symbol  $\rho(\xi)\sim\sum_{j\geq 0} \rho_{q-j}(\xi)$, and set $m=\Re q$. 
\begin{enumerate}
 \item $P$ admits a parametrix $Q\in \Psi^{-q}(\T^n_\theta)$, i.e., $ PQ=QP=1 \bmod \Psi^{-\infty}(\T^n_\theta)$. 
%          \begin{equation*}
%                PQ=QP=1 \quad \bmod \Psi^{-\infty}(\T^n_\theta) . 
%          \end{equation*}

\item Any parametrix $Q\in \Psi^{-q}(\T^n_\theta)$ has symbol $\sigma(\xi)\sim\sum_{j\geq 0} \sigma_{-q-j}(\xi)$, where 
\begin{gather} \label{eq:PsiDOs.symbol-parametrix1}
 \sigma_{-q}(\xi)=\rho_q(\xi)^{-1},\\
\sigma_{-q-j}(\xi)=-\!\!\!\sum_{\substack{k+l+|\alpha|=j \\ l<j}}\!\! \frac{1}{\alpha !}\rho_q(\xi)^{-1}\partial_\xi^\alpha\rho_{q-k}(\xi)\delta^\alpha\sigma_{-q-l}(\xi), \qquad j\geq 1 .
\label{eq:PsiDOs.symbol-parametrix2}
\end{gather}

\item For any $s\in \R$ and $u\in{\scD'(\T^n_\theta)}$, we have 
\begin{equation*}
Pu\in W^{s}_2(\T^n_{\theta})\Longleftrightarrow u\in W^{s+m}_2(\T^n_{\theta}) .
\end{equation*}

\item The operator $P$ is hypoelliptic, i.e.,  for any $u\in \scD'(\T^n_\theta) $, we have
\begin{equation}\label{eq:PsiDOs.hypoellipticity}
 Pu \in C^\infty(\T^n_\theta)  \Longleftrightarrow u \in C^\infty(\T^n_\theta) . 
\end{equation}
%\item If $m>0$, then the operator $P-\lambda$ is hypoelliptic in the above sense for every $\lambda \in \C$.  
\end{enumerate}
\end{proposition}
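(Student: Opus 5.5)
The plan is to prove the four parts of Proposition~\ref{prop:Elliptic.existence-parametrix} in order, using the symbolic calculus of Proposition~\ref{prop:Composition.composition-PsiDOs} and the Sobolev mapping properties of Proposition~\ref{prop:Sob-Mapping.rho-on-Hs}.

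\textbf{Step 1 (symbol of a right parametrix).} Since $\rho_q(\xi)$ is invertible for $\xi\neq0$, we have $\rho_q^{-1}\in S_{-q}(\T^n_\theta\times\R^n)$. Define $\sigma_{-q-j}$ recursively by~(\ref{eq:PsiDOs.symbol-parametrix1})--(\ref{eq:PsiDOs.symbol-parametrix2}). By induction each $\sigma_{-q-j}$ is homogeneous of degree $-q-j$ and smooth on $\R^n\setminus0$: the right-hand side involves only products of homogeneous terms, $\xi$-derivatives and the derivations $\delta^\alpha$, which preserve homogeneity in the appropriate degrees. By the formula for $(\rho\sharp\sigma)_{-j}$, the recursion is exactly the statement that $(\rho\sharp\sigma)_0=1$ and $(\rho\sharp\sigma)_{-j}=0$ for $j\geq1$.

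\textbf{Step 2 (the parametrix itself).} A Borel-type argument, which is standard in the NC setting (see \cite{HLP:Part1}), gives $\sigma\in S^{-q}$ with $\sigma\sim\sum(1-\chi)\sigma_{-q-j}$. Setting $Q_R=P_\sigma$, we get $PQ_R=1+R_1$ with $R_1$ smoothing. Doing the same on the left, with $\rho_q^{-1}$ multiplying on the right, produces $Q_L$ with $Q_LP=1+R_2$. Then
\[
Q_L-Q_R=Q_L(PQ_R-R_1)-(Q_LP-R_2)Q_R=R_2Q_R-Q_LR_1\in\Psi^{-\infty},
\]
so $Q_R$ is a two-sided parametrix. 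This gives (1).

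For the uniqueness claim in (2), suppose $Q'$ is any parametrix in $\Psi^{-q}$. Then
\[
Q'-Q_R=Q'(PQ_R)-(Q'P)Q_R \bmod \Psi^{-\infty}\in\Psi^{-\infty}.
\]
Hence the homogeneous components of $Q'$ coincide with those of $Q_R$ by Remark~\ref{rem:PsiDOs.symbol-uniqueness}, so they satisfy the recursion.

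\textbf{Step 3 (regularity).} For (3), the implication "$\Leftarrow$" is Proposition~\ref{prop:Sob-Mapping.rho-on-Hs}. For "$\Rightarrow$", extend everything to $\scD'(\T^n_\theta)$ using Proposition~\ref{Adjoints.PsiDOs-extension}. Write $u=QPu-Ru$ with $R$ smoothing; the identity $QP=1+R$ holds on $C^\infty(\T^n_\theta)$ and extends to $\scD'(\T^n_\theta)$ by density and continuity. Then $QPu\in W^{s+m}_2$ because $Q$ has order $-m$, and $Ru\in C^\infty(\T^n_\theta)$. Part (4) follows by intersecting over all $s$, using $C^\infty=\bigcap_s W^s_2$.

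\textbf{Main obstacle.} The delicate step is the Borel summation together with checking the homogeneity of the recursively defined $\sigma_{-q-j}$, in particular that $\delta^\alpha$ applied to homogeneous symbols keeps them in $S_{\bullet}$. I would rely on the classical-symbol machinery of \cite{HLP:Part1,HLP:Part2} for both points.
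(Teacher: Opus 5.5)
Your proposal is correct and is the standard argument of the cited references; this paper does not reprove the result but quotes it from \cite{HLP:Part2}. The ingredients match: the homogeneous components are built recursively from $\rho\sharp\sigma\sim 1$ and summed by Borel, a separate left parametrix handles noncommutativity, parametrices are compared modulo $\Psi^{-\infty}$, and regularity comes from $u=QPu-Ru$. The step you flag as delicate is harmless, since $\delta^\alpha$ acts only in the $\T^n_\theta$-variable and commutes with dilations in $\xi$; the real input is $\rho_q(\xi)^{-1}\in S_{-q}(\T^n_\theta\times\R^n)$, which the paper records when it defines ellipticity.
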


If $m>0$ it can be further shown that $P-\lambda$ is hypoelliptic in the sense of~(\ref{eq:PsiDOs.hypoellipticity}) for every $\lambda \in \C$.  This has the following consequence.

\begin{corollary}[\cite{HLP:Part2}] \label{cor:Elliptic.P-lambda-hypoell}
 Let $P\in\Psi^q(\T^n_\theta)$, $q\in \C$, be elliptic. Then 
   \begin{equation*}
 \ker P :=\left\{u\in \scD'(\T^n_\theta) ; \ Pu=0\right\}\subseteq  C^\infty(\T^n_\theta)  
\end{equation*}
If $\Re q>0$, then, for every $\lambda \in \C$, we also have 
\begin{equation*}
 \ker (P-\lambda) :=\left\{u\in \scD'(\T^n_\theta) ; \ (P-\lambda )u=0\right\}\subseteq  C^\infty(\T^n_\theta)  
\end{equation*}
 \end{corollary}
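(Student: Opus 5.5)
The first inclusion follows from Proposition~\ref{prop:Elliptic.regularity}. Let $u\in\scD'(\T^n_\theta)$ with $Pu=0$. Since $0\in C^\infty(\T^n_\theta)$, hypoellipticity~(\ref{eq:PsiDOs.hypoellipticity}) gives $u\in C^\infty(\T^n_\theta)$.

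The same argument can be made concrete through the parametrix. Choose $Q\in\Psi^{-q}(\T^n_\theta)$ with $QP=1-R$ and $R\in\Psi^{-\infty}(\T^n_\theta)$. By Proposition~\ref{Adjoints.PsiDOs-extension}, both $Q$ and $P$ extend continuously to $\scD'(\T^n_\theta)$. The identity $QP=1-R$ holds on $C^\infty(\T^n_\theta)$, and this space is dense in $\scD'(\T^n_\theta)$. So it persists on $\scD'(\T^n_\theta)$, where $R$ is understood as its smoothing extension $\scD'\to C^\infty$. Hence $u=QPu+Ru=Ru\in C^\infty(\T^n_\theta)$.

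For the second inclusion, assume $\Re q>0$ and fix $\lambda\in\C$. I would use the remark preceding the corollary that $P-\lambda$ is hypoelliptic. To justify it, note that $P-\lambda=P_{\rho-\lambda}$, and that $\rho-\lambda$ is a classical symbol of order $q$. Indeed, the constant $\lambda$ is a polynomial symbol of degree $0$, and $\Re q>0$, so $\lambda$ only enters the homogeneous components $\rho_{q-j}$ with $q-j=0$. If no $j$ satisfies $q-j=0$, it enters only through the $O(|\xi|^{-\infty})$-compatible remainder after cutting off.

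It is cleaner to argue directly. The principal symbol of $P-\lambda$ is still $\rho_q(\xi)$, because $|\lambda|=O(|\xi|^0)$ and $0<\Re q$. Hence $P-\lambda$ is elliptic in $\Psi^q(\T^n_\theta)$: more precisely, $P-\lambda\in\Psi^q(\T^n_\theta)$ with the same principal symbol, since $\lambda\in\stS^0\subseteq\stS^{\Re q-\epsilon}$ for suitable $\epsilon$.

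Applying the first part to the elliptic operator $P-\lambda$ then gives $\ker(P-\lambda)\subseteq C^\infty(\T^n_\theta)$.

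The main obstacle is verifying that $P-\lambda$ is a classical elliptic operator of order $q$ when $q\notin\Z$, since the constant $\lambda$ is homogeneous of degree $0\notin q-\N_0$. I would avoid this issue by working only with standard symbols. Let $Q$ be a parametrix of $P$. Then $Q(P-\lambda)=1-R-\lambda Q$, with $\lambda Q\in\OP^{-\Re q}$. A Neumann-series parametrix $\sum_{j<N}(\lambda Q)^j Q$ modulo $\OP^{-N\Re q}$, followed by an asymptotic summation, yields a left parametrix $Q_\lambda$ of $P-\lambda$ modulo smoothing operators. The argument of the first part then applies verbatim.
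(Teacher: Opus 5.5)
Your final argument is correct, and it follows the paper's route. The first inclusion is read off from the hypoellipticity in Proposition~\ref{prop:Elliptic.regularity}, exactly as in the paper. For the second inclusion the paper gives no argument of its own: it only quotes from \cite{HLP:Part2} that $P-\lambda$ is hypoelliptic when $\Re q>0$.

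Your Neumann-series construction supplies a proof of that quoted fact. Set $T=\lambda Q\in\OP^{-\Re q}$. Then $\big(\sum_{j<N}T^j\big)Q(P-\lambda)=1-T^N-\sum_{j<N}T^jR$, and the last sum is smoothing. You can finish with an asymptotic summation in the standard classes. You can also avoid summation altogether: $(P-\lambda)u=0$ gives $u=T^Nu+S_Nu$ with $S_N$ smoothing. So $u\in W_2^s(\T^n_\theta)$ forces $u\in W_2^{s+N\Re q}(\T^n_\theta)$ for every $N$.

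You should, however, delete the two middle paragraphs. The following claims are false whenever $q\notin\N$ and $\lambda\neq 0$:
\begin{itemize}
\item that $\rho-\lambda$ is a classical symbol of order $q$;
\item that $\lambda$ enters only through an $O(|\xi|^{-\infty})$ remainder;
\item that $P-\lambda\in\Psi^q(\T^n_\theta)$ with principal symbol $\rho_q$.
\end{itemize}
For $N>\Re q$, the remainder $\rho-\lambda-\sum_{j<N}\rho_{q-j}$ contains the constant $-\lambda$, which is not $O(|\xi|^{\Re q-N})$. The inclusion $\lambda\in\stS^0\subseteq\stS^{\Re q-\epsilon}$ shows only that $\lambda$ is of lower order in the standard sense, not that $\rho-\lambda$ is classical. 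This is why hypoellipticity of $P-\lambda$ is not a direct instance of Proposition~\ref{prop:Elliptic.regularity} and has to be stated separately. You flag this yourself at the end, so the final proof does not depend on those paragraphs. As written, though, they assert something untrue.

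The parametrix is also more machinery than the corollary needs. Any $u\in\scD'(\T^n_\theta)$ lies in some $W_2^s(\T^n_\theta)$. Then $(P-\lambda)u=0$ gives $Pu=\lambda u\in W_2^s(\T^n_\theta)$. Part~(3) of Proposition~\ref{prop:Elliptic.regularity} then yields $u\in W_2^{s+\Re q}(\T^n_\theta)$. Iterating, which is where $\Re q>0$ enters, gives $u\in\bigcap_t W_2^t(\T^n_\theta)=C^\infty(\T^n_\theta)$. Your route buys a bit more, namely an explicit left parametrix of $P-\lambda$ modulo smoothing operators. The bootstrap uses only results already stated in the paper.
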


\subsection{Spectral theory of elliptic \psidos} \label{subsec:spectra-of-psidos}
Let $P\in \Psi^q(\T^n_\theta)$ be elliptic with $m:=\Re q>0$. We regard $P$ as an unbounded operator on $L_2(\T^n_\theta)$ with domain $W_2^m(\T^n_\theta)$. It can then be shown that $P$ is closed, Fredholm, and has closed range. Its adjoint agrees with its formal adjoint $P^*\in  \Psi^{\bar{q}}(\T^n_\theta)$ with domain $W_2^m(\T^n_\theta)$. In particular, if $P$ is formally selfadjoint (resp., normal), then it is selfadjoint (resp., normal). 

We denote by $\Sp(P)$ the spectrum of $P$, i.e., the set of $\lambda \in \C$ such that $P-\lambda$ is not a bijection from its domain $W_2^m(\T^n_\theta) $ onto $L_2(\T^n_\theta)$. For all $\lambda \in \C\setminus \Sp(P)$, the operator $P-\lambda:W_2^m(\T^n_\theta)  \rightarrow L_2(\T^n_\theta)$ is actually an isomorphism of Hilbert spaces (see~\cite{HLP:Part2}). Moreover, $\Sp(P^*)=\{\overline{\lambda}; \ \lambda \in \Sp(P)\}$.

\begin{proposition}[see \cite{HLP:Part2}] \label{prop:Spectral.spectrum-P}
 There are only two possibilities for the spectrum of $P$. Either $\Sp(P)$ is all of $\C$, or it is an unbounded discrete set consisting of isolated eigenvalues with finite multiplicity. 
 \end{proposition}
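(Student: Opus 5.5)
The plan is to use the Fredholm property of $P-\lambda$ together with analytic Fredholm theory on the connected set $\C$. For every $\lambda\in\C$, $P-\lambda$ is again elliptic in $\Psi^q(\T^n_\theta)$: since $\Re q>0$, the constant $\lambda$ is a lower-order term and does not change the principal symbol. By Proposition~\ref{prop:Elliptic.existence-parametrix} we get a parametrix $Q\in\Psi^{-q}(\T^n_\theta)$ with $PQ-1,\ QP-1\in\Psi^{-\infty}(\T^n_\theta)$. Smoothing operators map $L_2$ continuously into $C^\infty(\T^n_\theta)\subseteq W_2^m(\T^n_\theta)$, and the inclusion $W_2^{t}\hookrightarrow W_2^s$ is compact for $t>s$. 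Hence these remainders are compact on $L_2(\T^n_\theta)$ and on $W_2^m(\T^n_\theta)$, and $Q:L_2\to W_2^m$ is bounded by Proposition~\ref{prop:Sob-Mapping.rho-on-Hs}. It follows that $\lambda\mapsto P-\lambda\in\sL(W_2^m(\T^n_\theta),L_2(\T^n_\theta))$ is a holomorphic (indeed affine) family of Fredholm operators. Moreover its index is independent of $\lambda$, because $\lambda\mapsto P-\lambda$ is continuous in norm and $\C$ is connected.

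The next step splits into two cases. If $\Sp(P)\neq\C$, pick $\lambda_0\notin\Sp(P)$. Then $P-\lambda_0$ is an isomorphism, so the index is $0$ for all $\lambda$. I would then write
\[
P-\lambda=\bigl(1-(\lambda-\lambda_0)(P-\lambda_0)^{-1}\bigr)(P-\lambda_0),
\]
where $K:=(P-\lambda_0)^{-1}$ is compact on $L_2(\T^n_\theta)$, being bounded into $W_2^m$ followed by the compact inclusion. Thus $\lambda\in\Sp(P)$ if and only if $(\lambda-\lambda_0)^{-1}$ is a nonzero eigenvalue of $K$, for $\lambda\neq\lambda_0$. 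By Riesz–Schauder theory the nonzero spectrum of the compact operator $K$ is a discrete set accumulating only at $0$, with finite-dimensional generalized eigenspaces. Transporting back, $\Sp(P)$ is a discrete set of eigenvalues with finite algebraic multiplicity, and it accumulates only at infinity.

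It remains to show unboundedness, and I expect this to be the main obstacle. I would argue by contradiction: if $\Sp(P)$ were bounded (hence finite), the resolvent $(P-\lambda)^{-1}$ would be entire outside a disk. The Riesz projection onto the complement $\Pi=\frac{1}{2\pi i}\int_{|\zeta|=R}(\zeta-P)^{-1}d\zeta$ would equal $1$ minus a finite-rank operator. On the range of $1-\Pi$, the restriction of $P$ would have empty spectrum, so its inverse would be a bounded operator with spectrum $\{0\}$, i.e.\ quasinilpotent. But that inverse is compact (its range lies in $W_2^m$) and injective on an infinite-dimensional space. The contradiction I would then try first is Lidskii/weak-Schatten: since $(P-\lambda_0)^{-1}\in\sL_{p,\infty}$ by Proposition~\ref{prop:PsiDOs.Schatten} ($p=n/m$), some power of it is trace class, and I would aim to show that this power has nonzero trace. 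Ruling out quasinilpotence is genuinely delicate, since compact injective quasinilpotent operators exist (Volterra). I would therefore rely on the structure of the resolvent via the principal symbol, comparing with the parametrix. If that fails, I would simply cite~\cite{HLP:Part2} for unboundedness.

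In the remaining case, where $\Sp(P)=\C$, nothing further is needed, which gives the dichotomy.
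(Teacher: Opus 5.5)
Your second step is the standard argument and is correct. If $\lambda_0\notin\Sp(P)$, then $K=(P-\lambda_0)^{-1}$ is compact on $L_2(\T^n_\theta)$. The factorization $P-\lambda=(1-(\lambda-\lambda_0)K)(P-\lambda_0)$ identifies $\Sp(P)\setminus\{\lambda_0\}$ with $\{\lambda_0+\mu^{-1};\ \mu\in\Sp(K)\setminus 0\}$, and Riesz--Schauder then gives discreteness and finite multiplicities. The paper itself gives no proof here; it only cites \cite{HLP:Part2}. Your Step~1 is not needed for this, and as written it is slightly off. For $q\notin\N_0$ the operator $P-\lambda$ is not in the classical class $\Psi^q(\T^n_\theta)$, because the constant $\lambda$ is homogeneous of degree $0\notin q-\N_0$. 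The Fredholm property still holds, since $\lambda$ times the compact inclusion $W_2^m\hookrightarrow L_2$ is a compact perturbation.

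The genuine gap is unboundedness, and no argument can close it: without further hypotheses it is false, and it fails exactly through the quasinilpotent scenario you anticipated. On $\T^2_\theta$ (any $\theta$), take $\beta\in\R\setminus\Z$ and
\[
P=U_1(\delta_1+i\delta_2+\beta)\in\Psi^1(\T^2_\theta), \qquad PU^k=(k_1+ik_2+\beta)\,U^{k+e_1}.
\]
Its principal symbol $(\xi_1+i\xi_2)U_1$ is invertible for $\xi\neq0$, so $P$ is elliptic. The weights never vanish and are comparable to $1+|k|$, so $P:W_2^1(\T^2_\theta)\to L_2(\T^2_\theta)$ is bijective, with inverse the weighted backward shift $KU^k=(k_1-1+ik_2+\beta)^{-1}U^{k-e_1}$. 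Moreover $\|K^N\|\leq\sup_{j\in\Z}\prod_{l=1}^N|j-l+\beta|^{-1}=O(2^N/(N-1)!)$. Hence $K$ is quasinilpotent, $P-\lambda=(1-\lambda K)P$ is invertible for every $\lambda$, and $\Sp(P)=\emptyset$.

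Both of your fallback routes fail on this example. For $N\geq3$, $K^N$ is trace-class but $\Tr K^N=0$, since it is off-diagonal in the basis $U^k$; by Lidskii this must happen for any quasinilpotent operator. Principal-symbol information is also of no help, because $\Sp(\rho_1(\xi))=|\xi|\,\mathbb{S}^1$ and so $\Theta(P)=\emptyset$. Citing \cite{HLP:Part2} does not rescue the word ``unbounded'' either.

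What your argument actually proves is the correct dichotomy: either $\Sp(P)=\C$, or $\Sp(P)$ is a closed discrete set, possibly finite or empty, of eigenvalues with finite algebraic multiplicity. Unboundedness requires an extra hypothesis that forces completeness of generalized eigenvectors in the infinite-dimensional space $L_2(\T^n_\theta)$. Examples are $P$ normal, or rays of minimal growth dividing $\C$ into sectors of aperture $<\pi m/n$, via the Dunford--Schwartz criterion used in Proposition~\ref{prop:powers.range-sect-proj}.
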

 
 In the special case of normal operators we even obtain the following result. 
 
 \begin{proposition}[see \cite{HLP:Part2}] \label{prop:Spectral.ortho-splitting}
If $P$ is normal and $\Sp (P)\neq \C$ (e.g., $P$ is selfadjoint), then we have an orthogonal decomposition, 
 \begin{equation*}
 L_2(\T^n_\theta) = \bigoplus_{\lambda \in \Sp(P)} \ker (P-\lambda). 
\end{equation*}
In particular, $P$ admits an orthonormal eigenbasis. 
\end{proposition}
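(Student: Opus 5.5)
The plan is to reduce the statement to the spectral theorem for compact normal operators, applied to a resolvent of $P$. Since $\Sp(P)\neq\C$, fix $\lambda_0\in\C\setminus\Sp(P)$ and set $R:=(P-\lambda_0)^{-1}$. By the discussion preceding Proposition~\ref{prop:Spectral.spectrum-P}, $P-\lambda_0:W_2^m(\T^n_\theta)\rightarrow L_2(\T^n_\theta)$ is an isomorphism. Hence $R$ is bounded from $L_2(\T^n_\theta)$ to $W_2^m(\T^n_\theta)$. Since $m>0$, composing with the compact inclusion $W_2^m(\T^n_\theta)\hookrightarrow L_2(\T^n_\theta)$ shows that $R$ is a compact operator on $L_2(\T^n_\theta)$. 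It is also injective, with range $W_2^m(\T^n_\theta)$.

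Next I show that $R$ is normal. The adjoint of $P-\lambda_0$ is $P^*-\bar\lambda_0$ with the same domain $W_2^m(\T^n_\theta)$, so $R^*=(P^*-\bar\lambda_0)^{-1}$. It therefore suffices to show that $R$ and $R^*$ commute, i.e., that $(P-\lambda_0)(P^*-\bar\lambda_0)=(P^*-\bar\lambda_0)(P-\lambda_0)$ as operators $W_2^{2m}(\T^n_\theta)\rightarrow L_2(\T^n_\theta)$, which are both isomorphisms. Expanding, this reduces to $PP^*=P^*P$, which is exactly the normality of $P$. These are composites of \psidos, so the identity holds on $C^\infty(\T^n_\theta)$ and extends by continuity to $W_2^{2m}(\T^n_\theta)$ via Proposition~\ref{prop:Sob-Mapping.rho-on-Hs}. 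Inverting both sides gives $R^*R=RR^*$.

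This domain bookkeeping is the main point requiring care. I would handle it through elliptic regularity, Proposition~\ref{prop:Elliptic.regularity}, applied to the elliptic operators $P-\lambda_0$ and $P^*-\bar\lambda_0$: it guarantees that each composite maps $W_2^{2m}(\T^n_\theta)$ bijectively onto $L_2(\T^n_\theta)$. Alternatively, if normality of $P$ is taken in the unbounded sense (equal domains and $\|Pu\|_0=\|P^*u\|_0$), one can instead quote the standard fact that the resolvent of a closed normal operator is normal.

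The spectral theorem for compact normal operators then gives
\begin{equation*}
L_2(\T^n_\theta)=\ker R\oplus\bigoplus_{\mu\neq 0}\ker(R-\mu),
\end{equation*}
an orthogonal sum in which each $\ker(R-\mu)$ is finite-dimensional. Here $\ker R=0$ because $R$ is injective. For $\mu\neq 0$, the identity $R-\mu=-\mu(P-\lambda_0-\mu^{-1})R$ on $L_2(\T^n_\theta)$ shows that $\ker(R-\mu)=\ker(P-\lambda)$ with $\lambda=\lambda_0+\mu^{-1}$. The bijection $\mu\mapsto\lambda_0+\mu^{-1}$ matches the nonzero eigenvalues of $R$ with the eigenvalues of $P$. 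By Proposition~\ref{prop:Spectral.spectrum-P}, every point of $\Sp(P)$ is an eigenvalue, so the decomposition above is precisely $L_2(\T^n_\theta)=\bigoplus_{\lambda\in\Sp(P)}\ker(P-\lambda)$.

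Finally, choosing an orthonormal basis in each finite-dimensional eigenspace $\ker(P-\lambda)$ and taking their union yields an orthonormal eigenbasis of $P$. By Corollary~\ref{cor:Elliptic.P-lambda-hypoell}, these eigenvectors in fact lie in $C^\infty(\T^n_\theta)$.
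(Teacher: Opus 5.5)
Your proof is correct. The paper does not prove this proposition; it only quotes it from \cite{HLP:Part2}, so there is no in-paper argument to compare with. Reducing to the spectral theorem for the compact normal operator $(P-\lambda_0)^{-1}$, as you do, is the standard way to obtain the result.

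One small technical point concerns the regularity step. Unless $q$ is a positive integer, $P-\lambda_0$ is not literally in $\Psi^q(\T^n_\theta)$, because the constant $\lambda_0$ is not among the homogeneous terms of degree $q-j$. It is therefore cleaner to apply Proposition~\ref{prop:Elliptic.regularity} to $P$ and $P^*$ themselves. For instance, if $u\in W_2^m(\T^n_\theta)$ and $(P^*-\bar\lambda_0)u\in W_2^m(\T^n_\theta)$, then $P^*u\in W_2^m(\T^n_\theta)$, and hence $u\in W_2^{2m}(\T^n_\theta)$.
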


 \section{Parametric Pseudodifferential Calculus} \label{sec:Parametric-PsiDOs}
We survey the main definitions and properties of pseudodifferential operators with parameter from~\cite{LP:Part1}. Throughout, $\C^*=\C\setminus \{0\}$, and we refer to~\cite[Appendix~C]{HLP:Part1} for background on smooth maps valued in locally convex spaces.

\subsection{Pseudo-cones and $\Hol^d(\Lambda)$-families} 
By a \emph{cone} $\Theta \subseteq \C^*$ we always mean one with vertex at the origin: $\lambda \in \Theta \Rightarrow t\lambda \in \Theta$ for all $t>0$.

 \begin{definition}
A  connected set $\Lambda\subseteq \C$ is called a \emph{pseudo-cone} when there are a cone $\Theta \subseteq \C^*$ and a disk $D$ about the origin such that 
$\Lambda\setminus D=\Theta\setminus D$. The cone $\Theta$, which is unique, is called the \emph{conical part} of $\Lambda$. If $\Lambda$ is a pseudo-cone, so are $\op{Int}(\Lambda)$ and $\overline{\Lambda}$.
 \end{definition}

\begin{example}
 Any angular sector $\Theta=\{ \phi <\arg \lambda <\phi'\}$ is a cone, and hence is a pseudo-cone. Chopping off a disk from $\Theta$ or gluing to it a disk or an annulus  provides us with pseudo-cones with conical part $\Theta$. 
 \end{example}
 
\begin{figure}[h]
\begin{minipage}{0.20\linewidth}
\centering{\def\svgwidth{\columnwidth}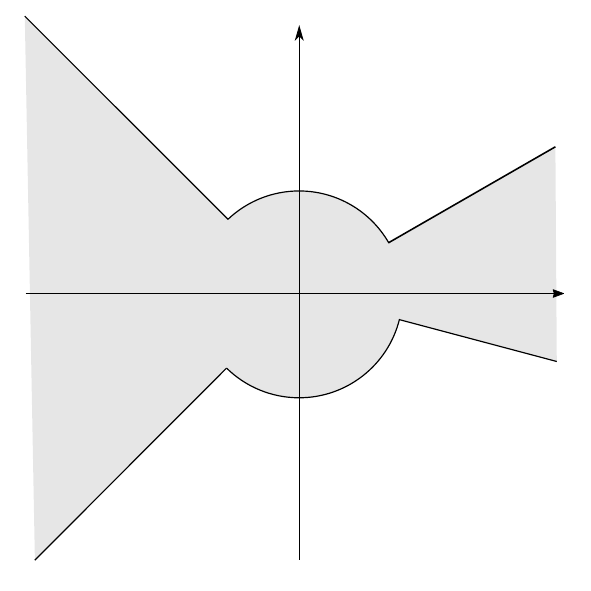}
\end{minipage}
\begin{minipage}{0.20\linewidth}
\centering{\def\svgwidth{\columnwidth}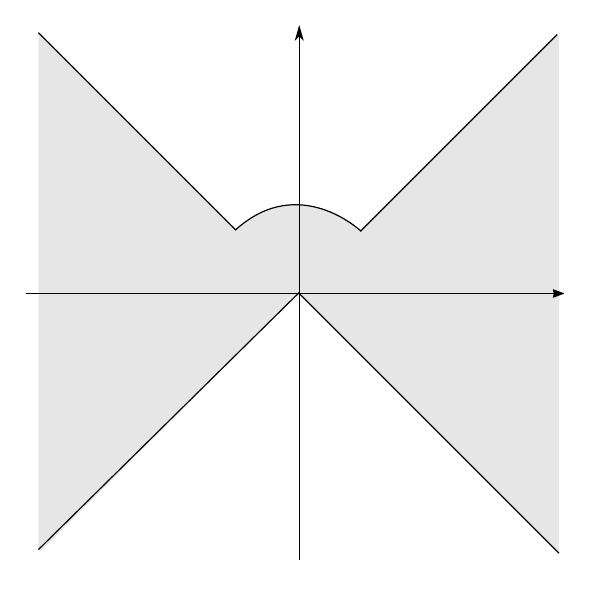}
\end{minipage}
\begin{minipage}{0.20\linewidth}
\centering{\def\svgwidth{\columnwidth}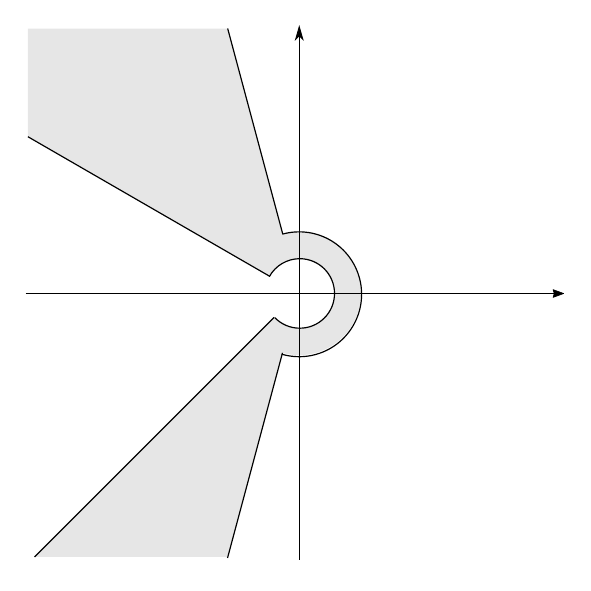}
\end{minipage}
\caption{Examples of Pseudo-Cones}
\end{figure}

\begin{definition}
 Given pseudo-cones  $\Lambda$ and $\Lambda'$ we shall write $\Lambda'\subsubset\Lambda$ to mean that $\overline{\Lambda'}\subseteq \op{Int}(\Lambda)$. If $\Theta$, $\Theta'$ are the conical parts of $\Lambda$, $\Lambda'$ respectively, this implies that $\Theta'\cap\bS^1$ is relatively compact in $\Theta \cap \bS^{1}$.
\end{definition}
 
In what follows, we let $\sE$ be a topological vector space (TVS).  

\begin{definition}\label{def:Holp.holomorphic-maps} 
Given any open $\Omega \subseteq \C$, a map $u:\Omega\rightarrow \sE$ is \emph{holomorphic at a point} $z_0\in \Omega$ if there is $u'(z_0)\in \sE$ such that
\begin{equation}
 \lim_{z\rightarrow z_0} \frac{u(z)-u(z_0)}{z-z_0} = u'(z_0) \quad \text{in $\sE$}. 
\end{equation}
 We say that the map $u(z)$ is \emph{holomorphic on} $\Omega$ if it is holomorphic at every point of $\Omega$. 
 \end{definition}

\begin{remark}
If $\Phi:\sE\rightarrow \sE_1$ is a continuous linear map to another TVS $\sE_1$, then $\Phi[u(z)]$ is holomorphic with $(\Phi[u(z)])'= \Phi[u'(z)]$. Moreover, if $\sE$ is locally convex, then a map $u:\Omega\rightarrow \sE$ is holomorphic iff it is $C^\infty$ and satisfies $\partial_{\overline{z}}u=0$ (see, e.g., \cite[Remark~4.10]{LP:Part1}). 
\end{remark}

\begin{definition}
 Suppose $\Omega$ is open in $\R^N\times \C$ for some $N\geq 1$. Then $C^{\infty,\omega}(\Omega; \sE)$ consists of maps $(\eta,\lambda) \rightarrow f(\eta;\lambda)\in \sE$ that are $C^\infty$ in $\eta$ and holomorphic in $\lambda$. For $\sE=C^\infty(\T^n_\theta)$ we write $C^{\infty,\omega}(\T^n_\theta\times \Omega)$.
\end{definition}

\begin{remark}\label{rmk:Symbols.identification-Cinftyw}
If $\sE$ is locally convex, then $C^{\infty,\omega}(\Omega; \sE)$ can be identified with the closed subspace of $C^\infty(\Omega;\sE)$ of solutions of $\partial_{\widebar{\lambda}} f=0$. For $\Omega=V\times \Lambda$ with $V\subseteq \R^N$ open, this gives an isomorphism $C^{\infty,\omega}(\Omega; \sE)\simeq \Hol(\Lambda; C^\infty(V;\sE))$; for $\sE=C^\infty(\T^n_\theta)$ in particular, $C^{\infty,\omega}(\Omega; \sE)\simeq \Hol(\Lambda; C^\infty(\T^n_\theta\times V))$.
\end{remark}

Throughout the remainder of this section we let $\Lambda$ be an \emph{open} pseudo-cone. 

\begin{definition}\label{def:symbols.Hold-sE}
Suppose that $\sE$ is a locally convex space. Then $\Hol^d(\Lambda;\sE)$, $d\in\R$, consists of holomorphic maps $f: \Lambda \rightarrow \sE$ such that, for every continuous semi-norm $p$ on $\sE$ and every pseudo-cone $\Lambda'\subsubset\Lambda$, there is $C_{p\Lambda'}>0$ such that
\begin{equation} \label{eq:Parameter.vector-with-parameter-estimate}
p\left[f(\lambda)\right]\leq C_{p\Lambda'}(1+|\lambda|)^d \qquad \forall\lambda\in\Lambda' .
\end{equation}
This definition continues to make sense if $\sE$ is a quasi-Banach space by requiring the estimate~(\ref{eq:Parameter.vector-with-parameter-estimate}) to hold for the quasi-norm of $\sE$.  
\end{definition}

\subsection{Standard parametric symbols} 
\begin{definition}
 $\stS^{m,d}(\T^n_\theta\times\R^n\times \Lambda)$, $m,d\in \R$, consists of maps $\rho(\xi;\lambda)\in C^{\infty,\omega} (\T^n_\theta\times\R^n\times \Lambda)$ such that, for all pseudo-cones $\Lambda'\subsubset \Lambda$ and multi-orders $\alpha$ and $\beta$, there is $C_{\Lambda'\alpha\beta}>0$ such that
\begin{equation*}
 \big\|\delta^\alpha \partial_\xi^\beta \rho(\xi;\lambda)\big\| \leq  C_{\Lambda'\alpha\beta}(1+|\lambda|)^d(1+|\xi|)^{m-|\beta|} \qquad \forall (\xi, \lambda)\in \R^n\times \Lambda'. 
\end{equation*}
\end{definition}

\begin{example}
 Any symbol $\rho(\xi)\in \stS^m(\T^n_\theta\times\R^n)$ can be regarded as an element of $\stS^{m,0}(\T^n_\theta\times\R^n\times \Lambda)$ that does not depend on $\lambda$. 
\end{example}

\begin{remark} \label{rmk:Parameter.standard-symbols-with-parameter-identification}
Let $\rho(\xi;\lambda)\in \stS^{m,d}(\T^n_\theta\times\R^n\times \Lambda)$. For every $\lambda\in \Lambda$ we get a symbol $\rho(\cdot;\lambda)\in \stS^m(\T^n_\theta\times\R^n)$, and so we get a family in $\stS^m(\T^n_\theta\times\R^n)$ parametrized by $\Lambda$. This is actually an $\Hol^d(\Lambda)$-family. This provides us with a  one-to-one correspondence,  
\begin{equation}
 \stS^{m,d}(\T^n_\theta\times\R^n\times \Lambda)\simeq \Hol^d(\Lambda;\stS^m(\T^n_\theta\times\R^n)).
 \label{eq:Parameter.parametric-symbols-identification} 
\end{equation}
\end{remark}

\begin{definition}
 $\stS^{-\infty,d}(\T^n_\theta\times\R^n\times \Lambda)$, $d\in \R$, consists of maps $\rho(\xi;\lambda)\in C^{\infty,\omega}(\T^n_\theta\times\R^n\times \Lambda)$ such that, given any $N\geq 0$, for all pseudo-cones $\Lambda'\subsubset \Lambda$ and multi-orders $\alpha$ and $\beta$, there is $C_{\Lambda'N\alpha\beta}>0$ such that
\begin{equation*}
 \big\|\delta^\alpha \partial_\xi^\beta \rho(\xi;\lambda)\big\| \leq  C_{\Lambda'N\alpha\beta}(1+|\xi|)^{-N}(1+|\lambda|)^d \qquad \forall (\xi, \lambda)\in \R^n\times \Lambda'. 
\end{equation*}
Equivalently, $\stS^{-\infty,d}(\T^n_\theta\times\R^n\times \Lambda)=\bigcap_{m\in \R} \stS^{m,d}(\T^n_\theta\times\R^n\times \Lambda)$. 
\end{definition}

\begin{remark}
 The one-to-one correspondence~(\ref{eq:Parameter.parametric-symbols-identification}) induces a vector space isomorphism,  
\begin{equation*}
\stS^{-\infty,d}(\T^n_\theta\times\R^n\times \Lambda)\simeq \Hol^d(\Lambda;\stS^{-\infty}(\T^n_\theta\times\R^n)).
\end{equation*}
\end{remark}

In what follows we let $(m_j)_{j \geq 0}$ be a (strictly) decreasing sequence of real numbers converging to $-\infty$. 

\begin{definition} \label{def:Parameter.Standard-asymptotic}
Given $\rho(\xi;\lambda)\in C^{\infty,\omega}(\T^n_\theta\times\R^n\times \Lambda)$ and $\rho_{j}(\xi;\lambda)\in\stS^{m_j,d}(\T^n_\theta\times\R^n\times \Lambda)$, $j\geq 0$, we shall write $\rho(\xi;\lambda)\sim\sum_{j\geq 0}\rho_{j}(\xi;\lambda)$ when
\begin{equation} \label{eq:Parameter.Standard-asymptotic}
\rho(\xi;\lambda)-\sum_{j<N}\rho_{j}(\xi;\lambda)\in\stS^{m_N,d}(\T^n_\theta\times\R^n\times \Lambda) \qquad \text{for all $N\geq 0$} .
\end{equation}
\end{definition}

\begin{remark}
 It can be shown (see~\cite{LP:Part1}) that~(\ref{eq:Parameter.Standard-asymptotic}) is equivalent to requiring that, given any $N\geq 0$, for all pseudo-cones $\Lambda'\subsubset \Lambda$ and multi-orders $\alpha$ and $\beta$, as soon as $J$ is large enough there is a constant $C>0$ such that, for all $(\xi,\lambda)\in \R^n\times \Lambda'$, we have 
\begin{equation} \label{eq:Parameter.Standard-asymptotic-qualitative}
 \big\|\delta^\alpha \partial_\xi^\beta \big(\rho-\sum_{j<J}\rho_{j}\big)(\xi;\lambda)\big\| \leq C(1+|\lambda|)^d(1+|\xi|)^{-N}. 
\end{equation}
\end{remark}

\subsection{Homogeneous parametric symbols}
Throughout this paper $w>0$ is a fixed positive real number, and $\Theta$ denotes the conical part of $\Lambda$ (an open cone in $\C^*$). For $c\geq 0$, we set
\begin{equation} \label{eq:Parameter.parameter-set-for-homogeneous-symbols}
\Omega_c(\Theta) = \big\{ (\xi,\lambda)\in(\Rn\setminus 0)\times\C; \ \text{$\lambda\in\Theta$ or $|\lambda|<c|\xi|^w$} \big\} .
\end{equation}
In particular, $\Omega_0(\Theta) = (\Rn\setminus 0)\times\Theta$. 

\begin{definition} \label{def:Parameter.homogeneous-symbol}
$S_m^d(\T^n_\theta\times\Omega_c(\Theta))$, $m,d\in\R$, consists of maps $\rho(\xi;\lambda)\in C^{\infty,\omega}(\T^n_\theta\times\Omega_c(\Theta))$ that satisfy the following two properties:
\begin{enumerate}
\item[(i)] $\rho(t\xi;t^w\lambda) = t^m\rho(\xi;\lambda)$ for all $(\xi,\lambda)\in\Omega_c(\Theta)$ and $t>0$.

\item[(ii)] Given any cone $\Theta'$ such that $\overline{\Theta'}\setminus \{0\} \subseteq \Theta$, for all compacts $K\subseteq \R^n\setminus 0$ and multi-orders $\alpha$, $\beta$, there is $C_{\Theta'K\alpha\beta}>0$ such that 
\begin{equation*}
 \norm{\delta^\alpha \partial_\xi^\beta \rho(\xi;\lambda)} \leq C_{\Theta'K\alpha\beta} \big(1+|\lambda|\big)^d \qquad \forall (\xi,\lambda)\in K\times \Theta'. 
\end{equation*}
\end{enumerate}
\end{definition}

\begin{example} \label{ex:Parameter.homogeous-symbol-can-be-seen-as-homogeneous-symbol-with-parameter}
 Any homogeneous symbol $\rho(\xi)\in S_m(\T^n_\theta\times\R^n)$ can be regarded as an element of $S^{0}_m(\T^n_\theta\times\Omega_c(\Theta))$ for every $c>0$. 
\end{example}

The following lemma shows how to cut off homogeneous parametric symbols in order to get standard parametric symbols. 

\begin{lemma}[see~\cite{LP:Part1}] \label{lem:Parameter.homogeneous-symbol-estimate}
Suppose that $\rho(\xi;\lambda)\in S_m^d(\T^n_\theta\times\Omega_c(\Theta))$, $m,d\in\R$. Let $\chi(\xi)\in C_c^\infty(\Rn)$ be such that $\chi(\xi) = 1$ for 
$|\xi|\leq (c^{-1}R)^{\frac{1}{w}}$, where $R>0$ is such that $\Lambda \subseteq \Theta \cup D(0,R)$. 
\begin{enumerate}
 \item[(i)] If $d\geq 0$, then $(1-\chi(\xi))\rho(\xi;\lambda)\in \stS^{m,d}(\T^n_\theta\times\R^n\times \Lambda)$.
 
 \item[(ii)] If $d<0$, then  
 \begin{equation*}
 (1-\chi(\xi))\rho(\xi;\lambda)\in  \bigcap_{d\leq d'\leq 0} \stS^{m+w|d'|,d'}(\T^n_\theta\times\R^n\times \Lambda). 
 \end{equation*}
\end{enumerate}
\end{lemma}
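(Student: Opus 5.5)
The plan is to use the homogeneity property (i) to reduce everything to estimates on a compact set of directions. Set $\tilde\rho(\xi;\lambda)=(1-\chi(\xi))\rho(\xi;\lambda)$. First, $\tilde\rho$ is defined and in $C^{\infty,\omega}$ on $\R^n\times\Lambda$. On $\supp\chi$ it vanishes. Off $\supp\chi$ we have $|\xi|^w> c^{-1}R$. So if $\lambda\in\Lambda\setminus\Theta$, then $|\lambda|<R<c|\xi|^w$, and hence $(\xi,\lambda)\in\Omega_c(\Theta)$. Holomorphy in $\lambda$ and smoothness in $\xi$ are then inherited from $\rho$. It remains to prove the symbol estimates on $\R^n\times\Lambda'$ for any pseudo-cone $\Lambda'\subsubset\Lambda$. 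Since $\chi$ has compact support, the derivatives falling on $1-\chi$ are harmless. By Leibniz it therefore suffices to bound $\|\delta^\alpha\partial_\xi^\beta\rho(\xi;\lambda)\|$ for $|\xi|\geq r_0:=(c^{-1}R)^{1/w}$ and $\lambda\in\Lambda'$.

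Differentiating (i) shows that $\delta^\alpha\partial_\xi^\beta\rho$ is homogeneous of degree $m-|\beta|$ under $(\xi,\lambda)\mapsto(t\xi,t^w\lambda)$. Take $t=|\xi|^{-1}$. This gives
\[\|\delta^\alpha\partial_\xi^\beta\rho(\xi;\lambda)\|=|\xi|^{m-|\beta|}\,\|\delta^\alpha\partial_\xi^\beta\rho(\hat\xi;|\xi|^{-w}\lambda)\|,\qquad \hat\xi=\xi/|\xi|.\]
The key step is a uniform bound $\|\delta^\alpha\partial_\xi^\beta\rho(\hat\xi;\mu)\|\leq C(1+|\mu|)^d$ for $\hat\xi\in\bS^{n-1}$ and $\mu$ ranging over the rescaled set $\{|\xi|^{-w}\lambda;\ \lambda\in\Lambda',\ |\xi|\geq r_0\}$. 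Split this set into two parts. If $\lambda$ lies in the conical part $\Theta'$ of $\Lambda'$ (with $\overline{\Theta'}\setminus 0\subseteq\Theta$), then $\mu\in\Theta'$, and (ii) with $K=\bS^{n-1}$ gives the bound. Otherwise $|\lambda|\leq R'$ for some $R'$. Then $\mu$ lies in a region of the form $\{|\mu|\leq R'r_0^{-w}\}$. One must check that $R'r_0^{-w}<c$ can be arranged, or else handle it directly. Here I would use that $\lambda\in\Lambda\setminus\Theta$ forces $|\mu|<c$, while the remaining $\lambda\in\Theta$ with $|\lambda|\le R'$ lie in a compact piece. The expected obstacle is this compactness argument: I need $\rho$ to be bounded, with its derivatives, on $\bS^{n-1}\times\overline{B}$ for a closed set $B\subseteq\{|\mu|<c\}\cup\Theta'$. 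I would get this from continuity of $\rho$ on the open set $\Omega_c(\Theta)$, after shrinking slightly: $\Lambda'\subsubset\Lambda$ gives a margin so that the closure of the relevant $\mu$-set stays inside $\{|\mu|<c\}\cup\Theta$. This yields a constant bound on the bounded part and $(1+|\mu|)^d$ on the conical part.

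With $\mu=|\xi|^{-w}\lambda$ we obtain
\[\|\delta^\alpha\partial_\xi^\beta\tilde\rho\|\leq C|\xi|^{m-|\beta|}\bigl(1+|\xi|^{-w}|\lambda|\bigr)^d .\]
If $d\geq0$, use $1+|\xi|^{-w}|\lambda|\leq C(1+|\lambda|)$ for $|\xi|\geq r_0$. This gives (i). If $d<0$, fix $d'\in[d,0]$. Since $1+|\xi|^{-w}|\lambda|\geq 1$, we have $(1+|\xi|^{-w}|\lambda|)^d\leq(1+|\xi|^{-w}|\lambda|)^{d'}$. Moreover $1+|\xi|^{-w}|\lambda|\geq C|\xi|^{-w}(1+|\lambda|)$ for $|\xi|\geq r_0$. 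Raising to the negative power $d'$ gives the bound $C|\xi|^{w|d'|}(1+|\lambda|)^{d'}$. Hence $\tilde\rho\in\stS^{m+w|d'|,d'}$, which proves (ii).
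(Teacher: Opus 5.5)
Your proof is correct. The paper does not reprove this lemma (it cites \cite{LP:Part1}), and your argument is the standard one: homogeneity reduces everything to the bound $\|\delta^\alpha\partial_\xi^\beta\rho(\xi;\lambda)\|\leq C|\xi|^{m-|\beta|}(1+|\xi|^{-w}|\lambda|)^d$ for $|\xi|\geq (c^{-1}R)^{1/w}$ and $\lambda\in\Lambda'$, from which (i) and (ii) follow exactly as you show.

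Two small points. Your compactness step is cleanest if you first use $\overline{\Lambda'}\subseteq\Lambda\subseteq\Theta\cup D(0,R)$ and compactness to get $\overline{\Lambda'}\subseteq\Theta''\cup D(0,R_1)$ for some cone $\Theta''$ with $\overline{\Theta''}\setminus 0\subseteq\Theta$ and some $R_1<R$. Then $\mu=|\xi|^{-w}\lambda$ lies either in $\Theta''$, where the defining estimate with $K=\bS^{n-1}$ applies, or in $D(0,cR_1/R)$, and $\bS^{n-1}\times\overline{D(0,cR_1/R)}$ is a compact subset of $\Omega_c(\Theta)$. Also, $(1-\chi)\rho$ vanishes where $\chi=1$, not on all of $\supp\chi$; this is harmless, since the transition region of $\chi$ lies in $\{|\xi|>(c^{-1}R)^{1/w}\}$, where $\rho$ is defined.
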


\subsection{Classical parametric symbols} 
Note that if $c>0$ and $R>0$ is such that $\Lambda \subseteq \Theta \cup D(0,R)$, then $\Omega_c(\Theta) \supseteq ( \R^n \setminus B(r) ) \times \Lambda$ for all $r> (c^{-1}R)^{\frac1{w}}$.

\begin{definition} \label{def:Parameter.classical-symbol}
$S^{m,d}(\T^n_\theta\times\R^n\times \Lambda)$, $m, d\in\R$, consists of maps $\rho(\xi;\lambda)\in C^{\infty,\omega}(\T^n_\theta\times\R^n\times \Lambda)$ for which there are $c>0$ and 
$\rho_{m-j}(\xi;\lambda)\in S_{m-j}^d(\T^n_\theta\times\Omega_c(\Theta))$, $j\geq 0$, such that 
\begin{equation*}
\rho(\xi;\lambda)\sim\sum_{j\geq 0}\rho_{m-j}(\xi;\lambda), 
\end{equation*}
in the sense that, for all $N\geq 0$ and multi-orders $\alpha$, $\beta$, given any pseudo-cone $\Lambda'\subsubset \Lambda$ and $r>0$ such that 
$ \Omega_c(\Theta) \supseteq \big( \R^n \setminus B(r) \big) \times \Lambda'$, as soon as $J$ is large enough there is $C_{\Lambda'NJr\alpha \beta}>0$ such that, for all $(\xi,\lambda)\in (\R^n\setminus B(r)) \times\Lambda'$, we have 
\begin{equation} \label{eq:Parameter.classical-symbol-asymptotics}
 \big\| \delta^\alpha \partial_\xi^\beta \big(\rho-\sum_{j<J}\rho_{m-j}\big)(\xi;\lambda)\big\| \leq C_{\Lambda'NJr\alpha \beta}|\xi|^{-N}(1+|\lambda|)^{d} . 
\end{equation}
\end{definition}

\begin{example} \label{ex:Parameter.classical-symbol-can-be-seen-as-classical-symbol-with-parameter}
 Let $\rho(\xi)\in S^m(\T^n_\theta\times\R^n)$, $\rho(\xi)\sim \sum \rho_{m-j}(\xi)$. As mentioned in Example~\ref{ex:Parameter.homogeous-symbol-can-be-seen-as-homogeneous-symbol-with-parameter} each homogeneous symbol $\rho_{m-j}(\xi)$ can be regarded as an element of $S^{0}_{m-j}(\T^n_\theta\times\Omega_c(\Theta))$ for any $c>0$. Note also that the asymptotic expansion $\rho(\xi)\sim \sum \rho_{m-j}(\xi)$ in the sense of~(\ref{eq:Symbols.classical-estimates}) implies that we have an asymptotic expansion in the sense of~(\ref{eq:Parameter.classical-symbol-asymptotics}) with $d=0$. It then follows that $\rho(\xi)\in S^{m,0}(\T^n_\theta\times\R^n\times \Lambda)$. 
\end{example}

\begin{remark} \label{rmk:Parameter.classical-symbol-asymptotics-equivalent-conditions}
Suppose we are given $\rho_{m-j}(\xi;\lambda)\in S_{m-j}^{d}(\T^n_\theta\times\Omega_c(\Theta))$, $j\geq 0$. Let $\chi(\xi)\in C^\infty_c(\R^n)$ be as in Lemma~\ref{lem:Parameter.homogeneous-symbol-estimate}. Recall that by Lemma~\ref{lem:Parameter.homogeneous-symbol-estimate} $(1-\chi(\xi))\rho_{m-j}(\xi;\lambda)\in \stS^{m-j+wd_{-},d}(\T^n_\theta\times\R^n\times \Lambda)$, where $d_{-}=\max(0,-d)$.  
Then, the following are equivalent: 
\begin{enumerate}
 \item[(i)] $\rho(\xi;\lambda)\sim \sum_{j\geq 0} \rho_{m-j}(\xi;\lambda)$ in the sense of~(\ref{eq:Parameter.classical-symbol-asymptotics}). 
 
 \item[(ii)] $\rho(\xi;\lambda)\sim \sum_{j\geq 0} (1-\chi(\xi))\rho_{m-j}(\xi;\lambda)$ in the sense of~(\ref{eq:Parameter.Standard-asymptotic}) or~(\ref{eq:Parameter.Standard-asymptotic-qualitative}). 
\end{enumerate}
Note that here~(\ref{eq:Parameter.Standard-asymptotic}) means that,  for all $N\geq 0$, as soon as $J\geq N+wd_{-}$, we have 
\begin{equation*}
\rho(\xi;\lambda) - \sum_{j<J}  (1-\chi(\xi))\rho_{m-j}(\xi;\lambda) \in \stS^{m-N,d}(\T^n_\theta\times\R^n\times \Lambda). 
\end{equation*}
This provides us with a quantitative version of the estimates~(\ref{eq:Parameter.classical-symbol-asymptotics}). 
 \end{remark}
 
Combining the above remark with Lemma~\ref{lem:Parameter.homogeneous-symbol-estimate} yields the following result. 
\begin{proposition}[see~\cite{LP:Part1}]\label{symbols:inclusion-classical-standard}
 Let $m,d\in \R$. 
\begin{enumerate}
 \item If $d\geq 0$, then $S^{m,d}(\T^n_\theta\times\R^n\times \Lambda)\subseteq\stS^{m,d}(\T^n_\theta\times\R^n\times \Lambda)$. 
 
 \item If $d<0$, then  
        \begin{equation*}
              S^{m,d}(\T^n_\theta\times\R^n\times \Lambda)\subseteq  \bigcap_{d\leq d'\leq 0} \stS^{m+w|d'|,d'}(\T^n_\theta\times\R^n\times \Lambda). 
         \end{equation*}
\end{enumerate}
 \end{proposition}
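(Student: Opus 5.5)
The plan is to reduce everything to the two facts already recorded: the cut-off Lemma~\ref{lem:Parameter.homogeneous-symbol-estimate} for homogeneous parametric symbols, and the equivalence in Remark~\ref{rmk:Parameter.classical-symbol-asymptotics-equivalent-conditions} between classical and standard asymptotic expansions. Let $\rho(\xi;\lambda)\in S^{m,d}(\T^n_\theta\times\R^n\times\Lambda)$ with $\rho\sim\sum_{j\geq 0}\rho_{m-j}$ and $\rho_{m-j}\in S^{d}_{m-j}(\T^n_\theta\times\Omega_c(\Theta))$. Fix $\chi(\xi)\in C^\infty_c(\R^n)$ as in Lemma~\ref{lem:Parameter.homogeneous-symbol-estimate}. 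By Remark~\ref{rmk:Parameter.classical-symbol-asymptotics-equivalent-conditions}, for every $N\geq 0$ and every $J\geq N+wd_-$ we may decompose
\begin{equation*}
\rho(\xi;\lambda)=\sum_{j<J}(1-\chi(\xi))\rho_{m-j}(\xi;\lambda)+R_J(\xi;\lambda), \qquad R_J\in \stS^{m-N,d}(\T^n_\theta\times\R^n\times\Lambda).
\end{equation*}
The idea is to estimate each piece separately and then add.

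\emph{Case $d\geq 0$.} Here $d_-=0$, and we take $N=0$ and $J=1$. Lemma~\ref{lem:Parameter.homogeneous-symbol-estimate}(i) gives $(1-\chi)\rho_m\in\stS^{m,d}$. The remainder satisfies $R_1\in\stS^{m,d}$. Since $\stS^{m,d}$ is a vector space, $\rho\in\stS^{m,d}(\T^n_\theta\times\R^n\times\Lambda)$.

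\emph{Case $d<0$.} Fix $d'\in[d,0]$ and take $N=0$ and some $J\geq w|d|$. By Lemma~\ref{lem:Parameter.homogeneous-symbol-estimate}(ii), applied to $\rho_{m-j}$, each term $(1-\chi)\rho_{m-j}$ lies in $\stS^{m-j+w|d'|,d'}$. This class is contained in $\stS^{m+w|d'|,d'}$.

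For the remainder we only use the elementary monotonicity of the standard classes. The defining estimates show that $\stS^{m_1,d_1}\subseteq\stS^{m_2,d_2}$ whenever $m_1\leq m_2$ and $d_1\leq d_2$. This holds because $(1+|\xi|)^{m_1-|\beta|}\leq(1+|\xi|)^{m_2-|\beta|}$ and $(1+|\lambda|)^{d_1}\leq(1+|\lambda|)^{d_2}$. Since $m\leq m+w|d'|$ and $d\leq d'$, we get $R_J\in\stS^{m,d}\subseteq\stS^{m+w|d'|,d'}$.

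Summing the finitely many pieces gives $\rho\in\stS^{m+w|d'|,d'}$. As $d'\in[d,0]$ was arbitrary, $\rho$ lies in the intersection claimed in part (2).

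I do not expect a genuine obstacle, since all the analytic content sits in Lemma~\ref{lem:Parameter.homogeneous-symbol-estimate} and Remark~\ref{rmk:Parameter.classical-symbol-asymptotics-equivalent-conditions}. The one point needing care is the bookkeeping of the $\lambda$-weights when $d<0$. The homogeneous terms only gain the weaker decay $(1+|\lambda|)^{d'}$, at the cost of $w|d'|$ in $\xi$-order. The remainder has the stronger decay $(1+|\lambda|)^{d}$, which must be deliberately weakened to $(1+|\lambda|)^{d'}$ so that all pieces land in the same class $\stS^{m+w|d'|,d'}$.
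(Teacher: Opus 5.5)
Your proof is correct and follows the same route the paper indicates. The paper obtains the statement by combining Remark~\ref{rmk:Parameter.classical-symbol-asymptotics-equivalent-conditions} (the classical expansion yields a remainder in $\stS^{m-N,d}$ once $J\geq N+wd_-$) with the cut-off Lemma~\ref{lem:Parameter.homogeneous-symbol-estimate}, and your trivial monotonicity of $\stS^{m,d}$ in $(m,d)$ correctly absorbs the remainder when $d<0$; for $d\geq 0$ you could even take $J=0$, so the remark gives $\rho\in\stS^{m,d}$ directly.
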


\subsection{\psidos\ with parameter} 
Let $\rho(\xi;\lambda)\in \stS^{m,d}(\T^n_\theta\times\R^n\times \Lambda)$. For each $\lambda \in \Lambda$, $P_\rho(\lambda)$ denotes the \psido\ with symbol $\rho(\cdot;\lambda)$, acting continuously on $C^\infty(\T^n_\theta)$ via the oscillating integral formula of Section~\ref{sec:PsiDOs}. We equip $\cL(C^\infty(\T^n_\theta))$ and $\cL(\scD'(\T^n_\theta))$ with their strong topologies.

\begin{proposition}[see~\cite{LP:Part1}] \label{prop:PsiDOs-parameter.PsiDO-gives-rise-to-family-of-continuous-operators-on-cAtheta}
 For any $\rho(\xi;\lambda)\in \stS^{m,d}(\T^n_\theta\times\R^n\times \Lambda)$, $m,d\in \R$, the family $P_\rho(\lambda)$ is contained in $\Hol^d(\Lambda; \cL(C^\infty(\T^n_\theta)))$ and uniquely extends to a family $P_\rho(\lambda)\in \Hol^d(\Lambda; \cL(\scD'(\T^n_\theta) ))$. 
\end{proposition}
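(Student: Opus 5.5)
The plan is to reduce everything to the symbol-level correspondence of Remark~\ref{rmk:Parameter.standard-symbols-with-parameter-identification}, which identifies $\rho(\xi;\lambda)$ with an $\Hol^d(\Lambda;\stS^m(\T^n_\theta\times\R^n))$-family. I will then show that quantization $\stS^m(\T^n_\theta\times\R^n)\ni\sigma\mapsto P_\sigma\in\cL(C^\infty(\T^n_\theta))$ is a continuous linear map, in fact one that is controlled by finitely many symbol semi-norms on each bounded piece. Finally, since composing a holomorphic map with a continuous linear map preserves holomorphy (the Remark after Definition~\ref{def:Holp.holomorphic-maps}), $\lambda\mapsto P_\rho(\lambda)$ will be holomorphic, and the bounds $(1+|\lambda|)^d$ will transfer from symbol semi-norms to operator semi-norms.

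The key estimate comes from the toroidal formula~(\ref{eq:PsiDOs.toroidal-def}). For $u=\sum u_kU^k$ we have $P_\sigma u=\sum_k u_k\sigma(k)U^k$, and applying $\delta^\alpha$ with Leibniz gives $\delta^\alpha(\sigma(k)U^k)=\sum_{\alpha'\le\alpha}\binom{\alpha}{\alpha'}\delta^{\alpha'}\sigma(k)\,k^{\alpha-\alpha'}U^k$. This yields
\begin{equation*}
 \|\delta^\alpha P_\sigma u\|\leq C_\alpha\, p^{(m)}_{|\alpha|}(\sigma)\sum_{k}(1+|k|)^{m_++|\alpha|}|u_k|,
\end{equation*}
and the right-hand sum is bounded by a continuous semi-norm of $u$ in $C^\infty(\T^n_\theta)$, since the rapid decay of $(u_k)$ is controlled by the norms $\|\delta^\beta u\|\ge \|\delta^\beta u\|_0$. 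Consequently, for every bounded set $B\subseteq C^\infty(\T^n_\theta)$ we get $\sup_{u\in B}\|\delta^\alpha P_\sigma u\|\le C_{B,\alpha}\,p^{(m)}_{|\alpha|}(\sigma)$, which is a semi-norm estimate for the strong topology. Applied with $\sigma=\rho(\cdot;\lambda)$, $\lambda\in\Lambda'$, it gives the $\Hol^d$ bound. For holomorphy I apply the same linear estimate to the difference quotient minus $\partial_\lambda\rho$; this converges in $\stS^m$ because $\rho$ is holomorphic into $\stS^m$. Strong topology convergence then follows from the same uniform-on-bounded-sets estimate.

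For $\scD'(\T^n_\theta)$ I use duality. The formal adjoint has symbol $\rho^\star(\xi;\lambda)$, and I need $\bar\lambda\mapsto\rho(\cdot;\lambda)^\star$ to be antiholomorphic-compatible. The cleaner route is the transpose: $\acou{P_\rho(\lambda)u}{v}=\acou{u}{P_\rho(\lambda)^t v}$, where $P^t$ acts by $U^k\mapsto$ a scalar-twisted symbol evaluation, and its symbol depends \emph{linearly and continuously} on $\rho(\cdot;\lambda)$ as a map $\stS^m\to\stS^m$ (it follows from the adjoint construction of~\cite{HLP:Part2} composed with complex conjugation). Hence $P_\rho(\lambda)^t\in\Hol^d(\Lambda;\cL(C^\infty(\T^n_\theta)))$ by the first part, and the extension is defined as its transpose on $\scD'$. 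Uniqueness comes from the density of $C^\infty$ in $\scD'$. Bounds in the strong topology of $\scD'$ follow because bounded sets of $\scD'$ are equicontinuous (the space $C^\infty(\T^n_\theta)$ is Fréchet--Montel) and transposition is continuous for the strong topologies.

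I expect the main obstacle to be this last step. I need to verify that the transpose symbol map is continuous and linear on $\stS^m$, and that transposition preserves holomorphy and the uniform-on-bounded-sets bounds. If that is awkward, I would instead work directly with Fourier coefficients on $\scD'$: for $u\in\scD'$, $(u_k)$ has polynomial growth uniformly on bounded sets, and $P_\sigma u=\sum u_k\sigma(k)U^k$ converges in $\scD'$ with estimates linear in $p_N^{(m)}(\sigma)$.
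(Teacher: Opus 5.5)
Your overall plan matches the argument behind the cited result. You view $\rho$ as an element of $\Hol^d(\Lambda;\stS^m(\T^n_\theta\times\R^n))$ and compose it with continuous linear quantization maps into $\cL(C^\infty(\T^n_\theta))$ and $\cL(\scD'(\T^n_\theta))$. The paper takes both continuity statements from~\cite{HLP:Part2}; see the proof of Proposition~\ref{prop:Hol.propertiesP(z)-W2s}. Your $C^\infty$ half is correct, and the toroidal estimate actually reproves the first continuity statement.

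\textbf{The $\scD'$ half.} Your main route contains a false claim: that the transpose $P^t$ with respect to $\acou{u}{v}=\tau(uv)$ has a symbol in $\stS^m$ depending continuously on $\rho$. A counterexample:
\begin{itemize}
\item Take $\rho(\xi)=U_1$, so $P_\rho u=U_1u$.
\item Since $\tau(U_1uv)=\tau(uvU_1)$, you get $P^tv=vU_1$. This is right multiplication.
\item Its toroidal symbol is $P^t(U^k)(U^k)^{-1}=U^kU_1(U^k)^{-1}=e^{2i\pi\sum_j\theta_{1j}k_j}U_1$.
\item If some $\theta_{1j}\notin\Z$, the $N$-th finite differences of this in the $e_j$ direction have constant nonzero norm $|e^{2i\pi\theta_{1j}}-1|^N$. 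For any $\sigma\in\stS^m$ these differences are $O(|k|^{m-N})$.
\item By Remark~\ref{rmk:PsiDOs.uniqueness-symbol}, $P^t$ therefore lies in no $\OP^m(\T^n_\theta)$.
\end{itemize}
Transposition turns left quantization into right quantization, and the two calculi differ when $\theta\neq 0$. So ``by the first part'' does not apply to $P_\rho(\lambda)^t$.

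\textbf{Repairs.} The conclusion you want is still true, and either of two fixes works.

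(a) Write $P^t=JP^*J$ with $Ju=u^*$, since $\acou{Pu}{v}=(Pu|v^*)=(u|P^*v^*)$. Because $\delta_j(u^*)=-(\delta_ju)^*$, $J$ preserves every semi-norm $\|\delta^\alpha\cdot\|$ and maps bounded sets to bounded sets. Hence the strong-topology semi-norms of $P^t$ equal those of $P^*=P_{\rho^\star}$. Holomorphy survives because the two antilinear operations $\rho\mapsto\rho^\star$ and $T\mapsto JTJ$ cancel. For this you still need continuity of $\rho\mapsto\rho^\star$ on $\stS^m$, which would have to come from the explicit formula in~\cite{HLP:Part2}.

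(b) Cleaner: avoid any symbol for $P^t$. Write $P_\sigma^tv=\sum_k\acou{\sigma(k)U^k}{v}(U^k)^{-1}$. Integrating by parts with $\tau\circ\delta_j=0$ gives $k_j\acou{\sigma(k)U^k}{v}=-\tau(\delta_j\sigma(k)U^kv)-\tau(\sigma(k)U^k\delta_jv)$. Iterating, $|\acou{\sigma(k)U^k}{v}|\leq C_Mp^{(m)}_M(\sigma)(1+|k|)^{m-M}\sup_{|\beta|\leq M}\|\delta^\beta v\|$. Hence $\sigma\mapsto P^t_\sigma$ is continuous linear from $\stS^m$ into $\cL(C^\infty(\T^n_\theta))$. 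This is essentially your Fourier-coefficient fallback, which is sound.

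With either repair, your argument that transposition $\cL(C^\infty)\to\cL(\scD')$ is continuous (bounded subsets of $\scD'$ are equicontinuous) and the density argument for uniqueness finish the proof.
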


\begin{definition}
$\Psi^{m,d}(\T^n_\theta;\Lambda)$, $m,d\in \R$, consists of families of operators $P_{\rho}(\lambda):C^\infty(\T^n_\theta)  \rightarrow C^\infty(\T^n_\theta) $ with $\rho(\xi;\lambda)$ in $S^{m,d}(\T^n_\theta\times\R^n\times \Lambda)$.
\end{definition}

\begin{example} \label{ex:PsiDOs-parameter.usual-PsiDO-can-be-seen-as-PsiDO-with-parameter}
 It follows from Example~\ref{ex:Parameter.classical-symbol-can-be-seen-as-classical-symbol-with-parameter} that any operator $P\in \Psi^m(\T^n_\theta)$ can be regarded as an element of $\Psi^{m,0}(\T^n_\theta;\Lambda)$. Combining this with the obvious inclusion $\Psi^{m,0}(\T^n_\theta;\Lambda)\subseteq \Psi^{m,1}(\T^n_\theta;\Lambda)$ allows us to regard $P-\lambda$ as an element of  $\Psi^{m,1}(\T^n_\theta;\Lambda)$. 
\end{example}

We also define \psidos\ with parameter of order~$-\infty$ as follows. 
\begin{definition}\label{rmk:PsiDOs-parameter.order-minus-infty-to-intersection-inclusion}
$\Psi^{-\infty,d}(\T^n_\theta;\Lambda)$, $d\in \R$,  consists of families of operators $P_{\rho}(\lambda):C^\infty(\T^n_\theta)  \rightarrow C^\infty(\T^n_\theta) $ with $\rho(\xi;\lambda)$ in $\stS^{-\infty,d}(\T^n_\theta\times\R^n\times \Lambda)$; equivalently, $\Psi^{-\infty,d}(\T^n_\theta;\Lambda)= \bigcap_{m\in \R}\Psi^{m,d}(\T^n_\theta;\Lambda)$ (see~\cite{LP:Part1}). 
\end{definition}

Let $\rho_1(\xi;\lambda) \in \stS^{m_1,d_1}(\T^n_\theta\times\R^n\times \Lambda)$ and $\rho_2(\xi;\lambda)\in \stS^{m_2,d_2}(\T^n_\theta\times\R^n\times \Lambda)$. For each $\lambda \in \Lambda$, we denote by $\rho_1\sharp \rho_2(\xi;\lambda)$ the symbol given by~(\ref{eq:Composition.symbol-sharp}). By Proposition~\ref{prop:Composition.sharp-continuity-standard-symbol} the composition $P_{\rho_1}(\lambda)P_{\rho_2}(\lambda)$ is the \psido\ with symbol $\rho_1\sharp \rho_2(\xi;\lambda)$.

\begin{proposition}[see~\cite{LP:Part1}] \label{prop:Parameter.composition-PsiDOs}
Let $P_1(\lambda)\in\Psi^{m_1,d_1}(\T^n_\theta;\Lambda)$ have symbol $\rho_1(\xi;\lambda)\sim\sum \rho_{1,m_1-j}(\xi;\lambda)$, and 
let $P_2(\lambda)\in\Psi^{m_2,d_2}(\T^n_\theta;\Lambda)$ have symbol $\rho_2(\xi;\lambda)\sim\sum \rho_{2,m_2-j}(\xi;\lambda)$. 
\begin{enumerate}
 \item $\rho_1\sharp \rho_2(\xi;\lambda)\in S^{m_1+m_2,d_1+d_2}(\T^n_\theta\times\R^n\times \Lambda)$ with 
 $\rho_1\sharp\rho_2(\xi;\lambda)\sim\sum (\rho_1\sharp\rho_2)_{m_1+m_2-j}(\xi;\lambda)$, where
\begin{equation*}
(\rho_1\sharp\rho_2)_{m_1+m_2-j}(\xi;\lambda) = \sum_{k+l+|\alpha|=j}\frac{1}{\alpha!}\partial_\xi^\alpha\rho_{1,m_1-k}(\xi;\lambda)\delta^\alpha\rho_{2,m_2-l}(\xi;\lambda),\qquad j\geq 0 .
\end{equation*}

\item The composition $P_1(\lambda)P_2(\lambda)=P_{\rho_1\sharp \rho_2}(\lambda)$ is in $\Psi^{m_1+m_2,d_1+d_2}(\T^n_\theta;\Lambda)$. 
\end{enumerate}
\end{proposition}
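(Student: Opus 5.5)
The plan is to reduce to the non-parametric composition result (Proposition~\ref{prop:Composition.composition-PsiDOs}), applied pointwise in $\lambda$, and to check that the parametric structure survives. There are three things to control: holomorphy in $\lambda$, the $(1+|\lambda|)^{d_1+d_2}$ bounds on sub-pseudo-cones, and the bi-homogeneous expansion.

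\emph{Step 1 (standard symbols).} Regard $\rho_i$ as elements of $\Hol^{d_i}(\Lambda;\stS^{m_i'}(\T^n_\theta\times\R^n))$ via Remark~\ref{rmk:Parameter.standard-symbols-with-parameter-identification}. Here $m_i'=m_i+w(d_i)_-$, by Proposition~\ref{symbols:inclusion-classical-standard}.

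By Lemma~\ref{prop:Composition.sharp-continuity-standard-symbol}, $\sharp$ is a bilinear map $\stS^{m_1'}\times\stS^{m_2'}\to\stS^{m_1'+m_2'}$. It is separately continuous by the closed graph theorem, and hence jointly continuous since these are Fréchet spaces. The same holds for the remainders: for every $N$,
\[
(\rho_1,\rho_2)\longmapsto \rho_1\sharp\rho_2-\sum_{|\alpha|<N}\tfrac{1}{\alpha!}\partial_\xi^\alpha\rho_1\,\delta^\alpha\rho_2
\]
is continuous into $\stS^{m_1'+m_2'-N}$. Composing a continuous bilinear map with holomorphic families gives a holomorphic family. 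Each seminorm of the output is bounded by a product of seminorms of $\rho_1(\lambda)$ and $\rho_2(\lambda)$, which yields the bound $(1+|\lambda|)^{d_1+d_2}$ on each $\Lambda'\subsubset\Lambda$. So $\rho_1\sharp\rho_2$ and all of its remainders lie in standard parametric classes.

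\emph{Step 2 (homogeneous terms).} Each product $\partial_\xi^\alpha\rho_{1,m_1-k}\,\delta^\alpha\rho_{2,m_2-l}$ is $C^{\infty,\omega}$ on $\Omega_c(\Theta)$, where we take the smaller of the two $c$'s. It satisfies
\[
f(t\xi;t^w\lambda)=t^{m_1+m_2-k-l-|\alpha|}f(\xi;\lambda),
\]
since $\partial_\xi^\alpha$ lowers the degree by $|\alpha|$ and $\delta^\alpha$ commutes with the scaling. Condition~(ii) of Definition~\ref{def:Parameter.homogeneous-symbol} follows from the submultiplicativity of $\|\cdot\|$ and the Leibniz rule for $\delta$. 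Hence $(\rho_1\sharp\rho_2)_{m_1+m_2-j}\in S^{d_1+d_2}_{m_1+m_2-j}$.

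\emph{Step 3 (the asymptotic expansion).} This is the main obstacle: when $d_i<0$, a loss of orders $w|d_i|$ appears in $\xi$. I would use the equivalent formulation in Remark~\ref{rmk:Parameter.classical-symbol-asymptotics-equivalent-conditions}(ii) with a cut-off $\chi$.

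Write $\rho_i=\sum_{k<K}(1-\chi)\rho_{i,m_i-k}+r_{i,K}$, with $r_{i,K}\in\stS^{m_i-K,d_i}$. Expand $\rho_1\sharp\rho_2$ bilinearly into three kinds of terms.

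\begin{itemize}
\item[(a)] Terms involving a remainder $r_{i,K}$. By Step~1 these lie in $\stS^{m_1+m_2+w(d_{3-i})_- -K,\,d_1+d_2}$, which is arbitrarily low order once $K$ is large.
\item[(b)] Terms $((1-\chi)\rho_{1,m_1-k})\sharp((1-\chi)\rho_{2,m_2-l})$. Apply the Step~1 remainder estimate with $N$ large. This leaves the finite sums $\frac{1}{\alpha!}\partial^\alpha_\xi((1-\chi)\rho_{1,\cdot})\,\delta^\alpha((1-\chi)\rho_{2,\cdot})$.
\item[(c)] These finite sums. They agree with $(1-\chi)$ times the homogeneous products for $|\xi|$ large. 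The differences are compactly supported in $\xi$ and have the right $\lambda$-growth, so they lie in $\stS^{-\infty,d_1+d_2}$.
\end{itemize}

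Regrouping by $k+l+|\alpha|=j$ gives the asymptotics of part~(1) in the sense of~(\ref{eq:Parameter.Standard-asymptotic}). The only bookkeeping needed is to choose $K,N\ge$ (target order) $+\,w(|d_1|+|d_2|)$, so that the orders lost in (a) and (b) are absorbed. This proves~(1).

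Part~(2) then follows: $P_1(\lambda)P_2(\lambda)=P_{\rho_1\sharp\rho_2}(\lambda)$ holds for each $\lambda$ by Lemma~\ref{prop:Composition.sharp-continuity-standard-symbol}(2), and by~(1) this symbol lies in $S^{m_1+m_2,d_1+d_2}(\T^n_\theta\times\R^n\times\Lambda)$.
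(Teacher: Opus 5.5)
Your argument is correct and is essentially the route behind this cited result. The paper gives no proof here, citing \cite{LP:Part1}, but it handles the holomorphic-family analogue (Propositions~\ref{prop:Hol.hol-sharp-product} and~\ref{prop:Hol.composition-of-symbols}) the same way: continuity of $\sharp$ and of its remainder maps gives the composition of standard parametric symbols with the $(1+|\lambda|)^{d_1+d_2}$ bounds, and the classical expansion then follows from homogeneity of the products plus cut-off bookkeeping.

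One minor slip: for $d_i<0$ the remainder $r_{i,K}$ lies only in $\stS^{m_i-K+w(d_i)_-,d_i}$, not in $\stS^{m_i-K,d_i}$ (cf.\ Remark~\ref{rmk:Parameter.classical-symbol-asymptotics-equivalent-conditions}). This is harmless, since your choice of $K,N$ exceeding the target order by $w(|d_1|+|d_2|)$ already absorbs the loss.
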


In addition, we have the following parametric version of Proposition~\ref{prop:Sob-Mapping.rho-on-Hs}. 

\begin{proposition}[see~\cite{LP:Part1}] \label{prop:PsiDOs-parameter.Sobolev-mapping-properties}
 Let $\rho(\xi;\lambda)\in \stS^{m,d}(\T^n_\theta\times\R^n\times \Lambda)$, $m,d\in \R$. 
 \begin{enumerate}
 \item The family $P_\rho(\lambda)$ is contained  in $\Hol^d(\Lambda; \cL(W_2^{s+m}(\T^n_\theta), W_2^s(\T^n_\theta)))$ for every $s\in \R$.
 
 \item If $m\leq 0$, then $P_\rho(\lambda)\in \Hol^d(\Lambda; \cL(L_2(\T^n_\theta) ))$. 
\end{enumerate}
\end{proposition}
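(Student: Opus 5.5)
This follows from the non-parametric Sobolev bound, provided its constant is controlled by a symbol seminorm. For each fixed $\lambda\in\Lambda$, $\rho(\cdot;\lambda)\in\stS^m(\T^n_\theta\times\R^n)$, so Proposition~\ref{prop:Sob-Mapping.rho-on-Hs} shows that $P_\rho(\lambda)$ extends to a bounded operator $W_2^{s+m}(\T^n_\theta)\to W_2^s(\T^n_\theta)$. What remains is holomorphy in $\lambda$ and the $O((1+|\lambda|)^d)$ bound on every $\Lambda'\subsubset\Lambda$.

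\emph{Step 1: quantitative Sobolev bound.} I would first extract from the proof in~\cite{HLP:Part2} that the quantization map
\begin{equation*}
\stS^m(\T^n_\theta\times\R^n)\ni\sigma\longmapsto P_\sigma\in\cL\bigl(W_2^{s+m}(\T^n_\theta),W_2^s(\T^n_\theta)\bigr)
\end{equation*}
is linear and continuous. Concretely, there should be $N=N(s,m)$ and $C>0$ with $\|P_\sigma\|_{\cL(W_2^{s+m},W_2^s)}\leq C\,p_N^{(m)}(\sigma)$.

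If this is not explicit in~\cite{HLP:Part2}, I would derive it by a closed graph argument. The map is linear from a Fréchet space into a Banach space. Its graph is closed because convergence $\sigma_j\to\sigma$ in $\stS^m$ implies $\sigma_j(k)\to\sigma(k)$ in $C^\infty(\T^n_\theta)$ for each $k$. By~(\ref{eq:PsiDOs.toroidal-def}) this gives $P_{\sigma_j}u\to P_\sigma u$ in $\scD'(\T^n_\theta)$ for $u\in C^\infty(\T^n_\theta)$, and $C^\infty(\T^n_\theta)$ is dense in $W_2^{s+m}(\T^n_\theta)$. I expect this continuity step to be the main point; everything else is formal.

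\emph{Step 2: transfer the parametric structure.} By Remark~\ref{rmk:Parameter.standard-symbols-with-parameter-identification}, $\lambda\mapsto\rho(\cdot;\lambda)$ belongs to $\Hol^d(\Lambda;\stS^m(\T^n_\theta\times\R^n))$. Composing with the continuous linear map of Step~1 keeps holomorphy: difference quotients converge in $\stS^m$, hence also in operator norm. Composing also preserves the bound, since
\begin{equation*}
\|P_\rho(\lambda)\|\leq C\,p_N^{(m)}\bigl(\rho(\cdot;\lambda)\bigr)\leq C C_{N\Lambda'}(1+|\lambda|)^d \qquad \text{for } \lambda\in\Lambda'.
\end{equation*}
This proves part~(1).

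\emph{Step 3: part~(2).} Take $s=0$ and $m=0$ in part~(1). If $m<0$, note that $\stS^{m,d}\subseteq\stS^{0,d}$ continuously, so the same conclusion holds in $\cL(L_2(\T^n_\theta))$.
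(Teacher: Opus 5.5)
Your proof is correct and follows the route the paper itself relies on. The statement is quoted from \cite{LP:Part1} without proof here, but elsewhere (e.g.\ in the proof of Proposition~\ref{prop:Hol.propertiesP(z)-W2s}) the paper combines exactly your two ingredients: the continuity of $\sigma\mapsto P_\sigma$ from $\stS^m(\T^n_\theta\times\R^n)$ to $\cL(W_2^{s+m}(\T^n_\theta),W_2^s(\T^n_\theta))$, taken from \cite{HLP:Part2}, and the identification $\stS^{m,d}(\T^n_\theta\times\R^n\times\Lambda)\simeq\Hol^d(\Lambda;\stS^m(\T^n_\theta\times\R^n))$.

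Your closed-graph fallback also works, with one correction to how you justify $P_{\sigma_j}u\to P_\sigma u$. Convergence of $\sigma_j(k)$ for each fixed $k$ is not enough on its own. What you need is that $\stS^m$-convergence gives $\|\sigma_j(k)-\sigma(k)\|\leq \epsilon_j(1+|k|)^m$, with $\epsilon_j\to 0$ uniformly in $k$. Combined with the rapid decay of $u_k$, this gives the convergence of the series.
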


 Combining this with Proposition~\ref{symbols:inclusion-classical-standard} yields the following result.  

\begin{proposition}[see~\cite{LP:Part1}] \label{prop:PsiDOs-parameter.classical-PsiDO-Sobolev-mapping-properties}
 Let $P(\lambda)\in \Psi^{m,d}(\T^n_\theta;\Lambda)$, $m,d\in \R$. 
\begin{enumerate}
 \item If $d\geq 0$, then $P(\lambda) \in \Hol^d(\Lambda; \cL(W_2^{s+m}(\T^n_\theta), W_2^s(\T^n_\theta)))$ for every $s\in \R$. 
 
 \item If $d<0$, then, for every $s\in \R$, we have 
          \begin{equation*}
             P(\lambda) \in \bigcap_{d\leq d'\leq 0}\Hol^{d'}\big(\Lambda; \cL\big(W_2^{s+m+w|d'|}(\T^n_\theta), W_2^s(\T^n_\theta)\big)\big).  
          \end{equation*}
          
 \item If $m\leq 0$, then $P(\lambda)\in\Hol^{\bar{d}}(\Lambda; \cL(L_2(\T^n_\theta) ))$ with $\bar{d}:=\max(d,mw^{-1})$. 
 \end{enumerate}
\end{proposition}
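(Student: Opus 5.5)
The plan is to deduce all three parts from the standard-symbol mapping result, Proposition~\ref{prop:PsiDOs-parameter.Sobolev-mapping-properties}, via the symbol inclusions of Proposition~\ref{symbols:inclusion-classical-standard}. Let $P(\lambda)=P_\rho(\lambda)$ with $\rho\in S^{m,d}(\T^n_\theta\times\R^n\times\Lambda)$.

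\emph{Parts (1) and (2).} If $d\geq 0$, Proposition~\ref{symbols:inclusion-classical-standard}(1) gives $\rho\in\stS^{m,d}(\T^n_\theta\times\R^n\times\Lambda)$. Part~(1) of Proposition~\ref{prop:PsiDOs-parameter.Sobolev-mapping-properties} then yields $P(\lambda)\in\Hol^d(\Lambda;\cL(W_2^{s+m},W_2^s))$ for all $s$.

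If $d<0$, Proposition~\ref{symbols:inclusion-classical-standard}(2) gives $\rho\in\stS^{m+w|d'|,d'}$ for each $d'\in[d,0]$. Applying Proposition~\ref{prop:PsiDOs-parameter.Sobolev-mapping-properties}(1) with order $m+w|d'|$ and parameter exponent $d'$ gives membership in $\Hol^{d'}(\Lambda;\cL(W_2^{s+m+w|d'|},W_2^s))$. Intersecting over $d'$ proves part~(2).

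\emph{Part (3).} Here $m\leq 0$, and this is the only step needing a small choice.

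\textbf{Case $d\geq 0$.} We have $\bar d=\max(d,mw^{-1})=d$, since $mw^{-1}\leq 0\leq d$. Then $\rho\in\stS^{m,d}$ with $m\leq 0$, so Proposition~\ref{prop:PsiDOs-parameter.Sobolev-mapping-properties}(2) gives $\Hol^d(\Lambda;\cL(L_2))$.

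\textbf{Case $d<0$.} Pick $d'=\max(d,mw^{-1})=\bar d\in[d,0]$. This is admissible because $m\leq0$ gives $mw^{-1}\leq 0$, and clearly $\bar d\geq d$. Then
\[
m+w|d'| = m - w\bar d \leq m - w(mw^{-1}) = 0,
\]
using $\bar d\geq mw^{-1}$ and $\bar d\le 0$. So $\rho\in\stS^{m+w|\bar d|,\bar d}$ has nonpositive order, and Proposition~\ref{prop:PsiDOs-parameter.Sobolev-mapping-properties}(2) gives $P(\lambda)\in\Hol^{\bar d}(\Lambda;\cL(L_2(\T^n_\theta)))$.

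This choice of $d'$ is the only delicate point. One needs the trade-off between $\xi$-order and $\lambda$-decay to keep the order $\leq 0$ while extracting the best decay exponent, and this is the largest decay compatible with order $\leq 0$ within the allowed range. Everything else is a direct invocation of the cited results.
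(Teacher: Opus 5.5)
Your proof is correct and follows exactly the route the paper indicates: the statement comes from combining the standard-symbol mapping result, Proposition~\ref{prop:PsiDOs-parameter.Sobolev-mapping-properties}, with the inclusions of Proposition~\ref{symbols:inclusion-classical-standard}. In part~(3) with $d<0$, your explicit choice $d'=\bar{d}=\max(d,mw^{-1})\in[d,0]$, which gives standard order $m+w|\bar{d}|=m-w\bar{d}\leq 0$, is precisely the detail the paper leaves implicit.
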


\subsection{The resolvent of an elliptic \psido} \label{sec:Resolvent}
From now on let $P:C^\infty(\T^n_\theta)\rightarrow C^\infty(\T^n_\theta)$ be an elliptic \psido\ of order $w>0$ with symbol $\rho(\xi)\sim\sum_{j\geq 0}\rho_{w-j}(\xi)$.

\begin{definition}
The \emph{elliptic parameter set} of $P$ is 
\begin{align} 
\Theta(P) &= \C^*\setminus \biggl[ \bigcup_{\xi\in\Rn\setminus 0} \Sp(\rho_w(\xi))\biggr] \nonumber\\
& = \big\{\lambda \in \C^*; \ \text{$\rho_w(\xi)-\lambda$ is invertible for all $\xi\in \R^n\setminus 0$}\big\}. 
\label{eq:Resolvent.elliptic-parameter-set-definition}
\end{align}
Since $\Sp(t^w\rho_w(\xi)) = t^w\Sp(\rho_w(\xi))$ for all $t>0$, we see that $\Theta(P)$ is a cone in $\C^*$. This is also an open set in $\C^*$ (see~\cite{LP:Part1}). 
\end{definition}

\begin{example}
 Suppose that $\rho_w(\xi)$ is selfadjoint for all $\xi\in \R^n\setminus \{0\}$ (e.g., $P$ is selfadjoint). Then $\Sp (\rho_w(\xi))\subseteq \R$ for all  $\xi\in \R^n\setminus \{0\}$, and so $\Theta(P) \supseteq \C\setminus \R$. Alternatively, if $\rho_w(\xi)$ is positive in the sense of~(\ref{eq:Elliptic.positivity-criterion}), then $\Sp (\rho_w(\xi))\subseteq (0,\infty)$, so that $\Theta(P)$ contains $\C\setminus [0,\infty)$; as $\Theta (P)$ is a cone and $\Sp (\rho_w(\xi))$ cannot be empty, this in fact gives $\Theta(P)=\C\setminus [0,\infty)$. 
\end{example}

Throughout the rest of this section we assume $\Theta(P)\neq \emptyset$. In the terminology of~\cite{LP:Part1} this means that $P$ is \emph{elliptic with parameter}. In addition, we set $c:=\inf\{\norm{\rho_w(\xi)^{-1}}^{-1};\ |\xi|=1\}$, and define 
\begin{equation*}
\Omega_c(P) = \{ (\xi,\lambda)\in(\Rn\setminus 0)\times\C ; \ \text{$\lambda\in\Theta(P)$ or $|\lambda|<c|\xi|^w$} \} = \Omega_c(\Theta(P)).
\end{equation*}

It can be shown that $\rho_w(\xi)-\lambda$ is invertible for all $(\xi,\lambda)\in \Omega_c(P)$ and $(\rho_w(\xi)-\lambda)^{-1}\in S_{-w}^{-1}(\T^n_\theta\times\Omega_c(P))$ 
(see~\cite{LP:Part1}). 

%\begin{definition}
% We say that $P$ is \emph{elliptic with parameter} when ; we assume this throughout the rest of this section. 
%\end{definition}
%
%\begin{lemma}[see~\cite{LP:Part1}] \label{lem:Resolvent.open_angular-sector}
%$\Theta(P)$ is an open cone in $\C^*$.
%\end{lemma}

%\begin{lemma}[see~\cite{LP:Part1}]  \label{lem:Resolvent.principal-symbol-minus-lambda-smooth-symbol-with-parameter}
%The following hold. 
%\begin{enumerate}
% \item $\rho_w(\xi)-\lambda$ is invertible for all $(\xi,\lambda)\in \Omega_c(P)$. 
% 
% \item $(\rho_w(\xi)-\lambda)^{-1}\in S_{-w}^{-1}(\T^n_\theta\times\Omega_c(P))$. 
%\end{enumerate}
%\end{lemma}

For any $R>0$, set $\Lambda_R = \Theta(P)\cup D(0,R)$.

\begin{proposition}[see~\cite{LP:Part1}]  \label{thm:Resolvent.parametrix-with-parameter}
Suppose that $P$ is elliptic with parameter. Then, for every $R>0$, the following hold. 
\begin{enumerate}
 \item $P-\lambda$ admits a parametrix $Q(\lambda)\in\Psi^{-w,-1}(\T^n_\theta;\Lambda_R)$ in the sense that
\begin{equation*}
(P-\lambda)Q(\lambda) = Q(\lambda)(P-\lambda)=1  \quad \bmod \quad \Psi^{-\infty,0}(\T^n_\theta;\Lambda_R) .
\end{equation*}

\item Any parametrix $Q(\lambda)\in \Psi^{-w,-1}(\T^n_\theta;\Lambda_R)$ as above has symbol $\sigma(\xi;\lambda)\sim \sum_{j\geq 0}  \sigma_{-w-j}(\xi;\lambda)$, where 
$ \sigma_{-w-j}(\xi;\lambda)\in S^{-1}_{-w-j}(\T^n_\theta\times\Omega_c(P)) $ is given by
\begin{gather}
 \sigma_{-w}(\xi;\lambda) = \big(\rho_w(\xi)-\lambda\big)^{-1} , 
\label{eq:Resolvent.symbol-resolvent1} \\
 \sigma_{-w-j}(\xi;\lambda) = -\sum_{\substack{k+l+|\alpha|=j \\ l<j}}\frac{1}{\alpha !}\big(\rho_w(\xi)-\lambda\big)^{-1} \partial_\xi^\alpha \rho_{w-k}(\xi) \delta^\alpha\sigma_{-w-l}(\xi;\lambda), \qquad j\geq 1 .
 \label{eq:Resolvent.symbol-resolvent2}
\end{gather}
\end{enumerate}
\end{proposition}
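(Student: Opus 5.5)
The plan is to build $Q(\lambda)$ from its symbol by the recursion~(\ref{eq:Resolvent.symbol-resolvent1})--(\ref{eq:Resolvent.symbol-resolvent2}), sum the resulting homogeneous terms asymptotically, check it is a right parametrix, then produce a left parametrix and show the two agree modulo $\Psi^{-\infty,0}$. Part~(2) will then follow from uniqueness of homogeneous components.

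\textbf{Step 1: the homogeneous terms.} Define $\sigma_{-w-j}(\xi;\lambda)$ on $\Omega_c(P)$ by~(\ref{eq:Resolvent.symbol-resolvent1})--(\ref{eq:Resolvent.symbol-resolvent2}), using that $\rho_w(\xi)-\lambda$ is invertible there and $(\rho_w(\xi)-\lambda)^{-1}\in S^{-1}_{-w}(\T^n_\theta\times\Omega_c(P))$. By induction on $j$, I would show $\sigma_{-w-j}\in S^{-1}_{-w-j}(\T^n_\theta\times\Omega_c(P))$, using four facts.
\begin{enumerate}
\item Each $\rho_{w-k}(\xi)$ is in $S^0_{w-k}(\T^n_\theta\times\Omega_c(P))$ (Example~\ref{ex:Parameter.homogeous-symbol-can-be-seen-as-homogeneous-symbol-with-parameter}).
\item The classes $S^d_m$ are stable under products, with degrees and $d$'s adding.
\item They are stable under $\partial_\xi^\alpha$ (which lowers $m$ by $|\alpha|$) and under $\delta^\alpha$ (Leibniz rule in $C^\infty(\T^n_\theta)$).
\item Holomorphy in $\lambda$ is preserved by these operations.
\end{enumerate}
Homogeneity is immediate, since $\rho_w(t\xi)-t^w\lambda=t^w(\rho_w(\xi)-\lambda)$.

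A term in the recursion has $d=-2$, not $-1$, so one has to check that $S^{-2}_m\subseteq S^{-1}_m$. This holds because the estimates in Definition~\ref{def:Parameter.homogeneous-symbol}(ii) are bounds by powers of $(1+|\lambda|)$, and $(1+|\lambda|)^{-2}\leq(1+|\lambda|)^{-1}$.

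\textbf{Step 2: building $Q(\lambda)$.} Pick $\chi$ as in Lemma~\ref{lem:Parameter.homogeneous-symbol-estimate}, relative to $\Lambda_R$. I would invoke the asymptotic-summation (Borel) lemma for parametric symbols from~\cite{LP:Part1} to get
\[
\sigma(\xi;\lambda)\sim\sum_{j\geq 0}(1-\chi(\xi))\sigma_{-w-j}(\xi;\lambda),
\]
so that $\sigma\in S^{-w,-1}(\T^n_\theta\times\R^n\times\Lambda_R)$ by Remark~\ref{rmk:Parameter.classical-symbol-asymptotics-equivalent-conditions}. Set $Q(\lambda)=P_\sigma(\lambda)$.

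Regard $P-\lambda\in\Psi^{w,1}(\T^n_\theta;\Lambda_R)$ (Example~\ref{ex:PsiDOs-parameter.usual-PsiDO-can-be-seen-as-PsiDO-with-parameter}), with homogeneous components $\rho_w-\lambda$ and $\rho_{w-k}$ for $k\geq 1$. Proposition~\ref{prop:Parameter.composition-PsiDOs} then gives $(P-\lambda)Q(\lambda)\in\Psi^{0,0}$, with homogeneous components
\[
\sum_{k+l+|\alpha|=j}\frac{1}{\alpha!}\partial_\xi^\alpha\rho_{w-k}\,\delta^\alpha\sigma_{-w-l}.
\]
This equals $1$ for $j=0$ and vanishes for $j\geq 1$, precisely by the recursion. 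Hence $(P-\lambda)Q(\lambda)-1$ is classical of every order $-N$ with $d=0$. By Proposition~\ref{symbols:inclusion-classical-standard}(1) and Definition~\ref{rmk:PsiDOs-parameter.order-minus-infty-to-intersection-inclusion}, it lies in $\Psi^{-\infty,0}(\T^n_\theta;\Lambda_R)$.

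\textbf{Step 3: left parametrix.} Run the symmetric recursion, solving $\sigma'\sharp(\rho-\lambda)\sim 1$ with $(\rho_w-\lambda)^{-1}$ multiplied on the right. This produces $Q'(\lambda)$ with $Q'(P-\lambda)=1$ modulo $\Psi^{-\infty,0}$. Then
\[
Q'=Q'(P-\lambda)Q+Q'R=Q+R'Q+Q'R,
\]
where $R,R'\in\Psi^{-\infty,0}$. Since $Q,Q'\in\Psi^{-w,-1}$, composing with a $\Psi^{-\infty,0}$ term gives $\Psi^{-\infty,-1}\subseteq\Psi^{-\infty,0}$. So $Q$ is a two-sided parametrix, which proves~(1).

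\textbf{Step 4: uniqueness for~(2).} If $\tilde Q\in\Psi^{-w,-1}$ is another parametrix, the same identity gives $\tilde Q-Q=Q(P-\lambda)(\tilde Q-Q)+R''(\tilde Q-Q)\in\Psi^{-\infty,0}$. Thus the classical expansions of $\tilde Q$ and $Q$ differ by a term that is $O(|\xi|^{-N}(1+|\lambda|)^0)$ for every $N$. By homogeneity, two homogeneous symbols of degree $-w-j$ whose difference is $O(|\xi|^{-N})$ for all $N$ along each fixed $\lambda$-ray must coincide, by scaling $(\xi,\lambda)\mapsto(t\xi,t^w\lambda)$. Hence $\tilde Q$ has the same $\sigma_{-w-j}$.

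\textbf{Main obstacle.} I expect the difficulty to be Step~1's uniform estimates on the noncompact region $\{|\lambda|<c|\xi|^w\}$, outside the cone. There I would use homogeneity to reduce to $|\xi|=1$, $|\lambda|<c$, where invertibility with a uniform inverse bound follows from the choice of $c$ and a Neumann series. A secondary subtlety is the bookkeeping of the $\lambda$-orders in Steps 2--4, where the remainders must land in $d=0$.
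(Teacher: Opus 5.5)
Your proposal is correct. The paper itself only quotes this result from~\cite{LP:Part1}, and your route is the standard construction for it: recursive homogeneous terms in $S^{-1}_{-w-j}$, asymptotic summation, the parametric composition formula, comparison of a right and a left parametrix, and uniqueness of homogeneous components for Part~(2).

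Two bookkeeping points need tightening.

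\textbf{Step~3.} Keep $Q'-Q=R'Q+Q'R\in\Psi^{-\infty,-1}(\T^n_\theta;\Lambda_R)$ rather than weakening it to $\Psi^{-\infty,0}$. To get $Q(\lambda)(P-\lambda)\equiv 1$ you need $(Q'-Q)(P-\lambda)\in\Psi^{-\infty,0}$, and since $P-\lambda$ carries $d=1$, this requires the factor $Q'-Q$ to have $d=-1$.

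\textbf{Step~4.} The scaling $(\xi,\lambda)\mapsto(t\xi,t^w\lambda)$ only identifies the homogeneous terms for $\lambda\in\Theta(P)$. For $\lambda$ in the disk part off the cone, these rays leave $\Lambda_R$, where no estimates are available. You must therefore finish with the identity theorem in $\lambda$ on the connected set $\Theta(P)\cup\{|\lambda|<c|\xi|^w\}$.
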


\begin{theorem}[see~\cite{LP:Part1}]  \label{thm:Resolvent.P-has-discrete-spectrum-resolvent-estimate}
Suppose that $P$ is elliptic with parameter.
\begin{enumerate}
\item The spectrum of $P$ is an unbounded discrete subset of $\C$ consisting of eigenvalues with finite multiplicity.

\item For any cone  $\Theta'$ such that $\overline{\Theta'}\setminus \{0\} \subseteq \Theta(P)$ the following hold. 
\begin{enumerate}
 \item $\Theta'$ contains at most finitely many eigenvalues of $P$. 
 
 \item There are $r_0>0$ and $C>0$ such that 
\begin{equation*}
\big\|(P-\lambda)^{-1} \big\| \leq C|\lambda|^{-1} \qquad \forall \lambda \in \Theta'\setminus D(0,r_0). 
\end{equation*}
\end{enumerate}
\end{enumerate}
\end{theorem}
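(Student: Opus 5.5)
The plan is to derive everything from the parametric parametrix of Proposition~\ref{thm:Resolvent.parametrix-with-parameter} together with the parametric Sobolev bounds of Proposition~\ref{prop:PsiDOs-parameter.classical-PsiDO-Sobolev-mapping-properties}, via a Neumann series argument at large $|\lambda|$.

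\emph{Step 1: an approximate inverse with small remainder.} Fix a cone $\Theta'$ with $\overline{\Theta'}\setminus\{0\}\subseteq\Theta(P)$, and choose an open cone $\Theta''$ with $\Theta'\subsubset\Theta''\subsubset\Theta(P)$. Fix $R>0$ and take the parametrix $Q(\lambda)\in\Psi^{-w,-1}(\T^n_\theta;\Lambda_R)$, so that
\begin{equation*}
(P-\lambda)Q(\lambda)=1-R_1(\lambda),\qquad Q(\lambda)(P-\lambda)=1-R_2(\lambda),
\end{equation*}
with $R_1(\lambda),R_2(\lambda)\in\Psi^{-\infty,0}(\T^n_\theta;\Lambda_R)$. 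The difficulty is that order $d=0$ gives only boundedness of the remainders in $\lambda$, not decay. To get decay I will refine the parametrix by using more terms of the symbol expansion. Write $\sigma^{(J)}=\sum_{j<J}(1-\chi)\sigma_{-w-j}$, where by Lemma~\ref{lem:Parameter.homogeneous-symbol-estimate} each term lies in $\stS^{-w-j+w|d'|,d'}$ for $-1\leq d'\leq 0$. Then $(\rho-\lambda)\sharp\sigma^{(J)}-1$ has a homogeneous expansion starting at order $-J$ with $\lambda$-weight $-1$ times weight $1$; the cancellation is built into~(\ref{eq:Resolvent.symbol-resolvent2}). Applying Proposition~\ref{symbols:inclusion-classical-standard}(ii) with $d'=-1$, I expect the remainder to be in $\stS^{-J+w,-1}$ modulo $\stS^{-\infty,0}$. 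The real gain must come from the homogeneity $\rho(t\xi;t^w\lambda)=t^m\rho$: a homogeneous symbol of degree $-J$ and weight $0$ is $O((|\xi|+|\lambda|^{1/w})^{-J})$ on $\Theta''$, hence $O(|\lambda|^{-\epsilon})$ in $\stS^{-J+w\epsilon}$ for $0\leq\epsilon\leq J/w$.

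\emph{Step 2: the main obstacle.} The $\stS^{-\infty,0}$ error left by the asymptotic summation does not decay in $\lambda$. To handle it I will build $Q$ exactly as the finite sum $P_{\sigma^{(J)}}(\lambda)$ with no asymptotic summation, so that $1-(P-\lambda)Q_J(\lambda)$ is given by an explicit finite symbol $\sharp$-product. The finitely many leftover terms, including the Taylor remainder of $\sharp$ in Lemma~\ref{prop:Composition.sharp-continuity-standard-symbol}, have homogeneous-type bounds. I expect these to lie in $\stS^{0,-\epsilon}$ for some $\epsilon>0$, using the $\lambda$-decay of $(\rho_w-\lambda)^{-1}$ on $\Theta''$ and $|\xi|\geq$ const on the support of $1-\chi$. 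The $\chi$-region contributes $\chi(\xi)\cdot 1$, which yields a finite-rank type term. I will avoid that term by instead checking that $(1-\chi)$ cutoffs can be replaced by homogeneous symbols evaluated on all of $\R^n\setminus0$ for $\lambda\in\Theta''$, which is valid because $\Omega_0(\Theta'')$ contains all $\xi\neq0$. Proposition~\ref{prop:PsiDOs-parameter.Sobolev-mapping-properties} then gives $\|R(\lambda)\|_{\sL(L_2)}\leq C|\lambda|^{-\epsilon}$ on $\Theta'$.

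\emph{Step 3: conclusion.} For $|\lambda|\geq r_0$ in $\Theta'$ we have $\|R(\lambda)\|<1/2$, so $(P-\lambda)Q(\lambda)(1-R(\lambda))^{-1}=1$ and $P-\lambda$ is surjective from $W_2^w$ onto $L_2$. By the analogous left version, or by Fredholm index $0$ (the index is constant in $\lambda$ and $P-\lambda$ is Fredholm), it is invertible. This proves (2a) for large $|\lambda|$; the bounded part of $\Theta'$ holds only finitely many eigenvalues once (1) is known. Moreover,
\begin{equation*}
\|(P-\lambda)^{-1}\|\leq 2\|Q(\lambda)\|_{\sL(L_2)}\leq C|\lambda|^{-1},
\end{equation*}
by Proposition~\ref{prop:PsiDOs-parameter.classical-PsiDO-Sobolev-mapping-properties}(3) with $m=-w$, $d=-1$, where $\bar d=-1$; this is (2b). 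For (1), the resolvent set is then nonempty, so Proposition~\ref{prop:Spectral.spectrum-P} shows the spectrum is discrete with finite multiplicities. It is unbounded since $P$ is unbounded with compact resolvent, because the inclusion $W_2^w\hookrightarrow L_2$ is compact.
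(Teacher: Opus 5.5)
The paper does not prove this statement; it is quoted from \cite{LP:Part1}. So I am judging your argument on its own terms.

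\textbf{What works.} The strategy is sound: a Neumann series built on the \emph{unsummed} approximate inverse $Q_J(\lambda)=P_{\sigma^{(J)}}(\lambda)$. You also correctly see why the summed parametrix of Proposition~\ref{thm:Resolvent.parametrix-with-parameter} is useless here: multiplying its tail by $\lambda$ destroys all decay.

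However, the mechanism you give in Step~1 is wrong. A homogeneous symbol of degree $-J$ and weight $0$ need not be $O((|\xi|+|\lambda|^{1/w})^{-J})$; any $\lambda$-independent element of $S_{-J}$ is a counterexample. The correct reason is as follows. The parameter $\lambda$ enters only through $-\lambda\sigma^{(J)}$. Together with the recursion~(\ref{eq:Resolvent.symbol-resolvent2}), this cancels every term of degree $>-J$ except the constant $1$. Every surviving term carries a factor $\delta^\alpha\sigma_{-w-j}$, and so lies in $\stS^{w-J,-1}$ by Lemma~\ref{lem:Parameter.homogeneous-symbol-estimate}(ii). The surviving terms are:
\begin{itemize}
\item the homogeneous terms of degree $\leq -J$;
\item the terms involving an order $w-J$ remainder of the symbol of $P$;
\item the Taylor remainder of $\rho\sharp\sigma^{(J)}$, which is controlled by the $\stS^{0,-1}$-seminorms of $\sigma^{(J)}$.
\end{itemize}
Since $\delta^\alpha$ does not act on $\xi$, no derivatives of $\chi$ appear. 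One gets $1-(\rho-\lambda)\sharp\sigma^{(J)}=\chi(\xi)$ modulo symbols in $\stS^{w-J,-1}$.

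\textbf{The genuine gap: the term $\chi(\xi)$.} This term has no decay in $\lambda$. Concretely, $Q_J(\lambda)U^0=\sigma^{(J)}(0;\lambda)U^0=0$, so $\|1-(P-\lambda)Q_J(\lambda)\|\geq 1$ for every $\lambda$, and the Neumann series cannot start.

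Your remedy does not work. On $\T^n_\theta$ the operator sees only the values of the symbol at $k\in\Z^n$. The lattice point affected by the cut-off is $k=0$, i.e.\ $\xi=0$, which is exactly where the $\sigma_{-w-j}$ are undefined. For $j\geq1$ they are typically singular there, since they involve $\partial_\xi^\alpha\rho_{w-k}$ of negative degree. Dropping the cut-off therefore leaves $Q_J(\lambda)$ undefined on $U^0=1$ and yields no symbol in $\stS^{0,-\epsilon}$. Hence Proposition~\ref{prop:PsiDOs-parameter.Sobolev-mapping-properties} cannot be invoked. The fact that $\Omega_0(\Theta'')$ contains every $\xi\neq0$ is beside the point.

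\textbf{A repair by rank-one correction.}
\begin{itemize}
\item Take the disk radius $R<c$ in Lemma~\ref{lem:Parameter.homogeneous-symbol-estimate}. Then $\chi$ can be chosen with $\chi(0)=1$ and $\supp\chi\subseteq\{|\xi|<1\}$.
\item With this choice $P_\chi=\Pi$, where $\Pi u=\tau(u)\,1$.
\item Since $(P-\lambda)(-\lambda^{-1}\Pi)=\Pi-\lambda^{-1}P\Pi$, we get $(P-\lambda)\bigl(Q_J(\lambda)-\lambda^{-1}\Pi\bigr)=1-R(\lambda)$ with $\|R(\lambda)\|_{\sL(L_2(\T^n_\theta))}=O(|\lambda|^{-1})$ on $\Theta'$, once $J\geq w$.
\item This amounts to giving the symbol the value $-\lambda^{-1}=\lim_{\xi\to0}\sigma_{-w}(\xi;\lambda)$ at $\xi=0$.
\item It preserves the bound $\|Q_J(\lambda)-\lambda^{-1}\Pi\|=O(|\lambda|^{-1})$.
\item The same correction works on the left, because $\Pi P$ extends to a bounded rank-one operator.
\end{itemize}

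\textbf{Two smaller points in Step~3.}
\begin{itemize}
\item Constancy of the Fredholm index in $\lambda$ does not by itself give index $0$. Use the corrected left approximate inverse instead. Alternatively, run the same argument for $P^*$, which is elliptic with parameter with $\Theta(P^*)=\{\bar\lambda;\ \lambda\in\Theta(P)\}$, and conclude $\ker(P-\lambda)=\ran(P^*-\bar\lambda)^\perp=\{0\}$.
\item ``Unbounded with compact resolvent'' does not imply unbounded spectrum. The operator $\frac{d}{dx}$ on $L_2(0,1)$ with $u(0)=0$ has compact resolvent and empty spectrum. Take unboundedness of $\Sp(P)$ from the dichotomy in Proposition~\ref{prop:Spectral.spectrum-P}, which applies once you know $\Sp(P)\neq\C$.
\end{itemize}
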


\begin{definition}
 A ray $L\subseteq \C^*$ is called a \emph{ray of minimal growth} for $P$ when the following two conditions are satisfied:
 \begin{enumerate}
\item[(i)] $L$ does not contain any eigenvalue of $P$. 

\item[(ii)] $\|(P-\lambda)^{-1}\|=O(|\lambda|^{-1})$ as $\lambda$ goes to $\infty$ along $L$.  
\end{enumerate}
\end{definition}

\begin{example}
 If $P$ is selfadjoint, then every ray $L\subseteq \C\setminus \R$ is a ray of minimal growth.
\end{example}

\begin{corollary}[see~\cite{LP:Part1}] \label{cor:Resolvent.ray-with-no-eigenvalue-is-a-ray-of-minimal-growth}
The following hold. 
\begin{enumerate}
 \item For any cone  $\Theta'$ such that $\overline{\Theta'}\setminus \{0\} \subseteq \Theta(P)$, all but finitely many rays contained in $\Theta'$ are rays of minimal growth for $P$. 

 \item Any ray $L\subseteq \Theta(P)$ that does not contain any eigenvalue of $P$ is a ray of minimal growth. 
\end{enumerate}
\end{corollary}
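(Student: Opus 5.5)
The plan is to derive both parts directly from Theorem~\ref{thm:Resolvent.P-has-discrete-spectrum-resolvent-estimate}. The only geometric input needed is that $\Theta(P)$ is an open cone in $\C^*$.

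For part~(1), let $\Theta'$ be a cone with $\overline{\Theta'}\setminus\{0\}\subseteq\Theta(P)$.
\begin{enumerate}
\item By part~(2)(a) of Theorem~\ref{thm:Resolvent.P-has-discrete-spectrum-resolvent-estimate}, $\Theta'$ contains only finitely many eigenvalues $\lambda_1,\ldots,\lambda_N$ of $P$. Hence at most $N$ rays in $\Theta'$ contain an eigenvalue, namely the rays through the $\lambda_j$.
\item Let $L\subseteq\Theta'$ be any other ray. Condition~(i) holds by construction.
\item By part~(1) of the theorem, $\Sp(P)$ consists only of eigenvalues, so $L\cap\Sp(P)=\emptyset$. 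Thus $(P-\lambda)^{-1}$ exists for all $\lambda\in L$.
\item Part~(2)(b) gives $r_0>0$ and $C>0$ with $\|(P-\lambda)^{-1}\|\leq C|\lambda|^{-1}$ for all $\lambda\in\Theta'\setminus D(0,r_0)$. In particular this holds for $\lambda\in L\setminus D(0,r_0)$, which is condition~(ii).
\end{enumerate}

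For part~(2), let $L=\{\arg\lambda=\phi\}\subseteq\Theta(P)$ be a ray with no eigenvalue of $P$.
\begin{enumerate}
\item Since $\Theta(P)$ is open and conical, the unit vector $e^{i\phi}$ has a neighborhood in $\bS^1$ lying in $\Theta(P)$. So there is $\epsilon>0$ with $\{|\arg\lambda-\phi|\leq\epsilon\}\setminus\{0\}\subseteq\Theta(P)$.
\item Take $\Theta'=\{|\arg\lambda-\phi|<\epsilon\}$. Then $\overline{\Theta'}\setminus\{0\}\subseteq\Theta(P)$ and $L\subseteq\Theta'$.
\item Condition~(i) is the hypothesis. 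As in part~(1), the absence of eigenvalues on $L$ together with part~(1) of the theorem gives $L\cap\Sp(P)=\emptyset$.
\item The estimate in part~(2)(b), applied to $\Theta'$, yields $\|(P-\lambda)^{-1}\|=O(|\lambda|^{-1})$ along $L$, which is condition~(ii).
\end{enumerate}

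There is no real obstacle. The only point needing care is the existence of the slightly larger closed sector in part~(2), which follows from openness of $\Theta(P)$ as recalled after~(\ref{eq:Resolvent.elliptic-parameter-set-definition}).
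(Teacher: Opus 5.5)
Your argument is correct and is essentially the intended one. The paper does not reprove this corollary (it cites \cite{LP:Part1}); it places it as an immediate consequence of Theorem~\ref{thm:Resolvent.P-has-discrete-spectrum-resolvent-estimate}, used exactly as you use it, with part~(2) reduced to part~(2)(b) by widening $L$ into a thin open sector whose closure minus the origin stays in $\Theta(P)$ because $\Theta(P)$ is an open cone.
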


We set
\begin{equation*}
\check{\Theta}(P) = \Theta(P)\setminus \{ t\lambda; \ t>0, \ \lambda\in\Sp(P) \} ,
\end{equation*}
i.e., $\Theta(P)$ with all rays through eigenvalues of $P$ removed. By Corollary~\ref{cor:Resolvent.ray-with-no-eigenvalue-is-a-ray-of-minimal-growth}, every ray in $\check{\Theta}(P)$ is a ray of minimal growth. It can be shown that $\check{\Theta}(P)$ is a non-empty open cone in $\C^*$ (see~\cite{LP:Part1}). 

%\begin{lemma}[see~\cite{LP:Part1}]\label{lem:Resolvent.checkThetaP-open}
% $\check{\Theta}(P)$ is a non-empty open cone in $\C^*$. 
%\end{lemma}

\begin{definition} \label{def:Resolvent.Agmon-pseudo-cone}
Set $r_0=\op{dist}(0,\Sp(P)\cap \C^*)$. The pseudo-cone $\Lambda(P)$ is defined by
\begin{equation}\label{eq:Resolvent.Agmon-pseudo-cone}
 \Lambda(P) = \left\{
\begin{array}{cl}
 \check{\Theta}(P)\cup D(0,r_0) & \text{if $0\not\in \Sp(P)$},\medskip \\
 \check{\Theta}(P)\cup \big[D(0,r_0)\setminus \{0\}\big]  & \text{if $0\in \Sp(P)$}.
\end{array}\right. 
\end{equation}
\end{definition}

\begin{theorem}[see~\cite{LP:Part1}] \label{thm:Resolvent.resolvent-is-psido-with-parameter}
Suppose that $P$ is elliptic with parameter. Then the following hold. 
\begin{enumerate}
 \item The resolvent $(P-\lambda)^{-1}$ is contained in $\Psi^{-w,-1}(\T^n_\theta;\Lambda(P))$. 

\item $(P-\lambda)^{-1}$ has symbol $\sigma(\xi;\lambda)\sim \sum \sigma_{-w-j}(\xi;\lambda)$, where $\sigma_{-w-j}(\xi;\lambda)$ is given by~(\ref{eq:Resolvent.symbol-resolvent1})--(\ref{eq:Resolvent.symbol-resolvent2}). 

\item\label{prop:Resolvent.Sobolev} For every $s\in \R$, we have
\begin{equation*}
 (P-\lambda)^{-1} \in \bigcap_{0\leq a \leq 1} \Hol^{-1+a}\left(\Lambda(P); \cL\big(W_2^s(\T^n_\theta), W_2^{s+aw}(\T^n_\theta)\big)\right). 
\end{equation*}
\end{enumerate}
\end{theorem}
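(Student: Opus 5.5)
The plan is to build the resolvent from the parametrix of Proposition~\ref{thm:Resolvent.parametrix-with-parameter}, and then show that the remainder is a smoothing family with parameter of the right decay. Fix $R\geq r_0$, so that $\Lambda(P)\subseteq \Lambda_R$. Let $Q(\lambda)\in\Psi^{-w,-1}(\T^n_\theta;\Lambda_R)$ be a parametrix, with
\begin{equation*}
(P-\lambda)Q(\lambda)=1-R_1(\lambda), \qquad Q(\lambda)(P-\lambda)=1-R_2(\lambda), \qquad R_1(\lambda),R_2(\lambda)\in\Psi^{-\infty,0}(\T^n_\theta;\Lambda_R).
\end{equation*}
For $\lambda\in\Lambda(P)$, the operator $P-\lambda:W_2^w\to L_2$ is invertible. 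Multiplying the first identity on the left and the second on the right by $(P-\lambda)^{-1}$, and substituting one into the other, gives
\begin{equation*}
(P-\lambda)^{-1}=Q(\lambda)+R_2(\lambda)Q(\lambda)+R_2(\lambda)(P-\lambda)^{-1}R_1(\lambda).
\end{equation*}
By Proposition~\ref{prop:Parameter.composition-PsiDOs}, $R_2Q\in\Psi^{-\infty,-1}(\T^n_\theta;\Lambda_R)$. So $Q+R_2Q$ is in $\Psi^{-w,-1}$ and has the homogeneous components~(\ref{eq:Resolvent.symbol-resolvent1})--(\ref{eq:Resolvent.symbol-resolvent2}). Parts~(1) and~(2) therefore reduce to showing that $T(\lambda):=R_2(\lambda)(P-\lambda)^{-1}R_1(\lambda)$ lies in $\Psi^{-\infty,-1}(\T^n_\theta;\Lambda(P))$.

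For this I would first control $(P-\lambda)^{-1}$ in $\sL(L_2(\T^n_\theta))$ on each closed pseudo-cone $\Lambda'\subsubset\Lambda(P)$. The conical part of $\Lambda'$ is a closed subcone of $\check\Theta(P)$, so it contains no ray through an eigenvalue. By Theorem~\ref{thm:Resolvent.P-has-discrete-spectrum-resolvent-estimate}, $\|(P-\lambda)^{-1}\|\leq C|\lambda|^{-1}$ outside a large disk in $\Lambda'$. The remaining bounded part of $\Lambda'$ is a compact set disjoint from $\Sp(P)$, since it is contained in $\check\Theta(P)\cup D(0,r_0)$ with $0$ removed when $0\in\Sp(P)$. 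On it the resolvent is holomorphic and hence bounded. Thus $(P-\lambda)^{-1}\in\Hol^{-1}(\Lambda(P);\sL(L_2))$.

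Next, by Proposition~\ref{prop:PsiDOs-parameter.Sobolev-mapping-properties}, $R_1(\lambda)\in\Hol^0(\Lambda_R;\sL(W_2^{-N},L_2))$ and $R_2(\lambda)\in \Hol^0(\Lambda_R;\sL(L_2,W_2^{N}))$ for every $N$. It follows that $T(\lambda)\in\Hol^{-1}(\Lambda(P);\sL(W_2^{-N},W_2^{N}))$ for all $N$.

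The main obstacle is to convert these operator bounds into symbol bounds in $\stS^{-\infty,-1}$. I would use the symbol map~(\ref{eq:PsiDOs.symbol-map-definition}), $\rho_{T}(\xi;\lambda)=\sum_k\phi(\xi-k)T(\lambda)(U^k)(U^k)^{-1}$. Since $\|U^k\|_{-N}\lesssim\brak{k}^{-N}$ and $W_2^{N}\hookrightarrow C^\infty(\T^n_\theta)$ continuously in the semi-norms $\|\delta^\alpha\cdot\|$ for $N$ large, each term is bounded by $C(1+|\lambda|)^{-1}\brak{k}^{-N'}$ for any $N'$. Because $\phi\in\cS(\R^n)$, the $\xi$-derivatives are handled as well, and so $\rho_T\in\stS^{-\infty,-1}(\T^n_\theta\times\R^n\times\Lambda(P))$. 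Holomorphy in $\lambda$ is inherited termwise, and $T=P_{\rho_T}$ by Proposition~\ref{prop:PsiDOs.rho-to-rhotilde-map-continuity}.

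Part~(3) then follows from Proposition~\ref{prop:PsiDOs-parameter.classical-PsiDO-Sobolev-mapping-properties}(2) with $m=-w$, $d=-1$ and $d'=a-1$. That result gives $\Hol^{a-1}$ bounds from $W_2^{s'-w+w(1-a)}$ to $W_2^{s'}$. Relabelling $s=s'-aw$, this is exactly $\Hol^{-1+a}(\Lambda(P);\sL(W_2^s,W_2^{s+aw}))$.
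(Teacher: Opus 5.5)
The paper gives no proof of this theorem; it imports it from \cite{LP:Part1} as background. There is therefore no in-paper argument to compare with. Your proposal is correct: it is a self-contained derivation from the other quoted ingredients, namely the parametrix on $\Lambda_R$ (Proposition~\ref{thm:Resolvent.parametrix-with-parameter}), the $O(|\lambda|^{-1})$ resolvent bound (Theorem~\ref{thm:Resolvent.P-has-discrete-spectrum-resolvent-estimate}), the parametric Sobolev mapping properties, and the toroidal symbol map.

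Sandwiching the resolvent as $R_2(\lambda)(P-\lambda)^{-1}R_1(\lambda)$ is the right move. The parametrix remainders lie only in $\Psi^{-\infty,0}$ and carry no decay in $\lambda$, so the factor $(1+|\lambda|)^{-1}$ must come from the $L_2$ resolvent bound. The two smoothing factors then upgrade that bound to bounds in $\sL(W_2^{-N},W_2^{N})$, with no need for parametric elliptic regularity on other Sobolev spaces. Part~(3) then follows from part~(1) instead of needing its own proof.

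Two details should each get a sentence.
\begin{enumerate}
\item When you invoke Proposition~\ref{prop:PsiDOs.rho-to-rhotilde-map-continuity} to get $T(\lambda)=P_{\rho_T}$, note that $T(\lambda)\in\OP^\bt(\T^n_\theta)$ for each fixed $\lambda$. Indeed, it maps $W_2^{-N}$ to $W_2^{N}$ for every $N$, so it is smoothing, hence a $\Psi$DO with symbol in $\stS^{-\infty}(\T^n_\theta\times\R^n)$.
\item For part~(2), say explicitly that restricting from $\Lambda_R$ to $\Lambda(P)$ is harmless, because $\Lambda'\subsubset\Lambda(P)$ implies $\Lambda'\subsubset\Lambda_R$ and $\Omega_c(\check\Theta(P))\subseteq\Omega_c(\Theta(P))$. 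Also say that adding $R_2Q+T\in\Psi^{-\infty,-1}$ leaves the homogeneous components of $Q$ unchanged.
\end{enumerate}
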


\begin{remark}
 We refer to~\cite{LL:JNCG25} for a description of the resolvent in a version for NC tori of the weakly pseudodifferential calculus of~\cite{GS:IM95}. 
\end{remark}
\section{Spectral Theory of Non-Selfadjoint Elliptic \psidos}\label{sec:Nonselfadjoint}
In this section we study the spectral projections and partial inverses of non-selfadjoint elliptic \psidos. As an addition to the results of~\cite{LP:Part1}, we also give a precise description of the singularity at the origin of the resolvent of an elliptic \psido. 

Throughout this section we let $P\in \Psi^q(\T^n_\theta)$ be an elliptic \psido\ with $m:=\Re q>0$. We further assume that $\Sp(P)\neq \C$. Thanks to Proposition~\ref{prop:Spectral.spectrum-P} we then know that $\Sp(P)$ is a discrete set consisting of isolated eigenvalues with finite multiplicity.
In addition, we denote by $\cK$ the Banach ideal of compact operators on $L_2(\T^n_\theta)$.  

\subsection{Generalized eigenspaces and spectral projections}

\begin{lemma} \label{lem:Spectral.resol-holomorphic}
 Set $\Omega=\C\setminus \Sp(P)$. Then the resolvent $\lambda \rightarrow (P-\lambda)^{-1}$ is a holomorphic map from $\Omega$ to $\cL(L_2(\T^n_\theta), W_2^m(\T^n_\theta) )$. In particular, it gives rise to a holomorphic map from $\Omega$ to $\cK$. 
\end{lemma}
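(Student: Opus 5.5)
The plan is to reduce everything to the first resolvent identity together with the isomorphism property recorded in Section~\ref{subsec:spectra-of-psidos}. For every $\lambda\in\Omega$, the operator $P-\lambda:W_2^m(\T^n_\theta)\rightarrow L_2(\T^n_\theta)$ is an isomorphism of Hilbert spaces. Thus $R(\lambda):=(P-\lambda)^{-1}$ is a well-defined element of $\cL(L_2(\T^n_\theta),W_2^m(\T^n_\theta))$. Since $P$ has order $q$ with $\Re q=m$, Proposition~\ref{prop:Sob-Mapping.rho-on-Hs} shows that $P-\lambda_0$ is bounded from $W_2^m(\T^n_\theta)$ to $L_2(\T^n_\theta)$. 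The inclusion $\iota:W_2^m(\T^n_\theta)\hookrightarrow L_2(\T^n_\theta)$ is bounded, and it is compact because $m>0$.

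Next I fix $\lambda_0\in\Omega$ and write $P-\lambda=(P-\lambda_0)-(\lambda-\lambda_0)\iota$, regarded as operators $W_2^m\to L_2$. Factoring out $(P-\lambda_0)$ gives
\begin{equation*}
P-\lambda=(P-\lambda_0)\big[1-(\lambda-\lambda_0)R(\lambda_0)\iota\big],
\end{equation*}
where $R(\lambda_0)\iota\in\cL(W_2^m(\T^n_\theta))$ is bounded. For $|\lambda-\lambda_0|\,\|R(\lambda_0)\iota\|<1$ the bracket is invertible by a Neumann series in the Banach algebra $\cL(W_2^m(\T^n_\theta))$. Hence
\begin{equation*}
R(\lambda)=\sum_{k\geq 0}(\lambda-\lambda_0)^k\big(R(\lambda_0)\iota\big)^kR(\lambda_0),
\end{equation*}
and this series converges in norm in $\cL(L_2,W_2^m)$. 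A norm-convergent power series is holomorphic, so $R$ is holomorphic near $\lambda_0$, in fact analytic, with derivative $R(\lambda_0)\iota R(\lambda_0)$ at $\lambda_0$. Alternatively, one can use the resolvent identity $R(\lambda)-R(\lambda_0)=(\lambda-\lambda_0)R(\lambda)\iota R(\lambda_0)$ and prove continuity of $R$ first. The power series route avoids that step.

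For the second assertion, composing with $\iota$ gives a continuous linear map $\cL(L_2,W_2^m)\ni T\mapsto \iota T\in\cL(L_2(\T^n_\theta))$. By the remark following Definition~\ref{def:Holp.holomorphic-maps}, holomorphy is preserved under continuous linear maps, so $\lambda\mapsto\iota R(\lambda)$ is holomorphic in $\cL(L_2(\T^n_\theta))$. Each value $\iota R(\lambda)$ is compact, because $\iota$ is compact. The ideal $\cK$ carries the operator norm topology and is closed in $\cL(L_2(\T^n_\theta))$, so the difference quotients converge in $\cK$ and the map is holomorphic into $\cK$.

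The only genuinely non-formal input is the compactness of $W_2^m(\T^n_\theta)\hookrightarrow L_2(\T^n_\theta)$, which is recorded in Section~\ref{sec:PsiDOs}; everything else is routine.
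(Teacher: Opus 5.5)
Your proof is correct and follows the same route as the paper's. The paper likewise deduces holomorphy of $\lambda\mapsto(P-\lambda)^{-1}$ in $\cL(L_2(\T^n_\theta),W_2^m(\T^n_\theta))$ from the fact that $P-\lambda:W_2^m(\T^n_\theta)\rightarrow L_2(\T^n_\theta)$ is an isomorphism, and then composes with the compact inclusion $W_2^m(\T^n_\theta)\hookrightarrow L_2(\T^n_\theta)$ to land in $\cK$; your Neumann series simply writes out the standard step that the paper states in one line.
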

 \begin{proof}
For $\lambda \in \Omega$ the operator $P-\lambda:W_2^m(\T^n_\theta)  \rightarrow L_2(\T^n_\theta)$ is an isomorphism, and so $\lambda \rightarrow (P-\lambda)^{-1}$ is a holomorphic map from $\Omega$ to $\cL(L_2(\T^n_\theta), W_2^m(\T^n_\theta) )$. Composing with the compact inclusion of $W_2^m(\T^n_\theta)$ into $L_2(\T^n_\theta)$ then yields a holomorphic family of compact operators. 
 \end{proof}

 Given any $\lambda \in \Sp(P)$, the \emph{generalized  eigenspace} $E_\lambda(P)$ and the \emph{spectral projection} $\Pi_\lambda(P)$ are defined by
 \begin{equation}\label{eq:ST.Riesz-root} 
 E_\lambda(P)= \bigcup_{\ell \geq 1}\ker (P-\lambda)^\ell, \qquad \Pi_\lambda(P)= \frac{1}{2i\pi} \int_{|\zeta -\lambda|=r} (\zeta-P)^{-1} d\zeta.  
\end{equation}
 Here $r>0$ is chosen so that $\lambda$ is the only eigenvalue of $P$ contained in the disk $|\zeta-\lambda|\leq r$. Thanks to Lemma~\ref{lem:Spectral.resol-holomorphic} this contour integral is well defined in $\cL(L_2(\T^n_\theta), W_2^m(\T^n_\theta))$, independently of the choice of $r$, and via the compact inclusion of $W_2^m(\T^n_\theta)$ into $L_2(\T^n_\theta)$ we may also regard $\Pi_\lambda(P)$ as a compact operator on $L_2(\T^n_\theta)$.  
 
 It is well known (see, e.g., \cite{GK:AMS69, RN:AK52}) that we have 
\begin{equation*}
 \Pi_\lambda(P)^2= \Pi_\lambda(P), \qquad \Pi_\lambda(P)\Pi_\mu(P)= 0 \quad \text{if $\mu\neq \lambda$}. 
\end{equation*}
These show that the spectral projections $\Pi_\lambda(P)$, $\lambda \in \Sp(P)$, form a family of mutually disjoint projections. 

Recall that $\Sp (P^*)=\left\{ \overline{\lambda}; \ \lambda \in \Sp(P)\right\}$. Therefore, taking into account that complex conjugation reverses orientation, we get
 \begin{equation} \label{eq:Spectral.Rieszproj-adjoint}
 \Pi_\lambda(P)^*=  \frac{-1}{2i\pi} \int_{|\zeta -\overline{\lambda}|=r} [(\zeta-P)^{-1}]^* d\overline{\zeta}= 
 \frac{1}{2i\pi} \int_{|\zeta -\overline{\lambda}|=r} (\zeta-P^*)^{-1} d\zeta = \Pi_{\overline{\lambda}}(P^*). 
\end{equation}

In what follows we denote by $\dotplus$ the algebraic direct sum of linear subspaces. 
 
 \begin{proposition}\label{prop:Spectral.projection-lambda}
 Let $\lambda \in \Sp(P)$. The following hold.
 \begin{enumerate}
 \item $E_\lambda(P)$ is a finite-dimensional subspace of $C^\infty(\T^n_\theta)$. In particular, there is an integer $\nu\geq 1$ such that $E_\lambda(P)=\ker (P-\lambda)^\nu$. 
  
\item $\Pi_\lambda(P)$ is the projection onto $E_\lambda(P)$ with kernel $E_{\overline{\lambda}}(P^*)^\perp$. In particular, we have
 \begin{equation*}
 L_2(\T^n_\theta) = E_\lambda(P) \dotplus E_{\overline{\lambda}}(P^*)^\perp.
\end{equation*}

\item We have
\begin{equation}
 E_{\overline{\lambda}}(P^*)^\perp\supseteq\bigplus^{\cdot}_{\mu \in \Sp(P)\setminus \{\lambda\}} E_\mu(P). 
 \label{eq:Spectral.Eolambdaperp-Emu}
\end{equation}
\end{enumerate}
\end{proposition}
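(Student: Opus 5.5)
Write $\lambda$ for the fixed eigenvalue and $\Pi:=\Pi_\lambda(P)$. The plan is to reduce everything to the standard Riesz theory for isolated eigenvalues of closed operators, using Lemma~\ref{lem:Spectral.resol-holomorphic}, and then use ellipticity only for regularity.

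\emph{Step 1: the range of $\Pi$ is a finite-dimensional invariant subspace.} Since $\Pi$ is an integral of the compact-valued holomorphic map $\zeta\mapsto(\zeta-P)^{-1}$ (Lemma~\ref{lem:Spectral.resol-holomorphic}), $\Pi$ is a compact idempotent on $L_2(\T^n_\theta)$. A compact projection has finite-dimensional range, so $\ran\Pi$ is finite-dimensional. Moreover $\Pi$ maps $L_2$ into $W_2^m=\dom P$ and commutes with $P$ on $W_2^m$. Hence $\ran\Pi$ is $P$-invariant, and $\ker\Pi$ is invariant under $P$ on $\ker\Pi\cap W_2^m$.

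\emph{Step 2: identify $\ran\Pi$ with $E_\lambda(P)$.} The restriction $P|_{\ran\Pi}$ has spectrum $\{\lambda\}$; this is the standard Riesz decomposition of spectrum, since the resolvent of the restriction is $(\zeta-P)^{-1}\Pi$, which is holomorphic outside $\lambda$. As $\ran\Pi$ is finite-dimensional, $(P-\lambda)^\nu\Pi=0$ for $\nu=\dim\ran\Pi$, so $\ran\Pi\subseteq\ker(P-\lambda)^\nu\subseteq E_\lambda(P)$.

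Conversely, let $u\in\ker(P-\lambda)^\ell$. For $\zeta\neq\lambda$ the Neumann-type identity gives
\[
(\zeta-P)^{-1}u=\sum_{j<\ell}(\zeta-\lambda)^{-j-1}(P-\lambda)^j u .
\]
Integrating over the circle then gives $\Pi u=u$. Thus $E_\lambda(P)=\ran\Pi$ is finite-dimensional and equals $\ker(P-\lambda)^\nu$.

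For smoothness, each generalized eigenvector satisfies $(P-\lambda)^\nu u=0$. Here $(P-\lambda)^\nu$ is elliptic of order $\nu q$: it is the composition $P_1P_2\cdots$ of elliptic operators, and its principal symbol is $\rho_q(\xi)^\nu$, since $\lambda$ is of lower order. Corollary~\ref{cor:Elliptic.P-lambda-hypoell}, applied to the elliptic operator $(P-\lambda)^\nu$ (or iterated $\nu$ times to $P-\lambda$), gives $u\in C^\infty(\T^n_\theta)$. This completes (1).

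\emph{Step 3: the kernel of $\Pi$.} By \eqref{eq:Spectral.Rieszproj-adjoint}, $\Pi^*=\Pi_{\bar\lambda}(P^*)$. Applying Step 2 to the elliptic operator $P^*$ gives $\ran\Pi^*=E_{\bar\lambda}(P^*)$. For a bounded projection, $\ker\Pi=(\ran\Pi^*)^\perp$, so $\ker\Pi=E_{\bar\lambda}(P^*)^\perp$. The direct sum decomposition of $L_2(\T^n_\theta)$ follows because $\Pi$ is an idempotent, which gives (2).

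\emph{Step 4: proof of (3).} Let $\mu\neq\lambda$. Step 2 for $\mu$ shows $E_\mu(P)=\ran\Pi_\mu(P)$. The disjointness relation $\Pi_\lambda\Pi_\mu=0$ then gives $E_\mu(P)\subseteq\ker\Pi_\lambda=E_{\bar\lambda}(P^*)^\perp$. Since the right-hand side is a linear subspace, it contains the algebraic sum of all the $E_\mu(P)$ with $\mu\neq\lambda$.

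That this sum is direct follows from the mutual disjointness of the projections. Suppose $\sum u_\mu=0$ is a finite sum with $u_\mu\in E_\mu(P)$. Applying $\Pi_{\mu_0}$ kills every term except $u_{\mu_0}$, so $u_{\mu_0}=0$ for each $\mu_0$.

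\emph{Main obstacle.} The delicate points are the unbounded-operator bookkeeping in Step 2: the commutation of $P$ with $(\zeta-P)^{-1}$ on $W_2^m$, and the fact that the spectrum of $P|_{\ran\Pi}$ is $\{\lambda\}$. I would handle both by citing the classical Riesz–Dunford theory for closed operators with compact resolvent (\cite{GK:AMS69, RN:AK52}) rather than reproving them.
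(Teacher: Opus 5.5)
Your proof is correct and follows essentially the same route as the paper: compactness of $\Pi_\lambda(P)$ gives finite-dimensionality, and the resolvent identity on $\ker(P-\lambda)^\ell$ gives $E_\lambda(P)\subseteq\ran\Pi_\lambda(P)$. Nilpotency of $P-\lambda$ on the finite-dimensional invariant range gives the reverse inclusion; you cite Riesz--Dunford theory to exclude other eigenvalues of the restriction, whereas the paper uses $\Pi_\lambda(P)\Pi_\mu(P)=0$. Finally, $\Pi_\lambda(P)^*=\Pi_{\overline{\lambda}}(P^*)$ identifies the kernel, and disjointness of the Riesz projections gives (3).

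One step needs reordering. $\ker(P-\lambda)^\ell$ is taken in $\scD'(\T^n_\theta)$, so the resolvent identity is not yet meaningful for its elements. As the paper does, first show $\ker(P-\lambda)^\ell\subseteq C^\infty(\T^n_\theta)$ for every $\ell$ by iterating Corollary~\ref{cor:Elliptic.P-lambda-hypoell} on $P-\lambda$, and only then apply the identity. Do not instead treat $(P-\lambda)^\nu$ as an elliptic element of $\Psi^{\nu q}(\T^n_\theta)$: this is not available as stated when $\lambda\neq 0$ and $q\notin\N$.
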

 \begin{proof}
 Let us first show by induction that $\ker (P-\lambda)^\ell \subseteq C^\infty(\T^n_\theta) $ for all $\ell \in \N$. By Corollary~\ref{cor:Elliptic.P-lambda-hypoell} this is true for $\ell=1$. Moreover, if $\ker(P-\lambda)^\ell \subseteq C^\infty(\T^n_\theta) $ and $u\in \ker(P-\lambda)^{\ell+1}$, then $(P-\lambda)u$ is contained in $\ker(P-\lambda)^\ell$, and so $(P-\lambda)u \in C^\infty(\T^n_\theta) $. Using Corollary~\ref{cor:Elliptic.P-lambda-hypoell} we then deduce that $u\in C^\infty(\T^n_\theta) $. This proves our claim. 
 
 Let $\ell \in \N$. We observe that, for all $u\in C^\infty(\T^n_\theta) $ and $\zeta \in \C\setminus \Sp(P)$, we have 
 \begin{equation*}
 (\zeta -P)^{-1}u= \sum_{0\leq j<\ell} (\zeta-\lambda)^{-(j+1)} (P-\lambda)^{j}u + (\zeta-\lambda)^{-\ell}(\zeta-P)^{-1} (P-\lambda)^\ell u.
\end{equation*}
 Indeed, it can be checked that the right-hand side is a solution in $W_2^m(\T^n_\theta) $ of the equation $(\zeta -P)v=u$. In particular, if $u\in \ker (P-\lambda)^\ell$, then $ (\zeta -P)^{-1}u= \sum_{j<\ell} (\zeta-\lambda)^{-(j+1)} (P-\lambda)^{j}u$, and so we get 
  \begin{equation*}
 \Pi_\lambda(P)u =  \sum_{0\leq j<\ell} \frac{1}{2i\pi} \int_{|\zeta -\lambda|=r} (\zeta-\lambda)^{-(j+1)} (P-\lambda)^{j}u d\zeta =u.    
\end{equation*}
This shows that $\ker (P-\lambda)^\ell\subseteq \ran \Pi_\lambda(P)$ for all $\ell \in \N$, and hence $E_\lambda(P)\subseteq  \ran \Pi_\lambda(P)$. 

As $\Pi_\lambda(P)$ is a projection, its range agrees with the eigenspace $\ker(1-\Pi_\lambda(P))$. As mentioned above, $\Pi_\lambda(P)$ is a compact operator on $L_2(\T^n_\theta)$, and so the above eigenspace must have finite dimension. It then follows that $\ran\Pi_\lambda(P)$ has finite dimension, and so its subspace $E_\lambda(P)$ has finite dimension as well. As $E_\lambda(P)$ is the union of the non-decreasing sequence of the (finite dimensional) subspaces $\ker(P-\lambda)^\ell$, it then follows that there must be some $\nu\in \N$ such that  $E_\lambda(P)=\ker (P-\lambda)^\nu$. 

As $(\zeta -P)^{-1}$, $\zeta \in \C\setminus \Sp(P)$, is a holomorphic family in $\cL(L_2(\T^n_\theta),W_2^m(\T^n_\theta))$, we may regard $P(\zeta -P)^{-1}$ and $(\zeta -P)^{-1}P$ as holomorphic families in $\cL(W_2^m(\T^n_\theta),L_2(\T^n_\theta) )$. These two families agree, since, writing $P=\zeta -(\zeta-P)$, both equal $\zeta(\zeta -P)^{-1}-1$ on $W_2^m(\T^n_\theta)$. Thus, 
\begin{equation}\label{eq:Spectral.PPil-PilP}
 P\Pi_\lambda(P)=  \frac{1}{2i\pi}   \!\!\int_{|\zeta -\lambda|=r} \!\!\! P(P-\zeta)^{-1}d\zeta=  \frac{1}{2i\pi}  \!\! \int_{|\zeta -\lambda|=r}\!\!\!  (P-\zeta)^{-1}P d\zeta=\Pi_\lambda(P)P. 
\end{equation}
It then follows that $P(\ran \Pi_\lambda(P))\subseteq \ran \Pi_\lambda(P)$, and so $P$ induces a linear operator,
\begin{equation*}
 \tilde{P}: \ran \Pi_\lambda(P)\longrightarrow \ran \Pi_\lambda(P).
\end{equation*}

Note that $\Sp(\tilde{P})\subseteq \Sp(P)$. In addition, $\ker (\tilde{P}-\lambda)=\ker (P-\lambda)$, since $\ker (P-\lambda)\subseteq  \ran \Pi_\lambda(P)$. Let $\mu \in \Sp(P)\setminus \{\lambda\}$. The fact that $\Pi_\lambda(P)$ is a projection and $\Pi_\lambda(P)\Pi_\mu(P)=0$ implies that $\ran \Pi_\mu(P) \cap\ran \Pi_\lambda(P)=\{0\}$.  As $\ker (P-\mu)\subseteq \ran \Pi_\mu(P)$, we see that $\ran \Pi_\lambda(P)\cap \ker (P-\mu)=0$, and so $\ker (\tilde{P}-\mu)= \ker (P-\mu) \cap \ran \Pi_\lambda(P)=\{0\}$. This shows  that $\lambda$ is the only eigenvalue of $\tilde{P}$. As $\ran \Pi_\lambda(P)$ has finite dimension, this implies there is $\ell \in \N$ such that $(\tilde{P}-\lambda)^\ell=0$. This means that $\ran \Pi_\lambda(P)\subseteq \ker(P-\lambda)^\ell\subseteq E_\lambda(P)$. It then follows that $\ran \Pi_\lambda(P)=E_\lambda(P)$, and so $\Pi_\lambda(P)$ projects onto $E_\lambda(P)$. 

Applying the above considerations to the pair $(P^*,\overline{\lambda})$ shows that $\ran \Pi_{\overline{\lambda}}(P^*)=E_{\overline{\lambda}}(P^*)$. As we know by~(\ref{eq:Spectral.Rieszproj-adjoint}) that $\Pi_{\lambda}(P)^*= \Pi_{\overline{\lambda}}(P^*)$, we deduce that 
\begin{equation*}
 \ker \Pi_{\lambda}(P) = \left[\ran \Pi_{\lambda}(P)^*\right]^\perp =  \left[\ran  \Pi_{\overline{\lambda}}(P^*)\right]^\perp = E_{\overline{\lambda}}(P^*)^\perp.
\end{equation*}
 Therefore, $\Pi_\lambda(P)$ is precisely the projection onto $E_\lambda(P)$ with kernel $E_{\overline{\lambda}}(P^*)^\perp$. In particular, we have  the direct sum splitting $ L_2(\T^n_\theta) = E_\lambda(P) \dotplus E_{\overline{\lambda}}(P^*)^\perp$. 
 
Finally, given any $\mu \in \Sp(P)\setminus \{\lambda\}$, the equality $\Pi_\lambda(P) \Pi_\mu(P)=0$ also means that $\ran \Pi_\mu(P) \subseteq \ker \Pi_\lambda(P)$. As $\ran \Pi_\mu(P)=E_\mu(P)$ and $ \ker \Pi_\lambda(P)=E_{\overline{\lambda}}(P^*)^\perp$, this shows that $E_\mu(P)\subseteq E_{\overline{\lambda}}(P^*)^\perp$. This gives~(\ref{eq:Spectral.Eolambdaperp-Emu}). 
 \end{proof}

The proof yields the following. 

\begin{corollary}\label{cor:Spectral.nilpotent-lambda}
 Let $\lambda \in \Sp(P)$. The operator $P-\lambda$ induces on $E_\lambda(P)$ a nilpotent operator,
\begin{equation*}
 N_\lambda: E_\lambda(P) \longrightarrow E_\lambda(P). 
\end{equation*}
\end{corollary}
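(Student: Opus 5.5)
The plan is to build $N_\lambda$ directly out of facts already established in the proof of Proposition~\ref{prop:Spectral.projection-lambda}. There are three steps: show that $P-\lambda$ maps $E_\lambda(P)$ into itself, so that $N_\lambda$ is a well-defined linear endomorphism; then show that $N_\lambda$ is nilpotent; and finally note that everything takes place in $C^\infty(\T^n_\theta)$, so no domain issues arise.

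\textbf{Step 1: invariance.} By Proposition~\ref{prop:Spectral.projection-lambda}, $E_\lambda(P)$ is a finite-dimensional subspace of $C^\infty(\T^n_\theta)$ with $E_\lambda(P)=\ker(P-\lambda)^\nu$ for some $\nu\geq 1$. In particular $P-\lambda$ is defined on all of $E_\lambda(P)$, since $C^\infty(\T^n_\theta)$ lies in the domain $W_2^m(\T^n_\theta)$. If $u\in\ker(P-\lambda)^\nu$, then $v:=(P-\lambda)u$ satisfies $(P-\lambda)^\nu v=(P-\lambda)^{\nu+1}u=0$, because $(P-\lambda)^\nu u=0$. Hence $v\in\ker(P-\lambda)^\nu=E_\lambda(P)$.

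An alternative route to the same invariance uses the commutation $P\Pi_\lambda(P)=\Pi_\lambda(P)P$ from~(\ref{eq:Spectral.PPil-PilP}) together with $\ran\Pi_\lambda(P)=E_\lambda(P)$. This is precisely the induced operator $\tilde P$ from the proof, and $N_\lambda=\tilde P-\lambda$.

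\textbf{Step 2: nilpotency.} For every $u\in E_\lambda(P)$ we have $N_\lambda^\nu u=(P-\lambda)^\nu u=0$, because $E_\lambda(P)=\ker(P-\lambda)^\nu$. Therefore $N_\lambda^\nu=0$.

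Equivalently, one can argue as in the proof: $\lambda$ is the only eigenvalue of $\tilde P$ on the finite-dimensional space $\ran\Pi_\lambda(P)$, so $\tilde P-\lambda$ is nilpotent.

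\textbf{Main obstacle.} There is essentially none. The only subtle point is the finiteness of $\nu$, which is needed to obtain a uniform power killing every vector of $E_\lambda(P)$. That finiteness is exactly part~(1) of Proposition~\ref{prop:Spectral.projection-lambda}, which rests on the compactness of $\Pi_\lambda(P)$.
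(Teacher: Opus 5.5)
Your argument is correct: invariance and $N_\lambda^\nu=0$ follow directly from $E_\lambda(P)=\ker(P-\lambda)^\nu$ in part~(1) of Proposition~\ref{prop:Spectral.projection-lambda}. The paper instead reads off nilpotency from the induced operator $\tilde{P}$ on $\ran\Pi_\lambda(P)=E_\lambda(P)$ built in that proof, which is your ``alternative route''; the two are essentially the same, and yours uses only the statement of the proposition rather than its proof.
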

\begin{proof}
 It is shown in the proof of Proposition~\ref{prop:Spectral.projection-lambda} that $P$ induces on $\ran \Pi_\lambda(P)$ a linear operator $\tilde{P}$ such that $\tilde{P}-\lambda$ is nilpotent. As $\ran \Pi_\lambda(P)=E_\lambda(P)$ this exactly means that $P-\lambda$ induces on $E_\lambda(P)$ a nilpotent linear operator $N_\lambda: E_\lambda(P) \rightarrow E_\lambda(P)$.  
\end{proof}
In what follows we shall denote by $\nu_\lambda$ the nilpotency index of $N_\lambda$, i.e., the smallest integer $\nu \geq 1$ such that $N_\lambda^\nu=0$.

Proposition~\ref{prop:Spectral.projection-lambda} also gives the following. 

\begin{corollary} \label{cor:Spectral.Riesz-projection-smoothing}
 All the spectral projections $\Pi_\lambda(P)$, $\lambda \in \Sp(P)$, are smoothing operators. 
\end{corollary}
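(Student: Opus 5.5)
The plan is to show that $\Pi_\lambda(P)$ is a finite-rank operator whose range and co-range consist of smooth elements. Such an operator extends continuously from $\scD'(\T^n_\theta)$ to $C^\infty(\T^n_\theta)$, which is exactly the definition of a smoothing operator.

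First, by Proposition~\ref{prop:Spectral.projection-lambda}, $\Pi_\lambda(P)$ is the projection onto $E_\lambda(P)$ with kernel $E_{\overline{\lambda}}(P^*)^\perp$. Both $E_\lambda(P)$ and $E_{\overline{\lambda}}(P^*)$ are finite-dimensional subspaces of $C^\infty(\T^n_\theta)$. For the second, we apply the same proposition to $P^*$, which is elliptic of order $\bar q$ with $\Re \bar q=m>0$ and satisfies $\Sp(P^*)\neq\C$.

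Let $d=\dim E_\lambda(P)$. Since $\Pi_\lambda(P)^*=\Pi_{\overline{\lambda}}(P^*)$ by~(\ref{eq:Spectral.Rieszproj-adjoint}), the map $\Pi_\lambda(P)$ restricts to an isomorphism $E_\lambda(P)\to E_\lambda(P)$ and the two spaces are in duality. We can therefore choose a basis $e_1,\ldots,e_d$ of $E_\lambda(P)$ and a basis $f_1,\ldots,f_d$ of $E_{\overline{\lambda}}(P^*)$ that are biorthogonal, i.e., $\acoup{e_i}{f_j}=\delta_{ij}$. This is possible because the pairing $E_\lambda(P)\times E_{\overline{\lambda}}(P^*)\to\C$ is nondegenerate. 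Indeed, if $u\in E_\lambda(P)$ is orthogonal to $E_{\overline{\lambda}}(P^*)$, then $u$ lies in $E_\lambda(P)\cap E_{\overline{\lambda}}(P^*)^\perp=\{0\}$ by the direct sum splitting; the symmetric argument applies to $P^*$. It then follows that
\begin{equation*}
 \Pi_\lambda(P)u=\sum_{j=1}^d \acoup{u}{f_j}\, e_j, \qquad u\in L_2(\T^n_\theta).
\end{equation*}
This holds because both sides vanish on $E_{\overline{\lambda}}(P^*)^\perp$ and agree with the identity on $E_\lambda(P)$.

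Finally, each functional $u\mapsto\acoup{u}{f_j}=\acou{u}{f_j^*}$, using~(\ref{eq:NCtori.distrb-innerproduct-eq}), extends continuously to $\scD'(\T^n_\theta)$ because $f_j^*\in C^\infty(\T^n_\theta)$. Hence the formula above defines a continuous map $\scD'(\T^n_\theta)\to C^\infty(\T^n_\theta)$ extending $\Pi_\lambda(P)$, so $\Pi_\lambda(P)$ is smoothing.

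The only delicate point is the nondegeneracy of the pairing, and it is settled directly by the splitting in Proposition~\ref{prop:Spectral.projection-lambda}.
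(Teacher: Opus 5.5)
Your proof is correct and is essentially the paper's argument. In both, $\Pi_\lambda(P)$ is written as a finite sum of rank-one maps $u\mapsto \acou{u}{g_j}h_j$ with $g_j,h_j\in C^\infty(\T^n_\theta)$, using that $E_\lambda(P)$ and $E_{\overline{\lambda}}(P^*)$ are finite-dimensional subspaces of $C^\infty(\T^n_\theta)$ and that $\ker\Pi_\lambda(P)=E_{\overline{\lambda}}(P^*)^\perp$. The only difference is that the paper skips your biorthogonality and nondegeneracy step: it writes $\Pi_\lambda(P)=\Pi_\lambda(P)\Pi$, where $\Pi$ is the orthogonal projection onto $E_{\overline{\lambda}}(P^*)$ with orthonormal basis $v_1,\ldots,v_k$, so that $\Pi_\lambda(P)u=\sum_j\acou{u}{v_j^*}\,\Pi_\lambda(P)v_j$ with $\Pi_\lambda(P)v_j\in E_\lambda(P)$.
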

\begin{proof}
Let $\lambda \in \Sp(P)$. Applying Proposition~\ref{prop:Spectral.projection-lambda} to $P^*$ shows that $E_{\overline{\lambda}}(P^*)$ is a finite-dimensional subspace of $C^\infty(\T^n_\theta)$. Thus, if we set $k=\dim E_{\overline{\lambda}}(P^*)$, then we find elements $v_j \in C^\infty(\T^n_\theta) $, $j=1,\ldots, k$, such that $\{v_1, \ldots, v_k\}$ is an orthonormal basis of $E_{\overline{\lambda}}(P^*)$.  In particular, if we denote by $\Pi$ the orthogonal projection onto $E_{\overline{\lambda}}(P^*)$, then 
 $\Pi u= \sum_{j=1}^k \acoup{u}{v_j} v_j$ for all $u\in L_2(\T^n_\theta)$.  As $\ran (1-\Pi)= E_{\overline{\lambda}}(P^*)^\perp =\ker \Pi_\lambda(P)$, 
 we see that, for all $u\in L_2(\T^n_\theta)$, we have 
 \begin{equation} \label{eq:Spectral.Rieszproj-decomp}
 \Pi_\lambda(P)u=  \Pi_\lambda(P)\Pi u =  \sum_{j=1}^k \acoup{u}{v_j}\Pi_\lambda(P)v_j = \sum_{j=1}^k \acou{u}{v_j^*} \Pi_\lambda(P)v_j. 
\end{equation}
Note that $ \Pi_\lambda(P)v_j \in C^\infty(\T^n_\theta) $, since $\ran \Pi_\lambda(P) =E_\lambda(P)\subseteq C^\infty(\T^n_\theta) $. Moreover, $u\rightarrow \acou{u}{v_j^*}$ is a continuous linear form on $\scD'(\T^n_\theta)$. Therefore, $u\rightarrow \acou{u}{v_j^*} \Pi_\lambda(P)v_j$ maps continuously $\scD'(\T^n_\theta)$ to $C^\infty(\T^n_\theta)$, i.e., this is a smoothing operator. Combining this with~(\ref{eq:Spectral.Rieszproj-decomp}) shows that  $\Pi_\lambda(P)$ is a smoothing operator. 
 \end{proof}

\begin{remark} \label{rmk:Spectral.P-normal-root-space-Riesz-projection}
If $P$ is normal and  $\Sp(P)\neq \C$ (e.g., $P$ is selfadjoint), then, for all $\lambda \in \Sp(P)$,  we have 
\begin{equation*}
 E_\lambda(P)=E_{\overline{\lambda}}(P^*)=\ker (P-\lambda). 
\end{equation*}
Moreover, the Riesz projection $\Pi_\lambda(P)$ is the orthogonal projection onto $\ker (P-\lambda)$.
\end{remark}

\subsection{Partial inverses of elliptic \psidos} \label{subsec:PartialInverse}
In what follows, we let $\rho_q(\xi)\in S_q(\T^n_\theta\times\R^n)$ be the principal symbol of $P$. The ellipticity of $P$ precisely means that $\rho_q(\xi)$ is invertible for every $\xi\neq 0$.

If $0\not \in \Sp(P)$, then $P^{-1}$ is an operator in $\Psi^{-q}(\T^n_\theta)$ with principal symbol  $\rho_q(\xi)^{-1}$ (see~\cite[{Proposition~11.7}]{HLP:Part2} and~\cite[{Corollary~11.8}]{HLP:Part2}). We would like to explain how to define a ``partial inverse'' for $P$ when $0\in \Sp(P)$ in such a way as to obtain a \psido-parametrix for $P$. 

From now on we assume that $0\in \Sp(P)$. 

\begin{lemma}\label{lem:Spectral.Po}
With respect to the splittings $W_2^m(\T^n_\theta) = E_0(P)\dotplus [E_{0}(P^*)^\perp \cap W_2^m(\T^n_\theta)]$ and $L_2(\T^n_\theta) =E_0(P)\dotplus E_0(P^*)^\perp$, the operator $P$ takes the form,
\begin{equation}\label{eq:Spectral.P-matrix0}
 P= 
\begin{pmatrix}
 N_0 & 0\\
 0 & P_0
\end{pmatrix},
\end{equation}
where $N_0:E_0(P)\rightarrow E_0(P)$ is the nilpotent operator of Corollary~\ref{cor:Spectral.nilpotent-lambda} and 
 \begin{equation*}
 P_0: E_0(P^*)^\perp \cap W_2^m(\T^n_\theta) \longrightarrow E_0(P^*)^\perp,
\end{equation*}
is a continuous bijection with continuous inverse. Moreover, $\Sp(P_0)=\Sp(P)\setminus \{0\}$, and $E_\lambda(P_0)=E_\lambda(P)$ for all $\lambda \in \Sp(P)\setminus \{0\}$.
\end{lemma}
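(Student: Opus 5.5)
The plan is to use the Riesz projection $\Pi_0:=\Pi_0(P)$ from Proposition~\ref{prop:Spectral.projection-lambda}. That result gives $L_2(\T^n_\theta)=E_0(P)\dotplus E_0(P^*)^\perp$, where $\Pi_0$ projects onto $E_0(P)$ with kernel $E_0(P^*)^\perp$. Since $E_0(P)\subseteq C^\infty(\T^n_\theta)\subseteq W_2^m(\T^n_\theta)$, intersecting with $W_2^m(\T^n_\theta)$ gives
\begin{equation*}
W_2^m(\T^n_\theta)=E_0(P)\dotplus [E_0(P^*)^\perp\cap W_2^m(\T^n_\theta)].
\end{equation*}
Indeed, for $u\in W_2^m$ we have $u-\Pi_0u\in W_2^m$. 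Moreover, $\Pi_0$ is smoothing by Corollary~\ref{cor:Spectral.Riesz-projection-smoothing}, so it is also continuous on $W_2^m$, and both splittings are topological.

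\textbf{Diagonal form.} By~(\ref{eq:Spectral.PPil-PilP}), $P\Pi_0=\Pi_0P$ on $W_2^m$. Hence $P$ maps $E_0(P)=\ran\Pi_0$ into itself, and it maps $\ker\Pi_0\cap W_2^m$ into $\ker\Pi_0=E_0(P^*)^\perp$. This gives the block-diagonal form~(\ref{eq:Spectral.P-matrix0}). The block on $E_0(P)$ is $P-0=N_0$, which is nilpotent by Corollary~\ref{cor:Spectral.nilpotent-lambda}. The block $P_0$ is continuous from $E_0(P^*)^\perp\cap W_2^m$ (with the $W_2^m$-norm) to $E_0(P^*)^\perp$, since $P:W_2^m\to L_2$ is bounded.

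\textbf{Bijectivity of $P_0$.} This is the main step. For injectivity: if $u\in\ker\Pi_0\cap W_2^m$ and $Pu=0$, then $u\in\ker P\subseteq E_0(P)=\ran\Pi_0$, so $u=\Pi_0u=0$. For surjectivity, take $f\in E_0(P^*)^\perp$. I would produce a preimage by the explicit formula
\begin{equation*}
u=\frac{1}{2i\pi}\int_{|\zeta|=r}\zeta^{-1}(P-\zeta)^{-1}f\,d\zeta ,
\end{equation*}
where $r$ is small enough that $0$ is the only eigenvalue in $|\zeta|\le r$. The resolvent identity $P(P-\zeta)^{-1}=1+\zeta(P-\zeta)^{-1}$ gives
\begin{equation*}
Pu=\frac{1}{2i\pi}\int_{|\zeta|=r}\zeta^{-1}\,d\zeta\,f+\frac{1}{2i\pi}\int_{|\zeta|=r}(P-\zeta)^{-1}f\,d\zeta=f-\Pi_0f=f.
\end{equation*}
This produces a $u\in W_2^m$ with $Pu=f$ (the sign conventions need checking; one should integrate against $1/\zeta-$ type factors with the correct orientation). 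Then $u-\Pi_0u$ lies in $\ker\Pi_0\cap W_2^m$ and satisfies $P(u-\Pi_0u)=f-\Pi_0Pu=f-\Pi_0 f=f$.

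A cleaner alternative is available. $P$ is Fredholm of index $0$, because $P-\lambda$ is invertible for some $\lambda$ and the inclusion $W_2^m\hookrightarrow L_2$ is compact. Since $P_0$ differs from $P$ by a finite-dimensional block, $P_0$ is Fredholm of index $0$, and injectivity then implies surjectivity. Continuity of the inverse follows from the open mapping theorem, as both spaces are closed subspaces of Hilbert spaces.

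\textbf{Spectrum and generalized eigenspaces.} For $\lambda\neq0$, the operator $N_0-\lambda$ is invertible because $N_0$ is nilpotent. Thus $P_0-\lambda$ is bijective iff $P-\lambda$ is, using the block form together with Fredholmness/open mapping as above. Also $0\notin\Sp(P_0)$ by the bijectivity step. Hence $\Sp(P_0)=\Sp(P)\setminus\{0\}$. For $\lambda\ne0$, $\ker(P-\lambda)^\ell$ decomposes along the splitting, and its $E_0(P)$-component vanishes since $(N_0-\lambda)^\ell$ is invertible. Therefore $\ker(P-\lambda)^\ell=\ker(P_0-\lambda)^\ell$, which gives $E_\lambda(P_0)=E_\lambda(P)$.
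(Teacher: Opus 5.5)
Your proof is correct. It differs from the paper's mainly in how you get surjectivity of $P_0$ and boundedness of its inverse.

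\textbf{The paper's route.} It uses the range of $P$: since $P$ is Fredholm, $\ran P$ is closed, so $\ran P=\ker(P^*)^\perp\supseteq E_0(P^*)^\perp$. The block form then gives $\ran P_0=\ran P\cap E_0(P^*)^\perp=E_0(P^*)^\perp$, and continuity of $P_0^{-1}$ follows from the open mapping theorem. The paper also records that $P_0$ is closed, via the graph splitting $G(P)=G(N_0)\dotplus G(P_0)$; your argument does not need this.

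\textbf{Your explicit preimage.} This avoids both the adjoint and the closed-range property. With counterclockwise orientation, $R:=\frac{1}{2i\pi}\int_{|\zeta|=r}\zeta^{-1}(P-\zeta)^{-1}d\zeta$ lies in $\sL(L_2(\T^n_\theta),W_2^m(\T^n_\theta))$ and satisfies $PR=1-\Pi_0(P)$, exactly as you computed. The sign is already right, so your hedge about conventions can be dropped. As a bonus, $P_0^{-1}=(1-\Pi_0(P))R$ on $E_0(P^*)^\perp$ is visibly bounded, so you do not even need the open mapping theorem there. In fact $R$ is the same contour integral that the paper later shows equals the partial inverse $P^{-1}$ (see the computation of $P_\phi^{-1}$ in the proof of Lemma~\ref{lem:powers.properties-half-plane}). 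So your route effectively constructs the partial inverse already at this stage.

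\textbf{Your Fredholm alternative.} This is also valid. Index $0$ follows from writing $P=(P-\lambda)+\lambda\iota$ with $\iota$ compact. Index additivity over the block decomposition, with $N_0$ acting on a finite-dimensional space, gives index $0$ for $P_0$, so injectivity implies surjectivity. This is closer in spirit to the paper, which also rests on Fredholmness, but it uses the index rather than the identification $\ran P=\ker(P^*)^\perp$.

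Your splitting, block-diagonal form, injectivity step and spectral/generalized-eigenspace argument match the paper's.
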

\begin{proof}
 Recall that $1-\Pi_0(P)$ is the projection onto $ E_0(P^*)^\perp$ that annihilates $E_0(P)$. Moreover, this is a \psido\ of order~0, since $\Pi_0(P)$ is a smoothing operator. In particular, $1-\Pi_0(P)$ maps continuously $W_2^m(\T^n_\theta)$ to itself. Thus, if $u\in E_0(P^*)^\perp \cap W_2^m(\T^n_\theta)$, then $u=(1-\Pi_0(P))u$ is contained in $(1-\Pi_0(P))(W_2^m(\T^n_\theta))$. Conversely, if $u\in (1-\Pi_0(P))(W_2^m(\T^n_\theta))$, then $u$ is contained in $E_0(P^*)^\perp \cap W_2^m(\T^n_\theta)$. Therefore, we see that
 \begin{equation*}
  E_0(P^*)^\perp \cap W_2^m(\T^n_\theta) = \left(1-\Pi_0(P)\right)\!\left(W_2^m(\T^n_\theta)\right). 
\end{equation*}
As $\Pi_0(P)$ commutes with $P$ we then get
\begin{equation*}
 P\left(E_0(P^*)^\perp \cap W_2^m(\T^n_\theta)\right)=P\left(1-\Pi_0(P)\right)\!\left(W_2^m(\T^n_\theta)\right)=\left(1-\Pi_0(P)\right)P\!\left(W_2^m(\T^n_\theta)\right)\subseteq E_0(P^*)^\perp. 
\end{equation*}
This shows that $P$ induces a linear operator, 
 \begin{equation*}
 P_0: E_0(P^*)^\perp \cap W_2^m(\T^n_\theta) \longrightarrow E_0(P^*)^\perp. 
\end{equation*}

With respect to the splittings $W_2^m(\T^n_\theta) = E_0(P)\dotplus [E_{0}(P^*)^\perp \cap W_2^m(\T^n_\theta)]$ and $L_2(\T^n_\theta) =E_0(P)\dotplus E_0(P^*)^\perp$, we can write $P$ as the $2\times 2$-matrix~\eqref{eq:Spectral.P-matrix0}, where $N_0$ is the nilpotent operator induced by $P$ on $E_0(P)$ (\emph{cf}.~Corollary~\ref{cor:Spectral.nilpotent-lambda}). In terms of graphs this means that $G(P)=G(N_0)\dotplus G(P_0)$. As $G(P)$ is closed it then follows that $G(P_0)$ is closed, i.e., $P_0$ is a closed operator. 

For the range and nullspace of $P_0$, the decomposition~(\ref{eq:Spectral.P-matrix0}) implies that 
\begin{equation*}
 \ker(P_0)=\ker (P)\cap E_0(P^*)^\perp \quad \textup{and} \quad  \ran(P_0)=\ran (P)\cap E_0(P^*)^\perp. 
\end{equation*}
As $\ker (P)\subseteq E_0(P)$ and $E_0(P)\cap E_0(P^*)^\perp=\{0\}$, we deduce that $\ker(P_0)=\{0\}$, i.e., $P_0$ is one-to-one. Moreover, as $\ran(P)$ is closed, we have $\ran(P)=\ker(P^*)^\perp\supseteq E_0(P^*)^\perp$, and hence $ \ran(P_0)=\ran (P)\cap E_0(P^*)^\perp=E_0(P^*)^\perp$, which shows that $P_0$ is onto. Therefore, we see that $P_0$ is a bijection. 

As $P_0$ agrees with the restriction of $P$ to $W_2^m(\T^n_\theta)\cap E_0(P^*)^\perp$, it is continuous with respect to the $W_2^m(\T^n_\theta)$-topology. Moreover, $W_2^m(\T^n_\theta)\cap E_0(P^*)^\perp$ is a closed subspace of $W_2^m(\T^n_\theta)$, since it is the range of the bounded projection induced by $1-\Pi_0(P)$ on $W_2^m(\T^n_\theta)$. In particular, this is a  Hilbert space with respect to the $W_2^m(\T^n_\theta)$-inner product. Therefore, $P_0$ is a continuous bijection between Hilbert spaces, and so this is an invertible operator with continuous inverse.

Finally, in view of~(\ref{eq:Spectral.P-matrix0}), given any $\lambda \in \C^*$ and $j\geq 1$, we have
\begin{equation*}
 (P-\lambda)^{j}= 
\begin{pmatrix}
 (N_0-\lambda)^j & 0\\
 0 & (P_0-\lambda)^j
\end{pmatrix}.
 \end{equation*}
 The operator $(N_0-\lambda)^j$ is always invertible, since $N_0$ is nilpotent and $\lambda\neq 0$. Therefore, we see that $(P-\lambda)^{j}$ is invertible iff  $(P_0-\lambda)^{j}$ is invertible. For $j=1$ this ensures that $\Sp(P)\setminus \{0\}=\Sp(P_0)\setminus \{0\}$. As $P_0$ is invertible, and so $0\not\in \Sp(P_0)$,  it follows that 
 $\Sp(P_0)=\Sp(P)\setminus \{0\}$. Moreover, it also follows from~(\ref{eq:Spectral.P-matrix0}) that, if $\lambda \in\Sp(P)\setminus \{0\}$, then  $\ker(P_0-\lambda)^j=\ker(P-\lambda)^j$ for all $j\geq 1$. This implies that $E_\lambda(P_0)=E_\lambda(P)$. The proof is complete. 
 \end{proof}

 \begin{definition}[Partial Inverse]
The \emph{partial inverse} of $P$ is the operator $P^{-1}: L_2(\T^n_\theta) \rightarrow  W_2^m(\T^n_\theta)$ given by 
\begin{equation}\label{eq:Spectral.partial-inverse-matrix}
 P^{-1}= 
\begin{pmatrix}
 0 & 0\\
 0 & P_0^{-1}
\end{pmatrix}.
\end{equation}
That is, $P^{-1}=0$ on $E_0(P)$ and $P^{-1}u=P_0^{-1}u$ for all $u \in E_{0}(P^*)^\perp$. 
  \end{definition}
 
\begin{remark}
If $P$ is normal and $\Sp(P)\neq \C$, then we know from Remark~\ref{rmk:Spectral.P-normal-root-space-Riesz-projection} that $E_0(P)=E_0(P^*)=\ker P$. Therefore, in this case the partial inverse vanishes on $\ker P$ and inverts $P$ on $(\ker P)^\perp$, and so we recover the usual notion of partial inverse. 
\end{remark}
  
It follows from the definition of $P^{-1}$ that 
\begin{equation*}
 \ker \left(P^{-1}\right) =E_0(P) \qquad \text{and} \qquad \ran \left(P^{-1}\right)= \ran \left(P_0^{-1}\right)=E_0(P^*)^\perp\cap W_2^m(\T^n_\theta). 
\end{equation*}
Moreover, on $L_2(\T^n_\theta)=E_0(P)\dotplus E_0(P^*)^\perp$ we have
\begin{equation}\label{eq:Spectral.PP-1-P-1P1}
 PP^{-1}= \begin{pmatrix}
 N_0 & 0\\
 0 & P_0
\end{pmatrix} 
\begin{pmatrix}
 0 & 0\\
 0 & P_0^{-1}
\end{pmatrix}
= \begin{pmatrix}
 0 & 0\\
 0 & 1
\end{pmatrix} =1-\Pi_0(P). 
\end{equation}
The same computation on $W_2^m(\T^n_\theta)= E_0(P)\dotplus (W_2^m(\T^n_\theta)\cap E_0(P^*)^\perp)$ gives
\begin{equation}\label{eq:Spectral.PP-1-P-1P2}
 P^{-1}P=1-\Pi_0(P). 
\end{equation}

 These properties characterize $P^{-1}$. More precisely, we have the following result. 
 
 % We mention the following characterization of the partial inverse. 
\begin{lemma}\label{lem:Spectral.partial-inverse-characterization} 
 The partial inverse $P^{-1}$ is the unique linear operator $Q:L_2(\T^n_\theta) \rightarrow W_2^m(\T^n_\theta)$ such that
\begin{equation}
 E_0(P) \subseteq \ker Q  \qquad \text{and} \qquad QP=1-\Pi_0(P)\quad \text{on $W_2^m(\T^n_\theta)$}.
 \label{eq:Spectral.partial-inverse-characterization}  
\end{equation}
\end{lemma}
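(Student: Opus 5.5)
The proof splits into two parts: first that $P^{-1}$ satisfies~(\ref{eq:Spectral.partial-inverse-characterization}), then uniqueness.

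\emph{Existence.} By definition~(\ref{eq:Spectral.partial-inverse-matrix}), $P^{-1}$ vanishes on $E_0(P)$, so $E_0(P)\subseteq \ker P^{-1}$. The identity $P^{-1}P=1-\Pi_0(P)$ on $W_2^m(\T^n_\theta)$ is exactly~(\ref{eq:Spectral.PP-1-P-1P2}). So $P^{-1}$ is an operator $L_2(\T^n_\theta)\rightarrow W_2^m(\T^n_\theta)$ with both required properties.

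\emph{Uniqueness.} Let $Q:L_2(\T^n_\theta)\rightarrow W_2^m(\T^n_\theta)$ be linear and satisfy~(\ref{eq:Spectral.partial-inverse-characterization}). We use the splitting $L_2(\T^n_\theta)=E_0(P)\dotplus E_0(P^*)^\perp$ from Proposition~\ref{prop:Spectral.projection-lambda}. On the first summand, $Q=0=P^{-1}$ by the kernel condition. It remains to show $Qv=P^{-1}v$ for $v\in E_0(P^*)^\perp$.

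Lemma~\ref{lem:Spectral.Po} shows that $P_0:E_0(P^*)^\perp\cap W_2^m(\T^n_\theta)\rightarrow E_0(P^*)^\perp$ is a bijection. So we may set $u:=P_0^{-1}v=P^{-1}v$. Then $u\in W_2^m(\T^n_\theta)\cap E_0(P^*)^\perp$ and $Pu=P_0u=v$.

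Since $u\in E_0(P^*)^\perp=\ker\Pi_0(P)$, we have $(1-\Pi_0(P))u=u$. Applying~(\ref{eq:Spectral.partial-inverse-characterization}) to $u$ gives
\[
Qv=QPu=(1-\Pi_0(P))u=u=P^{-1}v.
\]
Hence $Q=P^{-1}$ on both summands, and therefore $Q=P^{-1}$.

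The only point needing care is that $Pu$ really lands on an arbitrary $v\in E_0(P^*)^\perp$, i.e. the surjectivity of $P_0$. This is supplied directly by Lemma~\ref{lem:Spectral.Po}, so no genuine obstacle remains. No continuity assumption on $Q$ is needed, because the argument is purely algebraic on the two summands.
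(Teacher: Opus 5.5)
Your proof is correct and is essentially the paper's argument: the paper writes your pointwise computation on the two summands as the operator identity $Q = Q\bigl(1-\Pi_0(P)\bigr) = QPP^{-1} = \bigl(1-\Pi_0(P)\bigr)P^{-1} = P^{-1}$. It uses the same two inputs as you do, namely $PP^{-1}=1-\Pi_0(P)$ (the surjectivity of $P_0$) and $\operatorname{ran} P^{-1}\subseteq \ker \Pi_0(P)$.
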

\begin{proof}
The properties are satisfied by $P^{-1}$. Conversely, let  $Q:L_2(\T^n_\theta) \rightarrow W_2^m(\T^n_\theta)$ be a linear operator satisfying~(\ref{eq:Spectral.partial-inverse-characterization}). The fact that $ E_0(P)\subseteq \ker Q$ ensures that $Q\Pi_0(P)=0$, and so $Q(1-\Pi_0(P))=Q$. Combining this with the equality $PP^{-1}=1-\Pi_0(P)$ shows that, on $L_2(\T^n_\theta)$, we have
\begin{equation}
 QPP^{-1}=Q\big(1-\Pi_0(P)\big)=Q. 
 \label{eq:Spectral.QPP-1}
\end{equation}

As $\ran P^{-1}=E_0(P^*)^\perp\cap W_2^m(\T^n_\theta)\subseteq \ker \Pi_0(P)$, we have $\Pi_0(P)P^{-1}=0$, and so $(1-\Pi_0(P))P^{-1}=P^{-1}$. Combining this with the equality $QP=1-\Pi_0(P)$ then gives
\begin{equation*}
 QPP^{-1}=\big(1-\Pi_0(P)\big)P^{-1}=P^{-1}.
\end{equation*}
Comparing this to~(\ref{eq:Spectral.QPP-1}) then shows that $Q=P^{-1}$. The proof is complete. 
%We know from Proposition~\ref{prop:Spectral.partial-inverse-basic-properties} that $P^{-1}$ maps $L_2(\T^n_\theta)$ to $W_2^m(\T^n_\theta)$ and satisfies~(\ref{eq:Spectral.partial-inverse-characterization}).    
\end{proof}

We have the following description of the partial inverse. 
 
\begin{proposition} \label{prop:Spectral.partial-inverse-basic-properties} The partial inverse
$P^{-1}$ is an operator in $\Psi^{-q}(\T^n_\theta)$. This is a parametrix of $P$, and so its symbol is given by~(\ref{eq:PsiDOs.symbol-parametrix1})--(\ref{eq:PsiDOs.symbol-parametrix2}). In particular, its  principal symbol is $\rho_q(\xi)^{-1}$. 
\end{proposition}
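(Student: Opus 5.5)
The idea is to compare $P^{-1}$ with a \psido\ parametrix of $P$ and show that the difference is smoothing. By Proposition~\ref{prop:Elliptic.existence-parametrix}, $P$ has a parametrix $Q\in\Psi^{-q}(\T^n_\theta)$ with
\[
QP=1-R_1,\qquad PQ=1-R_2,\qquad R_1,R_2\in\Psi^{-\infty}(\T^n_\theta).
\]
Its symbol is given by~(\ref{eq:PsiDOs.symbol-parametrix1})--(\ref{eq:PsiDOs.symbol-parametrix2}). Since $\Psi^{-q}(\T^n_\theta)$ is stable under addition of smoothing operators, it is enough to prove that $P^{-1}-Q$ is smoothing. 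The statements about the symbol and the principal symbol then follow at once. Moreover, $P^{-1}$ is then itself a parametrix, because $PP^{-1}=P^{-1}P=1-\Pi_0(P)$ by~(\ref{eq:Spectral.PP-1-P-1P1})--(\ref{eq:Spectral.PP-1-P-1P2}), and $\Pi_0(P)$ is smoothing by Corollary~\ref{cor:Spectral.Riesz-projection-smoothing}.

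To compare the two operators, I will multiply the identity $PP^{-1}=1-\Pi_0(P)$ on the left by $Q$. Working on $L_2(\T^n_\theta)$, where $P^{-1}$ takes values in $W_2^m(\T^n_\theta)$ so that every composition makes sense, this gives
\[
(1-R_1)P^{-1}=Q-Q\Pi_0(P),
\qquad\text{that is,}\qquad
P^{-1}-Q=R_1P^{-1}-Q\Pi_0(P).
\]
The second term $Q\Pi_0(P)$ is smoothing, since it is a \psido\ composed with a smoothing operator. The first term $R_1P^{-1}$ needs more care, because at this stage $P^{-1}$ is known only as a bounded map $L_2\to W_2^m$. Two facts handle it. First, $R_1$ extends to a continuous map $\scD'(\T^n_\theta)\to C^\infty(\T^n_\theta)$. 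Second, $P^{-1}$ is bounded $L_2\to L_2$. Hence $R_1P^{-1}$ is continuous $L_2\to C^\infty$, but smoothing requires continuity on all of $\scD'$. I would get this by using the identity $P^{-1}P=1-\Pi_0(P)$ symmetrically on the other side: $P^{-1}=P^{-1}PQ+P^{-1}R_2$. Substituting gives
\[
P^{-1}-Q=(1-\Pi_0(P))Q-Q+P^{-1}R_2=-\Pi_0(P)Q+P^{-1}R_2.
\]
Now $\Pi_0(P)Q$ is smoothing. For $P^{-1}R_2$, the operator $R_2$ maps $\scD'$ continuously into $C^\infty\subseteq L_2$, and $P^{-1}$ maps $L_2$ into $W_2^m$. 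So $P^{-1}R_2$ maps $\scD'$ continuously into $W_2^m$, though not yet into $C^\infty$.

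The main obstacle is therefore to show that $P^{-1}$ maps $C^\infty(\T^n_\theta)$ continuously into $C^\infty(\T^n_\theta)$, or better, $W_2^s\cap L_2$ into $W_2^{s+m}$. I would prove this with elliptic regularity, Proposition~\ref{prop:Elliptic.regularity}(3). If $u\in W_2^s$ with $s\geq 0$, then $v=P^{-1}u$ satisfies $Pv=(1-\Pi_0(P))u\in W_2^s$, hence $v\in W_2^{s+m}$. Continuity of $P^{-1}:W_2^s\to W_2^{s+m}$ then follows from the closed graph theorem, since $P^{-1}$ is already continuous into $W_2^m$. Consequently $P^{-1}R_2:\scD'\to C^\infty$ is continuous, so $P^{-1}R_2$ is smoothing. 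This proves that $P^{-1}-Q\in\Psi^{-\infty}(\T^n_\theta)$, and hence that $P^{-1}\in\Psi^{-q}(\T^n_\theta)$ with principal symbol $\rho_q(\xi)^{-1}$.
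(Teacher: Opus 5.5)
Your proof is correct and follows the paper's argument: compare $P^{-1}$ with a parametrix $Q$ via the two identities $P^{-1}=Q(1-\Pi_0(P))+R_1P^{-1}$ and $P^{-1}=Q-\Pi_0(P)Q+P^{-1}R_2$, then show that the correction terms in the second identity are smoothing. The only difference is how you get continuity of $P^{-1}$ on $C^\infty(\T^n_\theta)$. The paper reads it off the first identity: its right-hand side maps $C^\infty(\T^n_\theta)$ continuously into itself because $R_1P^{-1}$ is continuous from $L_2(\T^n_\theta)$ to $C^\infty(\T^n_\theta)$, a fact you already noted. You instead use elliptic regularity and the closed graph theorem, which also works but is not needed.
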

 \begin{proof}
 As $\Pi_0(P)$ is a smoothing operator, the equalities $PP^{-1}=1-\Pi_0(P)$ and $P^{-1}P=1-\Pi_0(P)$ mean that $P^{-1}$ is a parametrix for $P$. Therefore, by Proposition~\ref{prop:Elliptic.regularity} we only need to show that $P^{-1}\in \Psi^{-q}(\T^n_\theta)$. 
 
 Let $Q$ be a parametrix in $\Psi^{-q}(\T^n_\theta)$, so that $QP=1-R_1$ and $PQ=1-R_2$, where  $R_1$ and $R_2$ are smoothing operators. As 
 $Q(1-\Pi_0(P))=QPP^{-1}=(1-R_1)P^{-1}$ on $L_2(\T^n_\theta)$, we get 
 \begin{equation*}
 P^{-1}=Q(1-\Pi_0(P))+R_1P^{-1}. 
\end{equation*}
As $R_1$ is a smoothing operator, the operator $R_1P^{-1}$ maps continuously $L_2(\T^n_\theta)$ to $C^\infty(\T^n_\theta)$, and so we see that $P^{-1}$ induces a continuous operator from $C^\infty(\T^n_\theta)$ to itself. 

On $C^\infty(\T^n_\theta)$ we also have $(1-\Pi_0(P))Q=P^{-1}PQ=P^{-1}(1-R_2)$, and so we get
\begin{equation*}
 P^{-1}=Q-\Pi_0(P)Q+P^{-1}R_2. 
\end{equation*}
 As the operators $\Pi_0(P)Q$ and $P^{-1}R_2$ are smoothing, this shows that $P^{-1}$ agrees with $Q$ up to a smoothing operator, and so this is a \psido\ of order~$-q$. The proof is complete. 
\end{proof}

Finally, we have the following properties of the partial inverse. 

\begin{proposition}\label{prop:Spectral.partial-inverse-adjoint-exponentiation} The following hold.
 \begin{enumerate}
 \item The partial inverse of $P^*$ is $(P^{-1})^*$. 

\item For every integer $k\geq 2$, the partial inverse of $P^k$ is $(P^{-1})^k$. 
\end{enumerate}
\end{proposition}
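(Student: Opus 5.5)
Both parts follow from the characterization in Lemma~\ref{lem:Spectral.partial-inverse-characterization}. The partial inverse of an elliptic operator $A$ with $\Re(\text{order})>0$ and $0\in\Sp(A)$ is the unique $Q:L_2(\T^n_\theta)\rightarrow W_2^{m'}(\T^n_\theta)$ with $E_0(A)\subseteq\ker Q$ and $QA=1-\Pi_0(A)$ on $W_2^{m'}(\T^n_\theta)$. So in each case I will exhibit the candidate operator and check these two conditions. If $0\notin\Sp(P)$, then $P^{-1}$ is the genuine inverse and both statements are immediate, so I assume $0\in\Sp(P)$.

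For (1), I first note that $P^*$ is elliptic of order $\bar q$ with $0\in\Sp(P^*)$. The candidate is $(P^{-1})^*$. By Proposition~\ref{prop:Spectral.partial-inverse-basic-properties}, $P^{-1}\in\Psi^{-q}(\T^n_\theta)$, so $(P^{-1})^*\in\Psi^{-\bar q}(\T^n_\theta)$ maps $L_2$ to $W_2^m$. Taking adjoints in $PP^{-1}=1-\Pi_0(P)$, together with $\Pi_0(P)^*=\Pi_0(P^*)$ from~(\ref{eq:Spectral.Rieszproj-adjoint}), gives $(P^{-1})^*P^*=1-\Pi_0(P^*)$. This identity holds first on $C^\infty(\T^n_\theta)$ as formal adjoints of \psidos, and then on $W_2^m$ by density and continuity. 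For the kernel condition, $\ker (P^{-1})^*=(\ran P^{-1})^\perp$ in $L_2$. Since $\ran P^{-1}=E_0(P^*)^\perp\cap W_2^m$ is dense in $E_0(P^*)^\perp$ (it is the image of the dense subspace $W_2^m$ under the bounded projection $1-\Pi_0(P)$), its orthogonal complement is $E_0(P^*)$. Hence $E_0(P^*)\subseteq\ker(P^{-1})^*$, and the characterization gives the claim.

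For (2), I first record the elementary facts about $P^k$. It is elliptic of order $kq$, and $P^k-\lambda^k=\prod_\omega(P-\omega\lambda)$, so its spectrum is $\{\lambda^k;\lambda\in\Sp(P)\}\neq\C$. Moreover $E_0(P^k)=E_0(P)$, since $\ker (P^k)^\ell=\ker P^{k\ell}$. By the uniqueness part of Proposition~\ref{prop:Spectral.projection-lambda}, $\Pi_0(P^k)$ is the projection onto $E_0(P^k)=E_0(P)$ with kernel $E_0((P^*)^k)^\perp=E_0(P^*)^\perp$, so $\Pi_0(P^k)=\Pi_0(P)$.

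With $Q=(P^{-1})^k$, which maps $L_2$ into $W_2^{km}$ because $P^{-1}\in\Psi^{-q}$, the kernel condition is clear: $E_0(P)\subseteq\ker P^{-1}\subseteq\ker Q$. For the composition, I use that $P^{-1}$ commutes with $\Pi_0(P)$. Indeed, $\Pi_0(P)P^{-1}=0$ and $P^{-1}\Pi_0(P)=0$, which follow respectively from $\ran P^{-1}\subseteq E_0(P^*)^\perp=\ker\Pi_0(P)$ and $\ran\Pi_0(P)=E_0(P)\subseteq\ker P^{-1}$. Then an induction gives $(P^{-1})^kP^k=(P^{-1})^{k-1}(1-\Pi_0(P))P^{k-1}=(P^{-1})^{k-1}P^{k-1}$ on $W_2^{km}$, which reduces to $P^{-1}P=1-\Pi_0(P)$.

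The main obstacle is bookkeeping of domains, in particular checking that each intermediate operator maps the right Sobolev space to the right one when passing from identities on $C^\infty(\T^n_\theta)$ to $W_2^{km}$. I would handle this by working on $C^\infty(\T^n_\theta)$, where all operators are \psidos, and extending by continuity.
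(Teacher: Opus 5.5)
Your proposal is correct and follows essentially the same route as the paper. In both, each part is checked against the characterization in Lemma~\ref{lem:Spectral.partial-inverse-characterization}. For (1) this uses $\Pi_0(P^*)=\Pi_0(P)^*$ and the density of $\ran P^{-1}$ in $E_0(P^*)^\perp$. For (2) it uses $E_0(P^k)=E_0(P)$ and $\Pi_0(P^k)=\Pi_0(P)$.

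Your additions are sound refinements of the same argument rather than a different proof:
\begin{itemize}
\item You verify explicitly that $\Sp(P^k)\neq\C$.
\item You justify why $\ran P^{-1}$ is dense in $E_0(P^*)^\perp$.
\item You run an induction with $P^{-1}\Pi_0(P)=0$ where the paper simply writes $(P^{-1})^kP^k=(P^{-1}P)^k$.
\end{itemize}
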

\begin{proof}
As $P^{-1}$ is an operator in $\Psi^{-q}(\T^n_\theta)$, its adjoint $(P^{-1})^*$ is an operator in $\Psi^{-\overline{q}}(\T^n_\theta)$. In particular, we see that $(P^{-1})^*$ maps $L_2(\T^n_\theta)$ to $W_2^m(\T^n_\theta)$. Moreover, as $\ran P^{-1}=E_0(P^*)^\perp\cap W_2^m(\T^n_\theta)$, we see that $\ran P^{-1}$ is a dense subspace of $E_0(P^*)^\perp$, and so $\ker (P^{-1})^*= (\ran P^{-1})^\perp=E_0(P^*)$. In addition, it follows from~(\ref{eq:Spectral.Rieszproj-adjoint}) that $\Pi_0(P^*)=\Pi_0(P)^*$. Combining this with the equality $PP^{-1}=1-\Pi_0(P)$ gives
\begin{equation*}
 (P^{-1})^*P^*=\big(PP^{-1}\big)^*=\big(1-\Pi_0(P)\big)^*=1-\Pi_0(P^*). 
\end{equation*}
 All this shows that $(P^{-1})^*$ satisfies the assumptions of~Lemma~\ref{lem:Spectral.partial-inverse-characterization}, and so this is the partial inverse of $P^*$. 
 
Let $k$ be an integer~$\geq 2$. Note that $(P^{-1})^k$ is an operator in $\Psi^{-kq}(\T^n_\theta)$, and so it maps $L_2(\T^n_\theta)$ to $W_2^{km}(\T^n_\theta)$. Furthermore, we have $E_0(P^k)=\bigcup_{j\geq 0} \ker P^{kj}=\bigcup_{j\geq 0}\ker P^j=E_0(P)$. Thus,
\begin{equation}
 E_0(P^k)=E_0(P)=\ker P^{-1}\subseteq \ker \big(P^{-1}\big)^k. 
 \label{eq:Spectral.E_0Pk-kernel-P-k}
\end{equation}
This also ensures that $\ran \Pi_0(P^k)=\ran \Pi_0(P)$. Likewise, we have $E_0((P^k)^*)=E_0((P^*)^k)=E_0(P^*)$, and so $\ker   \Pi_0(P^k)=E_0((P^k)^*)^\perp= E_0(P^*)^\perp=\ker \Pi_0(P)$. This shows that $\Pi_0(P^k)$ and $\Pi_0(P)$ are projections with the same range and the same kernel, and so they agree. Combining this with the equalities $P^{-1}P=PP^{-1}=1-\Pi_0(P)$ we get
\begin{equation*}
 \big(P^{-1}\big)^kP^k=\big(P^{-1}P\big)^k=\big(1-\Pi_0(P)\big)^k=1-\Pi_0(P)=1-\Pi_0\big(P^k\big).
\end{equation*}
Together with~(\ref{eq:Spectral.E_0Pk-kernel-P-k}) and Lemma~\ref{lem:Spectral.partial-inverse-characterization} this shows that $(P^{-1})^k$ is the partial inverse of $P^k$. 
 \end{proof}

\begin{remark}
 Given any integer $k\geq 2$, since the partial inverse of $P^k$ and the power $(P^{-1})^k$ agree, we shall denote them by $P^{-k}$. 
\end{remark}

\subsection{Singularity of $(P-\lambda)^{-1}$ near $\lambda=0$} Assume now that $P$ has order $w>0$ and $\Theta(P)\neq \emptyset$, where $\Theta(P)$ is defined in~(\ref{eq:Resolvent.elliptic-parameter-set-definition}), and keep the notation of Section~\ref{sec:Parametric-PsiDOs}. We now describe the singularity at the origin of the resolvent $(P-\lambda)^{-1}$ in terms of our parametric \psido-calculus. 

Let $\Lambda(P)$ be the pseudo-cone defined in~(\ref{eq:Resolvent.Agmon-pseudo-cone}). By Theorem~\ref{thm:Resolvent.resolvent-is-psido-with-parameter} we have $(P-\lambda)^{-1}\in \Psi^{-w,-1}(\T^n_\theta;\Lambda(P))$. If $0\not\in \Sp(P)$, then $\Lambda(P)$ contains the origin, and so $(P-\lambda)^{-1}$ is a holomorphic family of parametric \psidos\ near the origin. Suppose instead that $0\in \Sp(P)$. Then the origin is an isolated point of $\C\setminus \Lambda(P)$, and so $\Lambda_0(P):=\Lambda(P)\cup\{0\}$ is an open pseudo-cone containing the origin with conical part $\Theta(P)$. By Corollary~\ref{cor:Spectral.nilpotent-lambda} $P$ induces a nilpotent operator $N_0:E_0(P)\rightarrow E_0(P)$; as above we denote by $\nu_0$ its nilpotency index. We then have the following description of the resolvent $(P-\lambda)^{-1}$ near $\lambda=0$. 

\begin{proposition}\label{prop:Resolvent.Meromorphic}
 Assume that $0\in \Sp(P)$, and set $\Lambda_0(P)=\Lambda(P)\cup\{0\}$. The following hold. 
\begin{enumerate}
 \item $(1-\Pi_0(P))(P-\lambda)^{-1}\in \Psi^{-w,-1}(\T^n_\theta;\Lambda_0(P))$. 
 
 \item We have
 \begin{align}
 (P-\lambda)^{-1} &= -\sum_{1\leq j \leq \nu_0} \lambda^{-j} \Pi_0(P)P^{j-1} + \big(1-\Pi_0(P)\big)(P-\lambda)^{-1} \nonumber\\
 & = - \sum_{1\leq j \leq \nu_0} \lambda^{-j} \Pi_0(P)P^{j-1} \quad \bmod  \Psi^{-w,-1}\big(\T^n_\theta;\Lambda_0(P)\big).
 \label{eq:Resolvent.singularity} 
\end{align}
\end{enumerate}
\end{proposition}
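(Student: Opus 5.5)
The plan is to split the resolvent along the decomposition $L_2(\T^n_\theta)=E_0(P)\dotplus E_0(P^*)^\perp$ of Lemma~\ref{lem:Spectral.Po}. First I would establish the algebraic identity in~(\ref{eq:Resolvent.singularity}). Since $\Pi_0(P)$ commutes with $P$ (see~(\ref{eq:Spectral.PPil-PilP})), it commutes with $(P-\lambda)^{-1}$ for $\lambda\in\Lambda(P)$. Hence
\begin{equation*}
(P-\lambda)^{-1}=\Pi_0(P)(P-\lambda)^{-1}+\big(1-\Pi_0(P)\big)(P-\lambda)^{-1}.
\end{equation*}
On $E_0(P)$ we have $P=N_0$ nilpotent of index $\nu_0$. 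The Neumann series $(N_0-\lambda)^{-1}=-\sum_{1\leq j\leq\nu_0}\lambda^{-j}N_0^{j-1}$ is finite and valid for $\lambda\neq 0$. Since $\Pi_0(P)P^{j-1}=P^{j-1}\Pi_0(P)$ restricts to $N_0^{j-1}$ on $E_0(P)$ and vanishes on $E_0(P^*)^\perp$, this gives $\Pi_0(P)(P-\lambda)^{-1}=-\sum_{j\leq\nu_0}\lambda^{-j}\Pi_0(P)P^{j-1}$. That is the first line of~(\ref{eq:Resolvent.singularity}). The second line then follows from part~(1).

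For part~(1) I would argue in two steps: the family already has the right symbol structure away from the origin, and it extends holomorphically across $\lambda=0$ with the symbol estimates preserved.

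\emph{Step one.} By Theorem~\ref{thm:Resolvent.resolvent-is-psido-with-parameter}, $(P-\lambda)^{-1}\in\Psi^{-w,-1}(\T^n_\theta;\Lambda(P))$. Since $\Pi_0(P)$ is smoothing (Corollary~\ref{cor:Spectral.Riesz-projection-smoothing}) and independent of $\lambda$, the family $\Pi_0(P)(P-\lambda)^{-1}$ is in $\Psi^{-\infty,-1}(\T^n_\theta;\Lambda(P))$. One way to see this is to write it as the explicit finite sum above, which is $O(|\lambda|^{-1})$ at infinity, with symbol in $\stS^{-\infty}$ because $\Pi_0(P)P^{j-1}$ is smoothing. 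Therefore $(1-\Pi_0(P))(P-\lambda)^{-1}$ lies in $\Psi^{-w,-1}(\T^n_\theta;\Lambda(P))$, with the same homogeneous components $\sigma_{-w-j}$. These components are already defined on $\Omega_c(\Theta(P))$, which contains small $|\lambda|$ for $\xi\neq0$, so they impose no condition at $\lambda=0$.

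\emph{Step two.} I need holomorphy and bounds of the remainder symbols on a neighbourhood of $0$. On $E_0(P^*)^\perp$ we have $(1-\Pi_0(P))(P-\lambda)^{-1}=(P_0-\lambda)^{-1}(1-\Pi_0(P))$. By Lemma~\ref{lem:Spectral.Po}, $P_0$ is invertible, so this extends holomorphically to $\lambda=0$ as a family in $\sL(L_2,W_2^m)$. At the symbol level, I would use the symbol map of Proposition~\ref{prop:PsiDOs.rho-to-rhotilde-map-continuity} together with the finite-sum formula. The symbol of $(1-\Pi_0(P))(P-\lambda)^{-1}$ equals a symbol of $(P-\lambda)^{-1}$ plus the explicit Laurent terms $\sum_j\lambda^{-j}\sigma_j(\xi)$ with $\sigma_j\in\stS^{-\infty}$. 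Its $\stS^{m'}$-seminorms are therefore holomorphic on a punctured disk $D(0,r)\setminus\{0\}$ and bounded on a small circle around $0$. By the maximum principle/Riemann removable singularity theorem applied to the Fréchet-valued map $\lambda\mapsto$ symbol in $\stS^{-w}$ (and in each $\stS^{-w-N}$ for the remainders), the family extends holomorphically over $0$ with uniform bounds near $0$. This yields the estimates~(\ref{eq:Parameter.classical-symbol-asymptotics}) on sub-pseudo-cones of $\Lambda_0(P)$.

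The main obstacle is the justification in step two that removability holds in the parametric symbol topology, not just in operator norm. That is, I must check that the remainder $\rho-\sum_{j<J}(1-\chi)\sigma_{-w-j}$ of the extended symbol stays holomorphic in $\stS^{-w-N}$ near $0$. I would handle this by noting that on a small circle $|\lambda|=r$ contained in $\Lambda(P)$ the remainders are bounded in $\stS^{-w-N}$, so the Cauchy integral over that circle defines the holomorphic extension with the same bounds.
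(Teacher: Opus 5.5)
Your derivation of the first line of~(\ref{eq:Resolvent.singularity}) is correct and is the paper's block-matrix computation, and your Step one is also correct. The gap is in Step two, at the very point you flag.

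Boundedness of the $\stS^{-w}$-valued symbol on a circle $|\lambda|=r$ says nothing about removability. The Riemann theorem needs boundedness on a punctured neighbourhood of $0$, and the maximum principle presupposes holomorphy at $0$. The Cauchy integral over $|\zeta|=r$ does not extend your symbol $\rho(\lambda)$ either. On the annulus it equals $\rho(\lambda)-\rho_-(\lambda)$, where $\rho_-(\lambda)=\frac{1}{2\pi i}\oint_{|\zeta|=\epsilon}(\lambda-\zeta)^{-1}\rho(\zeta)\,d\zeta$, with $0<\epsilon<|\lambda|$, is the principal part at $0$.

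Holomorphy of the \emph{operator} family at $0$, which you correctly get from the invertibility of $P_0$, does not by itself kill $\rho_-$. A symbol is only determined on $\Z^n$ modulo $\stS^{-\infty}$. If $\sigma$ is a symbol of $(P-\lambda)^{-1}$ on $\Lambda(P)$, so is $\sigma+\lambda^{-1}\psi(\xi)$ for any $\psi\in C^\infty_c(\R^n)$ vanishing on $\Z^n$. This extra term lies in $\stS^{-\infty,-1}(\T^n_\theta\times\R^n\times\Lambda(P))$, because $0\notin\overline{\Lambda'}$ for $\Lambda'\subsubset\Lambda(P)$. With this choice your symbol of $(1-\Pi_0(P))(P-\lambda)^{-1}$ has a genuine pole at $\lambda=0$, although it is bounded on every circle. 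So, as written, you never produce a single symbol in $C^{\infty,\omega}(\T^n_\theta\times\R^n\times\Lambda_0(P))$, and that is what membership in $\Psi^{-w,-1}(\T^n_\theta;\Lambda_0(P))$ requires.

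The missing step is to show the principal part is harmless and subtract it. For $|\lambda|>\epsilon$, continuity of quantization and Cauchy's theorem give $P_{\rho_-(\lambda)}=\frac{1}{2\pi i}\oint_{|\zeta|=\epsilon}(\lambda-\zeta)^{-1}(1-\Pi_0(P))(P-\zeta)^{-1}\,d\zeta=0$. Hence, by Proposition~\ref{prop:PsiDOs.rho-to-rhotilde-map-continuity}(ii), $\rho_-$ is a holomorphic $\stS^{-\infty}$-valued map on $\C\setminus\{0\}$ that is $O(|\lambda|^{-1})$ at infinity. Therefore $\rho-\rho_-$ is still a symbol of the family. It differs from $\rho$ by an element of $\stS^{-\infty,-1}$ on $\Lambda(P)$, and it is holomorphic across $0$. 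Only then does your Cauchy-integral argument give the remainder bounds near $0$. If you meant the canonical symbol~(\ref{eq:PsiDOs.symbol-map-definition}), then $\rho_-$ does vanish, but again only by this argument: $\rho_-$ is fixed by the linear map $\rho\mapsto\rho_{P_\rho}$, while $P_{\rho_-}=0$.

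The paper avoids symbol-level removability altogether. It takes $\lambda_0\in\Sp(P)\setminus\{0\}$ and applies Theorem~\ref{thm:Resolvent.resolvent-is-psido-with-parameter} to $\tilde P=P+\lambda_0\Pi_0(P)$. The paper writes $P-\lambda_0\Pi_0(P)$, but the sign should turn the block $N_0$ into $N_0+\lambda_0$, whose spectrum is $\{\lambda_0\}$. Since $\tilde P-P$ is smoothing, $\Theta(\tilde P)=\Theta(P)$. Also $\Sp(\tilde P)=\Sp(P)\setminus\{0\}$, so $\Lambda(\tilde P)=\Lambda_0(P)$. The block form gives $(1-\Pi_0(P))(\tilde P-\lambda)^{-1}=(1-\Pi_0(P))(P-\lambda)^{-1}$, and composing with $1-\Pi_0(P)\in\Psi^{0,0}(\T^n_\theta;\Lambda_0(P))$ concludes.
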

\begin{proof}
As $N_0$ is a nilpotent operator of nilpotency index $\nu_0$, for all $\lambda\in \C^*$, we have
\begin{equation*}
 (N_0 -\lambda)^{-1}  =-\lambda^{-1}\big(1-\lambda^{-1}N_0\big)^{-1} 
  = - \sum_{1\leq j \leq \nu_0} \lambda^{-j}N_0^{j-1} . 
 \end{equation*}
Combining this with~(\ref{eq:Spectral.P-matrix0}) we get that, for all $\lambda \in \C\setminus \Sp(P)$, we have
 \begin{align}
 (P-\lambda)^{-1} & = 
 \begin{pmatrix}
 (N_0 -\lambda)^{-1}  & 0\\
 0 & \big(P_0-\lambda)^{-1}
\end{pmatrix} 
\nonumber\\ 
 & = - \sum_{1\leq j \leq \nu_0}  \lambda^{-j}
 \begin{pmatrix}
N_0^{j-1} & 0 \\
 0 & 0
\end{pmatrix}
 +
 \begin{pmatrix}
0 & 0 \\
 0 & (P_0-\lambda)^{-1}
\end{pmatrix}
\label{eq:Spectral.resolvent-splitting}\\ 
& = -\sum_{1\leq j \leq \nu_0} \lambda^{-j} \Pi_0(P)P^{j-1} + \big(1-\Pi_0(P)\big)(P-\lambda)^{-1}. 
\nonumber \end{align}
Therefore, to complete the proof we just need to show that $(1-\Pi_0(P))(P-\lambda)^{-1}\in \Psi^{-w,-1}(\T^n_\theta;\Lambda_0(P))$.

Let $\lambda_0$ be in $\Sp(P_0)=\Sp(P)\setminus \{0\}$, and set $\tilde{P}=P-\lambda_0\Pi_0(P)$. As $\tilde{P}$ and $P$ agree up to a smoothing operator, $\tilde{P}$ is an operator in $\Psi^w(\T^n_\theta)$ with the same homogeneous symbols as $P$. In particular, it has the same principal symbol, and so $\tilde{P}$ is elliptic with parameter and $\Theta(\tilde{P})=\Theta(P)$. Moreover, Theorem~\ref{thm:Resolvent.resolvent-is-psido-with-parameter} applies, and so $(\tilde{P}-\lambda)^{-1}\in \Psi^{-w,-1}(\T^n_\theta;\Lambda(\tilde{P}))$. Regarding $1-\Pi_0(P)$ as an element of $\Psi^{-w,-1}(\T^n_\theta;\Lambda(\tilde{P}))$, we then deduce that 
 $(1-\Pi_0(P))(\tilde{P}-\lambda)^{-1}\in \Psi^{-w,-1}(\T^n_\theta;\Lambda(\tilde{P}))$.
 
 In view of~(\ref{eq:Spectral.P-matrix0}) we have 
  \begin{equation*}
 \tilde{P}=P-\lambda_0\Pi_0(P)= 
\begin{pmatrix}
 N_0-\lambda_0 & 0\\
 0 & P_0
\end{pmatrix}.
\end{equation*}
It follows that $\Sp(\tilde{P})=\Sp(N_0-\lambda_0)\cup \Sp(P_0)$. As $ \Sp(N_0-\lambda_0)=\{\lambda_0\}\subseteq \Sp(P_0)$, we see that 
$\Sp(\tilde{P})=\Sp(P_0)=\Sp(P)\setminus \{0\}$. As $\Theta(\tilde{P})= \Theta(P)$, this ensures that $\Lambda(\tilde{P})=\Lambda_0(P)$, and so we see that 
 $(1-\Pi_0(P))(\tilde{P}-\lambda)^{-1}\in \Psi^{-w,-1}(\T^n_\theta;\Lambda_0(P))$. 
 
 In the same way as in~(\ref{eq:Spectral.resolvent-splitting}) we have 
 \begin{equation*}
 \big(\tilde{P}-\lambda\big)^{-1}  = 
 \begin{pmatrix}
 (N_0-\lambda_0 -\lambda)^{-1}  & 0\\
 0 & \big(P_0-\lambda)^{-1}
\end{pmatrix}. 
\end{equation*}
 Thus, 
 \begin{equation*}
\left(1-\Pi_0(P)\right) \big(\tilde{P}-\lambda\big)^{-1}= 
\begin{pmatrix}
0 & 0\\
 0 & \big(P_0-\lambda)^{-1}
\end{pmatrix}= \left(1-\Pi_0(P)\right) \big(P-\lambda\big)^{-1}. 
\end{equation*}
It then follows that $(1-\Pi_0(P))(P-\lambda)^{-1}\in \Psi^{-w,-1}(\T^n_\theta;\Lambda_0(P))$. This completes the proof. 
\end{proof}

\begin{remark}
 If we further assume that $P$ is normal, then~(\ref{eq:Resolvent.singularity}) reduces to 
\begin{equation*}
  (P-\lambda)^{-1}= -\lambda^{-1} \Pi_0(P) \quad \bmod  \Psi^{-w,-1}\big(\T^n_\theta;\Lambda_0(P)\big).
\end{equation*}
\end{remark}

\section{Holomorphic Families of \psidos}\label{sec:hol-PsiDOs}  
In this section, we give a comprehensive account of holomorphic families of \psidos. They have also been considered in~\cite{LNP:TAMS16, Po:SIGMA20}. We also introduce a new notion of $\Hol_\infty$-families of \psidos. These are families that are uniformly bounded along half-strips. This notion will be instrumental in establishing sub-exponential growth for complex powers of elliptic \psidos\ in the next section. 

\subsection{Preliminaries}
Suppose that $X$ is a locally compact space and $\sE$ is a locally convex TVS or a quasi-Banach space. We denote by $C(X; \sE)$ the space of continuous maps $u:X\rightarrow \sE$. We also denote by $L^\infty(X; \sE)$ the space of bounded measurable maps $u:X\rightarrow \sE$. If $\sE$ is locally convex we equip  $L^\infty(X; \sE)$ with the locally convex TVS topology generated by the semi-norms, 
\begin{equation*}
  u \longrightarrow \sup_{z\in X} \fp\circ u(z), 
\end{equation*}
where $\fp$ ranges over continuous semi-norms on $\sE$. If $\sE$ is a quasi-Banach space with quasi-norm $\|\cdot\|_\sE$, then we 
equip  $L^\infty(X; \sE)$ with the quasi-norm, 
\begin{equation*}
  u \longrightarrow \sup_{z\in X} \left\|u(z)\right\|_\sE. 
\end{equation*}

We further let $L^\infty_\loc(X;\sE)$ be the space of measurable maps $u:X \rightarrow \sE$ that are bounded on compact sets $K\subseteq X$. If $\sE$ is locally convex, then we equip $L^\infty_\loc(X; \sE)$, and its subspace $C(X;\sE)$, with the locally convex topology generated by the semi-norms, 
\begin{equation*}
 u \longrightarrow \sup_{z\in K} \fp\circ u(z), \qquad K\subseteq X \ \text{compact}, 
\end{equation*}
where $\fp$ ranges over continuous semi-norms on $\sE$. Note that $C(X;\sE)$ is a closed subspace of $L^\infty_\loc(X; \sE)$. If $\Omega \subseteq \C$ is an open set, then $\Hol(\Omega;\sE)$ is a subspace of $L^\infty_\loc(\Omega;\sE)$ and  $C(\Omega;\sE)$, and we equip it with the topology defined by the same semi-norms.  

From now on we assume that $\sE$ is locally convex and denote by $\sE'$ its topological dual. Given any continuous map $f:[a,b]\rightarrow \sE$, its Riemann integral $\int_a^bf(t)dt$ is defined in the same way as the classical Riemann integral of a scalar function (see, e.g., \cite{FJ:JDE68, Ha:BAMS82}; see also~\cite[Appendix~B]{HLP:Part1}). The main properties of the Riemann integrals of continuous scalar functions extend \emph{verbatim} to continuous $\sE$-valued maps. In addition, we have the following properties. 

\begin{proposition}\label{prop:Hol.integral}
Let  $f:[a,b]\rightarrow \sE$ be a continuous map. The following hold.
\begin{enumerate}
 \item The integral $\int_a^b f(t)dt$ is the unique element of $\sE$ such that 
\begin{equation*}
 \varphi\bigg(\int_a^b f(t)dt\bigg) = \int_a^b \varphi\circ f(t)dt\qquad \forall \varphi \in \sE'. 
\end{equation*}

\item For any continuous semi-norm $\fp$ on $\sE$ we have
\begin{equation}
  \fp\bigg(\int_a^b f(t)dt\bigg) \leq \int_a^b \fp\circ f(t)dt. 
  \label{eq:Hol.integral-semi-norm}
\end{equation}

\item Given any continuous linear map $\Phi:\sE\rightarrow \sE_1$, where  $\sE_1$ is some locally convex space,  we have 
\begin{equation*}
 \int_a^b \Phi\circ f(t)dt =  \Phi\bigg(\int_a^b f(t)dt\bigg) . 
\end{equation*}
\end{enumerate}
\end{proposition}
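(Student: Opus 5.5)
The plan is to reduce everything to the scalar case via Hahn--Banach, using the Riemann-sum definition of the integral. Let $S_\pi=\sum_i f(t_i^*)(t_i-t_{i-1})$ denote the Riemann sums attached to tagged partitions $\pi$ of $[a,b]$. The definition gives $\int_a^b f(t)dt=\lim_{|\pi|\to0}S_\pi$ in $\sE$. I will use this limit for each of the three parts in turn.

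\emph{Part~(2).} For any continuous semi-norm $\fp$, the triangle inequality gives $\fp(S_\pi)\leq \sum_i \fp(f(t_i^*))(t_i-t_{i-1})$. The right-hand side is a Riemann sum of the scalar function $\fp\circ f$, which is continuous because $\fp$ is continuous and $f$ is continuous. Since $\fp$ is continuous on $\sE$, passing to the limit $|\pi|\to0$ gives~(\ref{eq:Hol.integral-semi-norm}).

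\emph{Part~(3).} Linearity of $\Phi$ gives $\Phi(S_\pi)=\sum_i \Phi(f(t_i^*))(t_i-t_{i-1})$, which is exactly a Riemann sum of the continuous map $\Phi\circ f:[a,b]\to\sE_1$. Continuity of $\Phi$ then gives $\Phi(S_\pi)\to\Phi(\int_a^b f)$. At the same time, the right-hand side converges to $\int_a^b\Phi\circ f$ in $\sE_1$. Hausdorffness of $\sE_1$, which is implicit in the standing conventions, then forces equality.

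\emph{Part~(1).} Specializing~(3) to $\Phi=\varphi\in\sE'$ and $\sE_1=\C$ gives the identity $\varphi(\int f)=\int\varphi\circ f$. For uniqueness, suppose $x,y\in\sE$ both satisfy it. Then $\varphi(x-y)=0$ for all $\varphi\in\sE'$. Since $\sE$ is a Hausdorff locally convex space, the Hahn--Banach theorem says $\sE'$ separates points, so $x=y$.

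The only real subtlety is existence of the integral, that is, convergence of the Riemann sums. This is presupposed by the statement and the cited references; it uses uniform continuity of $f$ on the compact interval $[a,b]$ with respect to each semi-norm, together with completeness (or quasi-completeness) of $\sE$. I would take this as given from~\cite[Appendix~B]{HLP:Part1} rather than reprove it.
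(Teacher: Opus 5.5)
Your proof is correct and is the standard argument. The paper states this proposition without proof and defers to the references cited just before it; there one likewise passes the semi-norm triangle inequality and linear identities through Riemann sums, then uses that $\sE'$ separates the points of the Hausdorff space $\sE$.

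One small sharpening concerns part~(3). The convergence $\Phi(S_\pi)\to\Phi\big(\int_a^b f(t)dt\big)$ by itself shows that the Riemann sums of $\Phi\circ f$ converge, so $\int_a^b\Phi\circ f(t)dt$ exists in $\sE_1$ with no completeness assumption on $\sE_1$. The only thing genuinely presupposed is the existence of the integral in $\sE$, which does need, e.g., quasi-completeness.
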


\subsection{Vector-valued holomorphic maps} 
 Let $\Omega$ be an open subset of the complex plane $\C$.  We continue to assume that $\sE$ is locally convex. 
 
 If $u:\Omega \rightarrow \sE$ is a continuous map and $\gamma:[a,b]\rightarrow \Omega$ is a $C^1$-curve, we define the contour integral $\int_\gamma u(\lambda)d\lambda$ in the usual way, i.e., 
\begin{equation*}
 \int_\gamma u(\lambda)d\lambda = \int_a^b \gamma'(t) u\circ \gamma(t) dt. 
\end{equation*}
If $\gamma$ is a regular curve, then the above contour integral is invariant under orientation-preserving  reparameterizations. Note also that~(\ref{eq:Hol.integral-semi-norm}) implies that, for any continuous semi-norm $\fp$ on $\sE$, we have
\begin{equation}
 \fp\bigg( \int_\gamma u(\lambda)d\lambda\bigg) \leq \sup_{\lambda \in \ran \gamma} \fp\big[u(\lambda)\big] \cdot \int_a^b |\gamma'(t)|dt. 
 \label{eq:Hol.integral-Taylor-ineq} 
\end{equation}

\begin{proposition}\label{prop:Hol.unique-analytic-continuation}
Suppose that $\Omega$ is connected, and let $u:\Omega \rightarrow \sE$ be a holomorphic map that vanishes on a subset $S$ that has an accumulation point. Then $u(z)=0$ on all of $\Omega$. 
\end{proposition}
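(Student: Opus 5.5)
The plan is to reduce to the scalar identity theorem by pairing with continuous linear functionals, and then to use Hahn--Banach to recover vanishing in $\sE$. We assume, as throughout this part of the section, that $\sE$ is locally convex, so that $\sE'$ separates points.

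Fix $\varphi\in \sE'$ and set $f_\varphi:=\varphi\circ u:\Omega\rightarrow \C$. By the remark following Definition~\ref{def:Holp.holomorphic-maps}, composing with a continuous linear map preserves holomorphy. Hence $f_\varphi$ is a holomorphic scalar function on $\Omega$, with $f_\varphi'(z)=\varphi(u'(z))$.

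Since $u=0$ on $S$, the function $f_\varphi$ also vanishes on $S$. Let $z_0$ be an accumulation point of $S$. The identity theorem is only useful if $z_0$ lies in $\Omega$, so we read the hypothesis in the standard sense that $S$ has an accumulation point in $\Omega$; otherwise the statement fails, e.g.\ $u(z)=\sin(\pi/z)$ on $\C^*$. With $z_0\in\Omega$, the classical identity theorem for holomorphic functions on the connected open set $\Omega$ gives $f_\varphi\equiv 0$ on $\Omega$.

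Thus, for every $z\in\Omega$, we have $\varphi(u(z))=0$ for all $\varphi\in\sE'$. Since $\sE$ is locally convex, the Hahn--Banach theorem shows that $\sE'$ separates the points of $\sE$, and so $u(z)=0$.

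The only delicate point is this use of local convexity. For a quasi-Banach space $\sE$ the dual may be trivial, and the argument would instead have to go through power-series expansions, which would require extra care. Since the section has just fixed $\sE$ to be locally convex, no such difficulty arises here.
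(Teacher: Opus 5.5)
Your proof is correct and is essentially the paper's argument: compose $u$ with each $\varphi\in\sE'$, apply the scalar identity theorem, and conclude that $u(z)=0$ because $\sE'$ separates points when $\sE$ is locally convex. Your remark that the accumulation point must lie in $\Omega$ is justified, and your counterexample $\sin(\pi/z)$ on $\C^*$ is valid; the paper leaves this hypothesis implicit.
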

\begin{proof}
 For every $\varphi \in \sE'$, the function $\varphi[u(z)]$ is analytic on $\Omega$ and vanishes on $S$, and so it vanishes identically on $\Omega$. Thus, given any $z\in \Omega$, we have that $\varphi[u(z)]=0$ for all $\varphi \in \sE'$. As $\sE$ is locally convex this implies that $u(z)=0$ for all $z\in \Omega$. 
\end{proof}

\begin{proposition}\label{prop:Hol.weakly-strong}
 Given a map $u:\Omega \rightarrow \sE$ the following are equivalent: 
\begin{enumerate}
 \item[(i)] The map $u(z)$ is weakly holomorphic on $\Omega$, i.e., for every  $\varphi \in \sE'$ the function $\varphi\circ u$ is holomorphic on $\Omega$.  
 
 \item[(ii)] The map $u(z)$ is continuous on $\Omega$ and, for any open disk $D$ such that $\overline{D}\subseteq \Omega$, we have
\begin{equation}
 u(z)= \frac{1}{2i \pi} \int_{\partial D} (\zeta-z)^{-1}u(\zeta) d\zeta \qquad \forall z \in D. 
 \label{eq:Hol.Cauchy-formula}
\end{equation}

\item[(iii)] The map $u(z)$ is holomorphic on $\Omega$. 
\end{enumerate}
\end{proposition}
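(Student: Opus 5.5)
I will prove the cycle of implications (i)$\Rightarrow$(ii)$\Rightarrow$(iii)$\Rightarrow$(i). The implication (iii)$\Rightarrow$(i) is immediate: if $u$ is holomorphic in $\sE$ and $\varphi\in\sE'$, then composing the difference quotients with the continuous linear map $\varphi$ shows that $\varphi\circ u$ is holomorphic with derivative $\varphi(u'(z))$.

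\emph{Step 1: (i)$\Rightarrow$(ii).} The plan has two parts. The first is to establish continuity of $u$, which is the main obstacle. Fix a closed disk $\overline{D'}\subseteq\Omega$ and a slightly larger disk $D''$ with $\overline{D''}\subseteq\Omega$. For each $\varphi\in\sE'$, the scalar Cauchy formula for $\varphi\circ u$ on $\partial D''$ gives, for $z,z_0\in D'$ with $z\neq z_0$,
\begin{equation*}
\varphi\Big(\frac{u(z)-u(z_0)}{z-z_0}\Big)=\frac{1}{2i\pi}\int_{\partial D''}\frac{\varphi(u(\zeta))}{(\zeta-z)(\zeta-z_0)}\,d\zeta .
\end{equation*}
Since $\varphi\circ u$ is continuous on the compact set $\partial D''$, it is bounded there. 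Hence the set $\{(u(z)-u(z_0))/(z-z_0)\}$, with $z,z_0$ ranging over $D'$, is weakly bounded. By Mackey's theorem, weakly bounded sets in a locally convex space are bounded, so for every continuous semi-norm $\fp$ we get $\fp(u(z)-u(z_0))\leq C_\fp|z-z_0|$. Thus $u$ is even locally Lipschitz, and in particular continuous.

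The second part is the Cauchy formula. Since $u$ is now known to be continuous, the contour integral in~(\ref{eq:Hol.Cauchy-formula}) is well defined in $\sE$. This needs one caveat: completeness of $\sE$, or at least the existence of the Riemann integral in $\sE$. I will assume this, as the paper implicitly does through its Riemann integral discussion. Applying any $\varphi\in\sE'$ and using Proposition~\ref{prop:Hol.integral}(1) together with the scalar Cauchy formula, the two sides of~(\ref{eq:Hol.Cauchy-formula}) agree after pairing with every $\varphi$. Since $\sE'$ separates points of the locally convex space $\sE$ (Hahn--Banach), the two sides are equal.

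\emph{Step 2: (ii)$\Rightarrow$(iii).} Fix $z_0\in D$ with $\overline{D}\subseteq\Omega$. Using~(\ref{eq:Hol.Cauchy-formula}), write the difference quotient as $\frac{1}{2i\pi}\int_{\partial D}(\zeta-z)^{-1}(\zeta-z_0)^{-1}u(\zeta)\,d\zeta$. Take as candidate derivative
\begin{equation*}
u'(z_0)=\frac{1}{2i\pi}\int_{\partial D}(\zeta-z_0)^{-2}u(\zeta)\,d\zeta .
\end{equation*}
The difference between the quotient and $u'(z_0)$ is
\begin{equation*}
\frac{z-z_0}{2i\pi}\int_{\partial D}(\zeta-z)^{-1}(\zeta-z_0)^{-2}u(\zeta)\,d\zeta .
\end{equation*}
By~(\ref{eq:Hol.integral-Taylor-ineq}), its $\fp$-semi-norm is bounded by $C|z-z_0|\sup_{\partial D}\fp\circ u$ for $z$ near $z_0$. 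This supremum is finite because $u$ is continuous on the compact set $\partial D$. Letting $z\to z_0$ shows that $u$ is holomorphic at $z_0$, which completes the cycle.
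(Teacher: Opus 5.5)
Your proof is correct and takes the same route as the paper. The paper cites the proof of Rudin's Theorem~3.31 for continuity (the difference quotients are weakly bounded by the scalar Cauchy formula, hence bounded because $\sE$ is locally convex) and for (ii)$\Rightarrow$(iii), and it gets the Cauchy formula in (i)$\Rightarrow$(ii) by pairing with $\varphi\in\sE'$ and using that $\sE'$ separates points, exactly as you do; your caveat about the existence of the $\sE$-valued Riemann integrals (which also covers $\int_{\partial D}(\zeta-z_0)^{-2}u(\zeta)\,d\zeta$ in Step~2) is the same assumption the paper makes implicitly in its discussion of Riemann integrals.
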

\begin{proof}
 It is immediate that (iii) implies (i). The proof of~\cite[Theorem~3.31]{Ru:FA} shows that (ii) implies (iii). It also shows that if $u(z)$ is weakly holomorphic, then $u(z)$ is continuous. Note that, although~\cite[Theorem~3.31]{Ru:FA} is stated for Fr\'echet spaces, these parts of the proof only use the local convexity property, and so the arguments hold \emph{verbatim} for general locally convex topological vector spaces.
 
In addition, if $u(z)$ is weakly holomorphic and continuous, then, for every $\varphi\in \sE'$, the function $\varphi\circ u$ satisfies Cauchy's formula. Thus, given any open disk $D$ such that $\overline{D}\subseteq \Omega$, for all $z\in D$, we have 
\begin{equation}\label{eq:Hol.Cauchy-formula-varphiu}
 \varphi\circ u(z) =  \frac{1}{2i \pi} \int_{\partial D} (\zeta-z)^{-1}\varphi\circ u(\zeta) d\zeta= \varphi\bigg( \frac{1}{2i \pi} \int_{\partial D} (\zeta-z)^{-1}u(\zeta) d\zeta\bigg). 
\end{equation}
It then follows that $u(z)$ satisfies Cauchy's formula~(\ref{eq:Hol.Cauchy-formula}). This shows that (i) implies (ii). 
\end{proof}

In what follows we say that a family $\sF\subseteq \sE'$ is \emph{separating} if, given any distinct elements $x,y\in \sE$, we can find $\varphi\in \sF$ such that $\varphi(x)\neq \varphi(y)$. 

\begin{corollary}\label{cor:Hol.hol-separating} 
 Let $u:\Omega \rightarrow \sE$ be a continuous map. Suppose there is a separating family $\sF \subseteq \sE'$ such that, for every $\varphi \in \sF$, the function $\varphi\circ u(z)$ is holomorphic on $\Omega$. Then the map $u(z)$ is holomorphic on $\Omega$. 
\end{corollary}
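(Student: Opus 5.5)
The plan is to show that $u$ is weakly holomorphic on all of $\sE'$, and then conclude with Proposition~\ref{prop:Hol.weakly-strong}. The key device is the Cauchy integral formula, used to pass from the separating family $\sF$ to the full dual.

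First, fix an open disk $D$ with $\overline{D}\subseteq\Omega$ and a point $z\in D$. Since $u$ is continuous, the map $\zeta\mapsto(\zeta-z)^{-1}u(\zeta)$ is continuous on $\partial D$. Hence the contour integral
\begin{equation*}
 v(z):=\frac{1}{2i\pi}\int_{\partial D}(\zeta-z)^{-1}u(\zeta)\,d\zeta
\end{equation*}
is a well-defined element of $\sE$.

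Second, let $\varphi\in\sF$. By Proposition~\ref{prop:Hol.integral}(3), $\varphi(v(z))$ equals the scalar integral $\frac{1}{2i\pi}\int_{\partial D}(\zeta-z)^{-1}\varphi\circ u(\zeta)\,d\zeta$. Because $\varphi\circ u$ is holomorphic by assumption, the classical Cauchy formula identifies this with $\varphi(u(z))$, exactly as in~(\ref{eq:Hol.Cauchy-formula-varphiu}). Thus $\varphi(v(z))=\varphi(u(z))$ for every $\varphi\in\sF$, and since $\sF$ is separating, $u(z)=v(z)$.

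Third, this shows that $u$ is continuous and satisfies Cauchy's formula~(\ref{eq:Hol.Cauchy-formula}) on every such disk. That is condition~(ii) of Proposition~\ref{prop:Hol.weakly-strong}, so $u$ is holomorphic on $\Omega$.

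There is no real obstacle. The only point needing care is to apply $\varphi$ only for $\varphi\in\sF$, and never assume weak holomorphy for general $\varphi\in\sE'$; the separating property is exactly what replaces the Hahn--Banach argument used in the proof of Proposition~\ref{prop:Hol.weakly-strong}.
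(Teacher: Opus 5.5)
Your proof is correct and is essentially the paper's own argument. For each $\varphi\in\sF$ you identify $\varphi(u(z))$ with $\varphi$ applied to the Cauchy integral, use that $\sF$ separates points to obtain Cauchy's formula~(\ref{eq:Hol.Cauchy-formula}) for $u$ itself, and conclude via condition~(ii) of Proposition~\ref{prop:Hol.weakly-strong}.

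The only quibble is your opening sentence, which says you will first prove weak holomorphy against all of $\sE'$. What you actually (and correctly) do is verify condition~(ii) directly; weak holomorphy follows only afterwards, from the proposition.
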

\begin{proof}
This is a generalization of the second half of the proof of Proposition~\ref{prop:Hol.weakly-strong}. Let $D$ be an open disk such that $\overline{D}\subseteq \Omega$. Given any $\varphi \in \sF$ the function $\varphi\circ u$ satisfies Cauchy's formula. Thus, in the same way as in~(\ref{eq:Hol.Cauchy-formula-varphiu}), we have 
 \begin{equation*}
  \varphi\circ u(z) = \varphi\bigg( \frac{1}{2i \pi} \int_{\partial D} (\zeta-z)^{-1}u(\zeta) d\zeta\bigg). 
\end{equation*}
As the family $\sF$ is separating it follows that $u(z)$ satisfies Cauchy's formula~(\ref{eq:Hol.Cauchy-formula}), and hence is holomorphic on $\Omega$ by Proposition~\ref{prop:Hol.weakly-strong}. 
\end{proof}

\begin{corollary}\label{cor:Hol.continuous-hol}
Let $u:\Omega \rightarrow \sE$ be a continuous map. Suppose there is a  continuous embedding of locally convex spaces $\iota:\sE\rightarrow \sE_1$ such that the map $\iota\circ u(z)$ is holomorphic on $\Omega$. Then the map $u(z)$ is holomorphic on $\Omega$. 
\end{corollary}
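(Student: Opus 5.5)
The plan is to reduce this to Corollary~\ref{cor:Hol.hol-separating} by pulling back continuous linear forms on $\sE_1$ along $\iota$. Set $\sF:=\{\varphi_1\circ\iota;\ \varphi_1\in\sE_1'\}$. Since $\iota$ is continuous and linear, every element of $\sF$ lies in $\sE'$, so $\sF\subseteq\sE'$. For each such $\varphi=\varphi_1\circ\iota$ we have $\varphi\circ u=\varphi_1\circ(\iota\circ u)$. By assumption $\iota\circ u$ is holomorphic on $\Omega$ with values in $\sE_1$, and composing with the continuous linear functional $\varphi_1$ preserves holomorphy (as noted in the remark after Definition~\ref{def:Holp.holomorphic-maps}). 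Hence $\varphi\circ u$ is holomorphic on $\Omega$ for every $\varphi\in\sF$.

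It remains to check that $\sF$ is separating. Let $x\neq y$ in $\sE$. Because $\iota$ is an embedding, in particular injective, we get $\iota(x)\neq\iota(y)$ in $\sE_1$. Since $\sE_1$ is locally convex (and Hausdorff, as throughout), the Hahn--Banach theorem provides $\varphi_1\in\sE_1'$ with $\varphi_1(\iota(x))\neq\varphi_1(\iota(y))$, that is, $\varphi(x)\neq\varphi(y)$ for $\varphi=\varphi_1\circ\iota\in\sF$.

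Since $u$ is continuous by hypothesis, Corollary~\ref{cor:Hol.hol-separating} now applies directly and shows that $u$ is holomorphic on $\Omega$. The only point that needs care is the separation step. There we use only injectivity of $\iota$ together with Hahn--Banach in $\sE_1$. We do not need $\iota$ to be a topological isomorphism onto its image, because continuity of $u$ in $\sE$ itself is part of the hypotheses, and that is exactly what Corollary~\ref{cor:Hol.hol-separating} requires.
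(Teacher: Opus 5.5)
Your proof is correct and follows the paper's argument: the paper also takes $\{\varphi\circ\iota;\ \varphi\in\sE_1'\}$ as the separating family in $\sE'$ and then applies Corollary~\ref{cor:Hol.hol-separating}. The one difference is that you justify the separation step explicitly (injectivity of $\iota$ plus Hahn--Banach in $\sE_1$), whereas the paper simply asserts it from $\iota$ being a continuous embedding.
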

\begin{proof}
As $\iota:\sE\rightarrow \sE_1$ is a continuous embedding, $\iota^*\sE_1:=\{\varphi\circ \iota; \ \varphi\in \sE_1'\}$ is a separating family in $\sE'$. By assumption the map $\iota\circ u(z)$ is holomorphic on $\Omega$, so for all $\varphi \in \sE_1'$, the function $\varphi \circ \iota\circ u(z)$ is holomorphic on $\Omega$. 
That is, $\psi \circ u(z)$ is holomorphic on $\Omega$ for all $\psi \in \iota^*\sE_1$. It then follows from Corollary~\ref{cor:Hol.hol-separating} that the map $u(z)$ is holomorphic on $\Omega$. 
\end{proof}

\begin{corollary}
 $\Hol(\Omega;\sE)$ is a closed subspace of $C(\Omega;\sE)$ (and hence is closed in $L^\infty_\loc(\Omega;\sE)$). 
\end{corollary}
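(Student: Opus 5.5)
We show that $\Hol(\Omega;\sE)$ is sequentially closed, and in fact closed under limits of nets, in $C(\Omega;\sE)$, using the Cauchy-formula characterization of Proposition~\ref{prop:Hol.weakly-strong}. Let $(u_i)$ be a net in $\Hol(\Omega;\sE)$ converging in $C(\Omega;\sE)$ to some $u\in C(\Omega;\sE)$. Convergence in $C(\Omega;\sE)$ means uniform convergence on compact subsets for every continuous semi-norm $\fp$ on $\sE$. By Proposition~\ref{prop:Hol.weakly-strong}, it suffices to check that $u$ satisfies Cauchy's formula~(\ref{eq:Hol.Cauchy-formula}) on every open disk $D$ with $\overline{D}\subseteq\Omega$.

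Fix such a disk $D$ and a point $z\in D$. Each $u_i$ satisfies $u_i(z)=\frac{1}{2i\pi}\int_{\partial D}(\zeta-z)^{-1}u_i(\zeta)\,d\zeta$. For any continuous semi-norm $\fp$, the estimate~(\ref{eq:Hol.integral-Taylor-ineq}) gives
\begin{equation*}
\fp\bigg(\frac{1}{2i\pi}\int_{\partial D}(\zeta-z)^{-1}\big(u_i(\zeta)-u(\zeta)\big)d\zeta\bigg)\leq \frac{r}{\dist(z,\partial D)}\sup_{\zeta\in\partial D}\fp\big[u_i(\zeta)-u(\zeta)\big],
\end{equation*}
where $r$ is the radius of $D$. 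The right-hand side tends to $0$, since $\partial D$ is compact and the convergence is uniform there. Hence the Cauchy integrals of the $u_i$ converge to that of $u$. On the other hand, $u_i(z)\to u(z)$ in $\sE$ because $\{z\}$ is compact. Since $\sE$ is locally convex and Hausdorff, limits are unique, and therefore $u$ satisfies~(\ref{eq:Hol.Cauchy-formula}).

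As $u$ is continuous, Proposition~\ref{prop:Hol.weakly-strong}, via the implication (ii)$\Rightarrow$(iii), shows that $u$ is holomorphic. So $\Hol(\Omega;\sE)$ is closed in $C(\Omega;\sE)$. Finally, $C(\Omega;\sE)$ carries the topology induced from $L^\infty_\loc(\Omega;\sE)$ and is a closed subspace of it, as noted above. Therefore $\Hol(\Omega;\sE)$ is closed in $L^\infty_\loc(\Omega;\sE)$ as well.

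The only point needing a little care is that closedness must be argued with nets, since these spaces need not be metrizable; the argument above uses nothing beyond this. No step is a real obstacle.
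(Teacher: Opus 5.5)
Your proof is correct, but it takes a slightly different route from the paper.

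The paper argues weakly. For each $\varphi\in\sE'$, the scalar net $\varphi\circ u_\alpha$ converges to $\varphi\circ u$ locally uniformly, so $\varphi\circ u$ is holomorphic by the classical Weierstrass theorem. Hence $u$ is weakly holomorphic, and the implication (i)$\Rightarrow$(iii) of Proposition~\ref{prop:Hol.weakly-strong} finishes the proof.

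You instead stay inside $\sE$. You pass to the limit directly in the vector-valued Cauchy formula using the semi-norm bound~(\ref{eq:Hol.integral-Taylor-ineq}), and your constant $r/\dist(z,\partial D)$ is right. You then invoke (ii)$\Rightarrow$(iii). In effect you reprove the Weierstrass convergence theorem at the vector level. This makes explicit that only uniform convergence on the circles $\partial D$ and pointwise convergence are used. It also uses the Hausdorff property of $\sE$ directly, for uniqueness of limits, rather than through the Hahn--Banach separation hidden in (i)$\Rightarrow$(ii).

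The paper's version is shorter because it outsources the limiting argument to the scalar case. It only needs $\varphi\circ u_\alpha\to\varphi\circ u$ locally uniformly for each $\varphi$, so it would survive a weaker mode of convergence. Yours uses uniform convergence in each continuous semi-norm, which is exactly what convergence in $C(\Omega;\sE)$ provides.

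Your last step, deducing closedness in $L^\infty_\loc(\Omega;\sE)$ from the fact that $C(\Omega;\sE)$ is a closed subspace of it, spells out what the paper leaves to its parenthetical remark.
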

\begin{proof}
 Let $(u_\alpha)\subseteq \Hol(\Omega;\sE)$ be a net converging to $u(z)$ in $C(\Omega;\sE)$. Then, for every $\varphi\in \sE'$, the net $(\varphi\circ u_\alpha)$ converges to $\varphi \circ u(z)$ in $C(\Omega)$, and hence $\varphi \circ u(z)$ is holomorphic on $\Omega$. Thus, the map $u(z)$ is weakly holomorphic on $\Omega$, and hence is holomorphic on $\Omega$ by Proposition~\ref{prop:Hol.weakly-strong}. This proves the result. 
\end{proof}

\subsection{Holomorphic families of standard symbols} 
We shall now look at holomorphic families of symbols parametrized by $\Omega$. In what follows we denote by $\B^n$ the open unit ball in $\R^n$ and set $\U^n=\R^n\setminus \B^n$. 

For families of standard symbols we have the following refinement of Corollary~\ref{cor:Hol.continuous-hol}. 

\begin{lemma}\label{lem:Hol.loc-bdd-hol}
 Let $\rho(z)$, ${z\in \Omega}$,  be a locally bounded family in $\stS^m(\T^n_\theta\times\R^{n})$, $m\in \R$, which is holomorphic as a family in $C^\infty(\T^n_\theta\times\R^{n})$. Then $\rho(z)$, ${z\in \Omega}$,  is a holomorphic family in $\stS^m(\T^n_\theta\times\R^{n})$. 
\end{lemma}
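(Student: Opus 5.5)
The idea is to use the Cauchy integral characterization of holomorphy from Proposition~\ref{prop:Hol.weakly-strong}, together with the fact that the semi-norms $p_N^{(m)}$ are suprema of weighted pointwise quantities. Each of these pointwise quantities is controlled by a Cauchy estimate in $C^\infty(\T^n_\theta\times\R^n)$. First, holomorphy in $C^\infty(\T^n_\theta\times\R^n)$ means that every map $z\mapsto \delta^\alpha\partial_\xi^\beta\rho(z)(\xi)$ is holomorphic into $C(\T^n_\theta)$, locally uniformly in $\xi$ on compact sets. Fix a closed disk $\overline{D}\subseteq\Omega$ and a slightly larger closed disk $\overline{D'}\subseteq\Omega$ with boundary circle $\gamma=\partial D'$. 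For $z\in D$, Cauchy's formula~(\ref{eq:Hol.Cauchy-formula}) holds in $C^\infty(\T^n_\theta\times\R^n)$. Evaluating at $\xi$ after applying $\delta^\alpha\partial_\xi^\beta$, which are continuous linear maps, it therefore holds pointwise:
\begin{equation*}
 \delta^\alpha\partial_\xi^\beta\rho(z)(\xi)=\frac{1}{2i\pi}\int_\gamma(\zeta-z)^{-1}\delta^\alpha\partial_\xi^\beta\rho(\zeta)(\xi)\,d\zeta .
\end{equation*}

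The main step is to show that the difference quotient converges in $\stS^m$. For $z_0\in D$ and $z\in D$ we subtract the formulas for $z$ and $z_0$ and the candidate derivative $\rho'(z_0)$; the derivative exists in $C^\infty(\T^n_\theta\times\R^n)$ and is given by the Cauchy integral with kernel $(\zeta-z_0)^{-2}$. This yields
\begin{equation*}
 \frac{\rho(z)-\rho(z_0)}{z-z_0}-\rho'(z_0)=\frac{z-z_0}{2i\pi}\int_\gamma\frac{\rho(\zeta)}{(\zeta-z)(\zeta-z_0)^2}\,d\zeta
\end{equation*}
pointwise in $\xi$, after applying $\delta^\alpha\partial^\beta_\xi$. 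We then multiply by $(1+|\xi|)^{-m+|\beta|}$, take norms and use~(\ref{eq:Hol.integral-Taylor-ineq}). The $p_N^{(m)}$ semi-norm of the left-hand side is then at most $C|z-z_0|\sup_{\zeta\in\gamma}p_N^{(m)}(\rho(\zeta))$, where $C$ depends only on the distances between $D$ and $\gamma$. Local boundedness in $\stS^m$ on the compact circle $\gamma$ makes the supremum finite, so the bound tends to $0$ as $z\to z_0$.

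The expected obstacle is a bookkeeping issue: one must check that $\rho'(z_0)$ actually lies in $\stS^m$, and that the integral identities may be taken pointwise. The first follows from the same Cauchy bound with kernel $(\zeta-z_0)^{-2}$. The second holds because evaluation at $\xi$ composed with $\delta^\alpha\partial^\beta_\xi$ is a continuous linear map $C^\infty(\T^n_\theta\times\R^n)\to C(\T^n_\theta)$, so Proposition~\ref{prop:Hol.integral}(iii) applies. Local boundedness is needed exactly once, to control the supremum over $\gamma$, and this gives holomorphy at every $z_0\in\Omega$.
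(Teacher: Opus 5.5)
Your proof is correct and follows essentially the same route as the paper: you apply Cauchy's formula pointwise to $\delta^\alpha\partial_\xi^\beta\rho(\zeta)(\xi)$, and local boundedness in $\stS^m$ controls the supremum over the circle. The only difference is that the paper bounds $\rho(z_0+h)-\rho(z_0)$ to get continuity in $\stS^m$ and then invokes Corollary~\ref{cor:Hol.continuous-hol}, whereas you bound the difference-quotient remainder directly with the kernel $(\zeta-z)^{-1}(\zeta-z_0)^{-2}$, which makes your argument self-contained.
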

\begin{proof}
 We have a continuous inclusion of $\stS^m(\T^n_\theta\times\R^{n})$ into $C^\infty(\T^n_\theta\times\R^{n})$. Thus, by Corollary~\ref{cor:Hol.continuous-hol}  it is enough to show that $\rho(z)$, ${z\in \Omega}$,  is a continuous family in $\stS^m(\T^n_\theta\times\R^{n})$. 
 
 Let $z_0\in \Omega$, and let $D$ be an open disk about $z_0$ of radius $r>0$ such that $\overline{D}\subseteq \Omega$. As $\rho(z)$, ${z\in \Omega}$,  is a holomorphic family in $C^\infty(\T^n_\theta\times\R^{n})$, by using Proposition~\ref{prop:Hol.integral} and Proposition~\ref{prop:Hol.weakly-strong} we see that, given any multi-orders $\alpha, \beta$, we have
\begin{equation*}
 \delta^\alpha \partial_\xi^\beta\rho(z)(\xi)= \frac{1}{2i\pi} \int_{\partial D} (\zeta-z)^{-1}\delta^\alpha \partial_\xi^\beta\rho(\zeta)(\xi) d\zeta \qquad  \forall z\in D\ \forall \xi \in \R^n. 
\end{equation*}
Thus, if $|h|<r$, then, for all $\xi\in \R^n$, we have
\begin{equation*}
 \delta^\alpha \partial_\xi^\beta\big[\rho(z_0+h)(\xi)-\rho(z_0)(\xi)\big]= 
 \frac{1}{2i\pi} \int_{\partial D}\big[ (\zeta-z_0-h)^{-1}-(\zeta-z_0)^{-1}\big]\delta^\alpha \partial_\xi^\beta\rho(\zeta)(\xi) d\zeta. 
\end{equation*}
As $\rho(z)$, ${z\in \Omega}$,  is locally bounded in $\stS^m(\T^n_\theta\times\R^{n})$, we have 
\begin{equation*}
 \big\|\delta^\alpha \partial_\xi^\beta\rho(\zeta)(\xi)\big\| \leq C_{D\alpha\beta}\big(1+|\xi|\big)^{m-|\beta|} \qquad \forall \zeta \in \partial D \ \forall \xi \in \R^n. 
\end{equation*}
Combining all this with~(\ref{eq:Hol.integral-Taylor-ineq}) we deduce that, for $|h|<r/2$, we have 
\begin{align*}
 \big\| \delta^\alpha \partial_\xi^\beta[\rho(z_0+h)(\xi)-\rho(z_0)(\xi)]\big\| &\leq C_{D\alpha\beta}\big(1+|\xi|\big)^{m-|\beta|}
 \sup_{\zeta \in  \partial D} \big|(\zeta -z_0-h)^{-1}-(\zeta -z_0)^{-1}\big| \\
 &\leq  C_{D\alpha\beta}|h|\big(1+|\xi|\big)^{m-|\beta|}. 
\end{align*}
This implies that $\rho(z_0+h)\rightarrow \rho(z_0)$ in $\stS^m(\T^n_\theta\times\R^{n})$ as $h\rightarrow 0$ for all $z_0\in \Omega$. That is, $\rho(z)$, ${z\in \Omega}$,  is a continuous family in $\stS^m(\T^n_\theta\times\R^{n})$. 
\end{proof}

\begin{remark}
The following basic facts about holomorphic families in $\stS^m(\T^n_\theta\times\R^{n})$ are worth recording. 
\begin{enumerate}
\item[(a)]\label{rmk:Hol.hol-standard-symb-U} If  we already know that $\rho(z)$, ${z\in \Omega}$,  is a locally bounded family in $C^\infty(\T^n_\theta\times\R^{n})$, then in order to check that this is a locally bounded family in $\stS^m(\T^n_\theta\times\R^{n})$ it is enough to show that, for any compact $K\subseteq \Omega$ and any multi-orders $\alpha$ and $\beta$, we have
 \begin{equation}
 \big\| \delta^\alpha \partial_\xi^\beta \rho(z)(\xi)\big\| \leq C_{K\alpha\beta}|\xi|^{m-|\beta|} \qquad \forall (z,\xi)\in K\times \U^n.
 \label{eq:Hol.hol-standard-symb-U} 
\end{equation}

\item[(b)] If $\rho(z)$, ${z\in \Omega}$,  is a holomorphic family in $\stS^{m}(\T^n_\theta\times\R^{n})$, then $\delta^\alpha\partial_\xi^\beta \rho(z)$, ${z\in \Omega}$,  is a holomorphic family in $\stS^{m-|\beta|}(\T^n_\theta\times\R^{n})$ for all multi-orders $\alpha$ and $\beta$. 

\item[(c)]\label{rmk:Hol.product-hol} The product of $C(\T^n_\theta)$ gives rise to continuous bilinear maps $\stS^{m_1}(\T^n_\theta\times\R^{n})\times \stS^{m_2}(\T^n_\theta\times\R^{n})\rightarrow \stS^{m_1+m_2}(\T^n_\theta\times\R^{n})$ (see~\cite{HLP:Part1}); it then follows that if $\rho_{j}(z)$, ${z\in \Omega}$, $j=1,2$, are holomorphic families in $\stS^{m_j}(\T^n_\theta\times\R^{n})$, then $\rho_1(z)\rho_2(z)$, ${z\in \Omega}$,  is a holomorphic family in $\stS^{m_1+m_2}(\T^n_\theta\times\R^{n})$.
\end{enumerate}
\end{remark}

\begin{definition}
 Given holomorphic families $\rho(z)$, $z\in \Omega$, in $C^\infty(\T^n_\theta\times\R^{n})$ and  $\rho_{m-j}(z)$, $z\in \Omega$, in $\stS^{m-j}(\T^n_\theta\times\R^{n})$, $j\geq 0$, we shall write $\rho(z)\sim \sum \rho_{m-j}(z)$ if 
\begin{equation}
 \rho(z) - \sum_{j<N} \rho_{m-j}(z)\in L^\infty_\loc\big(\Omega; \stS^{m-N}(\T^n_\theta\times\R^{n})\big) \qquad \forall N\geq 0.
 \label{eq:Hol.asymptotic-standard} 
\end{equation}
\end{definition}

\begin{remark}\label{rmk:Hol.asymptotic-standard-stS} 
 In view of Lemma~\ref{lem:Hol.loc-bdd-hol} the condition~(\ref{eq:Hol.asymptotic-standard}) implies that
 \begin{equation*}
 \rho(z) - \sum_{j<N} \rho_{m-j}(z)\in \Hol\big(\Omega; \stS^{m-N}(\T^n_\theta\times\R^{n})\big) \qquad \forall N\geq 0. 
\end{equation*}
In particular, $\rho(z)$, ${z\in \Omega}$,  is a holomorphic family in $\stS^{m}(\T^n_\theta\times\R^{n})$. 
\end{remark}

\begin{lemma}\label{lem:Hol.asymptotic-standard-NJ}
Suppose we are given holomorphic families $\rho(z)$, $z\in \Omega$, in $C^\infty(\T^n_\theta\times\R^{n})$ and  $\rho_j(z)$, $z\in \Omega$, in $\stS^{m-j}(\T^n_\theta\times\R^{n})$, $j\geq 0$. The following are equivalent:
\begin{enumerate}
\item[(i)] $\rho(z) \sim \sum \rho_{m-j}(z)$ in the sense of~(\ref{eq:Hol.asymptotic-standard}). 

\item[(ii)] For every $N\geq 0$, there is $J_0\geq 0$ such that 
\begin{equation}
 \rho(z) - \sum_{j<J} \rho_{m-j}(z)\in L^\infty_\loc\big(\Omega; \stS^{-N}(\T^n_\theta\times\R^{n})\big) \qquad \forall J\geq J_0.
  \label{eq:Hol.asymptotic-standardNJ}
\end{equation}
 \end{enumerate}
 \end{lemma}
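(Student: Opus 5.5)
To prove Lemma~\ref{lem:Hol.asymptotic-standard-NJ}, I will treat the two implications separately. The only facts I need are that $\stS^{m'}(\T^n_\theta\times\R^n)\subseteq \stS^{m''}(\T^n_\theta\times\R^n)$ continuously whenever $m'\leq m''$, and that $\rho_{m-j}(z)$, being holomorphic in $\stS^{m-j}$, is in particular continuous and hence in $L^\infty_\loc(\Omega;\stS^{m-j})$.

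For (i)$\Rightarrow$(ii), fix $N\geq 0$ and set $J_0=\max(0,\lceil m+N\rceil)$. For any $J\geq J_0$, condition~(\ref{eq:Hol.asymptotic-standard}) with $N$ replaced by $J$ gives
\[
\rho(z)-\sum_{j<J}\rho_{m-j}(z)\in L^\infty_\loc\big(\Omega;\stS^{m-J}(\T^n_\theta\times\R^n)\big).
\]
Since $m-J\leq -N$, the continuous inclusion $\stS^{m-J}\subseteq\stS^{-N}$ maps locally bounded families to locally bounded families. This gives~(\ref{eq:Hol.asymptotic-standardNJ}).

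For (ii)$\Rightarrow$(i), fix $N\geq 0$. Choose an integer $N'\geq 0$ with $-N'\leq m-N$, and let $J_0$ be given by (ii) for $N'$. Take $J\geq \max(J_0,N)$ and write
\[
\rho(z)-\sum_{j<N}\rho_{m-j}(z)=\Big(\rho(z)-\sum_{j<J}\rho_{m-j}(z)\Big)+\sum_{N\leq j<J}\rho_{m-j}(z).
\]
The first term lies in $L^\infty_\loc(\Omega;\stS^{-N'})$, which is contained in $L^\infty_\loc(\Omega;\stS^{m-N})$. Each summand in the second term is holomorphic, hence continuous, in $\stS^{m-j}\subseteq\stS^{m-N}$, so it is bounded on compact sets. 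This is a finite sum, so the whole right-hand side lies in $L^\infty_\loc(\Omega;\stS^{m-N})$, which is~(\ref{eq:Hol.asymptotic-standard}).

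There is no real obstacle here; the only point needing care is the choice of the index $J$ relative to $N$ and $m$, and in particular allowing for $m$ being non-integer. Measurability of the families is automatic, since they are continuous in $C^\infty(\T^n_\theta\times\R^n)$.
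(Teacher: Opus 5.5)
Your proof is correct and follows essentially the same argument as the paper. For (ii)$\Rightarrow$(i) the paper likewise takes $J=\max(N,J_0)$ and splits $\rho(z)-\sum_{j<N}\rho_{m-j}(z)$ into the tail $\rho(z)-\sum_{j<J}\rho_{m-j}(z)$ plus the finite sum $\sum_{N\leq j<J}\rho_{m-j}(z)$, using the continuous inclusions $\stS^{m-j}\subseteq\stS^{m-N}$; it simply calls (i)$\Rightarrow$(ii) immediate, and your explicit choice of $N'$ with $-N'\leq m-N$ makes precise a step the paper leaves implicit.
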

\begin{proof}
 It is immediate that (i) implies (ii), so we only have to prove the converse. This is a standard argument. Assume that~(\ref{eq:Hol.asymptotic-standardNJ}) is satisfied. Let $N\geq 0$. There is $J_0$ such that, for all $J\geq J_0$, we have
 \begin{equation*}
 \rho(z) - \sum_{j<J} \rho_{m-j}(z)\in L^\infty_\loc\big(\Omega; \stS^{m-N}(\T^n_\theta\times\R^{n})\big) \qquad \forall J\geq J_0. 
\end{equation*}
Set $J=\max(N,J_0)$. If  $N\leq j<J$, then $ \rho_{m-j}(z)\in L^\infty_\loc(\Omega; \stS^{m-N}(\T^n_\theta\times\R^{n}))$, since we have a continuous inclusion of 
$\stS^{m-j}(\T^n_\theta\times\R^{n})$ into $\stS^{m-N}(\T^n_\theta\times\R^{n})$. Thus, 
\begin{equation*}
  \rho(z) - \sum_{j<N} \rho_{m-j}(z)=  \rho(z) - \sum_{j<J} \rho_{m-j}(z) +   \sum_{N\leq j<J} \rho_{m-j}(z)\in  L^\infty_\loc\big(\Omega; \stS^{m-N}(\T^n_\theta\times\R^{n})\big).
\end{equation*}
This shows that $\rho(z) \sim \sum \rho_{m-j}(z)$ in the sense of~(\ref{eq:Hol.asymptotic-standard}), and hence (ii) implies (i). 
\end{proof}

\subsection{Holomorphic families of classical symbols} 
In what follows we set 
\begin{equation*}
 S^\bt(\T^n_\theta\times\R^{n})=\bigcup_{q\in \C} S^q(\T^n_\theta\times\R^{n}). 
\end{equation*}
This is the class of all classical symbols on $\T^n_\theta$. 

\begin{definition}[see also~\cite{LNP:TAMS16, Po:SIGMA20}] \label{def:Hol.hol-family-classical-symbols}
We say that a family $\rho(z)(\xi)$, $z\in\Omega$, in $S^\bt(\T^n_\theta\times\Rn)$ is \emph{holomorphic} when the following conditions are satisfied:
\begin{enumerate}
\item[(i)] There is a holomorphic function $w(z)$ on $\Omega$ such that $\rho(z)(\xi) \in S^{w(z)}(\T^n_\theta\times\R^{n})$ for all $z \in \Omega$.

\item[(ii)] $\rho(z)$, ${z\in \Omega}$,  is a holomorphic family in $C^\infty(\T^n_\theta\times\Rn)$.

\item[(iii)] We have $\rho(z)\sim\sum_{j\geq 0}\rho_j(z)$ with $\rho_j(z)\in S_{w(z)-j}(\T^n_\theta\times\Rn)$, in the sense that, for any $N\geq 0$, any compact $K\subseteq \Omega$ and any multi-orders $\alpha$, $\beta$, we have 
\begin{equation} \label{eq:Hol.hol-family-classical-symbol-estimate}
\Big\| \delta^\alpha\partial_\xi^\beta \Big(\rho(z)-\sum_{j<N}\rho_j(z) \Big)(\xi) \Big\| \leq C_{NK\alpha\beta}|\xi|^{\Re{w(z)}-N-|\beta|}  \quad \text{for all}\ (z,\xi)\in K\times \U^n.
\end{equation}
\end{enumerate}
We denote by $\Hol(\Omega;S^\bt(\T^n_\theta \times \R^n))$ the class of holomorphic families in $S^\bt(\T^n_\theta \times \R^n)$. We call $w(z)$ the order (function) of the family $\rho(z)$, ${z\in \Omega}$; it is not unique, and may be replaced by $w(z)+\ell$ for any integer $\ell\geq 1$. 
\end{definition}

\begin{remark}\label{rmk:Hol.derivatives-hol}
 If $\rho(z)$, ${z\in \Omega}$,  is a holomorphic family in $S^\bt(\T^n_\theta\times\R^{n})$, then $\delta^\alpha\dl_\xi^\beta\rho(z)$, ${z\in \Omega}$,  is a holomorphic family in $S^\bt(\T^n_\theta\times\R^{n})$  for all multi-orders $\alpha$ and $\beta$. 
\end{remark}

\begin{remark}\label{rmk:Hol.holfamily-classical-standard}
Let $\rho(z)$, ${z\in \Omega}$,  be a holomorphic family in $S^\bt(\T^n_\theta\times\R^{n})$. If $\Re w(z)\leq m$ on $\Omega$, then (iii) implies that $\rho(z)$, ${z\in \Omega}$,  satisfies estimates of the form~(\ref{eq:Hol.hol-standard-symb-U}) for $N=0$. As $\rho(z)$, ${z\in \Omega}$,  is a holomorphic family in $C^\infty( \T^n_\theta\times\Rn)$, it then follows from Remark~\ref{rmk:Hol.hol-standard-symb-U} and Lemma~\ref{lem:Hol.loc-bdd-hol} that $\rho(z)$, ${z\in \Omega}$,  is a holomorphic family in $\stS^m(\T^n_\theta\times\R^{n})$. 
\end{remark}

\begin{remark}\label{rmk:Hol.holfamily-bounded-symbols}
It also follows from (iii) that, if $K\subseteq \Omega$ is compact and we set $w_K=\sup\{\Re w(z); \ z\in K\}$, then $\rho(z)$, $z\in K$, is a bounded family in $\bS^{w_K}(\T^n_\theta\times \R^n)$. 
\end{remark}

\begin{remark}\label{rmk:Hol.reduction-precompact}
 A family $\rho(z)$, $z\in \Omega$, in $S^\bt(\T^n_\theta\times\Rn)$ is holomorphic if and only if it is holomorphic near any point $z_0\in \Omega$. Thus,  restricting ourselves to precompact neighborhoods of points in $\Omega$ allows us to reduce to the case where $\Re w(z)\leq m$ for some $m\in \R$. 
\end{remark}

\begin{definition}
 A \emph{holomorphic family} in $S_\bt(\T^n_\theta\times \R^n)$ of degree $w(z)$ is a  holomorphic family $\rho(z)$, $z\in\Omega$, in 
 $C^\infty(\T^n_\theta\times (\Rn\setminus 0))$ such that $\rho(z)\in S_{w(z)}(\T^n_\theta\times \R^n)$ for all $z\in \Omega$. 
\end{definition}

\begin{lemma} \label{lem:Hol.homogeneous-symbol-holomorphic}
If $\rho(z)$, ${z\in \Omega}$,  is a holomorphic family in $S^\bt(\T^n_\theta\times \R^n)$ of order $w(z)$ and $\rho(z)\sim\sum \rho_j(z)$ in the sense of~(\ref{eq:Hol.hol-family-classical-symbol-estimate}), then $\rho_{j}(z)$, $z\in\Omega$, is a holomorphic family in  $S_\bt(\T^n_\theta\times \R^n)$ of degree $w(z)-j$ for all $j\geq 0$. 
\end{lemma}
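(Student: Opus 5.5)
We argue by induction on $j$, extracting each homogeneous component as a limit of rescaled remainders. The standard limit formula for homogeneous components is
\begin{equation*}
 \rho_j(z)(\xi)=\lim_{t\to\infty} t^{-(w(z)-j)}\Big(\rho(z)-\sum_{k<j}\rho_k(z)\Big)(t\xi), \qquad \xi\neq 0 .
\end{equation*}
Each approximant $\rho(z)(t\xi)-\sum_{k<j}\rho_k(z)(t\xi)$ is holomorphic in $z$ with values in $C^\infty(\T^n_\theta\times(\R^n\setminus 0))$. For the function $\rho(z)$ this follows from assumption (ii) of Definition~\ref{def:Hol.hol-family-classical-symbols}: composition with the dilation $\xi\mapsto t\xi$ is a continuous linear map. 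For the terms $\rho_k(z)$, $k<j$, it follows from the induction hypothesis. Likewise $z\mapsto t^{-(w(z)-j)}$ is an entire scalar function, since $w(z)$ is holomorphic.

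The key estimate comes from (\ref{eq:Hol.hol-family-classical-symbol-estimate}), applied with $N=j+1$. Using the homogeneity of $\rho_j(z)$, for $z$ in a compact $K$ and $\xi$ in a compact subset of $\R^n\setminus 0$, we get
\begin{equation*}
 \Big\|\delta^\alpha\partial_\xi^\beta\Big[t^{-(w(z)-j)}\Big(\rho(z)-\sum_{k<j}\rho_k(z)\Big)(t\xi)-\rho_j(z)(\xi)\Big]\Big\| \leq C\, t^{-1}
\end{equation*}
for $t\geq 1$ large, uniformly in $z\in K$. Two points need care here. The factor $t^{-\Re w(z)}$ exactly cancels the growth $|t\xi|^{\Re w(z)}$ in the bound. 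The $\xi$-derivatives of the dilated function produce a factor $t^{|\beta|}$, which is absorbed by $|t\xi|^{-|\beta|}$.

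It follows that the approximants converge to $\rho_j(z)$ in $C^\infty(\T^n_\theta\times(\R^n\setminus 0))$, uniformly for $z$ in compact subsets of $\Omega$. In other words, the convergence holds in $C(\Omega;C^\infty(\T^n_\theta\times(\R^n\setminus 0)))$. By the corollary that $\Hol(\Omega;\sE)$ is closed in $C(\Omega;\sE)$, the limit $\rho_j(z)$ is holomorphic. Since $\rho_j(z)\in S_{w(z)-j}(\T^n_\theta\times\R^n)$ by assumption (iii), this is exactly the claim, and it completes the induction step.

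The main obstacle is the uniformity in $z$ of the difference estimate. Bounding the terms $k<j$ separately would give non-uniform powers of $t$, so I will not do that. Instead I apply (\ref{eq:Hol.hol-family-classical-symbol-estimate}) directly to the full remainder $\rho-\sum_{k\leq j}\rho_k$ and treat $\rho_j(z)$ exactly via homogeneity. To restrict to integer $t$ and obtain a sequence, I may take $t=\ell\in\N$.
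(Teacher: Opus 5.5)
Your proposal is correct and is essentially the paper's own proof. The paper likewise shows that $\lambda^{-w(z)+j}\bigl(\rho(z)-\sum_{\ell<j}\rho_\ell(z)\bigr)(\lambda\xi)\to\rho_j(z)(\xi)$ in $C^\infty(\T^n_\theta\times(\R^n\setminus 0))$, uniformly for $z$ in compact subsets of $\Omega$, using the remainder estimate together with homogeneity, and then concludes by induction on $j$ from the closedness of holomorphic families (it phrases this closedness in $L^\infty_\loc$ rather than $C(\Omega;\cdot)$, which makes no difference).
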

\begin{proof} We only need to show that $\rho_{j}(z)$, $z\in\Omega$, is a holomorphic family in $C^\infty(\T^n_\theta\times (\Rn\setminus 0))$ for all $j\geq 0$. The estimate~(\ref{eq:Hol.hol-family-classical-symbol-estimate}) for $N=1$ and $\alpha=\beta=0$ means that, for any compact $K\subseteq \Omega$, we have
\begin{equation*}
 \big\| \rho(z)(\xi)-\rho_0(z)(\xi) \big\| \leq C_{K}|\xi|^{\Re{w(z)}-1}  \quad \forall (z,\xi)\in K\times \U^n.
\end{equation*}
Let $L\subseteq \R^n\setminus 0$ be compact. If $\lambda>0$ large enough, then $\lambda L\subseteq \U^n$, and so we get 
\begin{equation*}
  \big\| \rho(z)(\lambda\xi)-\rho_0(z)(\lambda \xi) \big\| \leq C_{K}|\lambda\xi|^{\Re{w(z)}-1}  \quad \forall  (z,\xi)\in K\times L.
\end{equation*}
 As $\rho_0(z)(\lambda \xi)= \lambda^{w(z)}\rho_0(z)(\xi)$ we obtain
 \begin{equation*}
  \big\| \lambda^{-w(z)}\rho(z)(\lambda\xi)-\rho_0(z)( \xi) \big\| \leq C_{KL}\lambda^{-1}  \quad \forall (z,\xi)\in K\times L.
\end{equation*}
Similarly, given any multi-orders $\alpha$ and $\beta$, we have 
\begin{align*}
  \big\| \delta^\alpha \partial_\xi^\beta \big[ \lambda^{-w(z)}\rho(z)(\lambda\xi)-\rho_0(z)(\xi)\big] \big\| & =
   \big\|  \lambda^{-w(z)+|\beta|}\delta^\alpha \partial_\xi^\beta\rho(z)(\lambda\xi)-\delta^\alpha \partial_\xi^\beta\rho_0(z)(\xi)\big\| \\
  & \leq C_{KL\alpha\beta}\lambda^{-1} \qquad \forall (z,\xi)\in K\times L.
\end{align*}
This shows that $\lambda^{-w(z)}\rho(z)(\lambda\xi)$ converges to $\rho_0(z)$ in $C^\infty(\T^n_\theta\times(\R^n\setminus 0))$ uniformly on compact subsets of $\Omega$ as $\lambda \rightarrow \infty$. That is, it converges in $L^\infty_\loc(\Omega; C^\infty(\T^n_\theta\times(\R^n\setminus 0)))$. 
As $\Hol(\Omega;C^\infty(\T^n_\theta\times(\R^n\setminus 0)))$ is a closed subspace of $L^\infty_\loc(\Omega; C^\infty(\T^n_\theta\times(\R^n\setminus 0)))$, it follows that $\rho_0(z)$, ${z\in \Omega}$,  is a holomorphic family in $C^\infty(\T^n_\theta\times(\R^n\setminus 0))$. 

It can be similarly shown that, for all $j\geq 1$, uniformly on compact subsets of $\Omega$, 
\begin{equation*}
 \lambda^{-w(z)+j}\Big( \rho(z)(\lambda \xi) -\sum_{\ell <j} \rho_{\ell}(z)(\lambda \xi) \Big) \longrightarrow \rho_j(z)(\xi) \qquad \text{in}\  C^\infty\big(\T^n_\theta\times(\R^n\setminus 0)\big). 
\end{equation*}
An induction on $j$ then shows that $\rho_j(z)$, ${z\in \Omega}$,  is a holomorphic family in $C^\infty(\T^n_\theta\times(\R^n\setminus 0))$ for all $j\geq 0$. 
\end{proof}

\begin{remark} \label{rem:Hol.holfamily-homogeneous-cutoff-classical}
Let $\rho(z)$, $z\in\Omega$, be a holomorphic family in $C^\infty(\T^n_\theta\times(\R^n\setminus 0))$. Suppose that $\rho(z)(\xi)$ is homogeneous of degree $w(z)$, where $w(z)$ is a holomorphic function on $\Omega$. In addition, let $\chi(\xi)\in C_c^\infty(\Rn)$ be such that $\chi(\xi)=1$ near $\xi=0$ and $\supp \chi \subseteq \B^n$. Set
\begin{equation*}
 \tilde{\rho}(z)(\xi)=(1-\chi(\xi)) \rho(z)(\xi), \qquad z\in \Omega, \ \xi\in \R^n. 
\end{equation*}
Then $ \tilde{\rho}(z)$ is a holomorphic family  in $C^\infty(\T^n_\theta\times\R^{n})$ and $ \tilde{\rho}(z)\sim \rho(z)$ in the sense of~(\ref{eq:Hol.hol-family-classical-symbol-estimate}). Thus, $ \tilde{\rho}(z)$ is a holomorphic family in $S^\bt(\T^n_\theta\times\R^{n})$. In particular, this is a holomorphic family in $\stS^m(\T^n_\theta\times\R^{n})$ if $\Re w(z)\leq m$ on $\Omega$. 
\end{remark}

\begin{example}\label{ex:Hol.powers}
 Suppose that $\Omega=\C$ and let $\chi(\xi)\in C_c^\infty(\R^n)$ be as in Remark~\ref{rem:Hol.holfamily-homogeneous-cutoff-classical}. Set 
 $\rho(z)(\xi)=(1-\chi(\xi))|\xi|^{z}$, $\xi\in \R^n$, $z\in \C$. Then $\rho(z)$, ${z\in \Omega}$,  is a holomorphic family in $S^\bt(\T^n_\theta\times \R^n)$ such that $\rho(z)(\xi)\sim |\xi|^{z}$ in the sense of~(\ref{eq:Hol.hol-family-classical-symbol-estimate}). In particular, $\rho(z)(\xi)$ is a classical symbol of order $z$ for all $z\in \C$. 
\end{example}

\begin{lemma} \label{lem:Hol.asymptotic-classical-standard}
Assume that $\Re w(z)\leq m$ on $\Omega$. Let $\rho(z)$, ${z\in \Omega}$,  be a holomorphic family in $C^\infty(\T^n_\theta\times \R^n)$ such that 
$\rho(z)\in S^{w(z)}(\T^n_\theta\times\R^{n})$, where $w(z)$ is a holomorphic function on $\Omega$. In addition, let $\chi\in C^\infty_c(\R^n)$ be as in Remark~\ref{rem:Hol.holfamily-homogeneous-cutoff-classical}. The following are equivalent: 
\begin{enumerate}
\item[(i)] $\rho(z)(\xi)\sim\sum_{j\geq 0}\rho_j(z)(\xi)$ in the sense of~(\ref{eq:Hol.hol-family-classical-symbol-estimate}).

\item[(ii)] Each family $\rho_j(z)$, ${z\in \Omega}$, $j\geq 0$, is a holomorphic family in $C^\infty(\T^n_\theta\times\R^{n})$, and  $\rho(z)(\xi)\sim\sum_{j\geq 0}(1-\chi(\xi))\rho_j(z)(\xi)$ in the sense of~(\ref{eq:Hol.asymptotic-standard}).
\end{enumerate}
\end{lemma}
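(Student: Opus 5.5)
The proof is a holomorphic-family version of Remark~\ref{rmk:Symbols.classical-inclusion}. The idea is to compare the two kinds of remainders on $\U^n$ and on $\B^n$ separately. On $\U^n$ the cut-off $1-\chi$ equals $1$, so the two remainders coincide there. On $\B^n$ everything is smooth and locally bounded, so it is harmless.

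\emph{Step 1: (i) implies (ii).} First, Lemma~\ref{lem:Hol.homogeneous-symbol-holomorphic} shows that each $\rho_j(z)$ is a holomorphic family in $C^\infty(\T^n_\theta\times(\R^n\setminus 0))$. Remark~\ref{rem:Hol.holfamily-homogeneous-cutoff-classical} then shows that $\tilde\rho_j(z):=(1-\chi)\rho_j(z)$ is holomorphic in $C^\infty(\T^n_\theta\times\R^n)$ and in $\stS^{m-j}(\T^n_\theta\times\R^n)$, since $\Re(w(z)-j)\leq m-j$. (Strictly, (ii) asks for holomorphy of the cut-off families, so I would read it that way.)

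Now fix $N$ and set $R_N(z)=\rho(z)-\sum_{j<N}\tilde\rho_j(z)$. Let $K\subseteq\Omega$ be compact.
\begin{itemize}
\item On $\U^n$ we have $1-\chi=1$, because $\supp\chi\subseteq\B^n$. So $R_N(z)$ agrees there with $\rho(z)-\sum_{j<N}\rho_j(z)$. By (\ref{eq:Hol.hol-family-classical-symbol-estimate}) this is bounded by $C|\xi|^{\Re w(z)-N-|\beta|}\leq C|\xi|^{m-N-|\beta|}$ for $(z,\xi)\in K\times\U^n$.
\item On $\overline{\B^n}$, the family $R_N(z)$ is holomorphic in $C^\infty(\T^n_\theta\times\R^n)$, hence continuous. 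So $\delta^\alpha\partial_\xi^\beta R_N(z)(\xi)$ is bounded on $K\times\overline{\B^n}$.
\end{itemize}
Together these give $R_N\in L^\infty_\loc(\Omega;\stS^{m-N}(\T^n_\theta\times\R^n))$, which is (\ref{eq:Hol.asymptotic-standard}).

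\emph{Step 2: (ii) implies (i).} Since each $(1-\chi)\rho_j(z)$ is holomorphic in $C^\infty$, the restriction of $\rho_j(z)$ to $\U^n$ is holomorphic. Homogeneity of degree $w(z)-j$ extends this holomorphy to all of $\R^n\setminus 0$, via $\rho_j(z)(\xi)=t^{-(w(z)-j)}\rho_j(z)(t\xi)$ with $t$ large.

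By (\ref{eq:Hol.asymptotic-standard}), for $(z,\xi)\in K\times\U^n$ we get
\[
\big\|\delta^\alpha\partial_\xi^\beta\big(\rho(z)-\textstyle\sum_{j<N}\rho_j(z)\big)(\xi)\big\|\leq C|\xi|^{m-N-|\beta|}.
\]
This only gives the exponent $m-N$, while (\ref{eq:Hol.hol-family-classical-symbol-estimate}) demands $\Re w(z)-N$. I expect this to be the main obstacle.

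To get past it, I would use the estimate with $N$ replaced by $N'=N+k$, where $k\geq m-\inf_K\Re w(z)$. Such a finite $k$ exists because $\Re w$ is continuous on the compact set $K$. Then:
\begin{itemize}
\item The remainder of order $N'$ is bounded by $C|\xi|^{m-N-k-|\beta|}\leq C|\xi|^{\Re w(z)-N-|\beta|}$ on $\U^n$, since $|\xi|\geq 1$ and $m-k\leq\Re w(z)$.
\item The intermediate terms $\rho_j(z)$, $N\leq j<N'$, are homogeneous of degree $w(z)-j$. So $\|\delta^\alpha\partial_\xi^\beta\rho_j(z)(\xi)\|=|\xi|^{\Re w(z)-j-|\beta|}\,\|\delta^\alpha\partial_\xi^\beta\rho_j(z)(\xi/|\xi|)\|$.
\item The last factor is evaluated on the sphere, and it is bounded uniformly for $z\in K$ by continuity from Step 2. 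Hence each intermediate term is bounded by $C|\xi|^{\Re w(z)-N-|\beta|}$.
\end{itemize}
Summing the remainder and the intermediate terms gives (\ref{eq:Hol.hol-family-classical-symbol-estimate}), which proves (i).
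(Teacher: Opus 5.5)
Your proof is correct and follows essentially the same route as the paper's proof. For (i)$\Rightarrow$(ii) you compare the remainders on $\U^n$, where $1-\chi=1$, and use local boundedness on the ball. For (ii)$\Rightarrow$(i) you absorb the gap between $m$ and $\Re w(z)$ by taking more terms of the expansion, and you control the extra homogeneous terms through their values on the unit sphere. The paper does the same, except that it makes your choice $N'=N+k$ via Lemma~\ref{lem:Hol.asymptotic-standard-NJ} with target order $w_K-N$.
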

\begin{proof}
Assume that $\rho(z)(\xi)\sim\sum_{j\geq 0}\rho_j(z)(\xi)$ in the sense of~(\ref{eq:Hol.hol-family-classical-symbol-estimate}), where $\rho_j(z)\in S_{w(z)-j}(\T^n_\theta\times\R^{n})$. For $j=0,1,\ldots $ set $\tilde{\rho}_j(z) =(1-\chi(\xi))\rho_j(z)$. We know by Remark~\ref{rem:Hol.holfamily-homogeneous-cutoff-classical} that $\tilde{\rho}_j(z)$, ${z\in \Omega}$,  is a holomorphic family in $\stS^{m-j}(\T^n_\theta\times\R^{n})$. In particular, given any $N\geq 1$, the difference $\rho(z)(\xi)-\sum_{j<N} \tilde{\rho}_j(z)(\xi)$ is a holomorphic family in $C^\infty(\T^n_\theta\times\R^{n})$, and hence is locally bounded. 
Moreover, the estimates~(\ref{eq:Hol.hol-family-classical-symbol-estimate}) imply that, for any compact $K\subseteq \Omega$ and any multi-orders $\alpha$ and $\beta$, for all $z\in K$ and $\xi\in \U^n$, we have
\begin{align*}
 \bigg\| \delta^\alpha \partial_\xi^\beta\big( \rho(z)(\xi)-\sum_{j<N} \tilde{\rho}_j(z)(\xi)\big)\bigg\|  =  \bigg\| \delta^\alpha \partial_\xi^\beta\big( \rho(z)(\xi)-\sum_{j<N} {\rho}_j(z)(\xi)\big)\bigg\| 
  \leq 
C_{KN\alpha\beta} |\xi|^{m-N-|\beta|}.  
\end{align*}
It then follows from Remark~\ref{rmk:Hol.hol-standard-symb-U} that $\rho(z)(\xi)-\sum_{j<N} \tilde{\rho}_j(z)(\xi)$ is a locally bounded family in $\stS^{m-N}(\T^n_\theta\times\R^{n})$ for all $N\geq 1$, and hence $\rho(z)(\xi)\sim\sum_{j\geq 0}(1-\chi(\xi))\rho_j(z)(\xi)$ in the sense of~(\ref{eq:Hol.asymptotic-standard}). 

Conversely, suppose that each family $\rho_j(z)$, ${z\in \Omega}$, $j\geq 0$, is a holomorphic family in $C^\infty(\T^n_\theta\times\R^{n})$, and $\rho(z)(\xi)\sim\sum_{j\geq 0}(1-\chi(\xi))\rho_j(z)(\xi)$ in the sense of~(\ref{eq:Hol.asymptotic-standard}). Let $K\subseteq \Omega$ be compact and set $w_K=\min \{\Re w(z); \ z\in K\}$. Let $N\geq 0$. By Lemma~\ref{lem:Hol.asymptotic-standard-NJ} there is $J_0\geq 0$ such that 
\begin{equation*}
 \rho(z)-\sum_{j<J} \tilde{\rho}_j(z)\in L^\infty\big(K;\stS^{w_K-N}(\T^n_\theta\times\R^{n}) \big)\qquad \forall J\geq J_0. 
\end{equation*}
Thus, given any $J\geq J_0$ and any multi-orders $\alpha$ and $\beta$, for all $z\in K$ and $\xi\in \U^n$, we have
\begin{align}
 \bigg\| \delta^\alpha \partial_\xi^\beta\big( \rho(z)(\xi)-\sum_{j<J} {\rho}_j(z)(\xi)\big)\bigg\| &=  \bigg\| \delta^\alpha \partial_\xi^\beta\big( \rho(z)(\xi)-\sum_{j<J} \tilde{\rho}_j(z)(\xi)\big)\bigg\| \nonumber \\  
 &\leq C_{KNJ\alpha\beta} \big(1+|\xi|)^{w_K-N-|\beta|}
 \label{eq:Hol.asymptotic-clas-stand}\\
 & \leq C_{KNJ\alpha\beta}|\xi|^{\Re w(z)-N-|\beta|}. \nonumber
\end{align}

Set $J=\max(J_0,N)$, and suppose that $N\leq j \leq J$. As $\rho_j(z)$, ${z\in \Omega}$, $j\geq 0$, is a locally bounded family in $C^\infty(\T^n_\theta\times\R^{n})$ and is homogeneous of degree $w(z)-j$, for all $z\in K$ and $\xi\in \U^n$, we have
\begin{equation*}
 \big\|  \delta^\alpha \partial_\xi^\beta \rho_j(z)(\xi)\big\| = |\xi|^{\Re w(z)-j-|\beta|} \big\|  \delta^\alpha \partial_\xi^\beta \rho_j(z)\left(|\xi|^{-1}\xi\right)\big\| 
 \leq C_{Kj\alpha\beta} |\xi|^{\Re w(z)-N-|\beta|}. 
\end{equation*}
Combining this with~(\ref{eq:Hol.asymptotic-clas-stand}) we obtain
\begin{equation*}
 \bigg\| \delta^\alpha \partial_\xi^\beta\big( \rho(z)(\xi)-\sum_{j<N} {\rho}_j(z)(\xi)\big)\bigg\| \leq C_{KN\alpha\beta} |\xi|^{\Re w(z)-N-|\beta|} \qquad \forall (z,\xi)\in K\times 
 \U^n.  
\end{equation*}
This shows that $\rho(z)(\xi)\sim\sum_{j\geq 0}\rho_j(z)(\xi)$ in the sense of~(\ref{eq:Hol.hol-family-classical-symbol-estimate}). 
\end{proof}

\begin{lemma} \label{lem:Hol.asymptotic-classical-family}
Assume that $\Re w(z)\leq m$ on $\Omega$.  Suppose that $\rho(z)$ is a holomorphic family in $C^\infty(\T^n_\theta\times\R^{n})$ such that $\rho(z)\sim \sum_{\ell\geq 0} \rho^{(\ell)}(z)$ in the sense of~(\ref{eq:Hol.asymptotic-standard}), where $\rho^{(\ell)}(z)$, ${z\in \Omega}$,  is a holomorphic family in $S^\bt(\T^n_\theta\times\R^{n})$ of order $w(z)-\ell$. 
Then $\rho(z)$, ${z\in \Omega}$,  is a holomorphic family in $S^\bt(\T^n_\theta\times\R^{n})$ of order $w(z)$. 
\end{lemma}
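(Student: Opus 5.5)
The idea is to read off the homogeneous components of $\rho(z)$ from those of the families $\rho^{(\ell)}(z)$, and then to apply Lemma~\ref{lem:Hol.asymptotic-classical-standard} in the direction (ii)$\Rightarrow$(i).

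For each $\ell$ write $\rho^{(\ell)}(z)\sim\sum_{k\geq 0}\rho^{(\ell)}_k(z)$, where $\rho^{(\ell)}_k(z)\in S_{w(z)-\ell-k}(\T^n_\theta\times\R^n)$. By Lemma~\ref{lem:Hol.homogeneous-symbol-holomorphic}, each $\rho^{(\ell)}_k(z)$ is a holomorphic family in $C^\infty(\T^n_\theta\times(\R^n\setminus 0))$. Set
\begin{equation*}
\rho_j(z):=\sum_{\ell+k=j}\rho^{(\ell)}_k(z)\in S_{w(z)-j}(\T^n_\theta\times\R^n).
\end{equation*}
This is a finite sum of holomorphic homogeneous families, so $(1-\chi)\rho_j(z)$ is a holomorphic family in $\stS^{m-j}(\T^n_\theta\times\R^n)$ by Remark~\ref{rem:Hol.holfamily-homogeneous-cutoff-classical}.

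It then suffices to show that $\rho(z)\sim\sum_j(1-\chi)\rho_j(z)$ in the sense of~(\ref{eq:Hol.asymptotic-standard}).

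Fix $N\geq 0$ and decompose
\begin{equation*}
\rho(z)-\sum_{j<N}(1-\chi)\rho_j(z)=\Big(\rho(z)-\sum_{\ell<N}\rho^{(\ell)}(z)\Big)+\sum_{\ell<N}\Big(\rho^{(\ell)}(z)-\sum_{k<N-\ell}(1-\chi)\rho^{(\ell)}_k(z)\Big).
\end{equation*}
The first term lies in $L^\infty_\loc(\Omega;\stS^{m-N})$ by hypothesis.

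For each $\ell<N$, the family $\rho^{(\ell)}(z)$ has order $w(z)-\ell$ with $\Re(w(z)-\ell)\leq m-\ell$. Lemma~\ref{lem:Hol.asymptotic-classical-standard}, direction (i)$\Rightarrow$(ii), applied with $m$ replaced by $m-\ell$, gives
\begin{equation*}
\rho^{(\ell)}(z)-\sum_{k<N-\ell}(1-\chi)\rho^{(\ell)}_k(z)\in L^\infty_\loc\big(\Omega;\stS^{m-\ell-(N-\ell)}\big)=L^\infty_\loc\big(\Omega;\stS^{m-N}\big).
\end{equation*}
Summing the finitely many terms yields the required asymptotic relation. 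Lemma~\ref{lem:Hol.asymptotic-classical-standard}, direction (ii)$\Rightarrow$(i), then gives $\rho(z)\sim\sum\rho_j(z)$ in the sense of~(\ref{eq:Hol.hol-family-classical-symbol-estimate}).

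The remaining hypothesis, $\rho(z)\in S^{w(z)}$ for each fixed $z$, also follows from this. Freezing $z$, the estimates just obtained show that $\rho(z)$ has the classical expansion $\sum\rho_j(z)$ in the sense of Remark~\ref{rmk:Symbols.classical-inclusion}.

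The main point needing care is the bookkeeping of orders. The lemma is stated with a uniform real bound $m$, while the order function is $w(z)$, so one must check that replacing $m$ by $m-\ell$ for $\rho^{(\ell)}$ is legitimate. It is, since $\Re(w(z)-\ell)\leq m-\ell$. One must also check that the choice of cutoff $\chi$ is the one fixed in Remark~\ref{rem:Hol.holfamily-homogeneous-cutoff-classical}. If $\Re w$ is not bounded above on $\Omega$, I would first localize to precompact neighborhoods using Remark~\ref{rmk:Hol.reduction-precompact}, although the hypothesis $\Re w(z)\leq m$ already covers this.
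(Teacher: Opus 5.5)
Your proof is correct and follows the paper's argument. Both define $\rho_j(z)=\sum_{\ell+k=j}\rho^{(\ell)}_k(z)$, apply Lemma~\ref{lem:Hol.asymptotic-classical-standard} (i)$\Rightarrow$(ii) to each $\rho^{(\ell)}(z)$, regroup the double sum, and conclude with (ii)$\Rightarrow$(i); your explicit decomposition of the remainder spells out the regrouping $\sum_\ell\sum_k(1-\chi)\rho^{(\ell)}_k\sim\sum_j(1-\chi)\rho_j$, which the paper states in one line.
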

\begin{proof}
By assumption  $\rho^{(\ell)}(z)\sim \sum_{j\geq 0} \rho^{(\ell)}_j(z)$ in the sense of~(\ref{eq:Hol.hol-family-classical-symbol-estimate}), where $ \rho^{(\ell)}_j(z)$ is homogeneous of degree $w(z)-\ell-j$. 
For $j=0,1, \ldots$ set $\rho_j(z)= \sum_{\ell=0}^j \rho^{(\ell)}_{j-\ell}(z)$. Lemma~\ref{lem:Hol.homogeneous-symbol-holomorphic} implies that $\rho_j(z)$, ${z\in \Omega}$,  is a holomorphic family in $C^\infty(\T^n_\theta\times(\R^n\setminus 0))$. Moreover, $\rho_j(z)(\xi)$ is homogeneous of degree $w(z)-j$. Let $\chi(\xi) \in C^\infty_c(\R^n)$ be as in Remark~\ref{rem:Hol.holfamily-homogeneous-cutoff-classical}. Lemma~\ref{lem:Hol.asymptotic-classical-standard} ensures that 
$\rho^{(\ell)}(z)\sim \sum_{j\geq 0} (1-\chi(\xi))\rho^{(\ell)}_j(z)$ in the sense of~(\ref{eq:Hol.asymptotic-standard}). Thus, still in this sense, we have
\begin{equation*}
 \rho(z) \sim \sum_{\ell \geq 0} \rho^{(\ell)}(z) \sim \sum_{\ell \geq 0} \sum_{j\geq 0} \big(1-\chi(\xi)\big)\rho^{(\ell)}_j(z)
 \sim \sum_{j\geq 0} \big(1-\chi(\xi)\big)\rho_j(z). 
\end{equation*}
It then follows from Lemma~\ref{lem:Hol.asymptotic-classical-standard} that $\rho(z) \sim \sum_{j \geq 0} \rho_j(z)$ in the sense of~(\ref{eq:Hol.hol-family-classical-symbol-estimate}), and hence $\rho(z)$, ${z\in \Omega}$,  is a holomorphic family in $S^\bt(\T^n_\theta\times\R^{n})$ of order $w(z)$. 
\end{proof}

\begin{lemma} \label{lem:Hol.holfamily-classical-product}
Let $\rho(z)$, ${z\in \Omega}$,  and $\sigma(z)$, ${z\in \Omega}$,  be holomorphic families in $S^\bt(\T^n_\theta\times\R^{n})$ with respective orders $w(z)$ and $u(z)$. Then $\rho(z)\sigma(z)$, ${z\in \Omega}$,  is a holomorphic  family  in $S^\bt(\T^n_\theta\times\R^{n})$ of order $w(z)+u(z)$.  
\end{lemma}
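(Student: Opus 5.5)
We check the three conditions of Definition~\ref{def:Hol.hol-family-classical-symbols} for $\rho(z)\sigma(z)$ with order $w(z)+u(z)$, which is holomorphic. By Remark~\ref{rmk:Hol.reduction-precompact} we may work near a given point $z_0$, i.e., on a precompact open neighborhood on which $\Re w(z)\leq m_1$ and $\Re u(z)\leq m_2$. Condition~(i) is the classical pointwise fact that $S^{q_1}\cdot S^{q_2}\subseteq S^{q_1+q_2}$. Condition~(ii) holds because the product of $C^\infty(\T^n_\theta\times\R^n)$ is a continuous bilinear map: writing $\rho(z+h)\sigma(z+h)-\rho(z)\sigma(z)=(\rho(z+h)-\rho(z))\sigma(z+h)+\rho(z)(\sigma(z+h)-\sigma(z))$, dividing by $h$ and letting $h\to 0$ gives the derivative, using continuity of $\sigma$ in $z$.

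The main work is condition~(iii). We set $\rho_j(z)$ and $\sigma_k(z)$ to be the homogeneous components, and define
\begin{equation*}
\tau_\ell(z)=\sum_{j+k=\ell}\rho_j(z)\sigma_k(z)\in S_{w(z)+u(z)-\ell}(\T^n_\theta\times\R^n).
\end{equation*}
By Lemma~\ref{lem:Hol.homogeneous-symbol-holomorphic} each $\rho_j(z)$, $\sigma_k(z)$ is holomorphic in $C^\infty(\T^n_\theta\times(\R^n\setminus 0))$, so each $\tau_\ell(z)$ is as well, by the same product argument as above. To get the asymptotics, we use Lemma~\ref{lem:Hol.asymptotic-classical-standard}. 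Cut off with $1-\chi$ to obtain $\tilde\rho_j,\tilde\sigma_k$. Then we have $\rho(z)=\sum_{j<N}\tilde\rho_j(z)+R_N(z)$ with $R_N\in L^\infty_\loc(\Omega;\stS^{m_1-N})$, and similarly $\sigma(z)=\sum_{k<N}\tilde\sigma_k(z)+R'_N(z)$ with $R'_N\in L^\infty_\loc(\Omega;\stS^{m_2-N})$. Each $\tilde\rho_j$ is locally bounded in $\stS^{m_1-j}$ by Remark~\ref{rem:Hol.holfamily-homogeneous-cutoff-classical}, and likewise for $\tilde\sigma_k$.

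Expanding the product and using the continuity of the bilinear product map $\stS^{a}\times\stS^{b}\to\stS^{a+b}$ from the earlier remark, we get
\begin{equation*}
\rho\sigma-\sum_{\ell<N}(1-\chi)^2\tau_\ell\in L^\infty_\loc\big(\Omega;\stS^{m_1+m_2-N}\big),
\end{equation*}
since every discarded term has total index at least $N$ or involves a remainder. Finally $(1-\chi)^2\tau_\ell-(1-\chi)\tau_\ell$ is compactly supported in $\xi$ and holomorphic, hence locally bounded in $\stS^{-\infty}$. Therefore $\rho\sigma\sim\sum(1-\chi)\tau_\ell$ in the sense of~(\ref{eq:Hol.asymptotic-standard}) with $m=m_1+m_2$, and Lemma~\ref{lem:Hol.asymptotic-classical-standard} converts this into~(\ref{eq:Hol.hol-family-classical-symbol-estimate}).

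The only subtle point is the bookkeeping here. Lemma~\ref{lem:Hol.asymptotic-classical-standard} is stated with a uniform bound $m$ on the real part of the order, which is why we first reduce to precompact neighborhoods. We must also confirm that local boundedness of products of locally bounded families follows from the joint continuity of the bilinear map on bounded sets.
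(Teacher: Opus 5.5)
Your proposal is correct and follows essentially the same route as the paper. Both arguments reduce to $\Re w\leq m$, $\Re u\leq p$, pass to cut-off homogeneous components via Lemma~\ref{lem:Hol.asymptotic-classical-standard}, and multiply out using continuity of the product $\stS^{m}\times\stS^{p}\to\stS^{m+p}$ before converting back to the classical estimates. The only difference is cosmetic: the paper handles the factor $(1-\chi)^2$ by introducing the new cut-off $\tilde\chi=1-(1-\chi)^2$, whereas you absorb $\chi(1-\chi)\tau_\ell$ as a compactly supported smoothing term, and both work.
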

\begin{proof}
Suppose that $\rho(z)\sim \sum \rho_j(z)$ and $\sigma(z) \sim \sum \sigma_j(z)$ in the sense of~(\ref{eq:Hol.hol-family-classical-symbol-estimate}), where $ \rho_j(z)$ (resp., $ \sigma_j(z)$) is homogeneous of degree $w(z)-j$ (resp., $u(z)-j$). Without any loss of generality we may assume that $\Re w(z)\leq m$ and $\Re u(z)\leq p$ on $\Omega$.

 Let $\chi\in C^\infty_c(\R^n)$ be such that $\chi(\xi)=1$ near $\xi=0$ and $\supp \chi\subseteq \B^n$. Lemma~\ref{lem:Hol.homogeneous-symbol-holomorphic} ensures that 
$\rho_j(z)$, ${z\in \Omega}$,   and  $\sigma_j(z)$, ${z\in \Omega}$,  are holomorphic families in $C^\infty(\T^n_\theta\times(\R^n\setminus 0))$. Moreover, by 
 Lemma~\ref{lem:Hol.asymptotic-classical-standard} $\rho(z)\sim \sum (1-\chi(\xi))\rho_j(z)$ and $\sigma(z) \sim \sum (1-\chi(\xi))\sigma_j(z)$ in the sense of~(\ref{eq:Hol.asymptotic-standard}). In particular, by using Remark~\ref{rmk:Hol.asymptotic-standard-stS} we see that, given any $N\geq 1$, we have 
\begin{gather*}
 \rho(z)(\xi)= \sum_{j<N} (1-\chi(\xi))\rho_j(z)(\xi) \qquad \bmod \Hol\big(\Omega; \stS^{m-N}(\T^n_\theta\times\R^{n})\big), \\
  \sigma(z)(\xi)= \sum_{k<N} (1-\chi(\xi))\sigma_k(z)(\xi) \qquad \bmod \Hol\big(\Omega; \stS^{p-N}(\T^n_\theta\times\R^{n})\big).
\end{gather*}
Combining this with Remark~\ref{rmk:Hol.product-hol} we deduce that
\begin{align}
 \rho(z)(\xi)\sigma(z)(\xi) & = \sum_{j,k<N}\left(1-\chi(\xi)\right)^2\rho_j(z)(\xi)\sigma_k(z)(\xi)  & \bmod \Hol\big(\Omega; \stS^{m+p-N}(\T^n_\theta\times\R^{n})\big), \nonumber\\ 
 & = \sum_{j<N} \left(1-\chi(\xi)\right)^2 \bigg(\sum_{k+\ell<N}\rho_k(z)(\xi)\sigma_\ell(z)(\xi)\bigg)  & \bmod \Hol\big(\Omega; \stS^{m+p-N}(\T^n_\theta\times\R^{n})\big). 
 \label{eq:Hol.product-hol-symbolsN}
\end{align}

For $j=0,1, \ldots $ set 
\begin{equation*}
 (\rho\sigma)_j(z)(\xi)= \sum_{k+\ell=j} \rho_k(z)(\xi)\sigma_\ell(z)(\xi), \qquad z\in \Omega, \ \xi\in \R^n\setminus 0. 
\end{equation*}
Then $(\rho\sigma)_j(z)$, ${z\in \Omega}$,  is a holomorphic family  in $C^\infty(\T^n_\theta\times(\R^n\setminus 0))$ and is homogeneous of degree $w(z)+u(z)-j$. In addition, set 
$\tilde{\chi}(\xi)=1-(1-\chi(\xi))^2 \in C^\infty(\R^n)$. Note that $\tilde{\chi}(\xi)=1$ near $\xi=0$ and $\supp \tilde{\chi}\subseteq \B^n$. Then~(\ref{eq:Hol.product-hol-symbolsN}) means that, in the sense of~(\ref{eq:Hol.asymptotic-standard}), we have
\begin{equation*}
 \rho(z)(\xi)\sigma(z)(\xi) \sim \sum_{j\geq 0} (1-\tilde{\chi}(\xi))(\rho\sigma)_j(z)(\xi). 
\end{equation*}
Lemma~\ref{lem:Hol.asymptotic-classical-standard} then ensures that  $\rho(z)\sigma(z)  \sim \sum_{j\geq 0} (\rho\sigma)_j(z)$ in the sense of~(\ref{eq:Hol.hol-family-classical-symbol-estimate}), and so $\rho(z)\sigma(z)$, ${z\in \Omega}$,   is a holomorphic  family  in $S^\bt(\T^n_\theta\times\R^{n})$ of order $w(z) +u(z)$.\end{proof}

\subsection{Holomorphic families of \psidos} 
In what follows, if $\rho(z)$, ${z\in \Omega}$,  is a holomorphic family of standard or classical symbols, we set
\begin{equation*}
 P_\rho(z)=P_{\rho(z)},  \qquad z\in \Omega. 
\end{equation*}
We also set
\begin{equation*}
 \Psi^\bt(\T^n_\theta)= \bigcup_{q\in \C} \Psi^q(\T^n_\theta). 
\end{equation*}
This is the class of all \emph{classical} \psidos\ on $\T^n_\theta$. 

\begin{definition}[see also~\cite{LNP:TAMS16, Po:SIGMA20}]
A family $P(z)$, ${z\in \Omega}$,  in  $\Psi^\bt(\T^n_\theta)$ is called a \emph{holomorphic family of order} $w(z)$ if there is a family 
$\rho(z)$, $z\in \Omega$, in $\Hol(\Omega;S^\bt(\T^n_\theta\times\R^{n}))$ of order $w(z)$ such that
\begin{equation*}
 P(z)=P_\rho(z) \qquad \forall z \in \Omega. 
\end{equation*}
We denote by $\Hol(\Omega;\Psi^\bt(\T^n_\theta))$ the class of holomorphic families in $\Psi^\bt(\T^n_\theta)$ parametrized by $\Omega$. 
\end{definition}

\begin{example}\label{ex:Hol.Delta-powers}
 Suppose that $\Omega=\C$. Let $\Delta=\delta_1^2+\cdots + \delta_n^2$ be the flat Laplacian on $\T^n_\theta$. We define $\Delta^{z}$, $z\in \C$, by Borel functional calculus. For $\Re z\leq 0$ we get a bounded operator on $L_2(\T^n_\theta)$. For $\Re z>0$ we obtain an unbounded operator with domain $W^s_2(\T^n_\theta)$, where $s=2\Re z$. In any case, we have 
\begin{equation}\label{eq:Hol.Delta-powers}
 \Delta^z\big(U^k\big) = \left\{ 
 \begin{array}{cl} 
 |k|^{2z}U^k & \text{if $k \neq 0$},\\
 0 & \text{if $k = 0$}. 
\end{array}\right.
\end{equation}
Let $\chi\in C^\infty_c(\R^n)$ be such that $\chi(\xi)=1$ near $\xi=0$ and $\supp \chi\subseteq \B^n$. Set $\rho(z)(\xi)=(1-\chi(\xi))|\xi|^{2z}$, $z\in \C$, $\xi\in \R^n$. Then $\Delta^{z}(U^k)=\rho(z)(k)U^k$ for all $k\in \Z^n$, and so $\Delta^z=P_{\rho}(z)$. From Example~\ref{ex:Hol.powers} we know that $\rho(z)$, ${z\in \Omega}$,  is a holomorphic family in $S^\bt(\T^n_\theta\times\R^{n})$. It follows that  $\Delta^z$, $z\in \C$,  is a holomorphic family in $\Psi^\bt(\T^n_\theta)$. Note also that~(\ref{eq:Hol.Delta-powers}) for $z=0$ gives 
\begin{equation*}
 \Delta^z\big|_{z=0} =1-\Pi_0,
\end{equation*}
where $\Pi_0$ is the orthogonal projection onto $\ker \Delta=\C\cdot 1$. 
\end{example}

As mentioned in Section~\ref{sec:PsiDOs} we have a well defined symbol map $\Psi^q(\T^n_\theta)\ni  P\rightarrow \rho_{\!{}_{P}}\in S^q(\T^n_\theta\times \R^n)$ such that $P_{\rho_{\!{}_{P}}}=P$ for all $P\in  \Psi^q(\T^n_\theta)$; see Eq.~(\ref{eq:PsiDOs.symbol-map-definition}). 

\begin{lemma}\label{lem:hol.local-global}
 Let $P(z)$, ${z\in \Omega}$,  be a family in $\Psi^\bt(\T^n_\theta)$ such that $P(z)$ has order $w(z)$, where $w(z)$ is a holomorphic function on $\Omega$.  Then the following are equivalent:
\begin{enumerate}
\item[(i)] $\rho_{\!{}_{P(z)}}$, ${z\in \Omega}$,  is a holomorphic family in $S^\bt(\T^n_\theta\times \R^n)$ of order $w(z)$. 

 \item[(ii)] $P(z)$, $z\in \Omega$, is a holomorphic family in $\Psi^\bt(\T^n_\theta)$ of order $w(z)$. 
 
 \item[(iii)] $P(z)$ is a holomorphic family in $\Psi^\bt(\T^n_\theta)$ of order $w(z)$ near any point $z_0\in \Omega$. 
\end{enumerate}
\end{lemma}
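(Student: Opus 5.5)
The implications (i)$\Rightarrow$(ii)$\Rightarrow$(iii) are immediate. For (i)$\Rightarrow$(ii) we simply take $\rho(z)=\rho_{\!{}_{P(z)}}$, since $P_{\rho_{\!{}_{P(z)}}}=P(z)$ by Proposition~\ref{prop:PsiDOs.rho-to-rhotilde-map-continuity}. The implication (ii)$\Rightarrow$(iii) is trivial. The real content is therefore (iii)$\Rightarrow$(i).

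Being a holomorphic family in $S^\bt(\T^n_\theta\times\R^n)$ is a local property in $z$ (Remark~\ref{rmk:Hol.reduction-precompact}). So it suffices to fix $z_0\in\Omega$ and a small open disk $D\ni z_0$ on which $P(z)=P_{\sigma(z)}$ for some $\sigma(z)\in\Hol(D;S^\bt(\T^n_\theta\times\R^n))$ of order $w(z)$. Shrinking $D$, we may assume $\Re w(z)\leq m$ on $D$. By Remark~\ref{rmk:Hol.holfamily-classical-standard}, $\sigma(z)$ is then a holomorphic family in $\stS^m(\T^n_\theta\times\R^n)$.

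By Proposition~\ref{prop:PsiDOs.rho-to-rhotilde-map-continuity}(ii), the map $\sigma\mapsto \sigma-\rho_{\!{}_{P_\sigma}}$ is a continuous linear map from $\stS^m$ to $\stS^{-\infty}$. Linearity holds because the symbol map (\ref{eq:PsiDOs.symbol-map-definition}) is linear in $P$ and $\sigma\mapsto P_\sigma$ is linear. Composing a holomorphic family with a continuous linear map gives a holomorphic family. Hence
\begin{equation*}
r(z):=\sigma(z)-\rho_{\!{}_{P(z)}}
\end{equation*}
is a holomorphic family in $\stS^{-\infty}(\T^n_\theta\times\R^n)$, and in particular in every $\stS^{m-N}$. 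Alternatively, holomorphy of $\rho_{\!{}_{P(z)}}$ in $C^\infty$ can be checked directly from (\ref{eq:PsiDOs.symbol-map-definition}), since $P(z)(U^k)(U^k)^{-1}=\sigma(z)(k)$. The continuity statement is what gives uniform local bounds in all seminorms.

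Now write $\rho_{\!{}_{P(z)}}=\sigma(z)-r(z)$. Condition (ii) of Definition~\ref{def:Hol.hol-family-classical-symbols} holds because $\stS^{-\infty}$ embeds continuously into $C^\infty(\T^n_\theta\times\R^n)$. Condition (i) holds because Proposition~\ref{prop:PsiDOs.rho-to-rhotilde-map-continuity}(iii) gives $\rho_{\!{}_{P(z)}}\in S^{w(z)}$. For condition (iii), write $\sigma(z)\sim\sum_j\sigma_j(z)$ in the sense of (\ref{eq:Hol.hol-family-classical-symbol-estimate}). Since $r(z)$ is locally bounded in every $\stS^{-M}$, subtracting it does not affect the estimates (\ref{eq:Hol.hol-family-classical-symbol-estimate}) with the same homogeneous components $\sigma_j(z)$: on $K\times\U^n$ we have $|\xi|^{-M}\leq|\xi|^{\Re w(z)-N-|\beta|}$ once $M$ is large. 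Alternatively, one can invoke Lemma~\ref{lem:Hol.asymptotic-classical-standard} directly. This gives (i) on $D$, and hence on all of $\Omega$.

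The main subtlety is the second step: justifying that the symbol map produces a holomorphic family with locally uniform $\stS^{-\infty}$ bounds. It relies on the linearity and continuity in Proposition~\ref{prop:PsiDOs.rho-to-rhotilde-map-continuity}(ii). A secondary point is that $\rho_{\!{}_{P(z)}}$ is globally defined on $\Omega$ and independent of the local choice of $\sigma$, so the local conclusions patch together without any compatibility check.
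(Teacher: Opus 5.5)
Your proposal is correct and follows essentially the same route as the paper: localize to a neighborhood with $\Re w(z)\leq m$, write $P(z)=P_{\sigma(z)}$ there, and use the continuity of the linear map $\sigma\mapsto\sigma-\rho_{\!{}_{P_\sigma}}$ from $\stS^m$ to $\stS^{-\infty}$ to conclude that $\rho_{\!{}_{P(z)}}$ differs from $\sigma(z)$ by a holomorphic $\stS^{-\infty}$-family. The only difference is that you check the three conditions of Definition~\ref{def:Hol.hol-family-classical-symbols} explicitly for $\rho_{\!{}_{P(z)}}$, whereas the paper leaves that final step implicit.
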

\begin{proof}
It is immediate that (i) implies (ii) and (ii) implies (iii). Therefore, we only need to show that (iii) implies (i). 

Assume that (iii) is satisfied. To establish (i) it is enough to establish it near any point $z_0\in \Omega$. Thus, possibly by shrinking $\Omega$ we may assume that $P(z)$, ${z\in \Omega}$,  is a holomorphic family in $\Psi^\bt(\T^n_\theta)$ of order $w(z)$ and there is $m\in \R$ such that $\Re w(z)\leq m$. Thus, we may write $P(z) = P_\rho(z)$, $z\in \Omega$, where $\rho(z)$, ${z\in \Omega}$,  is a holomorphic family in $S^\bt(\T^n_\theta\times \R^n)$ of order $w(z)$. As $\Re w(z)\leq m$, we know from Remark~\ref{rmk:Hol.holfamily-classical-standard} that  $\rho(z)$, ${z\in \Omega}$,  is a holomorphic family in $\bS^m(\T^n_\theta\times \R^n)$.

Note that $ \rho_{\!{}_{P(z)}}=\rho_{\!{}_{P_\rho(z)}}$, $z\in \Omega$. By Part~(ii) of Proposition~\ref{prop:PsiDOs.rho-to-rhotilde-map-continuity} we know that 
$\rho \rightarrow\rho_{\!{}_{P_\rho}} -\rho$ is a continuous linear map from $\bS^m(\T^n_\theta\times \R^n)$ to $\bS^{-\infty}(\T^n_\theta\times \R^n)$. It then follows that $ \rho_{\!{}_{P(z)}}-\rho(z)$, $z\in \Omega$, is a  holomorphic family in $\bS^{-\infty}(\T^n_\theta\times \R^n)$, and so $\rho_{\!{}_{P(z)}}$, ${z\in \Omega}$,  is a holomorphic family in $S^\bt(\T^n_\theta\times \R^n)$ of order $w(z)$. This shows that~(iii) implies~(i). 
\end{proof}

\begin{remark}
The implication $\textup{(ii)} \Rightarrow \textup{(i)}$ is the content of~\cite[Lemma~4.18]{Po:SIGMA20}.
\end{remark}

\begin{remark} \label{rem:Hol.symbol-uniqueness}
The proof of Lemma~\ref{lem:hol.local-global} shows that if $\rho(z)$, ${z\in \Omega}$,  is any holomorphic family in $S^\bt(\T_\theta^n\times\Rn)$ of order $w(z)$ such that $P(z) = P_\rho(z)$, then $\rho_{\!{}_{P(z)}}-\rho(z)$, $z\in \Omega$, is a holomorphic family in $\bS^{-\infty}(\T^n_\theta\times \R^n)$. Thus, if $\rho(z)\sim\sum_{j\geq 0}\rho_j(z)$, where $\rho_j(z)\in S_{w(z)-j}(\T_\theta^n\times\Rn)$ and $\sim$ is meant in the sense of~(\ref{eq:Hol.hol-family-classical-symbol-estimate}), then $\rho_{\!{}_{P(z)}}\sim\sum_{j\geq 0}\rho_j(z)$ in the sense of~(\ref{eq:Hol.hol-family-classical-symbol-estimate}). 
\end{remark}

\begin{proposition}\label{prop:Hol.propertiesP(z)-W2s} 
Let $P(z)$, ${z\in \Omega}$,  be a holomorphic family in $\Psi^\bt(\T^n_\theta)$ of order $w(z)$. 
\begin{enumerate}
 \item $P(z)$, ${z\in \Omega}$,  is a holomorphic family in $\cL(C^\infty(\T^n_\theta))$ and (uniquely extends to) a holomorphic family in $\cL(\scD'(\T^n_\theta))$. 
 
 \item If $\Re w(z)\leq m$ on $\Omega$, then $P(z)$, ${z\in \Omega}$,  is a holomorphic family in $\cL(W_2^{s+m}(\T^n_\theta), W_2^s(\T^n_\theta))$ for every $s\in \R$.
 
 \item Let $K\subseteq \Omega$ be compact, and set $w_K=\max\{\Re w(z); \ z\in K\}$. Then $P(z)$, $z\in K$, is a strongly continuous bounded family in $\sL(W_2^{s+w_K}(\T^n_\theta), W_2^s(\T^n_\theta))$ for every $s\in \R$. 
\end{enumerate}
\end{proposition}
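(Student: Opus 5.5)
The plan is to pass from holomorphic families of classical symbols to holomorphic families of standard symbols, and then to operators via the continuity of the quantization map $\rho\mapsto P_\rho$ on standard symbol spaces. Part~(2) is the base case. Since $\Re w(z)\leq m$, Remark~\ref{rmk:Hol.holfamily-classical-standard} shows that $P(z)=P_\rho(z)$ with $\rho(z)$ a holomorphic family in $\stS^m(\T^n_\theta\times\R^n)$. By Proposition~\ref{prop:Sob-Mapping.rho-on-Hs}, and the estimates in~\cite{HLP:Part2} behind it, the quantization map $\stS^m(\T^n_\theta\times\R^n)\ni\rho\mapsto P_\rho\in\sL(W_2^{s+m}(\T^n_\theta),W_2^s(\T^n_\theta))$ is linear. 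It is also continuous: the operator norm is bounded by a symbol seminorm $p^{(m)}_N(\rho)$. One can check this directly with the toroidal formula~(\ref{eq:PsiDOs.toroidal-def}) combined with the Sobolev norms, or simply quote it. Composing a holomorphic map with a continuous linear map preserves holomorphy (the Remark after Definition~\ref{def:Holp.holomorphic-maps}), and this gives~(2).

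For~(1), holomorphy is local, so by Remark~\ref{rmk:Hol.reduction-precompact} and Lemma~\ref{lem:hol.local-global} we may shrink $\Omega$ to a precompact neighbourhood of any point, on which $\Re w(z)\leq m$. Then $\rho(z)$ is holomorphic in $\stS^m$. We then need the quantization map $\stS^m\to\sL(C^\infty(\T^n_\theta))$, and $\stS^m\to\sL(\scD'(\T^n_\theta))$ with the strong topologies, to be continuous. For $C^\infty$ this follows from~(2): $C^\infty(\T^n_\theta)=\bigcap_s W_2^s$ carries the projective topology, and every seminorm of $P_\rho u$ is dominated by $p_N^{(m)}(\rho)\|u\|_{s+m}$. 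For $\scD'$, bounded subsets of $\scD'(\T^n_\theta)$ lie in, and are bounded in, some $W_2^{t}$. This holds because $C^\infty(\T^n_\theta)$ is a Fréchet–Montel (nuclear) space, which lets us identify $\scD'$ with the inductive limit of the $W_2^t$. Uniform convergence on bounded sets therefore reduces to the Sobolev estimates of~(2). If one prefers to avoid topological subtleties, an alternative is weak holomorphy: pair $P(z)u$ with test elements and invoke Corollary~\ref{cor:Hol.hol-separating}. Uniqueness of the extension to $\scD'$ is Proposition~\ref{Adjoints.PsiDOs-extension}.

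For~(3), cover $K$ by finitely many open discs with compact closures in $\Omega$. On a slightly larger compact neighbourhood $K'$ we have $\Re w\leq w_K+\epsilon$, but we want the exact order $w_K$. So instead we use Remark~\ref{rmk:Hol.holfamily-bounded-symbols}: $\rho(z)$, $z\in K$, is a \emph{bounded} family in $\stS^{w_K}$. The norm bound from~(2) then gives uniform boundedness in $\sL(W_2^{s+w_K},W_2^s)$. For strong continuity at $z_0\in K$, take $u\in C^\infty(\T^n_\theta)$. Then $P(z)u\to P(z_0)u$ in $C^\infty$ by~(1), hence in $W_2^s$. Since $C^\infty$ is dense in $W_2^{s+w_K}$ and the family is uniformly bounded, an $\epsilon/3$ argument yields strong continuity on all of $W_2^{s+w_K}$.

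The main obstacle is the continuity of quantization into the $\scD'$ strong topology in~(1), together with not losing the exact order $w_K$ in~(3) (norm continuity may fail there, which is why only strong continuity is claimed). For the first I would try the bounded-sets-in-Sobolev-spaces reduction.
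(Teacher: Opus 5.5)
Your proposal is correct and follows the paper's proof. You reduce to $\Re w(z)\leq m$, regard $\rho(z)$ as a holomorphic family in $\stS^m(\T^n_\theta\times\R^n)$, and compose with the continuous quantization maps; for~(3) you combine boundedness of $\rho(z)$, $z\in K$, in $\stS^{w_K}(\T^n_\theta\times\R^n)$ with a density argument. The differences are minor. The paper simply cites~\cite{HLP:Part2} for continuity of quantization into $\sL(C^\infty(\T^n_\theta))$ and $\sL(\scD'(\T^n_\theta))$ instead of deriving it from the Sobolev bounds. In~(3) it takes $W_2^{s+w'}(\T^n_\theta)$ as the dense subspace, using norm-continuity on a slightly larger neighbourhood where $\Re w\leq w'$, whereas you use $C^\infty(\T^n_\theta)$.
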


\begin{proof}
In the first part, $\sL(C^\infty(\T^n_\theta))$ and $\cL(\scD'(\T^n_\theta))$ are endowed with their respective strong topologies, i.e., the topologies of uniform convergence on bounded sets. Write $P(z)=P_\rho(z)$, $z\in \Omega$, where $\rho(z)$, ${z\in \Omega}$,  is a holomorphic family in $S^\bt(\T^n_\theta\times \R^n)$. Without any loss of generality we may assume $\Re w(z)\leq m$ on $\Omega$ (\emph{cf}.~Remark~\ref{rmk:Hol.reduction-precompact}). In this case $\rho(z)$, ${z\in \Omega}$,  is a holomorphic family in $\bS^m(\T^n_\theta\times \R^n)$ (\emph{cf}.\ Remark~\ref{rmk:Hol.holfamily-classical-standard}). The assignment $\rho \rightarrow P_\rho$ gives rise to continuous linear maps from $\bS^m(\T^n_\theta \times \R^n)$ to $\sL(C^\infty(\T^n_\theta))$, $\sL(\scD'(\T^n_\theta))$ and $\sL(W_2^{s+m}(\T^n_\theta), W_2^s(\T^n_\theta))$, $s\in \R$ (see~\cite{HLP:Part2}). It then follows that $P_\rho(z) = P(z)$, ${z\in \Omega}$,  is a holomorphic family in any of those locally convex spaces. This gives the first two parts. 

It remains to prove the last part. Let $K\subseteq \Omega$ be compact, and set $w_K=\max\{\Re w(z); \ z\in K\}$.  By Remark~\ref{rmk:Hol.holfamily-bounded-symbols} the family $\rho(z)$, $z\in K$, is bounded in $\bS^{w_K}(\T^n_\theta\times \R^n)$. Given any $s\in \R$, the continuity of the linear map $\rho \rightarrow P_\rho$ from $\bS^{w_K}(\T^n_\theta\times\R^n)$ to  $\sL(W_2^{s+w_K}(\T^n_\theta), W_2^s(\T^n_\theta))$ then ensures that  $P(z)$, $z\in K$, is a bounded family in $\sL(W_2^{s+w_K}(\T^n_\theta), W_2^s(\T^n_\theta))$. 

Let $\Omega'$ be a bounded open set such that $K\subseteq \Omega'\subseteq \overline{\Omega}'\subseteq \Omega$. Thus, there is $w'\geq w_K$ such that $\Re w(z)\leq w'$ on $\Omega'$. By the first part of the proof $P(z)$, $z\in \Omega'$, is a holomorphic family in 
$\sL(W_2^{s+w'}(\T^n_\theta), W_2^s(\T^n_\theta))$, and so $P(z)$, $z\in K$, is a norm-continuous family in $\sL(W_2^{s+w'}(\T^n_\theta), W_2^s(\T^n_\theta))$. As $W_2^{s+w'}(\T^n_\theta)$ is dense in $W_2^{s+w_K}(\T^n_\theta)$ and $P(z)$, $z\in K$, is bounded in $\sL(W_2^{s+w_K}(\T^n_\theta), W_2^s(\T^n_\theta))$, it follows that $P(z)$, $z\in K$, is a strongly continuous family in 
$\sL(W_2^{s+w_K}(\T^n_\theta), W_2^s(\T^n_\theta))$. This proves the last part. 
\end{proof}

\begin{proposition}\label{prop:Hol.propertiesP(z)-sLp} 
Let $P(z)$, ${z\in \Omega}$,  be a holomorphic family in $\Psi^\bt(\T^n_\theta)$ of order $w(z)$. 
\begin{enumerate}
 \item If $\Re w(z)<m$ (resp., $\Re w(z)\leq m$) on $\Omega$ with $m<0$, then $P(z)$, ${z\in \Omega}$,  is a holomorphic family in $\sL_{p}$ (resp., $\sL_{p,\infty}$) with 
$p=n|m|^{-1}$. 
 
 \item   Let $K\subseteq \Omega$ be compact such that $w_K:=\max\{\Re w(z); \ z\in K\}<0$, and set $p=n|w_K|^{-1}$. Then the family $P(z)$, $z\in K$, is bounded in $\sL_{p,\infty}$, and hence is bounded in $\sL_r$ for all $r>p$. 
\end{enumerate}
\end{proposition}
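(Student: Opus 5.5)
We follow the scheme of the proof of Proposition~\ref{prop:Hol.propertiesP(z)-W2s}. First write $P(z)=P_\rho(z)$ with $\rho(z)$ a holomorphic family in $S^\bt(\T^n_\theta\times\R^n)$ of order $w(z)$. The main tool is the continuity of the quantization map $\rho\rightarrow P_\rho$ from $\stS^{m}(\T^n_\theta\times\R^n)$ into $\sL_{p,\infty}$, $p=n|m|^{-1}$. Proposition~\ref{prop:PsiDOs.Schatten} only gives membership, so we upgrade it to continuity in one of two ways. The first is the closed graph theorem for $F$-spaces: $\sL_{p,\infty}$ is quasi-Banach, $\stS^m$ is Fréchet, and the graph is closed because both spaces embed continuously into $\sL(L_2(\T^n_\theta))$, where quantization is already continuous. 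The second is a direct factorization $P_\rho=P_\rho\Lambda^{-m}\cdot\Lambda^{m}$. Here $\rho\mapsto P_\rho\Lambda^{-m}$ is continuous into $\sL(L_2)$, because its symbol $\rho\sharp\brak{\xi}^{-m}$ depends continuously on $\rho$ in $\stS^0$ (Lemma~\ref{prop:Composition.sharp-continuity-standard-symbol}). Also $\Lambda^{m}\in\sL_{p,\infty}$, since its singular values are $(1+|k|^2)^{m/2}$, and $\sL_{p,\infty}$ is a two-sided ideal with $\|AB\|_{p,\infty}\leq\|A\|\,\|B\|_{p,\infty}$. The factorization is cleaner, so we use it.

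For part~(1) in the case $\Re w(z)\leq m$: Remark~\ref{rmk:Hol.holfamily-classical-standard} shows that $\rho(z)$ is holomorphic in $\stS^m(\T^n_\theta\times\R^n)$. Composing with the continuous linear map $\rho\mapsto P_\rho\in\sL_{p,\infty}$ then gives a holomorphic family in $\sL_{p,\infty}$. We must check that holomorphy is preserved under continuous linear maps into a quasi-Banach space. This is immediate from Definition~\ref{def:Holp.holomorphic-maps}, which uses only difference quotients and not local convexity.

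For the case $\Re w(z)<m$ and $\sL_p$, argue locally; holomorphy is a local property. Near any $z_0$ pick $m'<m$ with $\Re w(z)\leq m'$ on a neighbourhood $\Omega'$ of $z_0$. By the previous step, $P(z)$ is holomorphic in $\sL_{p',\infty}$ on $\Omega'$, where $p'=n|m'|^{-1}<p$. The continuous inclusion $\sL_{p',\infty}\hookrightarrow\sL_p$ (Appendix~\ref{sec:quasi+Schatten}) then gives holomorphy in $\sL_p$ on $\Omega'$.

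For part~(2): by Remark~\ref{rmk:Hol.holfamily-bounded-symbols}, $\rho(z)$, $z\in K$, is bounded in $\stS^{w_K}(\T^n_\theta\times\R^n)$. A continuous linear map into a quasi-Banach space sends bounded sets to bounded sets, so $P(z)$, $z\in K$, is bounded in $\sL_{p,\infty}$ with $p=n|w_K|^{-1}$. Boundedness in $\sL_r$ for $r>p$ follows from the continuous inclusion $\sL_{p,\infty}\subseteq\sL_r$.

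The main obstacle is the continuity of quantization into $\sL_{p,\infty}$. If the factorization route has a gap, the fallback is the closed graph theorem. The closed-graph version for $F$-spaces applies since $\sL_{p,\infty}$ is complete and metrizable.
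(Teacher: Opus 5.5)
Your proof is correct and follows the paper's argument step for step. You compose the holomorphic family $\rho(z)$ in $\stS^m(\T^n_\theta\times\R^n)$ with the continuous quantization map into $\sL_{p,\infty}$. For the strict-inequality case you choose $m'<m$ locally and use the continuous inclusion $\sL_{p',\infty}\subseteq\sL_p$. For part~(2) you use boundedness of $\rho(z)$, $z\in K$, in $\stS^{w_K}(\T^n_\theta\times\R^n)$.

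The only difference is that the paper cites~\cite{HLP:Part2} for continuity of $\rho\mapsto P_\rho$ into $\sL_{p,\infty}$, whereas your factorization $P_\rho=(P_\rho\Lambda^{-m})\Lambda^{m}$ proves it directly. Your ordering $p'<p$ is the correct one; the paper's text writes $q>p$, which is a slip.
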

\begin{proof}
Write $P(z)=P_\rho(z)$, $z\in \Omega$, where $\rho(z)$, ${z\in \Omega}$,  is a holomorphic family in $S^\bt(\T^n_\theta\times \R^n)$ of order $w(z)$. Suppose that $\Re w(z)\leq m$ on $\Omega$.  As in the proof of Proposition~\ref{prop:Hol.propertiesP(z)-W2s},  in this case $\rho(z)$, ${z\in \Omega}$,  is a holomorphic family in $\bS^m(\T^n_\theta\times \R^n)$.  Set $p=n|m|^{-1}$. As the quantization map $\rho \rightarrow P_\rho$ is continuous from $\bS^m(\T^n_\theta\times \R^n)$ to $\sL_{p,\infty}$ (\emph{cf}.~\cite{HLP:Part2}), we deduce that $P(z)$, ${z\in \Omega}$,  is a holomorphic family in $\sL_{p,\infty}$. 

Suppose now that $\Re w(z)<m$ on $\Omega$. Near any $z_0\in \Omega$ we can find a bounded open neighborhood $\Omega'$ of $z_0$ such that $\Re w(z)\leq m'<m$ on $\Omega'$. Set $q=n|m'|^{-1}$. Note that $q> p$, and so $\sL_{q,\infty}$ embeds continuously into $\sL_p$. By the first part of the proof 
 $P(z)$, $z\in \Omega'$, is a holomorphic family in $\sL_{q,\infty}$. Thus, $P(z)$ is a holomorphic family in $\sL_p$ near every point $z_0\in \Omega$, and hence $P(z)$, ${z\in \Omega}$,  is a holomorphic family in $\sL_{p}$. 
 
Finally,  let $K\subseteq \Omega$ be a compact set such that $w_K:=\max\{\Re w(z); \ z\in K\}<0$,  and set  $p=n|w_K|^{-1}$. Then $\rho(z)$, $z\in K$, is a bounded family in $\bS^{w_K}(\T^n_\theta\times \R^n)$, and so $P(z)$, $z\in K$, is a bounded  family in $\sL_{p,\infty}$.  
\end{proof}

In what follows we equip $\sL(\scD'(\T^n_\theta), C^\infty(\T^n_\theta))$ with its strong topology, i.e., the topology of uniform convergence on bounded subsets of $\scD'(\T^n_\theta)$. 

\begin{proposition}
 Let $\rho(z)$, ${z\in \Omega}$,  be a holomorphic family in $S^\bt(\T^n_\theta \times \R^n)$. The following are equivalent:
\begin{enumerate}
 \item[(i)] $P_\rho(z)$, ${z\in \Omega}$,  is a holomorphic family in $\sL(\scD'(\T^n_\theta), C^\infty(\T^n_\theta))$. 
 
 \item[(ii)] $\rho(z)$, ${z\in \Omega}$,  is a holomorphic family in $\stS^{-\infty}(\T^n_\theta\times\R^{n})$. 
 
 \item[(iii)] $P_\rho(z)\in \Psi^{-\infty}(\T^n_\theta)$ for all $z\in \Omega$. 
\end{enumerate}
\end{proposition}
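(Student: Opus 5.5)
I would prove the cycle (ii)$\Rightarrow$(i)$\Rightarrow$(iii)$\Rightarrow$(ii).

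\emph{(ii)$\Rightarrow$(i).} Smoothing operators are exactly the \psidos\ with symbols in $\stS^{-\infty}(\T^n_\theta\times\R^n)$. I expect the quantization map $\rho\rightarrow P_\rho$ to be continuous from the Fréchet space $\stS^{-\infty}(\T^n_\theta\times\R^n)$ to $\sL(\scD'(\T^n_\theta),C^\infty(\T^n_\theta))$. To see this, use the toroidal formula~(\ref{eq:PsiDOs.toroidal-def}), $P_\rho u=\sum_k u_k\rho(k)U^k$. For $u$ in a bounded set of $\scD'(\T^n_\theta)$, the coefficients satisfy $|u_k|\leq C(1+|k|)^N$ for some fixed $N$. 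Each seminorm $\|\delta^\alpha(\cdot)\|$ of $P_\rho u$ is then bounded by a finite symbol seminorm of $\rho$, because $\delta^\alpha(\rho(k)U^k)$ expands by Leibniz into terms $\delta^{\alpha'}\rho(k)\,k^{\alpha''}U^k$ and $\|\delta^{\alpha'}\rho(k)\|\leq C(1+|k|)^{-M}$ for every $M$. Composing a holomorphic map with a continuous linear map preserves holomorphy, so (i) follows.

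\emph{(i)$\Rightarrow$(iii).} This is immediate: for each fixed $z$, $P_\rho(z)$ maps $\scD'(\T^n_\theta)$ continuously into $C^\infty(\T^n_\theta)$, so it is smoothing, hence lies in $\Psi^{-\infty}(\T^n_\theta)$.

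\emph{(iii)$\Rightarrow$(ii).} This is the substantive step. Write $\rho(z)\sim\sum_j\rho_j(z)$ in the sense of~(\ref{eq:Hol.hol-family-classical-symbol-estimate}). Since $P_\rho(z)\in\Psi^{-\infty}(\T^n_\theta)$, Remark~\ref{rem:PsiDOs.symbol-uniqueness} (uniqueness of homogeneous components) gives $\rho_j(z)=0$ for every $j$ and every $z$. The estimates~(\ref{eq:Hol.hol-family-classical-symbol-estimate}) then say that for every $N$, compact $K$, and $\alpha,\beta$,
\[
\|\delta^\alpha\partial_\xi^\beta\rho(z)(\xi)\|\leq C_{NK\alpha\beta}|\xi|^{\Re w(z)-N-|\beta|}
\]
on $K\times\U^n$. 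Since $\Re w$ is bounded on $K$, this is $O(|\xi|^{-N'})$ uniformly for $z\in K$, for every $N'$. On the unit ball, local boundedness of $\rho(z)$ in $C^\infty(\T^n_\theta\times\R^n)$ supplies the needed bounds. Hence $\rho(z)$ is locally bounded in each $\stS^{-N}(\T^n_\theta\times\R^n)$, and by Lemma~\ref{lem:Hol.loc-bdd-hol} it is holomorphic in each $\stS^{-N}$.

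Holomorphy in the projective limit $\stS^{-\infty}=\bigcap_N\stS^{-N}$ follows because its topology is generated by the seminorms of all the $\stS^{-N}$, and the difference quotients converge in each of them to the same limit (the derivative in $C^\infty(\T^n_\theta\times\R^n)$).

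The main obstacle I anticipate is the continuity claim in (ii)$\Rightarrow$(i), namely controlling the strong topology uniformly over bounded subsets of $\scD'(\T^n_\theta)$. I would handle it with the polynomial-growth description of bounded sets in $\scD'(\T^n_\theta)$, which I expect to follow from the Sobolev-scale description $\scD'(\T^n_\theta)=\bigcup_s W_2^s(\T^n_\theta)$ together with Banach--Steinhaus.
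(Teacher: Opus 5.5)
Your proposal is correct and follows the same cycle as the paper. Implication (i)$\Rightarrow$(iii) is immediate, (ii)$\Rightarrow$(i) comes from continuity of the quantization map $\stS^{-\infty}(\T^n_\theta\times\R^n)\rightarrow\sL(\scD'(\T^n_\theta),C^\infty(\T^n_\theta))$, and (iii)$\Rightarrow$(ii) comes from the vanishing of all homogeneous components $\rho_j(z)$, which turns the estimates~(\ref{eq:Hol.hol-family-classical-symbol-estimate}) into uniform rapid decay. The paper simply cites~\cite[Proposition~5.30]{LP:Part1} for the continuity you verify by hand (via the toroidal formula and equicontinuity of bounded subsets of $\scD'(\T^n_\theta)$), and it compresses into one line the local-boundedness argument you spell out (Remark~\ref{rmk:Hol.hol-standard-symb-U} plus Lemma~\ref{lem:Hol.loc-bdd-hol}).
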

\begin{proof}
 It is immediate that (i) implies (iii). Furthermore, as the quantization map $\rho \rightarrow P_\rho$ is continuous from $\bS^{-\infty}(\T^n_\theta\times\R^{n})$ to $\sL(\scD'(\T^n_\theta), C^\infty(\T^n_\theta))$ (see~\cite[Proposition 5.30]{LP:Part1}), it is clear that (ii) implies~(i). 
 
It remains to show that (iii) implies (ii). Suppose that $P_\rho(z)\in \Psi^{-\infty}(\T^n_\theta)$ for all $z\in \Omega$. We have $\rho(z)\sim \sum_{j\geq 0} \rho_j(z)$ in the sense of~(\ref{eq:Hol.hol-family-classical-symbol-estimate}), where $\rho_j(z)\in S_{w(z)-j}(\T^n_\theta\times\R^{n})$ and $w(z)$ is a holomorphic function on $\Omega$. Given any $z\in \Omega$, the fact that  $P_{\rho}(z)$ is a smoothing operator ensures that $\rho(z)\in \bS^{-\infty}(\T^n_\theta\times\R^{n})$, and so $\rho_j(z)=0$ for all $j\geq 0$. Thus, $\rho(z)\sim 0$ in the sense of~(\ref{eq:Hol.hol-family-classical-symbol-estimate}). This implies that $\rho(z)$, ${z\in \Omega}$,  is a holomorphic family in $\stS^{-\infty}(\T^n_\theta\times\R^{n})$. This shows that (iii) implies (ii). 
\end{proof}
In particular, if $\rho_1(z)$, ${z\in \Omega}$, and $\rho_2(z)$, ${z\in \Omega}$, are holomorphic families in $S^\bt(\T^n_\theta\times\R^{n})$ such that $P_{\rho_1}(z)=P_{\rho_2}(z)$ for all $z\in \Omega$, then $\rho_1(z)-\rho_2(z)$, ${z\in \Omega}$, is a holomorphic family in $\bS^{-\infty}(\T^n_\theta\times\R^{n})$. 

In what follows given holomorphic families of standard or classical symbols $\rho_1(z)$, ${z\in \Omega}$,  and $\rho_2(z)$, ${z\in \Omega}$,  we set
\begin{equation*}
 \rho_1\sharp\rho_2(z)=\rho_1(z)\sharp \rho_2(z), \qquad z\in \Omega. 
\end{equation*}
By arguing along the same lines as the proof of~\cite[Proposition~5.6]{LP:Part1} we obtain the following result. 

\begin{proposition}\label{prop:Hol.hol-sharp-product}
 Given holomorphic families $\rho_j(z)$, $z\in \Omega$, in $\stS^{m_j}(\T^n_\theta\times\R^{n})$, $j=1,2$, the composition
 $\rho_1\sharp\rho_2(z)$, ${z\in \Omega}$,  is a holomorphic family in $\stS^{m_1+m_2}(\T^n_\theta\times\R^{n})$. Moreover, in the sense of~(\ref{eq:Hol.asymptotic-standard}) we have
$\rho_1\sharp\rho_2(z) \sim \sum\frac{1}{\alpha !}\partial_\xi^\alpha\rho_1(z)\delta^\alpha\rho_2(z)$. 
\end{proposition}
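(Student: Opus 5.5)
The argument is the holomorphic counterpart of Lemma~\ref{prop:Composition.sharp-continuity-standard-symbol}. Its content is that the continuity estimates behind that lemma are uniform, so they carry over to families. I would not redo the oscillating-integral analysis. I would rely on the following fact, implicit in~\cite{HLP:Part2} and used in~\cite[Proposition~5.6]{LP:Part1}: the sharp product
\begin{equation*}
\stS^{m_1}(\T^n_\theta\times\R^n)\times \stS^{m_2}(\T^n_\theta\times\R^n)\ni(\rho_1,\rho_2)\longrightarrow \rho_1\sharp\rho_2\in \stS^{m_1+m_2}(\T^n_\theta\times\R^n)
\end{equation*}
is a continuous bilinear map. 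In addition, for every $N\geq 1$ the remainder map
\begin{equation*}
R_N(\rho_1,\rho_2):=\rho_1\sharp\rho_2-\sum_{|\alpha|<N}\frac{1}{\alpha!}\partial_\xi^\alpha\rho_1\,\delta^\alpha\rho_2
\end{equation*}
is continuous bilinear into $\stS^{m_1+m_2-N}(\T^n_\theta\times\R^n)$. Both statements come from the integral form of the Taylor remainder in $\eta$ in~(\ref{eq:Composition.symbol-sharp}), whose symbol seminorms are bounded by finitely many seminorms of $\rho_1$ and $\rho_2$.

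Granting this, the proof reduces to the following general principle. If $B:\sE_1\times\sE_2\to\sE$ is a continuous bilinear map between Fréchet spaces and $u_j\in\Hol(\Omega;\sE_j)$, then $B(u_1(z),u_2(z))$ is holomorphic. To see this, write
\begin{equation*}
B(u_1(z+h),u_2(z+h))-B(u_1(z),u_2(z))=B(u_1(z+h)-u_1(z),u_2(z+h))+B(u_1(z),u_2(z+h)-u_2(z)),
\end{equation*}
divide by $h$, and use joint continuity of $B$; the convergent difference quotients are bounded. Applied to $\sharp$, this shows that $\rho_1\sharp\rho_2(z)$ is holomorphic in $\stS^{m_1+m_2}$.

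For the asymptotic expansion, each term $\frac1{\alpha!}\partial_\xi^\alpha\rho_1(z)\delta^\alpha\rho_2(z)$ is holomorphic in $\stS^{m_1+m_2-|\alpha|}$ by Remark~\ref{rmk:Hol.hol-standard-symb-U}(b)--(c). Applying the same principle to $R_N$ shows that $R_N(\rho_1(z),\rho_2(z))$ is holomorphic, and hence locally bounded, in $\stS^{m_1+m_2-N}$. Grouping the terms by $|\alpha|=j$ then gives~(\ref{eq:Hol.asymptotic-standard}).

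The main obstacle is the joint continuity of $\sharp$ and of $R_N$ in the symbol topologies. The cited lemma gives the symbol class of the output, not explicitly its continuity. If this cannot be quoted, I would prove it by the standard route. First insert a cutoff splitting the region $|\eta|\leq\frac12\langle\xi\rangle$ from its complement. On the first region, apply Taylor's formula in $\eta$ and integrate by parts in $t$ to gain $\langle\eta\rangle^{-M}$. On the complement, integrate by parts in $t$ and $\eta$ to get rapid decay. Each resulting bound is a product of seminorms $p^{(m_1)}_{N'}(\rho_1)\,p^{(m_2)}_{N'}(\rho_2)$, which is exactly the required bilinear continuity.
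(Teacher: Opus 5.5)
Your proposal is correct and follows essentially the same route as the paper. The paper only says the result follows "along the same lines as the proof of \cite[Proposition~5.6]{LP:Part1}", and that proof rests on the same two facts you use: the continuity of the bilinear maps $(\rho_1,\rho_2)\mapsto\rho_1\sharp\rho_2$ and of the Taylor remainders into $\stS^{m_1+m_2-N}(\T^n_\theta\times\R^n)$, and the fact that continuous bilinear maps send holomorphic families to holomorphic families (which gives local boundedness of the remainder, i.e.~(\ref{eq:Hol.asymptotic-standard})).
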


We have the following version of this result for holomorphic families of classical symbols. 

\begin{proposition} \label{prop:Hol.composition-of-symbols}
  Let $\rho(z)$, ${z\in \Omega}$,  and $\sigma(z)$, ${z\in \Omega}$,  be holomorphic families in $S^\bt(\T^n_\theta\times\R^{n})$ with respective order functions $w(z)$ and $u(z)$. Then $\rho\sharp \sigma(z)$, ${z\in \Omega}$,  is a holomorphic family in $S^\bt(\T^n_\theta\times\R^{n})$ of order $w(z)+u(z)$. 
 \end{proposition}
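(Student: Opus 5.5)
The plan is to reduce to the classical symbol expansion of the composition and then apply Lemma~\ref{lem:Hol.asymptotic-classical-family}. By Remark~\ref{rmk:Hol.reduction-precompact} and Lemma~\ref{lem:hol.local-global}, holomorphy is a local property in $z$, so it suffices to work near an arbitrary point $z_0\in\Omega$. After shrinking $\Omega$ to a bounded neighborhood, we may assume $\Re w(z)\leq m$ and $\Re u(z)\leq p$ on $\Omega$. Remark~\ref{rmk:Hol.holfamily-classical-standard} then shows that $\rho(z)$ and $\sigma(z)$ are holomorphic families in $\stS^m(\T^n_\theta\times\R^n)$ and $\stS^p(\T^n_\theta\times\R^n)$. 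Proposition~\ref{prop:Hol.hol-sharp-product} therefore gives that $\rho\sharp\sigma(z)$ is a holomorphic family in $\stS^{m+p}(\T^n_\theta\times\R^n)$, and in particular in $C^\infty(\T^n_\theta\times\R^n)$. It also gives the expansion
\begin{equation*}
 \rho\sharp\sigma(z)\sim \sum_{\alpha}\frac{1}{\alpha!}\partial_\xi^\alpha\rho(z)\,\delta^\alpha\sigma(z)
\end{equation*}
in the sense of~(\ref{eq:Hol.asymptotic-standard}).

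Next we group the terms by $|\alpha|=\ell$ and set
\begin{equation*}
 \rho^{(\ell)}(z):=\sum_{|\alpha|=\ell}\frac{1}{\alpha!}\partial_\xi^\alpha\rho(z)\,\delta^\alpha\sigma(z).
\end{equation*}
By Remark~\ref{rmk:Hol.derivatives-hol}, $\partial_\xi^\alpha\rho(z)$ is a holomorphic family in $S^\bt(\T^n_\theta\times\R^n)$ of order $w(z)-|\alpha|$, and $\delta^\alpha\sigma(z)$ is one of order $u(z)$. Lemma~\ref{lem:Hol.holfamily-classical-product} then shows that each $\rho^{(\ell)}(z)$ is a holomorphic family in $S^\bt(\T^n_\theta\times\R^n)$ of order $w(z)+u(z)-\ell$. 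Since the sum over $\alpha$ is finite for each fixed $\ell$, regrouping is harmless, and we get $\rho\sharp\sigma(z)\sim\sum_\ell\rho^{(\ell)}(z)$ in the sense of~(\ref{eq:Hol.asymptotic-standard}). Here the order of the $\ell$-th term is bounded by $m+p-\ell$, so the bounds match.

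The main obstacle is an indexing mismatch in the hypotheses of Lemma~\ref{lem:Hol.asymptotic-classical-family}. That lemma uses remainders in $\stS^{M-N}$ with $M$ a common real bound on the orders, while Proposition~\ref{prop:Hol.hol-sharp-product} gives remainders in $\stS^{m+p-N}$. We take $M:=m+p$, which satisfies $\Re(w(z)+u(z))\leq M$, so the two formats agree. We also check that the remainder statement in Proposition~\ref{prop:Hol.hol-sharp-product} is the locally bounded one of~(\ref{eq:Hol.asymptotic-standard}). This follows from the uniform-on-compacts estimates of the oscillatory-integral proof in~\cite[Proposition~5.6]{LP:Part1}, which is how that proposition is stated. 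Applying Lemma~\ref{lem:Hol.asymptotic-classical-family} with order function $w(z)+u(z)$ then shows that $\rho\sharp\sigma(z)$ is a holomorphic family in $S^\bt(\T^n_\theta\times\R^n)$ of order $w(z)+u(z)$ near $z_0$. Since $z_0$ was arbitrary, this holds on all of $\Omega$.
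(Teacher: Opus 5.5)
Your proposal is correct and follows essentially the same route as the paper: localize so that $\Re w(z)\leq m$ and $\Re u(z)\leq p$, apply Proposition~\ref{prop:Hol.hol-sharp-product} to get $\rho\sharp\sigma(z)\sim\sum\frac{1}{\alpha!}\partial_\xi^\alpha\rho(z)\delta^\alpha\sigma(z)$, use Remark~\ref{rmk:Hol.derivatives-hol} and Lemma~\ref{lem:Hol.holfamily-classical-product} on each term, and conclude with Lemma~\ref{lem:Hol.asymptotic-classical-family}. Your regrouping by $|\alpha|=\ell$ with common bound $M=m+p$ only makes explicit the bookkeeping that the paper leaves implicit.
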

\begin{proof}
  Without any loss of generality we may assume that $\Re w(z)\leq m$ and $\Re u(z) \leq p$ (\emph{cf}.~Remark~\ref{rmk:Hol.reduction-precompact}). In this case $\rho(z)$, ${z\in \Omega}$,  and $\sigma(z)$, ${z\in \Omega}$,  are holomorphic families in $\stS^m(\T^n_\theta\times\R^{n})$ and $\bS^p(\T^n_\theta\times\R^{n})$, respectively. Proposition~\ref{prop:Hol.hol-sharp-product} then ensures that $\rho\sharp\sigma(z)$, ${z\in \Omega}$,  is a holomorphic family in $\stS^{m+p}(\T^n_\theta\times\R^{n})$ and we have $\rho\sharp\sigma(z) \sim \sum\frac{1}{\alpha !}\partial_\xi^\alpha\rho(z)\delta^\alpha\sigma(z)$ in the sense of~(\ref{eq:Hol.asymptotic-standard}). By Remark~\ref{rmk:Hol.derivatives-hol} and Lemma~\ref{lem:Hol.holfamily-classical-product} each summand $\partial_\xi^\alpha\rho(z)\delta^\alpha\sigma(z)$, ${z\in \Omega}$,  is a holomorphic family in $S^\bt(\T^n_\theta\times\R^{n})$ of order $w(z)+u(z)-|\alpha|$. It then follows from Lemma~\ref{lem:Hol.asymptotic-classical-family} that $\rho\sharp\sigma(z)$, ${z\in \Omega}$,  is a holomorphic family in $S^\bt(\T^n_\theta\times\R^{n})$ of order $w(z)+u(z)$. 
\end{proof}

\begin{corollary} \label{cor:Hol.composition-of-psidos}
Let $P(z)$, ${z\in \Omega}$,  and $Q(z)$, ${z\in \Omega}$,  be holomorphic families in $\Psi^\bt(\T^n_\theta)$ of respective orders $w(z)$ and $u(z)$. Then $P(z)Q(z)$, ${z\in \Omega}$,  is a holomorphic family in $\Psi^\bt(\T^n_\theta)$ of order $w(z)+u(z)$. 
\end{corollary}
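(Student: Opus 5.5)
The plan is to reduce the statement to the corresponding result for symbols, which has just been established as Proposition~\ref{prop:Hol.composition-of-symbols}, together with the composition formula $P_{\rho_1}P_{\rho_2}=P_{\rho_1\sharp\rho_2}$ of Lemma~\ref{prop:Composition.sharp-continuity-standard-symbol}.

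\textbf{Step 1.} By definition of holomorphic families in $\Psi^\bt(\T^n_\theta)$, there are holomorphic families $\rho(z)$ and $\sigma(z)$, $z\in\Omega$, in $S^\bt(\T^n_\theta\times\R^n)$ of respective orders $w(z)$ and $u(z)$. They satisfy
\begin{equation*}
P(z)=P_\rho(z), \qquad Q(z)=P_\sigma(z), \qquad z\in \Omega.
\end{equation*}

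\textbf{Step 2.} For each fixed $z\in\Omega$, both $\rho(z)$ and $\sigma(z)$ are standard symbols, by Remark~\ref{rmk:Symbols.classical-inclusion}. So Lemma~\ref{prop:Composition.sharp-continuity-standard-symbol} gives
\begin{equation*}
P(z)Q(z)=P_{\rho(z)}P_{\sigma(z)}=P_{\rho(z)\sharp\sigma(z)}=P_{\rho\sharp\sigma}(z).
\end{equation*}
This identity holds pointwise in $z$ and needs no uniformity.

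\textbf{Step 3.} Proposition~\ref{prop:Hol.composition-of-symbols} shows that $\rho\sharp\sigma(z)$, $z\in\Omega$, is a holomorphic family in $S^\bt(\T^n_\theta\times\R^n)$ of order $w(z)+u(z)$. Combined with Step~2, this says exactly that $P(z)Q(z)$, $z\in\Omega$, is a holomorphic family in $\Psi^\bt(\T^n_\theta)$ of order $w(z)+u(z)$.

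There is no real obstacle here. All the analytic content (holomorphy of $\sharp$ and uniform classical expansions) is already packaged in Proposition~\ref{prop:Hol.composition-of-symbols}. The only point needing care is that the order bounds $\Re w\le m$ and $\Re u\le p$ used in that proposition are local. If they are wanted explicitly, one can invoke Lemma~\ref{lem:hol.local-global}(iii) to argue near each $z_0\in\Omega$.
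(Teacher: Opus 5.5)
Your proposal is correct and follows the same route as the paper, which states this corollary without a separate proof. The corollary comes from applying Proposition~\ref{prop:Hol.composition-of-symbols} to the symbol families $\rho(z)$ and $\sigma(z)$, together with the pointwise identity $P_{\rho(z)}P_{\sigma(z)}=P_{\rho(z)\sharp\sigma(z)}$ from Lemma~\ref{prop:Composition.sharp-continuity-standard-symbol}. Your closing caveat about local order bounds is unnecessary: Proposition~\ref{prop:Hol.composition-of-symbols} is already stated on an arbitrary open set $\Omega$, and its proof does that localization itself via Remark~\ref{rmk:Hol.reduction-precompact}.
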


\subsection{Holomorphic gaugings} 
Holomorphic gaugings of symbols and \psidos\ are important ingredients in the construction of the canonical trace on non-integer-order \psidos\ (see~\cite{LNP:TAMS16, Po:SIGMA20}). The terminology \emph{holomorphic gauging} was coined by Guillemin~\cite{Gu:AIM93, Gu:JFA93}. 

\begin{definition}
 If $\rho(\xi)\in S^q(\T^n_\theta\times\R^{n})$, $q\in \C$,  then a \emph{holomorphic gauging} of $\rho(\xi)$ is any family $\rho(z)(\xi)\in \Hol(\C;S^\bt(\T^n_\theta\times\R^{n}))$ 
 of order $z+q$ such that $\rho(0)(\xi)=\rho(\xi)$. 
\end{definition}

\begin{lemma}\label{lem:Hol.gauging-symbols} 
 Any symbol $\rho(\xi)\in S^q(\T^n_\theta\times\R^{n})$, $q\in \C$, admits a holomorphic gauging. 
\end{lemma}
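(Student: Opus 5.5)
The natural candidate is $\rho(z)(\xi):=\rho(\xi)\,\tilde{\rho}(z)(\xi)$, where $\tilde{\rho}(z)(\xi)=(1-\chi(\xi))|\xi|^{z}$ and $\chi\in C^\infty_c(\R^n)$ is as in Remark~\ref{rem:Hol.holfamily-homogeneous-cutoff-classical}. This candidate fails the normalization: $\tilde\rho(0)=1-\chi\neq 1$, so $\rho(0)=(1-\chi)\rho\neq\rho$. We therefore correct it and set
\begin{equation*}
 \rho(z)(\xi):=\rho(\xi)\Big[(1-\chi(\xi))|\xi|^{z}+\chi(\xi)\Big], \qquad z\in\C,\ \xi\in\R^n.
\end{equation*}
Then $\rho(0)=\rho$. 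Since $\chi$ has compact support, the term $\chi(\xi)\rho(\xi)$ is a $z$-independent element of $\stS^{-\infty}(\T^n_\theta\times\R^n)$.

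We first record that $\rho$ itself is a constant, hence holomorphic, family in $S^\bt(\T^n_\theta\times\R^n)$ of order $q$. Indeed, the estimates (\ref{eq:Symbols.classical-estimates}) are exactly (\ref{eq:Hol.hol-family-classical-symbol-estimate}) with $w(z)=q$, and constant maps are holomorphic. By Example~\ref{ex:Hol.powers}, the family $\tilde\rho(z)=(1-\chi)|\xi|^z$ is holomorphic in $S^\bt(\T^n_\theta\times\R^n)$ of order $z$. Lemma~\ref{lem:Hol.holfamily-classical-product} then shows that $\rho\,\tilde\rho(z)$ is a holomorphic family of order $z+q$, with homogeneous components $\rho_{q-j}(\xi)|\xi|^{z}$.

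It remains to add $\chi\rho$. As a constant family in $\stS^{-\infty}$, it is holomorphic in $C^\infty(\T^n_\theta\times\R^n)$. It does not change the homogeneous expansion, because it vanishes for $|\xi|\geq 1$ (recall $\supp\chi\subseteq\B^n$). Hence estimates (\ref{eq:Hol.hol-family-classical-symbol-estimate}) on $K\times\U^n$ are unaffected, and condition (i) holds with $w(z)=z+q$, which is holomorphic. Thus $\rho(z)\in\Hol(\C;S^\bt(\T^n_\theta\times\R^n))$ has order $z+q$ and satisfies $\rho(0)=\rho$, which is a holomorphic gauging.

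The only point requiring care is the normalization $\rho(0)=\rho$, which is handled by the $\chi$ correction above. Everything else follows directly from the product lemma. A minor check is that $|\xi|^z$ is smooth on $\supp(1-\chi)$ and holomorphic in $z$ as a $C^\infty(\T^n_\theta\times\R^n)$-valued map; this is already contained in Example~\ref{ex:Hol.powers}.
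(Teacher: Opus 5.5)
Your proposal is correct and is essentially the paper's proof. The paper sets $\sigma(z)(\xi)=\chi(\xi)+(1-\chi(\xi))|\xi|^z$, a holomorphic gauging of the constant symbol $1$, and takes $\rho(z)=\rho\,\sigma(z)$ via Lemma~\ref{lem:Hol.holfamily-classical-product}. The only difference is bookkeeping: you apply the product lemma to $(1-\chi)|\xi|^z$ and add the compactly supported term $\chi\rho$ afterwards, whereas the paper absorbs $\chi$ into the gauging of $1$ before multiplying.
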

\begin{proof}
 Let $\chi(\xi)\in C_c^\infty(\R^n)$ be such that $\chi(\xi)=1$ near $\xi=0$ and $\supp \chi\subseteq \B^n$. Set $\sigma(z)(\xi)=\chi(\xi)+(1-\chi(\xi))|\xi|^z$, $z\in \C$, $\xi\in \R^n$. We know from Example~\ref{ex:Hol.powers} that $(1-\chi(\xi))|\xi|^z$, $z\in \C$, is a holomorphic family in $S^\bt(\T^n_\theta\times\R^{n})$ of order $z$, and so $\sigma(z)$, $z\in \C$, is a holomorphic family in $S^\bt(\T^n_\theta\times\R^{n})$ of order $z$. Note that $\sigma(0)(\xi)=\chi(\xi)+(1-\chi(\xi))=1$. Thus, $\sigma(z)$, $z\in \C$, is a holomorphic gauging of the constant symbol $1$. 
 
Set $\rho(z)(\xi)=\rho(\xi)\sigma(z)(\xi)$, $z\in \C$, $\xi \in \R^n$. Lemma~\ref{lem:Hol.holfamily-classical-product} ensures that  $\rho(z)$, $z\in \C$,  is a holomorphic family in $S^\bt(\T^n_\theta\times\R^{n})$ of order $q+z$. Moreover, we have $\rho(0)(\xi)=\rho(\xi)\sigma(0)(\xi)=\rho(\xi)$. Thus,  $\rho(z)$, $z\in \C$,  is a holomorphic gauging of $\rho(\xi)$, proving the result. 
\end{proof}

\begin{definition}\label{def:Hol.gauging-PsiDOs}
 If $P\in \Psi^q(\T^n_\theta)$, then a \emph{holomorphic gauging} of $P$ is any family $P(z)\in \Hol(\C;\Psi^{\bt}(\T^n_\theta))$ of order $z+q$ such that $P(0)=P$; by Lemma~\ref{lem:hol.local-global}, this amounts to a holomorphic family  $\rho(z)$, $z\in \C$,  in $S^\bt(\T^n_\theta\times\R^{n})$ of order $q+z$ such that $P(z)=P_\rho(z)$ for all $z\in \C$.  
 \end{definition}

\begin{proposition} \label{prop:Hol.gauging-PsiDos}
Any $P\in \Psi^q(\T^n_\theta)$, $q\in \C$, admits a holomorphic gauging. 
\end{proposition}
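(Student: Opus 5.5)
The plan is to reduce to the symbol-level statement, Lemma~\ref{lem:Hol.gauging-symbols}, and then quantize. First we pick a symbol of $P$. Proposition~\ref{prop:PsiDOs.rho-to-rhotilde-map-continuity}(iii) lets us take the canonical symbol $\rho:=\rho_{\!{}_{P}}\in S^q(\T^n_\theta\times\R^n)$, so that $P=P_\rho$. Any classical symbol of $P$ would serve equally well, since by definition of $\Psi^q(\T^n_\theta)$ one exists.

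Next we apply Lemma~\ref{lem:Hol.gauging-symbols} to $\rho$. Concretely, with $\chi$ as in that lemma, we set $\rho(z)(\xi)=\rho(\xi)\bigl(\chi(\xi)+(1-\chi(\xi))|\xi|^z\bigr)$. This is a family in $\Hol(\C;S^\bt(\T^n_\theta\times\R^n))$ of order $z+q$ with $\rho(0)=\rho$.

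We then define $P(z):=P_{\rho(z)}$ for $z\in\C$. By the definition of holomorphic families of \psidos, $P(z)$ is a holomorphic family in $\Psi^\bt(\T^n_\theta)$ of order $z+q$. It satisfies $P(0)=P_{\rho(0)}=P_\rho=P$, so by Definition~\ref{def:Hol.gauging-PsiDOs} it is a holomorphic gauging of $P$.

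There is essentially no obstacle. The only point needing care is that the holomorphic family must be built from a \emph{classical} symbol of $P$, so that the order function $z+q$ is holomorphic and the asymptotic expansion is uniform. This is already guaranteed by Lemma~\ref{lem:Hol.gauging-symbols} together with Lemma~\ref{lem:Hol.holfamily-classical-product}.

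Alternatively, at the operator level, one could take $P(z)=P\,(\Delta^{z/2}+\Pi_0)$, using Example~\ref{ex:Hol.Delta-powers} and Corollary~\ref{cor:Hol.composition-of-psidos}. Here $\Pi_0$, the projection onto constants, is smoothing and given by a constant-in-$z$ holomorphic family, and $\Delta^{z/2}+\Pi_0$ equals $1$ at $z=0$.
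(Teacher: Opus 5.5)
Your proof is correct and is essentially the paper's argument: choose a classical symbol $\rho\in S^q(\T^n_\theta\times\R^n)$ with $P=P_\rho$, take a holomorphic gauging $\rho(z)$ of it from Lemma~\ref{lem:Hol.gauging-symbols}, and quantize to obtain the family $P_{\rho(z)}$, which satisfies $P_{\rho(0)}=P$. Your operator-level alternative $P(\Delta^{z/2}+\Pi_0)$ also works, since $\Pi_0$ is a constant smoothing family that can be given order $z$ and $\Delta^{0}=1-\Pi_0$, but it is not needed.
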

\begin{proof}
 Let $\rho\in S^q(\T^n_\theta\times\R^{n})$ be such that $P=P_\rho$. Let  $\rho(z)$, $z\in \C$,  be a holomorphic gauging of $\rho$. The existence of such a family is guaranteed by Lemma~\ref{lem:Hol.gauging-symbols}. Then $P_\rho(z)$, $z\in \C$, is a holomorphic gauging of $P$. The result is proved. 
 \end{proof}

\subsection{$\Hol_\infty$-Families of \psidos} \label{subsec:uniform}
In order to study the behavior of complex powers along vertical strips it is convenient to introduce the notion of $\Hol_\infty$-family of \psidos.

In what follows, by a closed vertical strip we mean any subset of $\C$ of the form $\{a\leq\Re z \leq b\}$, $a\leq b$. By a closed vertical half-strip we shall mean any set of the form,
\begin{equation}
 \Sigma_\pm(a,b,c):=\{a\leq \Re z\leq b\}\cap \{\pm \Im z\geq c\}, \qquad a\leq b, \quad c\in \R.
 \label{eq:Uniform.half-strips}  
\end{equation}
By an open vertical strip (resp., half-strip) we shall mean the interior of a strip (resp., half-strip) as above with $a<b$.  

\begin{definition} \label{def:powers.admissible}
 An open $\Omega\subseteq \C$ is called \emph{admissible} if, for every $z_0\in \Omega$, there is an open 
vertical half-strip which contains $z_0$ and is contained in $\Omega$. 
\end{definition}

Examples of admissible open subsets of $\C$ include the following: 
\begin{enumerate}
 \item[(i)] The whole complex plane $\C$.  
 
 \item[(ii)] $\C\setminus A$, where $A$ is any closed subset of $\R$.  

\item[(iii)] Any left half-plane $\{\Re z<a\}$ or right half-plane $\{\Re z>a\}$. 
 
 \item[(iv)] Any lower half-plane $\{\Im z<a\}$ or upper half-plane $\{\Im z>a\}$. 
 
 \item[(v)] Any quadrant given by the intersection of a left/right half-plane as in~(iii) with a lower/upper half-plane as in~(iv). 
 
\end{enumerate}
Instances of open sets as in (iv) and (v) are the upper/lower half-planes and upper/lower left-quadrants,
\begin{equation*}
 \bH_\pm=\{\pm \Im z>0\}, \qquad \bQ_{\pm}:= \{\Re z<0\} \cap \{\pm \Im z>0\}. 
\end{equation*}

Throughout the remainder of this section we let $\Omega$ be an admissible open subset of $\C$.  

\begin{definition}
 $\Hol_\infty(\Omega)$ consists of holomorphic functions $f(z)$ on $\Omega$ that are uniformly bounded on every closed vertical half-strip $\Sigma \subseteq \Omega$. 
\end{definition}

\begin{example}\label{ex:Uniform.Gamma-function}
 Let $\Gamma(z)$ be Euler's gamma function. This is a holomorphic function on $\C\setminus \Z_{-}$\footnote{Here $\Z_{-}=\{0,-1,-2, \ldots\}$ consists of all non-positive integers.}. In fact, given any $\alpha \in [0,\pi/2)$, we have 
\begin{equation*}
 e^{\mp i\alpha z}\Gamma(z)\in \Hol_\infty(\bH_\pm). 
\end{equation*}
This is a standard result. Thanks to the functional equation $\Gamma(z+1)=z\Gamma(z)$, it is enough to show the uniform boundedness of $e^{\mp i\alpha z}\Gamma(z)$ on vertical half-strips $\Sigma_\pm(a,b,0)$ of the form~(\ref{eq:Uniform.half-strips}) with $0<a<b$. Note that, for $\Re z>0$, we have 
\begin{equation*}
 \Gamma(z)= \int_0^\infty t^{z-1} e^{-t}dt = \int_{e^{\pm i\alpha}[0,\infty)} t^{z-1} e^{-t}dt. 
\end{equation*}
Thus, if $z=x+iy$ and $\pm y>0$, then 
\begin{equation*}
 \big|\Gamma(z)\big| \leq \int_0^\infty t^{x-1} e^{\mp \alpha y} e^{-(\cos \alpha)t}dt = e^{-\alpha |y|}  \int_0^\infty t^{x-1} e^{-(\cos \alpha)t}dt, 
\end{equation*}
from which the result follows. 
\end{example}

In what follows we let $\sE$ be a locally convex TVS or a quasi-Banach space.  

\begin{definition}
 A $\Hol_\infty(\Omega)$-\emph{family} in $\sE$ is any holomorphic family in $\sE$ which is uniformly bounded on any closed vertical half-strip $\Sigma \subseteq \Omega$. 
 \end{definition}

 \begin{remark}
 If $\sE$ is locally convex, then a map $u(z)\in \Hol(\Omega;\sE)$ is $\Hol_\infty$ if and only if, for any continuous semi-norm $\fp$ on $\sE$ and any closed vertical half-strip $\Sigma \subseteq \Omega$, we have 
\begin{equation}\label{eq:Hol.Holinf-seminorms}
  \sup_{z\in \Sigma}\fp\big[u(z)\big]<\infty. 
\end{equation}
If $\sE$ is a quasi-Banach space with quasi-norm $\|\cdot\|_{\sE}$, then the above condition should be replaced by 
\begin{equation*}
  \sup_{z\in \Sigma}\| u(z)\|_{\sE}<\infty. 
\end{equation*}
\end{remark}
 
\begin{remark}\label{rmk:Hol.boundedness-vertical-strips}
 As any closed vertical strip is the union of two closed vertical half-strips, a $\Hol_\infty(\Omega)$-family in $\sE$ is automatically bounded on every closed vertical strip contained in $\Omega$. 
\end{remark}
 
\begin{remark}\label{rmk:unif.composition-cont-lin} 
 If $u(z)$, ${z\in \Omega}$,  is a $\Hol_\infty(\Omega)$-family in $\sE$ and $\Phi:\sE \rightarrow \sF$ is a continuous linear map from $\sE$ to another locally convex
 or quasi-Banach space, then the composition $\Phi[u(z)]$, ${z\in \Omega}$,  is a $\Hol_\infty(\Omega)$-family in $\sF$. We denote by $\Hol_\infty(\Omega;\sE)$ the space of $\Hol_\infty(\Omega)$-families in $\sE$; if $\sE$ is locally convex, then $\Hol_\infty(\Omega;\sE)$ is a locally convex topological vector space with respect to the semi-norms~(\ref{eq:Hol.Holinf-seminorms}). 
\end{remark}

\begin{definition}
  We say that a family $\rho(z)(\xi)\in \Hol(\Omega;S^\bt(\T^n_\theta\times \R^n))$ of order $w(z)$ is a $\Hol_\infty(\Omega)$-\emph{family} if the following conditions are satisfied:
  \begin{enumerate}
\item[(i)]
For every closed vertical half-strip  $\Sigma\subseteq \Omega$, the subset $\{\Re w(z);\ z\in \Sigma\}$ is compact. 

\item[(ii)] $\rho(z)$, ${z\in \Omega}$,  is a $\Hol_\infty(\Omega)$-family in $C^\infty(\T^n_\theta\times\Rn)$.

\item[(iii)] We have $\rho(z)\sim\sum_{j\geq 0}\rho_j(z)$ with $\rho_j(z)\in S_{w(z)-j}(\T^n_\theta\times\Rn)$, in the sense that, for any $N\geq 0$, any  
closed vertical half-strip  $\Sigma\subseteq \Omega$, and any multi-orders $\alpha$, $\beta$, we have 
\begin{equation} \label{eq:Hol.hol-infty-family-classical-symbol-estimate}
\Big\| \delta^\alpha\partial_\xi^\beta \Big(\rho(z)-\sum_{j<N}\rho_j(z) \Big)(\xi) \Big\| \leq C_{\Sigma N\alpha\beta}|\xi|^{\Re{w(z)}-N-|\beta|}  \quad \text{for all}\ (z,\xi)\in \Sigma\times \U^n.
\end{equation}
\end{enumerate}
We denote by $\Hol_\infty(\Omega;S^\bt(\T^n_\theta\times \R^n))$ the class of $\Hol_\infty(\Omega)$-families in $S^\bt(\T^n_\theta\times \R^n)$ indexed by $\Omega$. 
\end{definition}

\begin{remark}
In the same way as in Remark~\ref{rem:Hol.symbol-uniqueness} if $\rho(z)$ is any $\Hol_\infty(\Omega)$-family in $S^\bt(\T_\theta^n\times\Rn)$ of order $w(z)$ such that $P(z) = P_\rho(z)$, then $\rho_{\!{}_{P(z)}}-\rho(z)$, $z\in \Omega$, is a $\Hol_\infty(\Omega)$-family in $\bS^{-\infty}(\T^n_\theta\times \R^n)$. Thus, if $\rho(z)\sim\sum_{j\geq 0}\rho_j(z)$, where $\rho_j(z)\in S_{w(z)-j}(\T_\theta^n\times\Rn)$ and $\sim$ is meant in the sense of~(\ref{eq:Hol.hol-infty-family-classical-symbol-estimate}), then $\rho_{\!{}_{P(z)}}\sim\sum_{j\geq 0}\rho_j(z)$ in the sense of~(\ref{eq:Hol.hol-infty-family-classical-symbol-estimate}). 
\end{remark}

\begin{example}\label{ex:Uniform.symbol-Deltaz}
 Suppose that $\Omega=\C$ and let $\chi(\xi)\in C_c^\infty(\R^n)$ be as in Remark~\ref{rem:Hol.holfamily-homogeneous-cutoff-classical}. Set 
 $\rho(z)(\xi)=(1-\chi(\xi))|\xi|^z$, $\xi\in \R^n$, $z\in \C$. As explained in Example~\ref{ex:Hol.powers} this defines a holomorphic family in $S^\bt(\T^n_\theta \times \R^n)$ of order $z$. We actually get a $\Hol_\infty(\C)$-family in $S^\bt(\T^n_\theta \times \R^n)$. 
\end{example}

\begin{remark}
 The compactness condition in (i) ensures that if $\pm\Re w(z)<m$ on $\Omega$ for some $m\in \R$, then, for every closed vertical half-strip  $\Sigma\subseteq \Omega$, we also have $\sup\{\pm \Re w(z); z\in \Sigma\}<m$. This condition is automatically satisfied in the main examples where $w(z)=wz+q$ with $w\in \R\setminus 0$ and $q\in \C$. 
\end{remark}

\begin{remark}\label{rmk:uniform.holinf-symbols-classical-standard}
Let $\rho(z)$, ${z\in \Omega}$,  be a $\Hol_\infty(\Omega)$-family in $S^\bt(\T^n_\theta \times \R^n)$ of order $w(z)$. In the same way as in Remark~\ref{rmk:Hol.holfamily-classical-standard}, if $\Re w(z)\leq m$ on $\Omega$, then $\rho(z)$, ${z\in \Omega}$,  is a   $\Hol_\infty(\Omega)$-family in $\bS^m(\T^n_\theta \times \R^n)$. Moreover, as in Remark~\ref{rmk:Hol.holfamily-bounded-symbols}, if $\Sigma$ is any closed vertical half-strip contained in $\Omega$, and we set $m_\Sigma=\sup\{ \Re w(z); z\in \Sigma\}$, then 
 $\rho(z)$, $z\in \Sigma$, is a bounded family in $\bS^{m_\Sigma}(\T^n_\theta\times \R^n)$. 
\end{remark}

\begin{definition}\label{def:Holinf}
We say that a family $P(z)\in \Hol(\Omega;\Psi^\bt(\T^n_\theta))$ of order $w(z)$ is a $\Hol_\infty(\Omega)$-\emph{family} if there is a family
 $\rho(z)(\xi)\in \Hol_\infty(\Omega;S^\bt(\T^n_\theta \times \R^n))$ of order $w(z)$  such that 
\begin{equation*}
 P(z)=P_{\rho}(z)\qquad \forall z\in \Omega. 
\end{equation*}
We denote by $\Hol_\infty(\Omega;\Psi^\bt(\T^n_\theta))$ the class of $\Hol_\infty(\Omega)$-families in $\Psi^\bt(\T^n_\theta)$ indexed by $\Omega$. 
\end{definition}

\begin{example}
 We saw in Example~\ref{ex:Hol.Delta-powers} that  $\Delta^z$, $z\in \C$,  is a holomorphic family in $\Psi^\bt(\T^n_\theta)$ of order $2z$ and has symbol $\rho(z)(\xi)=(1-\chi(\xi))|\xi|^{2z}$ 
 with  $\chi(\xi)\in C_c^\infty(\R^n)$  as in Remark~\ref{rem:Hol.holfamily-homogeneous-cutoff-classical}. As pointed out in Example~\ref{ex:Uniform.symbol-Deltaz}  $\rho(z)$, $z\in \C$,  is a  $\Hol_\infty(\C)$-family in $S^\bt(\T_\theta^n\times \R^n)$. Therefore, we see that  $\Delta^z$, $z\in \C$,  is a $\Hol_\infty(\C)$-family
  in $\Psi^\bt(\T^n_\theta)$.  
\end{example}

All the properties of holomorphic families of symbols and \psidos\ stated in Section~\ref{sec:hol-PsiDOs} extend to $\Hol_\infty(\Omega)$-families \emph{verbatim}, or with only minor modifications. In particular, we have the following analogue of Lemma~\ref{lem:hol.local-global}. 

\begin{lemma}\label{lem:unif.local-global}
 Let $P(z)\in \Hol(\Omega;\Psi^\bt(\T^n_\theta))$ have order $w(z)$.
 The following are equivalent:
\begin{enumerate}
\item[(i)] $\rho_{\!{}_{P(z)}}$, ${z\in \Omega}$,  is a $\Hol_\infty(\Omega)$-family in $S^\bt(\T^n_\theta\times \R^n)$.  

 \item[(ii)] $P(z)$, ${z\in \Omega}$,  is a $\Hol_\infty(\Omega)$-family in $\Psi^\bt(\T^n_\theta)$. 
 
 \item[(iii)] For every $z_0\in \Omega$, there is an open vertical half-strip $\Sigma_0\subseteq \Omega$ containing $z_0$ such that 
 $P(z)\in \Hol_\infty(\Sigma_0;\Psi^\bt(\T^n_\theta))$. 
\end{enumerate}
\end{lemma}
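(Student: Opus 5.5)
The plan is to mirror the proof of Lemma~\ref{lem:hol.local-global}, replacing compact sets by closed vertical half-strips. The implications (i)$\Rightarrow$(ii) and (ii)$\Rightarrow$(iii) are immediate. For (i)$\Rightarrow$(ii), the symbol $\rho_{\!{}_{P(z)}}$ quantizes to $P(z)$ by Proposition~\ref{prop:PsiDOs.rho-to-rhotilde-map-continuity}(i). For (ii)$\Rightarrow$(iii), admissibility of $\Omega$ gives, for each $z_0$, an open vertical half-strip $\Sigma_0\subseteq\Omega$ containing $z_0$, and every closed vertical half-strip inside $\Sigma_0$ is also one inside $\Omega$. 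So the work is (iii)$\Rightarrow$(i).

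Assume (iii) holds. Given a closed vertical half-strip $\Sigma\subseteq\Omega$, I would first reduce to the local statement by a covering argument. Write $\Sigma=\{a\le\Re z\le b,\ \pm\Im z\ge c\}$. The horizontal segment $\{a\le \Re z\le b,\ \Im z=\pm c\}$ is compact. I would try to cover $\Sigma$ by finitely many closed vertical half-strips, each contained in one of the open half-strips $\Sigma_0$ furnished by (iii). This is the step I expect to be the main obstacle. The half-strips $\Sigma_0$ attached to points near the bottom segment may point upward or downward, and their widths vary.

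To handle this, I would use openness of each $\Sigma_0$ together with the fact that $\Sigma$ extends only in one vertical direction. The first attempt is a Lebesgue-number style argument along the real direction. For each $x\in[a,b]$, take the point $z_x=x\pm i c'$ with $c'$ large. Since $\Omega\supseteq\Sigma$, it is cleaner to apply (iii) at points far up in $\Sigma$ and check that the resulting half-strip points in the same direction as $\Sigma$. If not, the remaining compact part of $\Sigma$ near the bottom is covered by finitely many sets on which the family is locally bounded, since holomorphic families are continuous. I would therefore split $\Sigma$ into a compact rectangle plus finitely many thin closed sub-half-strips. On the rectangle, Lemma~\ref{lem:hol.local-global} and Remark~\ref{rmk:Hol.holfamily-bounded-symbols} give uniform bounds. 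On each sub-half-strip, (iii) gives uniform bounds. If the half-strip from (iii) at a high point turned out to point the wrong way, I would instead use that it still contains a closed half-strip pointing in the correct direction up to a compact rectangle.

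Once reduced to a single closed half-strip $\Sigma'\subseteq\Sigma_0$, write $P(z)=P_\rho(z)$ with $\rho\in\Hol_\infty(\Sigma_0;S^\bt)$. By condition (i) of the definition and Remark~\ref{rmk:uniform.holinf-symbols-classical-standard}, $\rho(z)$, $z\in\Sigma'$, is bounded in $\bS^{m}$ with $m=\sup_{\Sigma'}\Re w$. By Proposition~\ref{prop:PsiDOs.rho-to-rhotilde-map-continuity}(ii), the map $\rho\mapsto\rho_{\!{}_{P_\rho}}-\rho$ is continuous, hence bounded on bounded sets, from $\bS^m$ to $\bS^{-\infty}$. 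So $\rho_{\!{}_{P(z)}}-\rho(z)$ is uniformly bounded in $\bS^{-\infty}$ on $\Sigma'$, and it is holomorphic by Lemma~\ref{lem:hol.local-global}. Adding it to $\rho(z)$ preserves the estimates~(\ref{eq:Hol.hol-infty-family-classical-symbol-estimate}) with the same homogeneous components, which gives (i).
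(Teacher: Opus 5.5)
Your steps (i)$\Rightarrow$(ii)$\Rightarrow$(iii) are fine. So is your final paragraph, which bounds $\rho_{\!{}_{P(z)}}-\rho(z)$ in $\bS^{-\infty}$ on a closed half-strip lying in a single $\Sigma_0$ via Proposition~\ref{prop:PsiDOs.rho-to-rhotilde-map-continuity}(ii) and Remark~\ref{rmk:uniform.holinf-symbols-classical-standard}. That paragraph is exactly the ``verbatim'' transfer of Lemma~\ref{lem:hol.local-global} the paper relies on; the paper gives no separate proof.

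The gap is the covering step, and it cannot be closed with (iii) as stated. Your fallback, that a wrong-way half-strip ``still contains a closed half-strip pointing in the correct direction up to a compact rectangle'', is false. An open downward half-strip $\{a<\Re z<b,\ \Im z<c\}$ is bounded above in $\Im z$, so it contains no upward half-strip at all.

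In fact the lemma is false as written: (iii) does not imply (i) or (ii). On $\Omega=\C$ let $P(z)u=e^{-iz}u$. This is a holomorphic family of order $0$ with symbol $e^{-iz}$. For each $z_0$ take $\Sigma_0=\{|\Re z-\Re z_0|<1,\ \Im z<\Im z_0+1\}$. Every closed vertical half-strip inside $\Sigma_0$ points downward, and there $|e^{-iz}|=e^{\Im z}$ is bounded. So $P\in\Hol_\infty(\Sigma_0;\Psi^\bt(\T^n_\theta))$ and (iii) holds. But by~(\ref{eq:PsiDOs.toroidal-def}), every symbol $\sigma(z)$ of $P(z)$, including $\rho_{\!{}_{P(z)}}$, satisfies $\sigma(z)(0)=P(z)(U^0)=e^{-iz}$. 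This is unbounded on $\{0\le\Re z\le 1,\ \Im z\ge 0\}$, so (i) and (ii) fail.

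What is missing is a direction condition: the half-strips supplied by (iii) must point the same way as the closed half-strip $\Sigma$ you want to control. This is automatic when $\Omega\subseteq\bH_+$ or $\Omega\subseteq\bH_-$, since then every open vertical half-strip in $\Omega$ points the same way. Those are the cases actually used in Section~\ref{sec:powers} ($\bH_\pm$ and $\bQ_\pm$).

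Under that condition you need neither high points nor a compact rectangle. For $\Sigma=\{a\le\Re z\le b,\ \pm\Im z\ge c\}$, apply (iii) at the base points $x\pm ic$, $x\in[a,b]$. This gives open half-strips $\{a_x<\Re z<b_x,\ \pm\Im z>c_x\}$ with $a_x<x<b_x$ and $c_x<c$. A Lebesgue number for the cover $\{(a_x,b_x)\}$ of $[a,b]$ splits $\Sigma$ into finitely many closed half-strips $\{t_{i-1}\le\Re z\le t_i,\ \pm\Im z\ge c\}$, each contained in one of these. Your final paragraph then applies to each piece. The homogeneous components agree across pieces, since they are determined by $P(z)$.
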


For future record we mention the following $\Hol_\infty$-version of Corollary~\ref{cor:Hol.composition-of-psidos}. 

\begin{proposition}\label{prop:unif.composition}
 Let $P(z)$, ${z\in \Omega}$,  and $Q(z)$, ${z\in \Omega}$,  be $\Hol_\infty(\Omega)$-families in $\Psi^\bt(\T^n_\theta)$ of respective orders $w(z)$ and $u(z)$. 
 Then   $P(z)Q(z)$, ${z\in \Omega}$,  is a $\Hol_\infty(\Omega)$-family in $\Psi^\bt(\T^n_\theta)$ of order $w(z)+u(z)$. 
\end{proposition}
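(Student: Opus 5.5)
The plan is to follow the proof of Corollary~\ref{cor:Hol.composition-of-psidos} (that is, Propositions~\ref{prop:Hol.hol-sharp-product} and~\ref{prop:Hol.composition-of-symbols}), keeping every bound uniform on closed vertical half-strips rather than on compact sets. By Lemma~\ref{lem:unif.local-global} the statement is local in the sense of (iii). So we fix $z_0\in\Omega$ and an open vertical half-strip $\Sigma_0\ni z_0$ contained in $\Omega$, and we shrink it so that its closure lies in $\Omega$. By condition (i) of the definition, $\Re w(z)$ and $\Re u(z)$ are then bounded above on $\Sigma_0$, say by $m$ and $p$. Next we write $P(z)=P_\rho(z)$ and $Q(z)=P_\sigma(z)$ with $\rho,\sigma$ $\Hol_\infty$-families of symbols. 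By Remark~\ref{rmk:uniform.holinf-symbols-classical-standard} these are $\Hol_\infty(\Sigma_0)$-families in $\stS^m$ and $\stS^p$. Lemma~\ref{prop:Composition.sharp-continuity-standard-symbol} gives $P(z)Q(z)=P_{\rho\sharp\sigma}(z)$, so it is enough to show that $\rho\sharp\sigma(z)$ is a $\Hol_\infty(\Sigma_0)$-family in $S^\bt$ of order $w(z)+u(z)$.

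First I would establish the $\Hol_\infty$ analogue of Proposition~\ref{prop:Hol.hol-sharp-product}. The map $(\rho_1,\rho_2)\mapsto\rho_1\sharp\rho_2$ is continuous and bilinear from $\stS^{m}\times\stS^{p}$ to $\stS^{m+p}$. For each $N$, the remainder map $(\rho_1,\rho_2)\mapsto\rho_1\sharp\rho_2-\sum_{|\alpha|<N}\frac{1}{\alpha!}\partial_\xi^\alpha\rho_1\delta^\alpha\rho_2$ is continuous and bilinear into $\stS^{m+p-N}$; this is the content of the proof of \cite[Proposition~5.6]{LP:Part1}. A continuous bilinear map sends pairs of families that are bounded on a half-strip to a family that is bounded on that half-strip, and holomorphy is preserved just as in the holomorphic case. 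Hence $\rho\sharp\sigma(z)$ is a $\Hol_\infty$-family in $\stS^{m+p}$. Moreover, $\rho\sharp\sigma(z)\sim\sum\frac{1}{\alpha!}\partial_\xi^\alpha\rho(z)\delta^\alpha\sigma(z)$, with remainders in $L^\infty(\Sigma;\stS^{m+p-N})$ for every closed half-strip $\Sigma\subseteq\Sigma_0$.

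Next I would transfer to this setting the lemmas on classical families, namely Lemmas~\ref{lem:Hol.homogeneous-symbol-holomorphic}, \ref{lem:Hol.asymptotic-classical-standard}, \ref{lem:Hol.asymptotic-classical-family} and~\ref{lem:Hol.holfamily-classical-product}. The proofs go through verbatim once every ``compact $K$'' is replaced by ``closed vertical half-strip $\Sigma$'', so that constants become uniform in $z\in\Sigma$.

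The only genuinely new point is to check that the homogeneous components $\rho_j(z)$ are bounded uniformly on $\Sigma$ in $C^\infty(\T^n_\theta\times(\R^n\setminus 0))$. I expect this to be the main obstacle. My plan is to use the scaling limit from the proof of Lemma~\ref{lem:Hol.homogeneous-symbol-holomorphic}. For $|\xi|=1$ and $t\ge 1$, \eqref{eq:Hol.hol-infty-family-classical-symbol-estimate} gives $\|\rho_0(z)(\xi)\|\le |t^{-w(z)}|\,\|\rho(z)(t\xi)\|+Ct^{-1}$. Since $\Re w(z)$ ranges over a compact set on $\Sigma$, and $\rho(z)$ is $\Hol_\infty$ in $C^\infty$, this is bounded uniformly in $z\in\Sigma$ once $t$ is fixed. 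Derivatives are handled in the same way, and induction on $j$ gives the components of higher order. Homogeneity then extends the bounds from $\{|\xi|=1\}$ to compact subsets of $\R^n\setminus 0$. The factors $|\xi|^{\Re w(z)-j}$ that appear in the cut-off comparison of Lemma~\ref{lem:Hol.asymptotic-classical-standard} are uniformly controlled on $\U^n$, again because $\Re w(z)$ stays in a compact set.

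Finally, the product lemma yields that each term $\partial_\xi^\alpha\rho(z)\delta^\alpha\sigma(z)$ is a $\Hol_\infty$-family in $S^\bt$ of order $w(z)+u(z)-|\alpha|$. The $\Hol_\infty$ version of Lemma~\ref{lem:Hol.asymptotic-classical-family} then shows that $\rho\sharp\sigma(z)$ is a $\Hol_\infty(\Sigma_0)$-family in $S^\bt$ of order $w(z)+u(z)$. Condition (i) of the definition holds for $w+u$ because a sum of bounded real parts is bounded. By Lemma~\ref{lem:unif.local-global}, this completes the proof.
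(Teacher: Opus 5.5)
Your strategy is the one the paper has in mind. The paper only asserts that the holomorphic arguments carry over with compact sets replaced by closed vertical half-strips. Your symbol-level estimates are sound: the bilinear continuity of $\sharp$ and of its remainders, and the uniform bounds on the homogeneous components from the scaling identity and induction on $j$, only use that $\Re w$ and $\Re u$ are bounded on each closed half-strip.

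The step that fails is the last one. Condition (i) asks that $\{\Re(w(z)+u(z));\ z\in\Sigma\}$ be \emph{compact}. Since $\Sigma$ is not compact, boundedness of the summands does not give this, and neither does compactness of their individual ranges.

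Here is a counterexample on $\Omega=\C$, writing $z=x+iy$. Take $w(z)=e^{z}$ and $u(z)=e^{\sqrt{2}z}$.
\begin{itemize}
\item On $\{a\leq\Re z\leq b,\ \pm\Im z\geq c\}$ the range of $\Re w=e^{x}\cos y$ is exactly $[-e^{b},e^{b}]$, and likewise for $u$. So both satisfy (i).
\item The sum $\Re(w+u)=e^{x}\cos y+e^{\sqrt{2}x}\cos(\sqrt{2}\,y)$ has supremum $e^{b}+e^{\sqrt{2}b}$. It is approached along $x=b$, $y=2\pi k$ with $k\sqrt{2}$ close to an integer, but attained only at $y=0$. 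So for $c>0$ the range is not closed.
\item The Fourier multipliers with symbols $(1-\chi(\xi))|\xi|^{e^{z}}$ and $(1-\chi(\xi))|\xi|^{e^{\sqrt{2}z}}$, with $\chi$ as in Remark~\ref{rem:Hol.holfamily-homogeneous-cutoff-classical}, are $\Hol_\infty(\C)$-families of orders $w$ and $u$.
\item Their product has symbol $(1-\chi(\xi))^{2}|\xi|^{e^{z}+e^{\sqrt{2}z}}$. By uniqueness of homogeneous components and holomorphy, any order function for it is $w+u+\ell$ with $\ell\in\N_0$ constant, and none of these satisfies (i).
\end{itemize}
So with (i) read literally the proposition is false as formulated, and this step cannot be repaired.

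Your argument does prove the statement under either of two changes:
\begin{itemize}
\item Add the assumption that $w+u$ satisfies (i). This holds automatically for orders of the form $az+b$ with $a\in\R$, which covers the paper's uses such as $AP^z$.
\item Weaken compactness in (i) to boundedness. The cost is that you lose the strict-inequality remark used in Proposition~\ref{prop:unif.propertiesP(z)-sLp}.
\end{itemize}

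Separately, avoid the reduction through part (iii) of Lemma~\ref{lem:unif.local-global}. Bounds on a half-strip around each point do not control every closed half-strip in $\Omega$. For example, the scalar family $e^{-iz}$ is bounded on every lower half-strip, and every point lies in one, yet it is unbounded on upper half-strips. Instead, fix an arbitrary closed vertical half-strip $\Sigma\subseteq\Omega$. Run your estimates with $m=\sup_\Sigma\Re w$ and $p=\sup_\Sigma\Re u$, using Remark~\ref{rmk:uniform.holinf-symbols-classical-standard}. Take holomorphy on $\Omega$ from Corollary~\ref{cor:Hol.composition-of-psidos}.
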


We also have the following $\Hol_\infty$-versions of Proposition~\ref{prop:Hol.propertiesP(z)-W2s} and Proposition~\ref{prop:Hol.propertiesP(z)-sLp}. 

\begin{proposition}\label{prop:unif.propertiesP(z)-W2s} 
Let $P(z)$, ${z\in \Omega}$,  be a $\Hol_\infty(\Omega)$-family in $\Psi^\bt(\T^n_\theta)$ of order $w(z)$. 
\begin{enumerate}
\item $P(z)$, ${z\in \Omega}$,  is a $\Hol_\infty(\Omega)$-family in $\cL(C^\infty(\T^n_\theta))$ and (uniquely extends to) a $\Hol_\infty(\Omega)$-family in $\cL(\scD'(\T^n_\theta))$. 
 
 \item If $\Re w(z)\leq m$ on $\Omega$, then $P(z)$, ${z\in \Omega}$,  is a $\Hol_\infty(\Omega)$-family in $\cL(W_2^{s+m}(\T^n_\theta), W_2^s(\T^n_\theta))$ for every $s\in \R$.
  
 \item Let $\Sigma\subseteq \Omega$ be a closed vertical half-strip or strip, and set $w_\Sigma=\max\{\Re w(z); \ z\in \Sigma\}$. Then $P(z)$, $z\in \Sigma$, is a strongly continuous bounded family in $\sL(W_2^{s+w_\Sigma}(\T^n_\theta), W_2^s(\T^n_\theta))$ for every $s\in \R$. 
\end{enumerate}
\end{proposition}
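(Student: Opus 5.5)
We follow the proof of Proposition~\ref{prop:Hol.propertiesP(z)-W2s}, replacing local boundedness on compact sets by uniform boundedness on closed vertical half-strips. First, write $P(z)=P_\rho(z)$ with $\rho(z)\in\Hol_\infty(\Omega;S^\bt(\T^n_\theta\times\R^n))$ of order $w(z)$. Each part is checked on every closed vertical half-strip $\Sigma\subseteq\Omega$. Moreover, holomorphy itself is already given by Proposition~\ref{prop:Hol.propertiesP(z)-W2s}. So only the uniform bounds on half-strips need proof.

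For parts (1) and (2), we first handle (2), where $\Re w(z)\leq m$ on $\Omega$. By Remark~\ref{rmk:uniform.holinf-symbols-classical-standard}, $\rho(z)$ is a $\Hol_\infty(\Omega)$-family in $\bS^m(\T^n_\theta\times\R^n)$. The quantization map $\rho\mapsto P_\rho$ is continuous and linear from $\bS^m(\T^n_\theta\times\R^n)$ to $\sL(W_2^{s+m},W_2^s)$, and also to $\sL(C^\infty(\T^n_\theta))$ and $\sL(\scD'(\T^n_\theta))$ with their strong topologies (\cite{HLP:Part2}). Remark~\ref{rmk:unif.composition-cont-lin} then gives the $\Hol_\infty$ property in each target space. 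This proves (2).

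For (1) no global bound on $\Re w$ is assumed. We fix a closed half-strip $\Sigma\subseteq\Omega$. By condition (i) of the definition, $m_\Sigma=\sup_\Sigma\Re w<\infty$. By the second half of Remark~\ref{rmk:uniform.holinf-symbols-classical-standard}, $\rho(z)$, $z\in\Sigma$, is then bounded in $\bS^{m_\Sigma}$. Since bounded sets are mapped to bounded sets by the continuous quantization into $\sL(C^\infty)$ and $\sL(\scD')$, every continuous seminorm of $P(z)$ is bounded on $\Sigma$. Combined with holomorphy, this is the $\Hol_\infty$ property.

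For (3), we let $\Sigma$ be a closed half-strip or strip. A strip is a union of two half-strips, so as in Remark~\ref{rmk:Hol.boundedness-vertical-strips} it suffices to treat half-strips. Boundedness of $P(z)$, $z\in\Sigma$, in $\sL(W_2^{s+w_\Sigma},W_2^s)$ follows exactly as above from the boundedness of $\rho(z)$ in $\bS^{w_\Sigma}$. For strong continuity, continuity is a local property, so we fix $z_0\in\Sigma$ and a compact neighborhood $K$ of $z_0$ in $\Sigma$. On a small bounded open neighborhood of $K$ in $\Omega$ we have $\Re w\leq w'$ for some $w'\geq w_\Sigma$. By part (3) of Proposition~\ref{prop:Hol.propertiesP(z)-W2s}, or directly by norm continuity into $\sL(W_2^{s+w'},W_2^s)$, $P(z)u$ is continuous in $z$ for $u$ in the dense subspace $W_2^{s+w'}$. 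A standard $\epsilon/3$ argument using the uniform bound on $\Sigma$ extends this to all $u\in W_2^{s+w_\Sigma}$.

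The main obstacle is the point where uniformity is needed along the unbounded direction of $\Sigma$. The compact-set arguments of the earlier results give only local control there. This is handled by condition (i), finiteness of $\sup_\Sigma\Re w$, together with Remark~\ref{rmk:uniform.holinf-symbols-classical-standard}. The remaining work is checking that this remark indeed yields boundedness in $\bS^{m_\Sigma}$ from estimate~(\ref{eq:Hol.hol-infty-family-classical-symbol-estimate}) with $N=0$ together with the $\Hol_\infty$ bound in $C^\infty(\T^n_\theta\times\R^n)$ on the ball $\B^n$.
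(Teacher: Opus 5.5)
Your proposal is correct and follows the paper's route. The paper simply repeats the proof of Proposition~\ref{prop:Hol.propertiesP(z)-W2s} with closed vertical half-strips in place of compact sets, then passes to strips via Remark~\ref{rmk:Hol.boundedness-vertical-strips}; your use of Remark~\ref{rmk:uniform.holinf-symbols-classical-standard} for the uniform symbol bounds and the density argument for strong continuity just make explicit the details the paper leaves to the reader.
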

\begin{proof}
 By arguing along the same lines as the proof of Proposition~\ref{prop:Hol.propertiesP(z)-W2s}  we obtain the first two parts, as well as the third part in the case of closed vertical half-strips. The extension to closed vertical strips follows from Remark~\ref{rmk:Hol.boundedness-vertical-strips}. 
\end{proof}

\begin{proposition}\label{prop:unif.propertiesP(z)-sLp} 
Let $P(z)$, ${z\in \Omega}$,  be a $\Hol_\infty(\Omega)$-family in $\Psi^\bt(\T^n_\theta)$ of order $w(z)$. 
\begin{enumerate}
 \item If $\Re w(z)<m$ (resp., $\Re w(z)\leq m$) on $\Omega$ with $m<0$, then $P(z)$, $z\in \Omega$,  is a  $\Hol_\infty(\Omega)$-family in $\sL_{p}$ (resp., $\sL_{p,\infty}$) with $p=n|m|^{-1}$. 
 
 \item   Let $\Sigma\subseteq \Omega$ be a closed vertical half-strip or strip such that $w_\Sigma=\sup\{\Re w(z); \ z\in \Sigma\}<0$, and set $p=n|w_\Sigma|^{-1}$. The family $P(z)$, $z\in \Sigma$, is a bounded family in $\sL_{p,\infty}$, and hence is bounded in $\sL_r$ for all $r>p$. 
\end{enumerate}
\end{proposition}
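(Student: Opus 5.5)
The proof follows Proposition~\ref{prop:Hol.propertiesP(z)-sLp}, with local boundedness on compact sets replaced by uniform boundedness on closed vertical half-strips, using the $\Hol_\infty$ symbol estimates.

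We write $P(z)=P_\rho(z)$ with $\rho(z)\in\Hol_\infty(\Omega;S^\bt(\T^n_\theta\times\R^n))$ of order $w(z)$, as in Definition~\ref{def:Holinf}.

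For part (1), first suppose $\Re w(z)\leq m$ on $\Omega$. By Remark~\ref{rmk:uniform.holinf-symbols-classical-standard}, $\rho(z)$ is then a $\Hol_\infty(\Omega)$-family in $\bS^m(\T^n_\theta\times\R^n)$. The quantization map $\rho\mapsto P_\rho$ is continuous and linear from $\bS^m(\T^n_\theta\times\R^n)$ to the quasi-Banach space $\sL_{p,\infty}$, $p=n|m|^{-1}$ (this is the fact from~\cite{HLP:Part2} already used in Proposition~\ref{prop:Hol.propertiesP(z)-sLp}). Remark~\ref{rmk:unif.composition-cont-lin} allows the target to be quasi-Banach, so $P(z)$ is a $\Hol_\infty(\Omega)$-family in $\sL_{p,\infty}$.

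For the strict case $\Re w(z)<m$, the local argument of Proposition~\ref{prop:Hol.propertiesP(z)-sLp} gives holomorphy in $\sL_p$, but we also need uniform bounds on every closed vertical half-strip $\Sigma\subseteq\Omega$. Condition~(i) in the definition of $\Hol_\infty$-symbol families makes $\{\Re w(z);\ z\in\Sigma\}$ compact, so $m_\Sigma:=\sup_\Sigma\Re w(z)<m$. Remark~\ref{rmk:uniform.holinf-symbols-classical-standard} then shows that $\rho(z)$, $z\in\Sigma$, is bounded in $\bS^{m_\Sigma}(\T^n_\theta\times\R^n)$. Hence $P(z)$, $z\in\Sigma$, is bounded in $\sL_{q,\infty}$ with $q=n|m_\Sigma|^{-1}>p$. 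Since $\sL_{q,\infty}$ embeds continuously into $\sL_p$, the family is bounded in $\sL_p$ on $\Sigma$. Combined with holomorphy in $\sL_p$, this gives the $\Hol_\infty(\Omega)$ property.

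For part (2), first take $\Sigma$ to be a half-strip. By Remark~\ref{rmk:uniform.holinf-symbols-classical-standard}, $\rho(z)$, $z\in\Sigma$, is bounded in $\bS^{w_\Sigma}(\T^n_\theta\times\R^n)$. Continuity of quantization into $\sL_{p,\infty}$, $p=n|w_\Sigma|^{-1}$, then gives boundedness of $P(z)$ in $\sL_{p,\infty}$, and the continuous inclusion $\sL_{p,\infty}\subseteq\sL_r$ for $r>p$ gives boundedness in $\sL_r$. A closed vertical strip is the union of two closed half-strips, so the strip case follows as in Remark~\ref{rmk:Hol.boundedness-vertical-strips}. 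Each half-strip has supremum of $\Re w$ at most $w_\Sigma$, and hence its Schatten exponent is at most $p$, so the bound in $\sL_{p,\infty}$ still holds.

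The only delicate point is the one just handled: passing from a pointwise strict inequality $\Re w(z)<m$ to a uniform gap on each half-strip, which is exactly what the compactness condition~(i) provides.
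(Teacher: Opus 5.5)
Your proof is correct and follows the paper's route: rerun the argument of Proposition~\ref{prop:Hol.propertiesP(z)-sLp} on closed vertical half-strips, then pass to strips via Remark~\ref{rmk:Hol.boundedness-vertical-strips}, with your explicit use of the compactness condition~(i) to get $m_\Sigma<m$ in the strict case being exactly the step the paper leaves implicit. One slip: since $m_\Sigma<m<0$ you have $q=n|m_\Sigma|^{-1}<p$, not $q>p$, and it is $q<p$ that gives the continuous inclusion $\sL_{q,\infty}\subseteq\sL_p$ (the paper's proof of Proposition~\ref{prop:Hol.propertiesP(z)-sLp} contains the same misprint).
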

\begin{proof}
 As with the proof of Proposition~\ref{prop:unif.propertiesP(z)-W2s}, arguing along the same lines as the proof of Proposition~\ref{prop:Hol.propertiesP(z)-sLp} yields the first part, as well as the second part for closed vertical half-strips. Using Remark~\ref{rmk:Hol.boundedness-vertical-strips} allows us to extend the result to closed vertical strips. 
\end{proof}

In what follows we set  $\Omega_\pm =\Omega\cap\bH_\pm$. If $\sE$ is a locally convex topological vector space (or even a quasi-Banach space), and $X \subseteq \C$ is a Borel set, we denote by $L^\infty(X;\sE)$ the space of bounded measurable maps $u:X\rightarrow \sE$. 

\begin{lemma}\label{lem:Unif.Holinf-rapid-decay}
 Let $u(z)\in \Hol(\Omega;\sE)$. Given any $\alpha \geq 0$, the following are equivalent:    
\begin{enumerate}
\item[(i)]  $e^{\pm i\alpha z}u(z)\in\Hol_\infty(\Omega_\pm;\sE)$.

\item[(ii)] $u(z)\in  e^{\alpha |\Im z|}L^\infty(\Sigma;\sE)$ for every  closed vertical half-strip $\Sigma \subseteq \Omega$. 
\end{enumerate}
Moreover, if (i)--(ii) hold, then $u(z)\in  e^{\alpha |\Im z|}L^\infty(\Sigma;\sE)$ for every  closed vertical strip $\Sigma \subseteq \Omega$. 
\end{lemma}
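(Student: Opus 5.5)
The equivalence comes down to computing the modulus of the exponential weight. Writing $z=x+iy$, we have
\[
\big|e^{\pm i\alpha z}\big|=e^{\mp\alpha y},
\]
and on $\bH_\pm$ this equals $e^{-\alpha|\Im z|}$. So $e^{\pm i\alpha z}u(z)$ is bounded on a set $X\subseteq \Omega_\pm$ exactly when $u(z)\in e^{\alpha|\Im z|}L^\infty(X;\sE)$. For a continuous semi-norm $\fp$ (or the quasi-norm), the relevant identity is
\[
\fp\big[e^{\pm i\alpha z}u(z)\big]=e^{-\alpha|\Im z|}\fp[u(z)].
\]
Holomorphy of $e^{\pm i\alpha z}u(z)$ on $\Omega_\pm$ is automatic, since it is the product of a scalar entire function with a holomorphic $\sE$-valued map.

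\emph{(i) $\Rightarrow$ (ii).} Let $\Sigma=\Sigma_\pm(a,b,c)\subseteq\Omega$ be a closed vertical half-strip, say an upper one $\Sigma_+(a,b,c)$.
\begin{itemize}
\item If $c>0$, then $\Sigma\subseteq\Omega_+$ is a closed vertical half-strip in $\Omega_+$. Assumption (i) then gives boundedness of $e^{-\alpha|\Im z|}u(z)$ on $\Sigma$, which is (ii).
\item If $c\le 0$, split $\Sigma=\Sigma_+(a,b,1)\cup K$ with $K=\{a\le\Re z\le b,\ c\le\Im z\le 1\}$. The set $K$ is compact and contained in $\Omega$, so the continuous map $u$ is bounded on it, and $e^{\alpha|\Im z|}$ is bounded above and below on $K$. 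On $\Sigma_+(a,b,1)$ we use the previous case.
\end{itemize}
Lower half-strips are handled symmetrically. The only care needed is to note that $K\subseteq\Sigma\subseteq\Omega$, so no admissibility issue arises.

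\emph{(ii) $\Rightarrow$ (i).} Let $\Sigma\subseteq\Omega_\pm$ be a closed vertical half-strip. Then $\Sigma\subseteq\Omega$ as well, so (ii) gives $\sup_\Sigma e^{-\alpha|\Im z|}\fp[u(z)]<\infty$. Since $|\Im z|=\pm\Im z$ on $\Sigma$, this is precisely $\sup_\Sigma\fp[e^{\pm i\alpha z}u(z)]<\infty$. Hence $e^{\pm i\alpha z}u(z)$ is a $\Hol_\infty(\Omega_\pm)$-family in $\sE$. A side remark: $\Omega_\pm$ is admissible, since for $z_0\in\Omega_\pm$ one can intersect a half-strip in $\Omega$ containing $z_0$ with $\bH_\pm$ (shrinking slightly) to obtain an open half-strip containing $z_0$ inside $\Omega_\pm$. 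This is not actually needed for the definition.

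\emph{Strips.} A closed vertical strip $\{a\le\Re z\le b\}\subseteq\Omega$ is the union of the two half-strips $\Sigma_+(a,b,0)$ and $\Sigma_-(a,b,0)$, both contained in $\Omega$. Applying (ii) to each gives the last assertion, exactly as in Remark~\ref{rmk:Hol.boundedness-vertical-strips}.

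The main point to be careful about is half-strips in $\Omega$ that cross the real axis. These are dealt with by the compact-piece splitting in the first implication, which uses only continuity of $u$ on compact subsets of $\Omega$. For quasi-Banach $\sE$ the same argument works verbatim with the quasi-norm in place of $\fp$.
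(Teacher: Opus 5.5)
Your proof is correct and is essentially the paper's argument. You get (ii)$\Rightarrow$(i) from $|e^{\pm i\alpha z}|=e^{-\alpha|\Im z|}$ on $\bH_\pm$, and (i)$\Rightarrow$(ii) by splitting a half-strip of $\Omega$ into a compact piece near the real axis plus a half-strip lying in $\Omega_\pm$; one caveat is that your side remark that $\Omega_\pm$ is admissible is false in general (for $\Omega=\{\Im z>-1\}$, $\Omega_-$ is a horizontal strip containing no vertical half-strip), but as you note it is not used, so the proof is unaffected.
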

\begin{proof}
It is clear that $e^{\pm i\alpha z}u(z)\in\Hol(\Omega_\pm;\sE)$. If (ii) holds, then $e^{\pm i\alpha z}u(z)$ is uniformly bounded on every closed vertical half-strip $\Sigma \subseteq \Omega_\pm$, and so  $e^{\pm i\alpha z}u(z)\in\Hol_\infty(\Omega_\pm;\sE)$. Furthermore, it follows from Remark~\ref{rmk:Hol.boundedness-vertical-strips} that $u(z)$ is in  $e^{\alpha |\Im z|}L^\infty(\Sigma;\sE)$ for every  closed vertical strip $\Sigma \subseteq \Omega$. 
  
It remains to show that~(i) implies~(ii). Assume that $e^{\pm i\alpha z}u(z)\in\Hol_\infty(\Omega_\pm;\sE)$. Let $\Sigma \subseteq \Omega$ be a closed vertical half-strip. Set $\Sigma_\pm= \Sigma \cap \{\pm \Im z \geq 1\}$ and $K=\Sigma \cap \{|\Im z|\leq 1\}$. Here $\Sigma_\pm$ is either a closed vertical half-strip in $\Omega_\pm$  or a compact subset. Either way $e^{\pm i \alpha z}u(z)\in L^\infty(\Sigma_\pm;\sE)$, and hence $u(z)\in e^{ \alpha |\Im z|}L^\infty(\Sigma_\pm;\sE)$. Moreover, as $K$ is a compact subset of $\Omega$, we also have that  $u(z)\in e^{ \alpha |\Im z|}L^\infty(K;\sE)$. As $\Sigma=\Sigma_{-}\cup \Sigma_+\cup K$, it follows that $u(z)\in  e^{\alpha |\Im z|}L^\infty(\Sigma;\sE)$. This shows that~(i) implies~(ii).  
\end{proof}

\begin{remark}
 If $\sE$ is locally convex, then~(ii) means that, for every continuous semi-norm $\fp$ on $\sE$, there is $C_{\Sigma \fp}>0$ such that 
\begin{equation*}
\fp\big[ u(z)\big] \leq C_{\Sigma \fp} e^{\alpha |\Im z|} \qquad \forall z\in \Sigma.  
\end{equation*}
In case $\sE$ is a quasi-Banach space we obtain a similar estimate in terms of the quasi-norm of $\sE$. Moreover, as in Remark~\ref{rmk:Hol.boundedness-vertical-strips} such estimates hold on every closed vertical strip contained in $\Omega$. 
\end{remark}

Combining Proposition~\ref{prop:unif.propertiesP(z)-W2s} and Proposition~\ref{prop:unif.propertiesP(z)-sLp} with Lemma~\ref{lem:Unif.Holinf-rapid-decay} we arrive at the following result. 

\begin{proposition}\label{prop:unif.exp-control} 
Let $P(z)$, ${z\in \Omega}$,  be a holomorphic family   in $\Psi^\bt(\T^n_\theta)$ of order $w(z)$ for which there is $\alpha\geq 0$ such that
\begin{equation*}
 e^{\pm i\alpha z}P(z)\in \Hol_{\infty}\big(\Omega_\pm;\Psi^\bt(\T^n_\theta)\big). 
\end{equation*}
 Let $\Sigma\subseteq \Omega$ be a closed vertical half-strip or strip, and set $w_\Sigma=\sup\{\Re w(z); \ z\in \Sigma\}$. 
\begin{enumerate}
 \item  $P(z) \in e^{\alpha |\Im z|} L^\infty(\Sigma;\sL(W_2^{s+w_\Sigma}(\T^n_\theta), W_2^{s}(\T^n_\theta)))$ for all $s\in \R$. 

\item If $w_\Sigma<0$, and we set $p=n|w_\Sigma|^{-1}$, then $P(z)$ is in $e^{\alpha |\Im z|} L^\infty\left(\Sigma;\sL_{p,\infty}\right)$, and hence in
$e^{\alpha |\Im z|} L^\infty\left(\Sigma;\sL_{r}\right)$ for all $r>p$. 
\end{enumerate}
\end{proposition}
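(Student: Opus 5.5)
The plan is to reduce Proposition~\ref{prop:unif.exp-control} to the $\Hol_\infty$ statements already established, applied separately on the upper and lower halves, and then to glue the two halves together using Lemma~\ref{lem:Unif.Holinf-rapid-decay}. Put $Q_\pm(z):=e^{\pm i\alpha z}P(z)$ for $z\in\Omega_\pm$. By hypothesis $Q_\pm(z)$ is a $\Hol_\infty(\Omega_\pm)$-family in $\Psi^\bt(\T^n_\theta)$. Its order is $w(z)$: multiplying by the scalar holomorphic function $e^{\pm i\alpha z}$ does not change the classical order function. Note also that $\Omega_\pm$ is admissible, because it is the intersection of the admissible set $\Omega$ with a half-plane.

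The first step is to fix a closed vertical half-strip or strip $\Sigma\subseteq\Omega$ and split it as $\Sigma=\Sigma_+\cup\Sigma_-\cup K$. Here $\Sigma_\pm=\Sigma\cap\{\pm\Im z\geq 1\}$ and $K=\Sigma\cap\{|\Im z|\leq 1\}$, exactly as in the proof of Lemma~\ref{lem:Unif.Holinf-rapid-decay}. Each $\Sigma_\pm$ is either empty, compact, or a closed vertical half-strip contained in $\Omega_\pm$. We also have $\sup\{\Re w(z);\ z\in\Sigma_\pm\}\leq w_\Sigma$.

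The next step is to apply Proposition~\ref{prop:unif.propertiesP(z)-W2s}(3) to $Q_\pm$ on $\Sigma_\pm$. Since $W_2^{s+w_\Sigma}\hookrightarrow W_2^{s+w_{\Sigma_\pm}}$ continuously, this shows that $Q_\pm(z)$ is bounded in $\sL(W_2^{s+w_\Sigma},W_2^s)$ on $\Sigma_\pm$. Because $|e^{\mp i\alpha z}|=e^{\alpha|\Im z|}$ on $\Sigma_\pm$, it follows that $P(z)\in e^{\alpha|\Im z|}L^\infty(\Sigma_\pm;\sL(W_2^{s+w_\Sigma},W_2^s))$. On the compact set $K$, Proposition~\ref{prop:Hol.propertiesP(z)-W2s}(3) applied to $P(z)$ gives boundedness with $w_K\leq w_\Sigma$, and $e^{\alpha|\Im z|}$ is bounded above and below on $K$. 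Combining the three pieces proves Part~(1).

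Part~(2) follows the same pattern. When $w_\Sigma<0$, apply Proposition~\ref{prop:unif.propertiesP(z)-sLp}(2) to $Q_\pm$ on $\Sigma_\pm$, and Proposition~\ref{prop:Hol.propertiesP(z)-sLp}(2) to $P$ on $K$. This requires the inclusion $\sL_{q,\infty}\subseteq\sL_{p,\infty}$ for $q=n|w_{\Sigma_\pm}|^{-1}\leq p$, which holds because $|w_{\Sigma_\pm}|\geq|w_\Sigma|$. It also requires the continuous embedding $\sL_{p,\infty}\hookrightarrow\sL_r$ for $r>p$.

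The only delicate point is bookkeeping, not analysis. The sets $\Sigma_\pm$ may be compact rather than genuine half-strips, and a uniform bound must be available on the compact middle piece $K$, which lies in $\Omega$ but not necessarily in $\Omega_\pm$. Both issues are handled by falling back on the holomorphic, non-$\Hol_\infty$, results on compact sets, as indicated above.
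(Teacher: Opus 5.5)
Your proof is correct and follows the paper's route. The paper just combines Propositions~\ref{prop:unif.propertiesP(z)-W2s} and~\ref{prop:unif.propertiesP(z)-sLp}, applied to $e^{\pm i\alpha z}P(z)$ on $\Omega_\pm$, with Lemma~\ref{lem:Unif.Holinf-rapid-decay}; you inline that lemma's splitting $\Sigma=\Sigma_+\cup\Sigma_-\cup K$, which is slightly more careful, since the lemma as stated asks for holomorphy in the fixed target space $\sL(W_2^{s+w_\Sigma}(\T^n_\theta),W_2^s(\T^n_\theta))$ (or $\sL_{p,\infty}$) on all of $\Omega$.

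One harmless slip: $\Omega\cap\bH_\pm$ need not be admissible (for example, $\Omega=\{\Im z<1\}$ gives $\Omega_+=\{0<\Im z<1\}$). Your argument never uses admissibility, though; it only needs that each piece $\Sigma\cap\{\pm\Im z\geq 1\}$ is a closed vertical half-strip in $\Omega_\pm$ or a compact set.
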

 
\begin{remark}
 The above result is the main motivation for the definition of $\Hol_\infty$-families of \psidos. We will see later that complex powers of elliptic \psidos\ satisfy the assumptions of Proposition~\ref{prop:unif.exp-control}, and so this will immediately provide uniform exponential bounds along vertical strips for such families. 
 \end{remark}

\section{Complex Powers of Elliptic \psidos}\label{sec:powers} 
In this section, we construct the complex powers of positive-order elliptic \psidos\ on $\T^n_\theta$ as holomorphic families of \psidos. This will provide us with analogues for NC tori of the well-known results of Seeley~\cite{Se:PSPM67}. We shall also look at sectorial projections and explain how they encode the asymmetry of the complex powers. 

\subsection{Holomorphic families of \psidos\ out of parametric symbols}\label{subsec:powers.parametric-hol}
In what follows, we let $\Lambda$ be an open pseudo-cone of the form,
\begin{equation*}
 \Lambda= \Theta \cup D(0,r_0) \qquad \text{or} \qquad \Lambda= \Theta \cup [D(0,r_0)\setminus 0] , 
\end{equation*}
where $r_0>0$ and $\Theta\subseteq \C^*$ is a non-empty cone. In addition, we let $L_\phi=\{\arg \lambda=\phi\}$ be a ray contained in $\Theta$. 

Given rays $L_{\phi_j}=\{\arg \lambda =\phi_j\}$, $j=1,2$, contained in $\Theta$ with $\phi-2\pi\leq \phi_2<\phi_1\leq \phi$ we form the oriented contour,  
\begin{gather}\label{eq:powers.contourG1} 
\Gamma(\phi_1,\phi_2,r):= \Gamma(\phi_1,r)^- \cup C(\phi_1,\phi_2,r) \cup \Gamma(\phi_2,r)^+, \quad \text{where}\\ 
 \Gamma(\phi_j,r)^\pm=[r,\infty)e^{i\phi_j}, \qquad C(\phi_1,\phi_2,r) =\{re^{i\eta};\phi_1>\eta>\phi_2\}, \qquad 0<r<r_0. 
 \label{eq:powers.contourG2}
\end{gather}
We orient $\Gamma(\phi_1,\phi_2,r)$ so that $\Gamma(\phi_1,r)^-$ (resp., $\Gamma(\phi_2,r)^+$) is oriented toward (resp., away from) the origin and $C(\phi_1,\phi_2,r)$ is oriented clockwise. 
\begin{figure}[h]
\begin{minipage}{0.25\linewidth}
\centering{\def\svgwidth{\columnwidth}%% Creator: Inkscape 1.0beta1 (32d4812, 2019-09-19), www.inkscape.org
%% PDF/EPS/PS + LaTeX output extension by Johan Engelen, 2010
%% Accompanies image file 'contour1.pdf' (pdf, eps, ps)
%%
%% To include the image in your LaTeX document, write
%%   \input{<filename>.pdf_tex}
%%  instead of
%%   \includegraphics{<filename>.pdf}
%% To scale the image, write
%%   \def\svgwidth{<desired width>}
%%   \input{<filename>.pdf_tex}
%%  instead of
%%   \includegraphics[width=<desired width>]{<filename>.pdf}
%%
%% Images with a different path to the parent latex file can
%% be accessed with the `import' package (which may need to be
%% installed) using
%%   \usepackage{import}
%% in the preamble, and then including the image with
%%   \import{<path to file>}{<filename>.pdf_tex}
%% Alternatively, one can specify
%%   \graphicspath{{<path to file>/}}
%% 
%% For more information, please see info/svg-inkscape on CTAN:
%%   http://tug.ctan.org/tex-archive/info/svg-inkscape
%%
\begingroup%
  \makeatletter%
  \providecommand\color[2][]{%
    \errmessage{(Inkscape) Color is used for the text in Inkscape, but the package 'color.sty' is not loaded}%
    \renewcommand\color[2][]{}%
  }%
  \providecommand\transparent[1]{%
    \errmessage{(Inkscape) Transparency is used (non-zero) for the text in Inkscape, but the package 'transparent.sty' is not loaded}%
    \renewcommand\transparent[1]{}%
  }%
  \providecommand\rotatebox[2]{#2}%
  \newcommand*\fsize{\dimexpr\f@size pt\relax}%
  \newcommand*\lineheight[1]{\fontsize{\fsize}{#1\fsize}\selectfont}%
  \ifx\svgwidth\undefined%
    \setlength{\unitlength}{283.46456693bp}%
    \ifx\svgscale\undefined%
      \relax%
    \else%
      \setlength{\unitlength}{\unitlength * \real{\svgscale}}%
    \fi%
  \else%
    \setlength{\unitlength}{\svgwidth}%
  \fi%
  \global\let\svgwidth\undefined%
  \global\let\svgscale\undefined%
  \makeatother%
  \begin{picture}(1,1)%
    \lineheight{1}%
    \setlength\tabcolsep{0pt}%
    \put(0,0){\includegraphics[width=\unitlength,page=1]{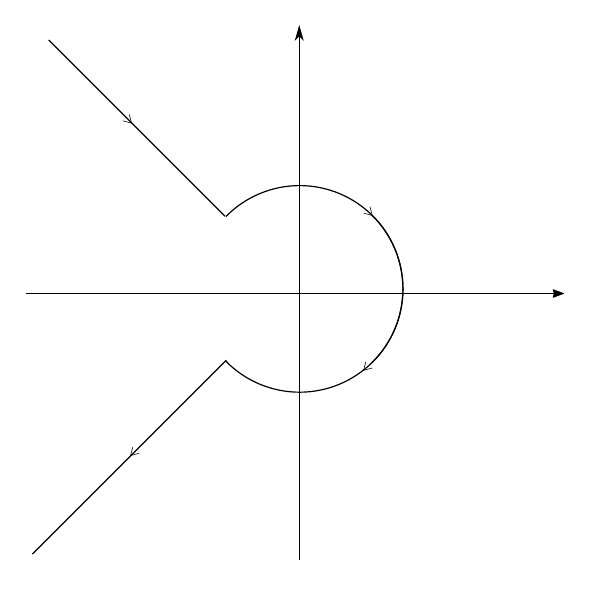}}%
    \put(0.66276482,0.65973785){\color[rgb]{0,0,0}\makebox(0,0)[lt]{\lineheight{1.25}\smash{\begin{tabular}[t]{l}{\footnotesize $C(\phi_1,\phi_2,r)$}\end{tabular}}}}%
%    \put(0.75696793,0.89985801){\color[rgb]{0,0,0}\makebox(0,0)[lt]{\lineheight{1.25}\smash{\begin{tabular}[t]{l}$\Gamma(\phi_1,\phi_2,r)$\end{tabular}}}}%
    \put(0.21175968,0.84621466){\color[rgb]{0,0,0}\makebox(0,0)[lt]{\lineheight{1.25}\smash{\begin{tabular}[t]{l}{\footnotesize $\Gamma(\phi_1,r)^-$}\end{tabular}}}}%
    \put(0.22344847,0.16625694){\color[rgb]{0,0,0}\makebox(0,0)[lt]{\lineheight{1.25}\smash{\begin{tabular}[t]{l}{\footnotesize $\Gamma(\phi_2,r)^+$}\end{tabular}}}}%
  \end{picture}%
\endgroup%
}
\end{minipage}
\caption{Contour $\Gamma(\phi_1,\phi_2,r)$.}
\end{figure}

We refer to~\cite[Appendix~A]{LP:Part1} for background on improper integrals $\int_{\Gamma} u(\lambda)d\lambda$ associated with continuous maps $u:\Gamma \rightarrow \sE$, where $\sE$ is a given locally convex space. Such an integral is called \emph{convergent} if the limits $\lim_{a\rightarrow \infty}\int_{re^{i\phi_j}}^{ae^{i\phi_j}}u(\lambda)d\lambda$, $j=1,2$,  
both exist in $\sE$. We then define
\begin{equation*}
 \int_{\Gamma} u(\lambda)d\lambda := -\lim_{a\rightarrow \infty}\int_{re^{i\phi_1}}^{ae^{i\phi_1}}u(\lambda)d\lambda + 
 \int_{C(\phi_1,\phi_2,r)} u(\lambda)d\lambda + \lim_{a\rightarrow \infty}\int_{re^{i\phi_2}}^{ae^{i\phi_2}}u(\lambda)d\lambda.  
\end{equation*}

\begin{proposition}[see~{\cite[Appendix~A]{LP:Part1}}]\label{prop:powers.improper-int} 
 Assume that the integral $\int_{\Gamma} u(\lambda)d\lambda$ converges.
\begin{enumerate}
 \item $ \int_{\Gamma} u(\lambda)d\lambda$ is the unique element of $\sE$ such that
\begin{equation*}
 \int_\Gamma \varphi\big[u(\lambda)\big] d\lambda = \varphi\bigg(  \int_{\Gamma} u(\lambda)d\lambda\bigg) \qquad \forall \varphi \in \sE'. 
\end{equation*}

\item Given any continuous linear map $\Phi:\sE\rightarrow \sE_1$, where  $\sE_1$ is some locally convex space,  the integral  
$\int_\Gamma \Phi\circ u(\lambda)d\lambda$ converges, and we have
\begin{equation*}
 \int_\Gamma \Phi\circ u(\lambda)d\lambda =  \Phi\bigg( \int_{\Gamma} u(\lambda)d\lambda\bigg) . 
\end{equation*}
\end{enumerate}
 \end{proposition}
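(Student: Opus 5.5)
The proof is by reduction to the scalar case through continuous linear functionals, using only the definition of the improper integral as a sum of two limits and a contour integral over the compact arc $C(\phi_1,\phi_2,r)$, together with Proposition~\ref{prop:Hol.integral}.

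\emph{Part (1).} Set $I:=\int_\Gamma u(\lambda)d\lambda\in\sE$. Fix $\varphi\in\sE'$. For each finite $a>r$, the segment integrals $\int_{re^{i\phi_j}}^{ae^{i\phi_j}}u(\lambda)d\lambda$ are Riemann integrals of continuous $\sE$-valued maps (after parametrizing by $t\mapsto te^{i\phi_j}$). By Proposition~\ref{prop:Hol.integral}(1), $\varphi$ commutes with them, and likewise with the arc integral over $C(\phi_1,\phi_2,r)$. Since $\varphi$ is continuous and the two limits exist in $\sE$, we may pass to the limit $a\to\infty$ inside $\varphi$. This shows that the scalar improper integrals $\int_\Gamma\varphi[u(\lambda)]d\lambda$ converge and equal $\varphi(I)$. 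Uniqueness uses local convexity: $\sE'$ separates points by Hahn--Banach. Hence any $J\in\sE$ with $\varphi(J)=\int_\Gamma\varphi[u]\,d\lambda$ for all $\varphi$ satisfies $\varphi(J-I)=0$ for all $\varphi$, so $J=I$.

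\emph{Part (2).} The map $\Phi\circ u:\Gamma\to\sE_1$ is continuous. By Proposition~\ref{prop:Hol.integral}(3), $\Phi$ commutes with each finite segment integral and with the arc integral. Continuity of $\Phi$ then carries the existence of the limits as $a\to\infty$ in $\sE$ over to the existence of the corresponding limits in $\sE_1$, with values $\Phi$ of the original limits. Summing the three pieces with the signs in the definition gives both the convergence of $\int_\Gamma\Phi\circ u(\lambda)d\lambda$ and the identity $\int_\Gamma\Phi\circ u(\lambda)d\lambda=\Phi(I)$.

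The only point that needs some care is checking that the ray pieces really are Riemann integrals of continuous maps on compact intervals, so that Proposition~\ref{prop:Hol.integral} applies. This follows from the parametrization and the definition of contour integrals given above. Everything else is just continuity of linear maps passing through limits, so no step presents a real obstacle.
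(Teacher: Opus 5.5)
Your proposal is correct and is the standard argument for this result. The paper gives no proof here and defers to \cite[Appendix~A]{LP:Part1}, but the natural route is exactly yours: push $\varphi$, resp.\ $\Phi$, through the finite ray and arc integrals via Proposition~\ref{prop:Hol.integral}, pass to the limit $a\to\infty$ by continuity, and obtain uniqueness from the fact that $\sE'$ separates points. Note that the existence half of Part~(1) is just Part~(2) with $\sE_1=\C$ and $\Phi=\varphi$, so the two parts can be merged.
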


We shall further say that a continuous (or even measurable) map $u:\Gamma \rightarrow \sE$ is \emph{absolutely integrable} if 
\begin{equation*}
 \int_\Gamma \mathfrak{p}\big[u(\lambda)\big] |d\lambda| <\infty \quad \text{for every continuous semi-norm $\mathfrak{p}$ on $\sE$}. 
\end{equation*}
For instance, any $\Hol^{d}(\Lambda)$-family in $\sE$ with $d<-1$ restricts to an absolutely integrable map on $\Gamma$. In case $\sE$ is a Banach space, absolute integrability simply means that $u(\lambda)$ is Bochner-integrable.

\begin{proposition}[see~{\cite[Appendix~A]{LP:Part1}}] 
Assume that $\sE$ is quasi-complete and $u:\Gamma \rightarrow \sE$ is continuous and absolutely integrable. 
\begin{enumerate}
 \item The integral $ \int_{\Gamma} u(\lambda)d\lambda$ is convergent. 
 
 \item  For every continuous semi-norm $\fp$ on $\sE$, we have
\begin{equation}
 \fp\bigg( \int_{\Gamma} u(\lambda)d\lambda\bigg) \leq  \int_{\Gamma}\fp\big[  u(\lambda)\big] |d\lambda|.
 \label{eq:Powers.int-semi-norms}  
\end{equation}
\end{enumerate}
\end{proposition}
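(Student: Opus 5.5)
The statement has two parts. Part (1) says the improper integral converges. Part (2) is the seminorm bound~(\ref{eq:Powers.int-semi-norms}). The plan is to reduce both to estimates on the two rays, since the circular arc $C(\phi_1,\phi_2,r)$ is compact and its contribution is an ordinary Riemann integral, handled by Proposition~\ref{prop:Hol.integral}.

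For part (1), fix one ray $j$ and put
\[
F(a)=\int_{re^{i\phi_j}}^{ae^{i\phi_j}}u(\lambda)\,d\lambda, \qquad a>r .
\]
I will show that $(F(a))_{a>r}$ is a Cauchy net in $\sE$. Given a continuous seminorm $\fp$ and $r<a<b$, the estimate~(\ref{eq:Hol.integral-semi-norm}) applied on the segment $[ae^{i\phi_j},be^{i\phi_j}]$ yields
\[
\fp\big(F(b)-F(a)\big)\leq \int_a^b \fp\big[u(te^{i\phi_j})\big]\,dt .
\]
By absolute integrability, $t\mapsto \fp[u(te^{i\phi_j})]$ is integrable on $[r,\infty)$. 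Hence the right-hand side tends to $0$ as $a,b\to\infty$.

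The one delicate point is passing from Cauchy to convergent. Here we only assume $\sE$ is quasi-complete, not complete. So I need the net to be bounded; bounded Cauchy nets converge in quasi-complete spaces. Boundedness follows from the same estimate, since $\fp(F(a))\leq \int_r^\infty \fp[u(te^{i\phi_j})]\,dt<\infty$ uniformly in $a$, for every continuous seminorm $\fp$. To be safe I will pass to the sequence $a=k\in\N$, which is bounded and Cauchy, hence convergent. The Cauchy estimate then shows that the full net converges to the same limit. Doing this for $j=1,2$ gives convergence of $\int_\Gamma u(\lambda)\,d\lambda$.

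For part (2), apply $\fp$ to the defining decomposition of $\int_\Gamma u(\lambda)\,d\lambda$ and use the triangle inequality. The arc term is bounded by $\int_{C}\fp[u]\,|d\lambda|$ via~(\ref{eq:Hol.integral-semi-norm}). Each ray term is $\fp(\lim_a F(a))$. Continuity of $\fp$ gives
\[
\fp\big(\lim_a F(a)\big)=\lim_a \fp(F(a))\leq \int_r^\infty \fp\big[u(te^{i\phi_j})\big]\,dt .
\]
Summing the three pieces gives~(\ref{eq:Powers.int-semi-norms}).

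If $u$ is only measurable rather than continuous, I would restrict to the continuous case as stated, since the Riemann integral requires continuity.
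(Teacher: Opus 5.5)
Your proof is correct and is the standard argument. On each ray the partial integrals $F(a)$ form a Cauchy net whose range is bounded in every continuous seminorm $\fp$ by $\int_\Gamma \fp[u(\lambda)]\,|d\lambda|$, so quasi-completeness gives convergence, and the seminorm bound passes to the limit by continuity of $\fp$. The paper gives no proof of its own here and defers to Appendix~A of~\cite{LP:Part1}. Your detour through integer values of $a$ is harmless but unnecessary, since the net $(F(a))_{a>r}$ is already bounded and Cauchy.
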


\begin{remark}
 A locally convex space is called \emph{complete} if every Cauchy net is convergent. It is called \emph{quasi-complete} if every bounded Cauchy net is convergent. All the locally convex spaces under consideration in this section are complete. For instance, the completeness of $\scD'(\T^n_\theta)$ and $\sL(C^\infty(\T^n_\theta))$ follows from the fact that $C^\infty(\T^n_\theta)$ is a Fr\'echet--Montel space (see~\cite{Tr:AP67}). All the integrals under consideration in this section and the next sections are integrals of absolutely integrable maps. 
\end{remark}

Given any $z\in \C$, we define the power function $\lambda \rightarrow \lambda_\phi^z$ by 
\begin{equation}\label{eq:Powers.Powers-scalars}
 \lambda_\phi^z= |\lambda|^z e^{iz \arg_{\phi} (\lambda)},
\end{equation}
where $\arg_\phi:\C^*\setminus L_\phi\rightarrow (\phi-2\pi,\phi)$ is the continuous determination of the argument associated with $\phi$. If $\lambda$ goes along 
$\Gamma(\phi,r)^-$ (resp., $\Gamma(\phi-2\pi,r)^{+}$), then we make the convention that $\lambda_\phi^z=|\lambda|^ze^{iz\phi}$ (resp., 
$\lambda_\phi^z=|\lambda|^ze^{iz(\phi-2\pi)}$). With this convention $(z,\lambda)\rightarrow \lambda_\phi^z$ is a well defined continuous function on $\C\times \Gamma$ which is holomorphic with respect to $z$. 

In the following we put $\Gamma=\Gamma(\phi_1,\phi_2,r)$, with $(\phi_1,\phi_2,r)$ as above. 

\begin{lemma}\label{lem:Powers.LCS} 
 Suppose that $\sE$ is a quasi-complete locally convex space, and let $v(\lambda)\in \Hol^{-1}(\Lambda;\sE)$. Then we define a family in $\Hol(\Re z<0;\sE)$  by letting
 \begin{equation}
 u(z)= \int_\Gamma \lambda_\phi^z v(\lambda)d\lambda, \qquad \Re z<0,
 \label{eq:powers.int-LCS} 
\end{equation}
where the integral converges in $\sE$. 
\end{lemma}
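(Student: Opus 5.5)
Fix $z$ with $\Re z<0$. The plan is to show that $\lambda\mapsto \lambda_\phi^z v(\lambda)$ is continuous and absolutely integrable on $\Gamma$, so the integral converges by the quasi-completeness result recalled above. Holomorphy in $z$ will then come from a locally uniform version of the same bound together with the separating-functional criterion (Corollary~\ref{cor:Hol.hol-separating}).

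\emph{Convergence.} On $\Gamma$ we have $|\lambda_\phi^z| = |\lambda|^{\Re z} e^{-\Im z\,\arg_\phi\lambda}$, with $\arg_\phi\lambda$ ranging over the compact interval $[\phi-2\pi,\phi]$. Hence $|\lambda_\phi^z|\leq e^{2\pi|\Im z|+|\phi||\Im z|}|\lambda|^{\Re z}$. The contour $\Gamma$ consists of two rays and an arc, all contained in a closed sub-pseudo-cone $\Lambda'\subsubset\Lambda$: the rays lie in $\Theta$, and the arc has radius $r<r_0$ and avoids $0$.

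Thus for each continuous semi-norm $\fp$ we get $\fp[v(\lambda)]\leq C_{\fp}(1+|\lambda|)^{-1}$ on $\Gamma$. Combining the two bounds, $\fp[\lambda_\phi^z v(\lambda)] \leq C(1+|\lambda|)^{-1}|\lambda|^{\Re z}$, which is integrable along the rays because $\Re z<0$; the arc is compact. Continuity on $\Gamma$ follows from the continuity of $v$ and of $\lambda_\phi^z$, with the convention on the two edges of the cut. So the integral converges in $\sE$.

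\emph{Holomorphy.} Let $K\subseteq\{\Re z<0\}$ be compact, with $\Re z\leq -\delta<0$ and $|\Im z|\leq M$ on $K$. The same estimate gives the uniform domination
\[
\fp[\lambda_\phi^z v(\lambda)]\leq C_{K,\fp}(1+|\lambda|)^{-1}\min(|\lambda|,1)^{-M'}|\lambda|^{-\delta}
\]
for $z\in K$; on the rays $|\lambda|\geq r$, so only the factor $|\lambda|^{-1-\delta}$ at infinity matters. From this, continuity of $u$ follows by dominated convergence applied to the truncated integrals, with the tails small uniformly in $z\in K$.

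For each $\varphi\in\sE'$, Proposition~\ref{prop:powers.improper-int} gives $\varphi(u(z))=\int_\Gamma \lambda_\phi^z\varphi(v(\lambda))\,d\lambda$. This is a scalar integral of a function holomorphic in $z$ and dominated uniformly on compacts, so it is holomorphic (Morera's theorem, or differentiation under the integral sign). Since $\sE'$ separates points, Corollary~\ref{cor:Hol.hol-separating} (or Proposition~\ref{prop:Hol.weakly-strong}) shows that $u$ is holomorphic on $\{\Re z<0\}$.

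The main point to check is the uniform domination on compact sets of $z$, specifically that the factor $e^{-\Im z\arg_\phi\lambda}$ stays bounded. This holds because $\arg_\phi$ takes values in a bounded interval; the rest is routine.
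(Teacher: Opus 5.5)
Your proposal is correct and follows essentially the same route as the paper: you get absolute integrability of $\lambda_\phi^z v(\lambda)$ on $\Gamma$ from the $\Hol^{-1}$ bound and $\Re z<0$, then show the scalar integrals $\varphi[u(z)]$ are holomorphic and conclude by the weak-to-strong criterion (Proposition~\ref{prop:Hol.weakly-strong}). Your domination is stated only locally uniformly in $z$ and keeps track of the factor $e^{-\Im z\,\arg_\phi\lambda}$, which is in fact the accurate form of the estimate the paper uses: the paper states that bound uniformly over $\{\Re z<-a\}$, where this factor is unbounded.
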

\begin{proof}
 Let $a>0$. As $v(\lambda)\in \Hol^{-1}(\Lambda;\sE)$, given any pseudo-cone $\Lambda'\subsubset \Lambda$ containing $\Gamma$ and any continuous semi-norm $\fp$ on $\sE$, there is $C_{\Lambda'\fp}>0$ such that
 \begin{equation}
 \fp\big[\lambda_\phi^z v(\lambda)\big] \leq C_{\Lambda'\fp} \left(1+|\lambda|\right)^{-1-a} \qquad \forall \lambda \in \Lambda', \quad \Re z<-a.
 \label{eq:powers.estimates-v}  
\end{equation}
In particular, if $\Re z <-a$, then $\lambda_\phi^zv(\lambda)$ is an absolutely integrable map from $\Gamma$ to $\sE$, and so the integral in~(\ref{eq:powers.int-LCS}) converges in $\sE$. Furthermore, for any $\varphi \in \sE'$, we have 
\begin{equation*}
 \varphi\left[u(z)\right] =  \int_\Gamma \lambda_\phi^z  \varphi\left[v(\lambda)\right]d\lambda. 
\end{equation*}
The estimates~(\ref{eq:powers.estimates-v}) ensure that $\lambda_\phi^z  \varphi[v(\lambda)]$ is a bounded holomorphic map from $\{\Re z <-a\}$ to $L^1(\Gamma)$. It then follows that $\varphi[u(z)]$ is a holomorphic function on $\{\Re z<-a\}$ for every $\varphi\in \sE'$. By Proposition~\ref{prop:Hol.weakly-strong} this shows that $u(z)\in \Hol(\Re z <-a;\sE)$ for every $a>0$, and hence $u(z)\in \Hol(\Re z <0;\sE)$. 
\end{proof}

\begin{remark}\label{lem:Powers.contour-LCS}
 Let $\delta>0$ be such that the sectors $\{|\arg \lambda -\phi_j|\leq \delta\}$, $j=1,2$, are contained in $\Theta$. Let $r'\in (0,r_0)$ and let $\phi'_j$, $j=1,2$, be angles such that $|\phi'_j-\phi_j|\leq \delta$ and $\phi-2\pi\leq \phi_2'<\phi_1'\leq \phi$. It follows from~\cite[Proposition~A.7(3)]{LP:Part1} that in the formula~(\ref{eq:powers.int-LCS}) the value of $u(z)$ remains unchanged if we replace the contour $\Gamma=\Gamma(\phi_1,\phi_2,r)$ by  $\Gamma(\phi_1',\phi_2',r')$. More generally, if $d\in \R$ and $v(\lambda)\in \Hol^{d}(\Lambda;\sE)$, then the formula~(\ref{eq:powers.int-LCS}) makes sense for $\Re z<-1-d$ and defines a family in $\Hol(\Re z<-1-d;\sE)$. 
\end{remark}

\begin{lemma}\label{lem:Powers.st-symbols}
 Let $\sigma(\xi;\lambda)\in \stS^{m,-1}(\T^n_\theta\times \R^n\times \Lambda)$, $m\in \R$. Define 
\begin{equation*}
 \rho(z)(\xi):=\int_\Gamma \lambda_\phi^z \sigma(\xi;\lambda)d\lambda, \qquad \Re z<0,
\end{equation*}
where the integral converges in $ \stS^{m}(\T^n_\theta\times \R^n)$. Then $\rho(z)\in \Hol(\Re z<0; \stS^{m}(\T^n_\theta\times \R^n))$, and we have
\begin{equation}
 P_\rho(z) = \int_\Gamma \lambda_\phi^z P_\sigma(\lambda)d\lambda, \qquad \Re z<0,
  \label{eq:powers.int-P-st-symbol}  
\end{equation}
where the integral converges in $\sL(C^\infty(\T^n_\theta))$. 
\end{lemma}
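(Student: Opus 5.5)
The plan is to reduce everything to Lemma~\ref{lem:Powers.LCS} together with the continuity of quantization. By Remark~\ref{rmk:Parameter.standard-symbols-with-parameter-identification}, the symbol $\sigma(\xi;\lambda)\in\stS^{m,-1}(\T^n_\theta\times\R^n\times\Lambda)$ corresponds to a family $\lambda\mapsto\sigma(\cdot;\lambda)$ in $\Hol^{-1}(\Lambda;\stS^m(\T^n_\theta\times\R^n))$. The space $\stS^m(\T^n_\theta\times\R^n)$ is Fréchet, hence complete and in particular quasi-complete. Lemma~\ref{lem:Powers.LCS} then applies with $\sE=\stS^m(\T^n_\theta\times\R^n)$ and $v(\lambda)=\sigma(\cdot;\lambda)$. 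It shows that the integral defining $\rho(z)$ converges (absolutely) in $\stS^m(\T^n_\theta\times\R^n)$ for $\Re z<0$, and that $\rho(z)\in\Hol(\Re z<0;\stS^m(\T^n_\theta\times\R^n))$.

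For the operator identity~(\ref{eq:powers.int-P-st-symbol}), I would use that the quantization map $\Phi:\stS^m(\T^n_\theta\times\R^n)\ni\tau\mapsto P_\tau\in\sL(C^\infty(\T^n_\theta))$ is continuous and linear. This continuity was invoked in the proof of Proposition~\ref{prop:Hol.propertiesP(z)-W2s}, citing~\cite{HLP:Part2}. The space $\sL(C^\infty(\T^n_\theta))$ is locally convex and complete, since $C^\infty(\T^n_\theta)$ is Fréchet--Montel. Part~(2) of Proposition~\ref{prop:powers.improper-int} then gives two things: the integral $\int_\Gamma\lambda_\phi^z\Phi[\sigma(\cdot;\lambda)]\,d\lambda=\int_\Gamma\lambda^z_\phi P_\sigma(\lambda)\,d\lambda$ converges in $\sL(C^\infty(\T^n_\theta))$, and it equals $\Phi(\rho(z))=P_{\rho(z)}=P_\rho(z)$. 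Here I use that $\Phi$ applied to $\lambda^z_\phi\sigma(\cdot;\lambda)$ is $\lambda^z_\phi P_\sigma(\lambda)$ by linearity.

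The only step requiring care is justifying continuity of $\Phi$ into $\sL(C^\infty(\T^n_\theta))$ with its strong topology. If one prefers to avoid citing it directly, one can argue via Proposition~\ref{prop:PsiDOs-parameter.PsiDO-gives-rise-to-family-of-continuous-operators-on-cAtheta}. That result gives $P_\sigma(\lambda)\in\Hol^{-1}(\Lambda;\sL(C^\infty(\T^n_\theta)))$, so Lemma~\ref{lem:Powers.LCS} makes the operator integral converge directly. The two sides can then be identified by testing against $U^k$ via~(\ref{eq:PsiDOs.toroidal-def}): evaluation at $\xi=k$ is a continuous linear functional on $\stS^m$, so $P_{\rho(z)}U^k=\rho(z)(k)U^k=\int_\Gamma\lambda^z_\phi\sigma(k;\lambda)\,d\lambda\,U^k$. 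This agrees with the operator integral applied to $U^k$. Density of the span of the $U^k$ and continuity of both operators on $C^\infty(\T^n_\theta)$ conclude the argument. I expect no serious obstacle beyond this bookkeeping.
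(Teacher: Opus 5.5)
Your proposal is correct and matches the paper's proof: the paper also gets the first part directly from Lemma~\ref{lem:Powers.LCS} applied in $\stS^m(\T^n_\theta\times\R^n)$. It then derives~(\ref{eq:powers.int-P-st-symbol}) from the continuity of $\sigma\mapsto P_\sigma$ from $\stS^m(\T^n_\theta\times\R^n)$ to $\sL(C^\infty(\T^n_\theta))$ (citing \cite[Proposition~5.4]{HLP:Part1}) together with the second part of Proposition~\ref{prop:powers.improper-int}; your fallback via testing on the $U^k$ is sound but not needed.
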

\begin{proof}
The first part is an immediate consequence of Lemma~\ref{lem:Powers.LCS}. In particular, if $\Re z<0$, then $ \lambda_\phi^z \sigma(\xi;\lambda)$ is an absolutely integrable map from $\Gamma$ to $\stS^m(\T^n_\theta\times \R^n)$. As $\sigma \rightarrow P_\sigma$ is a continuous linear map from $\stS^m(\T^n_\theta\times \R^n)$ to 
$\sL(C^\infty(\T^n_\theta))$ (see~\cite[Proposition~5.4]{HLP:Part1}), the second part is a direct consequence of the second part of Proposition~\ref{prop:powers.improper-int}.
\end{proof}

\begin{remark}
 Given $s\in \R$, the map $\sigma \rightarrow P_\sigma$ is continuous from $\stS^m(\T^n_\theta \times \R^n)$ to $\sL(W_2^{s+m}(\T^n_\theta), W_2^s(\T^n_\theta))$. Thus, in the same way as in the proof of Lemma~\ref{lem:Powers.st-symbols} it can be shown that the integral in~(\ref{eq:powers.int-P-st-symbol}) actually converges in $\sL(W_2^{s+m}(\T^n_\theta), W_2^s(\T^n_\theta))$ for all $s\in \R$. In particular, if $m=0$, then it converges in $\sL(L_2(\T^n_\theta))$. 
\end{remark}

Given any $c>0$, we define the open set $\Omega_c(\Theta)\subseteq (\R^n\setminus 0)\times \C$ as in~(\ref{eq:Parameter.parameter-set-for-homogeneous-symbols}). That is, 
\begin{equation}\label{eq:Powers.parameter-set-for-homogeneous-symbols}
 \Omega_c(\Theta)=\bigcup_{\xi\neq 0} \{\xi\}\times \Lambda\left(c|\xi|^w\right),
\end{equation}
where for $r>0$ we set
\begin{equation*}
 \Lambda(r):=\Theta \cup \{|\lambda|<r\}. 
\end{equation*}
Note that by assumption $\Gamma \subsubset \Lambda(r')\setminus\{0\}\subseteq \Lambda(r_0)\setminus\{0\} \subseteq \Lambda$ for all $r'\in(r,r_0]$. 

\begin{lemma}\label{lem:Powers.hom-symbols}
 Let $\sigma(\xi;\lambda)\in S^{-1}_{m}(\T^n_\theta\times \Omega_c(\Theta))$ with $c>r$. Define 
 \begin{equation}
 \rho(z)(\xi) = \int_{|\xi|^w\Gamma}  \lambda_\phi^z \sigma(\xi;\lambda)d\lambda, \qquad \xi\neq 0, \quad \Re z <0,
 \label{eq:powers.int-hom-symb}
\end{equation}
where the integral converges in $C^\infty(\T^n_\theta)$. Then $\rho(z)(\xi)$ is a holomorphic family in 
$S_\bt(\T^n_\theta \times \R^n)$ of degree $w(z)=m+w(z+1)$. 
\end{lemma}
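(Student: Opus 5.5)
The plan has three parts: show the integral converges and is smooth in $\xi$ locally uniformly, prove homogeneity of degree $m+w(z+1)$ by a change of variables, and prove holomorphy in $z$ by the weak-holomorphy criterion.

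\emph{Convergence and smoothness.} Fix a compact set $K\subseteq\R^n\setminus 0$. For $\xi\in K$, the contour $|\xi|^w\Gamma$ consists of the rays $[r|\xi|^w,\infty)e^{i\phi_j}$, which lie in $\Theta$, and the arc of radius $r|\xi|^w<c|\xi|^w$. Since $\Gamma\subsubset\Lambda(c)\setminus\{0\}$, there is a slightly smaller cone $\Theta'$ with $\overline{\Theta'}\setminus 0\subseteq\Theta$ containing the rays. By homogeneity, the arc $(\xi,\lambda)$ with $|\lambda|=r|\xi|^w$ reduces to $(\xi/|\xi|,\lambda/|\xi|^w)$, which lies in a compact subset of $\Omega_c(\Theta)$.

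On the rays, condition (ii) of Definition~\ref{def:Parameter.homogeneous-symbol} gives
\[
\|\delta^\alpha\partial_\xi^\beta\sigma(\xi;\lambda)\|\leq C_{K\alpha\beta}(1+|\lambda|)^{-1}.
\]
On the arc, continuity on a compact set gives a uniform bound. Because $|\lambda_\phi^z|\leq C|\lambda|^{\Re z}$ on the contour, locally uniformly in $z$, the integrand is absolutely integrable in $C^\infty(\T^n_\theta)$ for $\Re z<0$. The bounds are uniform for $\xi\in K$ and $z$ in compact subsets of $\{\Re z<0\}$.

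For $\xi$-derivatives, I first fix $\xi_0$ and, for $\xi$ near $\xi_0$, replace the contour $|\xi|^w\Gamma$ by the fixed contour $|\xi_0|^w\Gamma$. This is allowed by Cauchy's theorem: the integrand is holomorphic in $\lambda$ on the region between the two contours, and it decays at infinity (the argument of Remark~\ref{lem:Powers.contour-LCS}). With the contour frozen, I differentiate under the integral sign. This shows $\rho(z)\in C^\infty(\T^n_\theta\times(\R^n\setminus 0))$, with seminorms locally bounded in $z$.

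\emph{Homogeneity.} For $t>0$, substitute $\lambda=t^w\mu$. Since $\lambda_\phi^z=t^{wz}\mu_\phi^z$ (the argument is unchanged) and $d\lambda=t^w\,d\mu$, the condition $\sigma(t\xi;t^w\mu)=t^m\sigma(\xi;\mu)$ gives
\[
\rho(z)(t\xi)=t^{wz+w+m}\rho(z)(\xi).
\]
This is degree $w(z)=m+w(z+1)$.

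\emph{Holomorphy.} Since $C^\infty(\T^n_\theta\times(\R^n\setminus 0))$ is a Fréchet space, Proposition~\ref{prop:Hol.weakly-strong} reduces holomorphy to weak holomorphy. Locally in $\xi$, with the contour frozen as above, the map $z\mapsto\lambda_\phi^z\,\delta^\alpha\partial_\xi^\beta\sigma$ is holomorphic. It is dominated by $(1+|\lambda|)^{-1-a}$ on $\{\Re z<-a\}$, uniformly for $\xi$ in compact sets. The same argument as in Lemma~\ref{lem:Powers.LCS} then shows that $z\mapsto\rho(z)$ is holomorphic in $C^\infty(\T^n_\theta\times V)$ for each relatively compact open $V\subseteq\R^n\setminus 0$. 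Hence it is holomorphic in $C^\infty(\T^n_\theta\times(\R^n\setminus 0))$, and so $\rho(z)$ is a holomorphic family in $S_\bt(\T^n_\theta\times\R^n)$ of degree $w(z)=m+w(z+1)$.

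The main obstacle is the contour-deformation step: the contour moves with $\xi$, so one must check that the rays stay inside $\Theta$ and the small arcs stay inside $\Omega_c(\Theta)$ uniformly for $\xi$ near $\xi_0$. The hypothesis $c>r$ is exactly what makes this work, since it gives room to vary the radius between $r|\xi|^w$ and $c|\xi|^w$.
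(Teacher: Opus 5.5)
Your proposal is correct and follows essentially the paper's argument. You freeze the contour locally in $\xi$ via the contour-deformation remark, apply Lemma~\ref{lem:Powers.LCS} in $C^\infty(\T^n_\theta\times V)$ to get holomorphy, and obtain homogeneity from the substitution $\lambda=t^w\mu$; the only difference is that the paper freezes the contour as $R^w\Gamma$ on all of $\{|\xi|>R\}$ rather than as $|\xi_0|^w\Gamma$ near each $\xi_0$, which is immaterial.
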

\begin{proof}
 By assumption $\sigma(\xi;\lambda)\in C^{\infty,-1}(\T^n_\theta\times \Omega_c(\Theta))$. Thus, given any $\xi \neq 0$, the map $\lambda  \rightarrow \sigma(\xi;\lambda)$ is in $\Hol^{-1}(\Lambda(c|\xi|^w); C^\infty(\T^n_\theta))$. Moreover, as $r<c$ we have $|\xi|^w\Gamma =\Gamma(\phi_1,\phi_2, r|\xi|^w) \subseteq \Lambda(c|\xi|^w)$. Therefore, by Lemma~\ref{lem:Powers.LCS} the formula~(\ref{eq:powers.int-hom-symb}) defines a family in $\Hol(\Re z<0; C^\infty(\T^n_\theta))$. 

Let $R>0$, and set $U_R=\{|\xi|>R\}$. If $\xi \in U_R$, then by Remark~\ref{lem:Powers.contour-LCS} in~(\ref{eq:powers.int-hom-symb}) we may replace the contour $|\xi|^w\Gamma$ by $R^w\Gamma$ without changing the value of the integral, i.e., 
\begin{equation*}
  \rho(z)(\xi) = \int_{R^w\Gamma}  \lambda_\phi^z \sigma(\xi;\lambda)d\lambda, \qquad |\xi|>R, \quad \Re z <0. 
\end{equation*}
 Note that, as $R<|\xi|$ and $r<c$, we have $R^w\Gamma =\Gamma(\phi_1,\phi_2,rR^w)\subseteq \Lambda(c|\xi|^w)$. In particular, $U_R\times \Lambda(cR^w)$ is an open subset of $\Omega_c(\Theta)$, and so $\sigma(\xi;\lambda)\in \Hol^{-1}(\Lambda(cR^w);C^\infty(\T^n_\theta \times U_R))$. Therefore, by Lemma~\ref{lem:Powers.LCS} the above formula actually defines a family in $\Hol(\Re z<0; C^\infty(\T^n_\theta \times U_R))$ for all $R>0$, and hence we get a family in $\Hol(\Re z<0; C^\infty(\T^n_\theta\times (\R^n\setminus 0)))$. 
 
 It remains to check the homogeneity of $\rho(z)(\xi)$. Let $\xi \neq 0$ and $t>0$. We have
 \begin{equation*}
 \rho(z)(t\xi) = \int_{t^w|\xi|^w\Gamma}  \lambda_\phi^z \sigma(t\xi;\lambda)d\lambda=  t^{w(z+1)} \int_{|\xi|^w\Gamma}  \lambda_\phi^z \sigma(t\xi;t^w\lambda)d\lambda. 
\end{equation*}
As $\sigma(t\xi;t^w\lambda)=t^m \sigma(\xi;\lambda)$, we get
\begin{equation*}
  \rho(z)(t\xi) =  t^{m+w(z+1)}\int_{|\xi|^w\Gamma} \lambda_\phi^z \sigma(\xi;\lambda)d\lambda= t^{m+w(z+1)} \rho(z)(\xi). 
\end{equation*}
 This shows that  $\rho(z)(\xi) \in S_{m+w(z+1)}(\T^n_\theta \times \R^n)$. 
\end{proof}
In the same way as in Remark~\ref{lem:Powers.contour-LCS}, in the formula~(\ref{eq:powers.int-hom-symb}) the value of the integral is unaffected by replacing $\Gamma$ by any contour $\Gamma(\phi_1',\phi_2',r')$, $0<r'<c$, with $(\phi_1',\phi_2')$ as in Remark~\ref{lem:Powers.contour-LCS}. 

\begin{proposition}\label{prop:powers.param-psido-hol-psidos} 
Let $Q(\lambda)\in \Psi^{m,-1}(\T^n_\theta;\Lambda)$ have symbol $\sigma(\xi;\lambda)\sim \sum_{j\geq 0} \sigma_{m-j}(\xi;\lambda)$,  with $\sigma_{m-j}(\xi;\lambda)$ in $S_{m-j}^{-1}(\T^n_\theta\times \Omega_c(\Theta))$, $c>0$. We also assume that $\Gamma=\Gamma(\phi_1,\phi_2,r)$ with $0<r<\min(r_0,c)$.
\begin{enumerate}
 \item We define a holomorphic family in $\Psi^\bt(\T^n_\theta)$ of order $w(z)=m+w(z+1)$ by letting
\begin{equation}
 P(z) = \int_{\Gamma} \lambda_\phi^z  Q(\lambda)d\lambda, \qquad \Re z <0, 
  \label{eq:powers.int-PsiDO} 
\end{equation}
where the integral converges in $\sL(C^\infty(\T^n_\theta))$. 
 
 \item  $P(z)$ has symbol $\rho(z)(\xi) \sim\sum_{j\geq 0}  \rho_j(z)(\xi)$, with $\rho_j(z)(\xi)\in S_{m-j+w(z+1)}(\T^n_\theta\times \R^n)$ given by
 \begin{equation}
 \rho_j(z)(\xi) = \int_{|\xi|^w\Gamma} \lambda_\phi^z  \sigma_{m-j}(\xi;\lambda)d\lambda, \qquad \xi\neq 0, \quad \Re z <0, 
 \label{eq:powers.int-classical-symb-hom} 
\end{equation}
where the integral converges in $C^\infty(\T^n_\theta)$. 
\end{enumerate}
\end{proposition}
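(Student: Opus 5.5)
The plan is to reduce everything to the two lemmas just proved: Lemma~\ref{lem:Powers.st-symbols} for the full symbol and Lemma~\ref{lem:Powers.hom-symbols} for the homogeneous components. Then I would assemble the classical expansion using Lemmas~\ref{lem:Hol.asymptotic-standard-NJ} and~\ref{lem:Hol.asymptotic-classical-standard}.

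\emph{Step 1 (the full symbol).} Write $Q(\lambda)=P_\sigma(\lambda)$ with $\sigma\in S^{m,-1}(\T^n_\theta\times\R^n\times\Lambda)$. Proposition~\ref{symbols:inclusion-classical-standard} with $d'=-1$ gives $\sigma\in\stS^{m+w,-1}(\T^n_\theta\times\R^n\times\Lambda)$. Lemma~\ref{lem:Powers.st-symbols} then shows that
\begin{equation*}
\rho(z)=\int_\Gamma\lambda_\phi^z\sigma(\cdot;\lambda)\,d\lambda
\end{equation*}
is a holomorphic family in $\stS^{m+w}(\T^n_\theta\times\R^n)$ on $\{\Re z<0\}$. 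The same lemma shows that $P_\rho(z)$ equals the integral~(\ref{eq:powers.int-PsiDO}), which converges in $\sL(C^\infty(\T^n_\theta))$. So it remains to show that $\rho(z)$ is a holomorphic family of classical symbols of order $w(z)=m+w(z+1)$, with homogeneous components~(\ref{eq:powers.int-classical-symb-hom}). Note that $\Re w(z)\leq m+w$ on $\{\Re z<0\}$, so Lemma~\ref{lem:Hol.asymptotic-classical-standard} will be applicable. By Lemma~\ref{lem:Powers.hom-symbols}, with $c>r$, each $\rho_j(z)$ is a holomorphic family in $S_\bt(\T^n_\theta\times\R^n)$ of degree $w(z)-j$.

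\emph{Step 2 (the expansion).} Fix $\chi\in C^\infty_c(\R^n)$ with $\chi=1$ near $|\xi|\leq R$, where $R>0$ is large enough that Lemma~\ref{lem:Parameter.homogeneous-symbol-estimate} applies and $\Gamma\subseteq\Lambda(c|\xi|^w)$ for $|\xi|\geq R$ (this is where $r<\min(r_0,c)$ is used). By Remark~\ref{rmk:Parameter.classical-symbol-asymptotics-equivalent-conditions}, for each $N$ and each $J\geq N+w$,
\begin{equation*}
\sigma-\sum_{j<J}(1-\chi)\sigma_{m-j}\in\stS^{m-N,-1}.
\end{equation*}
Integrating against $\lambda^z_\phi$ over $\Gamma$, Lemma~\ref{lem:Powers.st-symbols} gives
\begin{equation*}
\rho(z)-\sum_{j<J}\int_\Gamma\lambda_\phi^z(1-\chi)\sigma_{m-j}\,d\lambda\in\Hol\big(\Re z<0;\stS^{m-N}\big),
\end{equation*}
so this remainder is in particular locally bounded.

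The key identity is that, for $|\xi|\geq R$,
\begin{equation*}
\int_\Gamma\lambda_\phi^z\sigma_{m-j}(\xi;\lambda)\,d\lambda=\int_{|\xi|^w\Gamma}\lambda_\phi^z\sigma_{m-j}(\xi;\lambda)\,d\lambda .
\end{equation*}
This holds because $\lambda\mapsto\sigma_{m-j}(\xi;\lambda)$ is an $\Hol^{-1}$-family on $\Lambda(c|\xi|^w)$, which contains both contours, so Remark~\ref{lem:Powers.contour-LCS} allows the radius change. Hence
\begin{equation*}
\int_\Gamma\lambda_\phi^z(1-\chi)\sigma_{m-j}\,d\lambda=(1-\chi)\rho_j(z).
\end{equation*}

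\emph{Step 3 (conclusion).} Since $N$ is arbitrary, Lemma~\ref{lem:Hol.asymptotic-standard-NJ}(ii)$\Rightarrow$(i), applied with exponent $m+w$, gives $\rho(z)\sim\sum(1-\chi)\rho_j(z)$ in the sense of~(\ref{eq:Hol.asymptotic-standard}). Lemma~\ref{lem:Hol.asymptotic-classical-standard} then upgrades this to the classical estimates~(\ref{eq:Hol.hol-family-classical-symbol-estimate}) with order $w(z)$. Both parts follow.

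The main obstacle is Step~2. I expect two difficulties: justifying the contour deformation uniformly in $\xi$ (and controlling the orders involved), and handling the order loss $w$ coming from $d=-1$, since the remainder only lies in $\stS^{m+w-N}$ rather than in $\stS^{\Re w(z)-N}$. The $N$–$J$ reformulation of Lemma~\ref{lem:Hol.asymptotic-standard-NJ} is exactly what absorbs this loss.
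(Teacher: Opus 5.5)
Your proposal is correct and follows essentially the same route as the paper's proof. Both arguments pass to $\stS^{m+w,-1}$ and invoke Lemma~\ref{lem:Powers.st-symbols} for the full symbol and Lemma~\ref{lem:Powers.hom-symbols} for the components, then subtract the cut-off expansion to get a remainder in $\Hol(\Re z<0;\stS^{m-N})$ or $\stS^{-N}$. They identify $\int_\Gamma \lambda_\phi^z(1-\chi)\sigma_{m-j}\,d\lambda=(1-\chi)\rho_j(z)$ by the contour change of Remark~\ref{lem:Powers.contour-LCS} for large $|\xi|$, and conclude via Lemmas~\ref{lem:Hol.asymptotic-standard-NJ} and~\ref{lem:Hol.asymptotic-classical-standard}.
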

\begin{proof}
 It follows from Proposition~\ref{prop:PsiDOs-parameter.PsiDO-gives-rise-to-family-of-continuous-operators-on-cAtheta} that $Q(\lambda)$ is a family in $\Hol^{-1}(\Lambda;\sL(C^\infty(\T^n_\theta)))$, and so Lemma~\ref{lem:Powers.LCS} ensures that the integral in~(\ref{eq:powers.int-PsiDO}) converges in $\sL(C^\infty(\T^n_\theta))$ and defines a family in  $\Hol(\Re z<0; \sL(C^\infty(\T^n_\theta)))$. 
 We also know from Proposition~\ref{symbols:inclusion-classical-standard} that $\sigma(\xi;\lambda)\in \stS^{m+w,-1}(\T^n_\theta\times \R^n\times \Lambda)$. Thus, by Lemma~\ref{lem:Powers.st-symbols} we define a family in $\Hol(\Re z<0;\stS^{m+w}(\T^n_\theta\times \R^n))$ by letting
 \begin{equation*}
  \rho(z)(\xi) =\int_{\Gamma} \lambda_\phi^z  \sigma(\xi;\lambda)d\lambda, \qquad \Re z <0, 
\end{equation*}
where the integral converges in $\stS^{m+w}(\T^n_\theta\times \R^n)$. Moreover, we have
\begin{equation*}
 P(z)=  \int_{\Gamma} \lambda_\phi^z  P_\sigma(\lambda)d\lambda=P_\rho(z), \qquad \Re z<0,
\end{equation*}
where the integral converges in $\sL(C^\infty(\T^n_\theta))$.

In addition, given any $j\geq 0$, Lemma~\ref{lem:Powers.hom-symbols} ensures that~(\ref{eq:powers.int-classical-symb-hom}) defines a holomorphic family in $S_\bt(\T^n_\theta\times \R^n)$ of degree $m-j+w(z+1)$ over the half-plane $\{\Re z<0\}$. In particular, we get holomorphic families in $C^\infty(\T^n_\theta\times (\R^n\setminus 0))$. 

 In order to complete the proof we just need to show that $\rho(z)(\xi) \sim\sum_{j\geq 0}  \rho_j(z)(\xi)$ in the sense of~(\ref{eq:Hol.hol-family-classical-symbol-estimate}). Let $\chi(\xi)\in C^\infty_c(\R^n)$ be such that $\chi(\xi)=1$ for $|\xi|\leq (c^{-1}r_0)^{1/w}$. Lemma~\ref{lem:Parameter.homogeneous-symbol-estimate} and Remark~\ref{rmk:Parameter.classical-symbol-asymptotics-equivalent-conditions} ensure that $(1-\chi(\xi))\sigma_{m-j}(\xi;\lambda) \in \stS^{m+w-j,-1}(\T^n_\theta\times \R^n\times \Lambda)$ and $\sigma(\xi;\lambda) \sim \sum_{j\geq 0} (1-\chi(\xi))\sigma_{m-j}(\xi;\lambda)$ in the sense of~(\ref{eq:Parameter.Standard-asymptotic}). Thus, given any $N\geq 0$, there is $J_0\geq 1$ such that, for each $J\geq J_0$ we have
\begin{equation*}
s_{NJ}(\xi;\lambda):= \sigma(\xi;\lambda)- \sum_{j<J} \left(1-\chi(\xi)\right)\sigma_{m-j}(\xi;\lambda) \in \stS^{-N,-1}(\T^n_\theta \times \R^n \times \Lambda).
\end{equation*}
 Thus, for $\Re z<0$, we have 
 \begin{equation*}
 \rho(z)(\xi) = \sum_{j<J} \tilde{\rho}_j(z)(\xi) + r_{NJ}(z)(\xi), 
\end{equation*}
where we have set 
\begin{gather}
 \tilde{\rho}_j(z)(\xi): = \int_{\Gamma} \lambda_\phi^z  \left(1-\chi(\xi)\right)\sigma_{m-j}(\xi;\lambda)d\lambda, \nonumber\\
  r_{NJ}(z)(\xi):= \int_{\Gamma} \lambda_\phi^z  s_{NJ}(\xi;\lambda)d\lambda. 
 \label{eq:powers.tilde-rho-rNJ}
\end{gather}
 
 It follows from Lemma~\ref{lem:Powers.st-symbols} that $ r_{NJ}(z)(\xi)\in \Hol(\Re z<0; \stS^{-N}(\T^n_\theta \times \R^n))$. We claim that 
\begin{equation}
 \tilde{\rho}_j(z)(\xi)= \left(1-\chi(\xi)\right)\rho_j(z)(\xi). 
 \label{eq:powers.tilde-rho-rho}
\end{equation}
The equality holds trivially for $|\xi|\leq (c^{-1}r_0)^{1/w}$, since in this case both sides are equal to $0$. As explained in the proof of Lemma~\ref{lem:Powers.hom-symbols}, for $\xi\neq 0$, the map $\lambda \rightarrow \sigma_{m-j}(\xi;\lambda)$ is in $\Hol^{-1}(\Lambda(c|\xi|^w);C^\infty(\T^n_\theta))$. By assumption $r<c$, and hence $r|\xi|^w<c|\xi|^w$. Furthermore, if $|\xi|> (c^{-1}r_0)^{1/w}$, then $c|\xi|^w>r_0>r$. Therefore, in this case, it follows from Remark~\ref{lem:Powers.contour-LCS} that in the definition~(\ref{eq:powers.tilde-rho-rNJ}) of $\tilde{\rho}_j(z)(\xi)$ we may replace the contour $\Gamma=\Gamma(\phi_1,\phi_2,r)$ by $\Gamma(\phi_1,\phi_2, r|\xi|^w)=|\xi|^w\Gamma$ without changing the value of $\tilde{\rho}_j(z)(\xi)$. Thus,
\begin{equation*}
 \tilde{\rho}_j(z)(\xi)= \int_{|\xi|^w\Gamma} \left(1-\chi(\xi)\right)\lambda_\phi^z  \sigma_{m-j}(\xi;\lambda)d\lambda= \left(1-\chi(\xi)\right)\rho_j(z)(\xi).  
\end{equation*}
This shows that~(\ref{eq:powers.tilde-rho-rho}) holds for all $\xi\in \R^n$. 

All this shows that, for all $N\geq 0$, there is $J_0\geq 1$ such that 
\begin{equation*}
 \rho(z)(\xi) - \sum_{j<J}\left(1-\chi(\xi)\right)\rho_j(z)(\xi)\in \Hol\left(\Re z<0; \stS^{-N}(\T^n_\theta \times \R^n)\right)\qquad \forall J\geq J_0. 
\end{equation*}
 This implies that $\rho(z)(\xi) \sim \sum_{j\geq 0} (1-\chi(\xi))\rho_j(z)(\xi)$ in the sense of~(\ref{eq:Hol.asymptotic-standardNJ}), and so by Lemma~\ref{lem:Hol.asymptotic-standard-NJ} the asymptotic expansion holds in the sense of~(\ref{eq:Hol.asymptotic-standard}). 
 Lemma~\ref{lem:Hol.asymptotic-classical-standard}  then ensures that $\rho(z)(\xi) \sim\sum_{j\geq 0}  \rho_j(z)(\xi)$ in the sense of~(\ref{eq:Hol.hol-family-classical-symbol-estimate}). 
 \end{proof}

Assume now that $\phi=\pi$. As $\Theta$ is an open cone, this implies that there is $\alpha\in (0,\pi)$ such that
\begin{equation}\label{eq:unif.Theta-alpha}
 \Theta \supseteq \{ \alpha \leq \arg \lambda \leq 2\pi -\alpha\}. 
\end{equation}
To simplify notation we drop the subscript from the notation of the powers~(\ref{eq:Powers.Powers-scalars}) associated with $\phi=\pi$ and simply denote them by $\lambda^z$, $z\in \C$. In addition, we let $\Gamma$ be any contour of the form $\Gamma(\pi,-\pi,r)$ as in~(\ref{eq:powers.contourG1})--(\ref{eq:powers.contourG2}) with $0<r<r_0$. 

We shall now explain that under the above conditions the holomorphic families of \psidos\ produced by Proposition~\ref{prop:powers.param-psido-hol-psidos} are actually $\Hol_\infty$-families. This is a consequence of the following observation. 

\begin{lemma}\label{lem:Powers.LCS-boundedness} 
Let $\sE$ be a quasi-complete locally convex space, and let $v(\lambda)\in \Hol^{-1}(\Lambda;\sE)$. In the same way as in Lemma~\ref{lem:Powers.LCS} define 
\begin{equation}
 u(z)= \int_\Gamma \lambda^z v(\lambda)d\lambda, \qquad \Re z<0. 
 \label{eq:unif.int-LCS} 
\end{equation}
Then $e^{\pm i\alpha z}u(z)\in\Hol_\infty(\bQ_\pm;\sE)$. 
\end{lemma}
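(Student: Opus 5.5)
We already know from Lemma~\ref{lem:Powers.LCS} that $u(z)\in\Hol(\Re z<0;\sE)$, so $e^{\pm i\alpha z}u(z)$ is holomorphic on $\bQ_\pm$. It therefore remains to show that, for every continuous semi-norm $\fp$ on $\sE$ and every closed vertical half-strip $\Sigma\subseteq\bQ_\pm$, the quantity $\fp[e^{\pm i\alpha z}u(z)]$ is bounded on $\Sigma$. Such a $\Sigma$ has the form $\Sigma_\pm(a,b,c)$ with $a\leq b<0$ and $\pm\Im z\geq c>0$.

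The first step is to use the freedom in the contour. By Remark~\ref{lem:Powers.contour-LCS}, we may replace $\Gamma=\Gamma(\pi,-\pi,r)$ by $\Gamma(\alpha',-\alpha',r)$ without changing $u(z)$, where $\alpha'\in(\alpha,\pi)$ is chosen so that the rays still lie in $\Theta$. This choice is possible because $\Theta$ is open and contains $\{\alpha\leq\arg\lambda\leq2\pi-\alpha\}$; alternatively we may simply take the rays $\arg\lambda=\pm\alpha$ (that is, $\alpha$ and $2\pi-\alpha$ in the $\arg_\pi$ convention). If the direct contour-deformation remark only allows small angular moves, we iterate it across the compact sector $\{\alpha\leq\arg\lambda\leq 2\pi-\alpha\}$, using the uniform $\Hol^{-1}$ bounds on the closed sub-pseudo-cone $\{\alpha\leq\arg\lambda\leq 2\pi-\alpha\}\cup \overline{D(0,r)}$ (compactness gives finitely many steps). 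So we work with the contour consisting of the ray $\arg\lambda=\alpha$ traversed inward, the circle arc of radius $r$ from $\arg\lambda=\alpha$ clockwise to $\arg\lambda=-\alpha$, and the ray $\arg\lambda=-\alpha$ traversed outward. With $\arg_\pi$, the arguments along this contour lie in $[-\alpha,\alpha]$.

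The second step is the pointwise estimate. For $z=x+iy$ and $\lambda$ with $\arg_\pi\lambda=\eta\in[-\alpha,\alpha]$, we have $|\lambda^z|=|\lambda|^x e^{-y\eta}$. On the two rays this gives $|\lambda^z|=|\lambda|^xe^{\mp y\alpha}$, and on the arc $|\lambda^z|\leq r^x e^{\alpha|y|}$. Hence, for all $\lambda$ on the contour, $|\lambda^z|\leq \max(|\lambda|^x,r^x)\,e^{\alpha|y|}$. Since $|e^{\pm i\alpha z}|=e^{\mp\alpha y}=e^{-\alpha|y|}$ when $\pm y>0$, we obtain $|e^{\pm i\alpha z}\lambda^z|\leq \max(|\lambda|^{x},r^{x})\leq C(1+|\lambda|)^{b}$ for $a\leq x\leq b<0$, with $C$ depending only on $a,b,r$. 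Combining this with the bound $\fp[v(\lambda)]\leq C_{\fp}(1+|\lambda|)^{-1}$ on the closed sub-pseudo-cone containing the contour, together with~(\ref{eq:Powers.int-semi-norms}), gives
\[
\sup_{z\in\Sigma}\fp\big[e^{\pm i\alpha z}u(z)\big]\leq C\,C_\fp\int_{\Gamma}(1+|\lambda|)^{-1+b}|d\lambda|<\infty .
\]
This yields $e^{\pm i\alpha z}u(z)\in\Hol_\infty(\bQ_\pm;\sE)$.

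The main obstacle is the contour rotation to angles $\pm\alpha$. The sign bookkeeping, namely that the sign of $y$ paired with the sign of $\eta$ gives $e^{\alpha|y|}$ at worst, is routine. What requires care is that the given contour at angle $\pm\pi$ only yields the bound $e^{\pi|y|}$, so the sharper constant $\alpha$ genuinely requires moving the contour. This move is justified by Remark~\ref{lem:Powers.contour-LCS} (via~\cite[Proposition~A.7]{LP:Part1}) provided the closed sector between the angles lies in $\Theta$, which is exactly what~(\ref{eq:unif.Theta-alpha}) guarantees.
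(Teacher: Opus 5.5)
Your proof is correct and is essentially the paper's argument: deform $\Gamma$ to the contour with rays $\arg\lambda=\pm\alpha$, bound $|\lambda^z|\le|\lambda|^{\Re z}e^{\alpha|\Im z|}$ on it, and integrate against $\fp[v(\lambda)]\le C_\fp(1+|\lambda|)^{-1}$. Two side remarks on the deformation. A single application of Remark~\ref{lem:Powers.contour-LCS} with $\delta=\pi-\alpha$ already moves the rays from $\pm\pi$ to $\pm\alpha$, so no iteration is needed. Your first suggestion $\alpha'\in(\alpha,\pi)$ goes the wrong way: it would only yield the weaker factor $e^{\alpha'|\Im z|}$.

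One inequality should be repaired. On the contour $|\lambda|\ge r$ and $x<0$, so your $\max(|\lambda|^x,r^x)$ equals $r^x$, which is not $O\big((1+|\lambda|)^{b}\big)$. What your ray and arc computations actually give is $|\lambda^z|\le|\lambda|^x e^{\alpha|y|}$ everywhere on the contour, since $|\lambda|=r$ on the arc. The bound $|\lambda|^x\le C_{a,b,r}(1+|\lambda|)^{b}$ for $|\lambda|\ge r$ is what makes the final integral converge. Also, if $0\notin\Lambda$, replace $\overline{D(0,r)}$ by a closed annulus $\{r'\le|\lambda|\le r\}$, so that your sub-pseudo-cone is $\subsubset\Lambda$.
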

\begin{proof}
We know from Lemma~\ref{lem:Powers.LCS} that $u(z)$, $\Re z<0$, is a holomorphic family in $\sE$. By assumption the angular sector $\{\alpha \leq \arg \lambda \leq 2\pi-\alpha\}$ is contained in $\Theta$. Therefore, as pointed out in Remark~\ref{lem:Powers.contour-LCS}, in the 
formula~(\ref{eq:unif.int-LCS}) defining $u(z)$ we may replace the contour $\Gamma=\Gamma(\pi,-\pi,r)$ by the contour $\Gamma':=\Gamma(\alpha,2\pi-\alpha,r)$. That is, 
\begin{equation*}
 u(z)=\int_{\Gamma'} \lambda^z v(\lambda)d\lambda, \qquad \Re z<0, 
\end{equation*}
where the integral converges in $\sE$. Moreover, as mentioned in the proof of Lemma~\ref{lem:Powers.LCS}, 
given any continuous semi-norm $\fp$ on $\sE$, there is $C_{\fp}>0$ such that
\begin{equation}\label{eq:Unif.estimates-vlambda}
 \fp\big[ v(\lambda)\big] \leq C_\fp (1+|\lambda|)^{-1} \qquad \forall \lambda \in \Gamma'. 
\end{equation}

Let $\Sigma=\{b\leq \Re z\leq a\}$, $b\leq a<0$, be a closed vertical strip contained in the half-plane $\{\Re z<0\}$. If $\lambda\in \Gamma'$, and we set $\psi=\arg_\pi \lambda$, then $\lambda^z=|\lambda|^z e^{i\psi z}$. Moreover, as $\lambda\in \Gamma'$ we have $|\psi|\leq \alpha$ and $|\lambda|\geq r$. Thus, if $z\in \Sigma$, then 
\begin{equation*}
 |\lambda^z|= |\lambda|^{\Re z} e^{-\psi\Im z}\leq r^{\Re z} \big(r^{-1}|\lambda|\big)^{\Re z} e^{\alpha |\Im z|} 
 \leq r^{-a}\max(1, r^b) |\lambda|^a e^{\alpha |\Im z|}. 
 \end{equation*}
 Combining this with~(\ref{eq:Unif.estimates-vlambda}) and using~(\ref{eq:Powers.int-semi-norms}) we deduce there is $C_{\Sigma\fp}>0$ such that
\begin{equation*}
 \fp\big[ u(z)\big]  \leq \int_{\Gamma'} |\lambda^z| \fp\big[v(\lambda)\big] |d \lambda| \leq C_{\Sigma\fp}e^{\alpha |\Im z|} \qquad \text{for all}\  z\in \Sigma. 
\end{equation*}
This implies that $e^{\pm i\alpha z}u(z)$ is bounded on any closed vertical half-strip $\Sigma_\pm \subseteq \bQ_\pm$, and hence 
$e^{\pm i\alpha z}u(z)\in \Hol_\infty(\bQ_\pm;\sE)$.  
\end{proof}

As a special case of Lemma~\ref{lem:Powers.LCS-boundedness} we have the following refinement of Lemma~\ref{lem:Powers.st-symbols}. 

\begin{lemma}\label{lem:Powers.st-symbols-boundedness}
 Let $\sigma(\xi;\lambda)\in \stS^{m,-1}(\T^n_\theta\times \R^n\times \Lambda)$, $m\in \R$. As in Lemma~\ref{lem:Powers.st-symbols} define
\begin{equation*}
 \rho(z)(\xi):=\int_\Gamma \lambda^z \sigma(\xi;\lambda)d\lambda, \qquad \Re z<0, \quad \xi \in \R^n. 
\end{equation*}
Then $e^{\pm i\alpha z}\rho(z)\in \Hol_\infty(\bQ_\pm;\stS^{m}(\T^n_\theta\times \R^n))$.
\end{lemma}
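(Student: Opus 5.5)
This is a direct application of Lemma~\ref{lem:Powers.LCS-boundedness} with $\sE=\stS^{m}(\T^n_\theta\times\R^n)$ and $v(\lambda)=\sigma(\cdot;\lambda)$.

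\emph{Setting up the lemma.} First I identify $\sigma$ with a vector-valued family. By the identification~(\ref{eq:Parameter.parametric-symbols-identification}) of Remark~\ref{rmk:Parameter.standard-symbols-with-parameter-identification}, we have
\begin{equation*}
\sigma(\xi;\lambda)\in \Hol^{-1}\big(\Lambda;\stS^m(\T^n_\theta\times\R^n)\big).
\end{equation*}
The space $\stS^m(\T^n_\theta\times\R^n)$ is Fr\'echet, hence complete and in particular quasi-complete. So the hypotheses of Lemma~\ref{lem:Powers.LCS-boundedness} hold.

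\emph{Applying it.} That lemma shows that
\begin{equation*}
u(z)=\int_\Gamma\lambda^z\sigma(\cdot;\lambda)\,d\lambda
\end{equation*}
satisfies $e^{\pm i\alpha z}u(z)\in\Hol_\infty(\bQ_\pm;\stS^m(\T^n_\theta\times\R^n))$.

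\emph{Matching $u$ with $\rho$.} It remains to check that $u(z)$ is the symbol $\rho(z)$ of the statement. By Lemma~\ref{lem:Powers.st-symbols}, the integral defining $\rho(z)$ converges in $\stS^m(\T^n_\theta\times\R^n)$, so $\rho(z)$ and $u(z)$ are the same element. One could also argue through evaluation at each $\xi$: evaluation $\stS^m\to C^\infty(\T^n_\theta)$ is a continuous linear map, so it commutes with the improper integral by Proposition~\ref{prop:powers.improper-int}, and the pointwise integral agrees with $u(z)(\xi)$.

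There is no real obstacle here. The only point to verify is the quasi-completeness of $\stS^m(\T^n_\theta\times\R^n)$, which follows from its being a Fr\'echet space.
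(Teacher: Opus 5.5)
Your proposal is correct and matches the paper, which states this lemma as a direct special case of Lemma~\ref{lem:Powers.LCS-boundedness}. The paper likewise takes $\sE=\stS^{m}(\T^n_\theta\times\R^n)$, a Fr\'echet space, and uses the identification $\stS^{m,-1}(\T^n_\theta\times\R^n\times\Lambda)\simeq\Hol^{-1}(\Lambda;\stS^{m}(\T^n_\theta\times\R^n))$.
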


Given any $c>0$, we define the open set $\Omega_c(\Theta)\subseteq (\R^n\setminus 0)\times \C$ as in~(\ref{eq:Parameter.parameter-set-for-homogeneous-symbols}) and~(\ref{eq:Powers.parameter-set-for-homogeneous-symbols}). By using Lemma~\ref{lem:Powers.LCS-boundedness} and arguing as in the proof of Lemma~\ref{lem:Powers.hom-symbols} we get the following refinement of that result.

\begin{lemma}\label{lem:Powers.hom-symbols-boundedness}
 Let $\sigma(\xi;\lambda)\in S^{-1}_{m}(\T^n_\theta\times \Omega_c(\Theta))$ with $c>r$. As in Lemma~\ref{lem:Powers.hom-symbols} define
 \begin{equation*}
 \rho(z)(\xi) = \int_{|\xi|^w\Gamma}  \lambda^z \sigma(\xi;\lambda)d\lambda, \qquad \xi\neq 0, \quad \Re z <0. 
\end{equation*}
Then $e^{\pm i\alpha z}\rho(z)(\xi)$ is in $\Hol_\infty(\bQ_\pm;C^\infty(\T^n_\theta\times (\R^n\setminus 0)))$ and is homogeneous
of degree $w(z)=m+w(z+1)$. 
\end{lemma}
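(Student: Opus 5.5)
The plan is to mimic the proof of Lemma~\ref{lem:Powers.hom-symbols}, replacing Lemma~\ref{lem:Powers.LCS} by Lemma~\ref{lem:Powers.LCS-boundedness}. Homogeneity and holomorphy of $\rho(z)(\xi)$ on $\{\Re z<0\}$ are already given by Lemma~\ref{lem:Powers.hom-symbols}. Multiplying by $e^{\pm i\alpha z}$ does not affect homogeneity in $\xi$, so $e^{\pm i\alpha z}\rho(z)(\xi)$ is homogeneous of degree $w(z)=m+w(z+1)$. What remains is to show that $e^{\pm i\alpha z}\rho(z)$ is uniformly bounded on closed vertical half-strips of $\bQ_\pm$ in every seminorm of $C^\infty(\T^n_\theta\times(\R^n\setminus 0))$.

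These seminorms are sup-norms of $\delta^\alpha\partial_\xi^\beta$ over compact sets $K\subseteq\R^n\setminus 0$. So I fix $R>0$ and work on the open set $U_R=\{|\xi|>R\}$, arguing locally in $\xi$. As in the proof of Lemma~\ref{lem:Powers.hom-symbols}, for $\xi\in U_R$ I replace the contour $|\xi|^w\Gamma$ by the fixed contour $R^w\Gamma=\Gamma(\pi,-\pi,rR^w)$; this is justified by Remark~\ref{lem:Powers.contour-LCS}. The map $\lambda\mapsto\sigma(\cdot;\lambda)$ then lies in $\Hol^{-1}(\Lambda(cR^w);C^\infty(\T^n_\theta\times U_R))$. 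The pseudo-cone $\Lambda(cR^w)$ has conical part $\Theta$, which contains $\{\alpha\le\arg\lambda\le2\pi-\alpha\}$, and $rR^w<cR^w$. Hence Lemma~\ref{lem:Powers.LCS-boundedness}, applied with $\sE=C^\infty(\T^n_\theta\times U_R)$, $\Lambda=\Lambda(cR^w)$ and $r$ replaced by $rR^w$, gives $e^{\pm i\alpha z}\rho(z)|_{U_R}\in\Hol_\infty(\bQ_\pm;C^\infty(\T^n_\theta\times U_R))$.

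Each compact $K\subseteq\R^n\setminus0$ lies in some $U_R$, so the $\Hol_\infty$ bounds on the sets $U_R$ give the bounds for every seminorm of $C^\infty(\T^n_\theta\times(\R^n\setminus0))$. This yields $e^{\pm i\alpha z}\rho(z)\in\Hol_\infty(\bQ_\pm;C^\infty(\T^n_\theta\times(\R^n\setminus 0)))$.

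The one point needing care is the hypothesis of Lemma~\ref{lem:Powers.LCS-boundedness}. That lemma was stated for the fixed $\Lambda$ and $r<r_0$, whereas here the pseudo-cone is $\Lambda(cR^w)$ and the radius is $rR^w$. Its proof only uses two facts: the $\Hol^{-1}$ bound on a closed sub-pseudo-cone containing the deformed contour $\Gamma(\alpha,2\pi-\alpha,rR^w)$, and the fact that this contour lies in the interior of the pseudo-cone. Both hold because $rR^w<cR^w$ and because of the sector inclusion~(\ref{eq:unif.Theta-alpha}). So the argument carries over verbatim.
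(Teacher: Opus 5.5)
Your proof is correct and follows the paper's own route: the paper obtains this lemma by rerunning the proof of Lemma~\ref{lem:Powers.hom-symbols} on each $U_R=\{|\xi|>R\}$ with the fixed contour $R^w\Gamma$, invoking Lemma~\ref{lem:Powers.LCS-boundedness} in place of Lemma~\ref{lem:Powers.LCS}. Your final caveat is not actually needed, since $\Lambda(cR^w)=\Theta\cup D(0,cR^w)$ is already a pseudo-cone of the form allowed in that subsection (with $r_0$ replaced by $cR^w>rR^w$), so Lemma~\ref{lem:Powers.LCS-boundedness} applies directly.
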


Thanks to Lemma~\ref{lem:Powers.st-symbols-boundedness} and Lemma~\ref{lem:Powers.hom-symbols-boundedness} we may argue as in the first part of the proof of Proposition~\ref{prop:powers.param-psido-hol-psidos} to get the following result. 

\begin{proposition}\label{prop:unif.param-psido-hol-infty-psidos} 
Let $Q(\lambda)\in \Psi^{m,-1}(\T^n_\theta;\Lambda)$ have symbol $\sigma(\xi;\lambda)\sim \sum_{j\geq 0} \sigma_{m-j}(\xi;\lambda)$,  with $\sigma_{m-j}(\xi;\lambda)$ in $S_{m-j}^{-1}(\T^n_\theta\times \Omega_c(\Theta))$, $c>0$. We also assume that $\Gamma=\Gamma(\pi,-\pi,r)$ with $0<r<\min(r_0,c)$. As in Proposition~\ref{prop:powers.param-psido-hol-psidos} define 
\begin{equation}
 P(z) = \int_{\Gamma} \lambda^z  Q(\lambda)d\lambda, \qquad \Re z <0.  
\end{equation}
Then $e^{\pm i\alpha z}P(z)$, $z\in \bQ_\pm$, is a $\Hol_\infty(\bQ_\pm)$-family in $\Psi^\bt(\T^n_\theta)$ of order $w(z)=m+w(z+1)$. 
\end{proposition}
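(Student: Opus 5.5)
The plan is to repeat the proof of Proposition~\ref{prop:powers.param-psido-hol-psidos}, replacing each holomorphy statement by its $\Hol_\infty$ refinement. By Proposition~\ref{prop:powers.param-psido-hol-psidos} we already know that $P(z)=P_\rho(z)$, $\Re z<0$, is a holomorphic family in $\Psi^\bt(\T^n_\theta)$ of order $w(z)=m+w(z+1)$. Its symbol is $\rho(z)=\int_\Gamma\lambda^z\sigma(\xi;\lambda)\,d\lambda$, and this symbol satisfies $\rho(z)\sim\sum_j\rho_j(z)$, with $\rho_j(z)$ given by~(\ref{eq:powers.int-classical-symb-hom}). By Definition~\ref{def:Holinf}, it therefore suffices to show that $e^{\pm i\alpha z}\rho(z)$ is a $\Hol_\infty(\bQ_\pm)$-family in $S^\bt(\T^n_\theta\times\R^n)$ whose homogeneous components are $e^{\pm i\alpha z}\rho_j(z)$. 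Condition~(i) of that definition is immediate, because $\Re w(z)=m+w(\Re z+1)$ ranges over a compact set on every closed vertical half-strip.

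For condition~(ii), recall that $\sigma\in\stS^{m+w,-1}(\T^n_\theta\times\R^n\times\Lambda)$ by Proposition~\ref{symbols:inclusion-classical-standard}. Lemma~\ref{lem:Powers.st-symbols-boundedness} then shows that $e^{\pm i\alpha z}\rho(z)$ is $\Hol_\infty(\bQ_\pm)$ in $\stS^{m+w}(\T^n_\theta\times\R^n)$, and hence in $C^\infty(\T^n_\theta\times\R^n)$. Likewise, Lemma~\ref{lem:Powers.hom-symbols-boundedness} shows that each $e^{\pm i\alpha z}\rho_j(z)$ is a $\Hol_\infty(\bQ_\pm)$-family in $C^\infty(\T^n_\theta\times(\R^n\setminus0))$ and is homogeneous of degree $w(z)-j$.

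Condition~(iii), the uniform asymptotic estimate~(\ref{eq:Hol.hol-infty-family-classical-symbol-estimate}), is the main step. I will keep the decomposition from the earlier proof:
\begin{equation*}
\rho(z)=\sum_{j<J}(1-\chi(\xi))\rho_j(z)+r_{NJ}(z), \qquad r_{NJ}(z)=\int_\Gamma\lambda^z s_{NJ}(\xi;\lambda)\,d\lambda,
\end{equation*}
where $s_{NJ}\in\stS^{-N,-1}(\T^n_\theta\times\R^n\times\Lambda)$. Applying Lemma~\ref{lem:Powers.st-symbols-boundedness} to $s_{NJ}$ shows that $e^{\pm i\alpha z}r_{NJ}(z)$ is bounded in $\stS^{-N}$ on every closed half-strip $\Sigma\subseteq\bQ_\pm$.

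The remaining task is to pass from this bound to an estimate with exponent $\Re w(z)-N$, which requires a $\Hol_\infty$ analogue of Lemmas~\ref{lem:Hol.asymptotic-standard-NJ} and~\ref{lem:Hol.asymptotic-classical-standard}. The arguments of those lemmas carry over with compact sets $K$ replaced by half-strips $\Sigma$. Two facts make this work. First, $\Re w(z)$ is bounded above and below on $\Sigma$. Second, the homogeneous terms satisfy, for $|\xi|\geq1$,
\begin{equation*}
\big\|\delta^\alpha\partial_\xi^\beta\rho_j(z)(\xi)\big\|\le |\xi|^{\Re w(z)-j-|\beta|}\sup_{|\eta|=1}\big\|\delta^\alpha\partial_\xi^\beta\rho_j(z)(\eta)\big\|,
\end{equation*}
and after multiplying by $e^{\pm i\alpha z}$ the supremum on the right is uniformly bounded on $\Sigma$ by Lemma~\ref{lem:Powers.hom-symbols-boundedness}. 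I will take $J\ge\max(J_0,N')$ with $N'$ large enough that $-N'\le\inf_\Sigma\Re w(z)-N$. This absorbs the remainder and the terms with $N\le j<J$, and yields~(\ref{eq:Hol.hol-infty-family-classical-symbol-estimate}) with constants uniform on $\Sigma$. This bookkeeping is where care is needed, but I expect no genuine difficulty beyond checking these uniformities, and it completes the proof.
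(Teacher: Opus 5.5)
Your proposal is correct and is essentially the paper's argument: the paper simply says to rerun the first part of the proof of Proposition~\ref{prop:powers.param-psido-hol-psidos}, with Lemmas~\ref{lem:Powers.st-symbols-boundedness} and~\ref{lem:Powers.hom-symbols-boundedness} in place of Lemmas~\ref{lem:Powers.st-symbols} and~\ref{lem:Powers.hom-symbols}, and relies on the $\Hol_\infty$ versions of the lemmas of Section~\ref{sec:hol-PsiDOs}. Your bookkeeping for condition~(iii) (choosing $N'$ with $-N'\le\inf_\Sigma\Re w(z)-N$, and bounding the homogeneous terms through their uniform bounds on the unit sphere) supplies the details the paper leaves implicit; one point you did not mention is that the cut-off $\chi$ may be nonzero on part of $\U^n$, but this is harmless because those terms are supported in a compact annulus on which $|\xi|^{\Re w(z)-N-|\beta|}$ is bounded below uniformly in $z\in\Sigma$.
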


\subsection{Complex powers of elliptic operators} 
As in Section~\ref{sec:Resolvent}, let $P\in \Psi^w(\T^n_\theta)$, $w>0$, be elliptic. We further assume that 
$\Theta(P)\neq   \emptyset$ and set $\Lambda=\Lambda(P)$, where $\Lambda(P)$ is defined as in~(\ref{eq:Resolvent.Agmon-pseudo-cone}). 

Recall that $\Theta(P)$ is the open cone in $\C^*$ that consists of all $\lambda \in \C^*$ such that $\rho_w(\xi)-\lambda$ is invertible for all $\xi\neq 0$. The cone $\check{\Theta}(P)$ is obtained from $\Theta(P)$ by deleting all the rays in $\Theta(P)$ that contain an eigenvalue of $P$. This is again an open cone in $\C^*$, and it is the conical part of $\Lambda$. We have $\Lambda \supseteq \check{\Theta}(P)\cup [D(0,r_0)\setminus 0]$, where 
$r_0=\inf\{|\lambda|; \lambda \in \Sp(P), \ \lambda \neq 0\}$.

We further set
\begin{equation*}
 c=\inf_{|\xi|=1} \left\|\rho_w(\xi)^{-1}\right\|^{-1} \quad \text{and} \quad c'=\sup_{|\xi|=1} \left\|\rho_w(\xi)\right\|. 
\end{equation*}
The ellipticity of $P$ ensures that $c>0$. Note that, for every $\xi\neq 0$, the spectrum of $\rho_w(\xi)$ is contained in the annulus $\{\|\rho_w(\xi)^{-1}\|^{-1} \leq |\lambda| \leq \|\rho_w(\xi)\|\}$. As by homogeneity $\rho_w(\xi)=|\xi|^w\rho_w(|\xi|^{-1}\xi)$, we deduce that
\begin{equation}
 \Sp\left( \rho_w(\xi)\right) \subseteq \left\{c|\xi|^w\leq |\lambda|\leq c'|\xi|^w\right\}\qquad \forall \xi\in \R^n\setminus 0. 
 \label{eq:Powers.spectrum-rhow}
\end{equation}

Set $\Omega_c(P):=\Omega_c({\Theta}(P))$. By Theorem~\ref{thm:Resolvent.resolvent-is-psido-with-parameter} the resolvent $(P-\lambda)^{-1}$ is in $\Psi^{-w,-1}(\T^n_\theta;\Lambda)$ and has symbol $\sigma(\xi; \lambda) \sim \sum_{j\geq 0} \sigma_{-w-j}(\xi;\lambda)$, with $\sigma_{-w-j}(\xi;\lambda)\in 
S_{-w-j}^{-1}(\T^n_\theta\times \Omega_c(P))$ given by~(\ref{eq:Resolvent.symbol-resolvent1})--(\ref{eq:Resolvent.symbol-resolvent2}). In particular, we have $\sigma_{-w}(\xi;\lambda)=(\rho_w(\xi)-\lambda)^{-1}$. 

If $\xi\in \R^n\setminus 0$, then  $\lambda\rightarrow \sigma_{-w}(\xi;\lambda)=(\rho_w(\xi)-\lambda)^{-1}$ is a holomorphic map from $\C\setminus \Sp(\rho_w(\xi))$ to $C^\infty(\T^n_\theta)$. Using~(\ref{eq:Resolvent.symbol-resolvent2}) and arguing by induction then shows that, for every $j\geq 0$, the map $\lambda \rightarrow \sigma_{-w-j}(\xi;\lambda)$ is a holomorphic map from $\C\setminus \Sp(\rho_w(\xi))$ to $C^\infty(\T^n_\theta)$. In fact, it can even be shown that $ \sigma_{-w-j}(\xi;\lambda)$ is a $C^\infty(\T^n_\theta)$-valued $C^{\infty,\omega}$-map on 
\begin{equation*}
 \hat{\Omega}^*(P):=\left\{(\xi,\lambda)\in (\R^n\setminus 0)\times \C; \ \lambda\not \in \Sp\left(\rho_w(\xi)\right)\right\}. 
\end{equation*}

In addition, we let $L_\phi=\{\arg \lambda =\phi\}$ be a ray contained in $\check{\Theta}(P)$, and we form the oriented contour $\Gamma=\Gamma(\phi, \phi-2\pi,r)$ as in~(\ref{eq:powers.contourG1})--(\ref{eq:powers.contourG2}) with $0<r<r_0$.
\begin{figure}[h]
\begin{minipage}{0.25\linewidth}
\centering{\def\svgwidth{\columnwidth}%% Creator: Inkscape 1.0beta1 (32d4812, 2019-09-19), www.inkscape.org
%% PDF/EPS/PS + LaTeX output extension by Johan Engelen, 2010
%% Accompanies image file 'contour2.pdf' (pdf, eps, ps)
%%
%% To include the image in your LaTeX document, write
%%   \input{<filename>.pdf_tex}
%%  instead of
%%   \includegraphics{<filename>.pdf}
%% To scale the image, write
%%   \def\svgwidth{<desired width>}
%%   \input{<filename>.pdf_tex}
%%  instead of
%%   \includegraphics[width=<desired width>]{<filename>.pdf}
%%
%% Images with a different path to the parent latex file can
%% be accessed with the `import' package (which may need to be
%% installed) using
%%   \usepackage{import}
%% in the preamble, and then including the image with
%%   \import{<path to file>}{<filename>.pdf_tex}
%% Alternatively, one can specify
%%   \graphicspath{{<path to file>/}}
%% 
%% For more information, please see info/svg-inkscape on CTAN:
%%   http://tug.ctan.org/tex-archive/info/svg-inkscape
%%
\begingroup%
  \makeatletter%
  \providecommand\color[2][]{%
    \errmessage{(Inkscape) Color is used for the text in Inkscape, but the package 'color.sty' is not loaded}%
    \renewcommand\color[2][]{}%
  }%
  \providecommand\transparent[1]{%
    \errmessage{(Inkscape) Transparency is used (non-zero) for the text in Inkscape, but the package 'transparent.sty' is not loaded}%
    \renewcommand\transparent[1]{}%
  }%
  \providecommand\rotatebox[2]{#2}%
  \newcommand*\fsize{\dimexpr\f@size pt\relax}%
  \newcommand*\lineheight[1]{\fontsize{\fsize}{#1\fsize}\selectfont}%
  \ifx\svgwidth\undefined%
    \setlength{\unitlength}{283.46456693bp}%
    \ifx\svgscale\undefined%
      \relax%
    \else%
      \setlength{\unitlength}{\unitlength * \real{\svgscale}}%
    \fi%
  \else%
    \setlength{\unitlength}{\svgwidth}%
  \fi%
  \global\let\svgwidth\undefined%
  \global\let\svgscale\undefined%
  \makeatother%
  \begin{picture}(1,1)%
    \lineheight{1}%
    \setlength\tabcolsep{0pt}%
    \put(0,0){\includegraphics[width=\unitlength,page=1]{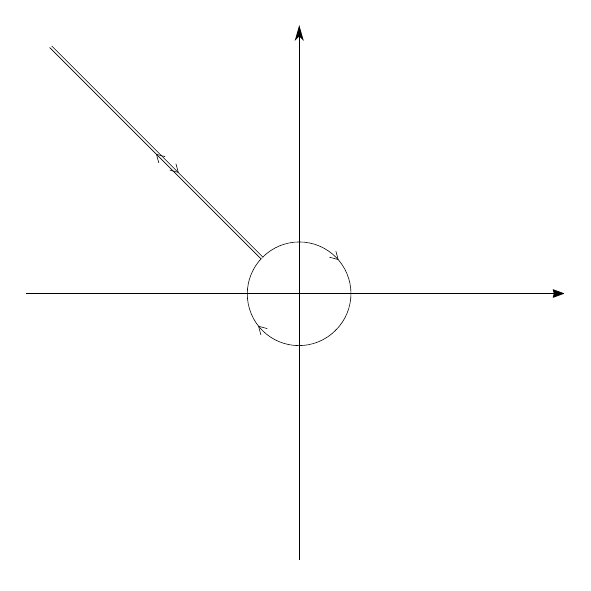}}%
    \put(0.37968044,0.359692){\color[rgb]{0,0,0}\makebox(0,0)[lt]{\lineheight{1.25}\smash{\begin{tabular}[t]{l}$\Gamma$\end{tabular}}}}%
  \end{picture}%
\endgroup%
}
\end{minipage}
\caption{Contour $\Gamma=\Gamma(\phi,\phi-2\pi,r)$.}\label{Fig:contour2}
\end{figure}

As $(P-\lambda)^{-1}\in \Psi^{-w,-1}(\T^n_\theta;\Lambda)$, it follows from Proposition~\ref{prop:powers.param-psido-hol-psidos} that we define a holomorphic family in $\Psi^\bt(\T^n_\theta)$ of order $wz$ by letting
\begin{equation}
 P_\phi^z:= \frac{i}{2\pi}\int_\Gamma \lambda^z_\phi (P-\lambda)^{-1}d\lambda, \qquad \Re z<0,
 \label{eq:powers.int-definition} 
\end{equation}
where the integral converges in $\sL(C^\infty(\T^n_\theta))$. In particular, we obtain a holomorphic family of bounded operators on $L_2(\T^n_\theta)$ thanks to Proposition~\ref{prop:Hol.propertiesP(z)-W2s}. In fact, as $(P-\lambda)^{-1}\in \Hol^{-1}(\Lambda;\sL(L_2(\T^n_\theta)))$ by Proposition~\ref{prop:PsiDOs-parameter.classical-PsiDO-Sobolev-mapping-properties}, the above integral converges in $\sL(L_2(\T^n_\theta))$ as well. 

For $0<c_1<c_2$ and $0<\delta<\pi$ we denote by $S_\phi(c_1,c_2,\delta)$ the circular sector, 
\begin{equation*}
 S_\phi(c_1,c_2,\delta):=\left\{ c_1\leq |\lambda|\leq c_2\ \text{and}\ \phi-2\pi+\delta\leq \arg \lambda \leq \phi -\delta\right\}. 
\end{equation*}

\begin{figure}[h]
\begin{minipage}{0.25\linewidth}
\centering{\def\svgwidth{\columnwidth}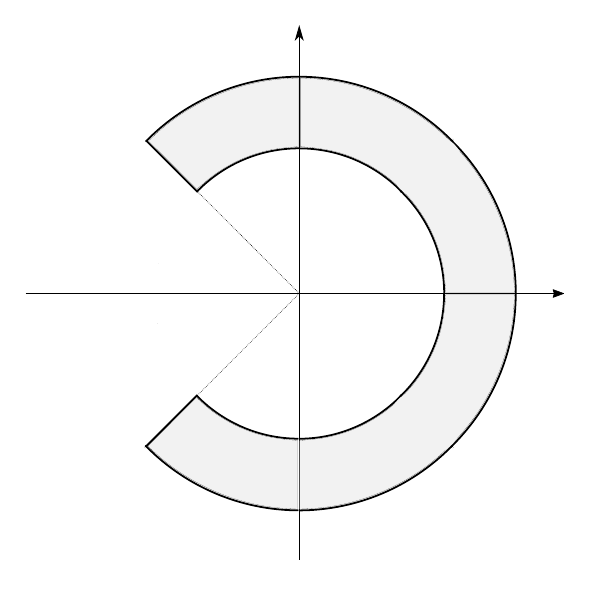}
\end{minipage}
\caption{A sector $S_\phi(c_1,c_2,\delta)$.}
\end{figure}

As $\Theta(P)$ is an open cone, there is $\delta>0$ such that  $\Theta(P)\supseteq \{|\arg\lambda -\phi|<\delta\}$. As $\Sp(\rho_w(\xi))$ is contained in $\C^*\setminus \Theta(P)$, it follows that $\Sp(\rho_w(\xi))$ is contained in the angular sector $\{\phi-2\pi+\delta\leq \arg\lambda \leq \phi-\delta\}$. Combining this with~(\ref{eq:Powers.spectrum-rhow}) we deduce that 
\begin{equation*}
 \Sp\left(\rho_w(\xi)\right) \subseteq |\xi|^w S_{\phi}(c,c',\delta) \qquad \forall \xi\in \R^n\setminus 0. 
\end{equation*}

In what follows we shall call \emph{rectifiable closed contour} any finite union of Lipschitz Jordan curves. For instance, if $\gamma$ is the boundary of any circular sector $S_\phi(c_1,c_2,\delta)$ with $0<c_1<c<c'<c_2$, then $\gamma_\xi:=|\xi|^w\gamma$, $\xi \neq 0$, is a counter-clockwise oriented rectifiable closed contour in $\C^*\setminus L_\phi$ whose interior contains $\Sp(\rho_w(\xi))$. 

\begin{lemma}\label{lem:powers.powers-half-plane} 
The operator $P_\phi^z$, $\Re z<0$, has symbol $\rho(z)(\xi)\sim \sum_{j\geq 0} \rho_j(z)(\xi)$, with $\rho_j(z)(\xi)\in S_{wz-j}(\T^n_\theta\times \R^n)$  given by
\begin{equation}
 \rho_{j}(z)(\xi) :=\frac{i}{2\pi} \int_{\gamma_{\xi}} \lambda^{z}_\phi \sigma_{-w-j}(\xi; \lambda)d\lambda,\qquad \xi\neq 0, 
  \label{eq:Powers.homogeneous-symbols}
\end{equation}
where $\sigma_{-w-j}(\xi; \lambda)$ is given by~(\ref{eq:Resolvent.symbol-resolvent1})--(\ref{eq:Resolvent.symbol-resolvent2}) and $\gamma_{\xi}$ is a counter-clockwise oriented
rectifiable closed contour in $\C^*\setminus L_\phi $ whose interior contains $\Sp(\rho_w(\xi))$. In particular, the principal symbol of $P_\phi^{z}$ is $\rho_{w}(\xi)_\phi^{z}$.  
\end{lemma}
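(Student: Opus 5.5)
Since $(P-\lambda)^{-1}\in \Psi^{-w,-1}(\T^n_\theta;\Lambda)$ by Theorem~\ref{thm:Resolvent.resolvent-is-psido-with-parameter}, with symbol $\sigma(\xi;\lambda)\sim\sum\sigma_{-w-j}(\xi;\lambda)$ and $\sigma_{-w-j}\in S^{-1}_{-w-j}(\T^n_\theta\times\Omega_c(P))$, the plan is to apply Proposition~\ref{prop:powers.param-psido-hol-psidos} with $m=-w$ and the contour $\Gamma=\Gamma(\phi,\phi-2\pi,r)$, where $r<\min(r_0,c)$. Multiplying by $\frac{i}{2\pi}$, this gives $P_\phi^z$ with symbol $\rho(z)\sim\sum\rho_j(z)$. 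Here
\[
\rho_j(z)(\xi)=\frac{i}{2\pi}\int_{|\xi|^w\Gamma}\lambda_\phi^z\sigma_{-w-j}(\xi;\lambda)\,d\lambda ,
\]
which is homogeneous of degree $-w-j+w(z+1)=wz-j$. The substance of the lemma is therefore to replace the noncompact contour $|\xi|^w\Gamma$ by the closed contour $\gamma_\xi$, and then to identify the principal term.

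\emph{Step 1 (contour deformation).} Fix $\xi\neq0$ and $\Re z<0$. The integrand $\lambda\mapsto\lambda_\phi^z\sigma_{-w-j}(\xi;\lambda)$ is holomorphic on $\C\setminus(L_\phi\cup\{0\}\cup\Sp(\rho_w(\xi)))$ with values in $C^\infty(\T^n_\theta)$; this was observed just before the lemma for $\sigma_{-w-j}$ on $\C\setminus\Sp(\rho_w(\xi))$. The spectrum $\Sp(\rho_w(\xi))$ lies in $|\xi|^wS_\phi(c,c',\delta)$. By Cauchy's theorem and Remark~\ref{lem:Powers.contour-LCS}, we may take $\gamma_\xi=|\xi|^w\partial S_\phi(c_1,c_2,\delta')$ with $c_1<\min(c,r)$, with $c_2>c'$, and with $\delta'$ small. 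We then compare $|\xi|^w\Gamma$ with $\gamma_\xi$ as follows: truncate $|\xi|^w\Gamma$ at radius $R$ and close it up by the arc $|\lambda|=R$. The difference between the truncated contour and $-\gamma_\xi$ bounds a region on which the integrand is holomorphic, since the region avoids both the spectrum and the cut. Care is needed with orientation: $\Gamma$ goes inward along $L_\phi$, clockwise around $0$, and outward along $L_{\phi-2\pi}$, so it encircles the spectrum clockwise. This accounts for the sign $\frac{i}{2\pi}=-\frac{1}{2i\pi}$ combining with the counter-clockwise $\gamma_\xi$. The small circle near $0$ is harmless because $\sigma_{-w-j}(\xi;\cdot)$ is holomorphic in $|\lambda|<c|\xi|^w$. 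The arc at infinity contributes nothing, because $\|\sigma_{-w-j}(\xi;\lambda)\|=O(|\lambda|^{-1})$ on the relevant sector for fixed $\xi$ (homogeneity with $d=-1$) while $|\lambda_\phi^z|=O(|\lambda|^{\Re z})$ with $\Re z<0$; hence the arc integral is $O(R^{\Re z})\to0$. This step is the main point where care is required: the bound on $\sigma_{-w-j}$ must hold uniformly for $\lambda$ ranging over a sector that contains the whole arc, not just in $\Theta(P)$. For this I would use the recursive formula~(\ref{eq:Resolvent.symbol-resolvent2}) together with $\|(\rho_w(\xi)-\lambda)^{-1}\|\le 2|\lambda|^{-1}$ for $|\lambda|\ge2c'|\xi|^w$; the latter yields decay on the full circle for large $R$.

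\emph{Step 2 (independence of $\gamma_\xi$ and principal symbol).} Any two admissible contours give the same integral, again by Cauchy's theorem in $\C^*\setminus L_\phi$ minus the spectrum. For $j=0$, we have $\sigma_{-w}=(\rho_w(\xi)-\lambda)^{-1}$, so
\[
\rho_0(z)(\xi)=\frac{1}{2i\pi}\int_{\gamma_\xi}\lambda_\phi^z(\lambda-\rho_w(\xi))^{-1}\,d\lambda .
\]
This is the holomorphic functional calculus of $\rho_w(\xi)$ in the C$^*$-algebra applied to the function $\lambda_\phi^z$, which is holomorphic near the spectrum. Hence $\rho_0(z)(\xi)=\rho_w(\xi)_\phi^z$, and this term lies in $C^\infty(\T^n_\theta)$ because that algebra is closed under holomorphic functional calculus.
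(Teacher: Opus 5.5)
Your proposal follows the paper's proof: you obtain $\rho_j(z)(\xi)$ from Proposition~\ref{prop:powers.param-psido-hol-psidos} as an integral over $|\xi|^w\Gamma$, deform it to $\gamma_\xi$ using the holomorphy of $\sigma_{-w-j}(\xi;\cdot)$ off $\Sp(\rho_w(\xi))$, and identify $\rho_0$ by holomorphic functional calculus; the only addition is that you spell out the closing-at-infinity estimate that the paper delegates to \cite[Proposition~A.7]{LP:Part1}. One correction to your orientation remark: $|\xi|^w\Gamma$ closed by a counter-clockwise arc at radius $R$ is the positively oriented boundary of the slit annulus, so it winds $+1$ (not $-1$) around $\Sp(\rho_w(\xi))$, and deforming to the counter-clockwise $\gamma_\xi$ introduces no sign; the factor $\frac{i}{2\pi}$ is instead absorbed by $(\rho_w(\xi)-\lambda)^{-1}=-(\lambda-\rho_w(\xi))^{-1}$, which is what your final formula for $\rho_0$ implicitly uses.
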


\begin{remark}
Given any $z\in \C$, the function $\lambda \rightarrow \lambda^z_\phi$ is holomorphic on $\C^*\setminus L_\phi$. Thus, for any $a\in C^\infty(\T^n_\theta)$ such that $\Sp(a)\subseteq \C^*\setminus L_\phi$, we may define $a^z_\phi$ by standard holomorphic functional calculus. This is automatically an element of $C^\infty(\T^n_\theta)$, since $C^\infty(\T^n_\theta)$ is closed under holomorphic functional calculus. This allows us to define 
 $\rho_{w}(\xi)_\phi^{z}$ as an element of $C^\infty(\T^n_\theta)$ for all $\xi\neq 0$. 
\end{remark}

\begin{proof}[Proof of Lemma~\ref{lem:powers.powers-half-plane}] 
 The only part that does not follow from Proposition~\ref{prop:powers.param-psido-hol-psidos} is the formula~(\ref{eq:Powers.homogeneous-symbols}). Note that if we take $r<\min(r_0,c)$, then by Proposition~\ref{prop:powers.param-psido-hol-psidos} we have that 
 $\rho(z)(\xi)\sim \sum_{j\geq 0} \rho_j(z)(\xi)$ in the sense of~(\ref{eq:Hol.hol-family-classical-symbol-estimate}), with 
\begin{equation}
 \rho_{j}(z)(\xi) :=\frac{i}{2\pi} \int_{|\xi|^w\Gamma} \lambda_\phi^{z} \sigma_{-w-j}(\xi; \lambda)d\lambda,\qquad \xi\neq 0. 
  \label{eq:Powers.homogeneous-symbols2}
\end{equation}

Note that $|\xi|^w\Gamma=|\xi|^w\Gamma(\phi,\phi-2\pi,r)$ is contained in $\C\setminus [|\xi|^wS_{\phi}(c,c',\delta)]$. As mentioned above, if $\xi\in \R^n\setminus 0$, then $\lambda \rightarrow \sigma_{-w-j}(\xi;\lambda)$ is a holomorphic map from $\C\setminus \Sp(\rho_w(\xi))$ to $C^\infty(\T^n_\theta)$. Therefore,
 in~(\ref{eq:Powers.homogeneous-symbols2}) we may replace the contour $|\xi|^w\Gamma$ by any counter-clockwise oriented
rectifiable closed contour $\gamma_{\xi}$ in $\C^*\setminus L_\phi $ whose interior contains $\Sp(\rho_w(\xi))$ (see, e.g.,~\cite[Proposition A.7]{LP:Part1}). This gives~(\ref{eq:Powers.homogeneous-symbols}). In particular, for $j=0$ we get 
\begin{equation*}
 \rho_0(z)(\xi)= \frac{i}{2\pi} \int_{\gamma_\xi} \lambda_\phi^{z}  \left(\rho_w(\xi)- \lambda\right)^{-1}d\lambda= \rho_{w}(\xi)_\phi^{z}. 
\end{equation*}
\end{proof}

\begin{lemma}\label{lem:powers.properties-half-plane} 
 The following hold. 
 \begin{gather}
 P_\phi^{z_1+z_2} = P_\phi^{z_1}P_\phi^{z_2}, \qquad \Re z_j<0,
 \label{eq:powers.semigroup}\\
 \left(1-\Pi_0(P)\right)  P_\phi^z =  P_\phi^z\left(1-\Pi_0(P)\right)= P_\phi^z, \qquad \Re z<0,
 \label{eq:powers.compatibility-Riesz-proj}\\
 P_\phi^{-k}=P^{-k}, \qquad k\in \N. 
 \label{eq:powers.negative-powers1} 
\end{gather}
Here $P^{-k}$ is the partial inverse of $P^k$ as defined in~(\ref{eq:Spectral.partial-inverse-matrix}) and $\Pi_0(P)$ is the spectral projection~(\ref{eq:ST.Riesz-root}) onto the generalized nullspace $E_0(P)$. 
\end{lemma}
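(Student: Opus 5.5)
The plan is to follow Seeley's classical argument, working in $\sL(L_2(\T^n_\theta))$, where the defining integral~(\ref{eq:powers.int-definition}) converges absolutely for $\Re z<0$. I will use two nested contours and the resolvent identity.

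\textbf{The semigroup property~(\ref{eq:powers.semigroup}).} Let $\Gamma=\Gamma(\phi,\phi-2\pi,r)$ and $\Gamma'=\Gamma(\phi,\phi-2\pi,r')$ with $0<r'<r<r_0$; by Remark~\ref{lem:Powers.contour-LCS} either contour may be used. I slightly rotate the rays of $\Gamma'$ (allowed, since $L_\phi\subseteq\check\Theta(P)$ is open) so that $\Gamma'$ lies to the left of $\Gamma$, i.e.\ it encloses no point of $\Gamma$ together with the spectrum away from $0$. I then write
\[
P_\phi^{z_1}P_\phi^{z_2}=\Big(\frac{i}{2\pi}\Big)^2\int_{\Gamma}\int_{\Gamma'}\lambda_\phi^{z_1}\mu_\phi^{z_2}(P-\lambda)^{-1}(P-\mu)^{-1}\,d\mu\,d\lambda,
\]
and insert $(P-\lambda)^{-1}(P-\mu)^{-1}=(\lambda-\mu)^{-1}\big[(P-\lambda)^{-1}-(P-\mu)^{-1}\big]$. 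Fubini is justified by the bound $O(|\lambda|^{\Re z_1-1}|\mu|^{\Re z_2-1}|\lambda-\mu|^{-1})$, which is integrable because the rays are separated. This leaves two scalar integrals. The first is $\int_{\Gamma'}\mu_\phi^{z_2}(\lambda-\mu)^{-1}d\mu$ for $\lambda\in\Gamma$: closing $\Gamma'$ at infinity on the side not containing $\lambda$, it vanishes since $\Re z_2<0$. The second is $\int_\Gamma\lambda_\phi^{z_1}(\lambda-\mu)^{-1}d\lambda$ for $\mu\in\Gamma'$: here $\mu$ is enclosed, and the residue gives $\pm 2\pi i\,\mu_\phi^{z_1}$ with the orientation sign. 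The key point, and the only real sign check, is that $\frac{i}{2\pi}(-2\pi i)\cdot(\text{sign})=1$, so the double integral collapses to $\frac{i}{2\pi}\int_{\Gamma'}\mu_\phi^{z_1+z_2}(P-\mu)^{-1}d\mu=P_\phi^{z_1+z_2}$.

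\textbf{The compatibility with $\Pi_0(P)$, (\ref{eq:powers.compatibility-Riesz-proj}).} By Proposition~\ref{prop:Resolvent.Meromorphic},
\[
\Pi_0(P)(P-\lambda)^{-1}=-\sum_{1\le j\le\nu_0}\lambda^{-j}\Pi_0(P)P^{j-1},
\]
and the same holds with $\Pi_0(P)$ on the right, since $\Pi_0(P)$ commutes with $P$ by~(\ref{eq:Spectral.PPil-PilP}). Hence
\[
\Pi_0(P)P_\phi^z=-\sum_{1\le j\le\nu_0}\Pi_0(P)P^{j-1}\,\frac{i}{2\pi}\int_\Gamma\lambda_\phi^{z-j}\,d\lambda .
\]
Each scalar integral vanishes: $\lambda_\phi^{z-j}$ is holomorphic off $L_\phi$ and decays like $|\lambda|^{\Re z-j}$ with $\Re z-j<-1$, so the keyhole contour can be pushed to infinity. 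If $0\notin\Sp(P)$ then $\Pi_0(P)=0$ and there is nothing to prove.

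\textbf{Negative integer powers~(\ref{eq:powers.negative-powers1}).} By the semigroup property it suffices to treat $k=1$, because $P^{-k}=(P^{-1})^k$ by Proposition~\ref{prop:Spectral.partial-inverse-adjoint-exponentiation}. For $z=-1$ the integrand $\lambda^{-1}(P-\lambda)^{-1}$ is single-valued, so I deform $\Gamma$. Using $(1-\Pi_0(P))P_\phi^{-1}=P_\phi^{-1}$, I work with $(1-\Pi_0(P))(P-\lambda)^{-1}$, which is holomorphic on $\Lambda_0(P)\ni 0$ by Proposition~\ref{prop:Resolvent.Meromorphic} and equals $(P_0-\lambda)^{-1}$ on $E_0(P^*)^\perp$. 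Closing the contour at infinity (the integrand is $O(|\lambda|^{-2})$) leaves only the residue at $\lambda=0$ coming from $\lambda^{-1}$. That residue gives $P_0^{-1}$ on $E_0(P^*)^\perp$, and $0$ on $E_0(P)$. This is exactly the partial inverse $P^{-1}$ of~(\ref{eq:Spectral.partial-inverse-matrix}); alternatively, one can verify the characterization in Lemma~\ref{lem:Spectral.partial-inverse-characterization}.

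\textbf{Expected obstacle.} The main difficulty I expect is the orientation and sign bookkeeping in the two residue computations. A second point is making sure the contour closures at infinity are legitimate for operator-valued integrands; I will reduce this to scalar statements via Proposition~\ref{prop:powers.improper-int}.
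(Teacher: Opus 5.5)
Your arguments for (\ref{eq:powers.semigroup}) and (\ref{eq:powers.compatibility-Riesz-proj}) are the ones the paper uses. For the first it is the Seeley--Shubin two-contour argument. For the second it is the Laurent expansion of $\Pi_0(P)(P-\lambda)^{-1}$ from Proposition~\ref{prop:Resolvent.Meromorphic}, followed by $\int_\Gamma\lambda_\phi^{z-j}\,d\lambda=0$. Two of your steps are misstated, however, and each is easy to repair.

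\emph{Radii in the semigroup step.} Your radii are reversed. If $r'<r$, the rotated ray of $\Gamma'$ at angle $\phi-\delta$ passes through $re^{i(\phi-\delta)}$, which lies on the circle of $\Gamma$. So the contours cross and neither scalar identity holds. The configuration you describe in words has $\Gamma'$ in the region enclosed by $\Gamma$ and $\Gamma$ in the keyhole of $\Gamma'$. That requires $\Gamma'=\Gamma(\phi-\delta,\phi-2\pi+\delta,r')$ with $0<r<r'<r_0$. Then $\int_\Gamma\lambda_\phi^{z_1}(\lambda-\mu)^{-1}d\lambda=+2\pi i\,\mu_\phi^{z_1}$, and the constant is $-\bigl(\tfrac{i}{2\pi}\bigr)^2\,2\pi i=\tfrac{i}{2\pi}$, as needed.

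\emph{Closing the contour when $k=1$.} You cannot close $\Gamma$ at infinity here. The region between $\Gamma$ and infinity contains every nonzero eigenvalue, and each is a pole of $(1-\Pi_0(P))(P-\lambda)^{-1}$. Moreover, the $O(|\lambda|^{-1})$ resolvent bound is only available in closed subcones of $\Theta(P)$, not on large circles. The correct reason is that $\lambda_\phi^{-1}=\lambda^{-1}$ is single-valued, so the two ray integrals cancel and only the clockwise circle $|\lambda|=r$ remains. Since $(1-\Pi_0(P))(P-\lambda)^{-1}$ is holomorphic on $D(0,r_0)$, the residue theorem inside that disk gives $P_\phi^{-1}=[(1-\Pi_0(P))(P-\lambda)^{-1}]_{\lambda=0}$.

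With these repairs your proof of (\ref{eq:powers.negative-powers1}) is correct, and it differs from the paper's. You read off the value at $\lambda=0$ from the block form (\ref{eq:Spectral.resolvent-splitting}) as $0\oplus P_0^{-1}$. That is the partial inverse by its definition (\ref{eq:Spectral.partial-inverse-matrix}).

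The paper reduces to the same circle integral but then multiplies on the right by $P$. It uses $\lambda^{-1}(P-\lambda)^{-1}P=\lambda^{-1}+(P-\lambda)^{-1}$ to get $P_\phi^{-1}P=1-\Pi_0(P)$ directly from the definition of the Riesz projection. It then concludes $P_\phi^{-1}=P_\phi^{-1}PP^{-1}=(1-\Pi_0(P))P^{-1}=P^{-1}$, using (\ref{eq:powers.compatibility-Riesz-proj}) and (\ref{eq:Spectral.PP-1-P-1P1}).

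The paper's route is purely algebraic and never evaluates a resolvent at $\lambda=0$. Yours is more direct and handles the invertible and non-invertible cases at once, at the price of relying on the decomposition of Lemma~\ref{lem:Spectral.Po}.
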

\begin{proof}
 The proof of~(\ref{eq:powers.semigroup}) goes along the same lines as the proof of~\cite[Proposition~10.1(a)]{Sh:Springer01}. In the invertible case, the proof of~(\ref{eq:powers.negative-powers1}) goes along the same lines as the proof of~\cite[Proposition~10.1(b)]{Sh:Springer01}. Moreover, if $P$ is invertible, then~(\ref{eq:powers.compatibility-Riesz-proj}) holds trivially, since in that case $\Pi_0(P)=0$. 
 
In the non-invertible case we proceed as follows. We observe that~(\ref{eq:Resolvent.singularity}) implies that
\begin{equation*}
 (P-\lambda)^{-1}\Pi_0(P)=\Pi_0(P)(P-\lambda)^{-1}= -\sum_{1\leq j \leq \nu_0} \lambda^{-j} \Pi_0(P)P^{j-1}, \qquad \lambda \in \Lambda,
\end{equation*}
where $\nu_0$ is the nilpotency index of the nilpotent operator induced by $P$ on $E_0(P)$. Thus, 
\begin{align*}
 \Pi_0(P)P_\phi^z=P_\phi^z \Pi_0(P) & = \frac{i}{2\pi} \int_\Gamma   \lambda^z_\phi  \Pi_0(P)(P-\lambda)^{-1}d\lambda\\
 &= - \sum_{1\leq j \leq \nu_0} \bigg(\frac{i}{2\pi} \int_\Gamma \lambda^{z-j}_\phi d\lambda\bigg)\Pi_0(P)P^{j-1}=0.
\end{align*}
 This gives~(\ref{eq:powers.compatibility-Riesz-proj}). 
 
 As $P^{-k}=(P^{-1})^k$ and we have the semigroup property~(\ref{eq:powers.semigroup}) it is enough to prove~(\ref{eq:powers.negative-powers1}) for $k=1$. Thanks to~(\ref{eq:Spectral.PP-1-P-1P1}) and~(\ref{eq:powers.compatibility-Riesz-proj}) we have
\begin{equation}
 P_\phi^{-1}\left(PP^{-1}\right)=P_\phi^{-1}\left(1-\Pi_0(P)\right)=  P_\phi^{-1}. 
 \label{eq:powers.Pphi-partial-inverse}
\end{equation}
Moreover, as $\lambda_\phi^{-1}=\lambda^{-1}$, in the integral $\int_\Gamma \lambda_\phi^{-1}(P-\lambda)^{-1}d\lambda$, 
 we have the same integrand over the pieces $\Gamma(\phi,r)^{-}$ and $\Gamma(\phi-2\pi,r)^{+}$. These two pieces agree, but have opposite orientations, and so the integrals over these two pieces cancel each other. Note further that $C(\phi,\phi-2\pi,r)$ is the  circle $\{|\lambda|=r\}$ (with the point $re^{i\phi}$ deleted) endowed with the clockwise orientation. Thus,
\begin{equation*}
 P_\phi^{-1} = -\frac{i}{2\pi} \int_{|\lambda|=r} \lambda^{-1}(P-\lambda)^{-1}d\lambda. 
\end{equation*}
Using~(\ref{eq:Spectral.Rieszproj-adjoint}) and the fact that $\lambda^{-1}(P-\lambda)^{-1}P=\lambda^{-1}+(P-\lambda)^{-1}$, we then get
\begin{equation*}
 P_\phi^{-1}P= \frac{1}{2i\pi} \int_{|\lambda|=r}\lambda^{-1}d\lambda +  \frac{1}{2i\pi} \int_{|\lambda|=r} (P-\lambda)^{-1}d\lambda =1-\Pi_0(P). 
\end{equation*}
 Therefore, we have
\begin{equation*}
 (P_\phi^{-1}P)P^{-1}= \left(1-\Pi_0(P)\right)P^{-1}=P^{-1}. 
\end{equation*}
 Comparing this with~(\ref{eq:powers.Pphi-partial-inverse}) then shows that $P_\phi^{-1}=P^{-1}$. 
\end{proof}

Thanks to Lemma~\ref{lem:powers.powers-half-plane} and Lemma~\ref{lem:powers.properties-half-plane} we may extend the definition of the powers $P_\phi^z$ to all $z\in \C$. Namely, we directly define $P_\phi^z$ as the \psido\ by letting
\begin{equation}
 P_\phi^z:= P^kP_\phi^{z-k}\in \Psi^{wz}(\T^n_\theta), \qquad z\in \C, 
 \label{eq:powers.int-definition2}
\end{equation}
where $k$ is any non-negative integer~$>\Re z$. The above definition does not depend on $k$. Indeed, if $\ell$ is any integer~$\geq 1$ and $\Re z<k$, then by using~(\ref{eq:powers.semigroup})--(\ref{eq:powers.negative-powers1}) we get
\begin{equation*}
 P^{k+\ell} P_{\phi}^{z-(k+\ell)}=P^k P^\ell P^{-\ell} P_{\phi}^{z-k} = P^k\left(1-\Pi_0(P)\right) P_{\phi}^{z-k} =P^kP_\phi^{z-k}. 
\end{equation*}

\begin{theorem} \label{thm:powers.complex-powers} 
 The following hold. 
\begin{enumerate}
 \item $P_\phi^z$, $z\in \C$, is a holomorphic family in $\Psi^{\bt}(\T^n_\theta)$ of order $wz$.

\item For $z\in \C$ and $j=0,1, \ldots$, the homogeneous symbol of degree $wz-j$ of $P_\phi^z$ is given by~(\ref{eq:Powers.homogeneous-symbols}). In particular, the principal symbol of $P_\phi^z$ is $\rho_w(\xi)_\phi^z$. 

\item We have the group properties, 
 \begin{gather}
 P_\phi^0=1-\Pi_0(P), \qquad P_\phi^{z_1+z_2}=P_\phi^{z_1}P_\phi^{z_2}, \quad z_i\in \C,
  \label{eq:powers.group-property}\\ 
 P_\phi^{-k}=P^{-k}, \qquad P_\phi^k=\left(1-\Pi_0(P)\right)P^k, \qquad k=1,2,\ldots  
   \label{eq:powers.integer-powers}
\end{gather}
\end{enumerate}
\end{theorem}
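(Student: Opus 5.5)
The plan is to extend everything from the half-plane $\{\Re z<0\}$ by means of the defining formula~(\ref{eq:powers.int-definition2}). On each half-plane $\Omega_k:=\{\Re z<k\}$, $k\in\N_0$, we have $P_\phi^z=P^kP_\phi^{z-k}$. By Proposition~\ref{prop:powers.param-psido-hol-psidos} and Lemma~\ref{lem:powers.powers-half-plane}, the map $z\mapsto P_\phi^{z-k}$ is a holomorphic family on $\Omega_k$ of order $w(z-k)$. The constant family $P^k$ is holomorphic of order $wk$. Corollary~\ref{cor:Hol.composition-of-psidos} then shows that $P_\phi^z$ is a holomorphic family of order $wz$ on each $\Omega_k$. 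Since the $\Omega_k$ exhaust $\C$ and the definition is independent of $k$, Lemma~\ref{lem:hol.local-global} (local-to-global) gives part~(1).

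For part~(2), define $\rho_j(z)(\xi)$ by~(\ref{eq:Powers.homogeneous-symbols}) for all $z\in\C$. This makes sense because $\lambda\mapsto\lambda_\phi^z$ is entire in $z$ on $\C^*\setminus L_\phi$ and $\gamma_\xi$ is compact, so $\rho_j(z)$ is a holomorphic family of homogeneous symbols of degree $wz-j$ on all of $\C$. Lemma~\ref{lem:Hol.homogeneous-symbol-holomorphic} shows that the true homogeneous components of $P_\phi^z$ are also holomorphic in $z$. By Lemma~\ref{lem:powers.powers-half-plane} the two families agree on $\{\Re z<0\}$, so by unique continuation (Proposition~\ref{prop:Hol.unique-analytic-continuation}, applied in $C^\infty(\T^n_\theta\times(\R^n\setminus0))$) they agree on all of $\C$. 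Alternatively, one can compute the components directly from the composition formula for $P^k\cdot P_\phi^{z-k}$, but the continuation argument is cleaner. For $j=0$, holomorphic functional calculus yields $\rho_w(\xi)_\phi^z$.

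For part~(3), first note that the map $(z_1,z_2)\mapsto P_\phi^{z_1+z_2}-P_\phi^{z_1}P_\phi^{z_2}$ is separately holomorphic in each variable as a map into $\sL(C^\infty(\T^n_\theta))$. This uses part~(1), Proposition~\ref{prop:Hol.propertiesP(z)-W2s}, and the continuity of composition on bounded sets, which is the one point requiring care. The map vanishes on $\{\Re z_1<0,\Re z_2<0\}$ by~(\ref{eq:powers.semigroup}). Unique continuation in $z_1$ and then in $z_2$ gives the group law on $\C^2$. A direct algebraic route also works: write $P_\phi^{z_1}P_\phi^{z_2}=P^{k}P_\phi^{z_1-k}P^{l}P_\phi^{z_2-l}$ and commute $P^l$ past $P_\phi^{z_1-k}$. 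This commutation holds because $P$ commutes with the resolvent, so it commutes with the integral~(\ref{eq:powers.int-definition}).

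For the remaining identities, take $k=1$ in~(\ref{eq:powers.int-definition2}). Then $P_\phi^0=PP_\phi^{-1}=PP^{-1}=1-\Pi_0(P)$ by~(\ref{eq:powers.negative-powers1}) and~(\ref{eq:Spectral.PP-1-P-1P1}). Next, $P_\phi^k=P^{k+1}P_\phi^{-1}=P^k(PP^{-1})=P^k(1-\Pi_0(P))=(1-\Pi_0(P))P^k$, using that $\Pi_0(P)$ commutes with $P$ by~(\ref{eq:Spectral.PPil-PilP}). The identity $P_\phi^{-k}=P^{-k}$ is exactly~(\ref{eq:powers.negative-powers1}). The main obstacle is the commutation and continuation step in the group law; I would handle it algebraically via the commutation of $P$ with $P_\phi^{w}$ for $\Re w<0$.
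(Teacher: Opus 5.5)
Your proposal is correct and follows essentially the same route as the paper. Part~(1) is obtained from $P_\phi^z=P^kP_\phi^{z-k}$ on $\{\Re z<k\}$, the composition corollary and the local-to-global lemma; part~(2) by analytically continuing the contour-integral formula from $\{\Re z<0\}$; and part~(3) by analytic continuation of the semigroup law together with $P_\phi^0=PP^{-1}$ and $P_\phi^k=P^{k+1}P_\phi^{-1}$. The only cosmetic difference is that the paper runs the continuation in part~(2) pointwise in $\xi$, pairing with $v\in\scD'(\T^n_\theta)$, which spares you from checking holomorphy of the contour integral in $C^\infty(\T^n_\theta\times(\R^n\setminus 0))$ (your version is still fine, e.g.\ with $\gamma_\xi=|\xi|^w\gamma$ and the substitution $\lambda=|\xi|^w\mu$).
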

\begin{proof}
 Let $k\in \N_0$. It is clear that $P^k$, $\Re z<k$, is a holomorphic family in $\Psi^\bt(\T^n_\theta)$ of order $wk$. Lemma~\ref{lem:powers.powers-half-plane} ensures that $P_\phi^{z-k}$, $\Re z<k$, is a holomorphic family in $\Psi^\bt(\T^n_\theta)$ of order $wz-wk$. Therefore, it follows from~(\ref{eq:powers.int-definition2}) and Corollary~\ref{cor:Hol.composition-of-psidos} that 
 $P_\phi^{z}$, $\Re z<k$, is a holomorphic family in $\Psi^\bt(\T^n_\theta)$ of order $wz$. This is true for all $k\in \N_0$, and so by using Lemma~\ref{lem:hol.local-global}  we see that $P_\phi^{z}$, $z\in \C$, is a holomorphic family in $\Psi^\bt(\T^n_\theta)$ of order $wz$ over the whole complex plane $\C$. This proves the first part. 
 
 For $j=0,1,\ldots$ and $\Re z<0$, the homogeneous symbol $\rho_j(z)(\xi)$ of degree $wz-j$ of $P_\phi^z$ is given by~(\ref{eq:Powers.homogeneous-symbols}). If $\xi \neq 0$, then
 $\rho_j(z)(\xi)\in\Hol(\C; C^\infty(\T^n_\theta))$. Moreover, for all $v$ in the topological dual $\scD'(\T_\theta^n)$, we have
\begin{equation*}
 \acou{v}{\rho_j(z)(\xi)} = \frac{i}{2\pi} \int_{\gamma_{\xi}} \lambda^{z}_\phi  \acou{v}{\sigma_{-w-j}(\xi; \lambda)}d\lambda, \qquad \Re z<0. 
\end{equation*}
The left-hand side is a holomorphic function on $\C$. The right-hand side is holomorphic on $\C$ as well, since the function $(z,\lambda)\rightarrow \lambda^{z}_\phi  \acou{v}{\sigma_{-w-j}(\xi; \lambda)}$ is continuous on $\C\times \gamma_\xi$ and holomorphic with respect to $z$. Thus, the above equality holds for all $z\in \C$ and all $v\in \scD'(\T^n_\theta)$. By Proposition~\ref{prop:Hol.integral} this means that~(\ref{eq:Powers.homogeneous-symbols}) holds for all $z\in \C$. This gives the second part. 

It remains to prove the last part. As $P_\phi^z$, $z\in \C$, is a holomorphic family with values in the locally convex space $\sL(C^\infty(\T^n_\theta))$, the group property~(\ref{eq:powers.group-property}) follows from~(\ref{eq:powers.semigroup}) by analytic continuation (\emph{cf}.~Proposition~\ref{prop:Hol.unique-analytic-continuation}). Note also that by definition $P_\phi^0=PP_\phi^{-1}=PP^{-1}=1-\Pi_0(P)$. 

The first set of equalities in~(\ref{eq:powers.integer-powers}) repeats~(\ref{eq:powers.negative-powers1}) \emph{verbatim}. As for the second set, given any $k\in \N$, by using~(\ref{eq:Spectral.PP-1-P-1P1}) and~(\ref{eq:powers.negative-powers1}) we get
\begin{equation*}
 P_\phi^k=P^{k+1}P_\phi^{-1}=P^k P P^{-1}=P^k \left(1-\Pi_0(P)\right)= \left(1-\Pi_0(P)\right)P^k. 
\end{equation*}
This proves~(\ref{eq:powers.integer-powers}). 
 \end{proof}

\begin{remark}
 Let $\nu_0$ be the nilpotency index of the nilpotent operator induced by $P$ on the generalized nullspace $E_0(P)$. As $E_0(P)=\ker P^{\nu_0}$, if $k\geq \nu_0$, then $\Pi_0(P)P^k =0$, and so in this case~(\ref{eq:powers.integer-powers}) gives 
\begin{equation*}
 P_\phi^k =P^k \qquad \text{for}\ k=\nu_0,\nu_0+1, \ldots.
\end{equation*}
In particular, if $P$ is invertible, then $P^k_\phi=P^k$ for all $k\in \N_0$; in particular, $P_\phi^0=1$. 
\end{remark}

\begin{remark}
 It was suggested without proof in~\cite{FGK:MPAG17} that complex powers of elliptic \psidos\ on NC 2-tori give rise to holomorphic families of \psidos. However, the definition of holomorphic families of \psidos\ and the construction of complex powers in~\cite{FGK:MPAG17} are not fully clear. Therefore, Theorem~\ref{thm:powers.complex-powers} provides the correct formulation of that result.
\end{remark}

\begin{remark}\label{rmk:powers.normal}
 Three special cases of the above construction are worth recording explicitly. 
\begin{enumerate}
 \item[(a)] Suppose that $P$ is normal, i.e., it commutes with its formal adjoint $P^*$. In this case,  as $L_\phi$ and $\Sp(P)$ are disjoint, we may define $P_\phi^z$, $z\in \C$, by means of the Borel functional calculus for $P$ on $L_2(\T^n_\theta)$, with the convention that 
 $\lambda_\phi^z=0$ for $\lambda=0$. Alternatively, if $(e_\ell)_{\ell\geq 0}$ is an orthonormal eigenbasis with $Pe_\ell =\lambda_\ell e_\ell$, then
\begin{equation}\label{eq:powers.normal} 
 P_\phi^ze_\ell= \left\{
\begin{array}{cl} 
  (\lambda_\ell)^z_\phi e_\ell & \text{if $\lambda_\ell\neq 0$},\\
  0 & \text{if $\lambda_\ell= 0$}.
\end{array}\right. 
\end{equation}
 In this definition, $P_\phi^z$ is a bounded operator for $\Re z\leq 0$ and has domain $W^{s}_2(\T^n_\theta)$ with $s=w \Re z$ for $\Re z>0$. On these domains we recover the operator $P_\phi^z$ defined by~(\ref{eq:powers.int-definition}) and~(\ref{eq:powers.int-definition2}). 

 \item[(b)]\label{rmk:powers.positive} Suppose that $P$ is selfadjoint. If $P$ has a positive invertible principal symbol and has non-negative spectrum, then $\Theta(P)=\check{\Theta}(P)=\C\setminus [0,\infty)$. Moreover, in view of~(\ref{eq:powers.normal}) we obtain the same family of complex powers for any $\phi\in (0,2\pi)$, since in this case $\arg_\phi \lambda=0$ for all $\lambda \in (0,\infty)$.  We denote this family by $P^z$, $z\in \C$.  In particular, we recover~\cite[Theorem~7.3]{LP:Part1}, which asserts that in this case $P^z\in \Psi^{wz}(\T^n_\theta)$ for all $z\in \C$ and gives the formula~(\ref{eq:Powers.homogeneous-symbols}) for its homogeneous symbols.  

 \item[(c)]\label{rmk:powers.selfadjoint} Suppose instead that $P$ is selfadjoint with $\check{\Theta}(P)=\C\setminus \R$, i.e., $P$ has both positive and negative eigenvalues. In this case we obtain  two families of complex powers $P_\uparrow^z$, $z\in \C$, and $P_\downarrow^z$, $z\in \C$, according to whether the ray $L_\phi$ is contained in the upper half-plane $\{\Im \lambda>0\}$  with $\phi\in (0,\pi)$ or is contained in the lower half-plane  $\{\Im \lambda<0\}$ with $\phi \in (\pi,2\pi)$. In fact, if $(e_\ell)_{\ell\geq 0}$ is any orthonormal eigenbasis with $Pe_\ell =\lambda_\ell e_\ell$, then 
\begin{equation}\label{eq:powers.selfadjoint} 
 P_{\uparrow\downarrow}^ze_\ell= \left\{
\begin{array}{cl} 
  \lambda_\ell^z e_\ell & \text{if $\lambda_\ell>0 $},\\
  0 & \text{if $\lambda_\ell= 0$},\\
   e^{\mp i\pi z} |\lambda_\ell|^z e_\ell & \text{if $\lambda_\ell<0 $}, 
\end{array}\right. 
\end{equation}
where the sign $\mp$ is $-$ (resp., $+$) when the arrow is up (resp., down). 
\end{enumerate}
\end{remark}

Combining Theorem~\ref{thm:powers.complex-powers} with Proposition~\ref{prop:Hol.propertiesP(z)-W2s} and Proposition~\ref{prop:Hol.propertiesP(z)-sLp} yields the following two results. 

\begin{corollary} \label{cor:powers.powers-W2s}
 Given any $m\in \R$, the following hold.
 \begin{enumerate}
 \item $P_\phi^z$, $\Re z<m$, is a holomorphic family in  $\sL(W_2^{s+wm}(\T^n_\theta), W_2^s(\T^n_\theta))$ for every $s\in \R$. 
 
 \item Let $K\subseteq \C$ be a compact set such that $\Re z\leq m$ on $K$. Then $P_\phi^z$, $z\in K$, is a bounded strongly continuous family in  $\sL(W_2^{s+wm}(\T^n_\theta), W_2^s(\T^n_\theta))$ for every $s\in \R$. 
\end{enumerate}
\end{corollary}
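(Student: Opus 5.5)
The corollary follows by feeding Theorem~\ref{thm:powers.complex-powers} into Proposition~\ref{prop:Hol.propertiesP(z)-W2s}. By Theorem~\ref{thm:powers.complex-powers}(1), $P_\phi^z$, $z\in\C$, is a holomorphic family in $\Psi^\bt(\T^n_\theta)$ of order $w(z)=wz$. Restrict it to the open half-plane $\Omega_m:=\{\Re z<m\}$. The restriction of a holomorphic family to an open subset is again a holomorphic family, since conditions (i)--(iii) of Definition~\ref{def:Hol.hol-family-classical-symbols} are local or uniform on compact subsets. On $\Omega_m$ we have $\Re w(z)=w\Re z\leq wm$, because $w>0$.

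For part (1), apply Proposition~\ref{prop:Hol.propertiesP(z)-W2s}(2) with the bound $\Re w(z)\leq wm$ on $\Omega_m$. This gives directly that $P_\phi^z$, $\Re z<m$, is a holomorphic family in $\sL(W_2^{s+wm}(\T^n_\theta),W_2^s(\T^n_\theta))$ for every $s\in\R$.

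For part (2), take a compact $K\subseteq\C$ with $\Re z\leq m$ on $K$. Apply Proposition~\ref{prop:Hol.propertiesP(z)-W2s}(3) to the family on $\Omega=\C$. We get $w_K=\max\{w\Re z;\ z\in K\}\leq wm$, so $P_\phi^z$, $z\in K$, is a bounded, strongly continuous family in $\sL(W_2^{s+w_K}(\T^n_\theta),W_2^s(\T^n_\theta))$. It remains to pass from $w_K$ to $wm$. Since $w_K\leq wm$, the inclusion $W_2^{s+wm}(\T^n_\theta)\hookrightarrow W_2^{s+w_K}(\T^n_\theta)$ is continuous, with norm at most $1$ by the Fourier description of the Sobolev norms. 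Composing with this inclusion preserves boundedness and strong continuity, which gives the claim in $\sL(W_2^{s+wm}(\T^n_\theta),W_2^s(\T^n_\theta))$.

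There is no real obstacle; the only points needing care are the sign $w>0$, which makes the bound on $\Re z$ translate to a bound on the order, and the monotonicity of the Sobolev scale, which lets us replace $w_K$ by $wm$.
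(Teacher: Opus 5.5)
Your proof is correct and follows the paper's argument: the paper obtains this corollary by combining Theorem~\ref{thm:powers.complex-powers} with Proposition~\ref{prop:Hol.propertiesP(z)-W2s}, just as you do. In part~(2) you also note the continuous inclusion $W_2^{s+wm}(\T^n_\theta)\hookrightarrow W_2^{s+w_K}(\T^n_\theta)$; the paper leaves this implicit, and it is exactly the small step needed to replace $w_K$ by $wm$.
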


\begin{corollary} \label{cor:powers.powers-sLp}
 Let $m<0$, and set $p=n(w|m|)^{-1}$. The following hold.
\begin{enumerate}
 \item $P_\phi^z$, $\Re z<m$, is a holomorphic family in $\sL_p$, and so this is a holomorphic family in $\sL_{q,\infty}$ for all $q \geq p$. 
 
 \item Given any compact set $K\subseteq \{\Re z<m\}$, the family $P_\phi^z$, $z\in K$, is bounded in $\sL_{p,\infty}$, and hence is bounded in $\sL_r$ for all $r>p$. 

\item For $p=1$, i.e., $m=-w^{-1}n$, the family $P_\phi^z$, $\Re z<-w^{-1}n$, is a holomorphic family of trace-class operators, and, for every compact set $K\subseteq \{\Re z<-w^{-1}n\}$, the family $P_\phi^z$, $z\in K$, is bounded in $\sL_{1,\infty}$. 
\end{enumerate}
\end{corollary}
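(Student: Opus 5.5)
Corollary~\ref{cor:powers.powers-sLp} will follow by feeding Theorem~\ref{thm:powers.complex-powers} into Proposition~\ref{prop:Hol.propertiesP(z)-sLp}, with the order function $w(z)=wz$. The only work is to translate conditions on $\Re z$ into conditions on $\Re w(z)=w\Re z$, and the exponent $n|m|^{-1}$ of that proposition into the exponent $p=n(w|m|)^{-1}$ of the statement.

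For part (1), I will restrict the family $P_\phi^z$ to the open half-plane $\Omega=\{\Re z<m\}$. By Theorem~\ref{thm:powers.complex-powers} it is a holomorphic family in $\Psi^\bt(\T^n_\theta)$ of order $wz$, and on $\Omega$ it satisfies $\Re(wz)<wm<0$. Part~(1) of Proposition~\ref{prop:Hol.propertiesP(z)-sLp}, applied with the negative number $wm$ in place of $m$, then shows that $P_\phi^z$ is a holomorphic family in $\sL_{p}$ with $p=n|wm|^{-1}=n(w|m|)^{-1}$. The claim for $\sL_{q,\infty}$ with $q\geq p$ follows from the continuous inclusions $\sL_p\subseteq\sL_{p,\infty}\subseteq\sL_{q,\infty}$: composing a holomorphic map with a continuous linear map keeps it holomorphic.

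For part (2), I will take a compact set $K\subseteq\{\Re z<m\}$ and set $w_K=\max\{w\Re z;\ z\in K\}$. By compactness $w_K<wm<0$. Part~(2) of Proposition~\ref{prop:Hol.propertiesP(z)-sLp} gives boundedness in $\sL_{p_K,\infty}$ with $p_K=n|w_K|^{-1}<p$, and the inclusion $\sL_{p_K,\infty}\subseteq\sL_{p,\infty}$ yields boundedness in $\sL_{p,\infty}$. Boundedness in $\sL_r$ holds already for every $r>p_K$, in particular for every $r>p$, via $\sL_{p_K,\infty}\subseteq\sL_r$.

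Part (3) is the special case $m=-w^{-1}n$, which gives $p=1$, so holomorphy in $\sL_1$ and boundedness in $\sL_{1,\infty}$ on compact sets follow directly from parts (1) and (2). There is no real obstacle anywhere. The only point needing care is that on a compact set the strict inequality $\Re z<m$ improves to $w_K<wm$, which gives the sharper exponent $p_K<p$ and makes the stated bounds immediate.
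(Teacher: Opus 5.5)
Your proposal is correct and follows the paper's own argument. The paper obtains the corollary by combining Theorem~\ref{thm:powers.complex-powers} (order function $wz$) with Proposition~\ref{prop:Hol.propertiesP(z)-sLp} applied with $wm$ in place of $m$, together with the standard continuous inclusions between (weak) Schatten classes.
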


We also mention the following extension of Theorem~\ref{thm:powers.complex-powers}. 

\begin{proposition}\label{prop:Powers.APz}
 Let $A\in \Psi^q(\T^n_\theta)$, $q\in \C$. Then $AP_\phi^z$, $z\in \C$, is a holomorphic family in $\Psi^\bt(\T^n_\theta)$ of order $wz+q$. 
\end{proposition}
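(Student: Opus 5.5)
The plan is to regard $A$ as the constant family $A(z):=A$, $z\in\C$, and to invoke Corollary~\ref{cor:Hol.composition-of-psidos} together with part~(1) of Theorem~\ref{thm:powers.complex-powers}.

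First, I will check that the constant family is a holomorphic family in $\Psi^\bt(\T^n_\theta)$ of constant order $q$. Choose a symbol $\sigma(\xi)\in S^q(\T^n_\theta\times\R^n)$ with $A=P_\sigma$ and $\sigma\sim\sum_j\sigma_{q-j}$. The constant family $\sigma(z):=\sigma$ then meets conditions (i)--(iii) of Definition~\ref{def:Hol.hol-family-classical-symbols}:
\begin{itemize}
\item[(i)] the order function $w(z)=q$ is constant, hence holomorphic;
\item[(ii)] a constant map into $C^\infty(\T^n_\theta\times\R^n)$ is holomorphic;
\item[(iii)] the estimates~(\ref{eq:Hol.hol-family-classical-symbol-estimate}) are exactly the classical estimates~(\ref{eq:Symbols.classical-estimates}) for $\sigma$, and they are trivially uniform in $z$.
\end{itemize}
So $A(z)=P_{\sigma(z)}$ belongs to $\Hol(\C;\Psi^\bt(\T^n_\theta))$ with order $q$.

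Next, by Theorem~\ref{thm:powers.complex-powers}(1), $P_\phi^z$, $z\in\C$, is a holomorphic family in $\Psi^\bt(\T^n_\theta)$ of order $wz$. Corollary~\ref{cor:Hol.composition-of-psidos} then gives that $AP_\phi^z=A(z)P_\phi^z$ is a holomorphic family of order $q+wz$.

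Nothing in this argument is a real obstacle; the only point needing a moment's thought is the verification above that a single classical \psido\ is a holomorphic family. If one also wants the homogeneous symbols, they follow from Proposition~\ref{prop:Composition.composition-PsiDOs} combined with~(\ref{eq:Powers.homogeneous-symbols}).
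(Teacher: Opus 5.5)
Your proposal is correct and follows the paper's own proof: regard $A$ as a constant holomorphic family of order $q$, then apply Corollary~\ref{cor:Hol.composition-of-psidos} together with Theorem~\ref{thm:powers.complex-powers}(1). Your explicit check of conditions (i)--(iii) of Definition~\ref{def:Hol.hol-family-classical-symbols} for the constant family is the only detail the paper leaves implicit, and it is right.
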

\begin{proof}
 We may regard $A$ as a (constant) holomorphic family in $\Psi^\bt(\T^n_\theta)$ of order $q$ over $\C$. The result then follows from
  Corollary~\ref{cor:Hol.composition-of-psidos} and Theorem~\ref{thm:powers.complex-powers}. 
\end{proof}

\begin{remark}\label{rmk:Powers.extension-APz}
 If $P$ is invertible, then $AP_\phi^z=A$ at $z=0$, so the family $AP_\phi^{z/w}$ is a holomorphic gauging for $A$  in the sense of Definition~\ref{def:Hol.gauging-PsiDOs}. More generally, Corollary~\ref{cor:powers.powers-W2s} and Corollary~\ref{cor:powers.powers-sLp} extend \emph{mutatis mutandis} to the families $AP_\phi^z$ as follows: 
 \begin{itemize}
 \item  For Corollary~\ref{cor:powers.powers-W2s} we just need to replace the exponents $s+wm$ by $s+wm+\Re q$.
 
 \item For Corollary~\ref{cor:powers.powers-sLp} the assumption $m<0$ has to be replaced by $wm+\Re q<0$, i.e., $m<-w^{-1}\Re q$, and then we have to set $p=n|wm+\Re q|^{-1}$. 
\end{itemize}
 In particular, $AP_\phi^z$, $\Re z<-w^{-1}(n+\Re q)$, is a holomorphic family of trace-class operators. 
\end{remark}

Suppose now that $P$ is any elliptic operator in $\Psi^w(\T^n_\theta)$. In this case $P^*P$ is an operator in $\Psi^{2w}(\T^n_\theta)$ with principal symbol $\rho_w(\xi)^*\rho_w(\xi)$. This is a positive invertible symbol and $P^*P$ has non-negative spectrum, and so $\Theta(P^*P)=\check{\Theta}(P^*P)= \C\setminus [0,\infty)$. By~\cite[Theorem~7.5]{LP:Part1} (or by Theorem~\ref{thm:powers.complex-powers}) the absolute value $|P|=\sqrt{P^*P}$ is an operator in $\Psi^{w}(\T^n_\theta)$ whose principal symbol is $\sqrt{\rho_w(\xi)^*\rho_w(\xi)}=|\rho_w(\xi)|$. 
Here again $|P|$ has non-negative spectrum, and  we have
\begin{equation*}
 \Theta(|P|)=\check{\Theta}(|P|)=\C\setminus [0,\infty). 
\end{equation*}
Therefore, by applying Theorem~\ref{thm:powers.complex-powers} to $|P|$ we arrive at the following statement. 

\begin{proposition}[compare~{\cite[Theorem~7.5]{LP:Part1}}] \label{prop:powers.absolute-value}
 Let $P$ be any elliptic operator in $\Psi^w(\T^n_\theta)$, $w>0$. 
 \begin{enumerate}
\item The complex powers $|P|^z$, $z\in \C$, give rise to a holomorphic family in 
 $\Psi^\bt(\T^n_\theta)$ of order $wz$.
 
 \item  For each $z\in \C$, the principal symbol of $|P|^z$ is equal to $|\rho_w(\xi)|^z$. 
\end{enumerate}
\end{proposition}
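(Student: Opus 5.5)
The plan is to apply Theorem~\ref{thm:powers.complex-powers} to the operator $|P|$ in place of $P$, with $\phi=\pi$ (any $\phi\in(0,2\pi)$ would do). First I would check that $|P|$ satisfies the standing hypotheses of that theorem. By \cite[Theorem~7.5]{LP:Part1}, $|P|=\sqrt{P^*P}$ lies in $\Psi^w(\T^n_\theta)$ with principal symbol $|\rho_w(\xi)|$. To get this one can also apply Theorem~\ref{thm:powers.complex-powers} directly to $P^*P\in\Psi^{2w}(\T^n_\theta)$: its principal symbol $\rho_w(\xi)^*\rho_w(\xi)$ is positive and invertible, so $P^*P$ is elliptic and $\Theta(P^*P)=\check{\Theta}(P^*P)=\C\setminus[0,\infty)$. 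Then Remark~\ref{rmk:powers.normal}(b) identifies $(P^*P)^{1/2}$, defined by Borel functional calculus, with the \psido\ $(P^*P)_\pi^{1/2}$. The latter has order $w$ and principal symbol $(\rho_w(\xi)^*\rho_w(\xi))^{1/2}_\pi=|\rho_w(\xi)|$; the last identity holds because holomorphic and continuous functional calculi agree for this positive invertible element.

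Second, I would verify ellipticity and the spectral data for $|P|$. Its principal symbol $|\rho_w(\xi)|$ is invertible for $\xi\neq0$, since $\rho_w(\xi)$ is. It is positive, with spectrum in $(0,\infty)$, so $\Theta(|P|)=\C\setminus[0,\infty)$. Since $|P|$ is selfadjoint and nonnegative, $\Sp(|P|)\subseteq[0,\infty)$, and hence $\check{\Theta}(|P|)=\C\setminus[0,\infty)$ as well. In particular $|P|$ is elliptic with parameter, the ray $L_\pi$ lies in $\check{\Theta}(|P|)$, and Theorem~\ref{thm:powers.complex-powers} applies.

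Third, I would read off the conclusions. Part~(1) of Theorem~\ref{thm:powers.complex-powers} gives that $|P|^z_\pi$, $z\in\C$, is a holomorphic family in $\Psi^\bt(\T^n_\theta)$ of order $wz$. Part~(2) gives principal symbol $|\rho_w(\xi)|_\pi^z$, which equals $|\rho_w(\xi)|^z$ because $\arg_\pi$ vanishes on $(0,\infty)\supseteq\Sp(|\rho_w(\xi)|)$. By Remark~\ref{rmk:powers.normal}(a)--(b), $|P|^z_\pi$ coincides with the powers $|P|^z$ given by the Borel functional calculus, with the convention $0^z=0$, and is independent of $\phi\in(0,2\pi)$.

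The only delicate point is the identification of $\sqrt{P^*P}$ as a \psido\ with the stated principal symbol, but this is quoted from \cite[Theorem~7.5]{LP:Part1}, or follows from Theorem~\ref{thm:powers.complex-powers} applied to $P^*P$ as above. The rest is bookkeeping.
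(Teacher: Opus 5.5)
Your proposal is correct and follows the paper's argument essentially verbatim. Like the paper, you identify $|P|=\sqrt{P^*P}$ as an elliptic element of $\Psi^w(\T^n_\theta)$ with principal symbol $|\rho_w(\xi)|$, either via \cite[Theorem~7.5]{LP:Part1} or via Theorem~\ref{thm:powers.complex-powers} applied to $P^*P$, note that $\Theta(|P|)=\check{\Theta}(|P|)=\C\setminus[0,\infty)$, and then apply Theorem~\ref{thm:powers.complex-powers} to $|P|$. Your extra remarks only make explicit bookkeeping that the paper leaves implicit: that $|\rho_w(\xi)|^z_\pi=|\rho_w(\xi)|^z$ because $\arg_\pi$ vanishes on $(0,\infty)$, and the identification with the Borel functional calculus via Remark~\ref{rmk:powers.normal}.
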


Let us illustrate the previous results in the special case of the Laplace--Beltrami operators of~\cite{HP:JGP20}. By a \emph{smooth density} on $\T_\theta^n$ we shall mean a positive invertible element of $C^\infty(\T^n_\theta)$. Any smooth density $\nu$ on $\T^n_\theta$ defines an equivalent inner product on $L_2(\T^n_\theta)$ by
\begin{equation}\label{eq:Powers.inner-product-density}
 \acoup{x}{y}_\nu:=(2\pi)^n\tau\big[y^*\nu x\big], \qquad x,y\in L_2(\T^n_\theta). 
\end{equation}
We denote by $L_2(\T^n_\theta;\nu)$ the corresponding Hilbert space. A unitary isomorphism from $L_2(\T^n_\theta;\nu)$ onto $L_2(\T^n_\theta)$ is simply given by the left multiplication by $(2\pi)^{n/2}\sqrt{\nu}$. Note that $\sqrt{\nu}\in C^\infty(\T^n_\theta)$, since $ C^\infty(\T^n_\theta)$ is closed under holomorphic functional calculus. 

In the terminology of~\cite{HP:JGP20, Ro:SIGMA13} a \emph{Riemannian metric} is given by a positive invertible matrix $g=(g_{ij})\in M_n(C^\infty(\T^n_\theta))$ such  that its entries $g_{ij}$ and the entries of its inverse $g^{-1}=(g^{ij})$ are selfadjoint elements of $C^\infty(\T^n_\theta)$. The \emph{Riemannian density} is defined by
\begin{equation*}
 \nu(g)= \sqrt{\det(g)}, \qquad \text{where}\ \det(g):=\exp\big[\Tr\left(\log (g)\right)\big]. 
\end{equation*}
Here $\log(g)\in M_n(C^\infty(\T^n_\theta))$ is defined by holomorphic functional calculus and $\Tr:M_n(C^\infty(\T^n_\theta))\rightarrow C^\infty(\T^n_\theta)$ is the usual matrix trace. Note that $\det(g)$ is a positive invertible element of $C^\infty(\T^n_\theta)$, and hence $\nu(g)$ is a smooth density on $\T^n_\theta$.  

The \emph{Laplace--Beltrami operator} $\Delta_{g,\nu}:C^\infty(\T^n_\theta)\rightarrow C^\infty(\T^n_\theta)$ is then defined by
\begin{equation}\label{eq:Powers.Laplace-Beltrami}
 \Delta_{g,\nu}u= \nu^{-1}\sum_{1\leq i,j\leq n} \delta_j\big(\sqrt{\nu}g^{ij}\sqrt{\nu} \delta_i(u)\big), \qquad u \in C^\infty(\T^n_\theta). 
\end{equation}
For $\nu=\nu(g)$ this operator is simply denoted $\Delta_g$. The operator  $\Delta_{g,\nu}$ is a second-order elliptic differential operator with symbol $\rho(\xi)=\rho_2(\xi)+\rho_1(\xi)$, where 
\begin{gather*}
 \rho_2(\xi)= \nu^{-\frac12} |\xi|_g^2  \nu^{\frac12}, \quad \text{with}\ |\xi|_g:=\bigg( \sum_{i,j} g^{ij}\xi_i\xi_j\bigg)^{\frac12},\\
 \rho_1(\xi)= \nu^{-1} \sum_{i,j} \delta_i\big(\sqrt{\nu}g^{ij}\sqrt{\nu}\big)\xi_j. 
\end{gather*}
Note that $|\xi|_g$ is a positive invertible element of $C^\infty(\T^n_\theta)$ for all $\xi\neq 0$ (see~\cite{HP:JGP20}). Thus, $\Sp(\rho_2(\xi))=\Sp(|\xi|_g^2)\subseteq (0,\infty)$, and hence $\Theta(\Delta_{g,\nu})=\C\setminus [0,\infty)$. 

In addition, $\Delta_{g,\nu}$ with domain $W_2^2(\T^n_\theta)$ is selfadjoint on $L_2(\T^n_\theta;\nu)$ and has non-negative spectrum  with  $\ker \Delta_{g,\nu}=\C$ (see~\cite{HP:JGP20}). It follows that $\check{\Theta}(\Delta_{g,\nu})=\C\setminus [0,\infty)$. In the same way as in Remark~\ref{rmk:powers.normal} we may define the complex powers $\Delta_{g,\nu}^z$, $z\in \C$, by using the Borel functional calculus for $\Delta_{g,\nu}$ on $L_2(\T^n_\theta;\nu)$. As in Remark~\ref{rmk:powers.positive} they agree with the complex powers $(\Delta_{g,\nu})_\phi^z$,  $z\in \C$, with $\phi \in (0,2\pi)$. Therefore, by using Theorem~\ref{thm:powers.complex-powers} we obtain the following result. 

\begin{proposition}[compare~{\cite[Lemma~10.3]{Po:JMP20}}]\label{prop:Powers.Laplace-Beltrami1}
 The complex powers $\Delta_{g,\nu}^z$, $z\in \C$, form a holomorphic family in $\Psi^{\bt}(\T^n_\theta)$ of order $2z$. For every $z\in \C$, the principal symbol of  $\Delta_{g,\nu}^z$ is equal to $\nu^{-\frac12}|\xi|_{g}^{2z} \nu^{\frac12}$. 
\end{proposition}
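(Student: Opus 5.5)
The plan is to apply Theorem~\ref{thm:powers.complex-powers} directly to $P=\Delta_{g,\nu}$ with some $\phi\in(0,2\pi)$. The text preceding the statement already supplies the hypotheses: $\Delta_{g,\nu}$ is elliptic of order $w=2$ with principal symbol $\rho_2(\xi)=\nu^{-\frac12}|\xi|_g^2\nu^{\frac12}$. Moreover, $\Theta(\Delta_{g,\nu})=\check{\Theta}(\Delta_{g,\nu})=\C\setminus[0,\infty)$, so any ray $L_\phi$ with $\phi\in(0,2\pi)$ lies in $\check\Theta$. Part~(1) of Theorem~\ref{thm:powers.complex-powers} then shows that $(\Delta_{g,\nu})^z_\phi$, $z\in\C$, is a holomorphic family in $\Psi^\bt(\T^n_\theta)$ of order $2z$.

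Next I identify $(\Delta_{g,\nu})^z_\phi$ with the Borel-calculus powers $\Delta^z_{g,\nu}$ on $L_2(\T^n_\theta;\nu)$, as in Remark~\ref{rmk:powers.normal}(a)--(b).
\begin{itemize}
\item Normality must be taken for the $\nu$-inner product rather than the standard one. For this I would use the unitary $U:x\mapsto(2\pi)^{n/2}\sqrt\nu\,x$ from $L_2(\T^n_\theta;\nu)$ onto $L_2(\T^n_\theta)$.
\item Conjugating, $\tilde P:=\sqrt\nu\,\Delta_{g,\nu}\sqrt\nu^{-1}$ is an elliptic \psido\ that is selfadjoint on $L_2(\T^n_\theta)$ and has nonnegative spectrum.
\item The resolvents satisfy $(\tilde P-\lambda)^{-1}=\sqrt\nu(\Delta_{g,\nu}-\lambda)^{-1}\sqrt\nu^{-1}$. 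Since multiplication operators by elements of $C^\infty(\T^n_\theta)$ pass through the contour integral~(\ref{eq:powers.int-definition}), the same conjugation relation holds for $\tilde P^z_\phi$ and $(\Delta_{g,\nu})^z_\phi$ when $\Re z<0$.
\item It then extends to all $z$ via~(\ref{eq:powers.int-definition2}), together with $\tilde P^k=\sqrt\nu\Delta^k_{g,\nu}\sqrt\nu^{-1}$.
\item The Borel-calculus powers conjugate in the same way under $U$. Remark~\ref{rmk:powers.normal} applied to $\tilde P$ identifies the Borel powers of $\tilde P$ with $\tilde P^z_\phi$, independently of $\phi\in(0,2\pi)$.
\end{itemize}
This gives $\Delta^z_{g,\nu}=(\Delta_{g,\nu})^z_\phi$.

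For the principal symbol, part~(2) of Theorem~\ref{thm:powers.complex-powers} gives $\rho_2(\xi)^z_\phi$, defined by holomorphic functional calculus. Holomorphic functional calculus commutes with conjugation by the invertible element $\nu^{\frac12}$, so
\[
\bigl(\nu^{-\frac12}|\xi|_g^2\nu^{\frac12}\bigr)^z_\phi=\nu^{-\frac12}\bigl(|\xi|_g^2\bigr)^z_\phi\nu^{\frac12}.
\]
Since $|\xi|_g^2$ is positive with spectrum in $(0,\infty)$, on which $\arg_\phi=0$, we get $(|\xi|_g^2)^z_\phi=|\xi|_g^{2z}$, as claimed.

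The only delicate step is the identification with the Borel-calculus powers. The normality in Remark~\ref{rmk:powers.normal} refers to the standard $L_2$ structure, whereas here the operator is selfadjoint only for the $\nu$-inner product, which is why the plan routes that step through the conjugation by $\sqrt\nu$. The remaining steps are direct citations of Theorem~\ref{thm:powers.complex-powers}.
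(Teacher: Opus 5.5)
Your proposal is correct and follows the paper's route: apply Theorem~\ref{thm:powers.complex-powers} to $\Delta_{g,\nu}$ with $\phi\in(0,2\pi)$, identify $(\Delta_{g,\nu})^z_\phi$ with the Borel-calculus powers as in Remark~\ref{rmk:powers.normal}, and read off the principal symbol $\rho_2(\xi)^z_\phi=\nu^{-\frac12}|\xi|_g^{2z}\nu^{\frac12}$. Your conjugation by $\sqrt{\nu}$, which turns $\Delta_{g,\nu}$ into $\hat{\Delta}_{g,\nu}$, only spells out the identification step that the paper covers with ``in the same way as in Remark~\ref{rmk:powers.normal}''.
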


As mentioned above, the left-multiplication by $(2\pi)^{n/2}\sqrt{\nu}$ is a unitary isomorphism from $L_2(\T^n_\theta;\nu)$ onto $L_2(\T^n_\theta)$. Under this isomorphism $\Delta_{g,\nu}$ corresponds to the operator $\hat{\Delta}_{g,\nu}:= \nu^{1/2} \Delta_{g,\nu} \nu^{-1/2}$. Namely,
\begin{equation}\label{eq:Powers.hat-Laplace-Beltrami}
 \hat{\Delta}_{g,\nu}u= \nu^{-\frac12}\sum_{1\leq i,j\leq n} \delta_j\big(\sqrt{\nu}g^{ij}\sqrt{\nu} \delta_i(\nu^{-\frac12}u)\big), \qquad u \in C^\infty(\T^n_\theta). 
\end{equation}
We obtain a second-order elliptic differential operator whose principal symbol is $|\xi|_g^2$. It is also a selfadjoint operator on $L_2(\T^n_\theta)$ with the same spectrum as $\Delta_{g,\nu}$. In particular, its spectrum is non-negative. Therefore, by using Theorem~\ref{thm:powers.complex-powers} and Remark~\ref{rmk:powers.positive} we immediately arrive at the following statement. 

\begin{proposition}[compare~{\cite[Lemma~10.3]{Po:JMP20}}]\label{prop:Powers.Laplace-Beltrami2}
  The complex powers $\hat{\Delta}_{g,\nu}^z$, $z\in \C$, form a holomorphic family in $\Psi^{\bt}(\T^n_\theta)$ of order $2z$. For every $z\in \C$, the principal symbol of  $\hat{\Delta}_{g,\nu}^z$ is $|\xi|_{g}^{2z}$. 
\end{proposition}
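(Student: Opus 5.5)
The plan is to reduce everything to Theorem~\ref{thm:powers.complex-powers} applied to the operator $\hat{\Delta}_{g,\nu}$ with a ray $L_\phi$, $\phi\in(0,2\pi)$. There are three things to check, in order: that $\hat{\Delta}_{g,\nu}$ satisfies the hypotheses of that theorem; that the abstractly defined powers $(\hat{\Delta}_{g,\nu})_\phi^z$ coincide with the Borel functional calculus powers $\hat{\Delta}_{g,\nu}^z$; and that the principal symbol formula $\rho_2(\xi)_\phi^z$ reduces to $|\xi|_g^{2z}$.

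For the first point, we read off from~(\ref{eq:Powers.hat-Laplace-Beltrami}) and the Leibniz rule for the derivations $\delta_j$ that $\hat{\Delta}_{g,\nu}$ is a second-order differential operator in $\Psi^2(\T^n_\theta)$. Its principal symbol is $\nu^{-1/2}\cdot \nu^{1/2}|\xi|_g^2\nu^{1/2}\cdot\nu^{-1/2}=|\xi|_g^2$; equivalently, it is the conjugate $\nu^{1/2}\rho_2(\xi)\nu^{-1/2}$ of the principal symbol of $\Delta_{g,\nu}$. Since $|\xi|_g$ is positive and invertible for $\xi\neq 0$, $\hat{\Delta}_{g,\nu}$ is elliptic and $\Theta(\hat{\Delta}_{g,\nu})=\C\setminus[0,\infty)$. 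By unitary equivalence with $\Delta_{g,\nu}$, it is selfadjoint on $L_2(\T^n_\theta)$ with non-negative discrete spectrum, so $\check{\Theta}(\hat{\Delta}_{g,\nu})=\C\setminus[0,\infty)$ and every $\phi\in(0,2\pi)$ is admissible.

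For the second point, we apply Remark~\ref{rmk:powers.normal}(a)--(b). Since $\hat{\Delta}_{g,\nu}$ is selfadjoint, it has an orthonormal eigenbasis $(e_\ell)$ by Proposition~\ref{prop:Spectral.ortho-splitting}. Formula~(\ref{eq:powers.normal}) shows that $(\hat{\Delta}_{g,\nu})_\phi^z e_\ell=\lambda_\ell^z e_\ell$ for $\lambda_\ell>0$ and $=0$ for $\lambda_\ell=0$, because $\arg_\phi\lambda=0$ on $(0,\infty)$. This is exactly the Borel functional calculus definition, with the convention $0^z=0$ as in Example~\ref{ex:Hol.Delta-powers}, and it is independent of $\phi$. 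The one point deserving care is the identification asserted in Remark~\ref{rmk:powers.normal}(a). To verify it for $\Re z<0$, we pair the contour integral~(\ref{eq:powers.int-definition}) against $e_\ell$: since $(\hat{\Delta}_{g,\nu}-\lambda)^{-1}e_\ell=(\lambda_\ell-\lambda)^{-1}e_\ell$, the scalar Cauchy integral $\frac{i}{2\pi}\int_\Gamma\lambda_\phi^z(\lambda_\ell-\lambda)^{-1}d\lambda$ equals $(\lambda_\ell)_\phi^z$ when $\lambda_\ell\neq0$ and vanishes when $\lambda_\ell=0$, the latter because $\lambda=0$ lies outside the region enclosed by $\Gamma$. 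For general $z$ the identification then follows from~(\ref{eq:powers.int-definition2}) by multiplying by $\hat{\Delta}_{g,\nu}^k$.

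Given these two points, Theorem~\ref{thm:powers.complex-powers}(1) shows that $\hat{\Delta}_{g,\nu}^z$ is a holomorphic family of order $2z$. Part~(2) of that theorem gives its principal symbol as $(|\xi|_g^2)_\phi^z$, defined by holomorphic functional calculus. Because $|\xi|_g^2$ is positive with spectrum in $(0,\infty)$, where $\lambda_\phi^z=\lambda^z$, this equals $|\xi|_g^{2z}$. The main obstacle is therefore only the bookkeeping needed to justify the normal-operator identification above; everything else is a direct citation.
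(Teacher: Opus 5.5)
Your proposal is correct and follows essentially the same route as the paper. The paper also checks that $\hat{\Delta}_{g,\nu}$ is a second-order elliptic differential operator with principal symbol $|\xi|_g^2$, selfadjoint on $L_2(\T^n_\theta)$ with non-negative spectrum, and then invokes Theorem~\ref{thm:powers.complex-powers} together with Remark~\ref{rmk:powers.normal}. Your explicit Cauchy-integral computation on the eigenbasis only fills in the identification of the Borel powers with $(\hat{\Delta}_{g,\nu})_\phi^z$, which the paper takes directly from that remark.
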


\begin{example}
If $g$ is the flat metric $g_{ij}=\delta_{ij}$, then $\nu(g)=1$ and $\Delta_{g}=\hat{\Delta}_{g}=\Delta$, where  $\Delta=\delta_1^2+\cdots +\delta_n^2$ is  the flat Laplacian. In this case we recover the fact that the complex powers  $\Delta^z$, $z\in \C$, form a holomorphic family of \psidos\ (\emph{cf}.~Example~\ref{ex:Hol.Delta-powers}). 
\end{example}

\begin{example}
 Suppose that $g$ is conformally flat, i.e., $g_{ij}=k^2\delta_{ij}$, where $k$ is a positive invertible element of $C^\infty(\T^n_\theta)$. This is the kind of metric considered by Connes--Tretkoff~\cite{CT:Baltimore11}, Connes--Moscovici~\cite{CM:JAMS14}, and various other authors 
 (see, e.g., \cite{FK:JNCG12,  FK:JNCG15, FGK:JNCG19, LM:GAFA16,  Liu:arXiv18a, Liu:arXiv18b}). In this case $\nu(g)=k^n$, and we have
\begin{equation*}
 \Delta_g=k^{-2} \Delta + \sum_{1\leq i \leq n} k^{-n}  \delta_i(k^{n-2}) \delta_i. 
\end{equation*}
In particular, in dimension $n=2$ we get $\Delta_g=k^{-2}\Delta$ and $\hat{\Delta}_g=k^{-1} \Delta k^{-1}$. In any dimension, $\Delta_g$ and $\hat{\Delta}_g$ both have  $k^{-2}|\xi|^2$ as principal symbol. 
\end{example}

\subsection{Behavior of complex powers on vertical strips} 
Assume now that $\phi=\pi$. As in~(\ref{eq:unif.Theta-alpha}) this implies there is $\alpha \in (0,\pi)$ such that
\begin{equation*}
 \check{\Theta}(P)\supseteq \{\alpha \leq \arg \lambda \leq 2\pi-\alpha\}. 
\end{equation*}
 We then define 
\begin{equation*}
 \alpha_0=\inf\left\{ \alpha \in \left(0,\pi\right); \ \check{\Theta}(P)\supseteq \{\alpha \leq \arg \lambda \leq 2\pi -\alpha\}\right\}. 
\end{equation*}

We define the complex powers $P^z:=P_\pi^z$, $z\in \C$, as in~(\ref{eq:powers.int-definition}) and~(\ref{eq:powers.int-definition2}) with $\phi=\pi$. We know from Theorem~\ref{thm:powers.complex-powers} that they form a holomorphic family in $\Psi^\bt(\T^n_\theta)$ of order $wz$. In fact, by using Lemma~\ref{lem:unif.local-global} and Proposition~\ref{prop:unif.param-psido-hol-infty-psidos}, and by arguing as in the first part of the proof of Theorem~\ref{thm:powers.complex-powers}, we obtain the following  refinement of this result. 

\begin{theorem}\label{thm:unif.complex-powers}
 The family $P^z$, $z\in \C$, is a holomorphic family in $\Psi^\bt(\T^n_\theta)$ of order $wz$ such that
 \begin{equation*}
 e^{\pm i\alpha z}P^z\in \Hol_\infty\big(\bH_\pm;\Psi^\bt(\T^n_\theta)\big) \qquad \forall \alpha \in (\alpha_0,\pi). 
\end{equation*}
  \end{theorem}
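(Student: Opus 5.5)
The plan follows the proof of Theorem~\ref{thm:powers.complex-powers}, with Proposition~\ref{prop:unif.param-psido-hol-infty-psidos} used in place of Proposition~\ref{prop:powers.param-psido-hol-psidos}. Fix $\alpha\in(\alpha_0,\pi)$. By the definition of $\alpha_0$ the sector $\{\alpha\le\arg\lambda\le 2\pi-\alpha\}$ lies in $\check\Theta(P)$, which is the conical part of $\Lambda=\Lambda(P)$. We may therefore apply Proposition~\ref{prop:unif.param-psido-hol-infty-psidos} to $Q(\lambda)=\frac{i}{2\pi}(P-\lambda)^{-1}\in\Psi^{-w,-1}(\T^n_\theta;\Lambda)$ (Theorem~\ref{thm:Resolvent.resolvent-is-psido-with-parameter}), with $m=-w$. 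This gives that $e^{\pm i\alpha z}P^z$ is a $\Hol_\infty(\bQ_\pm)$-family in $\Psi^\bt(\T^n_\theta)$ of order $wz$ on the quadrants $\bQ_\pm=\{\Re z<0\}\cap\bH_\pm$.

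To extend to $\bH_\pm$, fix $k\in\N_0$ and use~(\ref{eq:powers.int-definition2}): $P^z=P^kP^{z-k}$ for $\Re z<k$. The constant family $P^k$ is trivially a $\Hol_\infty$-family of order $wk$. The map $z\mapsto z-k$ sends closed vertical half-strips in $\{\Re z<k\}\cap\bH_\pm$ to closed vertical half-strips in $\bQ_\pm$, and $e^{\pm i\alpha z}=e^{\pm i\alpha k}e^{\pm i\alpha(z-k)}$ differs only by a harmless constant factor. Hence $e^{\pm i\alpha z}P^{z-k}$ is a $\Hol_\infty(\{\Re z<k\}\cap\bH_\pm)$-family of order $w(z-k)$. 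Proposition~\ref{prop:unif.composition} then shows that $e^{\pm i\alpha z}P^z=P^k\cdot e^{\pm i\alpha z}P^{z-k}$ is a $\Hol_\infty$-family of order $wz$ on $\{\Re z<k\}\cap\bH_\pm$. This region is admissible, and the order condition (i) holds automatically since $w(z)=wz$.

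Finally, every $z_0\in\bH_\pm$ lies in an open vertical half-strip contained in $\{\Re z<k\}\cap\bH_\pm$ for some $k$, for instance $k>\Re z_0+1$. The local-to-global Lemma~\ref{lem:unif.local-global} (criterion (iii)) then yields $e^{\pm i\alpha z}P^z\in\Hol_\infty(\bH_\pm;\Psi^\bt(\T^n_\theta))$. Holomorphy of $P^z$ on all of $\C$ with order $wz$ is already Theorem~\ref{thm:powers.complex-powers}.

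The main obstacle is the quadrant step. There one must check that Proposition~\ref{prop:unif.param-psido-hol-infty-psidos} really delivers uniformity on whole half-strips of $\bQ_\pm$, including the symbol-expansion remainders~(\ref{eq:Hol.hol-infty-family-classical-symbol-estimate}). The worry is that Lemma~\ref{lem:Powers.LCS-boundedness} only gives bounds on strips $\{b\le\Re z\le a\}$ with $a<0$. Such strips cover every closed half-strip in $\bQ_\pm$, since a closed half-strip in the open quadrant has $\Re z\le a<0$. So the estimate $|\lambda^z|\le C|\lambda|^{a}e^{\alpha|\Im z|}$ on the deformed contour $\Gamma(\alpha,2\pi-\alpha,r)$ suffices, both for the standard-symbol remainders $r_{NJ}(z)$ and for the homogeneous terms $\rho_j(z)$. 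For the homogeneous terms the estimate is taken on the unit sphere and then extended by homogeneity.
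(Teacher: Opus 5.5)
Your proposal is correct and follows the paper's argument. You apply Proposition~\ref{prop:unif.param-psido-hol-infty-psidos} to the resolvent to get the $\Hol_\infty(\bQ_\pm)$ statement, then extend it to $\bH_\pm$ via $P^z=P^kP^{z-k}$, Proposition~\ref{prop:unif.composition}, and Lemma~\ref{lem:unif.local-global}, just as in the first part of the proof of Theorem~\ref{thm:powers.complex-powers}; your remark that closed half-strips in $\bQ_\pm$ lie in strips $\{b\leq \Re z\leq a\}$ with $a<0$ correctly justifies using Lemma~\ref{lem:Powers.LCS-boundedness} in the quadrant step.
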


As an application of Theorem~\ref{thm:unif.complex-powers} we obtain the following exponential bounds along vertical strips for the complex powers $P^z$, $z\in \C$. 

\begin{corollary} \label{cor:unif.powers-Ws} 
Let $\Sigma$ be any closed vertical strip in $\C$, and set $\sigma=\max\{\Re z; z\in \Sigma\}$. For every  $\alpha \in (\alpha_0,\pi)$, the following hold. 
\begin{enumerate}
 \item $P^z\in e^{\alpha |\Im z|}L^\infty\left(\Sigma; \sL\left(W_2^{s+w\sigma}(\T^n_\theta), W_2^{s}(\T^n_\theta)\right)\right)$ for all $s\in \R$. In particular, for $\Sigma =i\R$ we get that $P^{it}\in e^{\alpha |t|}   L^\infty\left(\R; \sL\left(W_2^{s}(\T^n_\theta)\right)\right)$ for all $s\in \R$. 

\item If $\sigma<0$ and $p:=n(w|\sigma|)^{-1}$, then $P^z$ is in $e^{\alpha |\Im z|}L^\infty\left(\Sigma; \sL_{p,\infty}\right)$, and hence is in  $e^{\alpha |\Im z|}L^\infty\left(\Sigma; \sL_{r}\right)$ for all $r>p$. 
\end{enumerate}
\end{corollary}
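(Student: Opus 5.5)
This corollary should follow directly from Theorem~\ref{thm:unif.complex-powers} combined with Proposition~\ref{prop:unif.exp-control}. The plan is to apply that proposition with $\Omega=\C$, $P(z)=P^z$, and order function $w(z)=wz$. Theorem~\ref{thm:unif.complex-powers} gives exactly the hypothesis needed, namely $e^{\pm i\alpha z}P^z\in\Hol_\infty(\bH_\pm;\Psi^\bt(\T^n_\theta))$ for every $\alpha\in(\alpha_0,\pi)$. Note that $\Omega_\pm=\C\cap\bH_\pm=\bH_\pm$. The compactness condition~(i) on $\{\Re w(z);\ z\in\Sigma\}$ holds automatically for $w(z)=wz$ on closed vertical half-strips.

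For a closed vertical strip $\Sigma=\{a\leq\Re z\leq b\}$, we compute
\begin{equation*}
w_\Sigma=\sup\{\Re(wz);\ z\in\Sigma\}=wb=w\sigma,
\end{equation*}
using $w>0$.

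\begin{itemize}
\item \textbf{Part~(1).} Proposition~\ref{prop:unif.exp-control}(1) gives $P^z\in e^{\alpha|\Im z|}L^\infty(\Sigma;\sL(W_2^{s+w\sigma},W_2^s))$ for all $s$. The special case $\Sigma=i\R$ has $\sigma=0$, so this yields $P^{it}\in e^{\alpha|t|}L^\infty(\R;\sL(W_2^s(\T^n_\theta)))$.
\item \textbf{Part~(2).} If $\sigma<0$, then $w_\Sigma=w\sigma<0$. Proposition~\ref{prop:unif.exp-control}(2) then applies with $p=n|w\sigma|^{-1}=n(w|\sigma|)^{-1}$, which gives both the $\sL_{p,\infty}$ bound and the $\sL_r$ bounds for $r>p$.
\end{itemize}

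The only point needing care is that Proposition~\ref{prop:unif.exp-control} already covers strips and not merely half-strips. That proposition relies on Lemma~\ref{lem:Unif.Holinf-rapid-decay}, which handles the passage from $\bH_\pm$ to a full strip: one splits $\Sigma$ into the two half-strips $\{\pm\Im z\geq1\}$ plus the compact piece $\{|\Im z|\leq1\}$. The compact piece is controlled by the local boundedness in Proposition~\ref{prop:Hol.propertiesP(z)-W2s}(3) and Proposition~\ref{prop:Hol.propertiesP(z)-sLp}(2). So there is no real obstacle, since all of this is already packaged into Proposition~\ref{prop:unif.exp-control}.
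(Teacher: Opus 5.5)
Your proposal is correct and matches the paper's proof: you apply Proposition~\ref{prop:unif.exp-control} to $P^z$, take the hypothesis from Theorem~\ref{thm:unif.complex-powers}, and note that $w_\Sigma=w\sigma$ because $w>0$. You are also right that the passage from half-strips to full strips, including the compact piece near the real axis, is already built into Proposition~\ref{prop:unif.exp-control}, so nothing further is needed.
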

\begin{proof}
As $P^z$ has order $w(z)=wz$, in the notation of Proposition~\ref{prop:unif.exp-control} we have 
\begin{equation*}
 w_\Sigma:=\max\{\Re (wz); z\in \Sigma\}=w\sigma. 
\end{equation*}
The result then follows from Proposition~\ref{prop:unif.exp-control} with $w_\Sigma=w\sigma$. 
\end{proof}

\begin{remark}\label{rmk:Powers.examples-Holinf}
 If $\Theta(P)=\C\setminus [0,\infty)$, then $\alpha_0=0$,  and so Theorem~\ref{thm:unif.complex-powers} and Corollary~\ref{cor:unif.powers-Ws} hold for all $\alpha\in (0,\pi)$. In particular, this occurs in the following cases:
 \begin{itemize}
 \item $P=|Q|$ where $Q$ is any elliptic operator in $\Psi^w(\T^n_\theta)$, $w>0$. 
 
 \item The Laplace--Beltrami operator $\Delta_{g,\nu}$ and its unitary conjugate $\hat{\Delta}_{g,\nu}$ as defined in~(\ref{eq:Powers.Laplace-Beltrami}) and~(\ref{eq:Powers.hat-Laplace-Beltrami}), respectively.  
\end{itemize}
\end{remark}

In addition, in the same way as in the proof of Proposition~\ref{prop:Powers.APz}, by using Theorem~\ref{thm:unif.complex-powers} and Proposition~\ref{prop:unif.composition} we get the following extension of Theorem~\ref{thm:unif.complex-powers}. 

\begin{proposition}\label{prop:Powers.APz-hol-infty}
 If $A\in \Psi^q(\T^n_\theta)$, $q\in \C$, then $AP^z$, $z\in \C$, is a holomorphic family in $\Psi^\bt(\T^n_\theta)$ of order $wz+q$ such that
  \begin{equation*}
 e^{\pm i\alpha z}AP^z\in \Hol_\infty\big(\bH_\pm;\Psi^\bt(\T^n_\theta)\big) \qquad \forall \alpha \in (\alpha_0,\pi). 
\end{equation*}
\end{proposition}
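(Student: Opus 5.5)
The plan is to follow the proof of Proposition~\ref{prop:Powers.APz} line by line, replacing ``holomorphic'' by ``$\Hol_\infty$''.

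The holomorphy statement needs no new work. By Proposition~\ref{prop:Powers.APz}, $AP^z$, $z\in\C$, is already a holomorphic family in $\Psi^\bt(\T^n_\theta)$ of order $wz+q$. It therefore remains to prove the $\Hol_\infty$ statement on each half-plane $\bH_\pm$, for a fixed $\alpha\in(\alpha_0,\pi)$.

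The key step is to split the exponential weight onto the second factor:
\begin{equation*}
 e^{\pm i\alpha z}AP^z = A\cdot\big(e^{\pm i\alpha z}P^z\big), \qquad z\in\bH_\pm.
\end{equation*}
The two factors are handled separately.

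\begin{itemize}
 \item By Theorem~\ref{thm:unif.complex-powers}, $e^{\pm i\alpha z}P^z$ belongs to $\Hol_\infty(\bH_\pm;\Psi^\bt(\T^n_\theta))$ with order $wz$.
 \item The constant family $A$ belongs to $\Hol_\infty(\bH_\pm;\Psi^\bt(\T^n_\theta))$ with constant order $q$. To see this, take a fixed classical symbol $\rho$ of $A$ with expansion $\rho\sim\sum\rho_{q-j}$. This constant family is trivially uniformly bounded in $C^\infty(\T^n_\theta\times\R^n)$. The estimates~(\ref{eq:Hol.hol-infty-family-classical-symbol-estimate}) hold with $z$-independent constants, namely those from~(\ref{eq:Symbols.classical-estimates}). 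Finally, $\{\Re q\}$ is trivially compact.
\end{itemize}

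Two admissibility points should be checked. First, $\bH_\pm$ are admissible open sets (example (iv)). Second, the compactness condition (i) holds for the order $wz+q$, since on any closed vertical half-strip $\Re(wz+q)$ ranges over a compact interval.

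Proposition~\ref{prop:unif.composition} then applies to the product and gives
\begin{equation*}
 e^{\pm i\alpha z}AP^z\in\Hol_\infty\big(\bH_\pm;\Psi^\bt(\T^n_\theta)\big),
\end{equation*}
with order $wz+q$.

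No step is a genuine obstacle, because all the analytic content sits in Theorem~\ref{thm:unif.complex-powers} and Proposition~\ref{prop:unif.composition}. The only point needing care is that the exponential weight $e^{\pm i\alpha z}$ is scalar and holomorphic, so it commutes with $A$ and can be moved onto either factor.
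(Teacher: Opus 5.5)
Your proposal is correct and follows the paper's own argument: the paper obtains the result exactly as in the proof of Proposition~\ref{prop:Powers.APz}, writing $e^{\pm i\alpha z}AP^z = A\,(e^{\pm i\alpha z}P^z)$ and combining Theorem~\ref{thm:unif.complex-powers} with Proposition~\ref{prop:unif.composition}. Your explicit check that the constant family $A$ is a $\Hol_\infty(\bH_\pm)$-family of order $q$ spells out a detail the paper leaves implicit.
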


\begin{remark}\label{rmk:unif.powers-Ws-APz} 
 In the same way as in Remark~\ref{rmk:Powers.extension-APz}, Corollary~\ref{cor:unif.powers-Ws} extends \emph{mutatis mutandis} to the families $AP_\phi^z$ as follows: 
 \begin{itemize}
 \item For the first part we should replace the exponents $s+w\sigma$ by $s+w\sigma+\Re q$.
 
 \item For the second part, the assumption $\sigma<0$ has to be replaced by $w\sigma+\Re q<0$, i.e., $\sigma<-w^{-1}\Re q$, and then we need to set $p=n|w\sigma+\Re q|^{-1}$. 
\end{itemize}
 In particular, we obtain that $AP_\phi^z$, $\Re z<-w^{-1}(n+\Re q)$, is a $\Hol_\infty$-family of trace-class operators. 
\end{remark}

In applications to the heat equation we are actually interested in the family $\Gamma(z)P^{-z}$, $\Re z>0$, and more generally families  $\Gamma(z)AP^{-z}$, $\Re z>0$, 
with $A\in \Psi^q(\T^n_\theta)$, $q\in \R$ (see~\cite{LP:SG2}).  We have the following exponential decay results for these families.

\begin{theorem}\label{thm:unif.rapid-decay-zeta}
 Assume that $\alpha_0<\pi/2$, i.e., $\check{\Theta}(P)$ contains the half-plane $\Re \lambda\leq 0$. Let $\Sigma \subseteq \C\setminus \Z_{-}$ be a closed vertical half-strip or strip, and set  $\sigma=\min\{\Re z;\ z\in \Sigma\}$. Given any $A\in \Psi^q(\T^n_\theta)$, $q\in \R$, for all $\epsilon\in (0,\pi/2-\alpha_0)$, the following hold. 
 \begin{enumerate}
 \item $\Gamma(z)AP^{-z}\in e^{-\epsilon |\Im z|}   L^\infty\left(\Sigma; \sL\left(W_2^{s}(\T^n_\theta), W_2^{s+w\sigma-q}(\T^n_\theta)\right)\right)$ for all $s\in \R$. 

\item If $\sigma>w^{-1}q$ and $p=n(w\sigma -q)^{-1}$, then $\Gamma(z)AP^{-z}$ is in $e^{-\epsilon |\Im z|} L^\infty\left(\Sigma; \sL_{p,\infty}\right)$, and hence is in $e^{-\epsilon |\Im z|}    L^\infty\left(\Sigma; \sL_{r}\right)$ for all $r>p$. 

\item If $\sigma>w^{-1}(n+q)$, then $\Gamma(z)AP^{-z} \in e^{-\epsilon |\Im z|}   L^\infty(\Sigma; \sL_1)$, and so there is $C_{A\Sigma\epsilon}>0$ such that 
\begin{equation*}
  \big|\Gamma(z)\Tr\left[AP^{-z}\right]\big| \leq C_{A\Sigma \epsilon} e^{-\epsilon |\Im z|}\qquad \forall z\in \Sigma.
\end{equation*}
 \end{enumerate}
\end{theorem}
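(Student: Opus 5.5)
The plan is to combine the $\Hol_\infty$ bounds of Proposition~\ref{prop:Powers.APz-hol-infty} for $AP^{z}$ with those of Example~\ref{ex:Uniform.Gamma-function} for $\Gamma(z)$, and then read off the mapping properties from Proposition~\ref{prop:unif.exp-control}. We substitute $z\to -z$ throughout. Proposition~\ref{prop:Powers.APz-hol-infty} gives $e^{\pm i\alpha z}AP^{z}\in \Hol_\infty(\bH_\pm;\Psi^\bt(\T^n_\theta))$ for every $\alpha\in(\alpha_0,\pi)$. Replacing $z$ by $-z$ exchanges the half-planes $\bH_\pm$, and so $e^{\mp i\alpha z}AP^{-z}\in \Hol_\infty(\bH_\pm;\Psi^\bt(\T^n_\theta))$; this is a family of order $-wz+q$. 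On the other side, Example~\ref{ex:Uniform.Gamma-function} gives $e^{\mp i\beta z}\Gamma(z)\in\Hol_\infty(\bH_\pm)$ for every $\beta\in[0,\pi/2)$, and this holds on $\C\setminus\Z_-$ via the functional equation.

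Given $\epsilon\in(0,\pi/2-\alpha_0)$, I choose $\alpha\in(\alpha_0,\pi/2)$ and $\beta\in(0,\pi/2)$ with $\beta-\alpha=\epsilon$. This is possible because $\alpha_0+\epsilon<\pi/2$, so I can take $\alpha$ slightly above $\alpha_0$ and set $\beta=\alpha+\epsilon<\pi/2$. Multiplying the two families, we get $e^{\mp i(\beta-\alpha)z}\Gamma(z)AP^{-z}\in\Hol_\infty(\bH_\pm\setminus\Z_-)$. For this step I use that multiplying a $\Hol_\infty$-family of \psidos\ by a scalar $\Hol_\infty$-function gives a $\Hol_\infty$-family; this is immediate at the level of symbols, or can be seen as a special case of Proposition~\ref{prop:unif.composition} with a scalar order-$0$ family. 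Hence $e^{\mp i\epsilon z}\Gamma(z)AP^{-z}$ is $\Hol_\infty$ on $\Omega_\pm$ with $\Omega=\C\setminus\Z_-$, which is admissible. Note that on $\bH_\pm$ we have $|e^{\mp i\epsilon z}|=e^{\epsilon|\Im z|}$, so the exponent $-\epsilon|\Im z|$ in the statement comes out with the right sign.

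The one genuine obstacle is that Proposition~\ref{prop:unif.exp-control} and Lemma~\ref{lem:Unif.Holinf-rapid-decay} are stated only for $\alpha\geq0$, whereas here the weight is $e^{-\epsilon|\Im z|}$. I expect the proof of Lemma~\ref{lem:Unif.Holinf-rapid-decay} to go through verbatim with $-\epsilon$ in place of $\alpha$. It splits $\Sigma$ into the two half-strips $\Sigma\cap\{\pm\Im z\geq1\}$ and a compact middle piece; on the compact piece, boundedness of the holomorphic family on $\C\setminus\Z_-$ suffices, since $\Sigma$ avoids $\Z_-$. The proof of Proposition~\ref{prop:unif.exp-control} then applies with order $w(z)=-wz+q$, for which
\begin{equation*}
w_\Sigma=\sup_{z\in\Sigma}\Re(-wz+q)=q-w\sigma.
\end{equation*}
This gives part~(1) directly, taking the Sobolev index $s'=s+w\sigma-q$ so that $s'+w_\Sigma=s$.

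For part~(2), the condition $w_\Sigma<0$ is exactly $\sigma>w^{-1}q$, and the proposition then gives membership in $\sL_{p,\infty}$ with $p=n(w\sigma-q)^{-1}$, together with the $\sL_r$ statement for $r>p$. For part~(3), $\sigma>w^{-1}(n+q)$ gives $p<1$. Then $\sL_{p,\infty}\subseteq\sL_1$ continuously, so the family lies in $e^{-\epsilon|\Im z|}L^\infty(\Sigma;\sL_1)$. Finally, the trace bound follows from $|\Tr T|\leq\|T\|_1$.
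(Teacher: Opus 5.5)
Your argument is correct and uses the same ingredients and the same choice of exponents as the paper: your $\beta=\alpha+\epsilon$ with $\alpha\in(\alpha_0,\pi/2-\epsilon)$ is the paper's $\alpha'$. The difference is that you combine them in the opposite order.

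The paper first turns the $\Hol_\infty$ information on $AP^{-z}$ into norm bounds. It gets $AP^{-z}\in e^{\alpha|\Im z|}L^\infty(\Sigma;\cdot)$ in the Sobolev and $\sL_{p,\infty}$ topologies from Corollary~\ref{cor:unif.powers-Ws} and Remark~\ref{rmk:unif.powers-Ws-APz}. It then multiplies pointwise by the scalar estimate $|\Gamma(z)|\leq C_\Sigma e^{-\alpha'|\Im z|}$ of \eqref{eq:unif.gamma-function-estimate}. So the paper needs the negative-exponent version of Lemma~\ref{lem:Unif.Holinf-rapid-decay} only implicitly, and only for the scalar function $\Gamma$ on $\C\setminus\Z_-$.

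You instead multiply at the level of families. This gives that $e^{\mp i\epsilon z}\Gamma(z)AP^{-z}$ is a $\Hol_\infty(\bH_\pm)$-family of \psidos, which is a slightly stronger, symbol-level statement. The cost is rerunning Lemma~\ref{lem:Unif.Holinf-rapid-decay} and Proposition~\ref{prop:unif.exp-control} with exponent $-\epsilon$ for an operator-valued family on $\C\setminus\Z_-$. As you note, the sign of the exponent is never used in those proofs, since the compact middle piece is handled by local boundedness. So this step is legitimate, just less economical than the paper's route.

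\textbf{One sign error to correct.} Reflecting $z\mapsto -z$ swaps both the half-plane and the sign of the exponent, so the sign pattern is preserved. From $e^{\pm i\alpha z}AP^{z}\in\Hol_\infty(\bH_\pm;\Psi^\bt(\T^n_\theta))$ you get $e^{\pm i\alpha z}AP^{-z}\in\Hol_\infty(\bH_\pm;\Psi^\bt(\T^n_\theta))$, not $e^{\mp i\alpha z}AP^{-z}$. The version you wrote would assert that $AP^{-z}$ decays like $e^{-\alpha|\Im z|}$. That already fails for $A=1$ and $P$ positive selfadjoint, where $P^{-iy}$ is unitary on $(\ker P)^\perp$. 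With the correct sign, the product with $e^{\mp i\beta z}\Gamma(z)$ is exactly the $e^{\mp i(\beta-\alpha)z}\Gamma(z)AP^{-z}$ you wrote in the next step, so nothing downstream changes.
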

\begin{proof}
Let $\alpha \in (\alpha_0, \pi/2-\epsilon)$, and set $\alpha'=\alpha+\epsilon$. Note that $\alpha'\in (0,\pi/2)$.  
On the one hand, as mentioned in Example~\ref{ex:Uniform.Gamma-function}, the function 
$\Gamma(z)$ is holomorphic on $\C\setminus \Z_{-}$ and $e^{\mp i \alpha' z}\Gamma(z)$ is a $\Hol_\infty$-function on $\bH_\pm$. It follows that there exists $C_{\Sigma}>0$ such that
\begin{equation} \label{eq:unif.gamma-function-estimate}
 \left| \Gamma(z)\right| \leq C_{\Sigma} e^{-\alpha' |\Im z|} \qquad \forall z\in \Sigma. 
\end{equation}
 
 On the other hand, by Corollary~\ref{cor:unif.powers-Ws} and Remark~\ref{rmk:unif.powers-Ws-APz} we know that 
 \begin{itemize}
 \item[(i)] $AP^{-z}$ is in $e^{\alpha |\Im z|}   L^\infty(\Sigma; \sL(W_2^{s-w\sigma +q}(\T^n_\theta), W_2^{s}(\T^n_\theta)))$ for all $s\in \R$. 
 
 \item[(ii)] If $\sigma>w^{-1}q$ and $p=n(w\sigma-q)^{-1}$, then $AP^{-z}$ is in $e^{\alpha |\Im z|}   L^\infty(\Sigma; \sL_{p,\infty})$. 
\end{itemize}
Combining~(i) with~(\ref{eq:unif.gamma-function-estimate}) and substituting $s+w\sigma -q$ for $s$ gives the first part. The second part follows by combining~(ii) with~(\ref{eq:unif.gamma-function-estimate}). The third part is a byproduct of the second part.  
\end{proof}

\section{Logarithms of Elliptic \psidos}\label{sec:log} 
In this section, we construct the logarithms of elliptic \psidos\ as the infinitesimal generators of their one-parameter groups of complex powers. 

Throughout this section $P\in \Psi^w(\T^n_\theta)$, $w>0$, is as in Section~\ref{sec:Resolvent}, with the additional assumption that $\Theta(P)\neq \emptyset$. 

Let $L_\phi=\{\arg \lambda=\phi\}$ be a ray contained in $\check{\Theta}(P)$. The logarithm $\log_\phi: \C^*\setminus  L_\phi\rightarrow \C$ is defined by \begin{equation*}
 \log_\phi (\lambda) =  \log|\lambda|+ i\arg_\phi \lambda, \qquad \lambda\in \C^*\setminus L_\phi, 
 \end{equation*}
where, as above, $\arg_\phi:\C^*\setminus L_\phi\rightarrow (\phi-2\pi,\phi)$ is the continuous determination of the argument associated with the angle $\phi$. 
We get a holomorphic function on $\C^*\setminus L_\phi$. Moreover, in view of~(\ref{eq:Powers.Powers-scalars}) we have
\begin{equation*}
 \log_\phi (\lambda) =  \left.\frac{\dl}{\dl z}\right|_{z=0} \lambda_\phi^z \qquad \text{for all $\lambda \in \C^*\setminus L_\phi$}. 
\end{equation*}

By Theorem~\ref{thm:powers.complex-powers} the complex powers $P_\phi^z$, $z\in \C$, defined by~(\ref{eq:powers.int-definition}) and~(\ref{eq:powers.int-definition2}) form a holomorphic family in $\Psi^\bt(\T^n_\theta)$ of order $wz$. In particular, we get a holomorphic family in $\sL(C^\infty(\T^n_\theta))$. 

\begin{definition}
 The \emph{logarithm} of $P$ associated with the angle $\phi$ is the operator in $\sL(C^\infty(\T^n_\theta))$ defined by
\begin{equation}
 \log_\phi(P) = \left.\frac{d}{dz}\right|_{z=0} P_\phi^z.
 \label{eq:Log.definition}
\end{equation}
\end{definition}

\begin{remark}
 Given any $s>0$, for $\Re z<sw^{-1}$ the real part of the order of $P_\phi^z$ is~$\leq s$, and so by Proposition~\ref{prop:Hol.propertiesP(z)-W2s} the restriction $P_\phi^z$, $\Re z<sw^{-1}$, is a holomorphic family in $\sL(W_2^s(\T^n_\theta),L_2(\T^n_\theta))$. Therefore, in~(\ref{eq:Log.definition}) the derivative is taken in  $\sL(W_2^s(\T^n_\theta),L_2(\T^n_\theta))$, and   so $\log_\phi(P)$ uniquely extends to an operator in $\sL(W_2^s(\T^n_\theta),L_2(\T^n_\theta))$ for all $s>0$. More generally, it extends to a bounded operator from $W_2^s(\T^n_\theta)$ to $W_2^t(\T^n_\theta)$ for all $s,t\in \R$, with $s>t$. 
\end{remark}

In what follows we denote by $\Gamma$ any contour $\Gamma(\phi,\phi-2\pi,r)$ as in~(\ref{eq:powers.contourG1})--(\ref{eq:powers.contourG2}) with $0<r<r_0$, where $r_0=\inf\{|\lambda|; \lambda \in \Sp(P), \ \lambda \neq 0\}$ (see Figure~\ref{Fig:contour2}). We make the convention that $\log_\phi\lambda$ is equal to $\log|\lambda|+i\phi$ (resp., $\log|\lambda|+i\phi-i2\pi$) when $\lambda$ ranges over  $\Gamma^{-}(\phi,r)$ (resp., $\Gamma^{+}(\phi-2\pi,r)$). 

\begin{proposition}[compare~{\cite[Proposition~4.2]{GG:MS08}}]\label{prop:Log.properties} 
We have
\begin{equation}\label{eq:Log.int-formula}
 \log_\phi(P)= \frac{i}{2\pi} \int_{\Gamma} \log_\phi(\lambda)\lambda^{-1} P(P-\lambda)^{-1} d\lambda,
\end{equation}
where the integral converges in $\sL(C^\infty(\T^n_\theta))$. 
\end{proposition}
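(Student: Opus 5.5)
To prove~(\ref{eq:Log.int-formula}), I will write $P_\phi^z$ near $z=0$ as a contour integral that converges for $\Re z$ slightly positive, and then differentiate under the integral sign.

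By~(\ref{eq:powers.int-definition2}) with $k=1$ we have $P_\phi^z=PP_\phi^{z-1}$ for $\Re z<1$. Since $P$ is continuous on $C^\infty(\T^n_\theta)$, I can move it inside the integral defining $P_\phi^{z-1}$ (Proposition~\ref{prop:powers.improper-int}(2)). This gives
\begin{equation*}
 P_\phi^z=\frac{i}{2\pi}\int_\Gamma \lambda_\phi^{z-1}P(P-\lambda)^{-1}\,d\lambda, \qquad \Re z<1 .
\end{equation*}
The integrand $v(\lambda)=P(P-\lambda)^{-1}=1+\lambda(P-\lambda)^{-1}$ is only in $\Hol^{0}(\Lambda;\sL(C^\infty(\T^n_\theta)))$. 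Indeed, $P(P-\lambda)^{-1}\in\Psi^{0,0}(\T^n_\theta;\Lambda)$ by Proposition~\ref{prop:Parameter.composition-PsiDOs}, and Proposition~\ref{prop:PsiDOs-parameter.PsiDO-gives-rise-to-family-of-continuous-operators-on-cAtheta} then gives this bound. By Remark~\ref{lem:Powers.contour-LCS} with $d=0$, the integral $\int_\Gamma\lambda_\phi^{z-1}v(\lambda)\,d\lambda$ therefore defines a holomorphic family in $\sL(C^\infty(\T^n_\theta))$ for $\Re(z-1)<-1$, that is, for $\Re z<0$ only. This does not reach $z=0$, so I must improve the decay of the integrand.

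For this I split $P(P-\lambda)^{-1}=1+\lambda(P-\lambda)^{-1}$. The constant term contributes $\frac{i}{2\pi}\int_\Gamma\lambda_\phi^{z-1}\,d\lambda$, which vanishes for $\Re z<0$. To see this, close the contour at infinity inside $\C^*\setminus L_\phi$: the function $\lambda_\phi^{z-1}$ is holomorphic there and decays like $|\lambda|^{\Re z-1}$. The remaining term is $\frac{i}{2\pi}\int_\Gamma\lambda_\phi^{z}(P-\lambda)^{-1}\,d\lambda$, which is just the definition~(\ref{eq:powers.int-definition}) again and is also only good for $\Re z<0$. So the splitting alone does not help.

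The better idea is to exploit the factor $\log_\phi(\lambda)$ directly. I would write $P_\phi^z-P_\phi^0=\frac{i}{2\pi}\int_\Gamma(\lambda_\phi^{z}-1)\lambda^{-1}v(\lambda)\,d\lambda$ for $\Re z<0$, where I use that $P_\phi^0=1-\Pi_0(P)=\frac{i}{2\pi}\int_\Gamma\lambda^{-1}P(P-\lambda)^{-1}\,d\lambda$. This identity follows from the computation in Lemma~\ref{lem:powers.properties-half-plane}: the two rays cancel, and the small circle gives $1-\Pi_0(P)$. Dividing by $z$ gives
\begin{equation*}
 \frac{P_\phi^z-P_\phi^0}{z}=\frac{i}{2\pi}\int_\Gamma \frac{\lambda_\phi^z-1}{z}\,\lambda^{-1}v(\lambda)\,d\lambda .
\end{equation*}
I then let $z\to0^-$ along the negative real axis. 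On $\Gamma$ I use $|(\lambda_\phi^z-1)/z|\le C\,|\log_\phi\lambda|\,\max(1,|\lambda|^{\Re z})$, which for real $z\le0$ is bounded by $C|\log_\phi\lambda|$. The difficulty is that $\lambda^{-1}\log\lambda\cdot v(\lambda)$ is not absolutely integrable when $v$ is only $O(1)$.

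So I expect the main obstacle to be convergence at infinity, where the integrand is of size $|\lambda|^{-1}\log|\lambda|$. I would handle it by splitting $v(\lambda)=1+\lambda(P-\lambda)^{-1}$ once more. The scalar part contributes $\int_\Gamma\log_\phi(\lambda)\lambda^{-1}\,d\lambda$, understood as the limit of the symmetric truncations $\int_{\Gamma\cap\{|\lambda|\le R\}}$, and its $z$-analogue I can compute explicitly. The logarithms on the two rays differ only by the constant $2\pi i$, so the divergent parts cancel between the rays and the rest is an explicit computation. For the operator part, I use that $\lambda(P-\lambda)^{-1}\cdot\lambda^{-1}=(P-\lambda)^{-1}$ is in $\Hol^{-1}$, with order $-w$ as a parametric symbol. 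Testing against $u\in C^\infty(\T^n_\theta)$ and using $(P-\lambda)^{-1}u=-\lambda^{-1}u+\lambda^{-1}(P-\lambda)^{-1}Pu$ gives $\Hol^{-2}$ decay after removing another scalar term, which I treat in the same way. This gives absolute convergence in $\sL(C^\infty(\T^n_\theta))$. By dominated convergence, using~(\ref{eq:Powers.int-semi-norms}), the limit of the difference quotient then equals the right-hand side of~(\ref{eq:Log.int-formula}). Convergence of that integral in $\sL(C^\infty(\T^n_\theta))$ in the stated sense comes out of the same bookkeeping.
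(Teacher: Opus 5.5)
There is a genuine gap: your detour rests on a false premise, and it contains a step that fails as written. The repair is exactly the paper's proof.

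\textbf{The false premise.} In $\sL(C^\infty(\T^n_\theta))$ the family $P(P-\lambda)^{-1}$ is not merely $\Hol^0$. By Proposition~\ref{prop:Parameter.composition-PsiDOs}, composing $P\in\Psi^{w,0}(\T^n_\theta;\Lambda)$ with $(P-\lambda)^{-1}\in\Psi^{-w,-1}(\T^n_\theta;\Lambda)$ gives an element of $\Psi^{0,-1}(\T^n_\theta;\Lambda)$, because the parameter orders add. More directly, $P$ is a fixed continuous operator on $C^\infty(\T^n_\theta)$, so composing it with the $\Hol^{-1}(\Lambda;\sL(C^\infty(\T^n_\theta)))$-family $(P-\lambda)^{-1}$ keeps the $O((1+|\lambda|)^{-1})$ bound in every seminorm of the strong topology. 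The $O(1)$ size of $1+\lambda(P-\lambda)^{-1}$ is an $\sL(L_2(\T^n_\theta))$ phenomenon. On $C^\infty(\T^n_\theta)$ the two $O(1)$ terms cancel, since $\lambda(P-\lambda)^{-1}=-1+(P-\lambda)^{-1}P$.

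Consequently $\lambda_\phi^{z-1}P(P-\lambda)^{-1}$ is absolutely integrable on $\Gamma$ for all $\Re z<1$, which is consistent with your own first display. For $|z|\leq 1/2$, its $z$-derivative is dominated in each seminorm by $C|\log_\phi\lambda|(1+|\lambda|)^{-3/2}$. So you may differentiate under the integral sign at $z=0$ directly. The paper does precisely this: it tests against $\varphi\in\sL(C^\infty(\T^n_\theta))'$ and then invokes Proposition~\ref{prop:powers.improper-int}(1).

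\textbf{The step that fails.} The scalar integral $\int_\Gamma\log_\phi(\lambda)\lambda^{-1}\,d\lambda$ does not converge, even under symmetric truncation. The $\log|\lambda|$ parts do cancel between the two rays. However, the constant jump $2\pi i$ of $\log_\phi$ across $L_\phi$, integrated against $dt/t$, leaves $-2\pi i\log(R/r)$. In fact
\[
\frac{i}{2\pi}\int_{\Gamma\cap\{|\lambda|\leq R\}}\log_\phi(\lambda)\lambda^{-1}\,d\lambda=\log R+i(\phi-\pi).
\]
Its $z$-analogue is no better: for $\Re z<0$,
\[
\frac{i}{2\pi}\int_\Gamma\frac{\lambda_\phi^z-1}{z}\,\lambda^{-1}\,d\lambda=-\frac{1}{z},
\]
which blows up as $z\to 0^-$. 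So no ``explicit computation'' of that term can be fed into a dominated-convergence argument.

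Your bookkeeping only closes because of an identity you do not state. The second scalar term you peel off, coming from $(P-\lambda)^{-1}u=-\lambda^{-1}u+\lambda^{-1}(P-\lambda)^{-1}Pu$, is exactly the negative of the first. The two divergences therefore cancel identically, and the integrand reassembles into $\log_\phi(\lambda)\lambda^{-1}(P-\lambda)^{-1}P$. That is just the $\Hol^{-1}$ observation above. You must combine these terms before estimating, and once you do, the splitting is superfluous.
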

\begin{proof}
Let $\varphi\in \sL(C^\infty(\T^n_\theta))'$. We have
\begin{equation} \label{eq:Log.derivative-int}
 \acou{\varphi}{\log_\phi(P)}= \acou{\varphi}{ \left.\frac{d}{dz}\right|_{z=0} P_\phi^z}=  \left.\frac{d}{dz}\right|_{z=0} \acou{\varphi}{ P_\phi^z}. 
\end{equation}
As mentioned above $(P-\lambda)^{-1}\in \Psi^{-w,-1}(\T^n_\theta;\Lambda)$, and so this is a $\Hol^{-1}(\Lambda)$-family in $\sL(C^\infty(\T^n_\theta))$. This ensures that, for $\Re z<1$, the maps $\lambda\rightarrow \lambda_\phi^{z-1}(P-\lambda)^{-1}$ and $\lambda \rightarrow  \lambda_\phi^{z-1}P(P-\lambda)^{-1}$ are absolutely integrable maps from $\Gamma$ to $\sL(C^\infty(\T^n_\theta))$, and so we have
\begin{equation*}
 P_\phi^z=PP_\phi^{z-1} = P\bigg(  \frac{i}{2\pi} \int_\Gamma \lambda_\phi^{z-1}(P-\lambda)^{-1} d\lambda\bigg)=
 \frac{i}{2\pi} \int_\Gamma \lambda_\phi^{z-1}P(P-\lambda)^{-1} d\lambda.  
\end{equation*}
Thus, 
\begin{equation*}
 \acou{\varphi}{ P_\phi^z} = \frac{i}{2\pi} \int_\Gamma \lambda_\phi^{z-1}\acou{\varphi}{P(P-\lambda)^{-1}} d\lambda. 
\end{equation*}
As $\lambda \rightarrow \lambda_\phi^{z-1} \acou{\varphi}{P(P-\lambda)^{-1}}$ is a bounded continuous function on $\Gamma$, we get
\begin{align*}
  \left.\frac{d}{dz}\right|_{z=0} \acou{\varphi}{ P_\phi^z} & =  \frac{i}{2\pi} \int_\Gamma  \left.\frac{\dl}{\dl z}\right|_{z=0}\big\{ \lambda_\phi^{z-1} \acou{\varphi}{P(P-\lambda)^{-1}} \big\}d\lambda\\
  &= \frac{i}{2\pi} \int_{\Gamma} \lambda^{-1}\log_\phi\lambda \acou{\varphi}{P(P-\lambda)^{-1}} d\lambda. 
\end{align*}
Combining this with~(\ref{eq:Log.derivative-int}) shows that, for all $\varphi \in \sL(C^\infty(\T^n_\theta))'$, we have 
\begin{equation*}
  \acou{\varphi}{\log_\phi(P)}=  \frac{i}{2\pi} \int_{\Gamma} \lambda^{-1}\log_\phi\lambda \acou{\varphi}{P(P-\lambda)^{-1}} d\lambda. 
\end{equation*}
Note that the integral $ i(2\pi)^{-1} \int_{\Gamma} \log_\phi(\lambda)\lambda^{-1} P(P-\lambda)^{-1} d\lambda$ converges in $\sL(C^\infty(\T^n_\theta))$, since the map $\lambda \rightarrow  \lambda^{-1}\log_\phi\lambda P(P-\lambda)^{-1}$ is an absolutely integrable map from $\Gamma$ to $\sL(C^\infty(\T^n_\theta))$. 
Combining this with the first part of Proposition~\ref{prop:powers.improper-int} gives~(\ref{eq:Log.int-formula}). 
\end{proof}

\begin{remark}\label{rmk:Log.convergenceWs}
In~(\ref{eq:Log.int-formula}) the integral can also be seen as a Bochner integral in $\sL(W_2^s(\T^n_\theta),W^t_2(\T^n_\theta))$ for all $s,t\in \R$ with $s>t$ (compare~\cite[Proposition~4.2]{GG:MS08}). 
\end{remark}

The following result shows that $\log_\phi (P)$ is actually a \psido.  

\begin{theorem}\label{thm:Log.psido-ness} 
 The logarithm $\log_\phi (P)$ is a \psido\ with symbol, 
\begin{equation}
 \tilde{\omega}(\xi) \sim w\log|\xi| + \sum_{j\geq 0} \omega_{-j}(\xi), 
 \label{eq:Log.asymptotic-symbol}
\end{equation}
where $\sim$ is meant in the sense of~(\ref{eq:Symbols.classical-estimates-qualitative}) and $\omega_{-j}(\xi) \in S_{-j}(\T^n_\theta \times \R^n)$, $j\geq 0$,  is given by
\begin{gather}\label{eq:Log.symbols1} 
 \omega_0(\xi)= \log_\phi\big[ \rho_w(|\xi|^{-1}\xi)\big], \qquad \xi\neq 0,\\
 \omega_{-j}(\xi)= \frac{i}{2\pi} \int_{\gamma_\xi} \log_\phi (\lambda)\sigma_{-w-j}(\xi;\lambda)d \lambda, \qquad j\geq 1.
 \label{eq:Log.symbols2}  
\end{gather}
Here $\sigma_{-w-j}(\xi; \lambda)$ is given by~(\ref{eq:Resolvent.symbol-resolvent1})--(\ref{eq:Resolvent.symbol-resolvent2}) and $\gamma_{\xi}$ is any counter-clockwise oriented Jordan curve in $\C^*\setminus L_\phi $ that contains $\Sp(\rho_w(\xi))$ in its interior.
\end{theorem}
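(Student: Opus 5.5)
We propose to differentiate the holomorphic family of symbols of $P_\phi^z$ at $z=0$. By Theorem~\ref{thm:powers.complex-powers}, $P_\phi^z=P_{\rho}(z)$ with $\rho(z)\in\Hol(\C;S^\bt(\T^n_\theta\times\R^n))$ of order $wz$ and homogeneous components $\rho_j(z)$ given by~(\ref{eq:Powers.homogeneous-symbols}). Restrict to a disk $|z|<\epsilon$, so that $\rho(z)$ is a holomorphic family in $\stS^{w\epsilon}(\T^n_\theta\times\R^n)$ (Remark~\ref{rmk:Hol.holfamily-classical-standard}). The quantization map is continuous from $\stS^{w\epsilon}$ to $\sL(C^\infty(\T^n_\theta))$. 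Hence $\log_\phi(P)=P_{\tilde\omega}$ with $\tilde\omega:=\partial_z\rho(z)|_{z=0}\in\stS^{w\epsilon}$ for every $\epsilon>0$.

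The main task is the asymptotic expansion of $\tilde\omega$. We use Lemma~\ref{lem:Hol.asymptotic-classical-standard}. For every $N$ and large $J$, the remainder $r_J(z)=\rho(z)-\sum_{j<J}(1-\chi)\rho_j(z)$ is a holomorphic family in $\stS^{-N}$ on the disk. By the Cauchy formula (Proposition~\ref{prop:Hol.weakly-strong}), $\partial_z r_J(0)\in\stS^{-N}$ as well. It follows that $\tilde\omega\sim\sum_j(1-\chi(\xi))\partial_z\rho_j(z)(\xi)|_{z=0}$ in the sense of~(\ref{eq:Symbols.classical-estimates-qualitative}).

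We next compute each $\partial_z\rho_j(0)$ by differentiating under the compact contour $\gamma_\xi$ in~(\ref{eq:Powers.homogeneous-symbols}). This is legitimate since the integrand is jointly continuous and holomorphic in $z$. It gives
\[
\partial_z\rho_j(0)(\xi)=\frac{i}{2\pi}\int_{\gamma_\xi}\log_\phi(\lambda)\sigma_{-w-j}(\xi;\lambda)\,d\lambda .
\]
This formula is not homogeneous, since $\rho_j(z)$ has degree $wz-j$.

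To extract the homogeneous pieces, write $\rho_j(z)(\xi)=|\xi|^{wz-j}\rho_j(z)(\xi/|\xi|)$ and differentiate. This gives
\[
\partial_z\rho_j(0)(\xi)=w\log|\xi|\,\rho_j(0)(\xi)+|\xi|^{-j}\partial_z\rho_j(0)(\xi/|\xi|).
\]
The second term is homogeneous of degree $-j$. For the first term we need $\rho_j(0)$, the symbol of $P_\phi^0=1-\Pi_0(P)$. Since $\Pi_0(P)$ is smoothing (Corollary~\ref{cor:Spectral.Riesz-projection-smoothing}), we get $\rho_0(0)=1$ and $\rho_j(0)=0$ for $j\ge1$. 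Alternatively, this follows directly from the contour integral, since $\sigma_{-w-j}$ is $O(\lambda^{-2})$ at infinity for $j\ge1$ by~(\ref{eq:Resolvent.symbol-resolvent2}). Thus the only logarithmic term is $w\log|\xi|$, and $\omega_{-j}(\xi):=|\xi|^{-j}\partial_z\rho_j(0)(\xi/|\xi|)$.

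For $j=0$ we have $\omega_0(\xi)=\partial_z\big[\rho_w(\xi/|\xi|)^z_\phi\big]_{z=0}=\log_\phi\rho_w(\xi/|\xi|)$ by holomorphic functional calculus. For $j\ge1$ we use the homogeneity $\sigma_{-w-j}(t\xi;t^w\lambda)=t^{-w-j}\sigma_{-w-j}(\xi;\lambda)$ and the identity $\log_\phi(t^w\lambda)=w\log t+\log_\phi\lambda$. The extra $w\log t$ term integrates to $w\log t\,\rho_j(0)=0$. This shows that the contour formula above is already homogeneous of degree $-j$, which yields~(\ref{eq:Log.symbols2}).

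The expected obstacle is bookkeeping the $\log|\xi|$ term within the cut-off expansion. The terms $(1-\chi)w\log|\xi|\rho_j(0)$ vanish for $j\ge1$, which leaves exactly $w\log|\xi|+\sum_j(1-\chi)\omega_{-j}$ with remainders in every $\stS^{-N}$. This is precisely~(\ref{eq:Log.asymptotic-symbol}).
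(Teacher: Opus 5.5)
Your proposal is correct and follows essentially the same route as the paper. Both arguments realize $\log_\phi(P)$ as $P_{\partial_z\rho(0)}$ via continuity of quantization on $\stS^{m}$. Both differentiate the contour formulas~(\ref{eq:Powers.homogeneous-symbols}) at $z=0$. Both use $P_\phi^0=1-\Pi_0(P)$ with $\Pi_0(P)$ smoothing to get $\rho_j(0)=0$ for $j\geq 1$, which gives the homogeneity of $\omega_{-j}$ and a single $w\log|\xi|$ term. Both then differentiate the cut-off remainders from Lemma~\ref{lem:Hol.asymptotic-classical-standard}.

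The only cosmetic difference is how the logarithmic term is isolated. You split off $w\log|\xi|\,\rho_j(0)(\xi)$ by writing $\rho_j(z)(\xi)=|\xi|^{wz-j}\rho_j(z)(\xi/|\xi|)$, whereas the paper gets the same cancellation from the scaling computation of $\omega_{-j}(t\xi)$, which you also reproduce.
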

\begin{proof}
 As $P_\phi^z$, $z\in \C$, is a holomorphic family in $\Psi^\bt(\T^n_\theta)$ of order $wz$, there is a holomorphic family  $\rho(z)$, $z\in \C$,  in $S^\bt(\T^n_\theta\times \R^n)$ of order $wz$ such that $P_\phi^z=P_\rho(z)$ for all $z\in \C$. Given any $a>0$ the restriction $\rho(z)$, $\Re z<a$, is a holomorphic family in $\stS^{m}(\T^n_\theta\times \R^n)$ with $m=wa$. As $\rho \rightarrow P_\rho$ is a continuous linear map from $\stS^m(\T^n_\theta\times \R^n)$ to $\sL(C^\infty(\T^n_\theta))$ we get 
\begin{equation*}
 \frac{d}{dz} P^z_\phi = \frac{d}{dz} P_\rho(z)= P_{\partial_z \rho}, \qquad \Re z<a.
\end{equation*}
In particular, if we set $\tilde{\omega}(\xi):=\partial_z\rho(0)(\xi)$, then $\tilde{\omega}(\xi)\in \stS^m(\T^n_\theta\times \R^n)$, and we have
\begin{equation*}
 \log_\phi(P) = \left.\frac{d}{dz}\right|_{z=0} P_\phi^z =P_{\tilde{\omega}}. 
\end{equation*}
This shows that $\log_\phi(P) $ is a \psido. 

Bearing this in mind, Theorem~\ref{thm:powers.complex-powers} ensures that $\rho(z)(\xi) \sim \sum_{j\geq 0} \rho_j(z)(\xi)$ in the sense of~(\ref{eq:Hol.hol-family-classical-symbol-estimate}), with
\begin{equation}
 \rho_{j}(z)(\xi)= \frac{i}{2\pi}\int_{\gamma_\xi} \lambda_\phi^z  \sigma_{-w-j}(\xi;\lambda)d \lambda, \qquad \xi \neq 0,
 \label{eq:Log.symbols-powers}
\end{equation}
where $\sigma_{-w-j}(\xi; \lambda)$ is given by~(\ref{eq:Resolvent.symbol-resolvent1})--(\ref{eq:Resolvent.symbol-resolvent2}) and $\gamma_{\xi}$ is any counter-clockwise oriented Jordan curve in $\C^*\setminus L_\phi $ that contains $\Sp(\rho_w(\xi))$ in its interior. In particular, $\rho_{j}(z)(\xi)$, $z\in \C$, is a holomorphic family in $C^\infty(\T^n_\theta\times (\R^n\setminus 0))$. Moreover, by arguing as in the proof of Proposition~\ref{prop:Log.properties}, for $\xi\neq 0$, we obtain
\begin{equation*}
 \left.\frac{\dl}{\dl z}\right|_{z=0} \rho_{j}(z)(\xi)=  
 \frac{i}{2\pi} \int_{\gamma_\xi} \left.\frac{\dl}{\dl z}\right|_{z=0}\big\{\lambda_\phi^z  \sigma_{-w-j}(\xi;\lambda)\big\}d \lambda
 = \frac{i}{2\pi} \int_{\gamma_\xi}\log_\phi(\lambda) \sigma_{-w-j}(\xi;\lambda)d \lambda.
\end{equation*}
In particular, for $j=0$ we have
\begin{equation*}
  \left.\frac{\dl}{\dl z}\right|_{z=0} \rho_{0}(z)(\xi)=\frac{i}{2\pi} \int_{\gamma_\xi} \log_\phi(\lambda)\big(\rho_w(\xi)-\lambda)^{-1}d \lambda = \log_\phi \big[\rho_w(\xi)\big]. 
\end{equation*}
It follows from all this that the formulas~(\ref{eq:Log.symbols1})--(\ref{eq:Log.symbols2}) define elements of $C^\infty(\T^n_\theta\times (\R^n\setminus 0))$. Note also that, for $j\geq 1$, we have 
\begin{equation}\label{eq:Log.symbols-derivatives1}
 \omega_{-j}(\xi)= \left.\frac{\dl}{\dl z}\right|_{z=0} \rho_{j}(z)(\xi), \qquad \xi \neq 0. 
\end{equation}
Moreover, we have
\begin{equation}\label{eq:Log.symbols-derivatives2}
 \left.\frac{\dl}{\dl z}\right|_{z=0} \rho_{0}(z)(\xi)= \log_\phi \big[|\xi|^w\rho_w(|\xi|^{-1}\xi)\big]=w\log |\xi| +\omega_0(\xi). 
\end{equation}

It is immediate from its definition that $\omega_0(\xi)$ is homogeneous of degree $0$ with respect to $\xi$, and so this is an element of $S_{0}(\T^n_\theta\times \R^n)$. Suppose that $j\geq 1$ and $\xi\neq 0$. Let $t>0$. As $\Sp(\rho_w(t\xi))=t^w\Sp(\rho_w(\xi))$, in the formulas~(\ref{eq:Log.symbols2}) for $\omega_{-j}(\xi)$ and  $\omega_{-j}(t\xi)$ we may choose the contours $\gamma_\xi$ and $\gamma_{t\xi}$ so that $\gamma_{t\xi}=t^w\gamma_\xi$. Thus,
\begin{align}
 \omega_{-j}(t\xi) = & \frac{i}{2\pi} \int_{t^w\gamma_\xi} \log_\phi (\lambda) \sigma_{-w-j}(t\xi;\lambda)d\lambda \nonumber\\
  = & t^w  \frac{i}{2\pi} \int_{\gamma_\xi} \log_\phi (t^w\lambda) \sigma_{-w-j}(t\xi;t^w\lambda)d\lambda \label{eq:Log.homogeneity-omegaj} \\
  = & wt^{-j}\log t  \frac{i}{2\pi} \int_{\gamma_\xi} \sigma_{-w-j}(\xi;\lambda)d\lambda 
 +  t^{-j} \frac{i}{2\pi} \int_{\gamma_\xi} \log_\phi (\lambda) \sigma_{-w-j}(\xi;\lambda)d\lambda
\nonumber\\
 =  &  wt^{-j}(\log t) \rho_j(0)(\xi) + t^{-j}\omega_{-j}(\xi),\nonumber
\end{align}
where we have used~(\ref{eq:Log.symbols-powers}) to get the last equality. We observe that $\rho_j(0)(\xi)$ is the symbol of degree $-j$ of $P_\phi^0=1-\Pi_0(P)$. As $\Pi_0(P)$ is a smoothing operator, the  symbol of $1-\Pi_0(P)$ is~$\sim 1$. In particular, all its homogeneous symbols of negative degree are 0, i.e.,  $\rho_{j}(0)(\xi)=0$ for $j\geq 1$. Combining this with~(\ref{eq:Log.homogeneity-omegaj}) shows that $\omega_{-j}(\xi)$ is homogeneous of degree~$-j$ for $j\geq 1$, and hence it lies in the symbol class $S_{-j}(\T^n_\theta \times \R^n)$. 

It remains to show that $\tilde{\omega}(\xi) \sim w\log|\xi|+ \sum_{j\geq 0} \omega_{-j}(\xi)$. In view of~(\ref{eq:Log.symbols-derivatives1})--(\ref{eq:Log.symbols-derivatives2}) this is equivalent to showing that
\begin{equation}
 \tilde{\omega}(\xi) \sim \sum_{ j \geq 0}\partial_z\rho_j(0)(\xi). 
 \label{eq:Log.asymptotic-symbol2}
\end{equation}
Let $a>0$ and let $\chi(\xi)\in C_c^\infty(\R^n)$ be such that $\chi(\xi)=1$ near $\xi=0$ and $\chi(\xi)=0$ for $|\xi|\geq 1$. Once again the real part of the order of $\rho(z)(\xi)$ is~$\leq wa$ for $\Re z<a$. Therefore, by Lemma~\ref{lem:Hol.asymptotic-classical-standard} the fact that $\rho(z)(\xi) \sim \sum_{j\geq 0} \rho_j(z)(\xi)$ in the sense of~(\ref{eq:Hol.hol-family-classical-symbol-estimate}) implies that $\rho(z)(\xi) \sim \sum_{j\geq 0} (1-\chi(\xi)) \rho_j(z)(\xi)$ in the sense of~(\ref{eq:Hol.asymptotic-standard}). Thus, by Remark~\ref{rmk:Hol.asymptotic-standard-stS} we have
\begin{equation*}
 \rho(z)(\xi) -\sum_{j<N} \big(1-\chi(\xi)\big) \rho_j(z)(\xi) \in \Hol\big(\Re z<a;\stS^{wa-N}(\T^n_\theta\times \R^n)\big) \qquad \forall N\geq 1. 
\end{equation*}
Differentiating at $z=0$ we then get
\begin{equation*}
  \tilde{\omega}(\xi) -\sum_{j<N} \big(1-\chi(\xi)\big) \partial_z\rho_j(0)(\xi) \in \stS^{wa-N}(\T^n_\theta\times \R^n) \qquad \forall N\geq 1.
\end{equation*}
This gives~(\ref{eq:Log.asymptotic-symbol2}). 
\end{proof}
We refer to~\cite{Gr:MS12, Ok:Duke95} for versions of Theorem~\ref{thm:Log.psido-ness} for elliptic \psidos\ on closed manifolds. 

\begin{remark}
 We can get a more quantitative version of~(\ref{eq:Log.asymptotic-symbol}). For all $N\geq 0$ and multi-orders $\alpha$ and $\beta$ there is $C_{N\alpha\beta}>0$ such that, for all $\xi\in \R^n$ with $|\xi|\geq 1$, we have
 \begin{equation*}
 \Big\| \delta^\alpha \partial_\xi^\beta \big( \tilde{\omega}(\xi)-w\log|\xi| - \sum_{j<N} \omega_{-j}(\xi) \big)\Big\| \leq C_{N\alpha\beta} | \xi |^{-N-| \beta |} .
\end{equation*}
This implies the following estimates for the symbol $\tilde{\omega}(\xi)$. First, there is $C>0$ such that 
\begin{equation}
 \big\| \tilde{\omega}(\xi)\big\|\leq C\big(1+ \log(1+ |\xi|)\big) \qquad \forall \xi \in \R^n. 
\end{equation}
Second, for all multi-orders $\alpha$ and $\beta$ with $(\alpha,\beta)\neq (0,0)$, there is $C_{\alpha\beta}>0$ such that 
\begin{equation*}
 \big\| \delta^\alpha \partial_\xi^\beta \tilde{\omega}(\xi)\big\|\leq C_{\alpha\beta} \big(1+ |\xi|\big)^{-|\beta|} \qquad \forall \xi \in \R^n. 
\end{equation*}
\end{remark}

\begin{remark}
 Suppose that $P$ has a positive principal symbol and is a non-negative selfadjoint operator with respect to an inner product of the form~(\ref{eq:Powers.inner-product-density}). In this case $\Theta(P)=\check{\Theta}(P)=\C\setminus [0,\infty)$ and we can define $\log (P)$ by using the Borel functional calculus for $P$ (with the convention that $\log (P)=0$ on $\ker P$). Equivalently, if $(e_{\ell})_{\ell\geq 0}$ is any orthonormal eigenbasis for $P$, then 
\begin{equation*}
 \log(P) e_\ell  = \left\{ 
\begin{array}{cr}
(\log \lambda_\ell) e_{\ell} & \text{if $\lambda_\ell>0$},\\
0 & \text{if $\lambda_\ell=0$}. 
\end{array}\right.
\end{equation*}
This agrees with the operator $\log_\phi (P)$ defined in~(\ref{eq:Log.definition}) with any angle $\phi\in (0,2\pi)$. 
\end{remark}

\begin{example}\label{ex:log.log-Delta}
 Let $\Delta=\delta_1^2+\cdots + \delta_n^2$ be the flat Laplacian on $\T^n_\theta$. For all $k \in \Z^n$ we have
 \begin{equation*}
 \log(\Delta) U^k  = \left\{ 
\begin{array}{cr} 
2(\log |k|)U^k  & \text{if $k\neq0$},\\
0 & \text{if $k=0$}. 
\end{array}\right.
\end{equation*}
Therefore, we see that $\log \Delta=2P_\eta$, where $\eta(\xi)=(1-\chi(\xi))\log|\xi|$ with  $\chi(\xi)\in C_c^\infty(\R^n)$ such that $\chi(\xi)=1$ near $\xi=0$ and $\chi(\xi)=0$ for $|\xi|\geq 1$. 
\end{example}

The above example allows us to reformulate Theorem~\ref{thm:Log.psido-ness} as follows. 

\begin{corollary}\label{cor:Log.logD-PsiDO}
We may write
\begin{equation*}
 \log_\phi(P)=\frac{1}{2}w \log \Delta + Q_\phi,
\end{equation*}
where $Q_\phi\in \Psi^0(\T^n_\theta)$ has symbol $\omega(\xi)\sim \sum_{j\geq 0} \omega_{-j}(\xi)$ with $\omega_{-j}(\xi)$ given by~(\ref{eq:Log.symbols1})--(\ref{eq:Log.symbols2}). 
\end{corollary}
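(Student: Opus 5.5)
The plan is to set $Q_\phi:=\log_\phi(P)-\frac12 w\log\Delta$ and read off its symbol by combining Theorem~\ref{thm:Log.psido-ness} with Example~\ref{ex:log.log-Delta}.

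First, Theorem~\ref{thm:Log.psido-ness} gives $\log_\phi(P)=P_{\tilde\omega}$, where
\[
\tilde\omega(\xi)\sim w\log|\xi|+\sum_{j\geq0}\omega_{-j}(\xi)
\]
in the sense of~(\ref{eq:Symbols.classical-estimates-qualitative}). Example~\ref{ex:log.log-Delta} gives $\frac12 w\log\Delta=P_{w\eta}$ with $\eta(\xi)=(1-\chi(\xi))\log|\xi|$. By linearity of quantization, $Q_\phi=P_\omega$ with
\[
\omega(\xi):=\tilde\omega(\xi)-w(1-\chi(\xi))\log|\xi| .
\]
Note that $\omega\in C^\infty(\T^n_\theta\times\R^n)$, since $\tilde\omega$ is smooth (it is $\partial_z\rho(0)$ in the proof of Theorem~\ref{thm:Log.psido-ness}) and $\eta$ is smooth because of the cut-off.

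Next we check the classical asymptotics. We have $\chi(\xi)=0$ for $|\xi|\geq1$, so on $\U^n$ the difference $\omega-\sum_{j<N}\omega_{-j}$ coincides with $\tilde\omega-w\log|\xi|-\sum_{j<N}\omega_{-j}$. The quantitative remark following Theorem~\ref{thm:Log.psido-ness} then yields, for $|\xi|\geq1$,
\[
\Big\|\delta^\alpha\partial_\xi^\beta\Big(\omega-\sum_{j<N}\omega_{-j}\Big)(\xi)\Big\|\leq C_{N\alpha\beta}|\xi|^{-N-|\beta|}.
\]
This is exactly condition~(\ref{eq:Symbols.classical-estimates}) with $q=0$. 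Each $\omega_{-j}$ lies in $S_{-j}(\T^n_\theta\times\R^n)$ by Theorem~\ref{thm:Log.psido-ness}, so $\omega\in S^0(\T^n_\theta\times\R^n)$, hence $Q_\phi\in\Psi^0(\T^n_\theta)$ with the stated homogeneous components.

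The only delicate point is whether that quantitative remark is actually justified, since it is stated rather than proved. If it needs support, we would derive it from the last display of the proof of Theorem~\ref{thm:Log.psido-ness}. That display says that $\tilde\omega-\sum_{j<N}(1-\chi)\partial_z\rho_j(0)$ lies in $\stS^{wa-N}$ for every $a>0$. Combining it with~(\ref{eq:Log.symbols-derivatives1})--(\ref{eq:Log.symbols-derivatives2}), we get that $\omega-\sum_{j<N}(1-\chi)\omega_{-j}$ lies in $\stS^{wa-N}$. Letting $N$ vary and using Remark~\ref{rmk:Symbols.classical-inclusion} (the qualitative form~(\ref{eq:Symbols.classical-estimates-qualitative})) then upgrades this to the exact estimates. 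The reason this works is that the $\log|\xi|$ term, which is the only obstruction to classicality, has been removed exactly.
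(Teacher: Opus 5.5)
Your proposal is correct and takes essentially the paper's route. Set $Q_\phi=\log_\phi(P)-\frac12 w\log\Delta$, use $\log\Delta=2P_\eta$ from Example~\ref{ex:log.log-Delta}, and subtract the logarithmic term from the symbol $\tilde\omega$ given by Theorem~\ref{thm:Log.psido-ness}.

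Your version is slightly more careful than the paper's. You subtract the cut-off symbol $w(1-\chi(\xi))\log|\xi|$, which is what $\frac12 w\log\Delta$ actually quantizes, rather than the singular $w\log|\xi|$. Your fallback argument via the last display of the proof of Theorem~\ref{thm:Log.psido-ness} and Remark~\ref{rmk:Symbols.classical-inclusion} is also sound. Strictly you do not need it, because the theorem already states the expansion in the sense of~(\ref{eq:Symbols.classical-estimates-qualitative}), and that suffices directly.
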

\begin{proof}
 Set $Q_\phi= \log_\phi(P)-\frac{1}{2}w \log \Delta$. Let $\chi(\xi)\in C_c^\infty(\R^n)$ be such that $\chi(\xi)=1$ near $\xi=0$ and $\chi(\xi)=0$ for $|\xi|\geq 1$, and set $\eta(\xi)=(1-\chi(\xi))\log|\xi|$. As mentioned above $\log \Delta=2P_\eta$. Theorem~\ref{thm:Log.psido-ness} asserts that $\log_\phi(P)$ has a symbol such that 
 $\tilde{\omega}(\xi)\sim w\log|\xi|+ \sum_{j\geq 0} \omega_{-j}(\xi)$ with $\omega_{-j}(\xi)$ given by~(\ref{eq:Log.symbols1})--(\ref{eq:Log.symbols2}). Thus, if we set 
 $\omega(\xi)=\tilde{\omega}(\xi)-w \log|\xi|$, then $Q_\phi=P_{\omega}$ and $\omega(\xi)\sim  \sum_{j\geq 0} \omega_{-j}(\xi)$. Thus, $\omega(\xi)\in S^0(\T^n_\theta\times \R^n)$, and hence $Q_\phi\in \Psi^0(\T^n_\theta)$. This gives the result. 
\end{proof}

\begin{remark}\label{rmk:Log.domain}
 It follows from Corollary~\ref{cor:Log.logD-PsiDO} that $\log_\phi(P)$ is a closed operator on $L_2(\T_\theta^n)$ with the same domain as $\log \Delta$, where 
\begin{equation*}
 \dom\big( \log \Delta\big)=\left\{u =\sum u_kU^k \in L_2(\T^n_\theta); \ \sum (\log |k|)^2|u_k|^2<\infty\right\}. 
\end{equation*}
\end{remark}

\begin{remark}
Combining Corollary~\ref{cor:Log.logD-PsiDO} with Eq.~(\ref{eq:Log.difference}) allows us to recover the fact that the sectorial projection $\Pi_{\phi,\phi'}(P)$ is a zeroth-order (classical) \psido. This is the approach used by Grubb~\cite{Gr:MS12} to show that sectorial projections of elliptic \psidos\ on closed manifolds are \psidos. Moreover, combining~(\ref{eq:Log.difference}) with the formulas~(\ref{eq:Log.symbols1})--(\ref{eq:Log.symbols2}) allows us to recover the formula~(\ref{eq:Powers.homogeneous-symbols-sectorial}) for the homogeneous components of the symbol of $\Pi_{\phi,\phi'}(P)$. 
\end{remark}

It follows from Theorem~\ref{thm:Log.psido-ness} that $\log_\phi(P)$ is a (standard) \psido\ of order $\leq a$ for any $a>0$. Therefore, if $\Re z<-w^{-1}n$, then $\log_\phi(P)P_\phi^z$ is a  standard \psido\ of order~$<-n$, and hence it is trace-class. In fact, we have the following result. 

\begin{proposition} \label{prop:Log.trace-of-Pz-derivative}
 We have
 \begin{equation*}
 \frac{d}{dz} \Tr\big[P_\phi^z\big]= \Tr\big[\log_\phi(P)P_\phi^z\big], \qquad \Re z<-w^{-1}n.  
\end{equation*}
\end{proposition}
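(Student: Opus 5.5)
The idea is to differentiate the trace-class-valued holomorphic family $P_\phi^z$ directly, and then identify its derivative with $\log_\phi(P)P_\phi^z$ using the group property.

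By Corollary~\ref{cor:powers.powers-sLp}(3), $P_\phi^z$, $\Re z<-w^{-1}n$, is a holomorphic family in $\sL_1$. Since $\Tr:\sL_1\to\C$ is continuous and linear, $z\mapsto \Tr[P_\phi^z]$ is holomorphic on this half-plane. Moreover,
\begin{equation*}
 \frac{d}{dz}\Tr\big[P_\phi^z\big]=\Tr\Big[\frac{d}{dz}P_\phi^z\Big],
\end{equation*}
where the derivative on the right is taken in $\sL_1$.

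It therefore remains to identify $\frac{d}{dz}P_\phi^z$ with $\log_\phi(P)P_\phi^z$. Fix $z_0$ with $\Re z_0<-w^{-1}n$. By the group property~(\ref{eq:powers.group-property}),
\begin{equation*}
 P_\phi^{z_0+h}=P_\phi^{h}P_\phi^{z_0}.
\end{equation*}
Differentiating at $h=0$ in $\sL(C^\infty(\T^n_\theta))$ gives $\frac{d}{dz}P_\phi^z\big|_{z_0}=\log_\phi(P)P_\phi^{z_0}$ as operators on $C^\infty(\T^n_\theta)$. This uses the definition~(\ref{eq:Log.definition}) and the continuity of right composition by the fixed operator $P_\phi^{z_0}$ on $\sL(C^\infty(\T^n_\theta))$ with its strong topology, which holds because $P_\phi^{z_0}$ maps bounded sets to bounded sets.

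The holomorphic embeddings $\sL_1\hookrightarrow\sL(L_2)$ and $\sL(C^\infty(\T^n_\theta))\hookrightarrow \sL(C^\infty(\T^n_\theta),L_2)$ are continuous and compatible. The $\sL_1$-derivative and the $\sL(C^\infty(\T^n_\theta))$-derivative therefore agree as operators $C^\infty(\T^n_\theta)\to L_2(\T^n_\theta)$. Since $C^\infty(\T^n_\theta)$ is dense in $L_2(\T^n_\theta)$ and both operators are bounded on $L_2$, they coincide. Here $\log_\phi(P)P_\phi^{z_0}$ is bounded on $L_2$, and indeed trace-class, because by Theorem~\ref{thm:Log.psido-ness} it is a standard \psido\ of order $<-n$ and Proposition~\ref{prop:PsiDOs.Schatten} applies.

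An alternative route, avoiding the operator-level comparison, works at the level of symbols. Writing $P_\phi^z=P_\rho(z)$ with $\rho(z)$ holomorphic in $\stS^{m}(\T^n_\theta\times\R^n)$ for $m<-n$ on a neighbourhood of $z_0$, quantization is continuous into $\sL_1$. Hence the $\sL_1$-derivative is $P_{\partial_z\rho(z_0)}$, which equals the $\sL(C^\infty)$-derivative computed above.

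The main obstacle is precisely this compatibility of derivatives taken in different topologies. I expect it to be handled cleanly by the embedding/density argument above, or alternatively via Corollary~\ref{cor:Hol.continuous-hol}.
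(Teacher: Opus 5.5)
Your proposal is correct, but it gets there by a somewhat different mechanism than the paper. Both arguments rely on the group property $P_\phi^{z+z_0}=P_\phi^{z}P_\phi^{z_0}$.

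The paper never differentiates $P_\phi^z$ in $\sL_1$. It writes $P_\phi^{z+z_0}=P_{\rho(z)\sharp\rho(z_0)}$ and notes that $\rho\mapsto \Tr[P_{\rho\sharp\rho(z_0)}]$ is a continuous linear form on $\bS^{wa}(\T^n_\theta\times\R^n)$ for small $a>0$. This uses continuity of $\sharp$ into $\bS^{wa+w\Re z_0}$ with $wa+w\Re z_0<-n$, followed by continuity of quantization into $\sL_1$. Applying this single functional to the holomorphic family $\rho(z)$, $\Re z<a$, gives holomorphy and the derivative at $z=0$ at once, namely $\Tr[P_{\partial_z\rho(0)}P_{\rho(z_0)}]$. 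The identity $P_{\partial_z\rho(0)}=\log_\phi(P)$ was already obtained in the proof of Theorem~\ref{thm:Log.psido-ness}. Since everything happens in one Fr\'echet space of symbols, no derivatives taken in different topologies ever need to be reconciled.

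Your main route takes the $\sL_1$-holomorphy from Corollary~\ref{cor:powers.powers-sLp} off the shelf. The price is the compatibility step through $\sL(C^\infty(\T^n_\theta),L_2(\T^n_\theta))$ plus density, and that step is sound as you wrote it:
\begin{itemize}
\item both maps into $\sL(C^\infty(\T^n_\theta),L_2(\T^n_\theta))$ are continuous and linear, so they carry derivatives to derivatives;
\item limits there are unique;
\item a bounded operator on $L_2(\T^n_\theta)$ is determined by its restriction to $C^\infty(\T^n_\theta)$.
\end{itemize}
The advantage of your argument is that it is abstract: it works for any family that is holomorphic in both $\sL_1$ and $\sL(C^\infty(\T^n_\theta))$. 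The paper's argument is shorter but leans on the continuity of the symbolic calculus.

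Your alternative symbol-level route is close to the paper's. The paper differentiates $\rho(z)\sharp\rho(z_0)$ at $z=0$ rather than $\rho(z)$ at $z_0$, and that choice is exactly what lets it skip your final matching step with the $\sL(C^\infty(\T^n_\theta))$-derivative.
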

\begin{proof}
 Let $z_0\in \C$, $\Re z_0<-w^{-1}n$. Set $m=w\Re z_0$, and let $a\in (0,w^{-1}|m+n|)$.  In addition, let 
  $\rho(z)$, $z\in \C$,  be a holomorphic family in $S^\bt(\T^n_\theta\times \R^n)$ of order $wz$ such that $P_\phi^z=P_{\rho}(z)$, $z\in \C$. Thus, 
\begin{equation*}
 P_\phi^{z+z_0}=P_\phi^zP_\phi^{z_0}=P_{\rho(z)} P_{\rho(z_0)}=P_{\rho(z)\sharp \rho(z_0)}. 
\end{equation*}
 Moreover, as shown in the proof of Theorem~\ref{thm:Log.psido-ness}, we have $\log_\phi(P)=P_{\partial_z\rho(0)}$. 
 
 Note that $\rho \rightarrow \rho\sharp \rho(z_0)$ is a continuous linear map from $\bS^{wa}(\T^n_\theta\times \R^n)$ to $\bS^{wa+m}(\T^n_\theta\times \R^n)$. As $wa+m<-n$, we get a continuous linear map $\rho \rightarrow P_{\rho\sharp \rho(z_0)}$ from $\bS^{wa}(\T^n_\theta\times \R^n)$ to $\sL_1$, and we get a continuous linear form $\rho \rightarrow \Tr[P_{\rho\sharp \rho(z_0)}]$ on $\bS^{wa}(\T^n_\theta\times \R^n)$. 
 As $\rho(z)$, $\Re z<a$, is a holomorphic family in $\bS^{wa}(\T^n_\theta\times \R^n)$, it follows that $\Tr[P_{\rho(z)\sharp \rho(z_0)}]$, $\Re z<a$, is a holomorphic function, and we have
\begin{equation*}
\frac{d}{dz} \Tr\big[ P_{\rho(z)\sharp \rho(z_0)}\big] \Big|_{z=0} = \Tr\big[ P_{\partial_z\rho(0)\sharp \rho(z_0)}\big]. 
\end{equation*}
As $P_{\rho(z)\sharp \rho(z_0)}=P_\phi^{z+z_0}$ and $P_{\partial_z\rho(0)\sharp \rho(z_0)}=P_{\partial_z\rho(0)}P_{\rho(z_0)}=\log_\phi(P)P_\phi^{z_0}$, we get
\begin{equation*}
 \frac{d}{dz} \Tr\big[ P_\phi^{z+z_0}\big] \Big|_{z=0} = \Tr\big[ \log_\phi(P)P_\phi^{z_0}\big]. 
\end{equation*}
This proves the result. 
\end{proof}

\section{Sectorial Projections} \label{sec:sectorial}
In this section, we construct the sectorial projections of elliptic \psidos\ and use them to study how their complex powers and logarithms depend on the ray used to define them.  

Throughout this section $P\in \Psi^w(\T^n_\theta)$, $w>0$, is as in Section~\ref{sec:Resolvent}.

Suppose first that $P$ is selfadjoint with $\check{\Theta}(P)=\C\setminus \R$. As in Remark~\ref{rmk:powers.selfadjoint} we then obtain two families of complex powers $P_{\uparrow\downarrow}^z$, $z\in \C$, and by Proposition~\ref{prop:powers.absolute-value} the powers $|P|^{z}$ form a holomorphic family in $\Psi^\bt(\T^n_\theta)$. In particular, $|P|^{-1}\in \Psi^{-w}(\T^n_\theta)$ has principal symbol $|\rho_w(\xi)|^{-1}$, so the sign operator $\op{Sign}(P)=P|P|^{-1}$ lies in $\Psi^0(\T^n_\theta)$ with principal symbol $\op{Sign}(\rho_w(\xi))$. Hence the orthogonal projections $\Pi_{\pm}(P)=\frac12\big(1-\Pi_0(P)\pm \op{Sign}(P)\big)$ onto the positive and negative eigenspaces of $P$ are operators in $\Psi^{0}(\T^n_\theta)$ with principal symbols $\Pi_{\pm}(\rho_w(\xi))$. Using~(\ref{eq:powers.selfadjoint}) we then obtain
\begin{equation}\label{eq:powers.asymmetry-selfadjoint}
 P_{\uparrow}^z - P_{\downarrow}^z = \big( 1- e^{2i\pi z}\big)\Pi_{-}(P)P_{\uparrow}^z, \qquad z\in \C,
\end{equation}
so the asymmetry between the two families of complex powers of $P$ is encoded by the \psido\ projection $\Pi_{-}(P)$.

We shall now extend the above considerations to the non-selfadjoint case. Assume that $\Theta(P)\neq \emptyset$, and let $L_{\phi}=\{\arg \lambda =\phi\}$ and $L_{\phi'}=\{\arg \lambda =\phi'\}$ be rays contained in $\check{\Theta}(P)$ with $\phi<\phi'\leq\phi+2\pi$. We shall denote by $S_{\phi,\phi'}$ the angular sector $\{\phi<\arg \lambda<\phi'\}$. For $\phi'=\phi+2\pi$ this is just $\C^*\setminus L_\phi$. In addition, let $\Gamma'$ be any contour $\Gamma(\phi',\phi,r)$, where $\Gamma(\phi',\phi,r)$ is defined as in~(\ref{eq:powers.contourG1})--(\ref{eq:powers.contourG2}) with $0<r<r_0$, where $r_0=\inf\{|\lambda|; \lambda \in \Sp(P), \ \lambda \neq 0\}$. 
\begin{figure}[h]
\begin{minipage}{0.25\linewidth}
\centering{\def\svgwidth{\columnwidth}%% Creator: Inkscape 1.0beta1 (32d4812, 2019-09-19), www.inkscape.org
%% PDF/EPS/PS + LaTeX output extension by Johan Engelen, 2010
%% Accompanies image file 'contour3.pdf' (pdf, eps, ps)
%%
%% To include the image in your LaTeX document, write
%%   \input{<filename>.pdf_tex}
%%  instead of
%%   \includegraphics{<filename>.pdf}
%% To scale the image, write
%%   \def\svgwidth{<desired width>}
%%   \input{<filename>.pdf_tex}
%%  instead of
%%   \includegraphics[width=<desired width>]{<filename>.pdf}
%%
%% Images with a different path to the parent latex file can
%% be accessed with the `import' package (which may need to be
%% installed) using
%%   \usepackage{import}
%% in the preamble, and then including the image with
%%   \import{<path to file>}{<filename>.pdf_tex}
%% Alternatively, one can specify
%%   \graphicspath{{<path to file>/}}
%% 
%% For more information, please see info/svg-inkscape on CTAN:
%%   http://tug.ctan.org/tex-archive/info/svg-inkscape
%%
\begingroup%
  \makeatletter%
  \providecommand\color[2][]{%
    \errmessage{(Inkscape) Color is used for the text in Inkscape, but the package 'color.sty' is not loaded}%
    \renewcommand\color[2][]{}%
  }%
  \providecommand\transparent[1]{%
    \errmessage{(Inkscape) Transparency is used (non-zero) for the text in Inkscape, but the package 'transparent.sty' is not loaded}%
    \renewcommand\transparent[1]{}%
  }%
  \providecommand\rotatebox[2]{#2}%
  \newcommand*\fsize{\dimexpr\f@size pt\relax}%
  \newcommand*\lineheight[1]{\fontsize{\fsize}{#1\fsize}\selectfont}%
  \ifx\svgwidth\undefined%
    \setlength{\unitlength}{283.46456693bp}%
    \ifx\svgscale\undefined%
      \relax%
    \else%
      \setlength{\unitlength}{\unitlength * \real{\svgscale}}%
    \fi%
  \else%
    \setlength{\unitlength}{\svgwidth}%
  \fi%
  \global\let\svgwidth\undefined%
  \global\let\svgscale\undefined%
  \makeatother%
  \begin{picture}(1,1)%
    \lineheight{1}%
    \setlength\tabcolsep{0pt}%
    \put(0,0){\includegraphics[width=\unitlength,page=1]{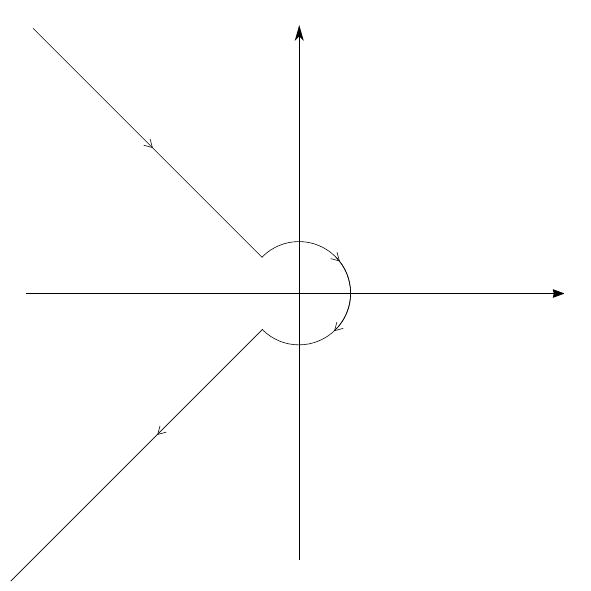}}%
    \put(0.58832783,0.5902766){\color[rgb]{0,0,0}\makebox(0,0)[lt]{\lineheight{1.25}\smash{\begin{tabular}[t]{l}$\Gamma'$\end{tabular}}}}%
  \end{picture}%
\endgroup%
}
\end{minipage}
\caption{Contour $\Gamma'=\Gamma(\phi',\phi,r)$.}\label{fig:sectorial-contour}
\end{figure}

We define the complex powers $P_\phi^z$ and $P_{\phi'}^z$, $z\in \C$, as in Section~\ref{sec:powers}, and seek an analogue of~(\ref{eq:powers.asymmetry-selfadjoint}) for the difference $P_{\phi}^z-P_{\phi'}^z$. As $(P-\lambda)^{-1}\in \Psi^{-w,-1}(\T^n_\theta;\Lambda(P))$, regarding $P$ as an element of $\Psi^{w,0}(\T^n_\theta;\Lambda(P))$ gives $P(P-\lambda)^{-1} \in \Psi^{0,-1}(\T^n_\theta;\Lambda(P))$, which by Proposition~\ref{prop:PsiDOs-parameter.Sobolev-mapping-properties} is a $\Hol^{-1}(\Lambda(P))$-family in $\sL(W_2^s(\T^n_\theta))$ for every $s\in \R$.

\begin{definition}
 The \emph{sectorial projection} onto the sector $S_{\phi,\phi'}$ is defined by 
\begin{equation}\label{def:sectorial.sectorial-projection}
 \Pi_{\phi,\phi'}(P):= \frac{i}{2\pi} \int_{\Gamma'} \lambda^{-1}P(P-\lambda)^{-1}d\lambda,
\end{equation}
where the integral converges in $\sL(W_2^s(\T^n_\theta))$ for every $s\in \R$.
\end{definition}

The above terminology stems from the fact that $\Pi_{\phi,\phi'}(P)$ is a bounded projection whose range is a closed subspace containing all the generalized eigenspaces $E_\lambda(P)$ with $\lambda \in S_{\phi,\phi'}$, and which annihilates $E_0(P)$ together with all the generalized eigenspaces $E_\lambda(P)$ with $\lambda \in S_{\phi',\phi+2\pi}$ (see, e.g.,~\cite[Proposition~3.2]{Po:IJM06}). That is, 
 \begin{equation}\label{eq:powers.range-kernel-spec-proj}
 \ran \Pi_{\phi,\phi'}(P) \supseteq \overline{ E_{\phi,\phi'}(P)}  \qquad \text{and} \qquad \ker \Pi_{\phi,\phi'}(P) \supseteq E_0(P) \dotplus \overline{ E_{\phi',\phi+2\pi}(P)}, 
 \end{equation}
where we have set 
\begin{equation*}
 E_{\phi_1,\phi_2}(P):= \!\!\!\!\!{ \bigplus^{.}_{\lambda \in \Sp(P)\cap S_{\phi_1,\phi_2}}}  \!\!\!\!\! E_\lambda(P),  \qquad \phi_1<\phi_2\leq \phi_1+2\pi,  
\end{equation*}
and $\dotplus$ denotes the algebraic direct sum.

 \begin{remark}
Sectorial projections were introduced by Burak~\cite{Bu:Pisa70} for elliptic differential operators associated with regular elliptic boundary problems. Later, they were also considered by Wodzicki~\cite{Wo:IM82, Wo:PhD, Wo:Weyl} in his work on spectral asymmetry of zeta functions of elliptic \psidos\ on closed manifolds. We refer to~\cite{BCLZ:JPDOA12, GG:MS08, Gr:MS12, Po:IJM06, Wo:SS85} for further accounts on sectorial projections. 
\end{remark}

\begin{remark}[see~{\cite[Proposition~3.4]{Po:IJM06}}]\label{rmk:Sectorial.transitivity}
 If $L_{\phi''}=\{\arg \lambda =\phi''\}$ is a ray contained in $\check{\Theta}(P)$ such that $\phi < \phi' <\phi'' \leq \phi +2\pi$, then  it can be shown 
 that $ \Pi_{\phi,\phi'}(P)$ and $ \Pi_{\phi',\phi''}(P)$ are disjoint projections, and we have
\begin{equation}\label{eq:powers.sect-proj-additivity}
 \Pi_{\phi,\phi'}(P) + \Pi_{\phi',\phi''}(P)=\Pi_{\phi,\phi''}(P). 
\end{equation}
Moreover, if we take $\phi''=\phi+2\pi$, then
\begin{equation*}
 \Pi_{\phi,\phi'}(P) + \Pi_{\phi',\phi+2\pi}(P)=\Pi_{\phi,\phi+2\pi}(P)=1-\Pi_{0}(P). 
\end{equation*}
\end{remark}

\begin{remark}
 If $\Theta(P)\supseteq S_{\phi,\phi'}$, then $\overline{S_{\phi,\phi'}}\setminus 0\subseteq \Theta(P)$, and so $\Sp(P)\cap S_{\phi,\phi'}$ is finite by Theorem~\ref{thm:Resolvent.P-has-discrete-spectrum-resolvent-estimate}. By arguing as in the proof of~\cite[Proposition~A.2]{Po:IJM06} it then can be shown that
 \begin{equation}\label{eq:powers.sect-proj-finite}
 \Pi_{\phi,\phi'}(P) = \!\!\!\!\! \sum_{\lambda \in \Sp(P)\cap S_{\phi,\phi'}} \!\!\!\!\! \Pi_\lambda(P). 
\end{equation}
\end{remark}

\begin{remark}\label{rmk:Sectorial.normal}
 If $P$ is normal, then $P$ admits an orthonormal eigenbasis. In this case, $\Pi_{\phi,\phi'}(P)$ is the orthogonal projection onto 
\begin{equation*}
 \bigoplus_{\lambda \in \Sp(P)\cap S_{\phi,\phi'}} \!\!\! \!\!\!  \ker(P-\lambda). 
\end{equation*}
In particular, if $P$ is selfadjoint, and we take $\phi\in (0,\pi)$ and $\phi'\in (\pi,2\pi)$, then $\Pi_{\phi,\phi'}(P)$ is just the orthogonal projection $\Pi_{-}(P)$  onto the negative eigenspace of $P$. 
\end{remark}

As the integral defining $\Pi_{\phi,\phi'}(P)$ converges in $\sL(W_2^s(\T^n_\theta))$ for every $s\in \R$, it gives rise to a bounded operator on each $W_2^s(\T^n_\theta)$, and hence on $C^\infty(\T^n_\theta)$. In fact, as the following statement asserts, $\Pi_{\phi,\phi'}(P)$ is a \psido.

\begin{proposition}\label{prop:powers.symbol-sect-proj}
The following hold. 
\begin{enumerate}
 \item $\Pi_{\phi,\phi'}(P)$ is a \psido\ of order $0$. 
 
\item It has symbol $\pi(\xi)\sim \sum_{j\geq 0} \pi_{-j}(\xi)$, with $\pi_{-j}(\xi)\in S_{-j}(\T^n_\theta\times \R^n)$ given by
\begin{equation}
 \pi_{-j}(\xi):=\frac{i}{2\pi} \int_{\gamma_{\xi,\phi,\phi'}} \sigma_{-w-j}(\xi; \lambda)d\lambda,\qquad \xi\neq 0, 
  \label{eq:Powers.homogeneous-symbols-sectorial}
\end{equation}
where $ \sigma_{-w-j}(\xi; \lambda)$ is given by~(\ref{eq:Resolvent.symbol-resolvent1})--(\ref{eq:Resolvent.symbol-resolvent2}) and $\gamma_{\xi,\phi,\phi'}$ is any counter-clockwise oriented Jordan curve in $\C^*\setminus L_\phi$ enclosing $\Sp(\rho_w(\xi))\cap S_{\phi,\phi'}$. In particular, the principal symbol of $\Pi_{\phi,\phi'}(P)$ is equal to $\Pi_{\phi,\phi'}(\rho_w(\xi))$. 

\item $\Pi_{\phi,\phi'}(P)$ is a smoothing operator if and only if $S_{\phi,\phi'}\subseteq \Theta(P)$. 
\end{enumerate}
\end{proposition}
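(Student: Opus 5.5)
The plan is to mimic the proof of Proposition~\ref{prop:powers.param-psido-hol-psidos} with the power $\lambda_\phi^z$ replaced by $\lambda^{-1}$, which is holomorphic on $\Lambda(P)$ away from $0$. The family $Q(\lambda):=\lambda^{-1}P(P-\lambda)^{-1}$ is a parametric \psido\ of class $\Psi^{0,-2}$ away from a neighborhood of $0$. To see this, write $P(P-\lambda)^{-1}=1+\lambda(P-\lambda)^{-1}$, so that $Q(\lambda)=\lambda^{-1}+(P-\lambda)^{-1}$.

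On the contour $\Gamma'=\Gamma(\phi',\phi,r)$, the scalar term $\lambda^{-1}$ is not absolutely integrable, so I would not split the integral directly. Instead I would work at the symbol level with the parametric symbol of $P(P-\lambda)^{-1}$. By Proposition~\ref{prop:Parameter.composition-PsiDOs}, this symbol lies in $S^{0,-1}(\T^n_\theta\times\R^n\times\Lambda(P))$, with homogeneous components $\tau_{-j}(\xi;\lambda)=\sum_{k+l+|\alpha|=j}\frac{1}{\alpha!}\partial_\xi^\alpha\rho_{w-k}(\xi)\delta^\alpha\sigma_{-w-l}(\xi;\lambda)\in S^{-1}_{-j}(\T^n_\theta\times\Omega_c(P))$. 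Using the recursion~(\ref{eq:Resolvent.symbol-resolvent2}), i.e.\ $(\rho\sharp\sigma)=1+\lambda\sigma$ componentwise, this gives $\tau_0=1+\lambda\sigma_{-w}$ and $\tau_{-j}=\lambda\sigma_{-w-j}$ for $j\geq1$.

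The argument then runs exactly as in Proposition~\ref{prop:powers.param-psido-hol-psidos} with $z=-1$, the integrand being $\lambda^{-1}$ times a $\Hol^{-1}$-family, hence $\Hol^{-2}$. First, the standard-symbol integral of Lemma~\ref{lem:Powers.st-symbols} converges in $\stS^{w}$. Better, using part~(2) of Proposition~\ref{symbols:inclusion-classical-standard} with $d'=0$, it converges in $\stS^{0}$, because $\lambda^{-1}$ supplies the decay in $\lambda$. Second, the remainders $s_{NJ}\in\stS^{-N,-1}$ integrate to $\stS^{-N}$ symbols. Third, for $|\xi|$ large, the contour can be rescaled to $|\xi|^w\Gamma'$, which gives homogeneous components $\pi_{-j}(\xi)=\frac{i}{2\pi}\int_{|\xi|^w\Gamma'}\lambda^{-1}\tau_{-j}(\xi;\lambda)\,d\lambda$ of degree $-j$. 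This is checked by the same scaling as in Lemma~\ref{lem:Powers.hom-symbols}, since $\lambda^{-1}d\lambda$ is scale invariant. Together with Lemma~\ref{lem:Hol.asymptotic-classical-standard} in the constant case, this shows $\Pi_{\phi,\phi'}(P)\in\Psi^0(\T^n_\theta)$, which proves part~(1).

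For part~(2), the main step is the contour deformation, which I expect to be the main obstacle. The part of the contour at infinity does not close up, since the two rays $L_\phi$ and $L_{\phi'}$ differ.
\begin{itemize}
\item For $j\geq1$, the integrand $\lambda^{-1}\tau_{-j}=\sigma_{-w-j}$ is $O(|\lambda|^{-2})$ as $|\lambda|\to\infty$, by the homogeneity $\sigma_{-w-j}(\xi;\lambda)=|\lambda|^{-1-j/w}\sigma_{-w-j}(|\lambda|^{-1/w}\xi;\lambda/|\lambda|)$ together with holomorphy near $\xi=0$ for fixed $\lambda\neq0$ in $\Theta(P)$. It is also holomorphic in $\lambda$ off $\Sp(\rho_w(\xi))$, so there is no pole at $0$. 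Hence we may close $|\xi|^w\Gamma'$ by arcs at infinity inside the sector $S_{\phi,\phi'}$. Cauchy's theorem then yields~(\ref{eq:Powers.homogeneous-symbols-sectorial}), with orientation fixed by the clockwise convention and the factor $i/2\pi=-1/(2i\pi)$.
\item For $j=0$, the integrand is $\lambda^{-1}+\sigma_{-w}(\xi;\lambda)$. Closing the arc, the $\lambda^{-1}$ contribution over the arc and the small circle cancels with the $O(|\lambda|^{-2})$ behaviour of the combination $\lambda^{-1}\rho_w(\rho_w-\lambda)^{-1}$, and one again gets the Riesz-type integral of $(\rho_w(\xi)-\lambda)^{-1}$ around $\Sp(\rho_w(\xi))\cap S_{\phi,\phi'}$. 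This is precisely the sectorial (Riesz) projection $\Pi_{\phi,\phi'}(\rho_w(\xi))$.
\end{itemize}

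For part~(3), suppose first that $S_{\phi,\phi'}\subseteq\Theta(P)$. Then $\Sp(\rho_w(\xi))\cap S_{\phi,\phi'}=\emptyset$, so every $\pi_{-j}$ vanishes by~(\ref{eq:Powers.homogeneous-symbols-sectorial}) and the operator is smoothing; alternatively this follows from~(\ref{eq:powers.sect-proj-finite}) and Corollary~\ref{cor:Spectral.Riesz-projection-smoothing}. Conversely, if $\Pi_{\phi,\phi'}(P)$ is smoothing, then the principal symbol $\Pi_{\phi,\phi'}(\rho_w(\xi))$ vanishes for every $\xi\neq0$. So no $\rho_w(\xi)$ has spectrum in the open sector, and hence $S_{\phi,\phi'}\cap\Sp(\rho_w(\xi))=\emptyset$, i.e.\ $S_{\phi,\phi'}\subseteq\Theta(P)$. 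Here we use that the Riesz projection onto a nonempty spectral piece is nonzero, and that the boundary rays already lie in $\Theta(P)$.
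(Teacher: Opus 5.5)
Your proposal is correct in outline, and parts~(1) and~(3) are essentially the paper's argument. The paper also obtains $\Pi_{\phi,\phi'}(P)$ as the value at $z=-1$ of the family $\tilde{P}_{\phi,\phi'}(z)=\frac{i}{2\pi}\int_{\Gamma'}\lambda_\phi^z P(P-\lambda)^{-1}\,d\lambda$ given by Proposition~\ref{prop:powers.param-psido-hol-psidos}. It proves the converse in~(3) by the spectral mapping theorem for $\car_{S_{\phi,\phi'}}(\rho_w(\xi))$.

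The real difference is in part~(2). The paper never deforms contours at $z=-1$. For $\Re z<-1$ the splitting $P(P-\lambda)^{-1}=1+\lambda(P-\lambda)^{-1}$ is legitimate, since $\int_{\Gamma'}\lambda_\phi^z\,d\lambda$ converges and vanishes. This gives $\tilde{P}_{\phi,\phi'}(z)=P_{\phi,\phi'}(z+1)$ with $P_{\phi,\phi'}(z)=\frac{i}{2\pi}\int_{\Gamma'}\lambda_\phi^z(P-\lambda)^{-1}\,d\lambda$. The homogeneous symbols of $P_{\phi,\phi'}(z)$ are integrals of $\lambda_\phi^{z}\sigma_{-w-j}(\xi;\lambda)$ over the compact curves $\gamma_{\xi,\phi,\phi'}$, as in Lemma~\ref{lem:powers.powers-half-plane}, hence entire in $z$. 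Analytic continuation down to $z=-1$ then gives (\ref{eq:Powers.homogeneous-symbols-sectorial}).

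You instead close $|\xi|^w\Gamma'$ at infinity directly at $z=-1$. This is more direct and avoids the auxiliary family $P_{\phi,\phi'}(z)$. The cost is that you need $o(|\lambda|^{-1})$ decay of the integrands on large arcs sweeping through all of $S_{\phi,\phi'}$. The paper's route only uses the crude bound $O(|\lambda|^{-1})$ for fixed $\xi$, with the factor $\lambda_\phi^{z}$, $\Re z<0$, supplying the extra decay.

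The decay you need is true, but your justification of it does not work, for two reasons:
\begin{itemize}
\item In general $\sigma_{-w-j}(\eta;\mu)$ is not holomorphic, or even bounded, as $\eta\to0$. The recursion (\ref{eq:Resolvent.symbol-resolvent2}) involves $\partial_\xi^\alpha\rho_{w-k}(\xi)$, which is singular at $\xi=0$ as soon as $w-k-|\alpha|<0$.
\item The large arcs pass through directions of $S_{\phi,\phi'}$ that need not lie in $\Theta(P)$.
\end{itemize}

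The right argument works at fixed $\xi\neq0$:
\begin{itemize}
\item For $|\lambda|\geq2\|\rho_w(\xi)\|$ the Neumann series gives $\|(\rho_w(\xi)-\lambda)^{-1}\|\leq2|\lambda|^{-1}$ in every direction.
\item Each $\delta_\ell$ is a derivation, so $\delta_\ell(\rho_w-\lambda)^{-1}=-(\rho_w-\lambda)^{-1}\delta_\ell(\rho_w)(\rho_w-\lambda)^{-1}$. Induction on (\ref{eq:Resolvent.symbol-resolvent2}) then shows that, for $j\geq1$, $\sigma_{-w-j}(\xi;\lambda)$ is a finite sum of products with $\lambda$-independent coefficients, each containing at least two resolvent factors. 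Hence it is $O(|\lambda|^{-2})$.
\item For $j=0$ the combination $\lambda^{-1}\rho_w(\xi)(\rho_w(\xi)-\lambda)^{-1}$ is $O(|\lambda|^{-2})$. After closing the contour, $\int_{\gamma_{\xi,\phi,\phi'}}\lambda^{-1}d\lambda=0$ because $\gamma_{\xi,\phi,\phi'}$ does not wind around $0$.
\end{itemize}

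There are also two smaller slips:
\begin{itemize}
\item With $d'=0$ in Proposition~\ref{symbols:inclusion-classical-standard}, the integrand $\lambda^{-1}\tau(\xi;\lambda)$ is only $O(|\lambda|^{-1})$ in $\stS^0$, which is not integrable. You get convergence in $\stS^{\epsilon}$ for every $\epsilon>0$, and order~$0$ then comes from the classical expansion, as you eventually use.
\item $\lambda^{-1}P(P-\lambda)^{-1}$ is not literally in a class $\Psi^{0,-2}$, since its leading component has a pole at $\lambda=0$. You never use this claim, though.
\end{itemize}
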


\begin{remark}\label{rmk:sectorial.principal-symbol}
 If $a\in C^\infty(\T^n_\theta)$ has its spectrum contained in $\C^*\setminus [L_{\phi}\cup L_{\phi'}]$, then the characteristic function $\car_{S_{\phi,\phi'}}$ is holomorphic near $\Sp(a)$, and so we may define the projection $\Pi_{\phi,\phi'}(a):=\car_{S_{\phi,\phi'}}(a)$ by holomorphic functional calculus. This yields an idempotent element of $C^\infty(\T^n_\theta)$, since $C^\infty(\T^n_\theta)$ is closed under holomorphic functional calculus. 
\end{remark}

\begin{proof}[Proof of Proposition~\ref{prop:powers.symbol-sect-proj}] 
 As $P(P-\lambda)^{-1} \in \Psi^{0,-1}(\T^n_\theta;\Lambda(P))$, Proposition~\ref{prop:powers.param-psido-hol-psidos} ensures that we define a holomorphic family in $\Psi^{\bt}(\T^n_\theta)$ by letting
\begin{equation*}
 \tilde{P}_{\phi,\phi'}(z):= \frac{i}{2 \pi} \int_{\Gamma'} \lambda^z_\phi P(P-\lambda)^{-1}d\lambda, \qquad \Re z<0, 
\end{equation*}
where the integral converges in $\sL(C^\infty(\T^n_\theta))$. Here $\tilde{P}_{\phi,\phi'}(z)$ has order $w(z+1)$ and by definition $\Pi_{\phi,\phi'}(P)=\tilde{P}_{\phi,\phi'}(-1)$. In particular, this shows that $\Pi_{\phi,\phi'}(P) \in \Psi^0(\T^n_\theta)$. Moreover, if we let $\tilde{\pi}(z)(\xi)\sim \sum_{j\geq 0} \tilde{\pi}_j(z)(\xi)$ be the symbol of $\tilde{P}_{\phi,\phi'}(z)$, then 
$ \tilde{\pi}_j(z)(\xi)\in \Hol(\Re z<0; C^\infty(\T^n_\theta))$ for $\xi\neq 0$. 

Similarly, as $(P-\lambda)^{-1} \in \Psi^{-w,-1}(\T^n_\theta;\Lambda(P))$, we also define a holomorphic family in $\Psi^{\bt}(\T^n_\theta)$ by letting
\begin{equation*}
 P_{\phi,\phi'}(z):= \frac{i}{2\pi} \int_{\Gamma'} \lambda^z_\phi (P-\lambda)^{-1}d\lambda, \qquad \Re z<0.  
\end{equation*}
Moreover, along the same lines as the proof of Lemma~\ref{lem:powers.powers-half-plane}, it can be shown that $P_{\phi,\phi'}(z)$ has symbol $\pi(z)\sim \sum_{j\geq 0} \pi_j(z)(\xi)$, with $\pi_j(z)(\xi)\in S_{wz-j}(\T^n_\theta\times \R^n)$  given by
\begin{equation}
 \pi_{j}(z)(\xi) :=\frac{i}{2\pi} \int_{\gamma_{\xi,\phi,\phi'}} \lambda^{z}_\phi \sigma_{-w-j}(\xi; \lambda)d\lambda,\qquad \xi\neq 0, 
  \label{eq:Powers.homogeneous-symbols-sectorial2}
\end{equation}
where $\sigma_{-w-j}(\xi; \lambda)$ is as above and $\gamma_{\xi,\phi,\phi'}$ is any counter-clockwise oriented Jordan curve in $\C^*\setminus L_\phi$ enclosing $\Sp(\rho_w(\xi))\cap S_{\phi,\phi'}$.

Thanks to the equality $P(P-\lambda)^{-1}=1+\lambda(P-\lambda)^{-1}$, for $\Re z<-1$ we get
\begin{equation*}
 \tilde{P}_{\phi,\phi'}(z) =\frac{i}{2 \pi} \int_{\Gamma'} \lambda^z_\phi d\lambda +\frac{i}{2 \pi} \int_{\Gamma'} \lambda^{z+1}_\phi (P-\lambda)^{-1}d\lambda = P_{\phi,\phi'}(z+1). 
\end{equation*}
Combining this with~(\ref{eq:Powers.homogeneous-symbols-sectorial2}) we then deduce that, for $j\geq 0$ and $\xi\neq 0$, we have
\begin{equation*}
 \tilde{\pi}_j(z)(\xi) =  \pi_{j}(z+1)(\xi)= \frac{i}{2\pi} \int_{\gamma_{\xi,\phi,\phi'}} \lambda^{z+1}_\phi \sigma_{-w-j}(\xi; \lambda)d\lambda,\qquad \Re z<-1.
\end{equation*}
We observe that the map $z\rightarrow \tilde{\pi}_j(z)(\xi)$ is holomorphic on the half-plane $\{\Re z<0\}$. Thus, by using the same kind of analytic continuation arguments used in the proof of Theorem~\ref{thm:powers.complex-powers} we get
\begin{equation*}
 \tilde{\pi}_j(z)(\xi) =\frac{i}{2 \pi} \int_{\gamma_{\xi,\phi,\phi'}} \lambda^{z+1}_\phi \sigma_{-w-j}(\xi; \lambda)d\lambda\quad \text{for $\Re z<0$}.
\end{equation*}
For $z=-1$ this gives the formula~(\ref{eq:Powers.homogeneous-symbols-sectorial})  for the homogeneous symbols of $\Pi_{\phi,\phi'}(P)$, since in this case we may allow $\gamma_{\xi,\phi,\phi'}$ to intersect $L_\phi$. In particular, for $j=0$ we get 
\begin{equation*}
 \pi_0(\xi)= \frac{i}{2\pi} \int_{\gamma_{\xi,\phi,\phi'}} \left(\rho_w(\xi)- \lambda\right)^{-1}d\lambda=\car_{S_{\phi,\phi'}}\left(\rho_w(\xi)\right)= \Pi_{\phi,\phi'}\left(\rho_w(\xi)\right). 
 \end{equation*}
 
It remains to prove the last part. If $S_{\phi,\phi'}\subseteq \Theta(P)$, then $\Sp(\rho_w(\xi))\cap S_{\phi,\phi'}=\emptyset$ for all $\xi \neq 0$, so $\lambda \rightarrow \sigma_{-w-j}(\xi;\lambda)$ is holomorphic inside $\gamma_{\xi,\phi,\phi'}$; hence the integral in~(\ref{eq:Powers.homogeneous-symbols-sectorial}) vanishes and $\pi_{-j}(\xi)=0$ for all $j\geq 0$. Thus $\Pi_{\phi,\phi'}(P)$ is a smoothing operator.

Conversely, if $\Pi_{\phi,\phi'}(P)$ is a smoothing operator, then $\Pi_{\phi,\phi'}(\rho_w(\xi))=\pi_0(\xi)=0$ for all $\xi \neq 0$. Recall that $\Pi_{\phi,\phi'}(\rho_w(\xi))=\car_{S_{\phi,\phi'}}(\rho_w(\xi))$, where $\car_{S_{\phi,\phi'}}(\rho_w(\xi))$ is defined by using the holomorphic functional calculus for $\rho_w(\xi)$. Therefore, by the spectral mapping theorem we have
\begin{equation*}
 \car_{S_{\phi,\phi'}} \left(\Sp\left(\rho_w(\xi)\right)\right)= \Sp\big(\car_{S_{\phi,\phi'}}(\rho_w(\xi))\big)=\Sp(0)=\{0\}.  
\end{equation*}
 This implies that $\Sp(\rho_w(\xi))\cap S_{\phi,\phi'} =\emptyset$ for all $\xi \neq 0$, and hence $S_{\phi,\phi'}\subseteq \Theta(P)$. This completes the proof. 
 \end{proof}

\begin{remark}
 For classical elliptic \psidos\ of positive order on closed manifolds, it has been argued by some authors that the approach of Seeley~\cite{Se:PSPM67} can be applied \emph{verbatim} to show that sectorial projections are \psidos. This seems to be an oversight (see~\cite{BCLZ:JPDOA12}). Grubb~\cite{Gr:MS12} overcame this by expressing the sectorial projection as a difference of logarithms (see Remark~\ref{rmk:sectorial.log-sectorial-PsiDO} below). We observe that the approach of this paper extends \emph{mutatis mutandis} to \psidos\ on ordinary closed manifolds. This simplifies all previous approaches to sectorial projections and provides a unified treatment of complex powers and sectorial projections of elliptic \psidos. 
\end{remark}

\begin{remark}\label{rmk:sectorial.equalities}
The inclusions in~(\ref{eq:powers.range-kernel-spec-proj}) are equalities if $P$ has a complete system of generalized eigenvectors in $L_2(\T^n_\theta)$, i.e., $\dotplus_{\lambda \in \Sp(P)}E_\lambda(P)$ is dense in $L_2(\T^n_\theta)$ (see~\cite[Proposition~A.2]{Po:IJM06}). This holds whenever $P$ is normal (see Remark~\ref{rmk:Sectorial.normal}), but not in general (see~\cite{AM:ZAA89, Se:CPDE86} for counterexamples on closed manifolds).
\end{remark}

The next statements provide conditions ensuring that the inclusions in~(\ref{eq:powers.range-kernel-spec-proj}) are equalities.

\begin{proposition}\label{prop:powers.range-sect-proj}
 Suppose there are rays $L_{\phi_0}, \ldots, L_{\phi_N}$ in $\Theta(P)$ with $\phi_0=\phi$, $\phi_{N}=\phi'$ and $\phi_{j-1}<\phi_j<\phi_{j-1}+n^{-1}\pi w$. Then the range of $\Pi_{\phi,\phi'}(P)$ is equal to $\overline{E_{\phi,\phi'}(P)}$. 
\end{proposition}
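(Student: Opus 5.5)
The approach is a Dunford--Schwartz type completeness argument on the range of $\Pi_{\phi,\phi'}(P)$, combined with the additivity of sectorial projections. By Remark~\ref{rmk:Sectorial.transitivity}, $\Pi_{\phi,\phi'}(P)=\sum_{j=1}^N\Pi_{\phi_{j-1},\phi_j}(P)$, and these are mutually disjoint projections. So it suffices to show $\ran\Pi_{\phi_{j-1},\phi_j}(P)=\overline{E_{\phi_{j-1},\phi_j}(P)}$ for each $j$; the general case then follows by summing, since each range is contained in $\ran\Pi_{\phi,\phi'}(P)$. (If some $L_{\phi_j}$ passes through an eigenvalue, we first perturb $\phi_j$ slightly. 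This is possible because, by Corollary~\ref{cor:Resolvent.ray-with-no-eigenvalue-is-a-ray-of-minimal-growth}, only finitely many rays near $L_{\phi_j}$ carry eigenvalues, and the strict aperture inequalities survive a small perturbation.) The inclusion $\supseteq$ is already~(\ref{eq:powers.range-kernel-spec-proj}), so only $\subseteq$ needs proof.

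For the reverse inclusion, fix $j$ and set $\Pi=\Pi_{\phi_{j-1},\phi_j}(P)$. Take $u\in\ran\Pi$ and $v\in L_2(\T^n_\theta)$ with $v\perp E_{\phi_{j-1},\phi_j}(P)$. The goal is to show $\acoup{u}{v}=0$.

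\begin{enumerate}
\item Consider the scalar function $f(\lambda)=\acoup{\lambda^{-1}P(P-\lambda)^{-1}u}{v}$, or equivalently $\acoup{(P-\lambda)^{-1}u}{v}$ up to an entire correction.
\item Inside the open sector $S_{\phi_{j-1},\phi_j}$, its only possible poles are the eigenvalues, and there the principal parts are given by $\Pi_\lambda(P)$ via Proposition~\ref{prop:Resolvent.Meromorphic}-type Laurent expansions. Since $\ran\Pi_\lambda(P)=E_\lambda(P)\perp v$, these principal parts pair to zero with $v$, and $f$ extends holomorphically to the closed sector.
\item Since $u=\Pi u$ and $\Pi$ commutes with the resolvent, the part of $u$ "outside" the sector contributes no singularities either. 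Using $(1-\Pi)u=0$, the resolvent applied to $u$ has no singularities outside the sector. Combining the two, $\lambda\mapsto\acoup{(P-\lambda)^{-1}u}{v}$ extends to an entire function $F$.
\item On the rays $L_{\phi_{j-1}}$ and $L_{\phi_j}$, and outside the closed sector, we have $|F(\lambda)|=O(|\lambda|^{-1})$ by Theorem~\ref{thm:Resolvent.P-has-discrete-spectrum-resolvent-estimate}.
\item Inside the sector we need a finite-order growth bound: $\|(P-\lambda)^{-1}\|\leq C\exp(C|\lambda|^{n/w+\epsilon})$ on circles avoiding the spectrum.
\item Apply Phragm\'en--Lindel\"of in the sector. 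Its aperture is $<\pi w/n$, and the order is $\leq n/w+\epsilon$ with $\epsilon$ small, so $F$ is bounded in the sector. Hence $F$ is entire, bounded, and $O(|\lambda|^{-1})$ along rays, so by Liouville $F\equiv0$.
\item Finally, $\acoup{u}{v}=\lim_{t\to\infty}\acoup{-t e^{i\phi_j}(P-te^{i\phi_j})^{-1}u}{v}=0$, using that $-\lambda(P-\lambda)^{-1}\to 1$ strongly along a ray of minimal growth.
\end{enumerate}

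The main obstacle is the growth bound in step~5. The plan there is to use $(P-\lambda)^{-1}=(\Lambda^{-a}\text{-type } \sL_{p,\infty})$ factorization: pick $\mu\notin\Sp(P)$ and write $(P-\lambda)^{-1}=(P-\mu)^{-1}(1-(\lambda-\mu)(P-\mu)^{-1})^{-1}$. Here $K=(P-\mu)^{-1}\in\sL_{p,\infty}\subseteq\sL_{q}$ for all $q>p=n/w$ by Proposition~\ref{prop:PsiDOs.Schatten}. Then invoke the Carleman-type estimate $\|(1-zK)^{-1}\|\le \exp(C|z|^q)/|\det_{\lceil q\rceil}(1-zK)|$ from Gohberg--Krein. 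Next, bound the lower growth of the regularized determinant (an entire function of order $\leq q$) away from its zeros by the minimum-modulus theorem. This produces the required order bound on a sequence of circles, which suffices for Phragm\'en--Lindel\"of. The bookkeeping of the exceptional circles is where the care is needed, but it follows Dunford--Schwartz XI.9 essentially verbatim.
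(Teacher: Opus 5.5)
\textbf{The gap is in step~4.} Off the closed sector $\overline{S_{\phi_{j-1},\phi_j}}$, Theorem~\ref{thm:Resolvent.P-has-discrete-spectrum-resolvent-estimate} does not give the bound you claim.

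\begin{itemize}
\item It only controls $\|(P-\lambda)^{-1}\|$ in cones whose closure lies in $\Theta(P)$. The complement of the sector generally contains directions where $\rho_w(\xi)$ has spectrum (the other sub-sectors, or $S_{\phi',\phi+2\pi}$). There $P$ may have infinitely many eigenvalues, and $\|(P-\lambda)^{-1}\|$ is unbounded.
\item Phragm\'en--Lindel\"of cannot replace it in general. When $\phi'<\phi+2\pi$, nothing in the hypotheses cuts $S_{\phi',\phi+2\pi}$ into sectors of aperture $<\pi w/n$.
\end{itemize}

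Without a bound on $F$ off the sector, the Liouville conclusion of step~6 is unjustified.

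\emph{The fix.} Use $u=\Pi u$ and bound $(P-\lambda)^{-1}\Pi$ rather than $(P-\lambda)^{-1}$. Take $\Gamma'=\Gamma(\phi_j,\phi_{j-1},r)$ and insert the definition of $\Pi$. Then apply the resolvent identity, together with $\int_{\Gamma'}\mu^{-1}(\lambda-\mu)^{-1}\,d\mu=0$ for $\lambda$ outside the region enclosed by $\Gamma'$. This gives
\[
 (P-\lambda)^{-1}\Pi=\frac{i}{2\pi}\int_{\Gamma'}\mu^{-1}(\mu-\lambda)^{-1}P(P-\mu)^{-1}\,d\mu ,
\]
first for $\lambda\notin\Sp(P)$; the right-hand side then provides the holomorphic extension. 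Since $P(P-\mu)^{-1}$ is bounded on $\Gamma'$, you get:
\begin{itemize}
\item $\|(P-\lambda)^{-1}\Pi\|=O(|\lambda|^{-1}\log|\lambda|)$ for $\lambda$ off the sector at angular distance $\geq\delta$ from $L_{\phi_{j-1}}\cup L_{\phi_j}$;
\item in the thin cones $\{|\arg\lambda-\phi_{j-1}|<\delta\}$ and $\{|\arg\lambda-\phi_j|<\delta\}$, which lie in $\Theta(P)$ for small $\delta$, Theorem~\ref{thm:Resolvent.P-has-discrete-spectrum-resolvent-estimate} does apply.
\end{itemize}
So $F$ is bounded off the sector, and steps~6--7 go through.

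\textbf{The rest of the proposal is sound.}
\begin{itemize}
\item The reduction to one sub-sector (moving intermediate rays into $\check\Theta(P)$ and summing via Remark~\ref{rmk:Sectorial.transitivity}) is exactly the paper's.
\item In step~3, the precise point is $\Pi_\mu(P)u=\Pi_\mu(P)\Pi u=\Pi\,\Pi_\mu(P)u=0$ for every eigenvalue $\mu$ off the open sector, by~(\ref{eq:powers.range-kernel-spec-proj}). This kills the principal part of the resolvent at $\mu$.
\item Step~5 (Carleman's inequality plus minimum modulus on a sequence of circles) is standard.
\end{itemize}

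\textbf{Comparison with the paper.} For $N=1$ the paper restricts $P$ to $\ran\Pi_{\phi,\phi'}(P)$. It notes that the restricted operator has spectrum $\Sp(P)\cap S_{\phi,\phi'}$ and resolvent in $\sL_{p,\infty}$, since it agrees with the partial inverse there. It then cites Dunford--Schwartz, Corollary~XI.9.31. Your steps~1--7 are essentially the proof of that criterion, written out for the scalar function $F$. The citation is much shorter. Your version is self-contained, and it exposes the off-sector control of the restricted resolvent, as above, that the citation leaves implicit.
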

\begin{proof}
 Let us first assume that $N=1$, i.e., $\phi<\phi'<\phi+n^{-1}\pi w$. Set $\sE_{\phi,\phi'}(P)=\ran \Pi_{\phi,\phi'}(P)$. This is a closed subspace of $L_2(\T^n_\theta)$ which is invariant by $P$, and so $P$ induces on $\sE_{\phi,\phi'}(P)$ a closed operator $P_{\phi,\phi'}$ with domain $\sE_{\phi,\phi'}(P)\cap W_2^w(\T^n_\theta)$. Moreover, as $E_{\phi,\phi'}(P)\subseteq \sE_{\phi,\phi'}(P)$ and $\sE_{\phi,\phi'}(P)\cap (E_0(P)\dotplus E_{\phi',\phi+2\pi}(P))=\{0\}$, we see that $\Sp(P_{\phi,\phi'})=\Sp(P)\cap S_{\phi,\phi'}$ and $E_\lambda(P_{\phi,\phi'})=E_\lambda(P)$ for all $\lambda \in \Sp(P)\cap S_{\phi,\phi'}$. Thus, $E_{\phi,\phi'}(P)$ is the subspace spanned by the generalized eigenvectors of $P_{\phi,\phi'}$. 
 
 As the partial inverse $P^{-1}$ is a \psido\ of order~$-w$, it is contained in the weak Schatten class $\sL_{p,\infty}$ with $p=nw^{-1}$ (\emph{cf}.~Proposition~\ref{prop:PsiDOs.Schatten}). As $P^{-1}$ and $(P_{\phi,\phi'})^{-1}$ agree on $\sE_{\phi,\phi'}(P)$ it follows that the resolvent of $P_{\phi,\phi'}$ is contained in $\sL_{p,\infty}$, and hence is in every Schatten class $\sL_{q}$ with $q>p$. As the spectrum of $P_{\phi,\phi'}$ is contained in the sector $S_{\phi,\phi'}$, which has aperture~$<n^{-1}\pi w=\pi p^{-1}$, it then follows from a criterion of Dunford--Schwartz~\cite[Corollary XI.9.31]{DS:IP63} that the generalized eigenvectors of $P_{\phi,\phi'}$ are complete in $\sE_{\phi,\phi'}(P)$. In other words, the closure of $E_{\phi,\phi'}(P)$ is equal to $\ran \Pi_{\phi,\phi'}(P)$. This proves the result for $N=1$. 
 
 Suppose that $N\geq 2$. Here $L_{\phi_1}, \ldots, L_{\phi_{N-1}}$ are rays contained in $\Theta(P)$. As $\Theta(P)$ is an open cone,  if $\delta>0$ is small enough, then the angular sectors $\{|\arg \lambda-\phi_j|\leq \delta\}$, $j=1,\ldots, N-1$, are contained in $\Theta(P)$, and so by  Theorem~\ref{thm:Resolvent.P-has-discrete-spectrum-resolvent-estimate} they contain at most finitely many eigenvalues of $P$. 
 Therefore, we may assume that the rays $L_{\phi_1}, \ldots, L_{\phi_{N-1}}$ are actually contained in $\check{\Theta}(P)$. In this case, by using~(\ref{eq:powers.sect-proj-additivity}) and arguing by induction, we see that $\Pi_{\phi_0,\phi_1}(P), \ldots, \Pi_{\phi_{N-1},\phi_N}(P)$ are mutually disjoint projections such that $\Pi_{\phi_0,\phi_1}(P)+ \cdots +\Pi_{\phi_{N-1},\phi_N}(P)=\Pi_{\phi,\phi'}(P)$. Thus, 
\begin{equation*}
 \ran \Pi_{\phi,\phi'}(P) = \ran \Pi_{\phi_0,\phi_1}(P)\dotplus \cdots \dotplus\ran \Pi_{\phi_{N-1},\phi_N}(P).
\end{equation*}
 The $N=1$ case shows that $ E_{\phi_{j-1},\phi_j}(P)$ is dense in $\ran \Pi_{\phi_{j-1},\phi_j}(P)$ for $j=1,\ldots, N$. It then follows that $E_{\phi,\phi'}(P)$ is dense in $ \ran \Pi_{\phi,\phi'}(P)$. This proves the result. 
\end{proof}

Specializing Proposition~\ref{prop:powers.range-sect-proj} to $\phi'=\phi+2\pi$ gives the following result. 

\begin{corollary} \label{cor:sectorial.plane-division-eigenvectors-completeness}
 Assume we can find rays $L_{\phi_0}, \ldots, L_{\phi_N}$ in $\Theta(P)$ that divide $\C^*$ into angular sectors of aperture~$<n^{-1}\pi w$. Then the 
generalized eigenvectors of $P$ are {complete} in $L_2(\T^n_\theta)$. 
\end{corollary}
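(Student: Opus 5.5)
This is essentially the case $\phi'=\phi+2\pi$ of Proposition~\ref{prop:powers.range-sect-proj}. Relabel the rays so that $\phi_0<\phi_1<\cdots<\phi_N$ are the angles of the given rays taken in increasing order within one turn, and append $\phi_{N+1}:=\phi_0+2\pi$, so that $L_{\phi_{N+1}}=L_{\phi_0}$. The hypothesis that the rays cut $\C^*$ into sectors of aperture $<n^{-1}\pi w$ then says exactly that $\phi_{j-1}<\phi_j<\phi_{j-1}+n^{-1}\pi w$ for $j=1,\ldots,N+1$.

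\textbf{Step 1: reduce to rays in $\check{\Theta}(P)$.} The proposition is stated for $\phi,\phi'$ with $L_\phi,L_{\phi'}\subseteq\check\Theta(P)$, since $\Pi_{\phi,\phi'}(P)$ is only defined in that case. We therefore first move the rays into $\check\Theta(P)$, as in the proof of Proposition~\ref{prop:powers.range-sect-proj}. Each ray $L_{\phi_j}$ lies in the open cone $\Theta(P)$, so for small $\delta>0$ the closed sectors $\{|\arg\lambda-\phi_j|\le\delta\}$ lie in $\Theta(P)$. By Theorem~\ref{thm:Resolvent.P-has-discrete-spectrum-resolvent-estimate} each such sector contains only finitely many eigenvalues. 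We may thus perturb every $\phi_j$ slightly so that the new rays avoid all eigenvalues and hence lie in $\check\Theta(P)$. The inequalities above are strict and there are finitely many of them, so they survive small perturbations. We perturb $\phi_0$ and $\phi_{N+1}=\phi_0+2\pi$ by the same amount.

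\textbf{Step 2: apply the proposition.} With $\phi=\phi_0$ and $\phi'=\phi_{N+1}=\phi+2\pi$, Proposition~\ref{prop:powers.range-sect-proj} gives
\[
\ran\Pi_{\phi,\phi+2\pi}(P)=\overline{E_{\phi,\phi+2\pi}(P)}.
\]
By Remark~\ref{rmk:Sectorial.transitivity}, $\Pi_{\phi,\phi+2\pi}(P)=1-\Pi_0(P)$. Its range is therefore $E_0(P^*)^\perp$, and
\[
L_2(\T^n_\theta)=E_0(P)\dotplus\ran\Pi_{\phi,\phi+2\pi}(P).
\]
Since $S_{\phi,\phi+2\pi}=\C^*\setminus L_\phi$ and $L_\phi$ contains no eigenvalues, $E_{\phi,\phi+2\pi}(P)$ is the algebraic sum of all $E_\lambda(P)$ with $\lambda\in\Sp(P)\setminus\{0\}$.

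\textbf{Step 3: conclude density.} Combining the two displays gives
\[
L_2(\T^n_\theta)=E_0(P)+\overline{E_{\phi,\phi+2\pi}(P)}\subseteq\overline{\textstyle\sum_{\lambda\in\Sp(P)}E_\lambda(P)},
\]
so the generalized eigenvectors of $P$ are complete. The only delicate point is Step~1: the perturbation must keep every aperture below $n^{-1}\pi w$ and must keep the wrap-around angle identified, which is why $\phi_0$ and $\phi_0+2\pi$ are moved together.
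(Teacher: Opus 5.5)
Your proposal is correct and follows the paper's route: the paper obtains the corollary by specializing Proposition~\ref{prop:powers.range-sect-proj} to $\phi'=\phi+2\pi$, where $\Pi_{\phi,\phi+2\pi}(P)=1-\Pi_0(P)$. Your Step~1 (perturbing the rays into $\check\Theta(P)$ while preserving the strict aperture inequalities) and Step~3 (adding back $E_0(P)$ via $L_2(\T^n_\theta)=E_0(P)\dotplus E_0(P^*)^\perp$) spell out details the paper leaves implicit.
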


\begin{remark}
 We refer to~\cite{Ag:CPAM62, Ag:Springer94, Bu:Pisa68, Po:IJM06} for versions of Proposition~\ref{prop:powers.range-sect-proj} and Corollary~\ref{cor:sectorial.plane-division-eigenvectors-completeness} for elliptic \psidos\ on closed manifolds. 
\end{remark}

Finally, we have the following asymmetry formulas for the complex powers and logarithms.

\begin{theorem} \label{thm:powers.asymmetry}
We have 
\begin{equation}
 P_\phi^z-P_{\phi'}^z = \left(1-e^{2i\pi z}\right) \Pi_{\phi,\phi'}(P)P_\phi^z \qquad \forall z\in \C. 
 \label{eq:powers.asymmetry} 
\end{equation}
In addition, we have
\begin{equation}\label{eq:Log.difference} 
 \log_{\phi'}(P) -  \log_{\phi}(P)= (2i\pi) \Pi_{\phi,\phi'}(P).  
\end{equation}
\end{theorem}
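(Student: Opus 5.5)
The plan is to prove~(\ref{eq:powers.asymmetry}) first for $\Re z<0$ by comparing contour integrals, then extend it to all $z\in\C$ by analytic continuation, and finally obtain~(\ref{eq:Log.difference}) by differentiating at $z=0$.

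\emph{Step 1: the case $\Re z<0$.} Write $P_\phi^z$ as in~(\ref{eq:powers.int-definition}) with $\Gamma_\phi=\Gamma(\phi,\phi-2\pi,r)$. By Remark~\ref{lem:Powers.contour-LCS}, $P_{\phi'}^z$ can be written with $\Gamma_{\phi'}=\Gamma(\phi',\phi'-2\pi,r)$, using the same small radius $r$. Parametrize both integrands by the angle. The branch $\lambda_{\phi'}^z$ agrees with $\lambda_\phi^z$ on the sector $\{\phi'-2\pi<\arg\lambda<\phi\}$, i.e.\ the complement of $\overline{S_{\phi,\phi'}}$. On $S_{\phi,\phi'}$ it equals $e^{2i\pi z}\lambda_\phi^z$, because $\arg_{\phi'}\lambda=\arg_\phi\lambda+2\pi$ there.

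Next decompose $\Gamma_\phi-\Gamma_{\phi'}$ into ray pieces and arcs. The ray pieces combine into a contour of the form $\Gamma'=\Gamma(\phi',\phi,r)$, which surrounds the sector $S_{\phi,\phi'}$. The integrand on it is $\lambda_\phi^z-\lambda_{\phi'}^z=(1-e^{2i\pi z})\lambda_\phi^z$, with the convention for $\lambda_\phi^z$ on $L_\phi$ chosen consistently. The two small arcs on $|\lambda|=r$ cancel on the common region, and what remains is the arc inside $S_{\phi,\phi'}$. This yields
\[ P_\phi^z-P_{\phi'}^z=(1-e^{2i\pi z})\,\frac{i}{2\pi}\int_{\Gamma'}\lambda_\phi^z(P-\lambda)^{-1}d\lambda=(1-e^{2i\pi z})P_{\phi,\phi'}(z), \]
in the notation of the proof of Proposition~\ref{prop:powers.symbol-sect-proj}. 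This sign and orientation bookkeeping is the main obstacle. I would carry it out for the scalar resolvent first to fix the signs, since both identities must reduce to $\lambda_\phi^z-\lambda_{\phi'}^z$ for a single eigenvalue.

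\emph{Step 2: identify $P_{\phi,\phi'}(z)$.} It remains to show $P_{\phi,\phi'}(z)=\Pi_{\phi,\phi'}(P)P_\phi^z$. Use the resolvent identity
\[ \lambda^{-1}P(P-\lambda)^{-1}(P-\mu)^{-1}=\frac{\lambda^{-1}P(P-\lambda)^{-1}-\lambda^{-1}P(P-\mu)^{-1}}{\lambda-\mu}, \]
with $\lambda\in\Gamma'$, and $\mu$ on a contour for $P_\phi^z$ shifted slightly outward (radius $r'<r$, angles perturbed as in Remark~\ref{lem:Powers.contour-LCS}). Then apply Fubini and Cauchy's formula for the scalar integrals $\int\lambda^{-1}(\lambda-\mu)^{-1}d\lambda$ and $\int\mu_\phi^z(\lambda-\mu)^{-1}d\mu$, exactly as in the semigroup proof~\cite[Prop.~10.1]{Sh:Springer01}. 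The delicate point is which residues appear, and I would fix the geometry so that $\Gamma'$ lies to the right of the $\mu$-contour. An alternative, avoiding the double integral, is the case $\Re z<-1$. There $\Pi_{\phi,\phi'}(P)=\tilde P_{\phi,\phi'}(-1)$, and the proof of Proposition~\ref{prop:powers.symbol-sect-proj} gives $P_{\phi,\phi'}(z)=\tilde P_{\phi,\phi'}(z-1)$. One then checks the semigroup law $P_{\phi,\phi'}(z_1)P_{\phi}^{z_2}=P_{\phi,\phi'}(z_1+z_2)$ with $z_1=0$ by the same Shubin argument.

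\emph{Step 3: extension to all $z\in\C$.} Both sides of~(\ref{eq:powers.asymmetry}) are holomorphic families in $\sL(C^\infty(\T^n_\theta))$ on $\C$, by Theorem~\ref{thm:powers.complex-powers} and Corollary~\ref{cor:Hol.composition-of-psidos}, since $\Pi_{\phi,\phi'}(P)\in\Psi^0$. Hence Proposition~\ref{prop:Hol.unique-analytic-continuation} extends the identity from $\Re z<0$ to all of $\C$.

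\emph{Step 4: logarithms.} Differentiate~(\ref{eq:powers.asymmetry}) at $z=0$ in $\sL(C^\infty(\T^n_\theta))$, using~(\ref{eq:Log.definition}) and the fact that $1-e^{2i\pi z}$ vanishes at $z=0$:
\[ \log_\phi(P)-\log_{\phi'}(P)=-2i\pi\,\Pi_{\phi,\phi'}(P)P_\phi^0. \]
By Theorem~\ref{thm:powers.complex-powers}, $P_\phi^0=1-\Pi_0(P)$. By~(\ref{eq:powers.range-kernel-spec-proj}), $\Pi_{\phi,\phi'}(P)\Pi_0(P)=0$, since $\Pi_{\phi,\phi'}(P)$ annihilates $E_0(P)$. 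Therefore $\Pi_{\phi,\phi'}(P)P_\phi^0=\Pi_{\phi,\phi'}(P)$, which gives~(\ref{eq:Log.difference}).
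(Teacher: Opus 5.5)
Your proposal is correct and follows the same route as the paper. You compare the two keyhole integrals for $\Re z<0$ to reduce both sides to the integral of $\lambda_\phi^z(P-\lambda)^{-1}$ over $\Gamma'$, continue analytically via Proposition~\ref{prop:Hol.unique-analytic-continuation}, and differentiate at $z=0$ using $P_\phi^0=1-\Pi_0(P)$; you dispose of $\Pi_{\phi,\phi'}(P)\Pi_0(P)$ through the kernel inclusion (\ref{eq:powers.range-kernel-spec-proj}) rather than the additivity in Remark~\ref{rmk:Sectorial.transitivity}, which amounts to the same thing.

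Your intermediate identities $P_\phi^z-P_{\phi'}^z=(1-e^{2i\pi z})P_{\phi,\phi'}(z)$ and $P_{\phi,\phi'}(z)=\Pi_{\phi,\phi'}(P)P_\phi^z$ are the ones consistent with the stated orientation of $\Gamma'$, as a one-eigenvalue check confirms. The two displayed formulas in the paper's proof each carry the opposite sign, and these two sign errors cancel in (\ref{eq:powers.asymmetry}).
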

\begin{proof}
 The proof of~(\ref{eq:powers.asymmetry}) is similar to the proof of~\cite[Proposition~4.1]{Po:IJM06}. We only highlight the main steps. If $\Re z<0$, then 
 by arguing along the same lines as the proof of~\cite[Proposition~4.1]{Po:IJM06} we get  the following two formulas,
\begin{gather*}
  P_\phi^z-P_{\phi'}^z =  \left(e^{2i\pi z}-1\right)  \frac{i}{2\pi} \int_{\Gamma'} \lambda^z_\phi (P-\lambda)^{-1}d\lambda, \quad \text{and}
  \\ 
 \Pi_{\phi,\phi'}(P)P_\phi^z= - \frac{i}{2\pi} \int_{\Gamma'} \lambda^z_\phi (P-\lambda)^{-1}d\lambda. 
\end{gather*}
This gives~(\ref{eq:powers.asymmetry}) for $\Re z<0$. As both sides of~(\ref{eq:powers.asymmetry}) are holomorphic families in the locally convex space $\sL(C^\infty(\T^n_\theta))$ the equality holds for all $z\in \C$ (\emph{cf}.~Proposition~\ref{prop:Hol.unique-analytic-continuation}).

It remains to prove~(\ref{eq:Log.difference}). By using~(\ref{eq:powers.asymmetry}) and the fact that $P_\phi^0=1-\Pi_0(P)$ we get
\begin{equation}
 \log_{\phi'}(P)-\log_{\phi}(P)= \left.\frac{d}{dz}\right|_{z=0} \bigg\{ (e^{2i\pi z}-1) \Pi_{\phi,\phi'}(P)P_\phi^z\bigg\} = (2i\pi)  \Pi_{\phi,\phi'}(P)\big(1-\Pi_0(P)\big). 
 \label{eq:Log.difference2} 
\end{equation}
Recall from Remark~\ref{rmk:Sectorial.transitivity} that $\Pi_{\phi,\phi'}(P)$ and $\Pi_{\phi',\phi+2\pi}(P)$ are disjoint projections such that
$\Pi_{\phi,\phi'}(P)+\Pi_{\phi',\phi+2\pi}(P)=1-\Pi_0(P)$, and so $\Pi_{\phi,\phi'}(P)(1-\Pi_0(P)) =\Pi_{\phi,\phi'}(P)$. Combining this with~(\ref{eq:Log.difference2}) gives~(\ref{eq:Log.difference}). 
\end{proof}

\begin{remark}
 For elliptic \psidos\ on closed manifolds, the asymmetry formula~(\ref{eq:powers.asymmetry}) goes back to Wodzicki~\cite{Wo:IM82, Wo:PhD, Wo:Weyl}, and the formula~(\ref{eq:Log.difference}) goes back to Okikiolu~\cite{Ok:Duke95} and Gaarde--Grubb~\cite{GG:MS08}.
\end{remark}

\begin{remark}\label{rmk:sectorial.log-sectorial-PsiDO}
Combining~(\ref{eq:Log.difference}) with Theorem~\ref{thm:Log.psido-ness} allows us to recover the fact that $\Pi_{\phi,\phi'}(P)$ is a zeroth-order \psido\ (see also~\cite{Gr:MS12}). 
\end{remark}

\appendix
\section{Quasi-Banach Spaces and Schatten Classes}\label{sec:quasi+Schatten}
In this appendix, for the reader's convenience we recall basic facts about quasi-Banach spaces and Schatten classes and their weak versions. We refer to~\cite{GK:AMS69, LSZ:Book, Si:AMS05} and the references therein for further background on these topics.

\subsection{Quasi-Banach spaces} 
\begin{definition}
A \emph{quasi-norm} on a vector space $E$ is a function $\|\cdot\|: E\rightarrow [0,\infty)$ with the following properties:
\begin{enumerate}
 \item[(i)] $\| \lambda x\|=|\lambda| \|x\|$ for all $x\in E$ and $\lambda \in \C$.

 \item[(ii)] $\|0\|=0$ and $\|x\|>0$ for $x\neq 0$.

  \item[(iii)] There exists $C>0$ such that
 \begin{equation}\label{eq:quasi}
 \|x+y\|\leq C\left(\|x\|+\|y\|\right) \qquad \forall x,y\in E.
\end{equation}
\end{enumerate}
\end{definition}

In other words, a quasi-norm is like a norm where the usual triangle inequality is relaxed to the quasi-triangle inequality. In particular, any norm is a quasi-norm. Similarly to norms, any quasi-norm $\|\cdot\|$ on a vector space $E$ defines a topology in which a basis of the neighborhood system for the origin consists of the balls
\begin{equation*}
 B(0,\delta)=\left\{x\in E; \ \|x\|\leq \delta\right\}, \qquad \delta>0.
\end{equation*}
Note that this topology need not be locally convex if $\|\cdot\|$ is not a norm. Moreover, every point admits a countable neighborhood basis. 

A subset $B$ of a quasi-normed space $(E, \|\cdot\|)$ is bounded iff the quasi-norm is bounded on $B$. Moreover, if $(E_1, \|\cdot \|_1)$ is another quasi-normed space, then a linear map $T:E\rightarrow E_1$ is continuous iff there is $C>0$ such that 
\begin{equation*}
 \|Tx\|_1 \leq C \|x\| \qquad \forall x \in E. 
\end{equation*}
 
\begin{definition}
 A \emph{quasi-Banach space} is a quasi-normed space that is complete, i.e., every Cauchy sequence is convergent.
\end{definition}

\subsection{Schatten classes and their weak versions}
Let $\sH$ be a (separable) Hilbert space. We denote by $\sL(\sH)$ the algebra of bounded linear operators on $\sH$ with norm $\|\cdot\|$. We also denote by $\sK$ the (closed) ideal of compact operators on $\sH$.

Given any operator $T\in \sK$, we let $(\mu_j(T))_{j\geq 0}$ be its sequence of \emph{singular values}, i.e., $\mu_j(T)$ is the $(j+1)$-th eigenvalue counted with multiplicity of the absolute value $|T|=\sqrt{T^*T}$. By the \emph{min-max principle} we have
\begin{align}
 \mu_j(T)=\min \left\{\|T_{|E^\perp}\|;\ \dim E=j\right\}.
  \label{eq:min-max}
\end{align}
In addition, we have
\begin{equation*}
  \mu_j(T)=\dist(T,\sR_j), \qquad \text{where}\ \sR_j=\{R\in \sL(\sH);\ \rk R\leq j\}.
\end{equation*}

We have the following properties of singular values:
\begin{gather*}
 \mu_j(T)=\mu_j(T^*)=\mu_j(|T|),\qquad is 
% \label{eq:Quantized.properties-mun1}\\
 \left|\mu_j(T)-\mu_j(S)\right|\leq \|S-T\|\\
% \label{eq:Quantized.properties-mun2}\\
\mu_j(ATB)\leq \|A\| \mu_j(T) \|B\| \qquad \forall A, B\in \sL(\sH).
\end{gather*}

\begin{definition}
 A \emph{quasi-Banach ideal} is a two-sided ideal $\sJ$ with a complete quasi-norm $\|\cdot\|_{\sJ}$ such that
 \begin{equation*}
 \|ATB\|_{\sJ} \leq \|A\| \|T\|_{\sJ} \|B\| \qquad \text{for all $T\in \sJ$ and $A,B\in \sL(\sH)$}.
\end{equation*}
If $\|\cdot\|_\sJ$ is a norm, then we say that $\sJ$ is a \emph{Banach ideal.}
\end{definition}

If $p>0$, then the \emph{Schatten class}
\begin{equation*}
 \sL_p:= \big\{ T\in \sK; \ \sum \mu_j(T)^p<\infty\big\}
\end{equation*}
is a quasi-Banach ideal with quasi-norm:
\begin{equation*}
 \|T\|_p:=  \big(\sum \mu_j(T)^p\big)^{1/p}, \qquad T\in \sL_p.
\end{equation*}
This is actually a Banach ideal for $p\geq 1$.

The \emph{weak Schatten class}
\begin{equation*}
 \sL_{p,\infty}:= \big\{ T\in \sK; \ \mu_j(T)=\op{O}(j^{-\frac{1}{p}})\big\}
\end{equation*}
is a quasi-Banach ideal with quasi-norm:
\begin{equation*}
 \|T\|_{p,\infty}:= \sup_{j\geq 0}(j+1)^{\frac 1p}\mu_j(T), \qquad T\in \sL_{p,\infty}.
\end{equation*}
For $p>1$, this quasi-norm is equivalent to the norm:
\begin{equation*}
 \|T\|_{p,\infty}':= \sup_{N\geq 1} N^{-1+\frac{1}{p}} \sum_{j<N}\mu_j(T),  \qquad T\in \sL_{p,\infty}.
\end{equation*}
Therefore, in this case $\sL_{p,\infty}$ is a Banach ideal with respect to $\|\cdot\|_{p,\infty}'$. In particular, its topology is locally convex. 

Finally, we have the (strict) continuous inclusions:
\begin{equation*}
 \sL_q \subsetneq \sL_{p,\infty}  \subsetneq \sL_r , \qquad q\leq p <r. 
\end{equation*}

\end{document}